\newcommand{\Pscr}{\mathscr{P}}
\newcommand{\Qscr}{\mathscr{Q}}
\theoremstyle{definition}
\theoremstyle{plain}
\newtheorem{corollaryspecial}{Corollary}
\newtheorem{theorem}{Theorem}[section]
\newtheorem{proposition}[theorem]{Proposition}
\newtheorem{lemma}[theorem]{Lemma}
\newtheorem{corollary}[theorem]{Corollary}
\newtheorem*{claim}{Claim}
\newtheorem{sub}{}[theorem] % This creates the counter "sub"
\newtheorem{sublemma}		[sub]{Lemma}
\theoremstyle{definition}
\newtheorem{definition}[theorem]{Definition}
\newtheorem{definitions}[theorem]{Definitions}
\newtheorem{parag}[theorem]{}
\newtheorem{example}[theorem]{Example}
\newtheorem{notation}[theorem]{Notation}
\newtheorem{notations}[theorem]{Notations}
\newtheorem{remark}[theorem]{Remark}
\newtheorem{remarks}[theorem]{Remarks}
\theoremstyle{remark}
\newtheorem*{smallremark}{Remark}
\newenvironment{enumerata}%
{\begin{enumerate}

}{\end{enumerate}}
\newcommand{\Nd}{	\operatorname{{\rm Nd}}}
\newcommand{\tD}{\tilde\Delta}
\newcommand{\bD}{\bar\Delta}
\newcommand{\In}{	\operatorname{{\rm In}}}
\newcommand{\qdic}{q_{\text{\rm dic}}}
\newcommand{\setspec}[2]{\big\{\,#1\, \mid \,#2\, \big\}}
\newcommand{\Integ}{\ensuremath{\mathbb{Z}}}
\newcommand{\Nat}{\ensuremath{\mathbb{N}}}
\newcommand{\Rat}{\ensuremath{\mathbb{Q}}}
\newcommand{\Comp}{\ensuremath{\mathbb{C}}}
\newcommand{\Reals}{\ensuremath{\mathbb{R}}}
\newcommand{\proj}{\ensuremath{\mathbb{P}}}
\newcommand{\Aeul}{\EuScript{A}}
\newcommand{\Ceul}{\EuScript{C}}
\newcommand{\Deul}{\EuScript{D}}
\newcommand{\Eeul}{\EuScript{E}}
\newcommand{\Neul}{\EuScript{N}}
\newcommand{\Oeul}{\EuScript{O}}
\newcommand{\Teul}{\EuScript{T}}
\newcommand{\Veul}{\EuScript{V}}
\renewcommand{\epsilon}{\varepsilon}
\renewcommand{\phi}{\varphi}
\renewcommand{\emptyset}{\varnothing}
\newcommand{\rien}[1]{}
\begin{document}
\renewcommand{\baselinestretch}{1.07}

%%%%%%	TOPMATTER:   %%%%%%%%%%%%%%%%%%%%%%%%%

\title[Structure of Newton trees at infinity]{Structure of the Newton tree at infinity \\ of a polynomial in two variables}

\author{Pierrette Cassou-Nogu\`es}
\author{Daniel Daigle}
\address{Univ.\ Bordeaux, CNRS, Bordeaux INP, IMB, UMR 5251,  F-33400, Talence, France}
\email{Pierrette.Cassou-nogues@math.u-bordeaux.fr}

\address{Department of Mathematics and Statistics\\
University of Ottawa\\
Ottawa, Canada K1N 6N5}
\email{ddaigle@uottawa.ca}

\thanks{Research of the first author partially supported by  Spanish grants
MTM2016-76868-C2-2-P (directors Pedro Gonzalez Perez and Alejandro Melle),
MTM2016-76868-C2-1-P (directors Enrique Artal and Jose Ignacio Cogolludo),
and the group {\it Geometria, Topologia, Algebra y Cryptografia en singularidades y sus aplicaciones.}}
\thanks{Research of the second author supported by grant 04539/RGPIN/2015 from NSERC Canada.}

{\renewcommand{\thefootnote}{}
\footnotetext{2010 \textit{Mathematics Subject Classification.}
Primary: 14R10, 14H45, 14H50.}}

{\renewcommand{\thefootnote}{}
\footnotetext{ \textit{Key words and phrases:} Newton tree, dual graph, splice diagram, affine plane curve, genus, rational polynomial, field generator.}}

\begin{abstract}
Let $f : \Comp^2 \to \Comp$ be a primitive polynomial.
Extend $f$ to a morphism $\Phi : X \to \proj^1$ where $X$ is a nonsingular projective surface that contains $\Comp^2$ as
an open set and where $\mathcal{D} = X \setminus \Comp^2$ is an SNC-divisor of $X$. 
The dual graph of $\mathcal{D}$ is a tree. 
We analyse the structure and complexity of this tree in terms of the genus of the generic fiber of $f$.
\end{abstract}

\maketitle
  
\vfuzz=2pt

\section*{Introduction}

This article studies a type of combinatorial object called
an {\it abstract Newton tree at infinity}.
It develops a graph theory whose results are valid for
all abstract Newton trees at infinity that are {\it minimally complete}
(these notions are defined in Sec.\ \ref{Sec:PreliminariesAbstractNewtontreesatinfinity}).
The motivation for investigating these trees comes from the study of polynomial maps $\Comp^2 \to \Comp$.
The motivation is discussed in this introduction, and the rest of the article
is purely combinatorial:
polynomials are almost never mentioned outside of this introduction
and of paragraphs \ref{918235071yrsj2dhry}--\ref{v87bnd5hmzvfdjsm6k7oxdg}.

Let $f \in \Comp[x,y]$ be a primitive polynomial and let us also view $f$ as a map $f:\Comp^2 \to \Comp$.
One calls $f$ a \textit{rational polynomial} if its generic fiber is a rational curve.
The study of rational polynomials has a long history,
but the problem of classifying them is still very much open.
So far, only three families of rational polynomials have been described:
(i)~simple rational polynomials (i.e., rational polynomials all of whose dicriticals have degree 1)
by Miyanishi and Sugie \cite{MiySugie:GenRatPolys}, 
then by Neumann and Norbury \cite{NeumannNorbury:simple} (who corrected an error of \cite{MiySugie:GenRatPolys}),
and finally by the authors \cite{CND15:simples} (who corrected an error of \cite{NeumannNorbury:simple});
(ii)~rational polynomials with a $\Comp^*$-fiber, by Kaliman \cite{Kaliman:CstarFiber96};
(iii)~quasi-simple rational polynomials (rational polynomials with one dicritical of degree $2$ and all others of degree $1$),
by Sasao \cite{Sasao_QuasiSimple2006}. 

The results of  \cite{MiySugie:GenRatPolys},  \cite{Kaliman:CstarFiber96} and \cite{Sasao_QuasiSimple2006} give equations for $f$.
The fact that Sasao's equations are already quite complicated suggests that, for more general hypotheses,
giving equations should be very difficult and perhaps unuseful.
Neumann and Norbury work with the splice diagram of $f$; they first give these trees and then deduce equations using ``educated guesses''.

It seems more realistic to study classifications of polynomials using their trees.  It is what we do in this article.
To each primitive polynomial $f \in \Comp[x,y]$, we associate a minimally complete abstract Newton tree at infinity $\Teul(f;x,y)$
(these are a variant of the splice diagrams used in \cite{NeumannNorbury:simple});
Rem.\ \ref{pc09v4rdh8drcj7yu} recalls how this association is done.
Since the results of this paper apply to all minimally complete abstract Newton trees at infinity,
they apply in particular to all trees $\Teul(f;x,y)$ where $f \in \Comp[x,y]$ is a primitive polynomial.
The genus $g$ of the generic fiber of $f$ can be computed from the information contained in $\Teul(f;x,y)$.
We decompose $\Teul(f;x,y)$ in shapes that we call ``combs'' and we show that the number of combs
is $\le 2g+1$. In particular, for rational polynomials the tree has a \textit{one comb} shape.
Our aim in a subsequent article is to describe birational maps that construct combs, which should show that all polynomials
can be deduced from simple ones and standard birational maps. This is a way to get equations without writing them.

It is well known that each primitive polynomial  $f \in \Comp[x,y]$ determines a finite collection of ``dicritical curves,'' 
that there is a notion of ``degree of a dicritical curve,''
and that the gcd of the degrees of all dicritical curves of $f$ is equal to $1$.
If $\Teul$ is an abstract Newton tree at infinity then 
one defines a notion of ``dicritical vertex'' of $\Teul$, and of ``degree of a dicritical vertex'' (Def.\ \ref{823yi867jcn73}).
In the special case where $\Teul = \Teul(f;x,y)$ with $f$ primitive,
there is a degree-preserving bijection between the set of dicritical vertices of $\Teul$ and 
the set of dicritical curves of $f$.
So the tree $\Teul(f;x,y)$, in addition to being minimally complete, also satisfies the condition that 
the gcd of the degrees of all dicritical vertices is equal to $1$.
Note that our theory is valid for all  minimally complete abstract Newton trees at infinity, without assuming that they
satisfy this gcd condition;
Cor.\ \ref{pc0wbyrjo79e8rnn9}, Prop.\ \ref{c0viwjytsdDJjLlxdifFebg} and Section~\ref{Section:RationalCase} 
are the only places where the gcd condition is assumed.

{\setlength{\unitlength}{1mm}
The reader unfamiliar with  abstract Newton trees at infinity should look at Ex.\ \ref{nv63jfy64nvy3}
and take note that these trees consist of vertices, arrows, edges, and decorations.
Dicritical vertices are represented by 
``\begin{picture}(2,1)(-1,-.5) \put(0,0){\circle*{1}} \end{picture}'' 
and vertices which are not dicritical are represented by ``\begin{picture}(2,1)(-1,-.5) \put(0,0){\circle{1}} \end{picture}''.
Each arrow is represented by an arrowhead
``\begin{picture}(2,1)(-1,-.5) \put(0.5,0){\vector(1,0){0}} \end{picture}''
(so ``\begin{picture}(5.5,1)(-.5,-.5) \put(0,0){\circle*{1}} \put(0.5,0){\vector(1,0){4.5}} \end{picture}''
represents an edge joining a vertex 
``\begin{picture}(2,1)(-1,-.5) \put(0,0){\circle*{1}} \end{picture}''
to an arrow
``\begin{picture}(2,1)(-1,-.5) \put(0.5,0){\vector(1,0){0}} \end{picture}'').
The edges and arrows are decorated.
A more complicated example is shown in Fig.~\ref{723dfvcjp2q98ewdywe},
but that one uses the abbreviation \begin{picture}(9.6,1)(-.9,-.5) \put(0,0){\circle*{1}} 
\put(0.5,0){\line(1,0){4.5}} \put(4.8,0){\makebox(0,0)[l]{\footnotesize $<$}} 
\end{picture}
defined in \ref{pd0f123wdjwe}.
}

Given a minimally complete abstract Newton tree at infinity $\Teul$,
let $\Neul$ be the subtree of $\Teul$ obtained by deleting all dicritical vertices and arrows of $\Teul$, and all edges incident to a
dicritical vertex or to an arrow. 
For each vertex $v$ of $\Neul$, a number $\tD(v) \in \Integ$ is defined in  \ref{pc09vg349fbv8682};
we write $\tD( \Neul )$ as an abbreviation for $\sum_{v \in \Neul} \tD(v)$.
Thus every minimally complete abstract Newton tree at infinity $\Teul$ determines an integer $\tD( \Neul )$.
This integer is important because in the special case where $\Teul = \Teul(f;x,y)$ with $f \in \Comp[x,y]$ primitive,
we have (by Rem.\ \ref{v87bnd5hmzvfdjsm6k7oxdg})
\begin{equation}  \label {ckjvbo293djbf0q2}
\tD( \Neul ) = 2g
\end{equation}
where $g$ is the genus of the generic fiber of $f$;
in particular, $f$ is a rational polynomial if and only if $\tD( \Neul ) = 0$.
Thanks to Eq.\ \eqref{ckjvbo293djbf0q2}, we may now forget polynomials and turn ourselves into graph theorists.
What we accomplish in this article can be summarized as follows:
{\it Given a minimally complete abstract Newton tree at infinity $\Teul$,}
\begin{enumerate}

\item[(i)] {\it we show that the subtree $\Neul$ of $\Teul$ can be decomposed into combs;}
\item[(ii)] {\it we explain how the structure of the decomposition {\rm (i)} is related to the number $\tD( \Neul )$;}
\item[(iii)] {\it for small values of  $\tD( \Neul )$, we derive a more detailed description of $\Teul$.}

\end{enumerate}

For the purpose of this introduction, it suffices to say that 
a \textit{comb} is a subtree of $\Neul$ of the type depicted in Fig.\  \ref{902br8hrjkderydyukhyg},
where the vertical or oblique lines are called \textit{teeth}.\footnote{These definitions of ``comb'' and ``tooth'' are oversimplified.
The correct definitions are given in Sec.\ \ref{SectionCombs} and involve delicate arithmetical conditions that cannot be stated here.}
There can be several teeth attached to $z_n$, but at most one tooth is attached to each $z_i$ with $1\le i <n$.
\begin{figure}[htb]
$$
\scalebox{.8}{
\setlength{\unitlength}{1mm}
\begin{picture}(116,29)(-1,-26)
\put(0,0){\circle{1}}
\put(20,0){\circle{1}}
\put(30,0){\circle{1}}
\put(40,0){\circle{1}}
\put(60,0){\circle{1}}
\put(70,0){\circle{1}}
\put(80,0){\circle{1}}
\put(100,0){\circle{1}}
\put(110,0){\circle{1}}
\put(.5,0){\line(1,0){6}}
\put(40.5,0){\line(1,0){6}}
\put(80.5,0){\line(1,0){6}}
\put(19.5,0){\line(-1,0){6}}
\put(59.5,0){\line(-1,0){6}}
\put(99.5,0){\line(-1,0){6}}
\put(10,0){\makebox(0,0){\dots}}
\put(50,0){\makebox(0,0){\dots}}
\put(90,0){\makebox(0,0){\dots}}
\put(20.5,0){\line(1,0){9}}
\put(30.5,0){\line(1,0){9}}
\put(60.5,0){\line(1,0){9}}
\put(70.5,0){\line(1,0){9}}
\put(100.5,0){\line(1,0){9}}
\put(30,-.5){\line(0,-1){6}} \put(30,-9){\makebox(0,0){$\vdots$}} \put(30,-19.5){\line(0,1){6}} \put(30,-20){\circle{1}}
\put(70,-.5){\line(0,-1){6}} \put(70,-9){\makebox(0,0){$\vdots$}} \put(70,-19.5){\line(0,1){6}} \put(70,-20){\circle{1}}
\put(109.8787,-.4851){\line(-1,-4){1.5}} \put(105.12,-19.51){\line(1,4){1.5}}
\put(110.1213,-.4851){\line(1,-4){1.5}} \put(114.88,-19.51){\line(-1,4){1.5}}
\multiput(107.2575,-10.97)(.2425,.97){3}{\makebox(0,0){$\cdot$}}
\multiput(112.62,-10.485)(-.2425,.97){3}{\makebox(0,0){$\cdot$}}
\put(105,-20){\circle{1}}
\put(115,-20){\circle{1}}
\put(0,1.5){\makebox(0,0)[b]{\tiny $z_1$}}
\put(30,1.5){\makebox(0,0)[b]{\tiny $z_{i_1}$}}
\put(70,1.5){\makebox(0,0)[b]{\tiny $z_{i_k}$}}
\put(110,1.5){\makebox(0,0)[b]{\tiny $z_n$}}
\put(110,-21){\makebox(0,0)[t]{$\underbrace{\rule{11mm}{0mm}}_{t(z_n)}$}}
\end{picture} \qquad
\raisebox{20mm}{\begin{minipage}{4cm}
\footnotesize
$n\ge1$, $k\ge0$,  \\
$1 \le i_1 < \cdots < i_k < n$, \\
$t(z_n) \ge 0$.
\end{minipage}}}
$$
\caption{A comb.}
\label {902br8hrjkderydyukhyg}
\end{figure}

Again, let $\Teul$ be a minimally complete abstract Newton tree at infinity and let $\Neul$ be the subtree
of $\Teul$ defined as before.
The decomposition mentioned in (i) implies that $\Neul$ is a tree of combs,
where the $z_n$ of one comb may be adjacent to the $z_1$ of another comb (or to the $z_1$s of several other combs).
Cor.\ \ref{pTo9v2q3hZYa9r1ge0cX3rg}(a) states that
\begin{equation} \label {pv09b34587r}
\text{the total number of combs in the decomposition is $\le 1+\max(0,\tD(\Neul))$;}
\end{equation}
this is in fact the simplest of our results that would go in item (ii), in the above list.
Other results show that the value of $\tD(\Neul)$ imposes conditions on the internal properties of the combs and on how the different combs 
are organized together to form $\Neul$.
For instance, Fig.\ \ref{d9be3yie8dfdokj6rdosefd} (located near \ref{0vbb359gb}) shows all cases where the number of combs is between $2$ and $5$.
In each row of the table, the tree is a simplified representation of $\Neul$ where each vertex represents a comb,
and the number $H$ satisfies $H \le \tD(\Neul)$.

\rien{
Recall that $\Neul$ is obtained from $\Teul$ by deleting all dicritical vertices, all arrows and some edges.
So if $v$ is a vertex of $\Neul$ then there may exist dicritical vertices $u_1, \dots, u_s$ adjacent to $v$ (where the $u_i$ are in $\Teul$ but not in $\Neul$);
if $s\neq 0$, and if $d_j$ is the degree of the dicritical $u_j$, we say that $v$ is a node of type $[d_1,\dots,d_s]$;
if $s=0$, we say that $v$ is not a node.
We have $\Nd^*(\Teul) \subseteq \Nd(\Teul) \subseteq \Neul$,
where $\Nd(\Teul)$ is the set of all nodes and 
$\Nd^*(\Teul)$ is the set of all nodes whose type $[d_1,\dots,d_s]$ satisfies $\gcd(d_1,\dots,d_s)=1$.
Observe that if $u$ is a dicritical vertex of degree $1$, then $u$ is adjacent to some element of $\Nd^*(\Teul)$;
so $\Nd^*(\Teul)$ gives information on the  possible locations of dicritical vertices of degree $1$ in $\Teul$.
This is relevant in view of the notions of simple and quasi-simple rational polynomials,
and also in view of the dichotomy between good and bad field generators (in characteristic zero, a field generator is the same thing
as a rational polynomial; by the remark after 1.3 in \cite{Rus:fg}, a field generator is ``good'' if 
and only if at least one of its dicriticals has degree $1$).
}

In order to be able to state some results, let us now introduce some ideas and notations.
Recall that $\Neul$ is obtained from $\Teul$ by deleting all dicritical vertices, all arrows and some edges.
So if $v$ is a vertex of $\Neul$ then there may exist dicritical vertices $u_1, \dots, u_s$ adjacent to $v$ (where the $u_i$ are in $\Teul$ but not in $\Neul$);
if $s\neq 0$, and if the degrees of the dicriticals $u_1,\dots,u_s$ are denoted $d_1\le \cdots \le d_s$,
we say that $v$ is a node of type $[d_1,\dots,d_s]$;
if $s=0$, we say that $v$ is not a node.
By Lemma \ref{90q932r8dhd89cnr9}\eqref{o82y387jw9e23},
if $v$ is a node of type  $[d_1,\dots,d_s]$ then each $d_j$ is a divisor of the integer $N_v$ defined in \ref{c9v39rf0eX9e4np8glr9t8}.
For any vertex $v$ of $\Neul$ we define 
$$
\epsilon'(v) = a_v^* + b_v + \text{ valency of $v$ in $\Neul$} \, ,
$$
where $a_{v}^* = 0$ (resp.\  $a_{v}^* = 1$) if no arrow
(resp.\ some arrow\footnote{Such an arrow would be in $\Teul$ but not in $\Neul$.})
decorated by $(0)$ is adjacent to $v$, and $b_{v} =$ number of dicritical vertices adjacent to $v$ and of degree $< N_{v}$.

We have $\Nd^*(\Teul) \subseteq \Nd(\Teul) \subseteq \Neul$,
where $\Nd(\Teul)$ is the set of all nodes and 
$\Nd^*(\Teul)$ is the set of all nodes whose type $[d_1,\dots,d_s]$ satisfies $\gcd(d_1,\dots,d_s)=1$.
Observe that if $u$ is a dicritical vertex of degree $1$, then $u$ is adjacent to some element of $\Nd^*(\Teul)$;
so $\Nd^*(\Teul)$ gives information on the  possible locations of dicritical vertices of degree $1$ in $\Teul$.
This is relevant in view of the notions of simple and quasi-simple rational polynomials,
and also in view of the dichotomy between good and bad field generators (in characteristic zero, a field generator is the same thing
as a rational polynomial; by the remark after 1.3 in \cite{Rus:fg}, a field generator is ``good'' if 
and only if at least one of its dicriticals has degree $1$).

\medskip

\noindent{\bf Some results for rational polynomials.}
Let us give some examples of new results for rational polynomials, obtained as corollaries of our theory.
Corollaries \ref{0v3y46y7iwoujbfv8e} and \ref{p09verfgukeikdseur} are immediate consequences of, respectively,
Thm \ref{c03hbr9dfgsjlxnzwesmchp} and Prop.~\ref{9988zhgGUjfdlkq4q965vsaqWd39r0}.

By \eqref{ckjvbo293djbf0q2}, rational polynomials satisfy $\tD(\Neul)=2g=0$,
so \eqref{pv09b34587r} implies that the decomposition of $\Neul$ into combs has exactly one comb.
This explains assertion (a) of:

\begin{corollaryspecial}  \label {0v3y46y7iwoujbfv8e}
Let $\Teul = \Teul(f;x,y)$ where $f \in \Comp[x,y]$ is a rational polynomial. 
Then either $\Teul$ is one of the trees of Ex.\ \ref{nv63jfy64nvy3} or the following hold.
\begin{enumerata}

\item $\Neul$ looks like the tree of Fig.\ \ref{902br8hrjkderydyukhyg} with $n>1$.

\item The root $v_0$ of $\Teul$ is one of $z_1, \dots, z_{n-1}$, say $v_0 = z_{i_0}$ with $1 \le i_0 < n$, and if $k\neq 0$ then $i_0 < i_1$.
Moreover, the valency of $v_0$ in $\Teul$ is at most $2$.

\item No tooth is attached to $z_1$ and $\epsilon'(z_1) \le 2$. 

\item We have $\epsilon'(z_n) \le 4$, so in particular $t(z_n) \in \{0,1,2,3\}$. 

\item We have $\epsilon'(z_i) \le 3$ for all $i$ such that $1<i<n$.

\item If $v$ is a vertex in a tooth and $v$ is distinct from the $z_i$ to which the tooth is attached,
then $v$ is a node and $\epsilon'(v) \le 2$.

\item We have $| \Nd^*(\Teul) | \le 2$ and one of the following holds:
\begin{itemize}

\item $\Neul = \{ z_1, \dots, z_n\}$, $\epsilon'(z_i) \le 2$ for all $i$, and $\Nd^*(\Teul) \subseteq \{z_1, z_n\}$.

\item Let $V(z_n)$ denote the set of all vertices  $v$  belonging to some tooth attached to $z_n$ and such that $v$ has valency $1$ in $\Neul$;
then $\Nd^*(\Teul) \subseteq V(z_n) \cup \{z_n\}$,
and if $| \Nd^*(\Teul) | = 2$ then $V(z_n) \subseteq \Nd^*(\Teul)$.

\end{itemize}

\end{enumerata}
\end{corollaryspecial}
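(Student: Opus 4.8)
The plan is to derive the entire statement from the general structure theorem, Theorem~\ref{c03hbr9dfgsjlxnzwesmchp}, by specializing it to the value $\tD(\Neul)=0$. The first step is the reduction: since $f$ is a rational polynomial its generic fiber has genus $g=0$, so \eqref{ckjvbo293djbf0q2} gives $\tD(\Neul)=2g=0$, hence $\max(0,\tD(\Neul))=0$, and \eqref{pv09b34587r} forces the comb decomposition of $\Neul$ to consist of a \emph{single} comb. Equivalently, $\Neul$ itself is a comb and so has the shape of Figure~\ref{902br8hrjkderydyukhyg}. The trees of Example~\ref{nv63jfy64nvy3} are exactly the degenerate cases (those in which the comb is too small, e.g.\ $n=1$), which is why the statement reads ``either $\Teul$ is one of those trees, or'' the comb has $n>1$; this gives~(a).

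For (b)--(f) I would go through the numerical bounds that Theorem~\ref{c03hbr9dfgsjlxnzwesmchp} provides for a minimally complete $\Teul$ with a prescribed value of $\tD(\Neul)$ --- bounds on the valency and spine-position of the root $v_0$, on $\epsilon'(z_1)$, on $\epsilon'(z_n)$, on $\epsilon'(z_i)$ for $1<i<n$, and on $\epsilon'(v)$ for a vertex $v$ in the interior of a tooth --- and substitute $\tD(\Neul)=0$. This yields: the valency of $v_0$ in $\Teul$ is at most $2$ and $v_0=z_{i_0}$ with $i_0<n$ (and $i_0<i_1$ when $k\neq0$), which is~(b); $\epsilon'(z_1)\le 2$, and since attaching a tooth to $z_1$ would raise its valency in $\Neul$ this also rules out such a tooth, giving~(c); $\epsilon'(z_n)\le 4$, and because the valency of $z_n$ in $\Neul$ is at least $t(z_n)+1$ ($t(z_n)$ tooth-edges plus the edge toward $z_{n-1}$) while $\epsilon'(z_n)$ is at least that valency, we obtain $t(z_n)\le 3$, which is~(d); $\epsilon'(z_i)\le 3$ for $1<i<n$, which is~(e); and for $v$ strictly inside a tooth the comb axioms of Section~\ref{SectionCombs} force $v$ to be a node while the specialized bound gives $\epsilon'(v)\le 2$, which is~(f). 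Each of these is a direct substitution, the only care needed being to track correctly how the ``excess'' terms $a_v^*$, $b_v$, the number of teeth, and any arrow decorated $(0)$ enter the bounds of Theorem~\ref{c03hbr9dfgsjlxnzwesmchp}.

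For (g) I would combine the comb structure just obtained with the constraints on the location of the elements of $\Nd^*(\Teul)$ available under the gcd hypothesis --- this is where one uses that $\Teul=\Teul(f;x,y)$ with $f$ primitive, so that the degrees of the dicriticals have gcd $1$, via the gcd-case results cited in the introduction (Corollary~\ref{pc0wbyrjo79e8rnn9}, Proposition~\ref{c0viwjytsdDJjLlxdifFebg}, and Section~\ref{Section:RationalCase}). From the bound $\epsilon'(v)\le 2$ holding at every node other than $z_n$ and the valency-$1$ tooth vertices of $z_n$, together with the definition of $\Nd^*$, one extracts $|\Nd^*(\Teul)|\le 2$ and the stated dichotomy: if $\Neul$ carries no teeth then $\Neul=\{z_1,\dots,z_n\}$, all $\epsilon'(z_i)\le 2$, and $\Nd^*(\Teul)\subseteq\{z_1,z_n\}$; otherwise $\Nd^*(\Teul)\subseteq V(z_n)\cup\{z_n\}$, with $V(z_n)\subseteq\Nd^*(\Teul)$ whenever $|\Nd^*(\Teul)|=2$.

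The genuinely delicate point is not any individual deduction but ensuring the specialization is \emph{exhaustive}: one must verify that the trees of Example~\ref{nv63jfy64nvy3} are precisely the cases excluded by the ``either \dots\ or'' alternative, and that the clean inequalities in (b)--(g) are really what Theorem~\ref{c03hbr9dfgsjlxnzwesmchp} (and the gcd-case results) yield at $\tD(\Neul)=0$ once every slack term has been accounted for. Granting those results, the rest is careful bookkeeping.
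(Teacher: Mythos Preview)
Your approach is correct and matches the paper's: the statement is declared in the introduction to be ``an immediate consequence of Thm~\ref{c03hbr9dfgsjlxnzwesmchp}'', and indeed parts (a)--(g) of the Corollary correspond respectively to parts (a), (c), (b), (d), (e), (f), (g) of that theorem (with a small amount of unpacking for $\epsilon'$-bounds, exactly as you sketch).

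Two corrections worth noting. First, Theorem~\ref{c03hbr9dfgsjlxnzwesmchp} is \emph{not} a general-$\tD(\Neul)$ result to be specialized: its hypothesis is already ``let $\Teul$ be a rational tree'' (Def.~\ref{ratttreeee}), which means $\tD(\Neul)=0$ and $\gcd\{d_x:x\in\Deul\}=1$; so there is no specialization step, only the identification $\tD(\Neul)=2g=0$ via \eqref{ckjvbo293djbf0q2}. Second, for part~(g) the relevant input is Cor.~\ref{9vbq34ritgf7ZAOdb9rv} (which is what Thm~\ref{c03hbr9dfgsjlxnzwesmchp}(g) points to), not Cor.~\ref{pc0wbyrjo79e8rnn9} or Prop.~\ref{c0viwjytsdDJjLlxdifFebg}: the dichotomy in Cor.~1(g) is the dichotomy $|\Omega(\Teul)|=2$ versus $|\Omega(\Teul)|=1$ of Cor.~\ref{9vbq34ritgf7ZAOdb9rv}(b,c), not ``teeth vs.\ no teeth''. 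Finally, for (f) the fact that each $x_j$ ($j<m$) is a node comes from Lemmas~\ref{C0qowbdowX932epug7fnvd} and~\ref{8ey8cdody27} (since $x_j\neq v_0$), not from the comb axioms themselves.
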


For primitive polynomials, the topological structure of $\Neul$ can be arbitrarily complicated;
so it is remarkable that, for all rational polynomials, $\Neul$ is as simple as is claimed in (a).
We also stress that all assertions of Cor.\ \ref{0v3y46y7iwoujbfv8e} are new results, except for the claim that $v_0$ has valency at most $2$.

The next result is concerned with the class of rational polynomials whose trees satisfy $\Nd(\Teul) = \Nd^*(\Teul)$
(this is considerably larger than the class of simple rational polynomials).
To understand the statement, first recall that a polynomial $f \in \Comp[x,y]$ is called a \textit{variable} if $\Comp[x,y]=\Comp[f,g]$ for some $g$.
By \cite[Thm 4.5]{Rus:fg}, if a rational polynomial $f\in \Comp[x,y]$ is not a variable then one can choose
the pair $(x,y)$ so that $f$ has two points at infinity.

\begin{corollaryspecial} \label {p09verfgukeikdseur}
Let $f \in \Comp[x,y]$ be a rational polynomial which is not a variable,
assume that $(x,y)$ has been chosen so that $f$ has two points at infinity, and let $\Teul = \Teul(f;x,y)$.
If $\Nd(\Teul) = \Nd^*(\Teul)$ then either $\Teul$ is one of the trees of Ex.\ \ref{nv63jfy64nvy3} or the following hold,
where the notation for $\Neul$ is that of Fig.\ \ref{902br8hrjkderydyukhyg}.
\begin{enumerata}

\item $\Neul = \{ z_1, \dots, z_n\}$,  $n \in \{2,3\}$ and if $n=3$ then $v_0=z_2$.

\item $\epsilon'(z_i) \le 2$ and $a_{z_i}^*=0$ for all $i = 1, \dots, n$.

\item $\Nd(\Teul) = \{ z_1, z_n \}$ and each node $v$ has type $[d_1, \dots, d_s] = [1, N_v, \dots, N_v]$, where $s=1 \Leftrightarrow v = v_0$.

\end{enumerata}
\end{corollaryspecial}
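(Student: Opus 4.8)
The plan is to deduce the statement from the combinatorial Proposition~\ref{9988zhgGUjfdlkq4q965vsaqWd39r0}; all the work then consists in checking that $\Teul = \Teul(f;x,y)$ satisfies its hypotheses and in transporting the conclusion back to the language of polynomials. First, since $f$ is a rational polynomial its generic fiber has genus $g=0$, so Eq.~\eqref{ckjvbo293djbf0q2} gives $\tD(\Neul)=2g=0$; by \eqref{pv09b34587r} the comb decomposition of $\Neul$ then has a single comb, so $\Neul$ already has the shape of Fig.~\ref{902br8hrjkderydyukhyg}. Second, by the construction recalled in Rem.~\ref{pc09v4rdh8drcj7yu} and paragraphs~\ref{918235071yrsj2dhry}--\ref{v87bnd5hmzvfdjsm6k7oxdg}, $\Teul$ is a minimally complete abstract Newton tree at infinity whose dicritical vertices have degrees of gcd~$1$. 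Third, the hypothesis that $f$ is not a variable is used only to invoke \cite[Thm 4.5]{Rus:fg}, which is precisely what permits the standing assumption that $(x,y)$ has been chosen so that $f$ has two points at infinity; under the construction of Rem.~\ref{pc09v4rdh8drcj7yu} this geometric condition on the pair $(x,y)$ translates into the combinatorial feature of $\Teul$ required by Proposition~\ref{9988zhgGUjfdlkq4q965vsaqWd39r0} (roughly, that $\Teul$ has exactly two top arrows, so that the root sits as in Fig.~\ref{902br8hrjkderydyukhyg}). Finally, the hypothesis $\Nd(\Teul)=\Nd^*(\Teul)$ is already a property of $\Teul$ and is used verbatim.

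With these four facts, Proposition~\ref{9988zhgGUjfdlkq4q965vsaqWd39r0} applies to $\Teul$, and its conclusion is precisely the asserted dichotomy: either $\Teul$ is one of the trees of Ex.~\ref{nv63jfy64nvy3}, or (a)--(c) hold. No reinterpretation is needed in passing from the proposition to the corollary, since the root $v_0$, the integers $N_v$, the quantities $\epsilon'(v)$, $a_v^*$, $b_v$, and the notion of a node of type $[d_1,\dots,d_s]$ are all attributes of the abstract tree $\Teul$. One thus reads off directly that $\Neul=\{z_1,\dots,z_n\}$ with $n\in\{2,3\}$ and $v_0=z_2$ when $n=3$; that $\epsilon'(z_i)\le 2$ and $a_{z_i}^*=0$ for every $i$; and that $\Nd(\Teul)=\{z_1,z_n\}$ with every node of type $[1,N_v,\dots,N_v]$ and $s=1\Leftrightarrow v=v_0$. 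These are exactly (a), (b) and (c).

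All the genuine content therefore lies inside Proposition~\ref{9988zhgGUjfdlkq4q965vsaqWd39r0}, which I use here as a black box; in the present deduction the only delicate points are pinning down the dictionary that translates ``$f$ has two points at infinity'' into the combinatorial hypothesis of that proposition, and checking that the pairs $(x,y)$ left uncovered by \cite[Thm 4.5]{Rus:fg} are exactly those whose tree is on the list of Ex.~\ref{nv63jfy64nvy3}. If one preferred not to quote Proposition~\ref{9988zhgGUjfdlkq4q965vsaqWd39r0}, one could instead start from Corollary~\ref{0v3y46y7iwoujbfv8e}, which applies since $f$ is rational (so either $\Teul$ is on the list of Ex.~\ref{nv63jfy64nvy3}, and we are done, or its items (a)--(g) hold), and then exploit the extra hypothesis $\Nd(\Teul)=\Nd^*(\Teul)$: by item~(f) every vertex of a tooth other than its attaching $z_i$ is a node, hence lies in $\Nd^*(\Teul)$, which by item~(g) forces either $\Neul=\{z_1,\dots,z_n\}$ already or $\Nd^*(\Teul)\subseteq V(z_n)\cup\{z_n\}$; combining the latter with the divisibility $d_j\mid N_v$ of Lemma~\ref{90q932r8dhd89cnr9}\eqref{o82y387jw9e23}, the $\tD(\Neul)=0$ accounting, and the two-points-at-infinity normalization should force $\Neul=\{z_1,\dots,z_n\}$ and bound $n$ by $3$ with $v_0=z_2$ when $n=3$, after which the gcd condition pins down the node types in (c). The step I expect to be the main obstacle, in either route, is precisely this arithmetical elimination of non-empty teeth, since that is where the hypothesis $\Nd(\Teul)=\Nd^*(\Teul)$ must be used with full force.
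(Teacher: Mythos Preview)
Your approach is the same as the paper's: the Corollary is indeed meant to be read off from Prop.~\ref{9988zhgGUjfdlkq4q965vsaqWd39r0}. However, you have misidentified how the ``two points at infinity'' hypothesis enters. Prop.~\ref{9988zhgGUjfdlkq4q965vsaqWd39r0} has \emph{no} such hypothesis --- its only assumptions are that $\Teul$ is a rational tree (Def.~\ref{ratttreeee}) with $\Nd(\Teul)=\Nd^*(\Teul)$ --- and its conclusion is a \emph{trichotomy} indexed by $|\Omega(\Teul)|\in\{0,1,2\}$, not the dichotomy you claim. The role of ``$f$ has two points at infinity'' is that, via Def.~\ref{9c8yvqewjft7d} and Rem.~\ref{p092pof90e9rf}, it forces $\delta_{v_0}=2$; this eliminates case~(b) of the Proposition, which explicitly states $\delta_{v_0}=1$. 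What survives is case~(a) (the trees of Ex.~\ref{nv63jfy64nvy3}) and case~(c), whose bullet points give the Corollary's (a) and (c) verbatim. Your sentence ``No reinterpretation is needed'' is thus slightly off: the passage from the trichotomy to the dichotomy is exactly where the extra hypothesis is spent.

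One further small point: item~(b) of the Corollary, namely $\epsilon'(z_i)\le 2$, is not literally among the bullets of case~(c); it needs a one-line check. From $a_{z_i}=1$ you get $a_{z_i}^*=0$. For the endpoints $z_1,z_n$, the valency in $\Neul$ is $1$ and the type $[1,N_v,\dots,N_v]$ gives $b_v\le 1$, so $\epsilon'(z_i)\le 2$. If $n=3$, then $z_2=v_0$ is not a node (since $\Nd(\Teul)=\{z_1,z_n\}$), so $b_{z_2}=0$ and $\epsilon'(z_2)=2$. Your alternative route through Cor.~\ref{0v3y46y7iwoujbfv8e} would work but is unnecessarily indirect; the paper treats the Corollary as an immediate consequence of the Proposition, and with the correction above, so can you.
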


\medskip

\noindent{\bf Strategy and organization of the text.}
Let $\Teul$ be a minimally complete abstract Newton tree at infinity.
To study the structure of $\Teul$, we work at three different levels:
the first is that of $\Teul$ itself,
the second is the subtree $\Neul$ of $\Teul$,
and the third is the subtree $S(\Teul)$ of $\Neul$, called the skeleton of $\Teul$,
obtained from $\Neul$ by deleting all teeth of all combs.\footnote{The skeleton can be arbitrarily complicated:
it can be shown that if $T$ is an arbitrary finite tree and $x$ is any vertex of $T$, then there exists a minimally complete
abstract Newton tree at infinity $\Teul$ whose skeleton $S(\Teul)$ is $T$ and whose root $v_0$ is $x$.}

Before deleting dicritical vertices and focussing our attention on $\Neul$, we carefully analyse
how dicritical vertices cluster around vertices called ``nodes'', and how these clusters contribute to the value of $\tD(\Neul)$.
This analysis of nodes is carried out in Sec.\ \ref{SEC:Nodes}.

The second level of our analysis (i.e., the study of the tree $\Neul$) is carried out
in Sections \ref{Section:Characteristicnumbers} and \ref{Sec:LocalstructureofNeul}.
Sec.\ \ref{Section:Characteristicnumbers} defines the set $P(\Teul)$ whose elements are
all pairs $(u,e)$ such that $u$ is a vertex of $\Neul$ and $e$ is an edge of $\Neul$ incident to $u$.
We define a partial order $\preceq$ on $P(\Teul)$ by declaring that $(u,e) \succ (u',e')$ if the path from $u$ to $u'$ traverses $e$ but not $e'$.
Then several definitions and proofs proceed by induction on the poset $(P(\Teul),\preceq)$.
For each $(u,e) \in P(\Teul)$, we define a characteristic number $c(u,e) \in \Rat_{>0} = \setspec{ x \in \Rat }{ x>0 }$. 
Properties of characteristic numbers are established in Sec.\ \ref{Section:Characteristicnumbers}.
Thm \ref{xncoo9qwdx9} appears to be a deep result about Newton trees.
It allows us to define the integer $M(u,e)>0$ for each $(u,e) \in P(\Teul)$ (see Notation \ref{c9v8mmxzKRknrscxe4ftdes}).

Using characteristic numbers, Sec.\ \ref{Sec:LocalstructureofNeul} develops a ``calculus'' for
computing $\tD$ of certain subsets of $\Neul$ (see in particular Thm \ref{P90werd23ewods0ci}).
These tools allow us to discover the unexpected fact (Lemma \ref{p0c9vin12q09wsc})
that three important functions $P(\Teul) \to \Rat$ are monotonic with respect to $\preceq$.
The rest of Sec.\ \ref{Sec:LocalstructureofNeul} looks at the relation between the constancy of these functions in a given region of $\Neul$
and the presence of certain substructures of $\Neul$ (teeth, combs, etc.) in that region.

Studying the skeleton $S(\Teul)$ is the third level of our analysis, and is the subject of
Sections \ref{Section:Globalstructurefirststeps} and \ref{Section:GlobalstructureofNeul}.
Sec.\ \ref{Section:Globalstructurefirststeps} deals with the case where $S(\Teul)$ is a single vertex
and Sec.\ \ref{Section:GlobalstructureofNeul} does the general case.
It is in Sec.\ \ref{Section:GlobalstructureofNeul} that we prove the existence of the decomposition into combs, and that
we describe how the structure of that decomposition is related to the number $\tD( \Neul )$.

Sec.\ \ref{Sec:Nd_etoile} gives information about the set $\Nd^*(\Teul)$ (whose relevance is briefly discussed before the
statement of Cor.~\ref{0v3y46y7iwoujbfv8e}).

Sec.\ \ref{Section:RationalCase} describes the class of rational trees,
which contains in particular $\Teul(f;x,y)$ for all rational polynomials $f \in \Comp[x,y]$.
We give a complete explicit description in the case $| S(\Teul) | = 1$,
and a partial description in the general case.
Cor.~\ref{9vbq34ritgf7ZAOdb9rv} and  Prop.~\ref{9988zhgGUjfdlkq4q965vsaqWd39r0} give information about the set $\Nd^*(\Teul)$.

Sec.\ \ref{Section:Genus1case} gives a partial description of $\Teul$ in the case $\tD( \Neul ) = 2$, and an even more partial description 
when $\tD( \Neul ) = 4$.
The description is complete when  $\tD( \Neul ) = 2$ and  $| S(\Teul) | = 1$.

\medskip

Almost everything in this article is new.
The only exceptions are:
Section \ref{Sec:PreliminariesAbstractNewtontreesatinfinity} (which recalls some definitions and results from \cite{CND15:simples}),
Notations \ref{Ppc09wberyrji76awe} and \ref{kjfoqwsdjkwef} (also from \cite{CND15:simples}),
and the geometric interpretation discussed at the end of Sec.\ \ref{SEC:Nodes}.

The theory of minimally complete abstract Newton trees at infinity is surprisingly rich and interesting,
and it is not easy to single out the ``main results'' of this work.
Theorems \ref{xncoo9qwdx9} and \ref{P90werd23ewods0ci} are powerful tools.
The fact that a decomposition into combs always exists is a consequence of Thm \ref{cvnv7nd6ykawsujwryf9v},
and is a new insight in the structure of Newton trees at infinity.
The decomposition into combs leads to Thm \ref{0d2h39fh29834jfp0dfgq} and its consequences,
which reveal the relation between the structure of $\Teul$ and the value of  $\tD(\Neul)$.

\section{Preliminaries: Abstract Newton trees at infinity}
\label {Sec:PreliminariesAbstractNewtontreesatinfinity}

The aim of this section is to recall the definition of an abstract Newton tree at infinity that is minimally complete.
We begin by some terminologies and notations for general graphs and trees.
CAUTION: our use of the word ``vertex'' differs from the standard usage of graph theory.

\begin{parag}
In this paper, a {\it graph\/} is a pair $X = (X_0,X_1)$ where $X_0$ and $X_1$ are finite sets and each element
of $X_1$ is a subset of $X_0$ of cardinality exactly $2$. The elements of $X_1$ are called the {\it edges},
and those of $X_0$ are called {\it $0$-dimensional cells\/} (we do NOT use the word ``vertex'' here!).
If $x \in X_0$ and $e \in X_1$ are such that $x \in e$, we say that the edge $e$ is {\it incident\/} to $x$.
If $x \in X_0$, the number of edges incident to $x$ is called the {\it valency\/} of $x$ and is denoted $\delta_{x}$.
If $x,y \in X_0$ are such that $\{x,y\}$ is an edge, we say that $x,y$ are {\it adjacent},
or that $x,y$ are \textit{linked by an edge}. If $e=\{x,y\}$ is an edge then $x,y$ are the {\it endpoints\/} of $e$.

A {\it path\/} in the graph $X=(X_0,X_1)$ is an ordered tuple $(x_0,\dots,x_n)$ of elements of $X_0$ satisfying
$n\ge0$ and the two conditions:
\begin{itemize}

\item if $n\ge1$ then for each $i \in \{0,\dots,n-1\}$ we have $\{ x_i, x_{i+1} \} \in X_1$;
\item if $n\ge2$ then the edges $\{x_0,x_1\}$, \dots, $\{x_{n-1},x_n\}$ are distinct.

\end{itemize}
Note that what we call a ``path'' is called a ``simple path'' in standard terminology of graph theory.
Given $x,y \in X_0$, a {\it path from $x$ to $y$} is a path $(x_0,\dots,x_n)$ satisfying 
$x_0=x$ and $x_n=y$.

Suppose that $\gamma = (x_0,\dots,x_n)$ is a path. 
We say that a $0$-dimensional cell $x \in X_0$ ``is in $\gamma$'' if $x \in \{ x_0,\dots,x_n \}$;
we say that an edge $e \in X_1$ ``is in $\gamma$'' if $e$ is one of the edges
$\{x_0,x_1\}$, \dots, $\{x_{n-1},x_n\}$.
If an edge $e$ is in a path $\gamma$, we also say that $\gamma$ \textit{traverses} $e$.

A graph $X = (X_0,X_1)$ is a {\it tree\/} if for every choice of $x,y \in X_0$ there exists a unique path from $x$ to $y$.
A {\it rooted tree\/} is a triple $\Teul=(X_0,X_1,v_0)$ such that $(X_0,X_1)$ is a tree and $v_0 \in X_0$;
then $v_0$ is called the {\it root\/} of $\Teul$ (we always denote the root by $v_0$).
\end{parag}

\begin{parag} \label {p0cb2398gfv821wid9bgd6}
Given a rooted tree $\Teul = (X_0,X_1,v_0)$, we define
$\Aeul = \setspec{ x \in X_0 }{ \delta_x=1 }\setminus\{v_0\}$
and
$\Veul = \setspec{ x \in X_0 }{ \delta_x>1 }\cup\{v_0\}$;
the elements of $\Aeul$ are called {\it arrows\/} and those of $\Veul$ are called {\it vertices\/}
(so the root $v_0$ is a vertex).
Observe that $X_0 = \Veul \cup \Aeul$ and $\Veul \cap \Aeul = \emptyset$,
that all arrows have valency $1$, and that all elements of $\Veul\setminus\{v_0\}$ have valency $\ge2$.
We define a partial order on the set $X_0 = \Veul \cup \Aeul$ by stipulating that, given distinct $x,y \in X_0$, 
$$
x < y \iff \text{$x$ is on the path from the root $v_0$ to $y$.}
$$
It follows in particular that $v_0 < y$ for all $y \in X_0 \setminus \{ v_0 \}$.
\end{parag}

\begin{remark} \label {jdhbf2i3p9e}
Let $\Teul=(X_0,X_1,v_0)$ be a rooted tree.
We shall say that a subset $S$ of $X_0$ is {\it connected\/} if every path
$(x_0,\dots,x_n)$ in $\Teul$ that satisfies $x_0,x_n \in S$ also satisfies $x_i \in S$ for all $i=0,\dots,n$.
Any connected set $S$ can be viewed as a subtree of $\Teul$ (define the edge-set of $S$ to be the set of all edges $e$ of $\Teul$ such
that both endpoints of $e$ are in $S$).

Note in particular that $\Veul$ is connected, in any rooted tree.
\end{remark}

\begin{parag}
A {\it decorated rooted tree\/} is a triple $(\Teul,f,q)$ where
\begin{itemize}
\item $\Teul = (X_0,X_1,v_0)$ is a rooted tree,
\item $f : \Aeul \to \{ (0), (1) \}$ is any map (where $(0)$ and $(1)$ are just two distinct symbols),
\item $q : \setspec{ (e,x) \in X_1 \times X_0 }{ x \in e } \to \Integ$ is any map.
\end{itemize}
\end{parag}

\begin{parag}
Let $(\Teul,f,q)$ be a decorated rooted tree, where $\Teul = (X_0,X_1,v_0)$.

If $\alpha \in \Aeul$ and $f(\alpha) = (0)$ (resp.\ $f(\alpha)=(1)$),
we say that {\it $\alpha$ is decorated by $(0)$} (resp.\ {\it by\/} $(1)$).
We write $\Aeul_0 = \setspec{ \alpha \in \Aeul }{ \text{$\alpha$ is decorated by $(0)$} }$;
consequently, $\Aeul \setminus \Aeul_0$ is the set of arrows decorated by $(1)$.
A {\it dead end\/} is an edge incident to an element of $\Aeul_0$.
Note that if $\{x,\alpha\}$ is a dead end and $\alpha \in \Aeul_0$ then $x \in \Veul$.

If $e = \{x,y\}$ is an edge, the number
$q(e,x)$ is called the {\it decoration of $e$ near $x$}
(so each edge is decorated near each one of its endpoints).

Given $x \in X_0$, let $E_x$ temporarily denote the set of all edges incident to $x$.
For each $e \in E_x$, define 
$Q(e,x) = \prod_{e' \in E_x \setminus \{e\}} q(e',x)$.
{\bf Here and throughout this paper, empty products of numbers are equal to \boldmath $1$.}
This defines a map
$$
Q : \setspec{ (e,x) \in X_1 \times X_0 }{ x \in e } \to \Integ .
$$
Note that $q$ and $Q$ have the same domain and that $Q$ is determined by $q$.

If $e=\{x,y\}$ is an edge such that $x,y \in \Veul$, we define the {\it determinant of $e$} by 
$$
\det(e) = q(e,x)q(e,y) - Q(e,x)Q(e,y) .
$$
\end{parag}

\begin{definition} \label {p9823p98p2d}
An  {\it abstract Newton tree at infinity\/} is a decorated rooted tree $\Teul$ that satisfies 
the following requirements.
\begin{enumerate}

\item \label {fho8wiw9s-1} 
For each vertex $v \in \Veul$, there exists $\alpha \in \Aeul\setminus\Aeul_0$ such that $\alpha>v$.

\item \label {fho8wiw9s-2} 
There is at most one dead end incident to a given vertex.

\item \label {fho8wiw9s-3} 
If $e$ is an edge incident to the root $v_0$ then $q(e,v_0)=1$.

\item  \label {fho8wiw9s-4} 
If $e$ is an edge incident to an arrow $\alpha \in \Aeul$ then  $q(e,\alpha)=1$.

\item \label {fho8wiw9s-5} 
Let $v\in \Veul$. If $e,e'$ are distinct edges incident to $v$ then 
$q(e,v)$ and $q(e',v)$ are relatively prime.
Let $E_v^+$ temporarily denote the set of edges of the form $\{v,x\}$ where $x \in \Veul \cup \Aeul$ and $v<x$,
and observe that $E_v^+ \neq \emptyset$ by condition (1). It is required that $q(e,v) \ge1$ for all $e \in E_v^+$,
and that at most one element $e$ of $E_v^+$ satisfies $q(e,v)>1$.
Moreover, if there is a dead end $\epsilon$ incident to $v$ then
$q(\epsilon,v) = \max_{e \in E_v^+} q(e,v)$ (note that $\epsilon \in E_v^+$).

\item \label {fho8wiw9s-6} 
For each edge $e=\{x,y\}$ such that $x,y \in \Veul$, we have $\det(e)<0$.

\end{enumerate}
\end{definition}

\smallskip

In all that follows, we assume that $\Teul$ is an abstract Newton tree at infinity with notations ($\Veul$, $\Aeul$, etc.) as 
defined in the above paragraphs.

\smallskip

\begin{definition} \label {c9v39rf0eX9e4np8glr9t8}
(Recall that empty products of numbers are equal to $1$.)
\mbox{\ }
\begin{enumerate}

\item[(i)] 
We say that an edge $\epsilon$ is {\it incident\/} to a path $\gamma$ if $\epsilon$ is not in $\gamma$
and $\epsilon$ is incident to some vertex $u$ of $\gamma$.
If $\epsilon$ is incident to $\gamma$,
we define $q(\epsilon,\gamma) = q(\epsilon,u)$ where $u$ is the unique vertex of $\gamma$ to which
$\epsilon$ is incident.

\item[(ii)] Given $v \in \Veul \cup \Aeul_0$ and $\alpha \in \Aeul\setminus\Aeul_0$,
we set
$$
\textstyle
x_{v,\alpha} = \prod_{\epsilon \in E} q(\epsilon,\gamma)
\quad \text{and} \quad
\hat x_{v,\alpha} = \prod_{\epsilon \in \hat E} q(\epsilon,\gamma)
$$
where $\gamma$ is the path from $v$ to $\alpha$,
$E$ is the set of edges incident to $\gamma$ and
$$
\hat E =  \text{set of edges incident to $\gamma$ but not incident to $v$}.
$$
Observe that 
$x_{v,\alpha} = Q(e,v) \hat x_{v,\alpha}$,
where $e$ is the unique edge incident to $v$ which is in $\gamma$.

\item[(iii)] Given $v\in \Veul\cup \Aeul_0$, we define the {\it multiplicity $N_v$ of $v$} by
$ N_v = \sum_{ \alpha \in \Aeul \setminus \Aeul_0} x_{v,\alpha}$.

\item[(iv)]  We define the {\it multiplicity\/} $M(\Teul)$ of $\Teul$ 
by $ M(\Teul) = -\sum_{v \in \Veul\cup \Aeul_0} N_v(\delta_v-2) $.

\end{enumerate}
\end{definition}

\begin{definition} \label {823yi867jcn73}
A {\it dicritical vertex\/} is a vertex $v \in \Veul$ satisfying $N_v=0$.
If $v$ is a dicritical vertex satisfying
\begin{equation} \tag {$ * $}
\setspec{ x \in \Veul \cup \Aeul }{ x > v } \subseteq \Aeul ,
\end{equation}
we define the {\it degree of the dicritical\/} $v$
to be the number of edges $\{ v,\alpha \}$ where $\alpha \in \Aeul \setminus \Aeul_0$.
By part \eqref{fho8wiw9s-1} of Def.\ \ref{p9823p98p2d}, the degree of a dicritical is strictly positive.
\end{definition}

\begin{parag} \label {pd0f123wdjwe}
{\bf Pictures.} \setlength{\unitlength}{1.5mm}
When an abstract Newton tree at infinity is represented by a picture,
dicritical vertices are represented by 
``\begin{picture}(2,1)(-1,-.5) \put(0,0){\circle*{1}} \end{picture}'' 
and vertices which are not dicritical are represented by ``\begin{picture}(2,1)(-1,-.5) \put(0,0){\circle{1}} \end{picture}''.
Each arrow is represented by an arrowhead
``\begin{picture}(2,1)(-1,-.5) \put(0.5,0){\vector(1,0){0}} \end{picture}'', not by an arrow
``\begin{picture}(5,1)(0,-.5) \put(0,0){\vector(1,0){5}} \end{picture}'',
so that ``\begin{picture}(5.5,1)(-.5,-.5) \put(0,0){\circle*{1}} \put(0.5,0){\vector(1,0){4.5}} \end{picture}''
represents an edge joining a vertex 
``\begin{picture}(2,1)(-1,-.5) \put(0,0){\circle*{1}} \end{picture}''
to an arrow
``\begin{picture}(2,1)(-1,-.5) \put(0.5,0){\vector(1,0){0}} \end{picture}''.
When the decoration of an arrow does not appear in the picture, that decoration is assumed to be $(1)$.
If $e$ is an edge incident to $x \in \Veul \cup \Aeul$,
and if the decoration $q(e,x)$ does not appear in the picture, that decoration is assumed to be $1$.
For a simple example of an abstract Newton tree at infinity, the reader is referred to Ex.\ \ref{nv63jfy64nvy3}.
A more complicated example is given in Fig.\  \ref{723dfvcjp2q98ewdywe}, where the following convention is used:
$$
\begin{picture}(9.6,1)(-.9,-.5) \put(0,0){\circle*{1}} \put(-.4,-.7){\makebox(0,0)[t]{\footnotesize $v$}}
\put(0.5,0){\line(1,0){4.5}} \put(4.8,0){\makebox(0,0)[l]{\footnotesize $<$}} \put(7,0){\makebox(0,0)[l]{\footnotesize $d$}} \end{picture}
\qquad \text{is an abbreviation for} \qquad
\raisebox{-3.0\unitlength}{\begin{picture}(9.6,7.4)(-.9,-3.5) \put(0,0){\circle*{1}} \put(-.4,-.7){\makebox(0,0)[t]{\footnotesize $v$}}
\put(0,0){\vector(2,1){5}} \put(0,0){\vector(2,-1){5}} \put(4,.6){\makebox(0,0)[c]{\footnotesize $\vdots$}}
\put(5.5,2.3){\makebox(0,0)[l]{\footnotesize $\alpha_1$}}\put(5.5,-2.3){\makebox(0,0)[l]{\footnotesize $\alpha_d$}}
\end{picture}}
$$
where $v \in \Veul$ is a dicritical vertex of degree $d$,
$\alpha_1, \dots, \alpha_d$ are the distinct elements of  $\Aeul\setminus\Aeul_0$ which are adjacent to $v$,
and $q( \{ v, \alpha_i\} , v ) = 1$ for all $i = 1, \dots, d$.

The tree of Fig.\ \ref{723dfvcjp2q98ewdywe} will be revisited in  Ex.\ \ref{Pc09vbq3j7gabXrZiA}, \ref{cIuCbgepwej6DsJ37655enm},
\ref{cov7btiw67d78vujew9}, \ref{oOo8cvn239vn3p98fgIW}, \ref{FFlAkjcvwiuer2n3Z3cxs7df9} and \ref{09cq13cgrl38n9ZAyey}.
The reader may verify directly that
$N_{v_0}= 252$, $N_{v_1}= 126$, $N_{v_2}= 63$, $N_{v_3}= 9$, $N_{v_4}= 3$, $N_{v_5}= 2$ and $N_{v_6}=9$,
and that all other vertices $v$ satisfy $N_v=0$ (and hence are dicritical).
Keep in mind that $v_0$ always denotes the root.
\end{parag}

\begin{figure}[htb]
 \centering
	\scalebox{0.7}{\input{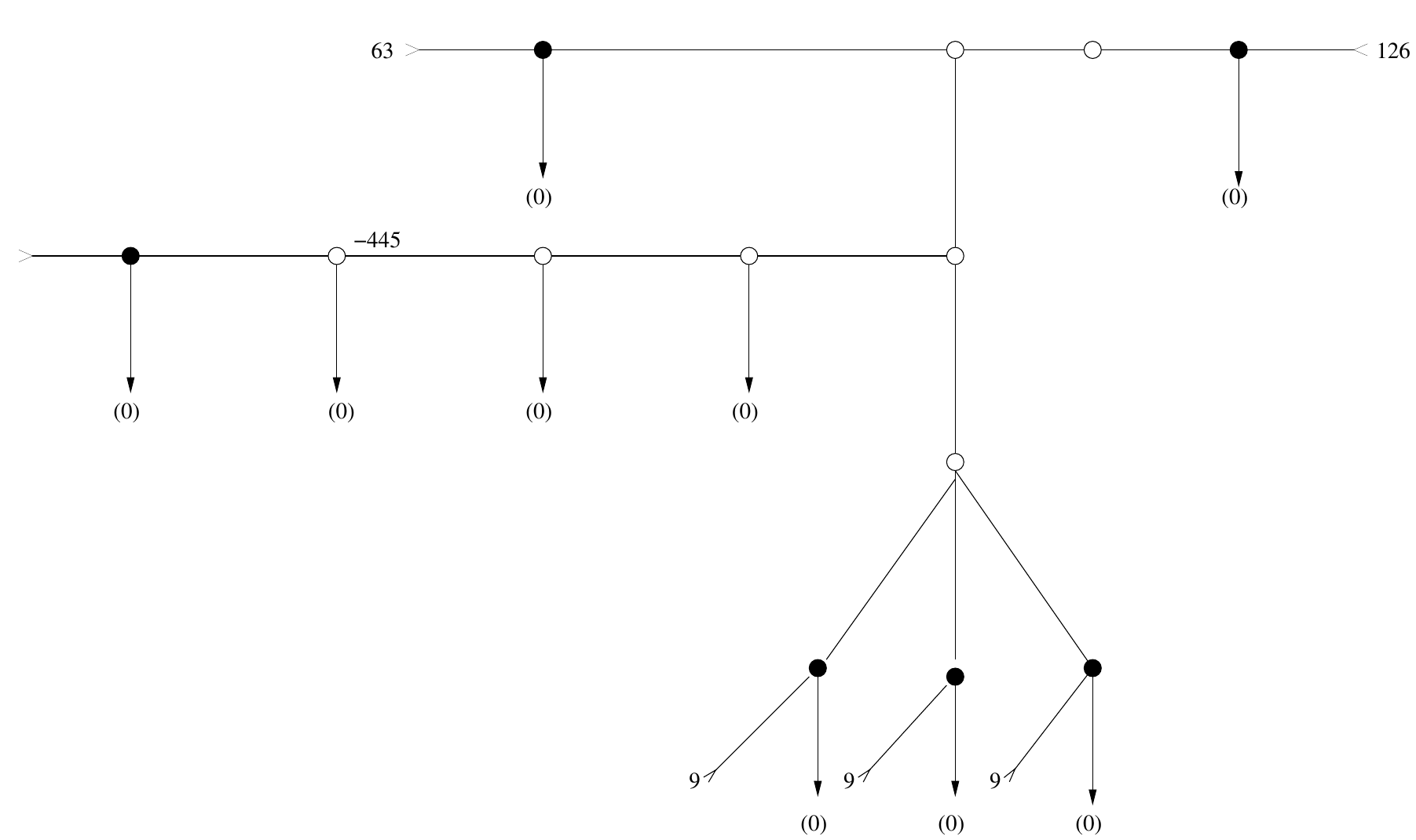_t}}
 \caption{An abstract Newton tree at infinity with $|\Veul|=13$ and $|\Aeul|=227$.}
 \label {723dfvcjp2q98ewdywe}
\end{figure}

\begin{definition} \label {9c8yvqewjft7d}
The {\it number of points at infinity\/} of $\Teul$
is the valency of the root when there is no dead end incident to the root and the valency minus $1$ otherwise.
It follows from part \eqref{fho8wiw9s-1} of \ref{p9823p98p2d} that $\Teul$ has at least one point at infinity.
\end{definition}

\begin{remark} \label {efy872392eujf}
If $\Teul$ has $n$ points at infinity then
$N_{v_0} \ge | \Aeul \setminus \Aeul_0 | \ge n \ge 1$.
Indeed, part \eqref{fho8wiw9s-1} of \ref{p9823p98p2d} implies that 
$| \Aeul \setminus \Aeul_0 | \ge n \ge 1$ and
part \eqref{fho8wiw9s-5}  of \ref{p9823p98p2d} implies
that $x_{v_0,\alpha}\ge1$ for each $\alpha \in \Aeul\setminus\Aeul_0$.
\end{remark}

\begin{remark} \label {es5s6xjc17lcy4l}
Let $e=\{v,\alpha\}$ be a dead end, where $v \in \Veul$ and $\alpha \in \Aeul_0$.  Then 
$$
N_v = q(e,v) N_{\alpha}.
$$
To see this, simply observe that $x_{v,\beta}=q(e,v)x_{\alpha,\beta}$ for all $\beta \in \Aeul \setminus \Aeul_0$.
\end{remark}

\begin{definition} \label {d23ed9wei9d23}
For a vertex $v$, we define $a_v=1$ if there is no dead end incident to $v$, and $a_v=q(e,v)$ if $e$
is a dead end incident to $v$ (recall that there is at most one dead end incident to $v$).
Note that  for all $v \in \Veul$ we have $a_v \in \Nat\setminus\{0\}$ and (by Rem.\ \ref{es5s6xjc17lcy4l})
$a_v \mid N_v$.
\end{definition}

\begin{definition} 
Consider distinct $v, v' \in \Veul$ and the path $\gamma$ from $v$ to $v'$.
We say that $\gamma$ is a {\it linear path\/} if each vertex $u$ in $\gamma$ but different from $v,v'$ has valency $2$.
If $\gamma$ is a linear path from $v$ to $v'$ then we define $\det(\gamma) = q(e,v) q(e',v') - Q(e,v)Q(e',v')$,
where $e$ (resp.\ $e'$) is the unique edge in $\gamma$ which is incident to $v$ (resp.\ $v'$).
\end{definition}

The determinant of a linear path generalizes the determinant of an edge.
We now quote Prop.\ 2.4 of \cite{CND15:simples}:

\begin{proposition}  \label {kuwdhr12778}
Consider distinct $v,v' \in \Veul$ and suppose that the path $\gamma$ from $v$ to $v'$ is linear. 
Let $q=q(e,v)$, $q'=q(e',v')$, $Q=Q(e,v)$ and $Q'=Q(e',v')$ 
where $e$ (resp.\ $e'$) is the unique edge in $\gamma$ which is incident to $v$ (resp.\ $v'$),
and let
\begin{align*}
A &= \setspec{ \alpha \in \Aeul\setminus\Aeul_0 }{ \text{the path from $v$ to $\alpha$ does not contain $v'$} }, \\
A' &= \setspec{ \alpha \in \Aeul\setminus\Aeul_0 }{ \text{the path from $v$ to $\alpha$ contains $v'$} } 
\end{align*}
(see Figure~\ref{83473yruer938d}).  Then the following hold.
\begin{enumerata}

\item \label {8723d4917t3rh} 
 For each $\alpha \in A$,
$x_{v,\alpha} = q \hat x_{v',\alpha}$
and $x_{v',\alpha} = Q' \hat x_{v',\alpha}$.

\item \label {u66d3swxsd}
For each $\alpha \in A'$, $x_{v',\alpha} = q' \hat x_{v,\alpha}$
and $x_{v,\alpha} = Q \hat x_{v,\alpha}$.

\item \label {238dhwi912F}
$\left| \begin{matrix} q & Q' \\ N_v & N_{v'} \end{matrix} \right| = 
\det(\gamma) \sum_{\alpha \in A'} \hat x_{v,\alpha}$\ \ and\ \ 
$\left| \begin{matrix} q' & Q \\ N_{v'} & N_{v} \end{matrix} \right| = 
\det(\gamma) \sum_{\alpha \in A} \hat x_{v',\alpha}$.

\item \label {61324d1g3r}
 If $v<v'$ then $q>0$, $Q'>0$, $\det\gamma<0$ and
$\left| \begin{matrix} q & Q' \\ N_v & N_{v'} \end{matrix} \right| < 0$.

\end{enumerata}
\end{proposition}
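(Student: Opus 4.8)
The plan is to prove (a) and (b) as purely combinatorial identities about the numbers $x_{v,\alpha}$ and $\hat x_{v,\alpha}$ of Def.\ \ref{c9v39rf0eX9e4np8glr9t8}, to deduce (c) from them by a short computation, and then to obtain (d) by reducing its determinant inequalities to an induction on the length of $\gamma$ whose only genuine input is (c); parts (a)--(c) require no induction. For (a) and (b) the key point is that, since $\gamma=(v=u_0,u_1,\dots,u_m=v')$ is linear, each interior vertex $u_i$ has valency $2$, so both edges at $u_i$ lie on $\gamma$ and no edge of $\Teul$ is incident to $\gamma$ at an interior vertex. Hence, for $\alpha\in A$, the path from $v'$ to $\alpha$ is $\gamma$ traversed backwards followed by the path $\gamma_1$ from $v$ to $\alpha$, and the edges incident to it differ from those incident to $\gamma_1$ only at the two ends of $\gamma$: the edge $e$ ceases to be incident (it is now internal), while every edge at $v'$ other than $e'$ becomes incident. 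Unwinding part (ii) of Def.\ \ref{c9v39rf0eX9e4np8glr9t8} and using the relation $x_{v,\alpha}=Q(e,v)\hat x_{v,\alpha}$ recorded there, this yields $x_{v',\alpha}=(Q'/q)\,x_{v,\alpha}$, and dividing by $Q(e',v')=Q'$ gives the two formulas of (a). For (b), $\alpha\in A'$ means the path from $v$ to $\alpha$ is $\gamma$ followed by the path from $v'$ to $\alpha$, so this time the factor $q(e',v')=q'$ is lost at $v'$ while the factor $Q(e,v)=Q$ is gained at $v$; the same unwinding gives $x_{v',\alpha}=(q'/Q)\,x_{v,\alpha}$ and hence the two formulas of (b).

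For (c), write $N_v=\sum_{\alpha\in A}x_{v,\alpha}+\sum_{\alpha\in A'}x_{v,\alpha}$ and likewise for $N_{v'}$, using $A\sqcup A'=\Aeul\setminus\Aeul_0$. Then
$$
qN_{v'}-Q'N_v=\sum_{\alpha\in A}\bigl(qx_{v',\alpha}-Q'x_{v,\alpha}\bigr)+\sum_{\alpha\in A'}\bigl(qx_{v',\alpha}-Q'x_{v,\alpha}\bigr),
$$
where by (a) each term of the first sum equals $qQ'\hat x_{v',\alpha}-Q'q\hat x_{v',\alpha}=0$ and by (b) each term of the second sum equals $(qq'-QQ')\hat x_{v,\alpha}=\det(\gamma)\,\hat x_{v,\alpha}$; this is the first identity, and the second follows by interchanging $v$ and $v'$, which swaps $A\leftrightarrow A'$, $q\leftrightarrow q'$ and $Q\leftrightarrow Q'$ while leaving $\det(\gamma)=qq'-QQ'$ fixed.

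For (d) the positivity statements are immediate: since $v<v'$, the edge $e$ of $\gamma$ at $v$ lies in $E_v^+$, so $q=q(e,v)\ge1$ by Def.\ \ref{p9823p98p2d}\eqref{fho8wiw9s-5}; and the edge $e'$ of $\gamma$ at $v'$ is the unique edge at $v'$ pointing towards the root, so every other edge at $v'$ lies in $E_{v'}^+$ and $Q'=Q(e',v')$ is a product of integers $\ge1$, whence $Q'\ge1$. Next I would note that $S:=\sum_{\alpha\in A'}\hat x_{v,\alpha}\ge1$: since $v'\neq v_0$ there is, by Def.\ \ref{p9823p98p2d}\eqref{fho8wiw9s-1}, an arrow $\alpha_0>v'$, and $\alpha_0\in A'$, so $A'\neq\emptyset$; and for each $\alpha\in A'$ the path from $v$ to $\alpha$ is the monotone descending chain from $v$ to $\alpha$, so every edge incident to that chain away from $v$ lies in some $E_w^+$ and carries a decoration $\ge1$, giving $\hat x_{v,\alpha}\ge1$. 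It then suffices to prove $\left|\begin{matrix}q&Q'\\N_v&N_{v'}\end{matrix}\right|=qN_{v'}-Q'N_v<0$, for $\det(\gamma)<0$ follows from this together with the first identity of (c) and $S>0$. I would prove this inequality by induction on the length $m$ of $\gamma$. For $m=1$, $\gamma$ is a single edge $e=\{v,v'\}$ and (c) gives $qN_{v'}-Q'N_v=\det(e)\,S$, negative by Def.\ \ref{p9823p98p2d}\eqref{fho8wiw9s-6} and $S>0$. For $m\ge2$, set $w=u_{m-1}$ and let $\gamma'$ be the (linear) path from $v$ to $w$ and $e_m=\{w,v'\}$ the last edge of $\gamma$; since $w$ has valency $2$ with $E_w^+=\{e_m\}$, one has $Q(e_{m-1},w)=q(e_m,w)\ge1$, where $e_{m-1}$ is the last edge of $\gamma'$. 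The induction hypothesis for $\gamma'$ gives $qN_w<q(e_m,w)\,N_v$, and the case $m=1$ applied to $e_m$ gives $q(e_m,w)\,N_{v'}<Q'N_w$; since $q$, $q(e_m,w)$ and $Q'$ are all positive, these chain to $N_{v'}<(Q'/q)\,N_v$, i.e.\ $qN_{v'}-Q'N_v<0$, closing the induction.

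The step I expect to be the main obstacle is the sign bookkeeping in (d). The quantities $q'=q(e',v')$ and $Q=Q(e,v)$ occurring along $\gamma$ need not be positive --- a decoration pointing towards the root is not constrained by Def.\ \ref{p9823p98p2d}\eqref{fho8wiw9s-5} --- so one cannot reach $\det(\gamma)<0$ by multiplying the edge inequalities $\det(e_i)<0$ directly. The argument above avoids this by never manipulating $\det(\gamma)$ during the induction: it propagates the inequality on $\left|\begin{matrix}q&Q'\\N_v&N_{v'}\end{matrix}\right|$ instead, whose only coefficients ($q$, the intermediate away-from-root decorations, and $Q'$) are genuinely $\ge1$, and recovers $\det(\gamma)<0$ only at the end via identity (c). One must also be attentive, in (a)--(b), to exactly which edges gain or lose incidence to a path when one path is built from another by pre- or post-concatenation --- a bookkeeping that is clean only because the interior of the linear path $\gamma$ contributes no incident edges at all.
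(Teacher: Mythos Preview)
Your proof is correct. Note, however, that this paper does not actually prove the proposition: it is quoted verbatim from \cite{CND15:simples} (see the sentence immediately preceding the statement), so there is no ``paper's own proof'' to compare against here. Your argument for (a)--(c) is the natural unwinding of Def.~\ref{c9v39rf0eX9e4np8glr9t8} using the linearity of $\gamma$, and your inductive proof of (d)---propagating the inequality on $qN_{v'}-Q'N_v$ rather than on $\det(\gamma)$, and recovering $\det(\gamma)<0$ only at the end from (c)---is clean and avoids precisely the sign pitfall you anticipated. The chaining step works because you multiply by the strictly positive quantities $q$, $q(e_m,w)$, $Q'$ only; the signs of $N_w$, $q'$, $Q$ are never needed.
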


\begin{figure}[h]
\setlength{\unitlength}{1mm}
\raisebox{0\unitlength}{\begin{picture}(80,30)(-40,-15)
\put(-10,0){\circle{1}}
\put(10,0){\circle{1}}
\put(-9.5,0){\line(1,0){6.3}}
\put(9.5,0){\line(-1,0){6.3}}
\put(0.5,0){\makebox(0,0){\dots}}
\put(-10.632,0.316){\line(-2,1){14.367}}
\put(-10.632,-.316){\line(-2,-1){14.367}}
\put(10.632, 0.316){\line(2,1){14.367}}
\put(10.632,-0.316){\line(2,-1){14.367}}
\put(-30,0){\oval(10,30)}
\put(30,0){\oval(10,30)}
\put(-30,0){\makebox(0,0){\footnotesize $A$}}
\put(30,0){\makebox(0,0){\footnotesize $A'$}}
\put(-15,0){\makebox(0,0)[r]{\footnotesize $Q$}}
\put(-8,-1){\makebox(0,0)[t]{\footnotesize $q$}}
\put(-10,1){\makebox(0,0)[b]{\footnotesize $v$}}
\put(0,2){\makebox(0,0)[b]{\footnotesize $\gamma$}}
\put(10,1){\makebox(0,0)[b]{\footnotesize $v'$}}
\put(8,-1){\makebox(0,0)[t]{\footnotesize $q'$}}
\put(15,0){\makebox(0,0)[l]{\footnotesize $Q'$}}
\end{picture}}
\caption{Schematic representation of the situation of Prop.~\ref{kuwdhr12778}.}
\label {83473yruer938d}
\end{figure}

See Rem.\ \ref{jdhbf2i3p9e} for the concept of a connected set.

\begin{corollary}[\cite{CND15:simples}, 2.5] \label {i875863urj1dpi}
\begin{enumerata}

\item If $v,v' \in \Veul$, $v < v'$ and  $N_v\le0$  then $N_{v'}<0$.

\item The set of vertices whose multiplicity is nonnegative  is connected.

\item The set of vertices whose multiplicity is positive  is connected.

\end{enumerata}
\end{corollary}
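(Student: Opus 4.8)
The plan is to derive all three parts from Proposition \ref{kuwdhr12778} together with the connectedness of $\Veul$ (Rem.\ \ref{jdhbf2i3p9e}). For part (a), let $v < v'$ in $\Veul$ with $N_v \le 0$. Consider the path $\gamma$ from $v$ to $v'$; since $\Veul$ is connected, all intermediate $0$-dimensional cells on $\gamma$ are vertices. The path $\gamma$ need not be linear, but I would reduce to the linear case: pick a vertex $w$ on $\gamma$, with $v \le w < v'$ and $w$ as close to $v'$ as possible such that the subpath from $w$ to $v'$ is linear; actually the cleanest route is to argue along a single edge. Indeed, it suffices to treat the case where $\{v,v'\}$ is an edge (and then iterate along the path, noting $v < v''$ for every vertex $v''$ between $v$ and $v'$). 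So assume $e = \{v,v'\}$ is an edge with $v < v'$. By Prop.\ \ref{kuwdhr12778}\eqref{238dhwi912F} applied to the linear path $\gamma = (v,v')$, we get $qN_{v'} - Q'N_v = \det(e)\sum_{\alpha \in A'}\hat x_{v,\alpha}$, where $q = q(e,v) > 0$, $Q' = Q(e,v') > 0$, and $\det(e) < 0$ by Def.\ \ref{p9823p98p2d}\eqref{fho8wiw9s-6}. The sum $\sum_{\alpha \in A'}\hat x_{v,\alpha}$ is strictly positive: $A'$ is the set of arrows in $\Aeul\setminus\Aeul_0$ whose path from $v$ passes through $v'$, and by Def.\ \ref{p9823p98p2d}\eqref{fho8wiw9s-1} there is at least one such arrow above $v'$, while each $\hat x_{v,\alpha}$ is a product of decorations $q(\epsilon,\gamma) \ge 1$ (using \eqref{fho8wiw9s-5}), hence $\ge 1$. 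Therefore $qN_{v'} - Q'N_v < 0$, i.e.\ $qN_{v'} < Q'N_v \le 0$, so $N_{v'} < 0$ since $q > 0$. This proves part (a) for a single edge; for a general path $v = v_0 < v_1 < \cdots < v_m = v'$ with consecutive $v_i$ adjacent, apply the edge case repeatedly: $N_{v_0} \le 0 \Rightarrow N_{v_1} < 0 \Rightarrow \cdots \Rightarrow N_{v_m} < 0$.

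For part (b), let $S = \setspec{v \in \Veul}{N_v \ge 0}$; I must show $S$ is connected, i.e.\ if $(x_0,\dots,x_n)$ is a path with $x_0, x_n \in S$ then every $x_i \in S$ (all $x_i$ are vertices since they lie on a path between vertices and $\Veul$ is connected). Suppose not: some $x_i$ has $N_{x_i} < 0$. Let $w$ be the meeting point of the three cells $x_0$, $x_n$, $x_i$ in the tree — that is, the unique vertex lying on all three pairwise paths. Then $w \le x_i$ (meaning $w$ is on the path from $v_0$ to $x_i$), because $w$ lies on the path from $x_0$ to $x_n$ which is a subpath of... — more carefully: among $x_0, x_n, x_i$, one of them, say $x_0$ (after possibly swapping $x_0$ and $x_n$), satisfies $x_0 \le w$ is false; rather, the correct statement is that $w$ lies on the path from $x_0$ to $x_i$ and on the path from $x_n$ to $x_i$, and $w$ is $\ge$ exactly one of $x_0, x_n$ in the root order or incomparable. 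The key point I would use: the path from $x_0$ to $x_n$ passes through $x_i$, so $x_i$ lies between $x_0$ and $x_n$; hence in the root order, either $x_0 < x_i$ or $x_n < x_i$ (one of the two endpoints is ``below'' $x_i$ toward the root, since the root-to-$x_i$ path exits the segment $[x_0,x_n]$ at one end). Then by part (a), $N_{x_i} < 0$ forces... no — I need it the other way. Let me instead argue: since $x_i$ is on the path from $x_0$ to $x_n$, and since for the root $v_0$ there is a unique nearest point $p$ of this path to $v_0$, we have $p \le x_0$... Rather: there exists an endpoint, WLOG $x_n$, such that $x_i \le x_n$ or $x_n \le x_i$ — precisely, one of the two arcs $[p, x_0]$, $[p, x_n]$ contains $x_i$ where $p$ is the vertex of the path closest to the root; say $x_i$ is on $[p,x_n]$, so $p \le x_i \le x_n$ in root order. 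Now if $x_i < x_n$: we'd want $N_{x_i} < 0 \Rightarrow N_{x_n} < 0$, but part (a) gives the implication only from a non-positive value at the lower vertex — and $N_{x_i} < 0 \le 0$, so indeed $N_{x_i} \le 0$ and $x_i < x_n$ give $N_{x_n} < 0$, contradicting $x_n \in S$. If instead $x_i = p$ (the closest point to the root), then $p \le x_0$ and $p \le x_n$ both hold, $N_p \le 0$, so part (a) gives $N_{x_0} < 0$ or (if $x_0 = p$) $N_{x_0} = N_p < 0$ — either way contradicting $x_0 \in S$. (The case $x_i$ on $[p, x_0]$ is symmetric.) This establishes (b).

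For part (c), replace $S$ by $S^+ = \setspec{v \in \Veul}{N_v > 0}$ and run the identical argument: if $x_i$ on a path between two elements of $S^+$ had $N_{x_i} \le 0$, then by the root-order analysis above one of the endpoints $x_0, x_n$ is $> x_i$ (or equals $x_i$), and part (a) applied with $N_{x_i} \le 0$ forces that endpoint to have strictly negative multiplicity, contradicting its membership in $S^+$; if $x_i = p$ is the closest point to the root, the same contradiction arises at $x_0$. Hence $S^+$ is connected. The main obstacle, and the only place requiring care, is the bookkeeping in parts (b) and (c): one must correctly identify, for the offending vertex $x_i$ on the path, an endpoint that is strictly above $x_i$ (or coincides with it) in the partial order $<$ induced by the root, so that Corollary part (a) can be invoked — this is a routine but slightly fussy ``three points in a tree meet at one vertex'' argument, combined with the observation that the vertex of the path nearest the root is $\le$ both endpoints.
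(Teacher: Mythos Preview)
Your proof is correct and follows essentially the same strategy as the paper: reduce (a) to the edge case via Prop.~\ref{kuwdhr12778}, then deduce (b) and (c) from (a). The one inefficiency is that you invoke part~\eqref{238dhwi912F} of Prop.~\ref{kuwdhr12778} and then re-argue the positivity of $q$, $Q'$, and $\sum_{\alpha\in A'}\hat x_{v,\alpha}$, whereas part~\eqref{61324d1g3r} of the same proposition already packages exactly the inequality $qN_{v'}-Q'N_v<0$ you need; the paper simply cites~\eqref{61324d1g3r} and is done in one line. Your derivations of (b) and (c) are more explicit than the paper's (which just says ``follow from (a)''), and while the write-up meanders, the underlying argument---locate the path's vertex $p$ nearest the root, observe that any interior $x_i$ satisfies $x_i<x_0$ or $x_i<x_n$, and apply (a)---is the natural one and is sound.
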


\begin{proof}
By \ref{kuwdhr12778}\eqref{61324d1g3r}, (a) is true whenever $\{v,v'\}$ is an edge;
it easily follows that (a) is true in general.  Assertions (b) and (c) follow from (a).
\end{proof}

\begin{remark} \label {7623egd812e76f}
It follows from Cor.\ \ref{i875863urj1dpi}(a) that if $x,y$ are dicritical vertices then $x,y$ are not adjacent.
Also note that $N_{v_0}>0$ by Rem.\ \ref{efy872392eujf}, so the root $v_0$ is not a dicritical vertex.
\end{remark}

\begin{definition} \label {c0j23oed9vbaCo8f}
An abstract Newton tree at infinity $\Teul$ is {\it generic\/}  if every vertex of $\Teul$
has  nonnegative multiplicity.
\end{definition}

\begin{remark} \label {zfp2o39wdw0d}
Let $\Teul$ be a generic Newton tree at infinity.
Then Cor.\ \ref{i875863urj1dpi}(a) 
implies that each dicritical vertex $v$ of $\Teul$ satisfies condition $(*)$ of Def.\ \ref{823yi867jcn73}.
Consequently, each dicritical vertex of $\Teul$ has a well-defined degree.
\end{remark}

\begin{lemma} \label {923ip38kjfbve09}
In a generic Newton tree at infinity, 
let $v$ be a dicritical vertex of degree $d$ and let
$B_v = \setspec{ \alpha \in \Aeul\setminus\Aeul_0 }{ \text{$\{v,\alpha\}$ is an edge} }$.
\begin{enumerata}

\item \label {jtiq348439}  $d = | B_v | \ge 1$

\item \label {93f394ejf9i} $q( \{v,\alpha\}, v ) = 1$ for all $\alpha \in B_v$.

\item \label {9v8bdzkc93ry7f} There exists exactly one $w \in \Veul$ such that $\{w,v\}$ is an edge.
Moreover, this unique $w$ satisfies $N_w>0$ and $w<v$.

\end{enumerata}
\end{lemma}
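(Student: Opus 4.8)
The plan is to deduce all three assertions from the structural facts already established, chiefly Corollary \ref{i875863urj1dpi}, Remark \ref{7623egd812e76f}, Remark \ref{zfp2o39wdw0d}, and part \eqref{fho8wiw9s-5} of Definition \ref{p9823p98p2d}. First I would note that, since $\Teul$ is generic, Remark \ref{zfp2o39wdw0d} guarantees that the dicritical vertex $v$ satisfies condition $(*)$ of Definition \ref{823yi867jcn73}, so the degree $d$ of $v$ is well defined; by definition it equals the number of edges $\{v,\alpha\}$ with $\alpha \in \Aeul\setminus\Aeul_0$, which is exactly $|B_v|$, and this is $\ge 1$ by part \eqref{fho8wiw9s-1} of Definition \ref{p9823p98p2d}. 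That settles \eqref{jtiq348439}.

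For \eqref{9v8bdzkc93ry7f} I would argue as follows. Condition $(*)$ says that every $x \in \Veul \cup \Aeul$ with $x > v$ lies in $\Aeul$; hence no vertex of $\Veul$ is strictly above $v$, so any vertex $w$ adjacent to $v$ must satisfy $w < v$. Since $v \neq v_0$ (the root is not dicritical, by Remark \ref{7623egd812e76f}), there is at least one such $w$; and since the path from $v_0$ to $v$ is unique in a tree, there is exactly one edge $\{w,v\}$ with $w \in \Veul$ and $w < v$, so this $w$ is unique. Finally, $w$ cannot be dicritical: by Corollary \ref{i875863urj1dpi}(a), if $N_w \le 0$ then $N_v < 0$, contradicting $N_v = 0$; hence $N_w > 0$. (Alternatively one may invoke Remark \ref{7623egd812e76f}, which says two dicritical vertices are never adjacent.) This proves \eqref{9v8bdzkc93ry7f}.

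For \eqref{93f394ejf9i} I would use part \eqref{fho8wiw9s-5} of Definition \ref{p9823p98p2d} applied to the vertex $v$. With $E_v^+$ the set of edges $\{v,x\}$ with $v < x$, condition $(*)$ tells us that all elements $x$ appearing here lie in $\Aeul$; among these, the ones with $x \in \Aeul_0$ are dead ends. Now $E_v^+$ contains every edge $\{v,\alpha\}$ with $\alpha \in B_v$, and part \eqref{fho8wiw9s-5} asserts that at most one edge $e \in E_v^+$ has $q(e,v) > 1$, that edge being forced (when a dead end is present) to be the dead end. Since $d = |B_v| \ge 1$, if some $\alpha \in B_v$ had $q(\{v,\alpha\},v) > 1$ it would be the unique such edge and, if a dead end incident to $v$ existed, would have to coincide with it — but $\alpha \notin \Aeul_0$, so $\{v,\alpha\}$ is not a dead end, a contradiction; and if no dead end is incident to $v$, then I would compute $N_v$ directly: $N_v = \sum_{\alpha\in\Aeul\setminus\Aeul_0} x_{v,\alpha} \ge \sum_{\alpha\in B_v} q(\{v,\alpha\},v) \ge |B_v| \ge 1$, contradicting $N_v = 0$. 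The only remaining possibility is $q(\{v,\alpha\},v) = 1$ for all $\alpha \in B_v$, which is \eqref{93f394ejf9i}.

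The main obstacle I anticipate is bookkeeping around part \eqref{fho8wiw9s-5}: one must be careful that a dead end incident to $v$ is an edge $\{v,\alpha'\}$ with $\alpha' \in \Aeul_0$, hence distinct from every $\{v,\alpha\}$ with $\alpha \in B_v$, and that the "at most one edge with decoration $>1$" clause genuinely forces all decorations $q(\{v,\alpha\},v)$ with $\alpha\in B_v$ to be $1$ rather than merely bounding the number of exceptions. Organizing the case split (dead end present versus not) cleanly is where the care is needed; everything else is immediate from the cited results.
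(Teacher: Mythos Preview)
Your treatments of parts \eqref{jtiq348439} and \eqref{9v8bdzkc93ry7f} are correct and essentially coincide with the paper's proof. For part \eqref{93f394ejf9i} the paper simply invokes \cite[Lemma~2.9]{CND15:simples}, whereas you attempt a self-contained argument; your dead-end case is fine, but the no-dead-end case contains a genuine gap.

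The displayed inequality
\[
N_v \;=\; \sum_{\alpha\in\Aeul\setminus\Aeul_0} x_{v,\alpha} \;\ge\; \sum_{\alpha\in B_v} q(\{v,\alpha\},v)
\]
is neither justified nor true. For $\alpha\in B_v$ the path $\gamma_{v,\alpha}$ is the single edge $\{v,\alpha\}$, so by Definition~\ref{c9v39rf0eX9e4np8glr9t8} one has $x_{v,\alpha}=Q(\{v,\alpha\},v)$, \emph{not} $q(\{v,\alpha\},v)$; and nothing forces the remaining summands $x_{v,\alpha}$ with $\alpha\notin B_v$ to be nonnegative. Worse, your chain of inequalities gives $N_v\ge |B_v|\ge 1$ regardless of whether any decoration exceeds $1$, so if it were valid it would exclude \emph{every} dicritical vertex without a dead end in a generic tree. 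But such vertices exist: take $\Veul=\{v_0,v\}$ with $q(\{v_0,v\},v_0)=1$, $q(\{v_0,v\},v)=0$, and $v$ adjacent to two arrows $\alpha_1,\alpha_2\in\Aeul\setminus\Aeul_0$ each with decoration $1$ near $v$; then $N_v=0$ and there is no dead end at $v$, yet your inequality would assert $0\ge 2$.

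A correct self-contained argument for \eqref{93f394ejf9i} runs via divisibility. Suppose $q_0:=q(\{v,\alpha_0\},v)>1$ for some $\alpha_0\in B_v$. For every $\alpha\in(\Aeul\setminus\Aeul_0)\setminus\{\alpha_0\}$ the edge $\{v,\alpha_0\}$ is incident to $\gamma_{v,\alpha}$, so $q_0\mid x_{v,\alpha}$; summing and using $N_v=0$ gives $q_0\mid x_{v,\alpha_0}=Q(\{v,\alpha_0\},v)$. By condition~\eqref{fho8wiw9s-5} each factor of $Q(\{v,\alpha_0\},v)$ is coprime to $q_0$, hence so is $Q(\{v,\alpha_0\},v)$ itself, forcing $q_0=1$, a contradiction.
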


\begin{proof}
Since our tree is generic,
Cor.\ \ref{i875863urj1dpi}(a) implies that $U_v \subseteq \Aeul$, where we define
$U_v = \setspec{ x \in \Veul \cup \Aeul }{ x > v }$;
so \cite[Lemma 2.9]{CND15:simples} implies that \eqref{93f394ejf9i}  is true.
Also note that 
each element of $U_v$ is linked to $v$ by an edge.
By part (1) of Def.\ \ref{p9823p98p2d}, there exists $\alpha \in \Aeul \setminus \Aeul_0$ such that $\alpha>v$;
then $\alpha \in U_v$, so $\{v,\alpha\}$ is an edge, so $\alpha \in B_v$, showing that $B_v \neq \emptyset$.
By definition of the degree of a dicritical, it is clear that $d=|B_v|$.

To prove \eqref{9v8bdzkc93ry7f}, observe that $v_0 \neq v$ (by Rem.\ \ref{7623egd812e76f}),
and that consequently there exists exactly one $w \in \Veul$ such that $\{w,v\}$ is an edge and $w<v$.
If there exists $w' \in \Veul \setminus \{w\}$ such that $\{w',v\}$ is an edge
then $w' \in U_v \subseteq \Aeul$, a contradiction.
This shows that there exists exactly one $w \in \Veul$ such that $\{w,v\}$ is an edge,
and that this $w$ satisfies $w<v$. We have $N_w > 0$ by Cor.\ \ref{i875863urj1dpi}(a) or Rem.\ \ref{7623egd812e76f},
so \eqref{9v8bdzkc93ry7f} is proved.
\end{proof}

\begin{definition} \label {cn1o29cnds9}
An abstract Newton tree at infinity $\Teul$ is {\it complete\/}  if it is generic and if
each arrow of $\Teul$ decorated with $(1)$ is adjacent to a dicritical vertex.
\end{definition}

\begin{remark} 
Let $\Teul$ be an abstract Newton tree at infinity.
By Rem.\  \ref{7623egd812e76f}, the root is not a dicritical vertex.
By Cor.\ \ref{i875863urj1dpi}, 
there is at most one dicritical on any path from the root to an arrow.
If $\Teul$ is complete, there is exactly one dicritical on any path from the root to an arrow decorated by $(1)$.
\end{remark}

Consider a dead end $e=\{v,\alpha\}$, where $v \in \Veul$ and $\alpha \in \Aeul_0$.
If the decoration of $e$ near $v$ is $1$, we call $e$ a {\it dead end decorated by $1$}.

\begin{definition} \label {e5e6e7w8e9wee9}
An abstract Newton tree at infinity $\Teul$ is {\it minimally complete\/} if it satisfies:
\begin{itemize}
\item[(i)] $\Teul$ is complete;

\item[(ii)] if $v$ is a dicritical then there is a dead end incident to $v$;

\item[(iii)] if a dead end decorated by $1$ is incident to a vertex $v$, then $v$ is a dicritical;

\item[(iv)] every element of $\Veul \setminus \{v_0\}$ has valency different from $2$.

\end{itemize}
\end{definition}

\begin{lemma}[{\cite[Lemma 2.24]{CND15:simples}}] \label {8123oiuwhqd8d2}
Consider an edge $\{v,v'\}$ in a minimally complete abstract Newton tree at infinity,
where $v$ is a vertex with $N_v=1$, $v'$ is a vertex, and $v<v'$.
Then $v'$ is a dicritical of degree $1$ and the edge determinant of $\{v,v'\}$ is $-1$.
\end{lemma}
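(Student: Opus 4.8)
The plan is to run the whole argument on the single edge $e=\{v,v'\}$, applying Proposition \ref{kuwdhr12778} to the (trivial) linear path $\gamma=(v,v')$. Write $q=q(e,v)$, $q'=q(e,v')$, $Q=Q(e,v)$, $Q'=Q(e,v')$, and let $A,A'$ be the two sets of arrows of Proposition \ref{kuwdhr12778}, so that $A'=\{\alpha\in\Aeul\setminus\Aeul_0:\alpha>v'\}$. First the cost-free facts: since $e\in E_v^+$, Def.\ \ref{p9823p98p2d}\eqref{fho8wiw9s-5} gives $q\ge1$; every edge of $v'$ other than $e$ lies in $E_{v'}^+$, so the same item gives $Q'\ge1$; and $\det(e)=qq'-QQ'$ is a negative integer by Def.\ \ref{p9823p98p2d}\eqref{fho8wiw9s-6}. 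Minimal completeness implies genericity, so $N_{v'}\ge0$ by Cor.\ \ref{i875863urj1dpi}; and $v$ carries no dead end, for otherwise its decoration $a_v$ would divide $N_v=1$ (Def.\ \ref{d23ed9wei9d23}), forcing $a_v=1$, and then $v$ would be a dicritical by Def.\ \ref{e5e6e7w8e9wee9}(iii), contradicting $N_v=1$. Applying Proposition \ref{kuwdhr12778}\eqref{8723d4917t3rh}--\eqref{238dhwi912F} and setting $m=\sum_{\alpha\in A'}\hat x_{v,\alpha}$, $m'=\sum_{\alpha\in A}\hat x_{v',\alpha}$, one reads off $N_v=qm'+Qm=1$, $N_{v'}=Q'm'+q'm$, $qN_{v'}-Q'=\det(e)\,m$ and $q'-QN_{v'}=\det(e)\,m'$. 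For $\alpha\in A'$ the path $\gamma_\alpha$ from $v$ to $\alpha$ leaves the root through $v'$, so $\hat x_{v,\alpha}$ is a product of decorations $q(\epsilon,\gamma_\alpha)$ of edges hanging off $\gamma_\alpha$ at vertices $\ne v$, each of which lies in some $E_u^+$; hence $\hat x_{v,\alpha}\ge1$. By Def.\ \ref{p9823p98p2d}\eqref{fho8wiw9s-1} there is an arrow of $\Aeul\setminus\Aeul_0$ above $v'$, so $A'\ne\emptyset$ and $m\ge1$, whence $qN_{v'}-Q'=\det(e)m\le-1$, i.e.\ $Q'\ge qN_{v'}+1$.

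The heart of the proof is to show $A=\emptyset$ and $N_{v'}=0$. For the first one uses the positivity of the numbers $x_{v,\alpha}$ in a generic tree (equivalently: the decoration of each edge near its endpoint farther from the root is positive); granting this, $\sum_{\alpha\in A}x_{v,\alpha}=qm'\ge0$ gives $m'\ge0$, and $x_{v,\alpha}=Q\hat x_{v,\alpha}>0$ for $\alpha\in A'$ gives $Q\ge1$, so $N_v=qm'+Qm=1$ with $q,m\ge1$ forces $m=Q=1$ and $m'=0$, that is $A=\emptyset$. Then every child $c\ne v'$ of $v$ leads to no arrow of $\Aeul\setminus\Aeul_0$ (such an arrow would lie in $A$), hence is not a vertex by Def.\ \ref{p9823p98p2d}\eqref{fho8wiw9s-1}, and is neither a $(1)$-arrow (by $A=\emptyset$) nor a $(0)$-arrow (as $v$ has no dead end); so there is no such $c$, and $v'$ is the unique child of $v$. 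Consequently $v=v_0$ by Def.\ \ref{e5e6e7w8e9wee9}(iv), and $|\Aeul\setminus\Aeul_0|=1$ because $m=1$ is a sum of terms $\ge1$. With $Q=1$, $m'=0$ the displayed identities collapse to $N_{v'}=q'$. Suppose $q'\ge1$; then $v'$ is not a dicritical, so $\delta_{v'}\ge3$ and $v'$ has at least two children. Since the unique arrow of $\Aeul\setminus\Aeul_0$ is adjacent to a dicritical and $v'$ is not one, no child of $v'$ is a $(1)$-arrow; since at most one vertex-child can have a $(1)$-arrow above it and at most one dead end is incident to $v'$, it follows that $v'$ has exactly one vertex-child and one dead-end child $\beta$, with $a_{v'}=q(\{v',\beta\},v')\ne1$ by Def.\ \ref{e5e6e7w8e9wee9}(iii), so $a_{v'}\ge2$; but Rem.\ \ref{es5s6xjc17lcy4l} gives $a_{v'}\mid N_{v'}=q'$ while Def.\ \ref{p9823p98p2d}\eqref{fho8wiw9s-5} forces $\gcd(q',a_{v'})=1$ — a contradiction. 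Hence $q'=0$ and $N_{v'}=0$, so $v'$ is a dicritical.

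It remains to compute the degree and the determinant. As $v'$ is a dicritical it has a dead end $\beta$ (Def.\ \ref{e5e6e7w8e9wee9}(ii)), whose decoration $a_{v'}$ is coprime to $q'=0$ by Def.\ \ref{p9823p98p2d}\eqref{fho8wiw9s-5}, hence $a_{v'}=1$; the edges from $v'$ to its $(1)$-arrows are also decorated $1$ by Lemma \ref{923ip38kjfbve09}\eqref{93f394ejf9i}; and since every cell above $v'$ is an arrow (Rem.\ \ref{zfp2o39wdw0d}), $E_{v'}\setminus\{e\}$ consists exactly of these edges, so $Q'=Q(e,v')=1$ and therefore $\det(e)=qq'-QQ'=-1$. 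Finally the $(1)$-arrows adjacent to $v'$ are precisely the elements of $A'$, and $N_v=Qm=m=1$ is a sum of $|A'|$ terms each $\ge1$, so $|A'|=1$; hence $v'$ has degree $1$.

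The only genuine obstacle is the positivity statement invoked in the second paragraph — that $x_{v,\alpha}>0$ for every vertex $v$ and every $\alpha\in\Aeul\setminus\Aeul_0$ in a generic Newton tree at infinity (equivalently, that the decoration of an edge near its endpoint farther from the root is positive, which is compatible with the negative decorations elsewhere). Everything else is bookkeeping with Proposition \ref{kuwdhr12778} and the arithmetic constraints \eqref{fho8wiw9s-5} and \eqref{fho8wiw9s-6}. I would establish that positivity separately, by an induction from the root towards the leaves based on the edge-determinant inequality \eqref{fho8wiw9s-6}, or simply quote it from \cite{CND15:simples}.
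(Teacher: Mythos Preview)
This lemma is not proved in the present paper; it is quoted from \cite[Lemma~2.24]{CND15:simples}. Assessing your argument on its own: the positivity statement you rely on is false, and this breaks the heart of the proof. You claim that in a generic tree the decoration of each edge near its endpoint \emph{farther} from the root is positive (equivalently $x_{v,\alpha}>0$ for all $v$ and all $\alpha$). Definition~\ref{p9823p98p2d}\eqref{fho8wiw9s-5} says the opposite: it is the decoration near the endpoint \emph{closer} to the root (i.e., $q(e,u)$ for $e\in E_u^+$) that is forced to be $\ge1$; the decoration near the child is unconstrained and is routinely negative---already in Example~\ref{nv63jfy64nvy3}(b) the edge $\{v_0,u\}$ carries $-a_2<0$ near the dicritical $u$, and Figure~\ref{723dfvcjp2q98ewdywe} exhibits many negative decorations, all sitting at child endpoints. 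Consequently your deductions $Q\ge1$, $m'\ge0$, $A=\emptyset$, and $v=v_0$ all collapse; there is no reason for $v$ to be the root.

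A correct argument bypasses all of this by proving $N_{v'}=0$ via a sharper lower bound on $m$. Suppose $N_{v'}>0$. Then $qN_{v'}<Q'$ (Prop.~\ref{kuwdhr12778}\eqref{61324d1g3r} with $N_v=1$) forces $Q'\ge2$, so exactly one edge in $E_{v'}^+$ has decoration $p:=Q'>1$; the dead-end clause of Def.~\ref{p9823p98p2d}\eqref{fho8wiw9s-5} together with $a_{v'}\mid N_{v'}$ then rules out any dead end at $v'$, and completeness rules out $(1)$-arrows, so $v'$ has at least two vertex children, one with edge-decoration $p$ and the others with decoration $1$. For any arrow $\beta$ above a decoration-$1$ child the factor $p$ at $v'$ gives $\hat x_{v,\beta}\ge p$, hence $m\ge p=Q'$; now $Q'-qN_{v'}=|\det e|\,m\ge Q'$ yields $N_{v'}\le0$, a contradiction. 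Once $N_{v'}=0$, your endgame is correct in spirit: $Q'=a_{v'}$, $m=d\,a_{v'}$, and $qN_{v'}-Q'N_v=\det(e)\,m$ reads $-a_{v'}=\det(e)\,d\,a_{v'}$, whence $d=1$ and $\det(e)=-1$. But drop the detour through $q'=0$, which you reached only via the broken positivity chain.
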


\medskip
\noindent{\bf Equivalence of abstract  Newton trees at infinity.}
Definition 2.13 of  \cite{CND15:simples} defines an equivalence relation on the set of 
abstract  Newton trees at infinity.
The main properties of that equivalence relation are given in Lemmas 2.14 and 2.18 of \cite{CND15:simples},
which we reproduce here as Lemmas \ref{0923923ujds0} and \ref{cvp9wpe9p9d}:

\begin{lemma} \label {0923923ujds0}
Equivalent abstract  Newton trees at infinity have the same number of points at infinity and the same multiplicity.
\end{lemma}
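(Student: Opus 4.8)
The final statement to prove is Lemma~\ref{0923923ujds0}: equivalent abstract Newton trees at infinity have the same number of points at infinity and the same multiplicity. Since this is quoted from \cite{CND15:simples}, the plan is essentially to indicate how it follows from the definition of the equivalence relation there, which is generated by a small list of elementary moves on trees. The strategy is therefore to reduce to checking invariance under each generating move.

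First I would recall (from \cite[Def.\ 2.13]{CND15:simples}) that the equivalence relation is the one generated by a finite list of local modifications of the decorated rooted tree — typically: insertion/deletion of a valency-two vertex along an edge (with the appropriate adjustment of the $q$-decorations so that determinants are preserved), and possibly insertion/deletion of a ``redundant'' dead end or arrow, together with the moves that renormalize decorations near the root or near arrows. It suffices to prove that both quantities in question — the number of points at infinity $n$ (Def.\ \ref{9c8yvqewjft7d}) and the multiplicity $M(\Teul)$ (Def.\ \ref{c9v39rf0eX9e4np8glr9t8}(iv)) — are unchanged by each such generating move; invariance under the generated equivalence then follows immediately.

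For the number of points at infinity: this is read off from the valency of the root and whether a dead end is incident to it, so I would check that no generating move alters the root's local picture in a way that changes $n$ — moves that insert/delete valency-two vertices act in the interior of edges and cannot change $\delta_{v_0}$ or create/destroy a dead end at $v_0$, and the root-normalization moves are set up precisely so as to preserve $\delta_{v_0}$. For the multiplicity $M(\Teul) = -\sum_{v} N_v(\delta_v-2)$: the key point is that inserting a valency-two vertex $w$ on an edge contributes $N_w(\delta_w-2) = N_w\cdot 0 = 0$ to the sum and, crucially, does not change $N_v$ for any pre-existing vertex $v$ (because the $x_{v,\alpha}$ are products of edge-decorations along paths, and the inserted vertex comes with decorations equal to $1$ on at least one side so that each relevant product is unchanged — this is exactly the content of how the move is defined, cf.\ the identity $x_{v,\beta}=q(e,v)x_{\alpha,\beta}$ used in Rem.\ \ref{es5s6xjc17lcy4l} and the formulas in Prop.\ \ref{kuwdhr12778}). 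For the remaining moves (redundant dead ends/arrows and decoration normalizations near $v_0$ or near arrows), one checks analogously that either the multiplicities $N_v$ are preserved and the added/removed cell has valency $2$, or the bookkeeping cancels. Hence $M(\Teul)$ is invariant.

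The main obstacle is not conceptual but one of fidelity: one must correctly enumerate the generating moves of the equivalence relation from \cite{CND15:simples} and verify the delicate decoration bookkeeping for each, especially the moves near the root and near arrows where conditions \eqref{fho8wiw9s-3} and \eqref{fho8wiw9s-4} of Def.\ \ref{p9823p98p2d} force renormalization. In practice, since this lemma is merely cited and not reproved in the present paper, it is legitimate to write: ``This is \cite[Lemma 2.14]{CND15:simples}; we include it here for the reader's convenience,'' and to give only the one-line reduction above (invariance of $n$ and of each $N_v(\delta_v-2)$ under each generating move), leaving the move-by-move verification to the cited source.
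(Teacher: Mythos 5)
The paper gives no proof of this lemma at all: it is explicitly reproduced from \cite{CND15:simples} (Lemma 2.14 there), so your concluding suggestion to simply cite that source is exactly what the paper does. Your preliminary sketch of checking invariance of $n$ and of $M(\Teul)$ under each generating move of the equivalence relation is plausible, but it cannot be verified against anything here since the equivalence relation itself is never defined in this paper.
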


\begin{lemma}  \label {cvp9wpe9p9d}
Let $\Ceul$ be the equivalence class of a generic Newton tree at infinity.
Then $\Ceul$ contains exactly one minimally complete  Newton tree at infinity.
\end{lemma}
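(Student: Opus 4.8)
Since Lemma \ref{cvp9wpe9p9d} is quoted from \cite{CND15:simples}, the shortest route is simply to cite it; but here is the shape of the argument I would give. Recall that the equivalence relation on abstract Newton trees at infinity is generated by finitely many types of elementary moves --- contraction of a valency-$2$ vertex different from the root, deletion of a superfluous dead end decorated by $1$, the analogous operations around dicritical vertices, and insertion/deletion of a dicritical on an edge carrying an arrow decorated by $(1)$ --- together with their inverses. Call a move in the size-decreasing direction a \emph{blow-down}. The plan is to show that blow-downs, viewed as a rewriting system on generic Newton trees at infinity, are terminating and confluent, and that their normal forms are precisely the minimally complete trees; existence and uniqueness of a minimally complete representative in $\Ceul$ then follow at once.

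First I would pin down the normal forms. Matching the four conditions of Def.\ \ref{e5e6e7w8e9wee9} against the list of blow-downs, one checks that an abstract Newton tree at infinity is minimally complete if and only if no blow-down applies to it: a valency-$2$ vertex $\neq v_0$ can always be contracted; a dead end decorated by $1$ incident to a non-dicritical vertex, a dicritical vertex with no incident dead end, and an arrow decorated by $(1)$ not adjacent to a dicritical each admit a reducing move; and conversely each reducing move destroys one of conditions (i)--(iv). This is a finite sequence of local verifications using Def.\ \ref{p9823p98p2d}, Def.\ \ref{cn1o29cnds9} and Def.\ \ref{823yi867jcn73}.

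For existence, equip the set of generic Newton trees at infinity with a complexity (say a lexicographic combination of $|\Veul|$, $|\Aeul|$ and the sizes of the decorations) that is strictly decreased by every blow-down, and verify that blow-downs preserve genericity --- using Cor.\ \ref{i875863urj1dpi} --- completeness, and membership in $\Ceul$. Iterating blow-downs from an arbitrary member of $\Ceul$ then terminates at a tree admitting no blow-down, i.e.\ a minimally complete tree lying in $\Ceul$. For uniqueness, with termination in hand it suffices, by Newman's Lemma, to establish local confluence: if $\Teul \to \Teul_1$ and $\Teul \to \Teul_2$ are two blow-downs, then $\Teul_1$ and $\Teul_2$ have a common blow-down. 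When the two contracted configurations are disjoint the moves commute; otherwise their supports overlap in one of finitely many local patterns, each of which is closed up by an explicit short computation tracking how the edge decorations recombine. Local confluence plus termination give confluence, hence a unique normal form for every generic tree; since $\sim$ is generated by blow-downs and their inverses, equivalent generic trees share a normal form, and applying this to two minimally complete trees in $\Ceul$ --- each its own normal form --- forces them to be equal.

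The main obstacle is the critical-pair analysis for local confluence: overlaps involving dicritical vertices, dead ends, and the arithmetic constraints of Def.\ \ref{p9823p98p2d}\eqref{fho8wiw9s-5}--\eqref{fho8wiw9s-6} must be checked to recombine consistently, and one must be sure that every critical pair actually closes up. A secondary point is bookkeeping: one should confirm that the invariants of Lemma \ref{0923923ujds0} together with genericity and completeness survive every move, so that the whole reduction stays inside $\Ceul$.
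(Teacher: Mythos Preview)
The paper does not prove this lemma at all: it is reproduced verbatim from \cite[Lemma~2.18]{CND15:simples}, and the surrounding text explicitly says so (``The main properties of that equivalence relation are given in Lemmas~2.14 and~2.18 of \cite{CND15:simples}, which we reproduce here\dots''). You correctly identify this at the outset, so strictly speaking there is no ``paper's own proof'' to compare against; the paper's treatment is a bare citation.

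Your sketch---reduction to a terminating, locally confluent rewriting system with minimally complete trees as normal forms, then Newman's Lemma---is a reasonable and standard strategy for results of this type. Two caveats, though. First, the equivalence relation is Definition~2.13 of \cite{CND15:simples}, which the present paper does not reproduce; your list of generating moves is inferred from the shape of Def.~\ref{e5e6e7w8e9wee9} rather than taken from that definition, so before the argument can be made rigorous one must check that the moves you name really do generate the relation as defined there. Second, the heart of the matter is exactly where you place it: the critical-pair analysis, and the verification that each blow-down preserves genericity and stays within the class of abstract Newton trees at infinity (conditions \eqref{fho8wiw9s-1}--\eqref{fho8wiw9s-6} of Def.~\ref{p9823p98p2d}). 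As written this is an outline, not a proof; but as an outline it is sound, and it matches the spirit in which such normal-form results are usually established.
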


\section{Nodes}
\label {SEC:Nodes}

{\it In this section, $\Teul$ denotes an abstract Newton tree at infinity that is minimally complete.} 

\medskip

This section develops a theory of ``nodes''  (see Def.\ \ref{oifnue09e2039}) that can be applied
to any tree $\Teul$ satisfying the above assumption.
We also define the subtree $\Neul$ of $\Teul$ (\ref{p0c9vh394envpwI2})
and the map $\tD : \Neul \to \Integ$ (\ref{pc09vg349fbv8682}),
both of which play a crucial role in subsequent sections.
Lemmas \ref{90q932r8dhd89cnr9}, \ref{C0qowbdowX932epug7fnvd} and \ref{8ey8cdody27}
give interesting information on vertices $v$ satisfying $\tD (v) \le 0$.

\begin{notation}  \label {p0c9vh394envpwI2}
Let $\Neul = \setspec{ v \in \Veul }{ N_v>0 }$ and $\Deul = \setspec{ v \in \Veul }{ N_v=0 }$.
Note that $\Deul$ is the set of dicritical vertices of $\Teul$ and that $\{ \Neul, \Deul \}$ is a partition of $\Veul$.
\end{notation}

We noted in \ref{i875863urj1dpi} that $\Neul$ is connected; so we may view $\Neul$ as a subtree of $\Teul$
(but strictly speaking, $\Neul$ is only a set of vertices).

\begin{remark} \label {p092pof90e9rf}
Note that $v_0 \in \Neul$  (by Rem.\ \ref{7623egd812e76f}) and that
\begin{enumerate}

\item[(i)] no arrow is adjacent to $v_0$,
\item[(ii)]  $\delta_{v_0}$ is equal to the number of points at infinity,
\item[(iii)]  $a_{v_0} = 1$,

\end{enumerate}
where (ii) and (iii) are immediate consequences of (i) (in fact (iii) also follows from Def.\ \ref{p9823p98p2d}\eqref{fho8wiw9s-3}).
To prove (i) by contradiction, assume that $\alpha \in \Aeul$ is such that $e = \{v_0,\alpha\}$ is an edge.
If $\alpha$ is decorated by $(1)$ then (since $\Teul$ is complete) $v_0$ is a dicritical, which is not the case.
So $\alpha$ is decorated by $(0)$; since $q(e,v_0)=1$, $e$ is a dead end decorated by $1$, so condition \ref{e5e6e7w8e9wee9}(iii)
implies that $v_0$ is  a dicritical, which is not the case.
\end{remark}

\begin{definition} \label {oifnue09e2039}
\mbox{\ }
\begin{enumerate}

\item By a {\it node\/} of $\Teul$, we mean a vertex which is adjacent to at least one dicritical vertex.
The set of all nodes of $\Teul$ is denoted $\Nd(\Teul)$.
By Rem.\ \ref{7623egd812e76f}, dicritical vertices cannot be adjacent; so $\Nd(\Teul) \subseteq \Neul$.
It is clear that $\Nd(\Teul) \neq \emptyset$.

\item Given $v \in \Neul$, let $\Deul_v$ be the set of dicritical vertices adjacent to $v$.
Note that $\Deul_v \neq \emptyset$ if and only if $v \in \Nd(\Teul)$.

\item Given a dicritical vertex $u$, let $d_u \in \Nat\setminus\{0\}$ be the degree of $u$,
i.e., the number of elements of $\Aeul \setminus \Aeul_0$ that are adjacent to $u$ (see \ref{823yi867jcn73} and \ref{zfp2o39wdw0d}).

\item Let $v \in \Nd(\Teul)$.
Let $u_1, \dots, u_s$ ($s \ge 1$) be the distinct elements of $\Deul_v$,
labeled so as to have  $d_{u_1} \le \cdots \le d_{u_s}$,
where $d_{u_i} \in \Nat\setminus\{0\}$ is the degree of the dicritical $u_i$.
Then we define the {\it type\/} of the node $v$ to be the finite sequence $[d_{u_1}, \dots, d_{u_s}]$, enclosed in square brackets.

\end{enumerate}
\end{definition}

\begin{example} \label {Pc09vbq3j7gabXrZiA}
Let $\Teul$ be the abstract Newton tree at infinity depicted in Fig.\ \ref{723dfvcjp2q98ewdywe}.
Then $\Teul$ is minimally complete.
The subtree $\Neul$ of $\Teul$ is shown in the picture below.
The numbers written near each vertex $v \in \Neul$ (in the picture)
are $N_v, a_v$ if $v$ is not a node,
and $N_v,a_v, \tau_v$ if $v$ is a node, where $\tau_v$ is the type of the node $v$.
$$
\scalebox{1}{
\setlength{\unitlength}{1mm}%
\begin{picture}(95,21)(-5,-16)
%%%%%%%%%%%%%%%%%%%%%%%%%%%%%%%%%%%%%%%%%%%%%%
\multiput(0,0)(20,0){5}{\circle{1}}
\put(0,-15){\circle{1}}
\multiput(0.5,0)(20,0){4}{\line(1,0){19}}
\put(0,-0.5){\line(0,-1){14}}
\put(40,-15){\circle{1}}
\put(40,-0.5){\line(0,-1){14}}
%%%%%%%%%%%%%%%%%%%%%%%%%%%%%%%%%%%%%%%%%%%%%%
\put(0,2){\makebox(0,0)[b]{\tiny ${ 3,3 }$}}
\put(20,2){\makebox(0,0)[b]{\tiny ${ 9,3 }$}}
\put(40,2){\makebox(0,0)[b]{\tiny ${ 63,1 }$}}
\put(60,2){\makebox(0,0)[b]{\tiny ${ 126,1,[63] }$}}
\put(80,2){\makebox(0,0)[b]{\tiny ${ 252,1,[126] }$}}
\put(2,-15){\makebox(0,0)[l]{\tiny ${ 2,2,[2] }$}}
\put(42,-15){\makebox(0,0)[l]{\tiny ${ 9,1,[9,9,9] }$}}
\put(38,-15){\makebox(0,0)[r]{\tiny ${ v_6 }$}}
\put(-2,-15){\makebox(0,0)[r]{\tiny ${ v_5 }$}}
\put(-2,-1.5){\makebox(0,0)[t]{\tiny ${ v_4 }$}}
\put(20,-1.5){\makebox(0,0)[t]{\tiny ${ v_3 }$}}
\put(38,-1.5){\makebox(0,0)[t]{\tiny ${ v_2 }$}}
\put(60,-1.5){\makebox(0,0)[t]{\tiny ${ v_1 }$}}
\put(80,-1.5){\makebox(0,0)[t]{\tiny ${ v_0 }$}}
%%%%%%%%%%%%%%%%%%%%%%%%%%%%%%%%%%%%%%%%%%%%%%
\end{picture} \qquad\quad
\raisebox{12\unitlength}{\begin{minipage}[t]{7cm} $\Neul = \{v_0, v_1, v_2, v_3, v_4, v_5, v_6 \}$ 
\end{minipage}}
}
$$
\end{example}

\begin{remark}
In fact there exists a rational polynomial $f \in \Comp[x,y]$ such that $\Teul(f;x,y)$ is the tree $\Teul$ of Fig.\ \ref{723dfvcjp2q98ewdywe}.
So, according to Cor.~\ref{0v3y46y7iwoujbfv8e} in the Introduction, the tree $\Neul$ depicted in Ex.~\ref{Pc09vbq3j7gabXrZiA} should
look like the tree of Fig.\ \ref{902br8hrjkderydyukhyg}.
This is the case, with $(z_1, \dots, z_n) = (v_0,v_1,v_2,v_3,v_4)$.
\end{remark}

\begin{definition}  \label {SJyFDkjyftGkp0c9vb349vby3f6w}
An \textit{f-partition} of a set $X$ is a family $( X_i )_{i \in I}$, indexed by a set $I$, satisfying:
\begin{itemize}

\item for each $i \in I$, $X_i$ is a (possibly empty) subset of $X$;
\item we have $X_i \cap X_j = \emptyset$ whenever $i,j$ are distinct elements of $I$;
\item $X = \bigcup_{i \in I} X_i$.

\end{itemize}
Clearly, if $( X_i )_{i \in I}$ is an f-partition of a finite set $X$ and $g : X \to \Reals$ is any
function then $\sum_{ x \in X} g(x) = \sum_{i \in I} \sum_{x \in X_i} g(x)$.
\end{definition}

\begin{remark} \label {92k3cbq3dbeoqw}
$( \Deul_v )_{v \in \Nd(\Teul)}$ is an f-partition of $\Deul$
and $( A_v )_{v \in \Nd(\Teul)}$ is an f-partition of $\Aeul \setminus \Aeul_0$,
where we define $ A_v = \setspec{ \alpha \in \Aeul \setminus \Aeul_0 }{ \text{$\alpha$ is adjacent to some element of $\Deul_v$}} $
for each $v \in \Nd(\Teul)$.
\end{remark}

\begin{notation} \label {Ppc09wberyrji76awe}
Following Paragraph 2.19 of \cite{CND15:simples} we define
$$
r_v = -1 + | \setspec{v' \in \Veul}{\text{$\{v,v'\}$ is an edge}} |,  \quad \text{for all $v \in \Neul$.}
$$
We note that $r_v \in \Nat$ for all $v \in \Neul$, and that $r_v=0$ if and only if $v = v_0$ and $\delta_{v_0}=1$.
Indeed, no arrow is adjacent to $v_0$ by  Rem.\ \ref{p092pof90e9rf}, so  $r_{v_0} = -1 + \delta_{v_0}$, so the claim is true for $v=v_0$.
If $v \neq v_0$ then we have $\delta_v \ge3$, at most one dead end is incident to $v$, and (since $v$ is not a dicritical) 
no element of $\Aeul\setminus\Aeul_0$ is adjacent to $v$, so $r_v\ge1$.
\end{notation}

\begin{notation} \label {kjfoqwsdjkwef}
We define the set map $\Delta : \Neul \to \Integ$ by
\begin{equation} \label {SODjc9wedc}
\textstyle   \Delta(v) = (r_v-1)(N_v-1) + N_v\big( 1-\frac1{a_v} \big) \qquad \text{for all $v \in \Neul$.}
\end{equation}
Since $a_v \mid N_v$, the right-hand-side of \eqref{SODjc9wedc} is indeed an integer.
We also define $ \textstyle  \Delta(S) = \sum_{v \in S} \Delta(v)$, for any subset $S$ of $\Neul$.
\end{notation}

The definition of $\Delta$ given in  \cite{CND15:simples} is different from this one,
but Equality \eqref{SODjc9wedc} is proved in Lemma 2.20 of \cite{CND15:simples},
so $\Delta$ is the same in the two articles.\footnote{Lemma 2.20 of \cite{CND15:simples} assumes that $\Teul$ has at least two points at infinity,
but that assumption is not used in the proof of \eqref{SODjc9wedc} so $\Delta$ is indeed the same in the two articles.}
Note that \cite{CND15:simples} does not consider the map $\tD : \Neul \to \Integ$, that we now define.

\begin{notation} \label {pc09vg349fbv8682}
We define $\tD(v) = \Delta(v) - \sum_{u \in \Deul_v} (d_u-1)$  for all $v \in \Neul$,
and $\tD(S) = \sum_{v \in S} \tD(v)$ for any subset $S$ of $\Neul$.
Note that $\tD(v) \in \Integ$ and $\tD(S) \in \Integ$.
Also observe that if $v \in \Neul \setminus \Nd(\Teul)$ then $\tD(v) = \Delta(v)$, because $\Deul_v = \emptyset$.
\end{notation}

\begin{definition} \label {aowi3efkWEGRGa9e93}
We shall now define numbers $\sigma(v), \epsilon(v) \in \Nat$ for each $v \in \Neul$,
and $k_u \in \Nat\setminus\{0\}$ for each $u \in \Deul$.
\begin{enumerate}

\item Suppose that $v \in \Nd(\Teul)$. For each $u \in \Deul_v$,  
let $k_u = -\det(\{u,v\}) \in \Nat\setminus \{0\}$ and let $d_u \in \Nat\setminus\{0\}$ be the degree of the dicritical $u$.
Then we define $\sigma(v) = \sum_{u \in \Deul_v} (k_u-1)d_u$. 

\smallskip

\noindent If $v \in \Neul$ is not a node we define $\sigma(v)=0$. It follows that the formula
$$
\textstyle
\sigma(v) = \sum_{u \in \Deul_v} (k_u-1)d_u
$$
is valid for all $v \in \Neul$, because if $v$ is not a node then $\Deul_v=\emptyset$.  Note that $\sigma(v) \in \Nat$ for all $v \in \Neul$.

\item For any $v \in \Neul$, define $\epsilon(v) =$ the cardinality of $\setspec{ x \in \Neul }{ \text{$x$ is adjacent to $v$} }$.

\end{enumerate}
\end{definition}

\begin{remark} \label {c0v9jn23rf9vXkfC2e}
We have $\epsilon(v)= r_v +1- | \Deul_v |$  for all $v \in \Neul$.
\end{remark}

\begin{lemma} \label {90q932r8dhd89cnr9}
Let $v \in \Neul$. 
\begin{enumerata}

\item \label {o82y387jw9e23} If $v \in \Nd(\Teul)$ then for all $u \in \Deul_v$ we have
$N_v = k_ud_u$ and $k_u,d_u \in \Nat\setminus\{0\}$.

\item \label {pc9vp23r09}     $\tD(v) = \sigma(v)  + (\epsilon(v)-2)(N_v-1) + N_v(1 - \frac1{a_v})$ 

\item  \label {0239u9e78923}
If $N_v=1$ then $\tD(v)=0 = \sigma(v)$,
$a_v=1$, $\epsilon(v) \le 1$ and $v$ is a node of type $[1,\dots,1]$.

\item \label {p9f239h209f9} If $\epsilon(v) >2$ then $\tD(v) > 0$.

\item \label {WFh2h8heq8whoq9u} If $\tD(v) < 0$ then $\epsilon(v) \in \{0,1\}$.

\end{enumerata}
\end{lemma}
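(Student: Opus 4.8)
The plan is to establish the five assertions in the given order, since each one feeds into the next. For \eqref{o82y387jw9e23}: if $v$ is a node and $u \in \Deul_v$, then $u$ is a dicritical, so $N_u = 0$; writing $w$ for the neighbour of $v$ along the edge $\{u,v\}$ with $v<u$ (indeed $v<u$ since the root is not dicritical, by Rem.\ \ref{7623egd812e76f}, and we know $w=v$ by Lemma \ref{923ip38kjfbve09}\eqref{9v8bdzkc93ry7f}), I would apply Prop.\ \ref{kuwdhr12778}\eqref{238dhwi912F} to the (trivial, one-edge) linear path $\gamma = \{v,u\}$. With $N_u = 0$, $q' = q(\{v,u\},u) = 1$ (Def.\ \ref{p9823p98p2d}\eqref{fho8wiw9s-4}), and $q(\{v,u\},v) = 1$ by Lemma \ref{923ip38kjfbve09}\eqref{93f394ejf9i}, the determinant formula collapses to $N_v = (-\det(\{v,u\})) \sum_{\alpha \in B_u} \hat x_{u,\alpha}$; since $q(\{v,u\},u)=1$ one has $\hat x_{u,\alpha} = x_{u,\alpha}$, and by Lemma \ref{923ip38kjfbve09}\eqref{jtiq348439}--\eqref{93f394ejf9i} the sum over $\alpha \in B_u$ of $x_{u,\alpha}$ is exactly $d_u$ (each term is $1$). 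Hence $N_v = k_u d_u$ with $k_u = -\det(\{v,u\}) \ge 1$ (Def.\ \ref{p9823p98p2d}\eqref{fho8wiw9s-6}) and $d_u \ge 1$.

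For \eqref{pc9vp23r09}: start from the definition $\tD(v) = \Delta(v) - \sum_{u \in \Deul_v}(d_u-1)$ and substitute \eqref{SODjc9wedc}, $\Delta(v) = (r_v-1)(N_v-1) + N_v(1-\tfrac1{a_v})$. Using Rem.\ \ref{c0v9jn23rf9vXkfC2e}, $r_v = \epsilon(v) - 1 + |\Deul_v|$, so $(r_v-1)(N_v-1) = (\epsilon(v)-2)(N_v-1) + |\Deul_v|(N_v-1)$. It then remains to identify $|\Deul_v|(N_v-1) - \sum_{u\in\Deul_v}(d_u-1)$ with $\sigma(v) = \sum_{u\in\Deul_v}(k_u-1)d_u$; by part \eqref{o82y387jw9e23} we have $N_v = k_u d_u$, so $N_v - 1 = k_u d_u - 1 = (k_u-1)d_u + (d_u - 1)$, and summing over $u \in \Deul_v$ gives exactly $\sum(k_u-1)d_u + \sum(d_u-1)$, so the two $\sum(d_u-1)$ terms cancel and $\sigma(v)$ emerges; when $v$ is not a node all relevant sums are empty and the identity still holds with $\sigma(v)=0$. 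Then \eqref{0239u9e78923} is a direct corollary: if $N_v=1$ then $N_v(1-\tfrac1{a_v}) \ge 0$ forces $a_v \mid N_v = 1$, so $a_v=1$ and that term vanishes; by \eqref{o82y387jw9e23} each $k_u d_u = N_v = 1$ so every $k_u = d_u = 1$, whence $\sigma(v) = 0$ and $v$ has type $[1,\dots,1]$ (and is a node, since $\Teul$ minimally complete forces a dead end, and then Lemma \ref{8123oiuwhqd8d2} applied to the edge from $v$ to its successor gives the adjacent dicritical of degree $1$ — actually it is cleaner to just note $\Deul_v$ may or may not be empty; if empty, $\tD(v)=\Delta(v)=(\epsilon(v)-2)\cdot 0 = 0$ directly and the ``node of type $[1,\dots,1]$'' clause is vacuous in the empty sense — I would phrase this carefully). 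Finally, from $\tD(v) = \sigma(v) + (\epsilon(v)-2)(N_v-1) + N_v(1-\tfrac1{a_v})$ with $\sigma(v) \ge 0$, $N_v - 1 \ge 0$ (as $v \in \Neul$ means $N_v \ge 1$), and $N_v(1-\tfrac1{a_v}) \ge 0$, all three summands are nonnegative; if $\epsilon(v) > 2$ the middle term is $\ge N_v - 1 \ge 0$, and we need it strictly positive — this holds unless $N_v = 1$, but $N_v=1$ gives $\tD(v)=0$ by \eqref{0239u9e78923} regardless, and in that subcase $\epsilon(v) \le 1 < 2$ contradicts $\epsilon(v)>2$, so $N_v \ge 2$, the middle term is $\ge \epsilon(v)-2 \ge 1 > 0$, proving \eqref{p9f239h209f9}. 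Part \eqref{WFh2h8heq8whoq9u} is the contrapositive combined with \eqref{p9f239h209f9}: if $\tD(v)<0$ then by nonnegativity of all three terms we cannot have this unless... wait — actually all three terms are $\ge 0$, so $\tD(v) < 0$ is impossible! So \eqref{WFh2h8heq8whoq9u} must be read as: the hypothesis $\tD(v)<0$ together with \eqref{pc9vp23r09} is contradictory once $\epsilon(v)\ge 2$, hence $\epsilon(v) \in \{0,1\}$; I would simply observe that if $\epsilon(v) \ge 2$ then $(\epsilon(v)-2)(N_v-1) \ge 0$ and the other two terms are $\ge 0$, so $\tD(v) \ge 0$, contradiction.

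The main obstacle I anticipate is purely bookkeeping in part \eqref{pc9vp23r09}: carefully tracking the three competing sums ($|\Deul_v|(N_v-1)$ coming from $r_v$, the $-\sum(d_u-1)$ from the definition of $\tD$, and the target $\sigma(v)=\sum(k_u-1)d_u$) and verifying they combine exactly via $N_v - 1 = (k_u-1)d_u + (d_u-1)$ for each $u$. Everything else is a short deduction from \eqref{pc9vp23r09} together with the divisibility $a_v \mid N_v$ (Def.\ \ref{d23ed9wei9d23}) and the sign $\det(\{v,u\}) < 0$ (Def.\ \ref{p9823p98p2d}\eqref{fho8wiw9s-6}). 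One small point requiring care: part \eqref{o82y387jw9e23}'s derivation of $N_v = k_u d_u$ from Prop.\ \ref{kuwdhr12778}, which needs the path from $v$ to $u$ to be (trivially) linear and needs $\sum_{\alpha \in A'} \hat x_{v,\alpha}$ — with $A'$ the set of arrows on the far side of $u$, i.e.\ $B_u$ — to equal $d_u$; this uses that $q(\{v,u\},u)=1$ so hats are harmless, plus Lemma \ref{923ip38kjfbve09}\eqref{93f394ejf9i}.
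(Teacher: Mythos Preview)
Your overall strategy matches the paper's: use Prop.~\ref{kuwdhr12778} for \eqref{o82y387jw9e23}, then algebraic rearrangement for \eqref{pc9vp23r09}, then deduce the remaining parts. Parts \eqref{pc9vp23r09}, \eqref{p9f239h209f9} and \eqref{WFh2h8heq8whoq9u} are fine. Two places need repair.

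\textbf{Part \eqref{o82y387jw9e23}.} The claims $q(\{v,u\},u)=1$ and $q(\{v,u\},v)=1$ are both false in general. Def.~\ref{p9823p98p2d}\eqref{fho8wiw9s-4} applies only to edges incident to \emph{arrows}, and $u$ is a vertex; Lemma~\ref{923ip38kjfbve09}\eqref{93f394ejf9i} concerns the edges from the dicritical $u$ to its adjacent arrows, not the edge $\{v,u\}$. (Look at Fig.~\ref{723dfvcjp2q98ewdywe}: the decorations $-2,0,-1,-8,\dots$ sit on edges $\{v,u\}$ near the dicritical endpoint.) The correct reading of Prop.~\ref{kuwdhr12778}\eqref{238dhwi912F} for the edge $e=\{v,u\}$, using $N_u=0$, is
\[
-\,Q(e,u)\,N_v \;=\; \det(e)\sum_{\alpha\in B_u}\hat x_{v,\alpha}\,,
\]
with $\hat x_{v,\alpha}$ (not $\hat x_{u,\alpha}$). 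Now Lemma~\ref{923ip38kjfbve09} gives $Q(e,u)=a_u$ and $\hat x_{v,\alpha}=a_u$ for each $\alpha\in B_u$; the factors of $a_u$ cancel and you obtain $N_v=k_ud_u$. Your displayed identity $N_v=(-\det e)\sum_{\alpha\in B_u}\hat x_{u,\alpha}$ is not what the proposition says, and your evaluation of the sum as $d_u$ is off by the factor $a_u$ on each side.

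\textbf{Part \eqref{0239u9e78923}.} You must actually prove that $v$ is a node and that $\epsilon(v)\le 1$; neither clause is vacuous, and your parenthetical treatment does not establish them. The missing argument: since $v\in\Neul$ is not dicritical and $\Teul$ is complete, no element of $\Aeul\setminus\Aeul_0$ is adjacent to $v$; hence by Def.~\ref{p9823p98p2d}\eqref{fho8wiw9s-1} there exists a \emph{vertex} $v'$ adjacent to $v$ with $v<v'$. Lemma~\ref{8123oiuwhqd8d2} (which you do cite) then shows every such $v'$ is a dicritical of degree $1$; in particular $\Deul_v\neq\emptyset$, so $v$ is a node. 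Moreover this shows that every vertex adjacent to $v$ and greater than $v$ lies in $\Deul_v\subset\Deul$, so the only possible neighbour of $v$ lying in $\Neul$ is its (unique, if it exists) predecessor; hence $\epsilon(v)\le 1$. Your aside ``$\Teul$ minimally complete forces a dead end'' is incorrect here: condition (ii) of Def.~\ref{e5e6e7w8e9wee9} forces dead ends only at dicriticals.
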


\begin{proof}
\eqref{o82y387jw9e23} Let $u\in \Deul_v$.  Let $e = \{v,u\}$, then \ref{kuwdhr12778} gives
$$
\left| \begin{matrix} q(e,v) & Q(e,u) \\ N_v & N_{u} \end{matrix} \right| = 
\displaystyle \det(e) \sum_{\alpha \in A'} {\hat x_{v,\alpha}}
$$
where $A' = \setspec{ \alpha \in \Aeul \setminus \Aeul_0 }{ \text{$\{u,\alpha\}$ is an edge} }$ and  $N_{u}=0$.
Lemma \ref{923ip38kjfbve09} implies that $Q(e,{u}) = a_{u}$
and that ${\hat x_{v,\alpha}} = a_{u}$ for each $\alpha \in A'$.
So $- a_{u} N_v = \det(e) d_u a_{u} = -k_u d_u a_{u}$ and the equality $N_v = k_u d_u$ follows.
It is clear that $k_u,d_u \in \Nat\setminus\{0\}$.
This proves \eqref{o82y387jw9e23}.

\eqref{pc9vp23r09}     The following argument is valid whether $v$ is a node or not.
Let $s = | \Deul_v |$ and note that $\epsilon(v) =  r_v +1- s$ by Rem.\ \ref{c0v9jn23rf9vXkfC2e}.
By \ref{kjfoqwsdjkwef},
\begin{align*}
\tD(v)
&= \textstyle \Delta(v) -  \sum_{u \in \Deul_v} (d_u-1) \\
&= \textstyle  (r_v-1)(N_v-1) + N_v\big( 1-\frac1{a_v} \big) - \sum_{u \in \Deul_v} (d_u-1) \\
&= \textstyle  (s-2+\epsilon(v))(N_v-1) + N_v\big( 1-\frac1{a_v} \big) - \sum_{u \in \Deul_v} (d_u-1) \\
&= \textstyle  (\epsilon(v)-2)(N_v-1) + N_v\big( 1-\frac1{a_v} \big) + s(N_v-1) - \sum_{u \in \Deul_v} (d_u-1) \\
&= \textstyle  (\epsilon(v)-2)(N_v-1) + N_v\big( 1-\frac1{a_v} \big) + \sum_{u \in \Deul_v} (N_v-d_u) \\
&= \textstyle   (\epsilon(v)-2)(N_v-1) + N_v(1 - \frac1{a_v}) + \sum_{u \in \Deul_v} (k_u-1) d_u  \\
&= \textstyle  (\epsilon(v)-2)(N_v-1) + N_v(1 - \frac1{a_v}) + \sigma(v) .
\end{align*}

\eqref{0239u9e78923} Assume that $N_v=1$.
Let $B$ be the set of vertices $u$ adjacent to $v$ and such that $v<u$.
As $v$ is not a dicritical, $B \neq \emptyset$.
Since $N_v=1$, Lemma \ref{8123oiuwhqd8d2} implies that $B \subseteq \Deul_v$, so $v$ is a node.
For each $u \in \Deul_v$ we have $k_u d_u = N_v=1$ by  \eqref{o82y387jw9e23}, so $k_u=1=d_u$;
it follows that the type of $v$ is $[1,\dots,1]$ and that $\sigma(v)=0$. 
Since $N_v=1$ we also have $a_v=1$ (because $a_v \mid N_v$). 
Then \eqref{pc9vp23r09}     gives $\tD(v)=0$.
The fact that $\epsilon(v) \le 1$ follows from $B \subseteq \Deul_v$
and the observation that at most one vertex is adjacent to $v$ and less than $v$.

(d) If $\epsilon(v) > 2$ then (c) implies that $N_v>1$, so (b) gives $\tD(v) > 0$.

Assertion (e) follows from (b).
\end{proof}

\begin{example} \label {cIuCbgepwej6DsJ37655enm}
Let $\Teul$ be the tree considered in Fig.\ \ref{723dfvcjp2q98ewdywe} and Ex.\ \ref{Pc09vbq3j7gabXrZiA}.
Using only the information contained in Ex.\ \ref{Pc09vbq3j7gabXrZiA}, the reader may calculate (with Lemma \ref{90q932r8dhd89cnr9}) that 
$\tD(v_0) = -125$, $\tD(v_1) = 63$, $\tD(v_2) = 62$, $\tD(v_3) = 6$, $\tD(v_4) = 2$, $\tD(v_5) = 0$ and $\tD(v_6) = -8$. 
Consequently, $\tD( \Neul ) = 0$.
\end{example}

\begin{lemma} \label {C0qowbdowX932epug7fnvd}
Let $v \in \Neul$.
\begin{enumerata}

\item If $\epsilon(v)=1$  and $v$ is not a node then $v=v_0$ and $\delta_{v_0}=1$.

\item If $\epsilon(v)=2$, $\tD(v)=0$ and $v \neq v_0$ then  $v$ is a node of type $[N_v,\dots,N_v]$ and $a_v=1$. 

\end{enumerata}
\end{lemma}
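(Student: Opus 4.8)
The plan is to treat \eqref{823yi867jcn73}-style part (a) by pure unwinding of definitions, and part (b) by a positivity argument built on the decomposition of $\tD(v)$ provided by Lemma \ref{90q932r8dhd89cnr9}\eqref{pc9vp23r09}.

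For (a): since $v$ is not a node, $\Deul_v=\emptyset$, so Rem.\ \ref{c0v9jn23rf9vXkfC2e} gives $\epsilon(v)=r_v+1$; hence $\epsilon(v)=1$ forces $r_v=0$. By Notation \ref{Ppc09wberyrji76awe}, $r_v=0$ holds if and only if $v=v_0$ and $\delta_{v_0}=1$, which is exactly the claimed conclusion. This part is immediate.

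For (b): apply Lemma \ref{90q932r8dhd89cnr9}\eqref{pc9vp23r09} with $\epsilon(v)=2$ to obtain $\tD(v)=\sigma(v)+N_v(1-\tfrac1{a_v})$. Since $\sigma(v)\ge 0$, $N_v\ge1$ and $a_v\ge1$, both summands are nonnegative, so $\tD(v)=0$ forces $\sigma(v)=0$ and $N_v(1-\tfrac1{a_v})=0$; the second equality gives $a_v=1$ at once. It then remains only to show that $v$ is a node and to read off its type. Once $v\in\Nd(\Teul)$ is known, $\sigma(v)=\sum_{u\in\Deul_v}(k_u-1)d_u=0$ with each term nonnegative forces $k_u=1$ for every $u\in\Deul_v$, whence $N_v=k_ud_u=d_u$ by Lemma \ref{90q932r8dhd89cnr9}\eqref{o82y387jw9e23}, so the type of $v$ is $[d_{u_1},\dots,d_{u_s}]=[N_v,\dots,N_v]$.

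The one step needing genuine care is ruling out $\Deul_v=\emptyset$. Here I would argue by contradiction: if $v$ is not a node then $\epsilon(v)=r_v+1=2$, so $r_v=1$. Because $v\in\Neul$ is not dicritical and $\Teul$ is complete, no arrow decorated $(1)$ can be adjacent to $v$ (its unique neighbour would have to be dicritical by Def.\ \ref{cn1o29cnds9}), so the only arrows adjacent to $v$ are dead-end arrows, of which there is at most one by Def.\ \ref{p9823p98p2d}\eqref{fho8wiw9s-2}; thus $\delta_v=(r_v+1)+t$ with $t\in\{0,1\}$. Since $v\neq v_0$ we have $\delta_v\ge 3$ (by \ref{p0cb2398gfv821wid9bgd6} together with \ref{e5e6e7w8e9wee9}(iv)), forcing $t=1$; but a dead end incident to the non-dicritical vertex $v$ must be decorated by an integer $>1$, for otherwise condition \ref{e5e6e7w8e9wee9}(iii) would make $v$ dicritical — so $a_v=q(e,v)>1$, contradicting $a_v=1$. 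Hence $v\in\Nd(\Teul)$, and the proof concludes as above. Everything except this contradiction argument is routine bookkeeping with the formula of Lemma \ref{90q932r8dhd89cnr9}.
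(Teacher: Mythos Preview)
Your proof is correct and follows essentially the same approach as the paper. For (a) you streamline matters by invoking the characterization of $r_v=0$ already recorded in Notation \ref{Ppc09wberyrji76awe}, whereas the paper re-derives this directly; for (b) both arguments extract $\sigma(v)=0$ and $a_v=1$ from Lemma \ref{90q932r8dhd89cnr9}\eqref{pc9vp23r09} and then use the valency constraint $\delta_v\ge 3$ together with the minimally-complete axioms to force the existence of a dicritical neighbour --- the paper argues directly that $a_v=1$ rules out dead ends, you argue the contrapositive, but the content is identical.
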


\begin{proof}
(a) Suppose that $v \in \Neul$ is such that $\epsilon(v)=1$  and $v \neq v_0$.
We have $\delta_v\ge3$, at most one element of $\Aeul_0$ is adjacent to $v$
and no element of $\Aeul \setminus \Aeul_0$ is adjacent to $v$; so
at least two vertices are adjacent to $v$, and exactly one of them belongs to $\Neul$ because $\epsilon(v)=1$;
so at least one element of $\Veul \setminus \Neul = \Deul$ is adjacent to $v$, so $v$ is a node.

By the above paragraph, if $v \in \Neul$, $\epsilon(v)=1$  and $v$ is not a node, then $v=v_0$.
Since $v_0$ is not a node and no arrow is adjacent to $v_0$, we have $\delta_{v_0} = \epsilon(v_0)=1$.
So (a) is true.

For (b), note that 
$$
\textstyle
0 = \tD(v) = \sigma(v) + (\epsilon(v)-2) (N_v-1) + N_v( 1 - \frac1{a_v} ) = \sigma(v) + N_v( 1 - \frac1{a_v} ),
$$
so $\sigma(v)=0$ and $a_v=1$.
Since $v$ is not a dicritical, there cannot be a dead end decorated by $1$ incident to $v$;
so the fact that  $a_v=1$ implies that no dead end is incident to $v$. 
Since $v$ is not a dicritical, no element of $\Aeul \setminus \Aeul_0$ is adjacent to $v$, so no arrow is adjacent to $v$.
Since $v \neq v_0$, we have $\delta_v > 2 = \epsilon(v)$, so there exists a $u \in \Veul \setminus \Neul$ adjacent to $v$.
Then $u$ is a dicritical and hence $v$ is a node.
The fact that $\sigma(v)=0$ implies that $k_x=1$ for all $x \in \Deul_v$, so $v$ is of type $[N_v, \dots, N_v]$.
\end{proof}

\begin{lemma} \label {lkjxfX0293wsod}
For all $v \in \Neul$ we have \  $\frac{\sigma(v)}{N_v} = \sum_{u \in \Deul_v}\left(1 - \frac1{k_u}\right)$.
\end{lemma}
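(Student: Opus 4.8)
The statement is essentially an immediate consequence of Lemma \ref{90q932r8dhd89cnr9}\eqref{o82y387jw9e23} together with the definition of $\sigma(v)$ in \ref{aowi3efkWEGRGa9e93}, so the plan is just to make the term-by-term computation explicit. First I would dispose of the trivial case: if $v \in \Neul$ is not a node, then $\Deul_v = \emptyset$, hence $\sigma(v) = 0$ by \ref{aowi3efkWEGRGa9e93}; since $v \in \Neul$ we have $N_v > 0$, so $\sigma(v)/N_v = 0$, and the right-hand side is an empty sum, also $0$. Thus the equality holds in this case.

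Next, suppose $v \in \Nd(\Teul)$. For each $u \in \Deul_v$, Lemma \ref{90q932r8dhd89cnr9}\eqref{o82y387jw9e23} gives $N_v = k_u d_u$ with $k_u, d_u \in \Nat\setminus\{0\}$. Using the definition $\sigma(v) = \sum_{u \in \Deul_v}(k_u-1)d_u$ and dividing by $N_v \neq 0$, I would write
$$
\frac{\sigma(v)}{N_v} = \sum_{u \in \Deul_v} \frac{(k_u-1)d_u}{N_v} = \sum_{u \in \Deul_v} \frac{(k_u-1)d_u}{k_u d_u} = \sum_{u \in \Deul_v} \frac{k_u-1}{k_u} = \sum_{u \in \Deul_v}\left(1 - \frac1{k_u}\right),
$$
which is exactly the claimed identity. (The cancellation of $d_u$ is legitimate since $d_u \ge 1$.)

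There is really no obstacle here: the only thing to be careful about is that $N_v > 0$ (guaranteed by $v \in \Neul$), so that the division defining $\sigma(v)/N_v$ makes sense, and that the two cases — $v$ a node and $v$ not a node — are both covered, the non-node case being handled by the convention that empty sums vanish. I would present the argument as a single short paragraph covering both cases, since the formula $\sigma(v) = \sum_{u \in \Deul_v}(k_u-1)d_u$ is already stated to be valid for all $v \in \Neul$.
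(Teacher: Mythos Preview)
Your proof is correct and follows the same approach as the paper's: both split into the trivial non-node case (where both sides vanish) and the node case, then use $N_v = k_u d_u$ from Lemma~\ref{90q932r8dhd89cnr9}\eqref{o82y387jw9e23} to carry out the identical term-by-term computation. Your write-up is slightly more detailed but substantively identical.
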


\begin{proof}
If $v$ is not a node then both sides of the equality are equal to $0$.
If $v \in \Nd(\Teul)$ then $N_v = k_ud_u$ for all $u \in \Deul_v$, so 
$\frac{\sigma(v)}{N_v}
=\frac{\sum_{u \in \Deul_v} (k_u - 1)d_u}{N_v}
= \sum_{u \in \Deul_v} \frac{(k_u - 1)d_u}{k_ud_u}
= \sum_{u \in \Deul_v} \left(1 - \frac1{k_u}\right)$.
\end{proof}

The following triviality is very useful:

\begin{lemma} \label {trivialobs}
Let $(m_1,\dots,m_s)$ be a family of elements of $\Nat \setminus \{0\}$.
If $K \in \Reals$ is such that
$ \textstyle  \sum_{i=1}^s (1 - \frac1{m_i}) < K $,
then $| \setspec{ i }{ m_i>1 } | < 2K$.
In particular, if  $\sum_{i=1}^s (1 - \frac1{m_i}) < 1$ then at most one $i$ is such that $m_i>1$.
\end{lemma}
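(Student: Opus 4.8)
The plan is to isolate the contribution of each index with $m_i>1$ and bound it from below by a fixed constant. First I would record the elementary fact that for $m\in\Nat\setminus\{0\}$ one has $1-\frac1m=0$ when $m=1$ and $1-\frac1m\ge\frac12$ when $m\ge2$ (the latter because $m\mapsto 1-\frac1m$ is increasing, so its value at any $m\ge2$ is at least its value at $2$). Writing $J=\setspec{i}{m_i>1}$, it then follows that
$$
\sum_{i=1}^s\Big(1-\frac1{m_i}\Big)=\sum_{i\in J}\Big(1-\frac1{m_i}\Big)\ge\frac12\,|J|,
$$
since the terms with $i\notin J$ vanish and each term with $i\in J$ is at least $\frac12$.

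Combining this inequality with the hypothesis $\sum_{i=1}^s(1-\frac1{m_i})<K$ yields $\frac12|J|<K$, that is, $|J|<2K$, which is the first assertion. For the ``in particular'' statement I would apply the first part with $K=1$: then $|J|$ is a nonnegative integer strictly smaller than $2$, hence $|J|\le1$, i.e.\ at most one index $i$ satisfies $m_i>1$. There is essentially no obstacle in this argument; the only points that require a moment's care are that the indices with $m_i=1$ must be discarded (they contribute $0$, not a positive amount) and that, because $|J|$ is an integer, the strict bound $|J|<2$ already forces $|J|\le1$.
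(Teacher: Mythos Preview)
Your argument is correct and is exactly the paper's approach: the paper's one-line proof is simply the observation that each nonzero term $1-\tfrac1{m_i}$ is at least $\tfrac12$, from which both conclusions follow immediately. You have just written out the details that the paper leaves implicit.
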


\begin{proof}
If $1 - \frac1{m_i} \neq0$ then $1 - \frac1{m_i} \ge \frac12$.
\end{proof}

\begin{notations} \label {92bty9df8349thg98}
Given $v \in \Neul$, we define $d(v)=N_v$ if $v$ is not a node,
and $d(v)=$ the gcd of the degrees of the dicriticals in $\Deul_v$ if $v$ is a node.
\end{notations}

\begin{remark} \label {F0934nofe8rg3p406egfh}
For all $v \in \Neul$ we have $d(v) \mid N_v$, and $d(v) = N_v$  if and only if $\sigma(v)=0$.
(See Lemma \ref{90q932r8dhd89cnr9}\eqref{o82y387jw9e23} and Def.\ \ref{aowi3efkWEGRGa9e93}.)
\end{remark}

\begin{lemma} \label {8ey8cdody27}
Let $v \in \Neul$ be such that $\epsilon(v)=1$ and $\tD(v) \le 0$. Then
$$
\tD(v) = 1 - \frac{d(v)}{a_v}
$$
and if $v \in \Nd( \Teul )$ then the following hold.
\begin{enumerata}

\item At most one $u \in \Deul_v$ satisfies $k_u>1$, and if such a $u$ exists then $a_v=1$.

\item The type of $v$ is $[d(v), N_v, \dots, N_v]$, and if $d(v) \neq N_v$ then $a_v=1$.

\end{enumerata}
\end{lemma}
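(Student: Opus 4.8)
The plan is to start from the formula in Lemma~\ref{90q932r8dhd89cnr9}\eqref{pc9vp23r09}, namely
$$
\tD(v) = \sigma(v) + (\epsilon(v)-2)(N_v-1) + N_v\Big(1-\tfrac{1}{a_v}\Big),
$$
and specialize it to $\epsilon(v)=1$. This gives $\tD(v) = \sigma(v) - (N_v-1) + N_v(1-\tfrac1{a_v})$. Using Lemma~\ref{lkjxfX0293wsod} to rewrite $\sigma(v) = N_v\sum_{u\in\Deul_v}(1-\tfrac1{k_u})$, and recalling from Remark~\ref{F0934nofe8rg3p406egfh} that $d(v)=N_v$ exactly when $\sigma(v)=0$, one should aim to show the displayed identity $\tD(v) = 1 - \tfrac{d(v)}{a_v}$. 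When $v$ is not a node, $\sigma(v)=0$ and $d(v)=N_v$, so directly $\tD(v) = -(N_v-1)+N_v(1-\tfrac1{a_v}) = 1 - \tfrac{N_v}{a_v} = 1-\tfrac{d(v)}{a_v}$, so that case is immediate. The real content is the node case.

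For the node case, the first step is the hypothesis $\tD(v)\le 0$ combined with the rewritten formula. Substituting, $\tD(v) = N_v\sum_{u\in\Deul_v}(1-\tfrac1{k_u}) - (N_v-1) + N_v(1-\tfrac1{a_v}) \le 0$. Since $N_v\ge 1$, dividing by $N_v$ (and dropping the $+1$ that helps us) gives roughly $\sum_{u\in\Deul_v}(1-\tfrac1{k_u}) + (1-\tfrac1{a_v}) < 1$. Now I would invoke Lemma~\ref{trivialobs} (the ``trivial observation''): a sum of terms of the form $1-\tfrac1{m}$ with $m\in\Nat\setminus\{0\}$ that is $<1$ forces at most one term to be nonzero. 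Applying this to the family consisting of the $k_u$ for $u\in\Deul_v$ together with $a_v$: at most one of these $|\Deul_v|+1$ integers exceeds $1$. This yields assertion (a) (at most one $u$ with $k_u>1$, and if such a $u$ exists then $a_v=1$ because $a_v$ cannot also exceed $1$), and also the structural part of (b): if some $k_u>1$ then all other $k_{u'}=1$, hence by Lemma~\ref{90q932r8dhd89cnr9}\eqref{o82y387jw9e23} those $u'$ have degree $d_{u'}=N_v$, while the exceptional $u$ has degree $d_u = N_v/k_u < N_v$; so $d(v) = \gcd$ of the degrees $= N_v/k_u$ divides $N_v$ properly, and the type of $v$ is $[d(v),N_v,\dots,N_v]$. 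If no $k_u>1$ then every $d_u=N_v$, so $d(v)=N_v$ and the type is $[N_v,\dots,N_v]=[d(v),N_v,\dots,N_v]$, and there is no constraint to force $a_v=1$ — consistent with the ``if $d(v)\neq N_v$'' qualifier.

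It remains to verify the identity $\tD(v)=1-\tfrac{d(v)}{a_v}$ in the node case. I would split on whether $\sigma(v)=0$. If $\sigma(v)=0$ then all $k_u=1$, so $d(v)=N_v$, and the formula reduces as in the non-node computation to $1-\tfrac{N_v}{a_v}=1-\tfrac{d(v)}{a_v}$. If $\sigma(v)\neq 0$, then by part (a) exactly one $u_0$ has $k_{u_0}>1$ and $a_v=1$; then $\sigma(v) = (k_{u_0}-1)d_{u_0} = (k_{u_0}-1)\cdot\tfrac{N_v}{k_{u_0}} = N_v - \tfrac{N_v}{k_{u_0}} = N_v - d(v)$. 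Plugging into $\tD(v)=\sigma(v)-(N_v-1)+N_v(1-\tfrac1{a_v})$ with $a_v=1$: $\tD(v) = (N_v-d(v)) - (N_v-1) + 0 = 1 - d(v) = 1 - \tfrac{d(v)}{a_v}$, as desired.

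The main obstacle, such as it is, is bookkeeping: one must be careful that Lemma~\ref{trivialobs} is applied to the correct combined family $\{k_u : u\in\Deul_v\}\cup\{a_v\}$ rather than just to the $k_u$ alone — it is the presence of the $(1-\tfrac1{a_v})$ term in the inequality that lets us conclude $a_v=1$ whenever some $k_u>1$, which is exactly the ``and if such a $u$ exists then $a_v=1$'' clause in (a) and the ``$d(v)\neq N_v \Rightarrow a_v=1$'' clause in (b). A minor subtlety: to get a \emph{strict} inequality $<1$ from $\tD(v)\le 0$, one uses that $\tD(v)\le 0$ gives $N_v\big(\sum(1-\tfrac1{k_u}) + (1-\tfrac1{a_v})\big) \le N_v-1 < N_v$, and dividing by $N_v>0$ gives the strict bound; this is where the $\le$ (as opposed to $=$) hypothesis is comfortably absorbed. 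Everything else is the routine substitution already sketched.
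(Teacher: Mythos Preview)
Your proof is correct and follows essentially the same route as the paper's: specialize Lemma~\ref{90q932r8dhd89cnr9}\eqref{pc9vp23r09} to $\epsilon(v)=1$, use $\tD(v)\le 0$ to obtain $(1-\tfrac1{a_v})+\sum_{u\in\Deul_v}(1-\tfrac1{k_u})<1$, apply Lemma~\ref{trivialobs} to the combined family $(a_v,k_{u_1},\dots,k_{u_s})$ to get (a) and (b), then compute $\sigma(v)=N_v-d(v)$ and substitute. Your handling of the non-node case via the bare algebraic identity $-(N_v-1)+N_v(1-\tfrac1{a_v})=1-\tfrac{N_v}{a_v}$ is in fact slightly more direct than the paper's, which detours through Lemma~\ref{C0qowbdowX932epug7fnvd} to identify $v=v_0$ and $a_{v_0}=1$ first.
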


\begin{proof}
Consider the case where $v \notin \Nd(\Teul)$.
By Lemma \ref{C0qowbdowX932epug7fnvd}, $v=v_0$ and $\delta_{v_0}=1$.
Since $\sigma(v_0)=0$ and $a_{v_0}=1$, Lemma \ref{90q932r8dhd89cnr9}\eqref{pc9vp23r09} gives  $\tD(v_0) = 1 - N_{v_0}$.
Since $d(v_0) = N_{v_0}$ it follows that the equality $\tD(v) = 1 - d(v)/a_v$ is true.

Until the end of the proof we assume that $v \in \Nd(\Teul)$.  We have
$$
\textstyle
0 \ge \tD(v) = \sigma(v) + (\epsilon(v)-2)(N_v-1) + N_v(1 - \frac1{a_v}) = \sigma(v) + (-1)(N_v-1) + N_v(1 - \frac1{a_v}) ,
$$
which implies that $\sigma(v) + N_v(1 - \frac1{a_v}) \le (N_v-1) < N_v$.
Dividing by $N_v$ gives
$$
\textstyle (1 - \frac1{a_v}) + \sum_{u \in \Deul_v} (1 - \frac1{k_u}) < 1 
$$
where we used Lemma \ref{lkjxfX0293wsod}.
If $u_1,\dots,u_s$ denote the distinct elements of $\Deul_v$ then Lemma \ref{trivialobs} says that at most one
member of the family $(a_v, k_{u_1}, \dots, k_{u_s})$ is greater than $1$.
This proves (a).  We may arrange the labelling of $u_1,\dots, u_s$ so that  $k_{u_1} \ge 1 = k_{u_2} = \cdots = k_{u_s}$.
Then the type of $v$ is $[d_{u_1}, \dots, d_{u_s}] = [\frac{N_{v}}{k_{u_1}}, N_{v}, \dots, N_v]$, so
$d(v) = \gcd(\frac{N_{v}}{k_{u_1}}, N_{v}, \dots, N_v) = \frac{N_{v}}{k_{u_1}}$,
which shows that the type of $v$ is $[d(v), N_v, \dots, N_v]$.
If $d(v) \neq N_v$ then $k_{u_1}>1$, so $a_v=1$.  This proves (b).
Note that $\sigma(v) = \sum_{u \in \Deul_v} (k_u-1)d_u = (k_{u_1}-1)d_{u_1} = N_v - d(v)$, so 
\begin{equation} \label {Dkjnxf923qwsd}
\textstyle  \tD(v) = (N_v-d(v)) + (-1)(N_v-1) + N_v(1 - \frac1{a_v}) = 1 + N_v -d(v) - \frac{N_v}{a_v}.
\end{equation}
If $d(v)=N_v$ then \eqref{Dkjnxf923qwsd} simplifies to $\tD(v) = 1 - \frac{d(v)}{a_v}$;
if $d(v) \neq N_v$ then $a_v=1$, and again \eqref{Dkjnxf923qwsd} simplifies to $\tD(v) = 1 - \frac{d(v)}{a_v}$. 
So $\tD(v) = 1 - \frac{d(v)}{a_v}$ is true and the Lemma is proved.
\end{proof}

\section*{Geometric interpretation of $\tD(\Neul)$}

Thm \ref{918235071yrsj2dhry} and Remarks \ref{pc09v4rdh8drcj7yu} and \ref{v87bnd5hmzvfdjsm6k7oxdg} are not used anywhere else in the article.
Their only purpose is to justify equality \eqref{ckjvbo293djbf0q2}, claimed in the introduction.
Refer to \ref{p0c9vh394envpwI2}, \ref{oifnue09e2039} and \ref{c9v39rf0eX9e4np8glr9t8} for the definitions of $\Deul$, $d_u$ and $M(\Teul)$.

\begin{theorem} \label {918235071yrsj2dhry}  
Let $\Teul$ be a minimally complete Newton tree at infinity.
Let $D(\Teul)=\sum_{u \in \Deul} d_u$ and  $D'(\Teul)=\sum_{u \in \Deul} (d_u-1)$. Then
$$
\tD(\Neul) = \Delta(\Neul) - D'(\Teul)  = 2 - M(\Teul) -  D(\Teul) .
$$
\end{theorem}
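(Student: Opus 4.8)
The plan is to prove the two equalities in turn: the first is purely formal, and the second rests on a global edge-count in the tree carried by $\Veul$.

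\emph{The equality $\tD(\Neul)=\Delta(\Neul)-D'(\Teul)$.} Summing the defining relation $\tD(v)=\Delta(v)-\sum_{u\in\Deul_v}(d_u-1)$ of Notation~\ref{pc09vg349fbv8682} over $v\in\Neul$ gives $\tD(\Neul)=\Delta(\Neul)-\sum_{v\in\Neul}\sum_{u\in\Deul_v}(d_u-1)$. Since $\Deul_v=\emptyset$ whenever $v$ is not a node, the double sum ranges effectively over $v\in\Nd(\Teul)$; as $(\Deul_v)_{v\in\Nd(\Teul)}$ is an f-partition of $\Deul$ by Remark~\ref{92k3cbq3dbeoqw}, the defining property of f-partitions (Def.~\ref{SJyFDkjyftGkp0c9vb349vby3f6w}) rewrites it as $\sum_{u\in\Deul}(d_u-1)=D'(\Teul)$, and the first equality follows with nothing further needed.

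\emph{The equality $\Delta(\Neul)-D'(\Teul)=2-M(\Teul)-D(\Teul)$.} Since $D'(\Teul)=D(\Teul)-|\Deul|$, this is equivalent to the identity
\begin{equation*}
\Delta(\Neul)+M(\Teul)+|\Deul|=2 .
\end{equation*}
I would rewrite its left side as a sum over $\Neul$. In $M(\Teul)=-\sum_{v\in\Veul\cup\Aeul_0}N_v(\delta_v-2)$, each dicritical vertex contributes $0$ (its $N_v$ vanishes) and each $\alpha\in\Aeul_0$ contributes $+N_\alpha$ (since $\delta_\alpha=1$), so $M(\Teul)=-\sum_{v\in\Neul}N_v(\delta_v-2)+\sum_{\alpha\in\Aeul_0}N_\alpha$. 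Two structural facts are then used. First, for $v\in\Neul$ every arrow adjacent to $v$ lies in $\Aeul_0$ (an arrow decorated $(1)$ adjacent to $v$ would, by completeness, force $v$ to be dicritical), and by Def.~\ref{p9823p98p2d}\eqref{fho8wiw9s-2} at most one dead end is incident to $v$; writing $b_v\in\{0,1\}$ for the number of dead ends incident to $v$, we obtain $\delta_v=r_v+1+b_v$, with $a_v=1$ whenever $b_v=0$ by Def.~\ref{d23ed9wei9d23}. Second, a dead end $\{v,\alpha\}$ satisfies $N_\alpha=N_v/a_v$ by Remark~\ref{es5s6xjc17lcy4l}, which is $0$ when $v$ is dicritical; hence $\sum_{\alpha\in\Aeul_0}N_\alpha=\sum_{v\in\Neul}b_vN_v/a_v$.

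Plugging $\delta_v-2=r_v-1+b_v$ into the formula for $M(\Teul)$ and adding it to $\Delta(\Neul)=\sum_{v\in\Neul}\big[(r_v-1)(N_v-1)+N_v(1-\frac1{a_v})\big]$, the terms proportional to $r_v-1$ telescope, each $v$ contributing $1-r_v$, while the remaining terms combine into $N_v(1-b_v)(1-\frac1{a_v})$, which vanishes in both cases $b_v=1$ and $b_v=0$ (for $b_v=0$ one has $a_v=1$). Thus $\Delta(\Neul)+M(\Teul)=\sum_{v\in\Neul}(1-r_v)=|\Neul|-\sum_{v\in\Neul}r_v$. To finish, I would count edges in the subtree of $\Teul$ induced on the connected set $\Veul$ (Rem.~\ref{jdhbf2i3p9e}): it has $|\Veul|$ vertices and $|\Veul|-1$ edges, so the sum over $v\in\Veul$ of the number of vertices adjacent to $v$ equals $2(|\Veul|-1)$; this number is $r_v+1$ for $v\in\Neul$ and is $1$ for a dicritical $v$ by Lemma~\ref{923ip38kjfbve09}\eqref{9v8bdzkc93ry7f}, so $\sum_{v\in\Neul}(r_v+1)=2(|\Veul|-1)-|\Deul|$ and $\sum_{v\in\Neul}r_v=|\Veul|-2$. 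Combining, $\Delta(\Neul)+M(\Teul)=|\Neul|-(|\Veul|-2)=2-|\Deul|$, which is the displayed identity. The main obstacle I foresee is the bookkeeping in this middle step — correctly accounting for the arrows decorated $(0)$ in the formula for $M(\Teul)$ and verifying that their contribution cancels against the $1/a_v$ terms coming from $\Delta$; everything else is routine.
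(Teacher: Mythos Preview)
Your proof is correct. For the first equality you do exactly what the paper does: invoke the f-partition $(\Deul_v)_{v\in\Nd(\Teul)}$ of $\Deul$. For the second, the paper reduces (as you do) to the identity $M(\Teul)+\Delta(\Neul)=2-|\Deul|$, but then simply \emph{cites} it as Thm~2.22(1) of \cite{CND15:simples}, whereas you supply a self-contained combinatorial proof: split $M(\Teul)$ over $\Neul$ and $\Aeul_0$, use $\delta_v=r_v+1+b_v$ for $v\in\Neul$, observe that the dead-end contributions $b_vN_v/a_v$ cancel against the $N_v(1-1/a_v)$ terms from $\Delta$ (in the two cases $b_v=1$ and $b_v=0,\ a_v=1$), and finish with the handshake count in the subtree on $\Veul$. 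Your argument is correct and has the virtue of not sending the reader to another paper; the paper's approach is shorter on the page but relies on prior work. The bookkeeping you flagged as a potential obstacle is handled cleanly.
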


\begin{proof}
We have $\tD(\Neul) = \Delta(\Neul) - D'(\Teul)$ because $( \Deul_v )_{ v \in \Nd(\Teul) }$ is an  f-partition of $\Deul$.
Thm 2.22(1) of \cite{CND15:simples} states that $M(\Teul) + \Delta(\Neul) = 2- | \Deul |$;\footnote{Thm 2.22 of \cite{CND15:simples} assumes
that $\Teul$ has at least two points at infinity, but in fact that assumption is not used in the proof of part (1) of that result.}
since $D'(\Teul) = D(\Teul) - |\Deul|$, we are done.
\end{proof}

\begin{remark} \label {pc09v4rdh8drcj7yu}	
Let us recall how a primitive polynomial $f \in \Comp[x,y]$ determines a tree $\Teul(f;x,y)$.
First note that $f : \Comp^2 \to \Comp$ extends to a rational map $\Phi_0 : \proj^2 \dashrightarrow \proj^1$.
Choose a composition $\pi :X\to \proj^2$ of blowing-up maps centered at points of 
the line at infinity $\mathbb{L}_\infty = \proj^2 \setminus \Comp^2$
such that $\Phi=\Phi_0\circ \pi : X \to \proj^1$ is a morphism. Then the curve
$$
\mathcal{D}=\pi^{-1}(\mathbb{L}_{\infty}) 
$$
is a tree of projective lines with simple normal crossings and is the complement of $\Comp^2$ in the nonsingular projective surface $X$.
If $E$ is an irreducible component of $\mathcal{D}$ satisfying $\Phi(E)=\proj^1$, one says that $E$ is a  dicritical component of $\mathcal{D}$;\footnote{In the
introduction, we used the term ``dicritical curve of $f$'' to refer to these dicritical components.}
then, for generic $t \in \Comp$, the closure in $X$ of the affine curve $f(x,y)=t$
intersects $E$ in a finite (and nonzero) number of points that is called the  degree of the dicritical $E$.

We consider the dual graph of $\mathcal{D}$ in $X$.
This means that we represent each irreducible component of $\mathcal{D}$ by a vertex
and that we put an edge between two vertices when the corresponding components intersect.
Note that this is a tree.
We represent the branches of the curve by arrows;
more precisely,
for each vertex $u$ representing a dicritical component $E$ of degree $d_E$, we attach $d_E$ arrows to $u$
and we decorate these arrows with the symbol $(1)$;
note that $u$ now has valency $\ge d_E+1 \ge 2$.
The vertex representing $\mathbb{L}_\infty$ is denoted by $v_0$ and is called the root of the tree.
Each edge is decorated by two integers, one near each of its ends;
the integers decorating the edges are obtained from the self-intersection numbers of the divisors as explained in \cite[Chap.\ 5]{Neumann:LinksInfinity}. 
In the terminology of \cite{Neumann:LinksInfinity}, the object that we have at this point is called a rooted  RPI splice diagram.
For each vertex $v$ of valency $1$ and distinct from $v_0$, we replace $v$ by an arrow decorated with $(0)$.
The decorated rooted tree $\Teul_0$ obtained in this manner satisfies the definition of an abstract Newton tree at infinity (Def.\ \ref{p9823p98p2d}),
and is generic (Def.\ \ref{c0j23oed9vbaCo8f}). 
We stress that the only difference between this $\Teul_0$ and Neumann's splice diagram is that we replaced 
each vertex $v \neq v_0$ of valency $1$ by  an arrow decorated with $(0)$.

Alternatively, one can obtain $\Teul_0$
from the Puiseux pairs of the branches at infinity as explained in \cite{Eisenbud_Neumann_1985} and \cite{Neumann:LinksInfinity},
or from the equations of the successive Newton polygons in the Newton algorithm as explained in \cite{CN11-Russellfest}.
In all cases, $\Teul_0$ is an abstract Newton tree at infinity and is generic.

We define $\Teul(f;x,y)$ to be the unique minimally complete Newton tree at infinity which is equivalent to $\Teul_0$ (Lemma \ref{cvp9wpe9p9d}).

One can see that there is a natural injective map $v \mapsto C_v$ from the set of vertices of $\Teul(f;x,y)$
to the set of irreducible components of $\mathcal{D}$,
and that a vertex $v$ of $\Teul(f;x,y)$ belongs to $\Neul$ if and only if $C_v$ is a component of the fiber of $\Phi$ over the point $\infty \in \proj^1$
(more precisely, the multiplicity of $C_v$ in the divisor $\Phi^{-1}(\infty)$ is equal to $N_v$).
\end{remark}

\begin{remark} \label {v87bnd5hmzvfdjsm6k7oxdg}
Let $f \in \Comp[x,y]$ be a primitive polynomial and let $\Teul = \Teul(f;x,y)$.
As explained in Rem.\ \ref{pc09v4rdh8drcj7yu}, $\Teul$ is a minimally complete abstract Newton tree at infinity.
We claim that $\tD(\Neul) = 2g$, where $g$ is the genus of the generic fiber of $f : \Comp^2 \to \Comp$.
To see this, first observe that  $\tD(\Neul) = 2 - M(\Teul) -  D(\Teul) $ by Thm \ref{918235071yrsj2dhry}; it therefore suffices to show that
\begin{equation} \label {pco8v76jretryusie45n54tf}
2 - M(\Teul) -  D(\Teul) = 2g.
\end{equation}
Let $C \subset \Comp^2$ denote a generic fiber of $f$;
then Thm 4.3 of \cite{Neumann:LinksInfinity} shows that the Euler characteristic of $C$ is equal to $M(\Teul)$.
Since $D(\Teul)$ is equal to the number $n_\infty$ of places at infinity of $C$, \eqref{pco8v76jretryusie45n54tf}
follows from the well-known equality $\chi(C) = 2 - 2g - n_\infty$.
So $\tD(\Neul) = 2g$.
\end{remark}

\section{Characteristic numbers}
\label {Section:Characteristicnumbers}

{\it Throughout this section, $\Teul$ is a minimally complete abstract Newton tree at infinity.}

\medskip

In this section we consider the set $P=P(\Teul)$ whose elements are all pairs $(u,e)$ such that $u\in\Neul$
and $e$ is an edge in $\Neul$ that is incident to $u$. We also define a partial order $\preceq$ on the set $P$.
By induction on the poset $(P,\preceq)$, we define a family $\big( c(u,e) \big)_{ (u,e) \in P }$ of positive rational numbers
and prove several of its properties.
We call $c(u,e)$ the characteristic number of $(u,e)$.
The most important property of characteristic numbers is given in Thm \ref{xncoo9qwdx9}, which is a fundamental result in the 
theory of Newton trees.

\begin{notation}
If $x,y \in \Veul \cup \Aeul$ then the unique path from $x$ to $y$ is denoted $\gamma_{x,y}$.
\end{notation}

\begin{notation} \label {dp9cvp0293ewd}
Let $w \in \Veul$ and let $A$ be a subset of $\Aeul \setminus \Aeul_0$.
Let $H(w,A)$ be the set of all pairs $(e,u)$ such that $u$ is a vertex, $e$ is an edge of $\Teul$ incident to $u$, and:
\begin{gather*}
\text{for all $\alpha \in A$, $u$ is in $\gamma_{w,\alpha}$ but $e$ is not in $\gamma_{w,\alpha}$.} 
\end{gather*}
Let $\hat H(w,A) = \setspec{ (e,u) \in H(w,A) }{ u \neq w }$.
Also define the integers
$$
\textstyle
h(w,A) = \prod_{(e,u) \in H(w,A)} q(e,u) \quad \text{and} \quad \hat h(w,A) = \prod_{(e,u) \in \hat H(w,A)} q(e,u)
$$
and observe that $h(w,A) \mid x_{w,\alpha}$ and $\hat h(w,A) \mid \hat x_{w,\alpha}$ for all $\alpha \in A$.
\end{notation}

\begin{lemma} \label {civ023edkdkfpw}  
Let $S$ be a nonempty collection of dicritical vertices,
let $d$ be the gcd of the degrees of the dicritical vertices belonging to $S$, and let
$$
A = \setspec{ \alpha \in \Aeul\setminus\Aeul_0 }{ \text{$\alpha$ is adjacent to some element of $S$} }.
$$
Then for every $w \in \Veul$,
$$
\textstyle \text{$h(w,A) d$ divides  $\sum_{\alpha \in A} x_{w,\alpha}$ \quad and \quad $\hat h(w,A) d$ divides  $\sum_{\alpha \in A} \hat x_{w,\alpha}$.}
$$
In particular,  $\sum_{\alpha \in A} \frac{x_{w,\alpha}}{a_w}$ and $\sum_{\alpha \in A} \hat x_{w,\alpha}$ belong to $d \Integ$.
\end{lemma}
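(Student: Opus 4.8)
The plan is to reduce everything to a single dicritical vertex by an f-partition / grouping argument, and then to handle the single-dicritical case by a direct local computation at that dicritical vertex, using Lemma \ref{923ip38kjfbve09} and the multiplicativity of the $q$-decorations along paths. First I would fix $w \in \Veul$ and group the arrows of $A$ according to which dicritical of $S$ they are attached to: for $u \in S$ let $A_u = \setspec{\alpha \in \Aeul\setminus\Aeul_0}{\{u,\alpha\}\text{ is an edge}}$, so that $(A_u)_{u\in S}$ is an f-partition of $A$ (distinctness because dicriticals are not adjacent, Rem.\ \ref{7623egd812e76f}; and $|A_u| = d_u$ by Lemma \ref{923ip38kjfbve09}\eqref{jtiq348439}). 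Then $\sum_{\alpha \in A} x_{w,\alpha} = \sum_{u\in S}\sum_{\alpha\in A_u} x_{w,\alpha}$, so it suffices to prove, for each fixed $u \in S$, that $h(w,A)$ divides every $x_{w,\alpha}$ with $\alpha \in A_u$ and that $\sum_{\alpha\in A_u} x_{w,\alpha} \in h(w,A)\, d_u\, \Integ$ — because then the full sum lies in $h(w,A)\gcd_u(d_u)\Integ = h(w,A)d\,\Integ$. (Here I would note that $h(w,A) \mid h(w,A_u)$ whenever $A_u \subseteq A$, since $H(w,A) \subseteq H(w,A_u)$ — enlarging the set of arrows only adds more constraints, hence fewer pairs $(e,u)$ — so it is even enough to work with $h(w,A_u)$ in place of $h(w,A)$; I'd have to check this monotonicity direction carefully, it's the kind of thing that is easy to get backwards.)

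The heart of the matter is then the single-dicritical statement: for $u$ a dicritical with $B_u = A_u$ the set of its adjacent good arrows (so $|B_u| = d_u$), and $w$ any vertex, one has $x_{w,\alpha}$ the same for all $\alpha \in B_u$ — call the common value $X$ — and $\sum_{\alpha \in B_u} x_{w,\alpha} = d_u X$, which is obviously in $d_u\Integ$, and $X$ is divisible by $h(w,B_u)$. The key observation is that for $\alpha, \alpha' \in B_u$ the paths $\gamma_{w,\alpha}$ and $\gamma_{w,\alpha'}$ agree up to $u$ and then split, with the last edge $\{u,\alpha\}$ (resp.\ $\{u,\alpha'\}$) decorated $1$ near $u$ by Lemma \ref{923ip38kjfbve09}\eqref{93f394ejf9i}; the set of edges incident to these paths, together with the relevant $q$-values, is independent of the choice of $\alpha \in B_u$, because the only edges that differ are the trailing $\{u,\alpha\}$ themselves, which are not incident to their own path and which carry $q = 1$ anyway. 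Hence $x_{w,\alpha}$ does not depend on $\alpha \in B_u$. The divisibility $h(w,B_u) \mid x_{w,\alpha}$ is already asserted in \ref{dp9cvp0293ewd} ("observe that $h(w,A) \mid x_{w,\alpha}$"), so I can simply cite it. For the "in particular" clause, I'd use Rem.\ \ref{es5s6xjc17lcy4l} / Def.\ \ref{d23ed9wei9d23}: $a_w \mid N_w$, and more to the point $a_w \mid h(w,A)$ when $A \neq \emptyset$, because the dead end at $w$ (if any) is one of the edges counted in $h(w,A)$ — it lies on every $\gamma_{w,\alpha}$? no: the dead end is *incident to* $w$ but not traversed, so it contributes $q(\text{dead end},w) = a_w$ to the product $h(w,A)$ exactly when $\alpha$ is such that the dead end is "not in $\gamma_{w,\alpha}$", which holds for all $\alpha$. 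So $a_w \mid h(w,A) \mid \sum_\alpha x_{w,\alpha}$, hence $d \mid \sum_\alpha x_{w,\alpha}/a_w$; and $\sum_\alpha \hat x_{w,\alpha} \in d\Integ$ is the $\hat{}$ version, which we are proving anyway.

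The $\hat{}$-versions of both divisibilities are entirely parallel: replace $H(w,A)$ by $\hat H(w,A)$ (drop the pairs $(e,u)$ with $u = w$) and $x_{w,\alpha}$ by $\hat x_{w,\alpha}$ throughout; the grouping argument is identical and the single-dicritical computation is identical because the edges incident to $w$ that get removed are the same for all $\alpha \in B_u$. The main obstacle I anticipate is getting the bookkeeping of \emph{which} edges lie on \emph{which} paths exactly right — in particular the monotonicity $H(w,A) \subseteq H(w,A_u)$ and the claim that $x_{w,\alpha}$ is genuinely constant on $B_u$ — since these are purely combinatorial facts about the tree that are "obvious from a picture" but need the definitions in \ref{c9v39rf0eX9e4np8glr9t8} and \ref{dp9cvp0293ewd} to be unwound carefully; everything arithmetical after that is routine.
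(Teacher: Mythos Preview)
Your proposal is correct and follows essentially the same approach as the paper: partition $A$ into the sets $B_u$ of arrows attached to each dicritical $u \in S$, use Lemma~\ref{923ip38kjfbve09}\eqref{93f394ejf9i} to see that $x_{w,\alpha}$ (and $\hat x_{w,\alpha}$) is constant on each $B_u$, combine this with the divisibility $h(w,A) \mid x_{w,\alpha}$ already noted in \ref{dp9cvp0293ewd} to get $\sum_{\alpha \in B_u} x_{w,\alpha} \in h(w,A)\,d_u\,\Integ$, and sum over $u$. Your detour through the monotonicity $h(w,A) \mid h(w,A_u)$ is unnecessary (the paper works directly with $h(w,A)$ throughout), but harmless; your argument for $a_w \mid h(w,A)$ is exactly right.
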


\begin{proof}
Let $w \in \Veul$.
As noted in \ref{dp9cvp0293ewd}, we have
\begin{equation} \label {c9f29wudsf0we}
\text{$h(w,A) \mid x_{w,\alpha}$ \quad and \quad $\hat h(w,A) \mid \hat x_{w,\alpha}$ \qquad for all $\alpha \in A$.}
\end{equation}

For each $v \in S$, let $B_v = \setspec{ \alpha \in \Aeul\setminus\Aeul_0 }{ \text{$\alpha$ is adjacent to $v$} }$.

Let $v \in S$. Note that $|B_v|$ is the degree of the dicritical $v$, so $|B_v| = c_v d$ for some $c_v  \in \Integ$.
It follows from Lemma \ref{923ip38kjfbve09}\eqref{93f394ejf9i} that $x_{w,\alpha}$ has the same value for all $\alpha \in B_v$,
so \eqref{c9f29wudsf0we} implies that there exists $g_v \in \Integ$ such that $x_{w,\alpha} = h(w,A) g_v$ for all $\alpha \in B_v$.
Similarly, $\hat x_{w,\alpha}$ has the same value for all $\alpha \in B_v$,
so \eqref{c9f29wudsf0we} implies that there exists $\hat g_v \in \Integ$ such that $\hat x_{w,\alpha} = \hat h(w,A) \hat g_v$ for all $\alpha \in B_v$.
Thus 
\begin{align*}
\textstyle \sum_{\alpha \in B_v} x_{w,\alpha} & = \textstyle \sum_{\alpha \in B_v} h(w,A) g_v = |B_v| h(w,A) g_v = c_v d h(w,A) g_v, \\
\textstyle \sum_{\alpha \in B_v} \hat x_{w,\alpha} & = \textstyle \sum_{\alpha \in B_v} \hat h(w,A) \hat g_v
= |B_v| \hat h(w,A) \hat g_v = c_v d \hat h(w,A) \hat g_v, 
\end{align*}
showing that for each $v \in S$ we have
$$
\textstyle \text{$\sum_{\alpha \in B_v} x_{w,\alpha}  \in h(w,A) d \Integ$ \quad and \quad $\sum_{\alpha \in B_v} \hat x_{w,\alpha}  \in \hat h(w,A) d \Integ$.}
$$
Since $(B_v)_{ v \in S }$ is an f-partition of $A$, 
$\sum_{\alpha \in A} x_{w,\alpha} = \sum_{v \in S} \sum_{\alpha \in B_v} x_{w,\alpha} \in h(w,A) d  \Integ$
and similarly $\sum_{\alpha \in A} \hat x_{w,\alpha} \in \hat h(w,A) d  \Integ$.
The fact that  $\sum_{\alpha \in A} \frac{x_{w,\alpha}}{a_w} \in d \Integ$ is a consequence of
$\sum_{\alpha \in A} x_{w,\alpha} \in h(w,A) d  \Integ$ and of $a_w \mid h(w,A)$.
The fact that $\sum_{\alpha \in A} \hat x_{w,\alpha} \in d \Integ$ is clear.
\end{proof}

\begin{notation} \label {0db3ukifxoms7dcdOoOoxde}
Define $A_v = \setspec{ \alpha \in \Aeul \setminus \Aeul_0 }{ \text{$\alpha$ is adjacent to some element of $\Deul_v$}}$
for each $v \in \Neul$, and observe that if $v \notin \Nd(\Teul)$ then $A_v = \emptyset$ (because $\Deul_v = \emptyset$).
\end{notation}

\begin{corollary} \label {p9cv355yutekvCs7df}
If $v \in \Neul$ and $w \in \Veul$ then  $\sum_{ \alpha \in A_v } \frac{x_{w,\alpha}}{a_w}$ and
$\sum_{ \alpha \in A_v } \hat x_{w,\alpha}$ belong to $d(v)\Integ$.
\end{corollary}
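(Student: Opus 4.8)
The plan is to deduce this directly from Lemma~\ref{civ023edkdkfpw}, after checking that the data $A_v$ and $d(v)$ attached to a vertex $v \in \Neul$ are exactly those appearing in that lemma when one takes $S = \Deul_v$.

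First I would dispose of the degenerate case $v \notin \Nd(\Teul)$. Here $\Deul_v = \emptyset$ by Def.~\ref{oifnue09e2039}, so $A_v = \emptyset$ by~\ref{0db3ukifxoms7dcdOoOoxde}; the two sums in the statement are therefore empty, hence equal to $0$, which lies in $d(v)\Integ$ no matter the value of $d(v)$ (in this case $d(v) = N_v$ by Notation~\ref{92bty9df8349thg98}).

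For the remaining case $v \in \Nd(\Teul)$, I would apply Lemma~\ref{civ023edkdkfpw} with $S = \Deul_v$, which is a nonempty collection of dicritical vertices. By Notation~\ref{92bty9df8349thg98} the integer $d(v)$ is precisely the gcd of the degrees of the dicriticals belonging to $S$, and by~\ref{0db3ukifxoms7dcdOoOoxde} the set $A_v$ coincides with the set $A$ built from $S$ in the lemma. The ``in particular'' clause of Lemma~\ref{civ023edkdkfpw}, applied with the given $w \in \Veul$, then says that $\sum_{\alpha \in A_v} \frac{x_{w,\alpha}}{a_w}$ and $\sum_{\alpha \in A_v} \hat x_{w,\alpha}$ belong to $d(v)\Integ$, which is the assertion to be proved.

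There is essentially no obstacle here: the corollary is Lemma~\ref{civ023edkdkfpw} restated in the terminology of nodes. The only points requiring (trivial) attention are matching the notation---confirming that $A_v = A$ and that $d(v)$ is the relevant gcd---and treating the case $v \notin \Nd(\Teul)$ separately, so that the hypothesis ``nonempty collection of dicritical vertices'' in Lemma~\ref{civ023edkdkfpw} is satisfied.
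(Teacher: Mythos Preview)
Your proposal is correct and follows exactly the same approach as the paper's own proof: handle the case $v \notin \Nd(\Teul)$ trivially (since $A_v = \emptyset$), and for $v \in \Nd(\Teul)$ apply Lemma~\ref{civ023edkdkfpw} with $S = \Deul_v$. The paper's proof is simply a terser version of what you wrote.
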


\begin{proof}
If $v \notin \Nd(\Teul)$ then $A_v = \emptyset$, so the two sums are zero. 
If $v \in \Nd(\Teul)$ then the result follows from Lemma \ref{civ023edkdkfpw}.
\end{proof}

\begin{notations} \label {jkcnv9wisdxco}
If $x_1, \dots, x_n \in \Rat$, let $\langle x_1, \dots, x_n \rangle$ denote the $\Integ$-submodule of $\Rat$ generated by $x_1, \dots, x_n$.
Then $\langle x_1, \dots, x_n \rangle$ is a free $\Integ$-module of rank $1$ or $0$,
so there exists a unique $\xi \in \Rat$ such that $\xi\ge0$ and $\langle x_1, \dots, x_n \rangle = \xi\Integ$. 
We refer to this unique $\xi$ as the \textit{gcd} of $x_1, \dots, x_n$, and we write $\gcd(x_1, \dots, x_n) = \xi$.
In the special case where $x_1, \dots, x_n \in \Integ$, this definition agrees with the usual gcd of $x_1, \dots, x_n$.
Gcd's of rational numbers have (in particular) the following properties:
\begin{itemize}

\item If $a, x_1, \dots, x_n \in \Rat$ then  $\gcd(a x_1, \dots, a x_n) = |a| \gcd( x_1, \dots,  x_n)$.

\item If $m_1, \dots, m_n \in \Integ$ and $x \in \Rat$ then  $\gcd(x, m_1 x, \dots, m_n x) = |x|$.

\item If $x_1, \dots, x_n \in \Rat$, and if $m$ is any positive integer such that $mx_i \in \Integ$ for all $i$,
then $\gcd(x_1, \dots, x_n) = \frac1m \gcd(m x_1, \dots, mx_n)$ where  $\gcd(m x_1, \dots, mx_n)$ is the usual gcd of 
the integers $m x_1, \dots, mx_n$.

\end{itemize}
\end{notations}

\begin{notations} \label {kCfnp293wdw0}
Let $P=P(\Teul)$ be the set of pairs $(u,e)$ where $u\in\Neul$ and $e$ is
an edge in $\Neul$ that is incident to $u$.
Given $(u,e) \in P$, define:
\begin{itemize}
\item $\Nd(u,e) = 
\setspec{ z \in \Nd(\Teul) }{ \text{$e$ is in $\gamma_{u,z}$} }$
\item $A(u,e) = 
\setspec{ \alpha \in \Aeul \setminus \Aeul_0 }{ \text{$e$ is in $\gamma_{u,\alpha}$} }$
\item $A'(u,e) = (\Aeul \setminus \Aeul_0) \setminus A(u,e)
=\setspec{ \alpha \in \Aeul \setminus \Aeul_0 }{ \text{$e$ is not in $\gamma_{u,\alpha}$} }$
\item $p(u,e) = \sum_{ \alpha \in A(u,e) } \hat x_{ u,\alpha }$
\item $p'(u,e) = \sum_{ \alpha \in A'(u,e) } \hat x_{ v,\alpha }$ where  $v$ is defined by $e = \{u,v\}$.
\end{itemize}
\end{notations}

\begin{remark}
If $| \Neul | \ge 2$ then $P(\Teul) \neq \emptyset$, because $\Neul$ is connected.
If $| \Neul | < 2$ then $\Neul = \{ v_0 \}$ and $P(\Teul) = \emptyset$. 
This means that all results from here to the end of the section are trivially true when $| \Neul | < 2$.
\end{remark}

\begin{definition} \label {d9f239cw093eir}
We define a partial order $\prec$ on  $P=P(\Teul)$ by declaring that $(u',e') \prec (u,e)$
if and only if $\gamma_{u',u}$ contains $e$ but not $e'$.
This order relation allows us to use induction in definitions and proofs.
In this regard, the following remarks are useful.

Let $(u,e) \in P$. Then $e = \{ u, u_0 \}$ for some $u_0 \in \Neul$ uniquely determined by $(u,e)$.
Observe that $\epsilon(u_0)\ge1$, 
and that $(u,e)$ is minimal in $(P, \preceq)$ if and only if $\epsilon(u_0)=1$.

If $(u,e)$ is not minimal,
let $u,u_1,\dots,u_n$ be the distinct elements of $\Neul$ that are adjacent to $u_0$
and let $e_i = \{u_0,u_i\}$ for $i=1,\dots,n$.
Note that $\epsilon(u_0)=n+1$ and that $n \ge 1$.
Then $(u_0,e_i) \prec (u,e)$ for all $i=1,\dots,n$.
Moreover, $(u_0,e_1), \dots, (u_0,e_n)$
are the maximal elements of the set $\setspec{ (u',e') \in P }{ (u',e') \prec (u,e) }$.
We call $(u_0,e_1), \dots, (u_0,e_n)$ the \textit{immediate predecessors} of $(u,e)$.
\end{definition}

\begin{definition} \label {kcjfnp0293wd}
We inductively define a rational number $c(u,e)$ for each $(u,e) \in P$.
Consider $(u,e) \in P$ with $e = \{u,u_0\}$.
See \ref{92bty9df8349thg98} for the definition of $d(u_0) \in \Nat \setminus \{0\}$.
\begin{enumerate}

\item[(i)] If $(u,e)$ is minimal (i.e., $\epsilon(u_0)=1$), we define $c(u,e) = d(u_0) / a_{u_0}$.

\item[(ii)] If $(u,e)$ is non-minimal (i.e., $\epsilon(u_0)>1$), we define
$$
c(u,e) = \frac{\gcd(d(u_0), c(u_0,e_1), \dots, c(u_0,e_n))}{a_{u_0}},
$$
where $(u_0,e_1), \dots, (u_0,e_n)$ are the immediate predecessors of $(u,e)$ (cf.\ Def.\ \ref{d9f239cw093eir}) and where
we use gcds of rational numbers (cf.\ \ref{jkcnv9wisdxco}).

\end{enumerate}
We call $c(u,e)$ the \textit{characteristic number} of $(u,e)$. 
\end{definition}

\begin{example} \label {cov7btiw67d78vujew9}
Continuation of Fig.\ \ref{723dfvcjp2q98ewdywe}, Ex.\ \ref{Pc09vbq3j7gabXrZiA} and \ref{cIuCbgepwej6DsJ37655enm}.
Using the data contained in Ex.\ \ref{Pc09vbq3j7gabXrZiA}, we may calculate $c(u,e)$ for all $(u,e) \in P$.
The results are given here:
%%%%%%%%%%%%%%%%%%%%%%%%%%%%%%%%%%%%%%%%%%%%%%%%%%%%%%%%
$$
\scalebox{1}{
\setlength{\unitlength}{1mm}%
\begin{picture}(95,21)(-5,-16)
%%%%%%%%%%%%%%%%%%%%%%%%%%%%%%%%%%%%%%%%%%%%%%
\multiput(0,0)(20,0){5}{\circle{1}}
\put(0,-15){\circle{1}}
\multiput(0.5,0)(20,0){4}{\line(1,0){19}}
\put(0,-0.5){\line(0,-1){14}}
\put(40,-15){\circle{1}}
\put(40,-0.5){\line(0,-1){14}}
%%%%%%%%%%%%%%%%%%%%%%%%%%%%%%%%%%%%%%%%%%%%%%
\put(3,1){\makebox(0,0)[bl]{\tiny ${ 3 }$}}
\put(17,1){\makebox(0,0)[br]{\tiny ${ \frac13 }$}}
\put(23,1){\makebox(0,0)[bl]{\tiny ${ 9 }$}}
\put(37,1){\makebox(0,0)[br]{\tiny ${ \frac19 }$}}
\put(43,1){\makebox(0,0)[bl]{\tiny ${ 63 }$}}
\put(61,1){\makebox(0,0)[bl]{\tiny ${ 126 }$}}
\put(57,1){\makebox(0,0)[br]{\tiny ${ \frac19 }$}}
\put(77,1){\makebox(0,0)[br]{\tiny ${ \frac19 }$}}
\put(1,-3){\makebox(0,0)[tl]{\tiny ${ 1 }$}}
\put(41,-12){\makebox(0,0)[bl]{\tiny ${ \frac19 }$}}
\put(41,-3){\makebox(0,0)[tl]{\tiny ${ 9 }$}}
\put(1,-12){\makebox(0,0)[bl]{\tiny ${ 1 }$}}
\put(38,-15){\makebox(0,0)[r]{\tiny ${ v_6 }$}}
\put(-2,-15){\makebox(0,0)[r]{\tiny ${ v_5 }$}}
\put(-2,0){\makebox(0,0)[r]{\tiny ${ v_4 }$}}
\put(20,-1.5){\makebox(0,0)[t]{\tiny ${ v_3 }$}}
\put(38,-1.5){\makebox(0,0)[t]{\tiny ${ v_2 }$}}
\put(60,-1.5){\makebox(0,0)[t]{\tiny ${ v_1 }$}}
\put(80,-1.5){\makebox(0,0)[t]{\tiny ${ v_0 }$}}
%%%%%%%%%%%%%%%%%%%%%%%%%%%%%%%%%%%%%%%%%%%%%%
\end{picture} \quad
\raisebox{12\unitlength}{\begin{minipage}[t]{8cm} 
Meaning: $c(v_1, \{v_1,v_2\}) = \frac19$,\\ $c(v_2, \{v_1,v_2\}) = 63$, etc.
\end{minipage}}
}
$$
\end{example}

\begin{lemma} \label {Ocivbghn5dmkuyesK}
For each $(u,e) \in P$, the rational number $c(u,e)$ is strictly positive.
\end{lemma}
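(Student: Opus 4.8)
The statement claims $c(u,e)>0$ for every $(u,e)\in P(\Teul)$, and the natural tool is induction on the poset $(P(\Teul),\preceq)$, exactly as the definition of $c$ was set up to allow (Def.\ \ref{kcjfnp0293wd}). The plan is to verify the base case and inductive step directly from the two clauses of that definition, the only non-formal ingredient being that a gcd of rational numbers, at least one of which is nonzero, is itself strictly positive.

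\textbf{Base case.} Suppose $(u,e)$ is minimal in $(P,\preceq)$, with $e=\{u,u_0\}$ and $\epsilon(u_0)=1$. Then $c(u,e)=d(u_0)/a_{u_0}$. By Notation \ref{92bty9df8349thg98} we have $d(u_0)\in\Nat\setminus\{0\}$ (it is either $N_{u_0}$, which is positive since $u_0\in\Neul$, or the gcd of degrees of dicriticals in $\Deul_{u_0}$, each a positive integer), and $a_{u_0}\in\Nat\setminus\{0\}$ by Def.\ \ref{d23ed9wei9d23}. Hence $c(u,e)>0$.

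\textbf{Inductive step.} Suppose $(u,e)$ is non-minimal, with $e=\{u,u_0\}$ and immediate predecessors $(u_0,e_1),\dots,(u_0,e_n)$, $n\ge1$. Then
$$
c(u,e)=\frac{\gcd\big(d(u_0),\,c(u_0,e_1),\dots,c(u_0,e_n)\big)}{a_{u_0}}.
$$
By the induction hypothesis each $c(u_0,e_i)$ is a strictly positive rational number, and $d(u_0)$ is a positive integer as above. In particular the $\Integ$-submodule $\langle d(u_0),c(u_0,e_1),\dots,c(u_0,e_n)\rangle$ of $\Rat$ is nonzero, so its positive generator $\xi=\gcd(d(u_0),c(u_0,e_1),\dots,c(u_0,e_n))$ satisfies $\xi>0$ (cf.\ \ref{jkcnv9wisdxco}, where the gcd is defined to be the nonnegative generator, and it is nonzero precisely because the module is nonzero). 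Since $a_{u_0}>0$, we conclude $c(u,e)=\xi/a_{u_0}>0$.

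\textbf{Main obstacle.} There is essentially no obstacle here: the result is a routine induction, and the one point worth stating carefully is that the gcd of a finite list of rationals that are not all zero is strictly positive, which is immediate from the description of $\langle x_1,\dots,x_n\rangle=\xi\Integ$ in \ref{jkcnv9wisdxco}. I would keep the write-up to a few lines, citing Notation \ref{92bty9df8349thg98}, Def.\ \ref{d23ed9wei9d23}, Def.\ \ref{kcjfnp0293wd} and \ref{jkcnv9wisdxco}, and invoking $\preceq$-induction via Def.\ \ref{d9f239cw093eir}.
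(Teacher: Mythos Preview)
Your proof is correct and follows essentially the same approach as the paper's own proof: induction on $(P,\preceq)$, handling the minimal case via $c(u,e)=d(u_0)/a_{u_0}>0$ and the inductive step via positivity of the gcd of strictly positive rationals divided by $a_{u_0}$. The only difference is that you spell out a bit more explicitly why $d(u_0)>0$ and why the gcd of nonzero rationals is positive, which is fine.
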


\begin{proof}
We write $\Rat_{>0} = \setspec{ x \in \Rat }{ x>0 }$.
Let $(u,e) \in P$, with $e = \{u,u_0\}$.
If $(u,e)$ is minimal then 
$c(u,e) = d(u_0)/a_{u_0}$ where $d(u_0), a_{u_0} \in \Nat\setminus\{0\}$, so $c(u,e) \in \Rat_{>0}$.
If $(u,e)$ is not minimal, 
and if its immediate predecessors $(u_0,e_1), \dots, (u_0,e_n)$ satisfy
$c(u_0,e_i) \in \Rat_{>0}$ for all $i=1,\dots,n$,
then $c(u,e) = \gcd(d(u_0), c(u_0,e_1), \dots, c(u_0,e_n)) / a_{u_0} \in \Rat_{>0}$.
It follows by induction that $c(u,e) \in \Rat_{>0}$ for all $(u,e) \in P$.
\end{proof}

\begin{notation}
If $x,y \in \Veul$ then $\alpha_{[x,y]} = \prod_{i=1}^n a_{v_i}$ where $v_1,\dots,v_n$ are the vertices defined by
$\gamma_{x,y} = (v_1, \dots, v_n)$.
We also define $\alpha_{[x,y)} = \prod_{i=1}^{n-1} a_{v_i}$ and $\alpha_{(x,y]} = \prod_{i=2}^{n} a_{v_i}$,
where $\alpha_{[x,y)} = 1 = \alpha_{(x,y]}$ when $x=y$.
\end{notation}

\begin{lemma}  \label {kjwoeid9cse9c}
Let $(u,e) \in P$.
\begin{enumerata}

\item If $(u',e') \in P$ satisfies $(u',e') \prec (u,e)$, then\ \ $c(u',e') \in \alpha_{[u',u)} c(u,e) \Integ$.

\item For each $z \in \Nd(u,e)$,\ \ $d(z) \in \alpha_{ [z,u) } {c(u,e)} \Integ $.

\item If $\Nd(u,e) \neq \emptyset$ then  $\gcd{ \setspec{ d(z) }{ z \in \Nd(u,e) } } \in  {c(u,e)} \Integ $. 

\end{enumerata}
\end{lemma}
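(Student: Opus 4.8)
The plan is to establish (a) and (b) together by induction on the poset $(P,\preceq)$, and then to read off (c) at once: if $\Nd(u,e)\neq\emptyset$, pick any $z\in\Nd(u,e)$; then (b) gives $d(z)\in\alpha_{[z,u)}c(u,e)\Integ\subseteq c(u,e)\Integ$ since $\alpha_{[z,u)}$ is a positive integer. As this holds for every $z\in\Nd(u,e)$, the $\Integ$-submodule of $\Rat$ generated by $\setspec{d(z)}{z\in\Nd(u,e)}$ lies in $c(u,e)\Integ$, whence $\gcd\setspec{d(z)}{z\in\Nd(u,e)}\in c(u,e)\Integ$, which is (c).

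For the base case of the induction, let $(u,e)\in P$ be minimal, $e=\{u,u_0\}$, so $\epsilon(u_0)=1$ and $c(u,e)=d(u_0)/a_{u_0}$. Assertion (a) is vacuous, since no element of $P$ is $\prec(u,e)$. For (b), note that $u$ is the only vertex of $\Neul$ adjacent to $u_0$, so any $z\in\Nd(\Teul)\subseteq\Neul$ with $e$ in $\gamma_{u,z}$ must equal $u_0$; thus $\Nd(u,e)\subseteq\{u_0\}$, and if $u_0\in\Nd(\Teul)$ then $\gamma_{u_0,u}=(u_0,u)$, so $\alpha_{[u_0,u)}=a_{u_0}$ and $\alpha_{[u_0,u)}c(u,e)=d(u_0)$, making $d(u_0)\in\alpha_{[u_0,u)}c(u,e)\Integ$ trivial.

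For the inductive step, take $(u,e)$ non-minimal, $e=\{u,u_0\}$, with immediate predecessors $(u_0,e_1),\dots,(u_0,e_n)$ (here $e_j=\{u_0,u_j\}$ and $u_1,\dots,u_n$ are the vertices of $\Neul$ adjacent to $u_0$ other than $u$), so $c(u,e)=\gcd(d(u_0),c(u_0,e_1),\dots,c(u_0,e_n))/a_{u_0}$. The one observation doing all the work is that $a_{u_0}c(u,e)$ divides each $c(u_0,e_j)$ and divides $d(u_0)$ (in the gcd-of-rationals sense of \ref{jkcnv9wisdxco}), so $c(u_0,e_j)\in a_{u_0}c(u,e)\Integ$ for all $j$ and $d(u_0)\in a_{u_0}c(u,e)\Integ$; together with $\alpha_{[u_0,u)}=a_{u_0}$ this already gives (a) for the immediate predecessors and (b) for $z=u_0$. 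For the remaining cases one uses that the immediate predecessors are the maximal elements of $\setspec{(u',e')}{(u',e')\prec(u,e)}$: any $(u',e')\prec(u,e)$ that is not an immediate predecessor satisfies $(u',e')\prec(u_0,e_j)$ for some $j$, and any $z\in\Nd(u,e)$ with $z\neq u_0$ satisfies $z\in\Nd(u_0,e_j)$ for some $j$ (the path from $u$ to $z$ runs $u,u_0,\dots$ and then enters exactly one $e_j$, because $\Neul$ is connected). In both situations the path from $u'$, respectively $z$, to $u$ passes through $u_0$, so $\alpha_{[u',u)}=\alpha_{[u',u_0)}a_{u_0}$, respectively $\alpha_{[z,u)}=\alpha_{[z,u_0)}a_{u_0}$. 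Applying the induction hypothesis to $(u_0,e_j)$ gives $c(u',e')\in\alpha_{[u',u_0)}c(u_0,e_j)\Integ$, respectively $d(z)\in\alpha_{[z,u_0)}c(u_0,e_j)\Integ$, and substituting $c(u_0,e_j)\in a_{u_0}c(u,e)\Integ$ produces $c(u',e')\in\alpha_{[u',u)}c(u,e)\Integ$, respectively $d(z)\in\alpha_{[z,u)}c(u,e)\Integ$. This closes the induction.

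The genuinely routine parts are the two path--product identities $\alpha_{[x,u)}=\alpha_{[x,u_0)}a_{u_0}$ and the elementary divisibility properties of gcds of rational numbers. The only point needing a little attention is the combinatorial bookkeeping in the inductive step: given $(u',e')\prec(u,e)$ or $z\in\Nd(u,e)$, one must correctly identify the immediate predecessor $(u_0,e_j)$ below which it sits and check that the relevant paths concatenate through $u_0$; once that is in place the rest is a one-line computation, so I do not anticipate a real obstacle.
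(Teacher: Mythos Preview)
Your proof is correct and follows essentially the same approach as the paper's: both establish (a) and (b) simultaneously by induction on $(P,\preceq)$, using the key divisibility $c(u_0,e_j)\in a_{u_0}c(u,e)\Integ$ together with the path-product identity $\alpha_{[x,u)}=\alpha_{[x,u_0)}a_{u_0}$, and then deduce (c) immediately from (b). Your argument for (c), noting that $\alpha_{[z,u)}\in\Nat\setminus\{0\}$ so $d(z)\in c(u,e)\Integ$ for each $z$, is slightly more explicit than the paper's one-line ``(c) follows from (b),'' but the content is identical.
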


\begin{proof}
Given $(u,e) \in P$, let $\Pscr_1(u,e)$ and $\Pscr_2(u,e)$ be the following statements:

\medskip

\noindent $\Pscr_1(u,e)$:\ \textit{Every $(u',e') \in P$ satisfying $(u',e') \prec (u,e)$  also satisfies\ \ $c(u',e') \in \alpha_{[u',u)} c(u,e) \Integ$.}

\smallskip

\noindent $\Pscr_2(u,e)$:\ \textit{For each $z \in \Nd(u,e)$,\ \ $d(z) \in \alpha_{ [z,u) } {c(u,e)} \Integ $.}

\medskip
Let us prove, by induction, that the statement ``$\Pscr_1(u,e)$ and $\Pscr_2(u,e)$'' is true for all $(u,e) \in P$.
This will prove that (a) and (b) are true.

Let $(u,e)$ be a minimal element of $P$.
Obviously, $\Pscr_1(u,e)$ is true.
To prove $\Pscr_2(u,e)$,  write $e=\{u,u_0\}$ and let $z \in \Nd(u,e) \subseteq \{u_0\}$.  
Then $z=u_0$, so $d(z) = d(u_0) = a_{u_0} c(u,e) = \alpha_{ [z,u) } c(u,e) \in \alpha_{ [z,u) } {c(u,e)} \Integ $,
so $\Pscr_2(u,e)$ is true.

Consider a non-minimal $(u,e) \in P$ and write $e=\{u,u_0\}$;
let $(u_0,e_1), \dots, (u_0,e_n)$ be the immediate predecessors of $(u,e)$
and assume that $\Pscr_1(u_0,e_i)$ and $\Pscr_2(u_0,e_i)$ are true for all $i = 1, \dots, n$;
to finish the proof, we have to deduce that $\Pscr_1(u,e)$ and $\Pscr_2(u,e)$ are true.

To prove that $\Pscr_1(u,e)$ is true,
we consider $(u',e') \in P$ satisfying $(u',e') \prec (u,e)$, and we have to show that
\begin{equation}  \label {Eocif2039wsdcx0}
c(u',e') \in \alpha_{[u',u)} c(u,e) \Integ.
\end{equation}
Since $(u',e') \prec (u,e)$, there exists $i$ such that $(u',e') \preceq (u_0,e_i)$.
Note that
\begin{equation} \label {kdF01923w9dqWjs}
c(u_0,e_i) \in \gcd( d(u_0),  c(u_0,e_1), \dots, c(u_0,e_n)) \Integ = a_{u_0} c(u,e) \Integ,
\end{equation}
so in particular \eqref{Eocif2039wsdcx0} is true when $(u',e') = (u_0,e_i)$.
If $(u',e') \prec (u_0,e_i)$ then, since $\Pscr_1(u_0,e_i)$ is assumed to be true,
we have $c(u',e') \in \alpha_{[u',u_0)} c(u_0,e_i) \Integ$.
We also have $c(u_0,e_i) \in a_{u_0} c(u,e) \Integ$ by \eqref{kdF01923w9dqWjs},
and it is clear that $\alpha_{[u',u_0)} a_{u_0} = \alpha_{[u',u)}$,
so  \eqref{Eocif2039wsdcx0} is true in this case too.
Since  \eqref{Eocif2039wsdcx0} is true in all cases, $\Pscr_1(u,e)$ is true.

To prove that $\Pscr_2(u,e)$ is true, we consider $z \in \Nd(u,e)$ and we have to show that
$d(z) \in \alpha_{ [z,u) } {c(u,e)} \Integ $.  Note that one of the following is true:
\begin{enumerate}

\item[(i)] $z = u_0$

\item[(ii)] $z \in \Nd(u_0,e_i)$ for some $i = 1, \dots, n$.

\end{enumerate}

In case (i), 
$ d(z) = d(u_0) \in  \gcd( d(u_0),  c(u_0,e_1), \dots, c(u_0,e_n)) \Integ = a_{u_0} c(u,e) \Integ
= \alpha_{ [z,u) } {c(u,e)} \Integ $.

In case (ii), the assumption that $\Pscr_2(u_0,e_i)$ is true implies that 
$d(z) \in \alpha_{[z,u_0)} c(u_0,e_i) \Integ$, and we have $c(u_0,e_i) \in a_{u_0} c(u,e) \Integ$ by \eqref{kdF01923w9dqWjs}, so
$d(z) \in \alpha_{ [z,u) } {c(u,e)} \Integ $.

This shows that $d(z) \in \alpha_{ [z,u) } {c(u,e)} \Integ $ in all cases, so $\Pscr_2(u,e)$ is true.
This completes the proof that $\Pscr_1(u,e)$ and $\Pscr_2(u,e)$ are true for all $(u,e) \in P$, so (a) and (b) are proved.
Assertion (c) follows from (b).
\end{proof}

\begin{notation} \label {Cjkf0293wdipwos}
Let $z \in \Nd(\Teul)$. 
For each $v \in \Deul_z$, let $e_v$ temporarily denote the edge $\{v,z\}$, and define
$$
y_v = \prod_{v' \in \Deul_z \setminus\{v\}} q(e_{v'},z).
$$
As the  $q(e_{v'},z)$ (with $v' \in \Deul_z$) are pairwise relatively prime,
it follows that 
$$
\gcd\setspec{ y_v }{ v \in \Deul_z } = 1
$$
(this is true even if $\Deul_z=\{v\}$, in which case we have $y_v=1$).  We also define
$$
\qdic(z) =  \prod_{v \in \Deul_z} q(e_{v},z) .
$$
\end{notation}

See Notations \ref{kCfnp293wdw0} for $p(u,e)$ and $p'(u,e)$.

\begin{theorem}  \label {xncoo9qwdx9}
For all $(u,e) \in P(\Teul)$, we have $p(u,e), p'(u,e), N_u \in c(u,e) \Integ$.
\end{theorem}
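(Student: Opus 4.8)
The plan is to prove the three divisibilities $p(u,e),p'(u,e),N_u\in c(u,e)\Integ$ simultaneously, by induction on the poset $(P(\Teul),\preceq)$. Fix $(u,e)\in P$ and write $e=\{u,u_0\}$; let $(u_0,e_1),\dots,(u_0,e_n)$ (with $e_i=\{u_0,u_i\}$ and $n=\epsilon(u_0)-1\ge 0$) be the immediate predecessors of $(u,e)$ as in Def.\ \ref{d9f239cw093eir}, so the inductive hypothesis is available for these. I would begin by recording two identities obtained by splitting $\Aeul\setminus\Aeul_0$ according to whether the path from $u$ (resp.\ from $u_0$) to the arrow crosses $e$: from Def.\ \ref{c9v39rf0eX9e4np8glr9t8}(ii) one gets $x_{u,\alpha}=Q(e,u)\hat x_{u,\alpha}$ for $\alpha\in A(u,e)$ and $x_{u,\alpha}=q(e,u)\hat x_{u_0,\alpha}$ for $\alpha\in A'(u,e)$, whence
\[
N_u=Q(e,u)\,p(u,e)+q(e,u)\,p'(u,e)\qquad\text{and}\qquad N_{u_0}=q(e,u_0)\,p(u,e)+Q(e,u_0)\,p'(u,e)
\]
(the second being the symmetric computation at $u_0$, or Prop.\ \ref{kuwdhr12778} applied to the single edge $e$). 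One also has $p'(u,e)=p(u_0,e)$, since $A'(u,e)=A(u_0,e)$. In particular $N_u$ will follow at once once $p(u,e)$ and $p'(u,e)$ are controlled.

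For $p(u,e)$ — the ``forward'' half — I would partition $A(u,e)=A_{u_0}\sqcup\bigsqcup_{i=1}^{n}A(u_0,e_i)$, with $A_{u_0}$ as in Notation \ref{0db3ukifxoms7dcdOoOoxde}. By Cor.\ \ref{p9cv355yutekvCs7df}, $\sum_{\alpha\in A_{u_0}}\hat x_{u,\alpha}\in d(u_0)\Integ$; and for each $i$ a comparison of paths gives $\hat x_{u,\alpha}=P_i\,\hat x_{u_0,\alpha}$ for all $\alpha\in A(u_0,e_i)$, where $P_i\in\Integ$ is the product of the $q(\epsilon,u_0)$ over edges $\epsilon$ at $u_0$ other than $e$ and $e_i$, so $\sum_{\alpha\in A(u_0,e_i)}\hat x_{u,\alpha}=P_i\,p(u_0,e_i)\in c(u_0,e_i)\Integ$ by the inductive hypothesis. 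Summing and using the definition of $c(u,e)$ (Def.\ \ref{kcjfnp0293wd}) together with the gcd-of-rationals conventions of \ref{jkcnv9wisdxco},
\[
p(u,e)\ \in\ d(u_0)\Integ+\textstyle\sum_i c(u_0,e_i)\Integ\ =\ \gcd\big(d(u_0),c(u_0,e_1),\dots,c(u_0,e_n)\big)\Integ\ =\ a_{u_0}\,c(u,e)\Integ .
\]
(The base case $\epsilon(u_0)=1$ is $n=0$, where $A(u,e)=A_{u_0}$ and $c(u,e)=d(u_0)/a_{u_0}$.)

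For $p'(u,e)$ — the deep half — I would exploit the second identity. Since $d(u_0)\mid N_{u_0}$ (Rem.\ \ref{F0934nofe8rg3p406egfh}) and $N_{u_0}\in c(u_0,e_i)\Integ$ for each $i$ by the inductive hypothesis, $N_{u_0}$ lies in every one of the ideals $d(u_0)\Integ,c(u_0,e_1)\Integ,\dots,c(u_0,e_n)\Integ$, hence in $\gcd(d(u_0),c(u_0,e_1),\dots,c(u_0,e_n))\Integ=a_{u_0}c(u,e)\Integ$; combined with $p(u,e)\in a_{u_0}c(u,e)\Integ$ from the previous paragraph, the identity gives $Q(e,u_0)\,p'(u,e)=N_{u_0}-q(e,u_0)\,p(u,e)\in a_{u_0}c(u,e)\Integ$. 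It then remains to remove the factor $Q(e,u_0)$. Using Def.\ \ref{p9823p98p2d}\eqref{fho8wiw9s-5} — the decorations at $u_0$ are pairwise coprime, at most one edge of $u_0$ directed away from the root has decoration $>1$, and a dead end $\delta$ at $u_0$ (necessarily $\delta\ne e$, since $e$ is not a dead end) has $q(\delta,u_0)=a_{u_0}$ equal to the maximum of those decorations — one finds $Q(e,u_0)=a_{u_0}\widetilde Q$ with $\widetilde Q$ coprime to $a_{u_0}$, so that $\widetilde Q\,p'(u,e)\in c(u,e)\Integ$; since $p'(u,e)\in\Integ$, the desired conclusion $p'(u,e)\in c(u,e)\Integ$ then reduces to the coprimality of $\widetilde Q$ with the numerator of $c(u,e)$ (a divisor of $d(u_0)$).

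That last coprimality — in effect a ``conservation of characteristic number across the edge $e$'' — is the step I expect to be the real obstacle: it does not seem visible from the data at $(u,e)$ alone, and I anticipate it must be built into a strengthened inductive statement that also tracks, for every $(u,e)\in P$, a coprimality of $c(u,e)$ with the decorations of the edges traversed on the way ``towards $u$'', in the same spirit as the relative divisibilities of Lemma \ref{kjwoeid9cse9c}. Granting it, $p'(u,e)\in c(u,e)\Integ$, and then $N_u=Q(e,u)p(u,e)+q(e,u)p'(u,e)\in c(u,e)\Integ$, completing the induction.
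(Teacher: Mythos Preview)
Your treatment of $p(u,e)$ is correct; the paper handles that half non-inductively (directly via Lemma~\ref{kjwoeid9cse9c}(c) and Lemma~\ref{civ023edkdkfpw}), but your inductive decomposition arrives at the same divisibility $p(u,e)\in a_{u_0}c(u,e)\Integ$, which the paper also records and uses (see \eqref{JEILdc09vn320efeo}).

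The gap you flag in the $p'(u,e)$ step is genuine, and the route you propose for closing it does not work. Your identity $N_{u_0}=q(e,u_0)\,p(u,e)+Q(e,u_0)\,p'(u,e)$ yields only $\tilde Q\,p'(u,e)\in c(u,e)\Integ$ with $\tilde Q=z_1\cdots z_n\cdot\qdic(u_0)$ (in the paper's notation $z_i=q(e_i,u_0)$, $\qdic(u_0)=\prod_{w\in\Deul_{u_0}}q(\{u_0,w\},u_0)$). There is no structural reason for $\tilde Q$ to be coprime to the numerator of $c(u,e)$: already in the base case $c(u,e)=d(u_0)/a_{u_0}$ with $d(u_0)$ a gcd of dicritical degrees, and nothing forces that to be coprime to $\qdic(u_0)$. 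Since the obstruction is local at $u_0$, strengthening the induction by tracking coprimalities along the path towards $u$ cannot help.

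The paper does not use $N_{u_0}$ at all. It obtains \emph{two} independent families of constraints on $p'(u,e)$ and plays them off against each other. First, for each $i$ it expands $p'(u_0,e_i)$ (which lies in $c(u_0,e_i)\Integ$ by induction) as a sum over $A_{u_0}$, the $A(u_0,e_j)$ with $j\ne i$, and $A'(u,e)$; isolating the last block gives $c(u,e)\mid \hat z_i\,\qdic(u_0)\,p'(u,e)$, and $\gcd_i(\hat z_i)=1$ yields $c(u,e)\mid \qdic(u_0)\,p'(u,e)$. Second --- and this is the idea you are missing --- for each dicritical $w\in\Deul_{u_0}$ it expands the relation $N_w/a_w=0$ in the same way; isolating the $A'(u,e)$ block now gives $c(u,e)\mid y_w\,z_1\cdots z_n\,p'(u,e)$ (Notation~\ref{Cjkf0293wdipwos}), and $\gcd_w(y_w)=1$ yields $c(u,e)\mid z_1\cdots z_n\,p'(u,e)$. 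Since $\gcd(\qdic(u_0),\,z_1\cdots z_n)=1$ by pairwise coprimality of the decorations at $u_0$, the two together give $c(u,e)\mid p'(u,e)$. In the base case only the dicritical family is available, and it already suffices (with the non-node subcase $u_0=v_0$, $\delta_{v_0}=1$ handled separately). The moral is that the vanishing $N_w=0$ at the dicriticals adjacent to $u_0$ is precisely what strips off the factor $\qdic(u_0)$; a single identity at $u_0$ cannot do this.
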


\begin{proof}
Let $(u,e) \in P = P(\Teul)$. Then $N_u = Q(e,u) p(u,e) + q(e,u) p'(u,e)$, so it suffices to show that $p(u,e), p'(u,e) \in c(u,e)\Integ$.
We first prove that  $p(u,e) \in c(u,e)\Integ$.
If $\Nd(u,e) = \emptyset$ then $A(u,e) = \emptyset$ and hence $p(u,e) = 0 \in c(u,e)\Integ$.
If $\Nd(u,e) \neq \emptyset$, define $d = \gcd{ \setspec{ d(z) }{ z \in \Nd(u,e) } }$;
we have $d \in c(u,e) \Integ $ by Lemma \ref{kjwoeid9cse9c} and  $p(u,e) \in d\Integ$ by Lemma \ref{civ023edkdkfpw},
so $p(u,e) \in c(u,e) \Integ$.

There remains to show that $p'(u,e) \in c(u,e) \Integ$.  We do this by induction.
For each $(u,e) \in P$, let $\Pscr(u,e)$ be the statement $p'(u,e) \in c(u,e) \Integ$.  
Let us prove that $\Pscr(u,e)$ is true for all $(u,e) \in P$.
In the argument below we use the following notation:
given $x,y \in \Rat$, we write $x \mid y$ if and only if there exists $n \in \Integ$ satisfying $nx=y$.
In other words, $x \mid y$ $\Leftrightarrow$ $y \in x\Integ$.

Consider the case where $(u,e)$ is a minimal element of $P$. Then $\epsilon(u_0)=1$, where $e = \{u,u_0\}$.
Now $u_0$ either is or is not a node; we distinguish the two cases.

If $u_0$ is not a node then $u_0 = v_0$ and $\delta_{ v_0 } = 1$ by Lemma \ref{C0qowbdowX932epug7fnvd}.
Then $c(u,e) = d(u_0)/ a_{u_0} = N_{v_0} = p'(u,e)$, so $\Pscr(u,e)$ is true.

Assume that $u_0$ is a node and observe that $\{ A_{u_0}, A'(u,e) \}$ is a partition of $\Aeul\setminus\Aeul_0$. For each $w \in \Deul_{u_0}$ we have
$$
0 = \frac{N_w}{a_w} = \sum_{\alpha \in A_{u_0}} \frac{x_{w,\alpha}}{a_w} + \sum_{\alpha \in A'(u,e)} \frac{x_{w,\alpha}}{a_w} .
$$
By Cor.\ \ref{p9cv355yutekvCs7df} there exists $K \in \Integ$ such that 
$\sum_{\alpha \in A_{u_0}} \frac{x_{w,\alpha}}{a_w} = K d(u_0) = K a_{u_0} c(u,e)$.
For each $\alpha \in A'(u,e)$ we have $\frac{x_{w,\alpha}}{a_w} = y_w a_{u_0} \hat x_{u_0,\alpha}$, so
$\sum_{\alpha \in A'(u,e)} \frac{x_{w,\alpha}}{a_w} =  y_w a_{u_0} p'(u,e)$. Thus
$0 =  K a_{u_0} c(u,e) +  y_w a_{u_0} p'(u,e)$ and hence $c(u,e) \mid y_w p'(u,e)$.
Since this holds for all $w \in \Deul_{u_0}$ and $\gcd\setspec{ y_w }{ w \in \Deul_{u_0} } = 1$,
it follows that $\Pscr(u,e)$ is true. 
So $\Pscr(u,e)$ is true whenever $(u,e)$ is minimal.

\medskip
Suppose that $(u,e)$ is non-minimal and that $\Pscr(u',e')$ is true for all $(u',e') \in P$ such
that $(u',e') \prec (u,e)$.
Write $e = \{u,u_0\}$ and let $(u_0,e_1), \dots, (u_0,e_n)$ be the immediate predecessors of $(u,e)$.
The inductive hypothesis implies that $\Pscr(u_0,e_i)$ is true for each $i = 1, \dots, n$, i.e.,
$$
c(u_0,e_i) \mid p'(u_0,e_i) \quad \text{for each $i \in \{1, \dots, n\}$.} 
$$
We also have $c(u_0,e_i) \mid p(u_0,e_i)$ (for all $i$) by the first part of the proof.
Since $a_{u_0} c(u,e) = \gcd( d(u_0), c(u_0,e_1), \dots, c(u_0,e_n) )$, it follows that 
\begin{equation} \label {JEILdc09vn320efeo}
a_{u_0} c(u,e) \mid  \gcd( d(u_0), p'(u_0,e_1), \dots, p'(u_0,e_n), p(u_0,e_1), \dots, p(u_0,e_n) ) .
\end{equation}
We use the following notation:
$$
\scalebox{.9}{
\setlength{\unitlength}{1mm}%
\begin{picture}(46,27)(-2,-16)
%%%%%%%%%%%%%%%%%%%%%%%%%%%%%%%%%%%%%%%%%%%%%%
\multiput(0,0)(20,0){2}{\circle{1}}
\put(40.8944,10.4472){\circle{1}}
\put(40.8944,-10.4472){\circle{1}}
\put(0.5,0){\line(1,0){19}}
\put(20,-0.5){\vector(0,-1){15}}
\put(19.64644,-0.353553){\line(-1,-2){4.5}}
\put(19.64644,-0.353553){\line(-1,-1){7}}
\put(20.44721,.2236){\line(2,1){20}}
\put(20.44721,-.2236){\line(2,-1){20}}
%%%%%%%%%%%%%%%%%%%%%%%%%%%%%%%%%%%%%%%%%%%%%%
\put(0,1){\makebox(0,0)[b]{\tiny ${ u }$}}
\put(40.8944,9){\makebox(0,0)[t]{\tiny ${ u_1 }$}}
\put(40.8944,-9){\makebox(0,0)[b]{\tiny ${ u_n }$}}
\put(8,1){\makebox(0,0)[b]{\tiny ${ e }$}}
\put(20,1){\makebox(0,0)[b]{\tiny ${ u_0 }$}}
\put(16,1){\makebox(0,0)[br]{\tiny ${ t }$}}
\put(26,4){\makebox(0,0)[br]{\tiny ${ z_1 }$}}
\put(23,-1.5){\makebox(0,0)[bl]{\tiny ${ z_n }$}}
\put(34,8){\makebox(0,0)[br]{\tiny ${ e_1 }$}}
\put(31,-5.5){\makebox(0,0)[bl]{\tiny ${ e_n }$}}
\put(21,-15){\makebox(0,0)[l]{\tiny ${ (0) }$}}
\put(20.5,-6){\makebox(0,0)[l]{\tiny ${ a_{u_0} }$}}
\put(12,-10.5){\circle{7}}
\put(12.5,-10.5){\makebox(0,0){\tiny ${ \Deul_{u_0} }$}}
%%%%%%%%%%%%%%%%%%%%%%%%%%%%%%%%%%%%%%%%%%%%%%
\end{picture}
\qquad
\raisebox{15mm}{\begin{minipage}{5cm}
$t = q(e,u_0)$\\
$z_i = q(e_i,u_0)$\\
$e_i = \{ u_0, u_i \}$\\
$\Deul_{u_0}$ may be empty.
\end{minipage}}}
$$
We also write
$\hat z_{ij} =$ product of all $z_k$ with $k \in \{1, \dots, n\} \setminus\{i,j\}$,
and $\hat z_{i} =$ product of all $z_k$ with $k \in \{1, \dots, n\} \setminus\{i\}$.
Note that
$$
\text{$\Aeul\setminus\Aeul_0 = A_{u_0} \cup A(u_0,e_1) \cup \cdots A(u_0,e_n) \cup A'(u,e)$ \ \  is a disjoint union.}
$$
Let $i \in \{ 1, \dots, n\}$ and $J_i = \{ 1, \dots, n\} \setminus \{i\}$. Then
$$
p'(u_0,e_i) =
\sum_{ \alpha \in A_{u_0} } \hat x_{u_i, \alpha}
+ \sum_{j \in J_i } \sum_{ \ \alpha \in A(u_0,e_j) } \hat x_{u_i, \alpha}
+ \sum_{ \alpha \in A'(u,e) } \hat x_{u_i, \alpha} .
$$
We have $\sum_{ \alpha \in A_{u_0} } \hat x_{u_i, \alpha} \in d(u_0)\Integ$ by Cor.\ \ref{p9cv355yutekvCs7df}.
For each $j \in J_i$, there exists $K_j \in \Integ$ such that $\hat x_{u_i, \alpha} = K_j \hat x_{u_0,\alpha}$ for all $\alpha \in A(u_0,e_j)$
(namely, $K_j = t \qdic(u_0) a_{u_0} \hat z_{ij}$);
thus $\sum_{ \alpha \in A(u_0,e_j) } \hat x_{u_i, \alpha} = K_j p(u_0,e_j) \in p(u_0,e_j)\Integ$.
It then follows from \eqref{JEILdc09vn320efeo} that $a_{u_0} c(u,e) \mid \sum_{ \alpha \in A'(u,e) } \hat x_{u_i, \alpha}$.
For each $\alpha \in A'(u,e)$ we have $\hat x_{u_i, \alpha} = \hat z_i \qdic(u_0) a_{u_0} \hat x_{u_0,\alpha}$,
so  $\sum_{ \alpha \in A'(u,e) } \hat x_{u_i, \alpha} =  \hat z_i \qdic(u_0) a_{u_0} p'(u,e)$ and hence
$ c(u,e) \mid  \hat z_i \qdic(u_0) p'(u,e) $.
Since this is true for all $i = 1, \dots, n$, and since $\gcd(\hat z_1, \dots, \hat z_n)=1$, we obtain
\begin{equation} \label {c09b2tij4waovddsvaweyu}
c(u,e) \mid \qdic(u_0) p'(u,e) .
\end{equation}

If $u_0$ is not a node then $\qdic(u_0)=1$ and hence $\Pscr(u,e)$ is true.
Suppose that $u_0$ is a node and let us show that $\Pscr(u,e)$ is true in this case as well.
Let $w \in \Deul_{u_0}$; then
$$
0 = \frac{N_w}{a_w} =
\sum_{\alpha \in A_{u_0}} \frac{x_{w,\alpha}}{a_w}
+ \sum_{j =1}^n \sum_{ \ \alpha \in A(u_0,e_j) } \frac{x_{w,\alpha}}{a_w} 
+ \sum_{\alpha \in A'(u,e)} \frac{x_{w,\alpha}}{a_w}  .
$$
We have $\sum_{\alpha \in A_{u_0}} \frac{x_{w,\alpha}}{a_w} \in d(u_0) \Integ$ by Cor.\ \ref{p9cv355yutekvCs7df}.
For each $j$ and each $\alpha \in A(u_0,e_j)$, we have $\frac{x_{w,\alpha}}{a_w} = y_w a_{u_0} t \hat z_j \hat x_{u_0,\alpha}$;
thus $\sum_{\alpha \in A(u_0,e_j) } \frac{x_{w,\alpha}}{a_w}= y_w a_{u_0} t \hat z_j p(u_0,e_j) \in p(u_0,e_j) \Integ$ for all $j$.
It follows from \eqref{JEILdc09vn320efeo} that $a_{u_0} c(u,e) \mid \sum_{\alpha \in A'(u,e)} \frac{x_{w,\alpha}}{a_w}$.
For each $\alpha \in A'(u,e)$ we have $\frac{x_{w,\alpha}}{a_w} = y_w a_{u_0} z_1 \cdots z_n \hat x_{u_0, \alpha}$, so
$\sum_{\alpha \in A'(u,e)} \frac{x_{w,\alpha}}{a_w} =  y_w a_{u_0} z_1 \cdots z_n p'(u,e)$ and hence
$a_{u_0} c(u,e) \mid  y_w a_{u_0} z_1 \cdots z_n p'(u,e)$.
Since this holds for every $w \in \Deul_{u_0}$, and since $\gcd\setspec{ y_w }{ w \in \Deul_{u_0} } = 1$,
we have $c(u,e) \mid  z_1 \cdots z_n p'(u,e)$. In view of  \eqref{c09b2tij4waovddsvaweyu} and of the fact
that $\gcd( z_1 \cdots z_n, \qdic(u_0) ) = 1$, we obtain that  $\Pscr(u,e)$ is true.
So the Theorem is proved.
\end{proof}

\begin{notation}  \label {c9v8mmxzKRknrscxe4ftdes}
For each $(u,e) \in P$, we define 
$$
M(u,e) = N_u / c(u,e) .
$$
Note that $M(u,e) \in \Nat \setminus \{0\}$, by Lemma \ref{Ocivbghn5dmkuyesK} and Thm \ref{xncoo9qwdx9}.
\end{notation}

\begin{proposition}    \label {0hp9fh023wbpchvgrsjs256}
If $(u,e) \in P$ satisfies $M(u,e) = 1$ then $u>u_0$, where $u_0 \in \Neul$ is defined by $e = \{u,u_0\}$.
\end{proposition}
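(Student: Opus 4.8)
The plan is to argue by contradiction: assume $(u,e)\in P$ with $M(u,e)=1$ and $u<u_0$, and deduce the contradiction $M(u,e)\ge 2$. The starting point is the identity $N_u = Q(e,u)\,p(u,e)+q(e,u)\,p'(u,e)$ established at the very beginning of the proof of Theorem \ref{xncoo9qwdx9}, together with the fact (also from that theorem) that $c(u,e)$ divides each of $N_u$, $p(u,e)$, $p'(u,e)$. Writing $a=p(u,e)/c(u,e)$ and $b=p'(u,e)/c(u,e)$, which lie in $\Nat$, and dividing the identity by $c(u,e)$ yields
$$
M(u,e)=Q(e,u)\,a+q(e,u)\,b .
$$
So everything reduces to showing that this arithmetic identity cannot equal $1$ when $u<u_0$.

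First the routine observations. Since $u<u_0$, the edge $e$ lies in $E_u^+$, so $q(e,u)\ge 1$ by Def.\ \ref{p9823p98p2d}\eqref{fho8wiw9s-5}. Applying Def.\ \ref{p9823p98p2d}\eqref{fho8wiw9s-1} to $u_0$ produces an arrow $\alpha>u_0$; then $u<u_0<\alpha$, so $e$ is traversed by $\gamma_{u,\alpha}$, i.e.\ $\alpha\in A(u,e)$, and since this path increases monotonically, every factor of $\hat x_{u,\alpha}$ is a decoration $q(\epsilon,u')$ with $\epsilon\in E_{u'}^+$, hence $\hat x_{u,\alpha}\ge 1$. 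Thus $p(u,e)\ge 1$ and $a\ge 1$.

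Next I would split on whether $A'(u,e)=\emptyset$. If $A'(u,e)\ne\emptyset$, the same argument applied to an arrow in $A'(u,e)$ gives $p'(u,e)\ge 1$, hence $b\ge 1$; provided one knows $Q(e,u)\ge 1$ this already gives $M(u,e)\ge Q(e,u)+q(e,u)\ge 2$. If $A'(u,e)=\emptyset$, then $p'(u,e)=0$ and every arrow is a descendant of $u$; consequently $u$ has no child other than the one toward $u_0$ (any other child would carry an arrow not beyond $e$, by Def.\ \ref{p9823p98p2d}\eqref{fho8wiw9s-1}), and $N_u=Q(e,u)\,p(u,e)$, so $M(u,e)=Q(e,u)\,a$. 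When $u\ne v_0$ this forces $\delta_u=3$ with a dead end at $u$ (the only possible third edge); since $u\in\Neul$ is not a dicritical this dead end is not decorated by $1$ (Def.\ \ref{e5e6e7w8e9wee9}), so $a_u\ge 2$, and then each $x_{u,\alpha}$ equals $Q(e,u)\,\hat x_{u,\alpha}=q(\epsilon_0,u)\,a_u\,\hat x_{u,\alpha}$, where $\epsilon_0$ is the parent edge of $u$; genericity of $\Teul$ ($N_u>0$) forces all these to be positive, so $q(\epsilon_0,u)\ge 1$, whence $Q(e,u)=q(\epsilon_0,u)\,a_u\ge 2$ and $M(u,e)\ge 2$. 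When $u=v_0$, the configuration $A'(u,e)=\emptyset$ forces $\delta_{v_0}=1$ and $N_{v_0}=p(v_0,e)$; by Lemma \ref{kjwoeid9cse9c}(c), $c(v_0,e)$ divides $\gcd\setspec{d(z)}{z\in\Nd(v_0,e)}$, and since every dicritical then lies beyond $e$, this gcd is the gcd of all dicritical degrees, which is $\le|\Aeul\setminus\Aeul_0|\le N_{v_0}$; so $M(v_0,e)=1$ would force $\Teul$ to have a single dicritical, $N_{v_0}$ equal to its degree, and all $x_{v_0,\alpha}=1$ — and tracing the decorations back to the parent of that dicritical exhibits a non-root vertex of valency $2$, contradicting Def.\ \ref{e5e6e7w8e9wee9}.

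The main obstacle is controlling the sign of $Q(e,u)$: the decorations on the ``parent side'' of a vertex are not forced positive by the axioms, so in the case $A'(u,e)\ne\emptyset$ with $Q(e,u)\le 0$ one cannot read off $M(u,e)\ge 2$ from the displayed identity. I expect this residual case to be handled by a local analysis at $u$ parallel to the case $A'(u,e)=\emptyset$ above: the equality $1=Q(e,u)\,a+q(e,u)\,b$ with $a\ge 1$ and $q(e,u)\ge 1$ pins down $q(\epsilon_0,u)\le 0$ and (via the identities of Prop.\ \ref{kuwdhr12778}) a very rigid configuration of arrows and decorations around $u$, which one then shows to be incompatible with the valency-$\ne 2$ and dead-end requirements of minimal completeness. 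This is the delicate point I would spend most of the effort on.
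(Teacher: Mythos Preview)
Your identity $M(u,e)=Q(e,u)\,a+q(e,u)\,b$ and the bound $a\ge 1$ are correct, and the case $A'(u,e)=\emptyset$ is handled well. But the remaining case has two problems, not one. First, the claim that ``the same argument gives $p'(u,e)\ge 1$'' is false: for $\alpha\in A'(u,e)$ with $u<u_0$, the path $\gamma_{u_0,\alpha}$ begins by going \emph{down} from $u_0$ to $u$, so the factors of $\hat x_{u_0,\alpha}$ include parent-side decorations (for instance $q(\epsilon_0,u)$ whenever $\alpha>u$ via a child of $u$ other than $u_0$), and the axioms do not force these positive---witness the decorations $0,-1,-2,-8,-74,\dots$ in Fig.~\ref{723dfvcjp2q98ewdywe}. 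So you cannot conclude $b\ge 1$, nor even $b\ge 0$, and your subcase $Q(e,u)\ge 1$ is not actually settled either.

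Second, the ``local analysis at $u$'' you hope for cannot close the gap on its own, because the hypothesis $M(u,e)=1$ reads $N_u=c(u,e)$, and $c(u,e)$ is a \emph{global} invariant of the subtree $\Neul(u,e)$, defined by a recursion (Def.~\ref{kcjfnp0293wd}) that reaches down to the minimal elements of $P$. Constraints on valencies and dead ends at $u$ alone do not see this recursion. The paper's proof is by induction on $(P,\preceq)$, and the induction is exactly what supplies the missing leverage: if $u<u_0$ then each immediate predecessor $(u_0,e_i)$ has $u_0<u_i$, so the inductive hypothesis forces $M(u_0,e_i)\ge 2$; hence each $c(u_0,e_i)$ is a proper divisor of $N_{u_0}$, whence $a_{u_0}c(u,e)=\gcd\big(d(u_0),c(u_0,e_1),\dots,c(u_0,e_n)\big)<N_{u_0}$. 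Coupling this with the determinantal bound $\sum_{\alpha\in A'}\hat x_{u,\alpha}\le Q(e,u_0)\,c(u,e)-q(e,u)\,N_{u_0}$ from Prop.~\ref{kuwdhr12778} yields $a_{u_0}<Q(e,u_0)$, which rules out a dead end at $u_0$; a divisibility argument via Lemma~\ref{civ023edkdkfpw} then produces a proper subset $A\subsetneq A'$ with $\sum_{\alpha\in A}\hat x_{u,\alpha}\in Q(e,u_0)\,c(u,e)\,\Nat$ yet strictly smaller than $Q(e,u_0)\,c(u,e)$, forcing $A=\emptyset$ and hence $\delta_{u_0}=2$, contrary to minimal completeness. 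The induction is doing the real work; your local picture at $u$ is looking in the wrong place.
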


\begin{proof}
Let $(u,e) \in P$ and write $e = \{u,u_0\}$.
We have to prove that $N_u = c(u,e)$ implies $u>u_0$.
Let $q = q(e,u)$ and $Q' = Q(e,u_0)$; then
$\left| \begin{smallmatrix} q & Q' \\ N_{u} & N_{u_{0}} \end{smallmatrix} \right|
= \det( e ) \sum_{\alpha \in A'} \hat x_{u,\alpha} $
by Prop.\ \ref{kuwdhr12778}\eqref{238dhwi912F},
where $A' = \setspec{ \alpha \in \Aeul\setminus\Aeul_0 }{ \text{the path $\gamma_{u,\alpha}$ contains $u_{0}$} }$.
So, using $\det(e) \le -1$, we get
$\sum_{\alpha \in A'} \hat x_{u,\alpha}
= \frac1{|\det(e)|} \left| \begin{smallmatrix} Q' & q \\ N_{u_{0}} & N_{u} \end{smallmatrix} \right|
\le \left| \begin{smallmatrix} Q' & q \\ N_{u_{0}} & N_{u} \end{smallmatrix} \right|$; so the implication
\begin{equation} \label {0cn3iehfc0F7CGBcCLIDD3Jhf828i3ubfi}
\text{if $N_u = c(u,e)$ then} \quad 
\textstyle
\sum_{\alpha \in A'} \hat x_{u,\alpha} \le Q' c(u,e) - q N_{u_{0}}  
\end{equation}
is valid for all $(u,e) \in P$.

We proceed by induction on $(u,e)$.
For each $(u,e) \in P$, let $\Pscr(u,e)$ be the statement ``if $N_u = c(u,e)$ then $u>u_0$, where $e = \{u,u_0\}$''.

Consider the case where $(u,e)$ is minimal in $(P,\preceq)$.
Then $c(u,e) = d(u_0) / a_{u_0}$.
We prove $\Pscr(u,e)$ by contradiction; assume that $N_u=c(u,e)$ and that it is not the case that $u>u_0$; then $u < u_0$ and
\begin{equation}  \label {pc09fn1o0q9wdnow9d}
\text{$Q'>0$, $q>0$, $A' \neq \emptyset$ and  $\hat x_{u,\alpha} > 0$ for all $\alpha \in A'$.}
\end{equation}
By \eqref{0cn3iehfc0F7CGBcCLIDD3Jhf828i3ubfi}, \eqref{pc09fn1o0q9wdnow9d} and $N_u = c(u,e) = d(u_0) / a_{u_0}$, we have
\begin{equation}  \label {0cvn23Oedhfp0CXew98we9}
\textstyle
Q' \frac{d(u_0)}{a_{u_0}} - q N_{u_0} \ge \sum_{\alpha \in A'} \hat x_{u,\alpha} > 0,
\end{equation}
so $Q' > a_{u_0} q \frac{N_{u_0}}{d(u_0)} \ge a_{u_0}$
(recall that $d(u_0)\mid N_{u_0}$ by Rem.\ \ref{F0934nofe8rg3p406egfh}) and hence $a_{u_0}< Q'$.
In view of part \eqref{fho8wiw9s-5} of Definition \ref{p9823p98p2d},
the conditions $u<u_0$ and $a_{u_0}<Q'$ imply that
no dead end is incident to $u_0$ (and consequently $a_{u_0}=1$).
Since $u_0 \in \Neul$, it follows that no arrow is adjacent to $u_0$.
Note that $u<u_0$ implies $u_0 \neq v_0$, so $\delta_{u_0}>2$;
since no arrow is adjacent to $u_0$ and $\epsilon(u_0)=1$, we have $|\Deul_{u_0}| \ge 2$.
Write $\Deul_{u_0} = \{ x_1, \dots, x_s \}$ (with $s\ge2$) and arrange the labelling so that
$q( \{u_0,x_1\}, u_0 ) = \max_{1 \le i \le s} q( \{u_0,x_i\}, u_0 )$
and hence $q( \{u_0,x_1\}, u_0 ) = Q'$.
Let $d_{x_i}$ be the degree of the dicritical $x_i$.
Using these facts and $d(u_0) = \gcd( d_{x_1}, \dots, d_{x_s})$, we get
$$
\textstyle
\sum_{\alpha \in A'} \hat x_{u,\alpha} 
= a_{x_1} d_{x_1} + Q' \sum_{i=2}^s a_{x_i} d_{x_i}
> Q' \sum_{i=2}^s a_{x_i} d_{x_i}
\ge Q' d(u_0) .
$$
Since \eqref{0cvn23Oedhfp0CXew98we9} gives 
$\sum_{\alpha \in A'} \hat x_{u,\alpha} \le Q' \frac{d(u_0)}{a_{u_0}} - q N_{u_0} <  Q' \frac{d(u_0)}{a_{u_0}} = Q' d(u_0)$,
this is absurd.
So $\Pscr(u,e)$ is true whenever $(u,e)$ is minimal in $(P,\preceq)$.

Now suppose that $(u,e)$ is not minimal and that $\Pscr(u',e')$ is true for all $(u',e') \in P$ such that $(u',e') \prec (u,e)$. 
We prove $\Pscr(u,e)$ by contradiction; assume that $N_u = c(u,e)$ and that it is not the case that $u>u_0$; then $u < u_0$
and \eqref{pc09fn1o0q9wdnow9d} is true.
Let $(u_0,e_1), \dots, (u_0,e_n)$ be the immediate predecessors of $(u,e)$ and write $e_i = \{u_0,u_i\}$ for $i = 1, \dots, n$.

For each $i \in \{1, \dots, n\}$,
the fact that $u<u_0$ implies that $u_0<u_i$; as $\Pscr(u_0,e_i)$ is true, we must have $N_{u_0} \neq c(u_0,e_i)$,
so   $N_{u_0} = M(u_0,e_i) c(u_0,e_i)$ where $M(u_0,e_i) \ge2$.
It follows that the number $a_{u_0} c(u,e) = \gcd( d(u_0), c(u_0,e_1), \dots, c(u_0,e_n))$ is a proper divisor of $N_{u_0}$, so
\begin{equation} \label {Cc0cipq0wdcno9w}
a_{u_0} c(u,e) < N_{u_0} .
\end{equation}
We have $\sum_{\alpha \in A'} \hat x_{u,\alpha}>0$ by \eqref{pc09fn1o0q9wdnow9d}, so
\eqref{0cn3iehfc0F7CGBcCLIDD3Jhf828i3ubfi} implies
$0 < \sum_{\alpha \in A'} \hat x_{u,\alpha} \le Q' c(u,e) - q N_{u_{0}}$, so $Q' c(u,e) > N_{u_{0}}$.
By this and \eqref{Cc0cipq0wdcno9w}, $a_{u_0}<Q'$.
As in the first part of the proof, this implies that no dead end is incident to $u_0$, and consequently $a_{u_0}=1$.
Another consequence of \eqref{0cn3iehfc0F7CGBcCLIDD3Jhf828i3ubfi} is:
\begin{equation} \label {d0o2Jcnqw9bafw829}
\textstyle
\sum_{\alpha \in A'} \hat x_{u,\alpha} < Q' c(u,e) .
\end{equation}
Define $A_{0}' = A_{u_0}$ and $A_{i}' = A(u_0,e_i)$ for each $i \in \{1, \dots, n\}$
(cf.\ \ref{0db3ukifxoms7dcdOoOoxde} and \ref{kCfnp293wdw0}).
Observe that $A' = \bigcup_{i=0}^n A_i'$ is a disjoint union and that $n\ge1$. Moreover, the fact that $u<u_0$ implies that
$A_i' \neq \emptyset$ for all $i > 0$.  One of the following must be true:
\begin{enumerate}

\item[(i)] $Q' = q( e_j , u_{0} )$ for some $j \in \{1, \dots, n\}$,

\item[(ii)] $Q' = q( \{ u_{0}, x \} , u_{0} )$ for some $x \in \Deul_{u_{0}}$.

\end{enumerate}
Define $A = \bigcup_{ i \in \{0,\dots,n\} \setminus \{j\} } A_i'$ in case (i), and $A = \bigcup_{ i=1 }^n A_i'$ in case (ii).
Observe that in both cases we have $A \subset A'$ (strict inclusion);
since $\hat x_{u,\alpha} > 0$ for all $\alpha \in A'$, it follows that
\begin{equation}  \label {0cbryj5ecxvn39fdtc}
\textstyle   \sum_{ \alpha \in A } \hat x_{u,\alpha} < \sum_{ \alpha \in A' } \hat x_{u,\alpha}.
\end{equation}
Let us prove:
\begin{equation}  \label {CKJv3l75q7mkmofhnbp4tZ68}
\textstyle
\sum_{ \alpha \in A } \hat x_{u,\alpha} \in Q' c(u,e) \Nat .
\end{equation}  
Let $S$ be the set of dicritical vertices that are adjacent to some arrow in $A$,
and let $d(S)$ be the gcd of the degrees of the dicriticals that belong to $S$.
Then $\sum_{ \alpha \in A } \hat x_{u,\alpha} \in \hat h(u,A) d(S) \Nat$
by Lemma \ref{civ023edkdkfpw}, so it suffices to show that  $\hat h(u,A) d(S) \in Q' c(u,e) \Nat$.
Because of how we defined $A$, we have $Q' \mid \hat h(u,A)$ in both cases; so it suffices to show that $d(S) \in c(u,e)\Nat$.
Since $S \subseteq \bigcup_{z \in \Nd(u,e)} \Deul_z$, $d(S)$ is a multiple of 
$g = \gcd{ \setspec{ d(z) }{ z \in \Nd(u,e) } }$; since $g \in  c(u,e) \Integ $ by Lemma \ref{kjwoeid9cse9c}(c),
we obtain $d(S) \in c(u,e)\Nat$, so  \eqref{CKJv3l75q7mkmofhnbp4tZ68} is proved.
So we have:
\begin{equation*} 
\textstyle
Q' c(u,e) > \sum_{\alpha \in A'} \hat x_{u,\alpha} > \sum_{\alpha \in A} \hat x_{u,\alpha} \in Q' c(u,e) \Nat,
\end{equation*}
where the first inequality is \eqref{d0o2Jcnqw9bafw829}, the second is \eqref{0cbryj5ecxvn39fdtc},
and the last claim is \eqref{CKJv3l75q7mkmofhnbp4tZ68}.
This implies that $\sum_{\alpha \in A} \hat x_{u,\alpha} = 0$, so $A = \emptyset$.
Since $n\ge1$ and $A_i' \neq \emptyset$ for all $i \ge 1$, case (ii) is impossible.
In case (i), $A=\emptyset$ means that $n=1$ and $A_0' = \emptyset$, so 
$\Deul_{u_0} = \emptyset$.
Since no dead end is incident to $u_0$, we obtain $\delta_{u_0}=2$, which is impossible because $u<u_0$ implies that $u_0$ is not the root.
This contradiction completes the proof.
\end{proof}

\section{Local structure of $\Neul$}
\label {Sec:LocalstructureofNeul}

{\it We assume that $\Teul$ is a minimally complete abstract Newton tree at infinity.}

\medskip

This section is divided into five parts. 
The first one introduces the rational numbers $\eta(u,e)$ and $R(u,A)$ and proves Thm \ref{P90werd23ewods0ci} and its corollaries.
These facts are the basis of many calculations carried out later.
The second subsection considers the three functions from $(P(\Teul), \preceq)$ to $(\Rat,\le)$ given by
$(u,e) \mapsto c(u,e)$,  $(u,e) \mapsto \eta(u,e)$ and $(u,e) \mapsto \tD( \Neul(u,e) )$,
and shows that the three are monotonic.
The third, fourth and fifth subsections introduce certain substructures of $\Neul$ 
($\tD$-trivial paths, teeth, combs, the set $\Omega(\Teul)$, etc.) and 
study the behavior of the three monotonic functions in relation with these substructures (in particular combs).

\begin{remark}
If $| \Neul | = 1$ then all claims made in this Section are trivially true.
In particular, the sets $P(\Teul)$, $Z(\Teul)$, $W(\Teul)$ and $\Omega(\Teul)$ are all empty, no path is $\tD$-trivial,
and there are no teeth and no combs.
Also note that if $| \Neul | = 1$ and $u \in \Neul$ then $\Eeul_u = \emptyset$ and $u=v_0$ ($\Eeul_u$ is defined in \ref{Bxdkj2A3wkwofdpfwend9}).
\end{remark}

\begin{definition} \label {Bxdkj2A3wkwofdpfwend9}
For each $(u,e) \in P$, we define the set $\Neul(u,e) = \setspec{ x \in \Neul }{ \text{$e$ is in $\gamma_{u,x}$} }$ and the rational number
$$
\eta(u,e) = \tD \big( \Neul(u,e) \big) - \big( 1 - c(u,e) \big).
$$
Given $u \in \Neul$, let $\Eeul_u$ be the set of all  edges of $\Neul$ incident to $u$.
Given a subset $A$ of $\Eeul_u$, define the numbers $\bD(u,A) \in \Integ$ and $R(u,A) \in \Rat_{\ge0}$ by:
\begin{align*}
\bD(u,A) &=  \textstyle  \tD\big( \{u\} \cup \bigcup_{e \in A} \Neul(u,e) \big) \\
R(u,A) &=  \textstyle  \sum_{x \in \Deul_u}(1-\frac1{k_x}) + (1-\frac1{a_u}) + \sum_{e \in A}(1-\frac1{M(u,e)}) .
\end{align*}
\end{definition}

Observe that $R(u,A)$ has the form $\sum_i (1 - \frac1{n_i})$ with $n_i \in \Nat\setminus \{0\}$ for all $i$.
Consequently, if $K$ is a number satisfying $R(u,A) < K$ then we have $| \setspec{ i }{ n_i \neq 1 } | < 2K$.
In particular, if $R(u,A)<1$ then at most one $i$ satisfies $n_i \neq 1$.

\begin{example} \label {oOo8cvn239vn3p98fgIW}
Continuation of Fig.\ \ref{723dfvcjp2q98ewdywe}, Ex.\ \ref{Pc09vbq3j7gabXrZiA}, \ref{cIuCbgepwej6DsJ37655enm} and \ref{cov7btiw67d78vujew9}.
Using the values of $\tD(v_i)$ given in Ex.\ \ref{cIuCbgepwej6DsJ37655enm} and those of $c(u,e)$ given in  Ex.\ \ref{cov7btiw67d78vujew9},
we may calculate $\eta(u,e)$ for all $(u,e) \in P$.
The results are given here:
%%%%%%%%%%%%%%%%%%%%%%%%%%%%%%%%%%%%%%%%%%%%%%%%%%%%%%%%
$$
\scalebox{1}{
\setlength{\unitlength}{1mm}%
\begin{picture}(85,21)(-5,-16)
%%%%%%%%%%%%%%%%%%%%%%%%%%%%%%%%%%%%%%%%%%%%%%
\multiput(0,0)(20,0){5}{\circle{1}}
\put(0,-15){\circle{1}}
\multiput(0.5,0)(20,0){4}{\line(1,0){19}}
\put(0,-0.5){\line(0,-1){14}}
\put(40,-15){\circle{1}}
\put(40,-0.5){\line(0,-1){14}}
%%%%%%%%%%%%%%%%%%%%%%%%%%%%%%%%%%%%%%%%%%%%%%
\put(3,1){\makebox(0,0)[bl]{\tiny ${ 0 }$}}
\put(17,1){\makebox(0,0)[br]{\tiny ${ \frac43 }$}}
\put(23,1){\makebox(0,0)[bl]{\tiny ${ 0 }$}}
\put(37,1){\makebox(0,0)[br]{\tiny ${ \frac{64}9 }$}}
\put(43,1){\makebox(0,0)[bl]{\tiny ${ 0 }$}}
\put(57,1){\makebox(0,0)[br]{\tiny ${ \frac{550}9 }$}}
\put(63,1){\makebox(0,0)[bl]{\tiny ${ 0 }$}}
\put(77,1){\makebox(0,0)[br]{\tiny ${ \frac{1117}9 }$}}
\put(1,-3){\makebox(0,0)[tl]{\tiny ${ 0 }$}}
\put(41,-12){\makebox(0,0)[bl]{\tiny ${ \frac{64}9 }$}}
\put(41,-3){\makebox(0,0)[tl]{\tiny ${ 0 }$}}
\put(1,-12){\makebox(0,0)[bl]{\tiny ${ 0 }$}}
\put(38,-15){\makebox(0,0)[r]{\tiny ${ v_6 }$}}
\put(-2,-15){\makebox(0,0)[r]{\tiny ${ v_5 }$}}
\put(-2,0){\makebox(0,0)[r]{\tiny ${ v_4 }$}}
\put(20,-1.5){\makebox(0,0)[t]{\tiny ${ v_3 }$}}
\put(38,-1.5){\makebox(0,0)[t]{\tiny ${ v_2 }$}}
\put(60,-1.5){\makebox(0,0)[t]{\tiny ${ v_1 }$}}
\put(80,-1.5){\makebox(0,0)[t]{\tiny ${ v_0 }$}}
%%%%%%%%%%%%%%%%%%%%%%%%%%%%%%%%%%%%%%%%%%%%%%
\end{picture}
\quad \raisebox{12\unitlength}{\begin{minipage}[t]{8cm} 
Meaning: $\eta(v_1, \{v_1,v_2\}) = \frac{550}9$,\\ $\eta(v_2, \{v_1,v_2\}) = 0$, etc.
\end{minipage}}
}
$$
\end{example}

\begin{theorem} \label {P90werd23ewods0ci}
Let $u \in \Neul$ and let $A$ be any subset of $\Eeul_u$. Then
$$
 R(u,A) + (\epsilon(u) - |A| - 1) { \textstyle (1 - \frac1{N_u}) }  =   1 + \frac{ \bD(u,A) - 1 - \sum_{e \in A}\eta(u,e)}{N_u} .
$$
\end{theorem}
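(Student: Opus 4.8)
The strategy is to express everything in terms of the quantities already developed and to reduce the claimed identity to an accounting of arrows. Write $e_1,\dots,e_r$ for the edges in $\Eeul_u\setminus A$ (so $r = \epsilon(u)-|A|-1$, since $\epsilon(u)-1$ counts the edges of $\Neul$ incident to $u$; note $u\ne v_0$ or the edge toward the root is among these), and recall $\Neul(u,e) = \setspec{x\in\Neul}{e\text{ is in }\gamma_{u,x}}$. The sets $\{u\}$ and the $\Neul(u,e)$ for $e\in\Eeul_u$ form an f-partition of $\Neul$ (every vertex of $\Neul$ lies on exactly one side of $u$ through exactly one incident edge, or equals $u$), so $\tD(\Neul) = \tD(u) + \sum_{e\in\Eeul_u}\tD(\Neul(u,e))$, and in particular $\bD(u,A) = \tD(u) + \sum_{e\in A}\tD(\Neul(u,e))$. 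Using the definition $\eta(u,e) = \tD(\Neul(u,e)) - (1-c(u,e))$, I can rewrite $\sum_{e\in A}\tD(\Neul(u,e)) = \sum_{e\in A}\eta(u,e) + \sum_{e\in A}(1-c(u,e))$, so that
$$
\bD(u,A) - 1 - \sum_{e\in A}\eta(u,e) = \tD(u) - 1 + \sum_{e\in A}(1-c(u,e)).
$$
Thus the right-hand side of the theorem equals $1 + \frac{1}{N_u}\bigl(\tD(u) - 1 + \sum_{e\in A}(1 - c(u,e))\bigr)$, and everything now reduces to a purely local identity at $u$.

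Next I would unpack $\tD(u)$ via Lemma \ref{90q932r8dhd89cnr9}\eqref{pc9vp23r09}: $\tD(u) = \sigma(u) + (\epsilon(u)-2)(N_u-1) + N_u(1-\tfrac1{a_u})$, and rewrite $\sigma(u)$ using Lemma \ref{lkjxfX0293wsod} as $\frac{\sigma(u)}{N_u} = \sum_{x\in\Deul_u}(1-\tfrac1{k_x})$. Dividing the target local identity through by $N_u$ and substituting, the claim becomes
$$
R(u,A) + (\epsilon(u)-|A|-1)(1-\tfrac1{N_u}) = 1 + \sum_{x\in\Deul_u}(1-\tfrac1{k_x}) + \tfrac{\epsilon(u)-2}{N_u}(N_u-1) + (1-\tfrac1{a_u}) - \tfrac1{N_u} + \tfrac1{N_u}\sum_{e\in A}(1-c(u,e)).
$$
Recalling $R(u,A) = \sum_{x\in\Deul_u}(1-\tfrac1{k_x}) + (1-\tfrac1{a_u}) + \sum_{e\in A}(1-\tfrac1{M(u,e)})$, the $\sum_{x}(1-\tfrac1{k_x})$ and $(1-\tfrac1{a_u})$ terms cancel from both sides, and after collecting the coefficients of $(1-\tfrac1{N_u})$ (there are $\epsilon(u)-2$ of them on the right, $\epsilon(u)-|A|-1$ on the left) I am left with an identity equivalent to
$$
\sum_{e\in A}\Bigl(1 - \tfrac1{M(u,e)}\Bigr) = (|A|-1)\Bigl(1-\tfrac1{N_u}\Bigr) + 1 + \tfrac1{N_u}\sum_{e\in A}(1-c(u,e)) - \tfrac{?}{}
$$
— at this point a careful term-by-term check, using $M(u,e) = N_u/c(u,e)$ (Notation \ref{c9v8mmxzKRknrscxe4ftdes}), collapses both sides: $1 - \tfrac1{M(u,e)} = 1 - \tfrac{c(u,e)}{N_u}$, so $\sum_{e\in A}(1-\tfrac1{M(u,e)}) = |A| - \tfrac1{N_u}\sum_{e\in A}c(u,e)$, and $\tfrac1{N_u}\sum_{e\in A}(1-c(u,e)) = \tfrac{|A|}{N_u} - \tfrac1{N_u}\sum_{e\in A}c(u,e)$, so the $\sum c(u,e)$ contributions match and the rest is elementary bookkeeping in $|A|$ and $1/N_u$.

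In short: f-partition to split $\tD(\Neul)$, the definition of $\eta$ to absorb the $\Neul(u,e)$ terms for $e\in A$, Lemma \ref{90q932r8dhd89cnr9}\eqref{pc9vp23r09} and Lemma \ref{lkjxfX0293wsod} to expand $\tD(u)$, and $M(u,e)=N_u/c(u,e)$ to finish. The only genuine subtlety — and the step I would be most careful about — is the combinatorial bookkeeping of the valency: correctly identifying that the number of edges of $\Neul$ incident to $u$ is $\epsilon(u)-1$, hence $|\Eeul_u\setminus A| = \epsilon(u)-|A|-1$, and tracking the $(\epsilon(u)-2)$ versus $(\epsilon(u)-|A|-1)$ coefficients of $(1-\tfrac1{N_u})$ so that the leftover after cancellation is exactly the trivial identity $|A|(1-\tfrac1{N_u}) = (|A|-1)(1-\tfrac1{N_u}) + (1-\tfrac1{N_u})$. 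Everything else is substitution; no new geometric input beyond the cited lemmas is needed.
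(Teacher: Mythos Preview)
Your approach is correct and essentially identical to the paper's: compute $\bD(u,A)-\sum_{e\in A}\eta(u,e)=\tD(u)+\sum_{e\in A}(1-c(u,e))$, expand $\tD(u)$ via Lemma~\ref{90q932r8dhd89cnr9}\eqref{pc9vp23r09}, divide by $N_u$, and use Lemma~\ref{lkjxfX0293wsod} together with $M(u,e)=N_u/c(u,e)$ to recognise $R(u,A)$.

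One correction to a harmless aside: your parenthetical claim that ``$\epsilon(u)-1$ counts the edges of $\Neul$ incident to $u$'' is wrong. By Definition~\ref{aowi3efkWEGRGa9e93}(2) and Definition~\ref{Bxdkj2A3wkwofdpfwend9}, $|\Eeul_u|=\epsilon(u)$, so $|\Eeul_u\setminus A|=\epsilon(u)-|A|$, not $\epsilon(u)-|A|-1$. Fortunately you never use $r$ or the edges $e_1,\dots,e_r$ anywhere afterwards, so the argument is unaffected; but the remark should be deleted or fixed. The coefficient $\epsilon(u)-|A|-1$ in the theorem is not the number of remaining edges --- it arises purely from the algebra (splitting $(\epsilon(u)-2)(1-\tfrac1{N_u})$ and absorbing the stray $1-\tfrac1{N_u}$ coming from $1-\tfrac1{N_u}=1+\tfrac{-1}{N_u}$), exactly as your final bookkeeping shows.
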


\begin{proof}
By Lemma \ref{90q932r8dhd89cnr9} and the definitions of $\eta$ and $\bD$, we have
\begin{align*}
\bD(u,A) & \textstyle - \sum_{e \in A}\eta(u,e)
 = \tD(u) + \sum_{e \in A} \tD( \Neul(u,e) ) - \sum_{e \in A}\eta(u,e) \\
& \textstyle = \tD(u) + \sum_{e \in A} (1 - c(u,e)) \\
& \textstyle = \sigma(u) + (\epsilon(u)-2)(N_u-1) + N_u(1 - \frac1{a_u})  + \sum_{e \in A} (1 - c(u,e))\\
& \textstyle = \sigma(u) + (\epsilon(u)-|A|-2)(N_u-1) + N_u(1 - \frac1{a_u})   + |A|(N_u-1) + \sum_{e \in A} (1 - c(u,e))\\
& \textstyle = \sigma(u) + (\epsilon(u)-|A|-2)(N_u-1) + N_u(1 - \frac1{a_u})  + \sum_{e \in A} (N_u - c(u,e)) ,
\end{align*}
\begin{align*}
\textstyle \frac{\bD(u,A) - \sum_{e \in A}\eta(u,e)}{N_u} 
& \textstyle= \frac{\sigma(u)}{N_u} +  \sum_{e \in A} (1 - \frac1{M(u,e)}) + (1 - \frac1{a_u})+ (\epsilon(u)-|A|-2)(1 - \frac1{N_u}) \\
& \textstyle= R(u,A) + (\epsilon(u)-|A|-2)(1 - \frac1{N_u}) ,
\end{align*}
where Lemma \ref{lkjxfX0293wsod} is used in the last equality. So
\begin{align*}
\textstyle R(u,A) + (\epsilon(u)-|A|-1)(1 - \frac1{N_u})
& \textstyle = \frac{\bD(u,A) - \sum_{e \in A}\eta(u,e)}{N_u} +1 - \frac1{N_u} \\
& \textstyle =   1 + \frac{ \bD(u,A) - 1 - \sum_{e \in A}\eta(u,e)}{N_u},
\end{align*}
which proves the claim.
\end{proof}

\begin{corollary} \label {P90werd23ewods0ci-cor}
Let $u \in \Neul$ and $A \subseteq \Eeul_u$, and assume that $\bD(u,A) < 1 + \sum_{e \in A}\eta(u,e)$.
\begin{enumerata}

\item $\epsilon(u)-|A| \in \{0,1,2\}$

\item If $\epsilon(u)-|A|=2$ then $R(u,A)=0$ and $\bD(u,A) =\sum_{e \in A}\eta(u,e)$.

\item If $\epsilon(u)-|A|=1$ then $R(u,A)<1$ and $\bD(u,A) = \sum_{e \in A}\eta(u,e) + (1 - c(u^*,e^*))$, where $e^*$ is the unique element
of $\Eeul_u \setminus A$ and $u^*$ is the vertex defined by $e^* = \{u,u^*\}$.

\end{enumerata}
\end{corollary}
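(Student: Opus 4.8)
The plan is to extract everything from Theorem~\ref{P90werd23ewods0ci}. Write that theorem's identity as
\[
R(u,A) + (\epsilon(u)-|A|-1)\bigl(1-\tfrac1{N_u}\bigr) \;=\; 1 + \frac{\bD(u,A)-1-\sum_{e\in A}\eta(u,e)}{N_u},
\]
and observe that the hypothesis $\bD(u,A) < 1 + \sum_{e\in A}\eta(u,e)$ makes the right‑hand side strictly less than $1$. Two elementary facts will be used repeatedly: first, each edge of $\Neul$ incident to $u$ has the form $\{u,x\}$ with $x\in\Neul$ adjacent to $u$, so $|\Eeul_u|=\epsilon(u)$ and hence $\epsilon(u)-|A|\ge 0$; second, $\epsilon(u)\le 1$ whenever $N_u=1$ (Lemma~\ref{90q932r8dhd89cnr9}), and a sum of terms $1-\tfrac1n$ with $n\in\Nat\setminus\{0\}$ lies in $\{0\}\cup[\tfrac12,\infty)$ (as in Lemma~\ref{trivialobs}).

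For (a): if $N_u=1$ then $\epsilon(u)-|A|\le\epsilon(u)\le 1$; if $N_u\ge 2$, then $1-\tfrac1{N_u}\ge\tfrac12>0$ and $R(u,A)\ge 0$, so the displayed identity forces $(\epsilon(u)-|A|-1)(1-\tfrac1{N_u})<1$, whence $\epsilon(u)-|A|-1<\tfrac{N_u}{N_u-1}\le 2$ and $\epsilon(u)-|A|\le 2$. For (b): here $\epsilon(u)=|A|+2\ge 2$, so $N_u\ne 1$, i.e.\ $N_u\ge 2$; the identity reads $R(u,A)+1-\tfrac1{N_u}<1$, so $R(u,A)<\tfrac1{N_u}\le\tfrac12$, hence $R(u,A)=0$; substituting $R(u,A)=0$ and $\epsilon(u)-|A|-1=1$ back into the identity gives $\bD(u,A)=\sum_{e\in A}\eta(u,e)$.

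For (c): with $\epsilon(u)-|A|-1=0$ the identity collapses to $R(u,A)=1+\frac{\bD(u,A)-1-\sum_{e\in A}\eta(u,e)}{N_u}$, which is $<1$, proving the first assertion. For the formula, note that $\Eeul_u\setminus A$ consists of a single edge $e^{\ast}$; write $e^{\ast}=\{u,u^{\ast}\}$, so $(u^{\ast},e^{\ast})\in P$. By Definitions~\ref{d9f239cw093eir} and \ref{kcjfnp0293wd} the immediate predecessors of $(u^{\ast},e^{\ast})$ are precisely the pairs $(u,e)$ with $e\in A$, so $a_u\,c(u^{\ast},e^{\ast})=\gcd\bigl(\{d(u)\}\cup\{c(u,e):e\in A\}\bigr)$; moreover $\{u\}\cup\bigcup_{e\in A}\Neul(u,e)=\Neul(u^{\ast},e^{\ast})$, whence $\bD(u,A)=\tD(\Neul(u^{\ast},e^{\ast}))=\eta(u^{\ast},e^{\ast})+1-c(u^{\ast},e^{\ast})$. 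Thus the asserted formula is equivalent to $\eta(u^{\ast},e^{\ast})=\sum_{e\in A}\eta(u,e)$. Combining the identity $\bD(u,A)-\sum_{e\in A}\eta(u,e)=\tD(u)+\sum_{e\in A}(1-c(u,e))$ (immediate from the definitions of $\bD$ and $\eta$), Lemma~\ref{90q932r8dhd89cnr9}(b) for $\tD(u)$ with $\epsilon(u)=|A|+1$, Remark~\ref{F0934nofe8rg3p406egfh} (which, since $R(u,A)<1$ forces at most one $k_x$ to exceed $1$, yields $\sigma(u)=N_u-d(u)$), and $c(u,e)=N_u/M(u,e)$, a short manipulation gives $\eta(u^{\ast},e^{\ast})-\sum_{e\in A}\eta(u,e)=\Phi$, where $\Phi:=\sigma(u)+|A|N_u-\tfrac{N_u}{a_u}-\sum_{e\in A}c(u,e)+c(u^{\ast},e^{\ast})$.

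It remains to prove $\Phi=0$. By the remark following Definition~\ref{Bxdkj2A3wkwofdpfwend9}, $R(u,A)<1$ means at most one member of $\{k_x:x\in\Deul_u\}\cup\{a_u\}\cup\{M(u,e):e\in A\}$ exceeds $1$, so exactly one of two cases occurs. If $M(u,e)=1$ for every $e\in A$, then $c(u,e)=N_u$ for all such $e$ and $g:=a_u c(u^{\ast},e^{\ast})=\gcd(d(u),N_u)=d(u)$ (using $d(u)\mid N_u$), so $\Phi=(N_u-d(u))(1-\tfrac1{a_u})$; since at most one of $a_u$ and the $k_x$ exceeds $1$, either $a_u=1$ or $\sigma(u)=0$ (hence $d(u)=N_u$), giving $\Phi=0$. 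If instead a necessarily unique $e_0\in A$ has $m_0:=M(u,e_0)>1$, then $a_u=1$, all $k_x=1$, $\sigma(u)=0$, $d(u)=N_u$, $c(u,e_0)=N_u/m_0$ while $c(u,e)=N_u$ for $e\in A\setminus\{e_0\}$, and $g=\gcd(N_u,N_u/m_0)=N_u/m_0$ (since $N_u=m_0(N_u/m_0)$), so $c(u^{\ast},e^{\ast})=N_u/m_0$; substituting, all terms cancel and $\Phi=0$. The main obstacle is precisely this last step: correctly identifying $(u^{\ast},e^{\ast})$, its immediate predecessors and the set $\Neul(u^{\ast},e^{\ast})$, reducing the formula to the scalar identity $\Phi=0$, and carrying out the two‑case verification with due care for gcds of rationals; parts (a), (b) and the first half of (c) are one‑line consequences of the master identity.
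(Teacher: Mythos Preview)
Your proof is correct and follows essentially the same route as the paper's: parts (a) and (b) are identical applications of the master identity from Theorem~\ref{P90werd23ewods0ci}, and for (c) both you and the paper exploit $R(u,A)<1$ to force at most one term among the $k_x$, $a_u$, $M(u,e)$ to exceed $1$, then verify the formula case by case. Your packaging via the scalar $\Phi$ and the identification $\bD(u,A)=\tD(\Neul(u^*,e^*))$ merges the paper's cases $(\alpha)$ and $(\beta)$ and absorbs the $A=\emptyset$ case uniformly, but the underlying computation is the same; one small point is that the claim $\sigma(u)=N_u-d(u)$ is not literally Remark~\ref{F0934nofe8rg3p406egfh} but rather the short calculation (also done in the paper) that at most one $k_x>1$ forces the type to be $[d(u),N_u,\dots,N_u]$.
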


\begin{proof}
The hypothesis and Thm \ref{P90werd23ewods0ci} imply that $R(u,A) + (\epsilon(u) - |A| - 1) (1 - \frac1{N_u} )  < 1$, so
$(\epsilon(u) - |A| - 1) (1 - \frac1{N_u} )  < 1$.
If $\epsilon(u)-|A|>2$ then $\epsilon(u)>2$, so (by Lemma \ref{90q932r8dhd89cnr9}\eqref{0239u9e78923}) $N_u>1$ and 
$(\epsilon(u) - |A| - 1) (1 - \frac1{N_u} )  \ge1$, a contradiction.
So (a) is true.

\smallskip

\noindent (b) Assume that $\epsilon(u)-|A|=2$.
Then $R(u,A) + (1 - \frac1{N_u} )  < 1$.
Since  $\epsilon(u)\ge2$, we have $N_u>1$, so $R(u,A) < 1/2$ and hence $R(u,A)=0$.
Then we get $(1 - \frac1{N_u} )  =   1 + \frac{ \bD(u,A) - 1 - \sum_{e \in A}\eta(u,e)}{N_u}$, and it follows that 
 $\bD(u,A) =\sum_{e \in A}\eta(u,e)$.
So assertion (b) is proved.

\smallskip

\noindent (c) If $\epsilon(u)-|A|=1$ then
$R(u,A)  =   1 + \frac{ \bD(u,A) - 1 - \sum_{e \in A}\eta(u,e)}{N_u} < 1$.

Consider the case $A = \emptyset$.
Then $\epsilon(u)=1$, the assumption $\bD(u,A) < 1 + \sum_{e \in A}\eta(u,e)$ is equivalent to $\tD(u)\le0$,
and the desired conclusion  $\bD(u,A) = \sum_{e \in A}\eta(u,e) + (1 - c(u^*,e^*))$ is equivalent to $\tD(u) = 1-\frac{d(u)}{a_u}$,
which is true by Lemma \ref{8ey8cdody27}.
So assertion (c) is true when $A=\emptyset$.
From now-on, assume that $A \neq \emptyset$.
Since $R(u,A) < 1$, there are three possibilities:
\begin{enumerate}

\item[($\alpha$)] {\it $k_x=1$ for all $x \in \Deul_{u}$ and $M(u,e)=1$ for all $e \in A$.}

Then $R(u,A) = 1 -1/\theta$ with $\theta = a_{u}$.

\item[($\beta$)] {\it There exists $y \in \Deul_{u}$ such that $k_y>1$.}

Applying Lemma \ref{trivialobs} to $R(u,A)<1$ gives
$k_x=1$ for all $x \in \Deul_{u} \setminus \{y\}$, $a_{u}=1$, and $M(u,e)=1$ for all $e \in A$,
so $R(u,A) = 1 -1/\theta$ with $\theta = k_y$.

\item[($\gamma$)] {\it There exists $e_1 \in A$ such that $M(u,e_1)>1$.}

Lemma \ref{trivialobs} gives
$M(u,e)=1$ for all $e \in A \setminus \{e_1\}$, $k_x=1$ for all $x \in \Deul_{u}$, and $a_{u}=1$,
so $R(u,A) = 1 -1/\theta$ with $\theta = M(u,e_1)$.

\end{enumerate}
So $1 - \frac1{\theta} =   1 + \frac{ \bD(u,A) - 1 - \sum_{e \in A}\eta(u,e)}{N_u}$, where
$\theta= a_{u}$ in case ($\alpha$), $\theta= k_y$ in case ($\beta$), and $\theta= M(u,e_1)$ in case ($\gamma$).
So in all cases we have
$\bD(u,A) = \sum_{e \in A}\eta(u,e) + (1 - \frac{N_{u}}{\theta})$,
and to prove the claim it suffices to show that
\begin{equation} \label {09he123q9wd10W2wjdi}
\textstyle    c(u^*,e^*) =  \frac{N_{u}}{\theta}.
\end{equation}

In cases ($\alpha$) and ($\gamma$) we have $k_x=1$ for all $x \in \Deul_{u}$, so $\sigma(u)=0$ and hence $d(u) = N_u$ by Rem.\ \ref{F0934nofe8rg3p406egfh}.

In case ($\alpha$) we have $N_{u} = M(u,e) c(u,e) = c(u,e)$ for all $e \in A$; since $A \neq \emptyset$,
$$
\textstyle
c(u^*,e^*)
= \frac{\gcd(\{d(u)\} \cup \{ c(u,e)\, \mid \,  e \in A\} )}{a_{u}}
= \frac{\gcd( N_{u}, N_{u})}{ \theta }
= \frac{N_{u}}{\theta},
$$
proving \eqref{09he123q9wd10W2wjdi}.
In case ($\gamma$) we have $M(u,e_1)>1$ and $M(u,e)=1$ (so $N_{u}=c(u,e)$) for $e \in A \setminus \{e_1\}$, so
$$
\textstyle
c(u^*,e^*)
= \frac{\gcd(\{d(u)\} \cup \{ c(u,e)\, \mid \,  e \in A\} )}{a_{u}}
= \frac{\gcd( N_{u}, \frac{N_{u}}{M(u,e_1)}, N_{u}, \dots, N_{u})}{ 1 }
= \frac{N_{u}}{M(u,e_1)}
= \frac{N_{u}}{\theta},
$$
so  \eqref{09he123q9wd10W2wjdi} is proved.
In case ($\beta$), $u$ is a node of type $[ \frac{N_{u}}{k_y},  N_{u}, \dots,  N_{u} ]$
so $d(u) =\frac{N_{u}}{k_y}$. Also, $N_{u} = c(u,e)$  for all $e \in A$, so
$$
\textstyle
c(u^*,e^*)
= \frac{\gcd(\{d(u)\} \cup \{ c(u,e)\, \mid \,  e \in A\} )}{a_{u}}
= \frac{\gcd(\frac{N_{u}}{k_y}, N_{u}, \dots, N_{u})}{1}
= \frac{N_{u}}{k_y}
= \frac{N_{u}}{\theta},
$$
so again \eqref{09he123q9wd10W2wjdi} is proved.
So assertion (c) is proved and we are done.
\end{proof}

\begin{corollary} \label {dox9f80293ewf0di}
For every $u \in \Neul$ we have
$ \tD(\Neul) = \big( R(u, \Eeul_u) - 2 \big) N_u  + 2 +  \sum_{e \in \Eeul_u} \eta(u,e) $.
\end{corollary}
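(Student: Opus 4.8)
The plan is to derive the identity as an immediate specialization of Theorem~\ref{P90werd23ewods0ci}, applied with $u$ the given vertex and $A = \Eeul_u$, the whole set of edges of $\Neul$ incident to $u$.

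First I would record two elementary facts. Since $\Neul$ is a tree, each edge of $\Neul$ incident to $u$ joins $u$ to a unique vertex of $\Neul$, distinct edges giving distinct vertices, and conversely each vertex of $\Neul$ adjacent to $u$ arises this way; hence $|\Eeul_u| = \epsilon(u)$, so $\epsilon(u) - |\Eeul_u| - 1 = -1$. Second, I would check $\{u\} \cup \bigcup_{e \in \Eeul_u} \Neul(u,e) = \Neul$. The inclusion $\subseteq$ is clear from the definition of $\Neul(u,e)$; for $\supseteq$, given $x \in \Neul$ with $x \neq u$, the path $\gamma_{u,x}$ lies entirely in $\Neul$ because $\Neul$ is connected (Cor.~\ref{i875863urj1dpi}), so its first edge $e$ belongs to $\Eeul_u$ and $x \in \Neul(u,e)$. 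Consequently $\bD(u,\Eeul_u) = \tD\big(\{u\} \cup \bigcup_{e \in \Eeul_u}\Neul(u,e)\big) = \tD(\Neul)$.

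Substituting $A = \Eeul_u$ into Theorem~\ref{P90werd23ewods0ci} and using the two facts above, the left-hand side becomes $R(u,\Eeul_u) - (1 - \tfrac1{N_u})$ and the right-hand side becomes $1 + \big(\tD(\Neul) - 1 - \sum_{e \in \Eeul_u}\eta(u,e)\big)/N_u$. Multiplying this equation through by $N_u$ gives $R(u,\Eeul_u)\,N_u - N_u + 1 = N_u + \tD(\Neul) - 1 - \sum_{e \in \Eeul_u}\eta(u,e)$, and isolating $\tD(\Neul)$ yields precisely $\tD(\Neul) = \big(R(u,\Eeul_u) - 2\big)N_u + 2 + \sum_{e \in \Eeul_u}\eta(u,e)$, as claimed.

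I do not expect a real obstacle: all the substance is in Theorem~\ref{P90werd23ewods0ci}. The only steps needing a line of justification are the identity $\bD(u,\Eeul_u) = \tD(\Neul)$ (via connectedness of $\Neul$) and the degenerate case $|\Neul| = 1$, in which $\Eeul_u = \emptyset$ and $u = v_0$; there one simply notes that the argument above still applies verbatim with $A = \emptyset$ (Theorem~\ref{P90werd23ewods0ci} makes sense and holds for $A = \emptyset$), or alternatively invokes the remark opening Section~\ref{Sec:LocalstructureofNeul}.
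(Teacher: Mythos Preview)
Your proposal is correct and follows exactly the paper's approach: apply Theorem~\ref{P90werd23ewods0ci} with $A = \Eeul_u$, use $|\Eeul_u| = \epsilon(u)$ and $\bD(u,\Eeul_u) = \tD(\Neul)$, and rearrange. The paper's proof is a one-liner that omits the justifications you supply for these two identities and for the degenerate case, but the substance is identical.
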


\begin{proof}
Thm \ref{P90werd23ewods0ci} gives
$ R(u,\Eeul_u) -{ \textstyle (1 - \frac1{N_u}) }  =   1 + \frac{ \tD(\Neul) - 1 - \sum_{e \in \Eeul_u}\eta(u,e)}{N_u} $.
\end{proof}

\section*{Monotonicity}

\begin{corollary} \label {okcf039wepdocS}
Let $(u,e)$ be a non-minimal element of $P = P(\Teul)$, write $e = \{u,u_0\}$, and let $(u_0,e_1), \dots, (u_0,e_n)$ be the immediate predecessors
of $(u,e)$. Then  the following hold.
\begin{enumerata}

\item \label {8938rf9h83wed2} If $\tD( \Neul(u,e) ) < 1 + \sum_{i=1}^n \eta(u_0,e_i)$ then $\eta(u,e) = \sum_{i=1}^n \eta(u_0,e_i)$.

\item \label {9vfh2867fyjmvqty} $\eta(u,e) \ge \sum_{i=1}^n \eta(u_0,e_i)$

\end{enumerata}
\end{corollary}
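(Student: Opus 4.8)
The plan is to deduce both assertions from Corollary~\ref{P90werd23ewods0ci-cor}, applied at the vertex $u_0$ rather than at $u$. Set $A = \Eeul_{u_0} \setminus \{e\}$. First I would record two bookkeeping facts. By Definition~\ref{d9f239cw093eir} the elements of $\Neul$ adjacent to $u_0$ are exactly $u, u_1, \dots, u_n$ (with $e_i = \{u_0,u_i\}$), so the edges of $\Neul$ incident to $u_0$ are exactly $e, e_1, \dots, e_n$; hence $A = \{e_1, \dots, e_n\}$, $\epsilon(u_0) = n+1$, and $\epsilon(u_0) - |A| = 1$. Second, $\Neul(u,e) = \{u_0\} \cup \bigcup_{i=1}^n \Neul(u_0,e_i)$: indeed, for $x \in \Neul$ with $x \neq u_0$ the path $\gamma_{u,x}$ traverses $e$ iff it runs $u, u_0, \dots$ and continues, and since $\Neul$ is connected this continuation $\gamma_{u_0,x}$ lies entirely in $\Neul$, hence leaves $u_0$ through one of $e_1, \dots, e_n$; moreover the union is disjoint since a simple path out of $u_0$ uses at most one edge incident to $u_0$. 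Therefore $\bD(u_0, A) = \tD\big(\{u_0\} \cup \bigcup_{i=1}^n \Neul(u_0,e_i)\big) = \tD(\Neul(u,e))$.

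With this dictionary in place, part (a) is immediate: its hypothesis $\tD(\Neul(u,e)) < 1 + \sum_{i=1}^n \eta(u_0,e_i)$ is exactly the hypothesis $\bD(u_0,A) < 1 + \sum_{e' \in A} \eta(u_0,e')$ of Corollary~\ref{P90werd23ewods0ci-cor}. Since $\epsilon(u_0) - |A| = 1$, part (c) of that corollary applies; the unique element $e^*$ of $\Eeul_{u_0} \setminus A$ is $e$, and the vertex $u^*$ with $e^* = \{u_0,u^*\}$ is $u$, so the corollary yields $\bD(u_0,A) = \sum_{i=1}^n \eta(u_0,e_i) + (1 - c(u,e))$. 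Subtracting $1 - c(u,e)$ and using the definition $\eta(u,e) = \tD(\Neul(u,e)) - (1-c(u,e))$ from~\ref{Bxdkj2A3wkwofdpfwend9} gives $\eta(u,e) = \sum_{i=1}^n \eta(u_0,e_i)$.

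For part (b) I would split into two cases. If the hypothesis of (a) holds, then (a) already gives $\eta(u,e) = \sum_{i=1}^n \eta(u_0,e_i)$ and we are done. Otherwise $\tD(\Neul(u,e)) \ge 1 + \sum_{i=1}^n \eta(u_0,e_i)$, and then
\[
\eta(u,e) = \tD(\Neul(u,e)) - 1 + c(u,e) \;\ge\; \sum_{i=1}^n \eta(u_0,e_i) + c(u,e) \;>\; \sum_{i=1}^n \eta(u_0,e_i),
\]
the strict inequality because $c(u,e) > 0$ by Lemma~\ref{Ocivbghn5dmkuyesK}.

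The substantive input is entirely Corollary~\ref{P90werd23ewods0ci-cor}(c) (hence Theorem~\ref{P90werd23ewods0ci}), so I do not anticipate a genuine obstacle; the only points demanding care are the reductions above — that $\Eeul_{u_0}$ contains no edge into $\Neul$ beyond $e, e_1, \dots, e_n$, that $\Neul(u,e)$ decomposes as stated, and that the $(e^*, u^*)$ returned by the corollary is $(e, u)$. It is also worth noting that, since $c(u,e)$ is always strictly positive, (a) is precisely the equality case of (b), which is why the two-case split in the proof of (b) is exhaustive.
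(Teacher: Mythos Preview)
Your proof is correct and follows essentially the same approach as the paper's own proof: both apply Corollary~\ref{P90werd23ewods0ci-cor}(c) at $u_0$ with $A=\{e_1,\dots,e_n\}$ to obtain (a), and then deduce (b) from (a) via the identity $\tD(\Neul(u,e))-1-\sum_i\eta(u_0,e_i)=\eta(u,e)-\sum_i\eta(u_0,e_i)-c(u,e)$ together with $c(u,e)>0$. The paper phrases (b) as a contrapositive (if $\eta(u,e)\le\sum_i\eta(u_0,e_i)$ then the hypothesis of (a) holds, forcing equality) while you do an explicit case split, but these are the same argument.
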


\begin{proof}
\eqref{8938rf9h83wed2} Suppose that $\tD( \Neul(u,e) ) < 1 + \sum_{i=1}^n \eta(u_0,e_i)$.
Let $A=\{e_1, \dots, e_n\}$ and note that $\tD( \Neul(u,e) ) = \bD(u_0,A)$.
Then $\bD(u_0,A) < 1 + \sum_{e' \in A} \eta(u_0,e')$ and $\epsilon(u_0)-|A|=1$,
so Cor.\ \ref{P90werd23ewods0ci-cor} gives
$\bD(u_0,A) = \sum_{e' \in A} \eta(u_0,e') + (1-c(u,e))$, so
$$
\textstyle \eta(u,e) + (1-c(u,e)) = \tD(\Neul(u,e)) = \bD(u_0,A) =  \sum_{i=1}^n \eta(u_0,e_i) + (1-c(u,e)),
$$
so $\eta(u,e) = \sum_{i=1}^n \eta(u_0,e_i)$.
This proves \eqref{8938rf9h83wed2}.

\noindent \eqref{9vfh2867fyjmvqty} Note that
\begin{align*}
\textstyle \tD(\Neul(u,e)) - 1 - \sum_{i=1}^n \eta(u_0,e_i) 
&= \textstyle \eta(u,e) + (1-c(u,e)) - 1 - \sum_{i=1}^n \eta(u_0,e_i) \\
&= \textstyle \eta(u,e) - \sum_{i=1}^n \eta(u_0,e_i) -c(u,e),
\end{align*}
so $\eta(u,e) \le \sum_{i=1}^n \eta(u_0,e_i)$ implies $\tD(\Neul(u,e)) < 1 + \sum_{i=1}^n \eta(u_0,e_i)$,
which (by part \eqref{8938rf9h83wed2}) implies $\eta(u,e) = \sum_{i=1}^n \eta(u_0,e_i)$.
This shows that $\eta(u,e) \le \sum_{i=1}^n \eta(u_0,e_i)$ implies $\eta(u,e) = \sum_{i=1}^n \eta(u_0,e_i)$,
so \eqref{9vfh2867fyjmvqty} is proved.
\end{proof}

\begin{corollary}  \label {90hJkHJHF7238ewHkjHiu93}
We have $\eta(u,e) \ge 0$ for all $(u,e) \in P$.
\end{corollary}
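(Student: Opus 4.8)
The plan is to prove $\eta(u,e)\ge 0$ by well-founded induction on the poset $(P(\Teul),\preceq)$, using Corollary \ref{okcf039wepdocS}\eqref{9vfh2867fyjmvqty} for the inductive step and Lemma \ref{8ey8cdody27} for the base case.

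First I would handle the base case: suppose $(u,e)$ is minimal in $(P,\preceq)$ and write $e=\{u,u_0\}$, so that $\epsilon(u_0)=1$. I claim $\Neul(u,e)=\{u_0\}$. Indeed, if $x\in\Neul$ satisfies $x\neq u_0$ and $e$ is in $\gamma_{u,x}$, then the vertex of $\gamma_{u,x}$ immediately following $u_0$ is some $w\in\Veul$ with $w$ adjacent to $u_0$ and $w\neq u$; since $\Neul$ is connected and $u_0,x\in\Neul$, the whole segment of $\gamma_{u,x}$ from $u_0$ to $x$ lies in $\Neul$, so $w\in\Neul$, contradicting $\epsilon(u_0)=1$. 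Hence $\tD(\Neul(u,e))=\tD(u_0)$, and using $c(u,e)=d(u_0)/a_{u_0}$ from Definition \ref{kcjfnp0293wd}(i),
$$\eta(u,e)=\tD(u_0)-1+\frac{d(u_0)}{a_{u_0}}.$$
Now split on the sign of the integer $\tD(u_0)$. If $\tD(u_0)\le 0$, then Lemma \ref{8ey8cdody27} (applicable since $u_0\in\Neul$ and $\epsilon(u_0)=1$) gives $\tD(u_0)=1-d(u_0)/a_{u_0}$, whence $\eta(u,e)=0$. If $\tD(u_0)>0$, i.e.\ $\tD(u_0)\ge 1$, then $\eta(u,e)\ge d(u_0)/a_{u_0}=c(u,e)>0$ by Lemma \ref{Ocivbghn5dmkuyesK}. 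In either case $\eta(u,e)\ge 0$.

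For the inductive step, let $(u,e)$ be non-minimal, write $e=\{u,u_0\}$, and let $(u_0,e_1),\dots,(u_0,e_n)$ be its immediate predecessors. By the induction hypothesis $\eta(u_0,e_i)\ge 0$ for each $i$, so Corollary \ref{okcf039wepdocS}\eqref{9vfh2867fyjmvqty} gives $\eta(u,e)\ge\sum_{i=1}^n\eta(u_0,e_i)\ge 0$. Since $(P,\preceq)$ is finite, the induction is complete.

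The only delicate point is the base case, and there the real work is the identification $\Neul(u,e)=\{u_0\}$ together with the observation that $\tD(u_0)\le 0$ forces the equality $\tD(u_0)=1-d(u_0)/a_{u_0}$ via Lemma \ref{8ey8cdody27}; once the subtree $\Neul(u,e)$ is pinned down, matching $\eta(u,e)$ against $c(u,e)$ is immediate, and the inductive step is essentially free given Corollary \ref{okcf039wepdocS}.
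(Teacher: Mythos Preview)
Your proof is correct and follows essentially the same approach as the paper: induction on $(P,\preceq)$, with the inductive step via Corollary \ref{okcf039wepdocS}\eqref{9vfh2867fyjmvqty} and the base case via Lemma \ref{8ey8cdody27}. The only cosmetic difference is that the paper phrases the base case as ``if $\eta(u,e)\le 0$ then $\eta(u,e)=0$'' (deducing $\tD(u_0)\le 0$ from $\eta(u,e)\le 0$), whereas you split directly on the sign of $\tD(u_0)$; your explicit justification of $\Neul(u,e)=\{u_0\}$ is a detail the paper leaves implicit.
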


\begin{proof}
We prove this by induction on $(u,e) \in P$.

Consider an $(u,e) \in P$ which is minimal with respect to $\prec$.
Write $e=\{u,u_0\}$; then 
$\epsilon(u_0)=1$, $\Neul(u,e) = \{u_0\}$ and  $c(u,e)= \frac{d(u_0)}{a_{u_0}}$.
To show that $\eta(u,e)\ge0$, it suffices to show that if $\eta(u,e)\le0$ then $\eta(u,e)=0$.
Assume that $\eta(u,e) \le 0$. 
Then $\tD( u_0 ) = \tD\big( \Neul(u,e) \big) \le 1-c(u,e) < 1$, so $\tD(u_0) \le 0$,
so $\tD( u_0 ) = 1 - \frac{d(u_0)}{a_{u_0}}$ by Lemma \ref{8ey8cdody27}, so
$\tD\big( \Neul(u,e) \big) = 1 - c(u,e)$ and hence $\eta(u,e)=0$.
So the claim is true for all minimal $(u,e) \in P$.

Let $(u,e) \in P$ be non-minimal, and assume that $\eta(u',e') \ge 0$ is true for every $(u',e') \in P$ satisfying $(u',e') \prec (u,e)$.
Write $e = \{u,u_0\}$ and let $(u_0,e_1), \dots, (u_0,e_n)$ be the immediate predecessors of $(u,e)$.
We have $\eta(u,e) \ge \sum_{i=1}^n \eta(u_0,e_i)$ by Cor.\ \ref{okcf039wepdocS}\eqref{9vfh2867fyjmvqty}, 
and the inductive hypothesis gives $\eta(u_0,e_i) \ge 0$ for all $i \in \{1, \dots, n\}$; so $\eta(u,e) \ge0$.
\end{proof}

In what follows, we consider the monotonicity properties of the maps 
$$
\begin{array}{rcl}
c : P(\Teul) & \to & \Rat_{>0} \\
(u,e) & \mapsto & c(u,e)
\end{array}
\qquad
\begin{array}{rcl}
\eta : P(\Teul) & \to & \Rat_{\ge0} \\
(u,e) & \mapsto & \eta(u,e)
\end{array}
\qquad
\begin{array}{rcl}
P(\Teul) & \to & \Integ \\
(u,e) & \mapsto & \tD\big( \Neul(u,e) \big)
\end{array}
$$
where $\Rat_{>0} = \setspec{x \in \Rat}{x>0}$ and $\Rat_{\ge0} = \setspec{x \in \Rat}{x\ge0}$.
We show that $c$ is order-reversing, and that the other two maps are order-preserving.

\begin{lemma} \label {p0c9vin12q09wsc}
If $(u,e), (u',e') \in P$ satisfy $(u,e) \succeq (u',e')$, then the following hold.
\begin{enumerata}

\item $\Neul(u,e) \supseteq \Neul(u',e')$

\item $c(u,e) \le c(u',e')$ and $\eta(u,e) \ge \eta(u',e')$

\item $\tD\big( \Neul(u,e) \big) - \tD\big( \Neul(u',e') \big) = [c(u',e') - c(u,e)] + [\eta(u,e)-\eta(u',e')]$  

\item $\tD\big( \Neul(u,e) \big) \ge \tD\big( \Neul(u',e') \big)$, where equality holds if and only if 
$c(u,e) = c(u',e')$ and $\eta(u,e) = \eta(u',e')$.

\end{enumerata}
\end{lemma}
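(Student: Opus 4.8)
The plan is to prove the four assertions in order, reducing everything to the case of immediate predecessors and then invoking Corollary \ref{okcf039wepdocS}. First I would reduce to the ``one step'' case: since $(u,e) \succeq (u',e')$, the definition of $\preceq$ gives a chain $(u',e') = (u_0',e_0') \prec (u_1',e_1') \prec \cdots \prec (u_m',e_m') = (u,e)$ in which each $(u_{j-1}',e_{j-1}')$ is an immediate predecessor of $(u_j',e_j')$ (this follows from Def.\ \ref{d9f239cw093eir}, where immediate predecessors are exactly the maximal elements strictly below a given element). So it suffices to establish (a)--(c) in the case where $(u',e')$ is an immediate predecessor of $(u,e)$, because (a) is transitive, the inequalities in (b) compose, and the identity in (c) telescopes along the chain.

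So assume $(u',e')$ is an immediate predecessor of $(u,e)$, write $e = \{u,u_0\}$, and let $(u_0,e_1),\dots,(u_0,e_n)$ be the immediate predecessors of $(u,e)$, so that $(u',e') = (u_0,e_i)$ for some $i$. For (a): every $x \in \Neul(u_0,e_i)$ has $e_i$ in $\gamma_{u_0,x}$, hence $e$ in $\gamma_{u,x}$ (the path from $u$ to $x$ runs through $u_0$ and then along $e_i$), so $x \in \Neul(u,e)$; thus $\Neul(u,e) \supseteq \bigcup_{j=1}^n \Neul(u_0,e_j) \supseteq \Neul(u_0,e_i)$. For $c$ in (b): by definition $a_{u_0} c(u,e) = \gcd(d(u_0), c(u_0,e_1),\dots,c(u_0,e_n))$, so $a_{u_0}c(u,e) \mid c(u_0,e_i)$ in the sense of \ref{jkcnv9wisdxco}, and since $a_{u_0} \ge 1$ and all characteristic numbers are positive (Lemma \ref{Ocivbghn5dmkuyesK}), we get $c(u,e) \le a_{u_0}c(u,e) \le c(u_0,e_i) = c(u',e')$. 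For $\eta$ in (b): this is exactly Corollary \ref{okcf039wepdocS}\eqref{9vfh2867fyjmvqty} together with $\eta \ge 0$ (Cor.\ \ref{90hJkHJHF7238ewHkjHiu93}): indeed $\eta(u,e) \ge \sum_{j=1}^n \eta(u_0,e_j) \ge \eta(u_0,e_i) = \eta(u',e')$ since each $\eta(u_0,e_j) \ge 0$.

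For (c) in the one-step case: unwind the definition $\eta(u,e) = \tD(\Neul(u,e)) - (1 - c(u,e))$, i.e.
$$
\tD(\Neul(u,e)) = \eta(u,e) + 1 - c(u,e),
$$
and similarly $\tD(\Neul(u_0,e_i)) = \eta(u_0,e_i) + 1 - c(u_0,e_i)$. Subtracting gives
$$
\tD(\Neul(u,e)) - \tD(\Neul(u_0,e_i)) = [\eta(u,e) - \eta(u_0,e_i)] + [c(u_0,e_i) - c(u,e)],
$$
which is precisely the asserted identity; note this step does not even need the immediate-predecessor hypothesis, so (c) holds for all $(u,e) \succeq (u',e')$ directly from the definition of $\eta$ and is the cleanest of the four. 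Finally (d) is immediate from (b) and (c): the right-hand side of (c) is $[c(u',e') - c(u,e)] + [\eta(u,e) - \eta(u',e')]$, a sum of two nonnegative terms by (b), so $\tD(\Neul(u,e)) \ge \tD(\Neul(u',e'))$, with equality iff both bracketed terms vanish, i.e.\ iff $c(u,e) = c(u',e')$ and $\eta(u,e) = \eta(u',e')$.

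The only genuine content is the chain-reduction bookkeeping in the first step and the monotonicity of $\eta$, and the latter has already been extracted as Corollary \ref{okcf039wepdocS}\eqref{9vfh2867fyjmvqty}; so I expect the main obstacle to be purely organizational — making sure the telescoping of (c) and the composition of the inequalities in (b) along the chain are spelled out cleanly, and that the ``$\mid$ on $\Rat$'' argument for $c(u,e) \le c(u',e')$ (using $a_{u_0} \ge 1$ and positivity of $c$) is stated correctly rather than deep.
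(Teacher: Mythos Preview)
Your proof is correct and follows essentially the same route as the paper's: reduce (a) and (b) to the immediate-predecessor case, use the definition of $c$ (via $a_{u_0}c(u,e)=\gcd(\dots)\mid c(u_0,e_i)$ and positivity) for $c(u,e)\le c(u',e')$, invoke Cor.~\ref{okcf039wepdocS}\eqref{9vfh2867fyjmvqty} together with $\eta\ge0$ for the $\eta$-inequality, read (c) straight from the definition of $\eta$, and deduce (d) from (b) and (c). The paper's proof is the same argument, just stated more tersely.
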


\begin{proof}
First consider the case where $(u',e')$ is an immediate predecessor of $(u,e)$.
Then (a) is obvious and  $\eta(u,e) \ge \eta(u',e')$ follows from Cor.\ \ref{okcf039wepdocS}\eqref{9vfh2867fyjmvqty} 
and the fact (Cor.\ \ref{90hJkHJHF7238ewHkjHiu93}) that $\eta(u'',e'') \ge 0$ for all $(u'',e'') \in P$.
Moreover, Def.\ \ref{kcjfnp0293wd} implies that there exists an integer $m\ge1$ such that $m c(u,e) = c(u',e')/a_{u'}$,
so in particular $c(u,e) \le c(u',e')$.
So (a) and (b) are true when $(u',e')$ is an immediate predecessor of $(u,e)$.
It follows that (a) and (b) are true in general.
Assertion (c) is a direct consequence of the definitions of $\eta(u,e)$ and $\eta(u',e')$ (cf.\ Def.\ \ref{Bxdkj2A3wkwofdpfwend9}),
and (d) follows from (b) and (c).
\end{proof}

\begin{definition}
An element $(u,e)$ of $P$ is said to be  \textit{nonpositive} if $\tD( \Neul(u,e) ) \le 0$.
Note that if $(u,e)$ is nonpositive then (by Lemma \ref{p0c9vin12q09wsc}) so is every $(u',e') \in P$ satisfying $(u',e') \preceq (u,e)$.
\end{definition}

\begin{proposition}  \label {DKxcnpw93sdo}
Let $(u,e) \in P$. Then the following are equivalent:
\begin{enumerata}

\item $(u,e)$ is nonpositive

\item $\eta(u,e)=0$

\item $\tD\big(\Neul(u,e)\big) = 1- c(u,e)$.

\end{enumerata}
\end{proposition}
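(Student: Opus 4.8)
The statement is an equivalence of three conditions for a fixed $(u,e)\in P$, and the natural strategy is to show the cyclic chain of implications $(a)\Rightarrow(c)\Rightarrow(b)\Rightarrow(a)$, using the definition $\eta(u,e)=\tD\bigl(\Neul(u,e)\bigr)-\bigl(1-c(u,e)\bigr)$ together with the monotonicity and nonnegativity results already proved. The key inputs will be Cor.~\ref{90hJkHJHF7238ewHkjHiu93} (which gives $\eta(u,e)\ge0$ always), Lemma~\ref{8ey8cdody27} (which computes $\tD(v)$ for a vertex $v$ with $\epsilon(v)=1$ and $\tD(v)\le0$), and the characterisation of minimal elements in Def.~\ref{d9f239cw093eir} (an element is minimal iff $\epsilon(u_0)=1$, where $e=\{u,u_0\}$).

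\textbf{The equivalence $(b)\Leftrightarrow(c)$} is immediate from the definition of $\eta(u,e)$: the equation $\eta(u,e)=0$ just \emph{is} the equation $\tD\bigl(\Neul(u,e)\bigr)=1-c(u,e)$ after moving the term $1-c(u,e)$ across. So no work is needed there. \textbf{The implication $(b)\Rightarrow(a)$} is also easy: if $\eta(u,e)=0$ then $\tD\bigl(\Neul(u,e)\bigr)=1-c(u,e)\le 1-c(u,e)$, and since $c(u,e)>0$ by Lemma~\ref{Ocivbghn5dmkuyesK} we get $\tD\bigl(\Neul(u,e)\bigr)<1$, hence $\le 0$ since it is an integer; thus $(u,e)$ is nonpositive.

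\textbf{The implication $(a)\Rightarrow(c)$ is the heart of the matter} and I expect it to be the main (though still modest) obstacle. I would prove it by induction on $(u,e)$ in the poset $(P,\preceq)$. For the base case, suppose $(u,e)$ is minimal; write $e=\{u,u_0\}$, so $\epsilon(u_0)=1$, $\Neul(u,e)=\{u_0\}$ and $c(u,e)=d(u_0)/a_{u_0}$. The hypothesis $(a)$ says $\tD(u_0)=\tD\bigl(\Neul(u,e)\bigr)\le0$, so Lemma~\ref{8ey8cdody27} applies and gives $\tD(u_0)=1-d(u_0)/a_{u_0}=1-c(u,e)$, which is exactly $(c)$. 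For the inductive step, suppose $(u,e)$ is non-minimal, write $e=\{u,u_0\}$, and let $(u_0,e_1),\dots,(u_0,e_n)$ be its immediate predecessors. Since $\tD\bigl(\Neul(u,e)\bigr)\le0<1+\sum_{i=1}^n\eta(u_0,e_i)$ (using $\eta\ge0$ from Cor.~\ref{90hJkHJHF7238ewHkjHiu93}), Cor.~\ref{okcf039wepdocS}\eqref{8938rf9h83wed2} yields $\eta(u,e)=\sum_{i=1}^n\eta(u_0,e_i)$. Now each $(u_0,e_i)\preceq(u,e)$ and hence is nonpositive (Lemma~\ref{p0c9vin12q09wsc} propagates nonpositivity downward), so the inductive hypothesis gives $\eta(u_0,e_i)=0$ for every $i$; therefore $\eta(u,e)=0$, i.e.\ $\tD\bigl(\Neul(u,e)\bigr)=1-c(u,e)$, establishing $(c)$. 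Combining the three parts closes the cycle and proves the proposition. I should be careful in the write-up to state clearly that the induction is along $\preceq$ and that the base case is exactly the class of minimal elements, since the whole argument hinges on Lemma~\ref{8ey8cdody27} being available precisely when $\epsilon(u_0)=1$.
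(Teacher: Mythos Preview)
Your proposal is correct and follows essentially the same approach as the paper's own proof: the equivalence $(b)\Leftrightarrow(c)$ and the implication $(b)\Rightarrow(a)$ are handled exactly as you describe, and the implication $(a)\Rightarrow(b)$ is done by induction on $(P,\preceq)$ using Lemma~\ref{8ey8cdody27} for the base case and Cor.~\ref{okcf039wepdocS}\eqref{8938rf9h83wed2} for the inductive step. The only cosmetic difference is the order of operations in the inductive step --- you first invoke Cor.~\ref{okcf039wepdocS} using $\eta\ge0$ and then apply the inductive hypothesis to the predecessors, whereas the paper first applies the inductive hypothesis and then the corollary --- but both orderings are valid.
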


\begin{proof}
We have $\eta(u,e)\ge0$ by Cor.\ \ref{90hJkHJHF7238ewHkjHiu93}.
Since $\tD\big(\Neul(u,e)\big) - (1- c(u,e)) = \eta(u,e) \ge 0$,
we have $\tD\big(\Neul(u,e)\big) \ge 1- c(u,e)$ and we also see that (b)$\Leftrightarrow$(c).
There remains to show that (a)$\Leftrightarrow$(b).

If $\eta(u,e)=0$ then $\tD\big( \Neul(u,e) \big) = \eta(u,e) + 1 - c(u,e) =  1 - c(u,e) < 1$, 
and since   $\tD\big( \Neul(u,e) \big) \in \Integ$ it follows that $\tD\big( \Neul(u,e) \big) \le 0$.
Thus (b)$\Rightarrow$(a).

We prove the converse by induction.
For each $(u,e) \in P$, let $\Pscr(u,e)$ be the assertion ``if $(u,e)$ is nonpositive then $\eta(u,e)=0$.''

Consider an $(u,e) \in P$ which is minimal with respect to $\prec$.
Write $e=\{u,u_0\}$; then  $\epsilon(u_0)=1$, $\Neul(u,e) = \{u_0\}$ and $c(u,e)= \frac{d(u_0)}{a_{u_0}}$.
If $(u,e)$ is nonpositive then
$\tD( u_0 ) = \tD\big( \Neul(u,e) \big) \le 0$,
so $\tD( u_0 ) = 1 - \frac{d(u_0)}{a_{u_0}}$ by Lemma \ref{8ey8cdody27}, so
$\tD\big( \Neul(u,e) \big) = 1 - c(u,e)$ and hence $\eta(u,e)=0$.
So $\Pscr(u,e)$ is true for all minimal $(u,e) \in P$.

Let $(u,e) \in P$ be non-minimal, and assume that $\Pscr(u',e')$ is true for every $(u',e') \prec (u,e)$.
Write $e = \{u,u_0\}$ and let $(u_0,e_1), \dots, (u_0,e_n)$ be the immediate predecessors of $(u,e)$.
To prove that $\Pscr(u,e)$ is true, assume that $(u,e)$ is nonpositive.
Then $(u_0,e_1), \dots, (u_0,e_n)$ are nonpositive and consequently $\eta(u_0,e_i)=0$ for all $i$, by the inductive hypothesis.
Since $\tD\big( \Neul(u,e) \big) \le 0$, we certainly have $\tD\big( \Neul(u,e) \big) < 1 + \sum_{i=1}^n \eta(u_0,e_i)$,
so Cor.\ \ref{okcf039wepdocS}\eqref{8938rf9h83wed2}  gives $\eta(u,e) = \sum_{i=1}^n \eta(u_0,e_i)$, so $\eta(u,e)=0$ and $\Pscr(u,e)$ is true.
\end{proof}

\begin{remark} \label {uyhmdytjwhwhrkdftraef4gf23h23}
By Prop.\ \ref{DKxcnpw93sdo}, if $(u,e) \in P$ is nonpositive then $c(u,e) \in \Nat\setminus \{0\}$.
\end{remark}

\begin{notation} \label {p09cv347dYF6Us98}
Given $u \in \Neul$, define
$$
a_u^* = \begin{cases} 
1 & \text{if $a_u>1$,} \\
0 & \text{if $a_u=1$}
\end{cases}
$$
and let $\#(u)$ denote the number of nonzero terms in the right-hand-side of
\begin{equation} \label {uibyu5orbfvo7gnrnsdruoq0heu}
R(u, \Eeul_u) =  \textstyle  \sum_{x \in \Deul_u}(1-\frac1{k_x}) + (1-\frac1{a_u}) + \sum_{e \in  \Eeul_u}(1-\frac1{M(u,e)}) .
\end{equation}
\end{notation}

\begin{corollary} \label {Xxkvp0wedifcwZepd}
For each $u \in \Neul$,
$$
| \setspec{ x \in \Deul_u }{ k_x>1 } | + a_u^* + \epsilon(u)-1 \le \#(u) \le \max(3, \tD(\Neul) + 2) .
$$
\end{corollary}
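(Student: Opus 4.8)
The plan is to establish the two inequalities separately; both fall out of results already available in Sections~\ref{Section:Characteristicnumbers} and~\ref{Sec:LocalstructureofNeul}. The starting point in each case is to read off from equation~\eqref{uibyu5orbfvo7gnrnsdruoq0heu} that a summand $(1-\frac1{k_x})$ is nonzero exactly when $k_x>1$, the summand $(1-\frac1{a_u})$ is nonzero exactly when $a_u>1$ (i.e.\ when $a_u^*=1$), and a summand $(1-\frac1{M(u,e)})$ is nonzero exactly when $M(u,e)>1$; using that $k_x,a_u,M(u,e)\in\Nat\setminus\{0\}$, this yields the identity $\#(u)=\big|\setspec{x\in\Deul_u}{k_x>1}\big|+a_u^*+\big|\setspec{e\in\Eeul_u}{M(u,e)>1}\big|$.

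For the left-hand inequality the only thing to add is the bound $\big|\setspec{e\in\Eeul_u}{M(u,e)>1}\big|\ge\epsilon(u)-1$. I would get this from Prop.~\ref{0hp9fh023wbpchvgrsjs256}: if $e=\{u,u_0\}\in\Eeul_u$ has $M(u,e)=1$ then $u_0<u$, and since at most one neighbour of $u$ in $\Neul$ lies below $u$ in the rooted order, at most one edge of $\Eeul_u$ can have $M=1$; combined with $|\Eeul_u|=\epsilon(u)$ this gives the bound, and substituting into the identity above produces $\#(u)\ge\big|\setspec{x\in\Deul_u}{k_x>1}\big|+a_u^*+\epsilon(u)-1$.

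For the right-hand inequality, set $m=\#(u)$; if $m\le3$ there is nothing to prove, so assume $m\ge4$. Each of the $m$ nonzero summands of~\eqref{uibyu5orbfvo7gnrnsdruoq0heu} has the form $1-\frac1n$ with $n\in\Nat$, $n\ge2$, hence is $\ge\frac12$, so $R(u,\Eeul_u)\ge m/2\ge2$. The auxiliary claim I need is $N_u\ge2$ whenever $m\ge4$: if instead $N_u=1$, then Lemma~\ref{90q932r8dhd89cnr9}\eqref{0239u9e78923} forces $a_u=1$ (so $a_u^*=0$), $\sigma(u)=0$ (so every $k_x=1$, killing all $\Deul_u$-summands), and $\epsilon(u)\le1$ (so $|\Eeul_u|\le1$), whence $\#(u)\le1$, a contradiction. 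Finally I would combine Cor.~\ref{dox9f80293ewf0di} with $\eta\ge0$ (Cor.~\ref{90hJkHJHF7238ewHkjHiu93}) to get $\tD(\Neul)=(R(u,\Eeul_u)-2)N_u+2+\sum_{e\in\Eeul_u}\eta(u,e)\ge(R(u,\Eeul_u)-2)N_u+2\ge(\tfrac{m}{2}-2)\cdot2+2=m-2$, using $R(u,\Eeul_u)-2\ge\tfrac{m}{2}-2\ge0$ and $N_u\ge2$; hence $\#(u)=m\le\tD(\Neul)+2\le\max(3,\tD(\Neul)+2)$.

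I do not anticipate a genuine obstacle: the whole weight of the argument rests on Cor.~\ref{dox9f80293ewf0di}, which already packages the relation between $R(u,\Eeul_u)$, $N_u$ and $\tD(\Neul)$, plus the ``at most one downward edge has $M=1$'' fact from Prop.~\ref{0hp9fh023wbpchvgrsjs256}. The only points needing a little care are the constants: the $\max(3,\cdot)$ is genuinely there because for $m=4$ the chain only delivers $\tD(\Neul)\ge2$ (and for $m\le3$ the estimate is vacuous), and one must check that the ``$N_u\ge2$ when $m\ge4$'' step handles the degenerate situations (such as $|\Neul|=1$) uniformly, which it does.
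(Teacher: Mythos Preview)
Your proof is correct and follows essentially the same approach as the paper: the identity for $\#(u)$, Prop.~\ref{0hp9fh023wbpchvgrsjs256} for the first inequality, and the combination of $R(u,\Eeul_u)\ge m/2$ with Cor.~\ref{dox9f80293ewf0di} and $\eta\ge0$ for the second. The only cosmetic difference is in the verification of $N_u\ge2$: you invoke the conclusion $\epsilon(u)\le1$ of Lemma~\ref{90q932r8dhd89cnr9}\eqref{0239u9e78923} directly, whereas the paper argues that $N_u=1$ would force $|\setspec{e\in\Eeul_u}{M(u,e)>1}|\ge3$ and then derives $\epsilon(u)\ge3$ to contradict the same lemma; your route is arguably cleaner.
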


\begin{proof}
The definition of $\#(u)$ is
\begin{equation} \label {cv9udh6dierbnxmmcis8eeAaBgbo}
\#(u) =  | \setspec{ x \in \Deul_u }{ k_x>1 } | + a_u^* + | \setspec{ e \in \Eeul_{u} }{ M(u,e) > 1 } | .
\end{equation}
Prop.\ \ref{0hp9fh023wbpchvgrsjs256} implies that
$| \setspec{ e \in \Eeul_{u} }{ M(u,e) > 1 } | \ge \epsilon(u)-1$, 
so the first inequality is true. 
Let $n = \#(u)$.
To prove the second inequality, it's enough to show that if ${n} \ge 4$ then ${n} \le \tD(\Neul)+2$.
Assume that ${n} \ge 4$. We claim:
\begin{equation} \label {D9023vWsbv54Cnbrsidagiut}
N_u \ge 2 .
\end{equation}
Indeed, assume the contrary. Then $N_u =1$ and consequently $k_x=1$ for all $x \in \Deul_u$.
Then \eqref{cv9udh6dierbnxmmcis8eeAaBgbo} gives  $| \setspec{ e \in \Eeul_{u} }{ M(u,e) > 1 } | = n- a_u^* \ge3$,
so $\epsilon(u)\ge3$, so Lemma \ref{90q932r8dhd89cnr9}\eqref{0239u9e78923} gives $N_u>1$, a contradiction.
So \eqref{D9023vWsbv54Cnbrsidagiut} is true.
Since there are $n$ nonzero terms in the sum \eqref{uibyu5orbfvo7gnrnsdruoq0heu}
and each nonzero term is $\ge\frac12$, we have $R(u,\Eeul_u) \ge {n}/2$.
By Corollaries \ref{dox9f80293ewf0di} and \ref{90hJkHJHF7238ewHkjHiu93} we have
$ \tD(\Neul) = \big( R(u, \Eeul_u) - 2 \big) N_u  + 2 +  \sum_{e \in \Eeul_u} \eta(u,e) \ge \big( \frac {n}2 - 2 \big) N_u  + 2$.
As $\frac {n}2-2\ge0$ and $N_u\ge2$, we obtain $\tD(\Neul) \ge {n} - 2$.
\end{proof}

\begin{proposition} \label {0c9b45okjrdidjfo}
We have $\tD(\Neul) \ge (\delta_{v_0}-1) (\delta_{v_0}-2)$,
and if $\tD(\Neul) \ge 0$ then  
$$
\delta_{v_0} \, \le  \, \frac{3+ \sqrt{1+4\tD(\Neul)}}{2} \, .
$$
\end{proposition}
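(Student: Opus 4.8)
The plan is to apply Corollary~\ref{dox9f80293ewf0di} at the root. Put $\delta=\delta_{v_0}$ and $N=N_{v_0}$; the corollary then reads
$$
\tD(\Neul)=\bigl(R(v_0,\Eeul_{v_0})-2\bigr)N+2+\sum_{e\in\Eeul_{v_0}}\eta(v_0,e).
$$
Several features of the root feed into the estimate. By Rem.~\ref{p092pof90e9rf} no arrow is adjacent to $v_0$ and $a_{v_0}=1$, so $\delta=\epsilon(v_0)+|\Deul_{v_0}|$ and $|\Eeul_{v_0}|=\epsilon(v_0)$; since no dead end is incident to $v_0$, $\delta$ equals the number of points at infinity, so $N\ge\delta$ by Rem.~\ref{efy872392eujf}; and since $v_0$ is the root we never have $v_0>u_0$, so Prop.~\ref{0hp9fh023wbpchvgrsjs256} gives $M(v_0,e)\ge2$ for every $e\in\Eeul_{v_0}$. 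Finally $\eta(v_0,e)\ge0$ by Cor.~\ref{90hJkHJHF7238ewHkjHiu93}.

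The heart of the argument is the inequality $R(v_0,\Eeul_{v_0})\ge\delta-1$. Using $a_{v_0}=1$, the identity $N=k_xd_x$ for $x\in\Deul_{v_0}$ (Lemma~\ref{90q932r8dhd89cnr9}), and $M(v_0,e)=N/c(v_0,e)$ (Notation~\ref{c9v8mmxzKRknrscxe4ftdes}), one rewrites
$$
R(v_0,\Eeul_{v_0})=\delta-\frac{1}{N}\Bigl(\sum_{x\in\Deul_{v_0}}d_x+\sum_{e\in\Eeul_{v_0}}c(v_0,e)\Bigr),
$$
so it is enough to prove $\sum_{x\in\Deul_{v_0}}d_x+\sum_{e\in\Eeul_{v_0}}c(v_0,e)\le N$. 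For this I would partition $\Aeul\setminus\Aeul_0$ according to the first edge traversed by $\gamma_{v_0,\alpha}$: the classes are $A(v_0,e)$ for $e\in\Eeul_{v_0}$ together with, for each $x\in\Deul_{v_0}$, the $d_x$ arrows adjacent to $x$ (in a generic tree all arrows below a dicritical are adjacent to it). Since $Q(e,v_0)=1$ one has $p(v_0,e)=\sum_{\alpha\in A(v_0,e)}x_{v_0,\alpha}$, and since every $x_{v_0,\alpha}\ge1$ (by the reasoning of Rem.~\ref{efy872392eujf}) each $A(v_0,e)$ contributes $p(v_0,e)\ge1$ and each dicritical class contributes $\ge d_x$ to $N=\sum_\alpha x_{v_0,\alpha}$; moreover $c(v_0,e)\mid p(v_0,e)$ by Thm.~\ref{xncoo9qwdx9}, so $c(v_0,e)\le p(v_0,e)$. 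Summing yields $\sum_xd_x+\sum_ec(v_0,e)\le N$, hence $R(v_0,\Eeul_{v_0})\ge\delta-1$.

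Inserting $\eta\ge0$, $R(v_0,\Eeul_{v_0})\ge\delta-1$ and $N\ge\delta$ into the first display gives $\tD(\Neul)\ge(\delta-3)N+2$. If $\delta\ge3$ then $\delta-3\ge0$ and $N\ge\delta$, so $(\delta-3)N+2\ge(\delta-3)\delta+2=(\delta-1)(\delta-2)$, which is the first assertion; if $\delta\le2$ the first assertion reads simply $\tD(\Neul)\ge0$, which has to be obtained by a separate argument exploiting that $\Neul$ is extremely restricted near $v_0$. For the second assertion, once $\tD(\Neul)\ge(\delta-1)(\delta-2)$ and $\tD(\Neul)\ge0$ are available, rewrite the first as $\delta^2-3\delta+\bigl(2-\tD(\Neul)\bigr)\le0$; as $1+4\tD(\Neul)\ge0$, the quadratic $X^2-3X+\bigl(2-\tD(\Neul)\bigr)$ has real roots $\tfrac{3\pm\sqrt{1+4\tD(\Neul)}}{2}$ and $\delta$ lies between them, so in particular $\delta\le\tfrac{3+\sqrt{1+4\tD(\Neul)}}{2}$.

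I expect the main obstacle to be the inequality $R(v_0,\Eeul_{v_0})\ge\delta-1$, and within it the divisibility $c(v_0,e)\mid p(v_0,e)$ — this is exactly where Thm.~\ref{xncoo9qwdx9} enters — together with the bookkeeping for the partition of $\Aeul\setminus\Aeul_0$ and the positivity $x_{v_0,\alpha}\ge1$. A secondary difficulty is the low-valency range $\delta\le2$, where the bound $(\delta-3)N+2$ may be negative and one needs the extra input $\tD(\Neul)\ge0$.
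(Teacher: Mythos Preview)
Your argument for $\delta_{v_0}\ge 3$ is correct and follows a somewhat different route from the paper's. The paper never isolates the inequality $R(v_0,\Eeul_{v_0})\ge\delta_{v_0}-1$; instead it expands $(\delta_{v_0}-2)N_{v_0}$ via $N_{v_0}=\sum_{u\in\Deul_{v_0}}a_ud_u+\sum_e p(v_0,e)$, splits $\Eeul_{v_0}$ into $A=\{e:c(v_0,e)\ge1\}$ and $B=\{e:c(v_0,e)<1\}$, and handles the edges in $B$ through the observation that $c(v_0,e)\notin\Integ$ forces $\tD(\Neul(v_0,e))\ge1$ and hence $\eta(v_0,e)\ge c(v_0,e)$. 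Your path---using Thm~\ref{xncoo9qwdx9} together with $p(v_0,e)>0$ to get $c(v_0,e)\le p(v_0,e)$, whence $\sum_xd_x+\sum_ec(v_0,e)\le N_{v_0}$ and $R(v_0,\Eeul_{v_0})\ge\delta_{v_0}-1$---is more direct and reaches the same bound $\tD(\Neul)\ge(\delta_{v_0}-3)N_{v_0}+2\ge(\delta_{v_0}-1)(\delta_{v_0}-2)$. The derivation of the quadratic upper bound on $\delta_{v_0}$ is identical in both approaches.

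For $\delta_{v_0}\le 2$ you correctly flag a gap, and the ``separate argument'' you allude to does not exist: the unconditional inequality $\tD(\Neul)\ge(\delta_{v_0}-1)(\delta_{v_0}-2)=0$ is actually false in that range. For instance, take $\Neul=\{v_0\}$ with $\delta_{v_0}=1$ and $v_0$ adjacent to a single dicritical $u$ of degree $2$ with $a_u=1$ and $q(\{v_0,u\},u)=0$; one checks this is minimally complete, $k_u=1$, $N_{v_0}=2$, $\epsilon(v_0)=0$, and $\tD(\Neul)=\tD(v_0)=-2(N_{v_0}-1)=-2$. The paper's own chain of inequalities shares this gap: the step $[(\delta_{v_0}-2)a_u-1]d_u\ge\delta_{v_0}-3$ tacitly requires $\delta_{v_0}\ge3$. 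Every use of Prop.~\ref{0c9b45okjrdidjfo} elsewhere in the paper invokes only the implication $\delta_{v_0}\ge3\Rightarrow\tD(\Neul)\ge(\delta_{v_0}-1)(\delta_{v_0}-2)\ge2$, which your argument does establish.
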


\begin{proof}
Define $A = \setspec{ e \in \Eeul_{v_0} }{ c(v_0,e) \ge 1 }$ and $B = \setspec{ e \in \Eeul_{v_0} }{ c(v_0,e) < 1 }$.
Note that
if $e \in B$ then $c(v_0,e) \notin \Integ$, so Rem.\ \ref{uyhmdytjwhwhrkdftraef4gf23h23} gives $\tD( \Neul(v_0,e) ) \ge 1$;
since $\tD( \Neul(v_0,e) ) = 1 + \eta(v_0,e) - c(v_0,e)$, this shows that $\eta(v_0,e) - c(v_0,e) \ge 0$ for all $e \in B$.
On the other hand, $\eta(v_0,e) - c(v_0,e) \ge -c(v_0,e)$ for all $e \in A$ by  Cor.\ \ref{90hJkHJHF7238ewHkjHiu93}, so 
\begin{equation}  \label {cF0iowo4vjjk4ixawe7gjndje}
\textstyle   \sum_{ e \in \Eeul_{v_0} } (\eta(v_0,e) - c(v_0,e)) \ge -\sum_{e \in A} c(v_0,e) .
\end{equation}
Also note that $N_{v_0} = \sum_{u \in \Deul_{v_0} } a_u d_u + \sum_{e \in \Eeul_{v_0} } p(v_0,e)$,
$p(v_0,e) \ge 1$ for all $e \in B$, and (by Thm \ref{xncoo9qwdx9}) $\frac{p(v_0,e)}{c(v_0,e)} \ge 1$ for all $e \in A$.
Thus Cor.\ \ref{dox9f80293ewf0di} gives
\begin{align*}
\tD(\Neul) &- 2 = \textstyle (R(v_0,\Eeul_{v_0}) - 2) N_{v_0} + \sum_{e \in \Eeul_{v_0} } \eta(v_0,e) \\
&= \textstyle -2 N_{v_0} + \sum_{u \in \Deul_{v_0} } (N_{v_0} - d_u) + \sum_{e \in \Eeul_{v_0} } (N_{v_0} - c(v_0,e)) + \sum_{e \in \Eeul_{v_0} } \eta(v_0,e) \\
&= \textstyle (\delta_{v_0} - 2) N_{v_0} - \sum_{u \in \Deul_{v_0} } d_u + \sum_{ e \in \Eeul_{v_0} } (\eta(v_0,e) - c(v_0,e)) \\
&\ge \textstyle (\delta_{v_0} - 2) N_{v_0}
- \sum_{u \in \Deul_{v_0} } d_u -\sum_{e \in A} c(v_0,e) \quad \text{(by \eqref{cF0iowo4vjjk4ixawe7gjndje})} \\
&\ge \textstyle (\delta_{v_0} - 2) \sum_{u \in \Deul_{v_0} } a_u d_u - \sum_{u \in \Deul_{v_0} } d_u
+ (\delta_{v_0} - 2)  \sum_{e \in \Eeul_{v_0} } p(v_0,e) -\sum_{e \in A} c(v_0,e) \\
&\ge \textstyle  \sum_{u \in \Deul_{v_0} } [(\delta_{v_0} - 2)a_u-1] d_u 
+  \sum_{e \in A } \big[ (\delta_{v_0} - 2)\frac{p(v_0,e)}{c(v_0,e)} - 1 \big] c(v_0,e)  + (\delta_{v_0} - 2)  \sum_{e \in B } p(v_0,e) \\
&\ge \textstyle  (\delta_{v_0} - 3) | \Deul_{v_0} | +  (\delta_{v_0} - 3) | A | +  (\delta_{v_0} - 2) | B | 
\ge  (\delta_{v_0} - 3)\delta_{v_0} + |B|
\ge  (\delta_{v_0} - 3)\delta_{v_0} ,
\end{align*}
so $\tD(\Neul) \ge (\delta_{v_0}-1) (\delta_{v_0}-2)$.
The last claim easily follows.
\end{proof}

\section*{$\tD$-trivial paths, teeth and brushes}

\begin{definition}
A path $(z_1, \dots, z_n)$ is \textit{$\tD$-trivial} if $n>1$, $z_1, \dots, z_n \in \Neul$, $\epsilon(z_1)=1$, and
$$
\text{$\epsilon(z_i)=2$ and $\tD(z_i)=0$ for all  $i$ such that $1<i<n$.}
$$
\end{definition}

\begin{proposition} \label {xfo230weidwods}
Let $(u_1, \dots, u_n)$ be a $\tD$-trivial path. 
\begin{enumerata}

\item $c(u_i, \{u_i,u_{i-1}\}) =  \frac{d(u_1)}{a_{u_1}}$ for all  $i \in \{ 2, \dots, n\}$.

\item If $N_{u_n} = \frac{d(u_1)}{a_{u_1}}$ then $u_n > u_{n-1}$.

\end{enumerata}
\end{proposition}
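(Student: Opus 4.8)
The plan is to prove (a) by induction on $i\in\{2,\dots,n\}$, and then to deduce (b) at once from (a) together with Prop.\ \ref{0hp9fh023wbpchvgrsjs256}.

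\emph{Base case $i=2$.} Since the path is $\tD$-trivial we have $\epsilon(u_1)=1$, so by Def.\ \ref{d9f239cw093eir} the pair $(u_2,\{u_2,u_1\})$ is a minimal element of $P=P(\Teul)$, and Def.\ \ref{kcjfnp0293wd}(i) gives $c(u_2,\{u_2,u_1\})=d(u_1)/a_{u_1}$ directly.

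\emph{Inductive step.} Suppose $2\le i<n$ and $c(u_i,\{u_i,u_{i-1}\})=d(u_1)/a_{u_1}$. Since $1<i<n$ we have $\epsilon(u_i)=2$ and $\tD(u_i)=0$. Because $\epsilon(u_i)=2$, the vertex $u_i$ has exactly two neighbours in $\Neul$, which must be $u_{i-1}$ and $u_{i+1}$ (distinct, since $\Teul$ is a tree); hence $(u_{i+1},\{u_{i+1},u_i\})$ is non-minimal with a single immediate predecessor $(u_i,\{u_i,u_{i-1}\})$, and Def.\ \ref{kcjfnp0293wd}(ii) gives
$$
c(u_{i+1},\{u_{i+1},u_i\}) = \frac{\gcd\big(d(u_i),\,c(u_i,\{u_i,u_{i-1}\})\big)}{a_{u_i}}.
$$
I would then note that $\tD(u_i)=0$ and $\epsilon(u_i)=2$ force, via Lemma \ref{90q932r8dhd89cnr9}\eqref{pc9vp23r09}, the identity $0=\sigma(u_i)+N_{u_i}(1-\frac1{a_{u_i}})$, whose two nonnegative summands must both vanish; thus $a_{u_i}=1$ and $\sigma(u_i)=0$, the latter giving $d(u_i)=N_{u_i}$ by Rem.\ \ref{F0934nofe8rg3p406egfh}. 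Finally, Thm \ref{xncoo9qwdx9} applied to $(u_i,\{u_i,u_{i-1}\})$ gives $N_{u_i}\in c(u_i,\{u_i,u_{i-1}\})\Integ=\big(d(u_1)/a_{u_1}\big)\Integ$, so $d(u_1)/a_{u_1}$ divides $N_{u_i}$ and therefore $\gcd\big(N_{u_i},\,d(u_1)/a_{u_1}\big)=d(u_1)/a_{u_1}$. Substituting $a_{u_i}=1$, $d(u_i)=N_{u_i}$ and this gcd into the displayed formula yields $c(u_{i+1},\{u_{i+1},u_i\})=d(u_1)/a_{u_1}$, completing the induction and proving (a).

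\emph{Part (b) and the main obstacle.} For (b), I would apply (a) with $i=n$ to get $c(u_n,\{u_n,u_{n-1}\})=d(u_1)/a_{u_1}$; then the hypothesis $N_{u_n}=d(u_1)/a_{u_1}$ means precisely $M(u_n,\{u_n,u_{n-1}\})=N_{u_n}/c(u_n,\{u_n,u_{n-1}\})=1$, and Prop.\ \ref{0hp9fh023wbpchvgrsjs256} immediately gives $u_n>u_{n-1}$. The only substantive ingredient is the divisibility $d(u_1)/a_{u_1}\mid N_{u_i}$ used to collapse the gcd in the inductive step, which rests entirely on Thm \ref{xncoo9qwdx9}; everything else is unwinding the recursive definition of characteristic numbers and the elementary observation that $\epsilon(v)=2$, $\tD(v)=0$ force $a_v=1$, $\sigma(v)=0$, $d(v)=N_v$. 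I expect no real difficulty beyond invoking these results correctly.
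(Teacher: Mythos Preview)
Your proof is correct and follows essentially the same approach as the paper's own proof: induction on $i$ for (a) using Def.\ \ref{kcjfnp0293wd}, Lemma \ref{90q932r8dhd89cnr9}\eqref{pc9vp23r09}, Rem.\ \ref{F0934nofe8rg3p406egfh} and Thm \ref{xncoo9qwdx9}, then Prop.\ \ref{0hp9fh023wbpchvgrsjs256} for (b). The arguments are virtually identical step for step.
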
 

\begin{proof}
Note that $n\ge2$ by definition of $\tD$-trivial path. 

(a) Let $e_i = \{ u_i, u_{i-1} \}$ for $i \in \{2, \dots, n\}$, then
$(u_2,e_2) \prec \cdots \prec (u_n,e_n)$ are elements of $P(\Teul)$.
By induction on $i$, we show that $c(u_i,e_i) =  \frac{d(u_1)}{a_{u_1}}$ is true for all  $i \in \{ 2 \dots, n\}$.

Since $(u_2,e_2)$ is minimal, we have $c(u_2,e_2) = d(u_1)/a_{u_1}$, so the case $i=2$ is true.

Let $i \in \{2, \dots,n-1\}$, assume that $c(u_i,e_i) =  \frac{d(u_1)}{a_{u_1}}$, and let us deduce that 
$c(u_{i+1},e_{i+1}) =  \frac{d(u_1)}{a_{u_1}}$.
Since $1 < i < n$, we have $\tD( u_{i} ) = 0$ and $\epsilon(u_{i})=2$, so
$$
\textstyle
0 = 
\tD(u_{i})
= \sigma( u_{i} ) + (\epsilon(u_{i})-2)(N_{u_{i}}-1) + N_{u_{i}}(1 - \frac1{a_{u_{i}}})
= \sigma( u_{i} ) + N_{u_{i}}(1 - \frac1{a_{u_{i}}})
$$
and consequently $\sigma( u_{i} ) = 0$ and  $a_{u_{i}}=1$.
Since $\sigma( u_{i} ) = 0$, we have $d(u_i) = N_{u_i}$ by Rem.\ \ref{F0934nofe8rg3p406egfh}.
Since $N_{u_i} \in c(u_i,e_i)\Integ$ by Thm \ref{xncoo9qwdx9}, we get $d(u_i) \in c(u_i,e_i)\Integ$.
This implies that $\gcd( d(u_i),  c(u_i,e_i) ) =  c(u_i,e_i)$.
Since $(u_i,e_i)$ is the only immediate predecessor of $(u_{i+1},e_{i+1})$, and since $a_{u_i}=1$,
$$
\textstyle
c(u_{i+1},e_{i+1})
= \frac{ \gcd( d(u_i),  c(u_i,e_i) )  }{ a_{u_{i}} }
= c(u_i,e_i) =  \frac{d(u_1)}{a_{u_1}},
$$
which completes the proof of (a).

(b) If $N_{u_n} = \frac{d(u_1)}{a_{u_1}}$ then $N_{u_n} = c(u_n,e_n)$ by (a), so $M(u_n,e_n)=1$, so $u_n > u_{n-1}$ by Prop.\ \ref{0hp9fh023wbpchvgrsjs256}.
\end{proof}

\begin{definition} \label {xkclqlwpd90cod}
\begin{enumerate}

\item $ Z = Z(\Teul) = \setspec{ z \in \Neul }{ \text{$\epsilon(z)=1$ and $\tD(z)\le0$} } $

\item Let $\Gamma = \Gamma(\Teul)$ be the set of $\tD$-trivial paths $(z_1,\dots,z_n)$ such that\footnote{See Lemma \ref{0cvg248f7qvq8hene44dsdi374}
for an equivalent definition of $\Gamma(\Teul)$.}
$$
\text{$\tD(z_1) \le 0 < \tD(\{ z_1, \dots, z_n \})$ \quad and \quad $z_{n-1}>z_{n}$.}
$$
Note that if $(z_1, \dots, z_n) \in \Gamma$ then $n\ge2$,  $z_1 \in Z$, $\{z_1,\dots,z_n\} \subseteq  \Neul$ and $\tD(z_n)>0$.

\item Let $W=W(\Teul)$ be the set of $w \in \Neul$ satisfying:
$$
\text{there exists $(z_1, \dots, z_n) \in \Gamma$ such that $z_n=w$.}
$$
Observe that $\tD(w)>0$ for all $w \in W$.

\item An element $(u,e)$ of $P(\Teul)$ is called a \textit{tooth} if 
$$
\text{there exists $(z_1, \dots, z_n) \in \Gamma$ such that $z_n=u$ and $\{z_n, z_{n-1}\} = e$.}
$$

\end{enumerate}
\end{definition}

Observe that if $(u,e)$ is a tooth then $u \in W(\Teul)$.
Also note that  $(z_1, \dots, z_n) \mapsto (z_n, \{z_n, z_{n-1} \})$ is a bijection from $\Gamma(\Teul)$ to the set of teeth of $\Teul$.

\begin{lemma}  \label {GRygergGREg8948r}
If $(u,e)$ is a tooth then
\begin{enumerata}

\item  $(u,e)$ is nonpositive, $\eta(u,e)=0$ and $\tD(\Neul(u,e)) = 1 - c(u,e)$

\item $\tD( \{u\} \cup \Neul(u,e) ) > 0$

\item $M(u,e)>1$.  

\end{enumerata}
\end{lemma}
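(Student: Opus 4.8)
The plan is to unwind the definitions. By Def.\ \ref{xkclqlwpd90cod}(4), saying that $(u,e)$ is a tooth means there is a $\tD$-trivial path $(z_1,\dots,z_n)\in\Gamma(\Teul)$ with $z_n=u$ and $e=\{z_n,z_{n-1}\}$; in particular $n\ge2$, $z_1\in Z(\Teul)$ (so $\epsilon(z_1)=1$ and $\tD(z_1)\le0$), and $z_{n-1}>z_n$. First I would observe that $\Neul(u,e)=\{z_1,\dots,z_{n-1}\}$: since the path from $z_n$ to any $x\in\Neul$ traverses $e$ exactly when $x$ lies on the ``$z_1$-side'' of $e$, and the $\tD$-trivial path has all its intermediate vertices of valency $2$ in $\Neul$ (i.e.\ $\epsilon(z_i)=2$ for $1<i<n$), the set $\Neul(u,e)$ is precisely $\{z_1,\dots,z_{n-1}\}$. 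Hence $\tD(\Neul(u,e))=\tD(\{z_1,\dots,z_{n-1}\})$, and the defining inequality $\tD(z_1)\le 0<\tD(\{z_1,\dots,z_n\})$ of $\Gamma(\Teul)$ will be the main arithmetic input.

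For part (a): to show $(u,e)$ is nonpositive I need $\tD(\Neul(u,e))=\tD(\{z_1,\dots,z_{n-1}\})\le0$. I would argue that $\tD(\{z_1,\dots,z_i\})\le0$ for all $i<n$ by induction on $i$, using $\tD(z_1)\le0$ for the base case and $\tD(z_i)=0$ for $1<i<n$ for the inductive step (adding $z_i$ changes $\tD$ of the set by exactly $\tD(z_i)=0$). This gives $\tD(\Neul(u,e))\le0$, so $(u,e)$ is nonpositive by definition. Then Prop.\ \ref{DKxcnpw93sdo} immediately gives the equivalences: nonpositive $\Leftrightarrow \eta(u,e)=0 \Leftrightarrow \tD(\Neul(u,e))=1-c(u,e)$, which is exactly the content of (a).

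For part (b): $\{u\}\cup\Neul(u,e)=\{z_1,\dots,z_n\}$, so $\tD(\{u\}\cup\Neul(u,e))=\tD(\{z_1,\dots,z_n\})>0$ directly from the defining property of $\Gamma(\Teul)$ — essentially nothing to prove. For part (c): I would combine the previous two. We have $\tD(\{u\}\cup\Neul(u,e))=\tD(u)+\tD(\Neul(u,e))$, and by Def.\ \ref{Bxdkj2A3wkwofdpfwend9} (and since $\eta(u,e)=0$ from (a)) one has a relation expressing $\tD(\{u\}\cup\Neul(u,e))$ via $\bD(u,\{e\})$; more directly, Thm \ref{P90werd23ewods0ci} applied with $A=\{e\}$ together with $\eta(u,e)=0$ and (a) gives $\tD(\{u\}\cup\Neul(u,e)) = $ something of the form $(\text{stuff})+N_u(1-\tfrac1{M(u,e)})+\cdots$; since the left side is $>0$ while $\tD(\Neul(u,e))=1-c(u,e)\le0$, the term $1-\tfrac1{M(u,e)}$ must be strictly positive, forcing $M(u,e)>1$. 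Alternatively, and probably cleaner: if $M(u,e)=1$ then $N_u=c(u,e)$, hence by Prop.\ \ref{0hp9fh023wbpchvgrsjs256} $u>u_0$ where $e=\{u,u_0\}$, i.e.\ $z_n>z_{n-1}$, contradicting $z_{n-1}>z_n$. This last route is the cleanest, so I expect the only genuine work is the bookkeeping in (a) identifying $\Neul(u,e)=\{z_1,\dots,z_{n-1}\}$ and the little induction showing the partial sums of $\tD$ stay $\le0$; everything else is a citation. The main (mild) obstacle is making the identification of $\Neul(u,e)$ rigorous from the $\tD$-triviality conditions, but that follows from $\epsilon(z_i)=2$ for $1<i<n$ together with $\epsilon(z_1)=1$, which pins down the shape of the path inside $\Neul$.
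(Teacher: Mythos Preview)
Your proposal is correct and follows essentially the same approach as the paper. The only cosmetic difference is that the paper skips your induction in (a) by writing directly $\tD(\Neul(u,e))=\tD(\{z_1,\dots,z_{n-1}\})=\tD(z_1)\le0$ (since $\tD(z_i)=0$ for $1<i<n$), and for (c) the paper uses exactly your ``cleaner'' route via Prop.~\ref{0hp9fh023wbpchvgrsjs256}.
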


\begin{proof}
There exists a $\tD$-trivial path $(z_1, \dots, z_n)$
such that $\tD(z_1)\le0<\tD(\{z_1,\dots,z_n\})$, $z_n=u$, $\{z_n,z_{n-1}\} = e$ and $z_n<z_{n-1}$.
We have $\Neul(u,e) = \{ z_1, \dots, z_{n-1} \}$,
so $\tD(\Neul(u,e)) = \tD( \{ z_1, \dots, z_{n-1} \} ) = \tD(z_1) \le 0$, so $(u,e)$ is nonpositive.
It follows that  $\tD(\Neul(u,e)) = 1 - c(u,e)$ and that $\eta(u,e)=0$ by Prop.\ \ref{DKxcnpw93sdo}.
We also have $\tD( \{u\} \cup \Neul(u,e) ) = \tD(\{z_1,\dots,z_n\}) > 0$.
If $M(u,e)=1$ then Prop.\ \ref{0hp9fh023wbpchvgrsjs256} implies that $z_n > z_{n-1}$, a contradiction.
So $M(u,e)>1$ and the Lemma is proved.
\end{proof}

\begin{lemma} \label {awe9ifc0we9} 
Let $w \in \Neul$ and let $A$ and $T$ be subsets of $\Eeul_w$. 
Assume:
\begin{itemize}

\item[(i)]  $A \cap T = \emptyset$ and $|A \cup T| < \epsilon(w)$;

\item[(ii)] for each $e \in T$, $(w,e)$ is a tooth;

\item[(iii)] for each $e \in A$, $(w,e)$ is nonpositive.

\end{itemize}
Then:
\begin{enumerata}

\item $\bD(w, A \cup T) > 0 \iff \bD(w, A) > 0$

\item If $|A| \le \epsilon(w)-3$ then  $\bD(w, A) > 0$.  

\end{enumerata}
\end{lemma}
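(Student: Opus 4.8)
The plan is to prove the two assertions using the "calculus" developed earlier in Section~\ref{Sec:LocalstructureofNeul}, in particular Theorem~\ref{P90werd23ewods0ci} and Corollary~\ref{P90werd23ewods0ci-cor}, together with the facts about teeth recorded in Lemma~\ref{GRygergGREg8948r}. First I would record the key numerical consequences of hypotheses (ii) and (iii): by Lemma~\ref{GRygergGREg8948r}, for each $e \in T$ we have $\eta(w,e)=0$ and $M(w,e)>1$, and by Proposition~\ref{DKxcnpw93sdo} (or Remark~\ref{uyhmdytjwhwhrkdftraef4gf23h23}), for each $e \in A$ we have $\eta(w,e)=0$ and $c(w,e)\in\Nat\setminus\{0\}$. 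Thus $\sum_{e \in A\cup T}\eta(w,e)=0$, and the additivity of $\tD$ over the $\tD$-trivial decomposition gives the identity
$$
\bD(w,A\cup T) - \bD(w,A) = \sum_{e \in T}\tD\big(\Neul(w,e)\big) = \sum_{e \in T}\big(1-c(w,e)\big),
$$
using $\tD(\Neul(w,e)) = 1 - c(w,e)$ from Lemma~\ref{GRygergGREg8948r}(a). Now observe that a tooth $(w,e)$ has $M(w,e)>1$, which forces $c(w,e) = N_?/M > 0$; I would need to check that in fact $c(w,e)\le 1$ so that each summand $1-c(w,e)\ge 0$ — this should follow because $(w,e)$ nonpositive gives $c(w,e)\in\Nat\setminus\{0\}$ together with... wait, actually the cleaner route is: since $\tD(\Neul(w,e))=1-c(w,e)\le 0$, we get $c(w,e)\ge 1$, so $1-c(w,e)\le 0$; hence $\bD(w,A\cup T)\le \bD(w,A)$. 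This gives "$\Leftarrow$" of (a) only after more care, so the better strategy is to apply Corollary~\ref{P90werd23ewods0ci-cor} directly to the set $A\cup T$ and separately to $A$.

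Here is the cleaner approach for part (a). Apply Theorem~\ref{P90werd23ewods0ci} with the subset $A\cup T$ of $\Eeul_w$: since $\sum_{e\in A\cup T}\eta(w,e)=0$,
$$
R(w,A\cup T) + (\epsilon(w) - |A\cup T| - 1)\big(1-\tfrac1{N_w}\big) = 1 + \frac{\bD(w,A\cup T)-1}{N_w}.
$$
The same identity holds with $A$ in place of $A\cup T$, using $\sum_{e\in A}\eta(w,e)=0$. Subtracting the two identities expresses $\bD(w,A\cup T)-\bD(w,A)$ in terms of $R(w,A\cup T)-R(w,A)$ and $|T|$; since the $R$-difference is exactly $\sum_{e\in T}(1-\tfrac1{M(w,e)})$ with each term in $[\tfrac12,1)$ because $M(w,e)>1$, one reads off that $\bD(w,A\cup T)>0 \iff \bD(w,A)>0$ provided $|A\cup T|<\epsilon(w)$ keeps the coefficient $(\epsilon(w)-|A\cup T|-1)\ge 0$, which is hypothesis (i). I expect the sign-chasing here to be the main obstacle: one must track that the "extra" teeth neither create nor destroy positivity, and the coefficient of $(1-\tfrac1{N_w})$ being nonnegative (thanks to $|A\cup T|<\epsilon(w)$, i.e. $\epsilon(w)-|A\cup T|-1\ge 0$) together with $N_w\ge 1$ is what makes $\bD(w,A\cup T)$ and $\bD(w,A)$ have the same sign relative to $0$. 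The case $N_w=1$ will need separate attention (then $R(w,A\cup T)=0$ since all $k_x=1$, all $M(w,e)=1$, contradicting $M(w,e)>1$ unless $T=\emptyset$, making (a) trivial).

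For part (b), assume $|A|\le \epsilon(w)-3$; I want $\bD(w,A)>0$. Using the identity above with the subset $A$ and $\sum_{e\in A}\eta(w,e)=0$:
$$
\bD(w,A) = 1 + N_w\Big(R(w,A) + (\epsilon(w)-|A|-1)\big(1-\tfrac1{N_w}\big) - 1\Big) = N_w\Big(R(w,A)-2\Big)+2+(\epsilon(w)-|A|-1)\,?
$$
— more precisely, $\bD(w,A) - 1 = N_w\big(R(w,A)+(\epsilon(w)-|A|-1)(1-\tfrac1{N_w})-1\big)$. Since $\epsilon(w)-|A|-1\ge 2$, the term $(\epsilon(w)-|A|-1)(1-\tfrac1{N_w})\ge 2(1-\tfrac1{N_w})$, so
$$
\bD(w,A)-1 \ge N_w\big(2(1-\tfrac1{N_w})-1\big) = N_w\big(1-\tfrac2{N_w}\big) = N_w - 2 \ge -1,
$$
hence $\bD(w,A)\ge 0$; and I would then rule out $\bD(w,A)=0$ by noting equality throughout forces $R(w,A)=0$, $\epsilon(w)-|A|-1=2$ and $N_w\le 2$, but $\epsilon(w)\ge 3$ with $N_w\le 2$ contradicts Lemma~\ref{90q932r8dhd89cnr9}\eqref{0239u9e78923} and \eqref{p9f239h209f9} (which force $N_w>1$, i.e. $N_w=2$, and then examine $R(w,A)=0$ forcing $a_w=1$ and all $k_x=1$; a short case check closes it). So $\bD(w,A)>0$, proving (b). The delicate point in (b) is the equality analysis at $N_w=2$, but it is brief.
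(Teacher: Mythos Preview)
Your argument for (b) is essentially the paper's and is correct; note that from $\bD(w,A)-1 \ge N_w-2 \ge 0$ you already get $\bD(w,A)\ge 1$, so the equality analysis you added is unnecessary. The direction ``$\Rightarrow$'' of (a) is also fine: $\bD(w,A\cup T)-\bD(w,A)=\sum_{e\in T}(1-c(w,e))\le 0$ since each $c(w,e)\ge 1$, and this is what the paper does too.

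The direction ``$\Leftarrow$'' of (a) has a genuine gap. Subtracting the two Theorem~\ref{P90werd23ewods0ci} identities only reproduces the same nonpositive difference
\[
\bD(w,A\cup T)-\bD(w,A)=\sum_{e\in T}\big(1-c(w,e)\big)\le 0,
\]
so ``one reads off'' the equivalence is not justified; and the nonnegativity of $\epsilon(w)-|A\cup T|-1$ does nothing to prevent this difference from dragging $\bD(w,A\cup T)$ below $0$ (each $c(w,e)$ can be as large as $N_w/2$). The missing idea, which the paper supplies, is a term-counting trick. Write the two left-hand sides explicitly as sums of terms of the form $1-\tfrac1{n_i}\in[0,1)$:
\[
\sum_{x\in\Deul_w}\!\Big(1-\tfrac1{k_x}\Big)+\Big(1-\tfrac1{a_w}\Big)+\sum_{e\in A}\Big(1-\tfrac1{M(w,e)}\Big)+\sum_{i=1}^{\ell+|T|}\Big(1-\tfrac1{N_w}\Big)
\]
for the $A$-identity (with $\ell=\epsilon(w)-|A\cup T|-1\ge 0$), and the same expression with only $\ell$ copies of $(1-\tfrac1{N_w})$ but with the extra tooth-terms $\sum_{e\in T}(1-\tfrac1{M(w,e)})$ for the $(A\cup T)$-identity. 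For such sums, ``$\ge 1$'' is equivalent to ``at least two nonzero terms'' (each nonzero term is $\ge\tfrac12$ and $<1$). Passing from the $A$-sum to the $(A\cup T)$-sum removes $|T|$ copies of $(1-\tfrac1{N_w})$ and inserts $|T|$ terms $(1-\tfrac1{M(w,e)})$, each of which is \emph{nonzero} because $M(w,e)>1$ for teeth. Hence the number of nonzero terms does not decrease, so if $\bD(w,A)\ge 1$ (i.e.\ the $A$-sum is $\ge 1$), then the $(A\cup T)$-sum is also $\ge 1$, giving $\bD(w,A\cup T)\ge 1$. This is the step your proposal is missing.
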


\begin{proof}
Note that  $\bD(w, A \cup T) = \bD(w, A) + \sum_{e \in T} \tD( \Neul(w,e) )$ and that $\tD( \Neul(w,e) ) \le 0$ for each $e \in T$,
so  $\bD(w, A \cup T) \le \bD(w, A)$ and consequently ``$\Rightarrow$'' is clear, in assertion (a).

Let $\ell = \epsilon(w)-|A \cup T|-1 \ge 0$.
By assumptions (ii) and (iii) and Lemma \ref{GRygergGREg8948r} we have $\eta(w,e)=0$ for all $e \in A \cup T$, so Thm \ref{P90werd23ewods0ci} gives
$ R(w,A\cup T) + \ell (1 - \frac1{N_w} )  =   1 + \frac{ \bD(w,A\cup T) - 1 }{N_w} $
and $R(w,A) + (\ell+|T|) (1 - \frac1{N_w})  =   1 + \frac{ \bD(w,A) - 1 }{N_w}$, 
or equivalently
\begin{gather}
\label {d90cf23owedJX0} \textstyle
\sum_{x \in \Deul_w}(1 - \frac1{k_x}) + \sum_{e \in A \cup T} (1 - \frac1{M(w,e)}) + (1 - \frac1{a_w}) + \sum_{i=1}^\ell (1 - \frac1{N_w} ) 
=   1 + \frac{ \bD(w,A\cup T) - 1 }{N_w} \\
 \label {9d2b398ev2we9fu} \textstyle
\sum_{x \in \Deul_w}(1 - \frac1{k_x}) + \sum_{e \in A} (1 - \frac1{M(w,e)}) + (1 - \frac1{a_w}) + \sum_{i=1}^{\ell+|T|} (1 - \frac1{N_w}) 
 =   1 + \frac{ \bD(w,A) - 1 }{N_w} .
\end{gather}
By  Lemma \ref{GRygergGREg8948r} we have $M(w,e)>1$ for all $e \in T$.
Suppose that $\bD(w,A) \ge 1$. Then $1 + \frac{ \bD(w,A) - 1 }{N_w} \ge 1$,
so the left-hand-side of \eqref{9d2b398ev2we9fu} has at least two nonzero terms.
Since the left-hand-side of \eqref{d90cf23owedJX0} is obtained from that of \eqref{9d2b398ev2we9fu} by removing $|T|$ terms $(1 - \frac1{N_w})$
and adding $|T|$ \textit{nonzero} terms $(1-\frac1{M(w,e)})$, 
the left-hand-side of \eqref{d90cf23owedJX0} has at least two nonzero terms;
thus $1 + \frac{ \bD(w,A\cup T) - 1 }{N_w} \ge 1$ and hence $\bD(w,A\cup T) \ge 1$. This proves (a).

(b) If $|A| \le \epsilon(w)-3$ then $\epsilon(w)\ge3$, so $N_w>1$ and hence $1-\frac1{N_w}>0$;
since $\ell + |T| = \epsilon(w) - |A| - 1 \ge 2$,
the left-hand-side of \eqref{9d2b398ev2we9fu}  has at least two nonzero terms $(1-\frac1{N_w})$;
then the sum is $\ge1$ and consequently
$1 + \frac{ \bD(w,A) - 1 }{N_w} \ge 1$, so $\bD(w,A)\ge1$.
\end{proof}

\begin{definition} \label {pd09Yv3ned09Xse}
Let $Z = Z(\Teul)$, $\Gamma = \Gamma(\Teul)$ and $W = W(\Teul)$ (see Def.\ \ref{xkclqlwpd90cod}).
\begin{enumerate}

\item For each $w \in W$, define
$$
V(w) = \setspec{ v \in \Veul }{  \gamma_{v,w} \in \Gamma } \quad \text{and} \quad
\bar V(w) = \setspec{x \in \Veul}{ \text{$x$ is in $\gamma_{v,w}$ for some $v \in V(w)$} } .
$$
Observe that $\bigcup_{w \in W} V(w) \subseteq Z$
and $\bigcup_{w \in W} \bar V(w) \subseteq  \Neul$.
We also define 
$$
V(w) = \emptyset \text{ and } \bar V(w) = \{w\} \quad \text{for all $w \in \Neul \setminus W$.}
$$
So, for $w \in \Neul$, we have $w \in W$ if and only if $| \bar V(w) | > 1$.

\item We say that $\Teul$ is a \textit{brush} if there exists $w \in W(\Teul)$ such that $\bar V(w) = \Neul$.

\end{enumerate}
\end{definition}

\begin{remark} \label {pc09n3409vnZiEWOdhFp30ef}
\begin{enumerata}

\item {\it No element $w$ of $W(\Teul)$ satisfies $v_0 \in \bar V(w) \setminus \{w\}$.}

\item {\it If $\Teul$ is a brush then $| \Neul | > 1$ and $W(\Teul) = \{ v_0 \}$.}

\end{enumerata}
Assertion (a) is proved by contradiction: suppose that  $w \in W(\Teul)$ satisfies $v_0 \in \bar V(w) \setminus \{w\}$.
Then there exists $(z_1, \dots, z_n) \in \Gamma(\Teul)$ satisfying $v_0 \in \{z_1, \dots, z_{n-1}\}$,
but this is absurd because the definition of $\Gamma(\Teul)$ implies that $z_{n-1}>z_n$.
So (a) is true.  Consider (b). If $\Teul$ is a brush then obviously $W(\Teul) \neq \emptyset$, so $| \Neul | > 1$.
The fact that $W(\Teul) = \{ v_0 \}$ follows from (a).
\end{remark}

\begin{lemma} \label {p0293efp0cw23ep0hvj}
If $\Teul$ is not a brush then for each $w \in W(\Teul)$ we have
$$
\epsilon(w) > |V(w)| \quad \text{and} \quad  \tD\big( \bar V(w) \big) \ge \max(1, \epsilon(w)-2) .
$$
\end{lemma}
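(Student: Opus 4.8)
The plan is to fix $w \in W(\Teul)$ and analyze the subtree $\bar V(w)$, which by Definition \ref{pd09Yv3ned09Xse} consists of $w$ together with all the $\tD$-trivial paths $\gamma_{v,w}$ with $v \in V(w)$. First I would record the structure: the teeth attached to $w$ are exactly the pairs $(w,e_v)$ where $e_v$ is the edge of $\gamma_{v,w}$ incident to $w$, as $v$ ranges over $V(w)$; distinct $v$ give distinct edges $e_v$, so $|V(w)|$ equals the number of such teeth. To prove $\epsilon(w) > |V(w)|$, I would apply Lemma \ref{awe9ifc0we9} with $A = \emptyset$ and $T = \{ e_v : v \in V(w) \}$: condition (ii) of that lemma is satisfied by construction, and condition (iii) is vacuous. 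Lemma \ref{awe9ifc0we9}(a) then gives $\bD(w,T) > 0 \iff \bD(w,\emptyset) > 0$, i.e.\ $\bD(w,T) > 0 \iff \tD(w) > 0$; since every $w \in W$ satisfies $\tD(w)>0$ (this is noted right after Definition \ref{xkclqlwpd90cod}), we get $\bD(w,T) = \tD(\bar V(w)) > 0$. But Lemma \ref{awe9ifc0we9} requires $|A \cup T| < \epsilon(w)$ as a hypothesis; so I must instead argue that the assumption $|T| \ge \epsilon(w)$ leads to a contradiction. If $|T| = \epsilon(w)$, then $T = \Eeul_w$ and $\bar V(w) = \Neul$, which would make $\Teul$ a brush (by Definition \ref{pd09Yv3ned09Xse}(2) and Remark \ref{pc09n3409vnZiEWOdhFp30ef}), contrary to hypothesis. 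The case $|T| > \epsilon(w)$ is impossible since $T \subseteq \Eeul_w$ and $|\Eeul_w| = \epsilon(w)$. Hence $|V(w)| = |T| < \epsilon(w)$.

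Next, for the lower bound $\tD(\bar V(w)) \ge \max(1, \epsilon(w)-2)$, I would again use Lemma \ref{awe9ifc0we9}, now that we know $|T| = |V(w)| < \epsilon(w)$ so its hypotheses are met with $A=\emptyset$. As above, part (a) yields $\tD(\bar V(w)) = \bD(w,T) > 0$, and since $\tD(\bar V(w)) \in \Integ$, this gives $\tD(\bar V(w)) \ge 1$. For the bound $\tD(\bar V(w)) \ge \epsilon(w)-2$, I would reach for Theorem \ref{P90werd23ewods0ci} applied to $u = w$, $A = T$: since $\eta(w,e)=0$ for every tooth $e$ (Lemma \ref{GRygergGREg8948r}(a)), that identity reads
$$
R(w,T) + (\epsilon(w) - |T| - 1)\Bigl(1 - \tfrac1{N_w}\Bigr) = 1 + \frac{\bD(w,T) - 1}{N_w},
$$
i.e.\ $\tD(\bar V(w)) = \bD(w,T) = 1 + N_w\bigl(R(w,T) + (\epsilon(w)-|T|-1)(1-\tfrac1{N_w}) - 1\bigr)$. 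Now $R(w,T)$ is a sum $\sum_i(1-\tfrac1{n_i})$ which includes the $|T|$ genuinely nonzero terms $(1-\tfrac1{M(w,e)})$ (nonzero because $M(w,e)>1$ for each tooth, by Lemma \ref{GRygergGREg8948r}(c)); adding the $\epsilon(w)-|T|-1$ terms $(1-\tfrac1{N_w})$ — each $\ge 0$, and positive when $\epsilon(w)-|T| \ge 2$, which forces $N_w > 1$ via Lemma \ref{90q932r8dhd89cnr9} — the total is a sum of $\epsilon(w)-1$ terms each $\ge \tfrac12$ when positive. I would bound $R(w,T) + (\epsilon(w)-|T|-1)(1-\tfrac1{N_w}) \ge \tfrac{\epsilon(w)-1}{2} \cdot \tfrac{N_w}{N_w-0}$... more carefully: each of the $|T|$ teeth contributes $1-\tfrac1{M(w,e)} \ge \tfrac12$, and in the regime $\epsilon(w) \ge 3$ we have $N_w \ge 2$, so $\tD(\bar V(w)) - 1 = N_w\bigl(\text{sum} - 1\bigr) \ge 2\bigl(\tfrac{\epsilon(w)-1}{2} - 1\bigr) = \epsilon(w) - 3$, giving $\tD(\bar V(w)) \ge \epsilon(w)-2$; and when $\epsilon(w) \le 2$ the claim $\tD(\bar V(w)) \ge \epsilon(w)-2$ is weaker than $\tD(\bar V(w))\ge 1$ and hence already established. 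Combining, $\tD(\bar V(w)) \ge \max(1,\epsilon(w)-2)$.

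The main obstacle I anticipate is the bookkeeping in the second half: getting a clean inequality $\tD(\bar V(w)) \ge \epsilon(w)-2$ requires correctly counting the nonzero terms in $R(w,T)$ together with the $(1-\tfrac1{N_w})$ terms and being careful about whether $N_w=1$ or $N_w \ge 2$ (when $N_w=1$ all $k_x=1$ and all $M(w,e)=1$, but $M(w,e)>1$ for teeth forces $T=\emptyset$, handled separately, and then $\epsilon(w)=1$ would make $w\notin W$). A cleaner route may be to invoke Corollary \ref{P90werd23ewods0ci-cor} or Proposition \ref{0hp9fh023wbpchvgrsjs256}-style arguments: since $\epsilon(w)-|T|$ is the valency of $w$ "outside the teeth", and each $(w,e)$ with $e\notin T$ incident to a vertex $>w$ has $M(w,e)\ge 1$, the count of nonzero terms in $R(w,\Eeul_w)$ is at least $\epsilon(w)-1$ by Proposition \ref{0hp9fh023wbpchvgrsjs256}, which is essentially the content of Corollary \ref{Xxkvp0wedifcwZepd}; I would try to mirror that corollary's proof localized to $T$ rather than $\Eeul_w$. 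I expect roughly one page once the case distinctions ($N_w=1$ vs $N_w\ge2$, $T=\emptyset$ vs $T\ne\emptyset$) are spelled out.
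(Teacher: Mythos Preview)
Your approach is essentially the same as the paper's: both arguments deduce $\epsilon(w)>|V(w)|$ from the brush contradiction, invoke Lemma~\ref{awe9ifc0we9} with $A=\emptyset$ and $T=\{e_v : v\in V(w)\}$ to obtain $\tD(\bar V(w))\ge 1$, and then apply Theorem~\ref{P90werd23ewods0ci} with $A=T$ (using $\eta(w,e)=0$ and $M(w,e)>1$ for teeth) to get the bound $\epsilon(w)-2$ in the regime $\epsilon(w)\ge 3$, $N_w\ge 2$. Your single estimate $R(w,T)+(\epsilon(w)-|T|-1)(1-\tfrac1{N_w})\ge(\epsilon(w)-1)/2$ is in fact slightly cleaner than the paper's case split on $k\ge 2$ versus $\epsilon(w)\ge k+2$; the obstacles you anticipate (the $N_w=1$ and $T=\emptyset$ cases) do not arise, since $w\in W(\Teul)$ forces $\tD(w)>0$ (hence $N_w>1$ by Lemma~\ref{90q932r8dhd89cnr9}) and $V(w)\neq\emptyset$.
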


\begin{proof}
Let $w \in W$.
It is clear that  $\epsilon(w) \ge |V(w)|$.
If $\epsilon(w) = |V(w)|$ then
$W=\{w\}$ and $\bar V(w) = \Neul$, so $\Teul$ is a brush, a contradiction.
So $\epsilon(w) > |V(w)|$.
From $\epsilon(w)>1$, we deduce that $N_w\ge2$.

Let $z_1, \dots, z_k$ be the distinct elements of $V(w)$.
For each $i =1,\dots,k$, let $e_i$ be the edge of $\gamma_{z_i,w}$ which is incident to $w$.
Then $(w,e_1), \dots, (w,e_k)$ are distinct teeth  (because $\gamma_{z_i,w} \in \Gamma$).
Let us apply Lemma \ref{awe9ifc0we9} with $A=\emptyset$ and $T=\{e_1, \dots,e_k\}$;
we have $|A \cup T| = k = | V(w) | < \epsilon(w)$ and (since $w \in W$)  $\bD( w, A) = \tD(w) > 0$,
so the Lemma gives $\tD\big( \bar V(w) \big) = \tD\big( \{w\} \cup \bigcup_{i=1}^k\Neul(w,e_i) \big) = \bD( w, A \cup T) \ge 1$. 

To complete the proof, there remains to show that  $\tD\big( \bar V(w) \big) \ge \epsilon(w)-2$.
If $\epsilon(w)\le2$ then  $\tD\big( \bar V(w) \big) \ge 1 > \epsilon(w)-2$, so we may assume from now-on that $\epsilon(w)\ge3$.

With $A = \{ e_1, \dots, e_k \}$, Thm \ref{P90werd23ewods0ci} gives
$$
R(w,A) + (\epsilon(w) - |A| - 1) { \textstyle (1 - \frac1{N_w}) }  =   1 + \frac{ \bD(w,A) - 1 - \sum_{e \in A}\eta(w,e)}{N_w}
 =   1 + \frac{ \bD(w,A) - 1}{N_w}
$$
where the last equality follows from the fact that, for each $e \in A$, $(w,e)$ is a tooth and hence $\eta(w,e)=0$ by Lemma \ref{GRygergGREg8948r}.
We have $R(w,A) \ge \sum_{i=1}^k (1 - \frac1{M(w,e_i)}) \ge k/2$ by Lemma \ref{GRygergGREg8948r}.
Noting that $\bD(w,A) = \tD( \bar V(w) )$ and $|A| = k$, we obtain
\begin{align} \label {zdklxpw2oeijfdpd}
\tD( \bar V(w) )
&  = (R(w,A)-1) N_w +  (\epsilon(w) - k - 1) (N_w - 1) + 1 \\
\notag
& \textstyle \ge (\frac k2 -1) N_w +  (\epsilon(w) - k - 1) (N_w - 1) + 1 \\
\notag
& \textstyle \ge (\frac k2 -1) N_w +  \epsilon(w) - k   + (\epsilon(w) - k - 1) (N_w - 2)  .
\end{align}
Since we assumed that $\epsilon(w)\ge3$, we have $k\ge 2$ or $\epsilon(w) \ge k+2$.

If $k\ge2$ then $N_w\ge2$ implies that  $(\frac k2 -1) N_w \ge  (\frac k2 -1) 2 = k-2$, so \eqref{zdklxpw2oeijfdpd} gives
\begin{align*}
\tD( \bar V(w) )
& \textstyle \ge \epsilon(w)-2 +  (\epsilon(w) - k - 1) (N_w - 2) \ge \epsilon(w)-2 .
\end{align*}

If $\epsilon(w) \ge k+2$ then  \eqref{zdklxpw2oeijfdpd} gives
\begin{align*}
\tD( \bar V(w) )
& \textstyle \ge (\frac k2 -1) N_w +  \epsilon(w) - k    +  (N_w - 2) + (\epsilon(w) - k - 2) (N_w - 2)  \\
& \textstyle \ge (\frac k2 -1) N_w +  \epsilon(w) - k    +  (N_w - 2) \\
& \textstyle = \frac k2  N_w +  \epsilon(w) - k      - 2 
\ge \epsilon(w)-2 .
\end{align*}
\end{proof}

\section*{The set $\Omega(\Teul)$}

\begin{definition} \label {c9v8b34yk4icsovedbd}
$ \Omega(\Teul) = Z(\Teul) \setminus \bigcup_{w \in W(\Teul)} V(w)$
\end{definition}

\begin{lemma}
\begin{enumerata}

\item $v_0 \in \Omega(\Teul)$ if and only if $v_0 \in Z(\Teul)$.

\item If $\epsilon(v_0)=1$ and $| \Deul_{v_0} | \le 1$ then  $v_0 \in \Omega(\Teul)$.

\item If $\delta_{v_0}=1$ and $| \Neul | > 1$ then $v_0 \in \Omega(\Teul)$.

\end{enumerata}
\end{lemma}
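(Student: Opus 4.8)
The plan is to deduce parts (b) and (c) from part (a), and to obtain part (a) from Remark \ref{pc09n3409vnZiEWOdhFp30ef}(a). Since $\Omega(\Teul) = Z(\Teul) \setminus \bigcup_{w \in W(\Teul)} V(w)$, the forward implication of (a) is immediate. For the converse, I would argue by contradiction: if $v_0 \in V(w)$ for some $w \in W(\Teul)$, then $v_0$ lies in $\gamma_{v_0,w}$, so $v_0 \in \bar V(w)$; moreover $v_0 \neq w$, because $\gamma_{v_0,v_0}$ is the trivial path and hence not $\tD$-trivial, so it cannot belong to $\Gamma(\Teul)$. Thus $v_0 \in \bar V(w) \setminus \{w\}$, contradicting Remark \ref{pc09n3409vnZiEWOdhFp30ef}(a). (Equivalently and more directly: if $\gamma_{v_0,w} = (z_1,\dots,z_n) \in \Gamma(\Teul)$, then $z_1 = v_0$ is the root, so $z_1 < z_2 < \cdots < z_n$, forcing $z_{n-1} < z_n$ and contradicting the requirement $z_{n-1} > z_n$ in the definition of $\Gamma$.) Hence $v_0 \notin \bigcup_{w} V(w)$, so $v_0 \in Z(\Teul)$ implies $v_0 \in \Omega(\Teul)$.

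For part (b), by (a) it suffices to check $v_0 \in Z(\Teul)$, i.e.\ that $\epsilon(v_0) = 1$ (given) and $\tD(v_0) \le 0$. I would evaluate $\tD(v_0)$ using Lemma \ref{90q932r8dhd89cnr9}\eqref{pc9vp23r09}: since $a_{v_0} = 1$ by Remark \ref{p092pof90e9rf}(iii) and $\epsilon(v_0) = 1$, this gives $\tD(v_0) = \sigma(v_0) - (N_{v_0} - 1)$. If $\Deul_{v_0} = \emptyset$, then $\sigma(v_0) = 0$ and $\tD(v_0) = 1 - N_{v_0} \le 0$ because $N_{v_0} \ge 1$. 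If $\Deul_{v_0} = \{u\}$, then $N_{v_0} = k_u d_u$ by Lemma \ref{90q932r8dhd89cnr9}\eqref{o82y387jw9e23}, so $\sigma(v_0) = (k_u - 1)d_u = N_{v_0} - d_u$ and therefore $\tD(v_0) = 1 - d_u \le 0$, the degree $d_u$ being $\ge 1$. In either case $v_0 \in Z(\Teul)$, hence $v_0 \in \Omega(\Teul)$.

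For part (c), I would reduce to (b). If $\delta_{v_0} = 1$ then $r_{v_0} = 0$ by Notation \ref{Ppc09wberyrji76awe}, so Remark \ref{c0v9jn23rf9vXkfC2e} gives $\epsilon(v_0) = 1 - |\Deul_{v_0}|$, whence $|\Deul_{v_0}| \in \{0,1\}$. The case $|\Deul_{v_0}| = 1$ must be excluded: then $\epsilon(v_0) = 0$, so the unique neighbour of $v_0$ is a dicritical vertex $u$, and Lemma \ref{923ip38kjfbve09}\eqref{9v8bdzkc93ry7f} shows $v_0$ is then the only vertex adjacent to $u$; consequently $\Veul = \{v_0, u\}$ and $\Neul = \{v_0\}$, contradicting $|\Neul| > 1$. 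Hence $|\Deul_{v_0}| = 0$, so $\epsilon(v_0) = 1$ and $|\Deul_{v_0}| \le 1$, and part (b) yields $v_0 \in \Omega(\Teul)$.

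I do not foresee a real obstacle: every step is a direct application of results already established. The only point that deserves a moment's care is the exclusion of the case $|\Deul_{v_0}| = 1$ in part (c), where one must invoke the structural description of a dicritical vertex (Lemma \ref{923ip38kjfbve09}) to see that $|\Neul| > 1$ is then impossible.
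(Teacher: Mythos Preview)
Your proof is correct and follows essentially the same approach as the paper's. The only minor difference is in part (c): the paper observes directly that $\epsilon(v_0) + |\Deul_{v_0}| = \delta_{v_0}$ (since no arrow is adjacent to $v_0$) and then uses the connectedness of $\Neul$ together with $|\Neul|>1$ to force $\epsilon(v_0)\ge 1$, hence $\epsilon(v_0)=1$ and $\Deul_{v_0}=\emptyset$; you reach the same conclusion via $r_{v_0}=0$ and then exclude $|\Deul_{v_0}|=1$ by invoking Lemma~\ref{923ip38kjfbve09}, which is a slightly longer but equally valid route.
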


\begin{proof}
(a) Clearly, $v_0 \in \Omega(\Teul)$ implies $v_0 \in Z(\Teul)$.
If $v_0$ belongs to $Z(\Teul)$ but not to $\Omega(\Teul)$ then $v_0 \in V(w)$ for some $w \in W(\Teul)$;
then $v_0 \in \bar V(w) \setminus \{w\}$, which contradicts Rem.\ \ref{pc09n3409vnZiEWOdhFp30ef}.

(b) The assumption  $| \Deul_{v_0} | \le 1$ implies that $\sigma(v_0) < N_{v_0}$.
Since $\epsilon(v_0)=1$ and $a_{v_0}=1$, Lemma \ref{90q932r8dhd89cnr9}\eqref{pc9vp23r09} gives $\tD(v_0) = \sigma(v_0) + (-1)(N_{v_0} - 1) \le 0$,
so $v_0 \in Z(\Teul)$, so $v_0 \in \Omega(\Teul)$ by (a).

(c) We have $\epsilon(v_0) + | \Deul_{v_0} | = \delta_{v_0}$, so the assumptions
$\delta_{v_0}=1$ and $| \Neul | > 1$ imply that $\epsilon(v_0)=1$ and $\Deul_{v_0} = \emptyset$, so the claim follows from (b).
\end{proof}

\begin{example} \label {FFlAkjcvwiuer2n3Z3cxs7df9}
Continuation of Fig.\ \ref{723dfvcjp2q98ewdywe} and Ex.\ \ref{Pc09vbq3j7gabXrZiA}, \ref{cIuCbgepwej6DsJ37655enm}, \ref{cov7btiw67d78vujew9}
and \ref{oOo8cvn239vn3p98fgIW}.
We have
$Z(\Teul) = \{ v_0, v_5, v_6 \}$, $\Gamma(\Teul) = \{ (v_5, v_4), (v_6, v_2) \}$, $W(\Teul) = \{v_2,v_4\}$, $\Omega(\Teul) = \{ v_0 \}$.
\end{example}

For the next result, it is good to note that if  $(z_1, \dots, z_n)$ is a $\tD$-trivial path
then $n\ge2$ and the condition $\epsilon(z_n)=1$ is equivalent to $\Neul = \{ z_1, \dots, z_n \}$.

\begin{theorem}  \label {Xcoikn23ifcdKJDluFYT937}
We have $| \Omega(\Teul) | \le 2$ and the following hold.
\begin{enumerata}

\item Assume that $| \Omega(\Teul) | = 2$.
Then there exists a $\tD$-trivial path $(z_1, \dots, z_n)$ satisfying $\epsilon(z_n)=1$,
and for any such path, the following hold:
\begin{itemize}

\item $\Neul = \{z_1, \dots, z_n \}$ and $\Omega(\Teul) =  \{ z_1, z_n \}$;

\item $\tD(z_1)\le0$, $\tD(z_n)\le0$ and $\tD(\Neul) = \tD(z_1) + \tD(z_n) \le 0$;

\item  $(z_n, \dots, z_1)$ is a $\tD$-trivial path satisfying $\epsilon(z_1)=1$.

\end{itemize}

\item Assume that no $\tD$-trivial path $(z_1, \dots, z_n)$ satisfies $\epsilon(z_n)=1$.
Then  $| \Omega(\Teul) | \le 1$, and if $\Omega(\Teul) \neq \emptyset$
then there exists a $\tD$-trivial path $(z_1,\dots,z_n)$
satisfying $\Omega(\Teul) = \{z_1\}$ and  $z_{n-1}<z_n$.

\end{enumerata}
\end{theorem}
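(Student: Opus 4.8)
The plan is to attach to every $v\in Z(\Teul)$ the unique maximal $\tD$-trivial path $\mu(v)=(v_1,\dots,v_m)$ with $v_1=v$: start at the leaf $v$ of $\Neul$, move to its unique neighbour in $\Neul$, and keep extending through vertices with $\epsilon=2$ and $\tD=0$, stopping at the first vertex $v_m$ with $\epsilon(v_m)\neq2$ or $\tD(v_m)\neq0$. First I would record the elementary facts: (i) every $\tD$-trivial path with first vertex $v$ is a prefix of $\mu(v)$; (ii) with respect to the tree order, $v_{m-1}>v_m$ exactly when $v_0\notin\{v_1,\dots,v_{m-1}\}$, in which case $\mu(v)$ is strictly decreasing from $v_1$, while $v_{m-1}<v_m$ exactly when $v_0=v_j$ for some $j<m$, in which case $\mu(v)$ is ``$V$-shaped'' with minimum $v_0$; (iii) since interior vertices of a $\tD$-trivial path have $\tD=0$, a prefix $(v_1,\dots,v_k)$ can lie in $\Gamma(\Teul)$ only for $k=m$. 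These give the characterization $v\in\Omega(\Teul)\iff\neg(v_{m-1}>v_m\text{ and }\tD(v_1)+\tD(v_m)>0)$, and I split $\Omega(\Teul)=\Omega_1\sqcup\Omega_2$ with $\Omega_1=\{v\in\Omega:v_{m-1}<v_m\}$ (so $v_0$ lies on $\mu(v)$ and $v_0\neq v_m$) and $\Omega_2=\{v\in\Omega:v_{m-1}>v_m,\ \tD(v_1)+\tD(v_m)\le0\}$.

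Next I would treat the ``path case'', where some $\tD$-trivial path $(z_1,\dots,z_n)$ has $\epsilon(z_n)=1$: then $\Neul=\{z_1,\dots,z_n\}$ is a simple path, $Z(\Teul)\subseteq\{z_1,z_n\}$, and as $\tD(z_1)\le0$ and $\tD(z_n)\le0$ are incompatible with $\tD(z_1)+\tD(z_n)>0$ we get $|\Gamma(\Teul)|\le1$. If $\Gamma(\Teul)=\emptyset$ then $\Omega(\Teul)=Z(\Teul)$, so $|\Omega(\Teul)|\le2$, and when $|\Omega(\Teul)|=2$ one has $Z(\Teul)=\{z_1,z_n\}$, hence $\tD(\Neul)=\tD(z_1)+\tD(z_n)\le0$ and $(z_n,\dots,z_1)$ is $\tD$-trivial with $\epsilon(z_1)=1$, which is (a). If $\Gamma(\Teul)\neq\emptyset$, a short computation of $W(\Teul)$ and $V(\cdot)$ gives $\Omega(\Teul)=\emptyset$. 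Because the rest of the proof shows $|\Omega(\Teul)|\le1$ whenever no such path exists, (a) follows in general: $|\Omega(\Teul)|=2$ forces the path case, and then the displayed analysis applies to each of the at most two (mutually reverse) paths with $\epsilon(z_n)=1$.

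The crux, and the step I expect to be the main obstacle, is $\Omega_2=\emptyset$ in the ``non-path case'' (no $\tD$-trivial path ends at a vertex $v_m$ with $\epsilon(v_m)=1$). Take $v\in\Omega_2$; then $\mu(v)$ ends at $v_m$ with $\epsilon(v_m)\ge2$ and, since we stopped there, $\epsilon(v_m)=2$ with $\tD(v_m)\neq0$ or $\epsilon(v_m)\ge3$. Put $e=\{v_m,v_{m-1}\}$. As $v_2,\dots,v_{m-1}$ have $\epsilon=2$ and $v_1$ has $\epsilon=1$, $\Neul(v_m,e)=\{v_1,\dots,v_{m-1}\}$, so $\tD(\Neul(v_m,e))=\tD(v_1)\le0$; thus $(v_m,e)$ is nonpositive, and Prop.~\ref{DKxcnpw93sdo} gives $\eta(v_m,e)=0$ and $c(v_m,e)=1-\tD(v_1)$, while Prop.~\ref{0hp9fh023wbpchvgrsjs256} gives $M(v_m,e)\ge2$ because $v_{m-1}>v_m$. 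Apply Cor.~\ref{P90werd23ewods0ci-cor} with $u=v_m$, $A=\{e\}$: its hypothesis holds since $\bD(v_m,\{e\})=\tD(v_m)+\tD(v_1)\le0<1=1+\eta(v_m,e)$, so part (a) forces $\epsilon(v_m)\in\{2,3\}$. If $\epsilon(v_m)=3$, part (b) gives $R(v_m,\{e\})=0$, so the summand $1-1/M(v_m,e)$ vanishes, contradicting $M(v_m,e)\ge2$. If $\epsilon(v_m)=2$, part (c) gives $R(v_m,\{e\})<1$; since $1-1/M(v_m,e)\ge1/2$ is a nonzero summand, Lemma~\ref{trivialobs} forces all other summands of $R(v_m,\{e\})$ to vanish, so $a_{v_m}=1$ and $k_x=1$ for every $x\in\Deul_{v_m}$, whence $\sigma(v_m)=0$ and $d(v_m)=N_{v_m}$ by Rem.~\ref{F0934nofe8rg3p406egfh}. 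Because $\epsilon(v_m)=2$, $\Eeul_{v_m}$ consists of exactly $e$ and the edge $e^*=\{v_m,c\}$ to the other $\Neul$-neighbour $c$ of $v_m$, so $(v_m,e)$ is the only immediate predecessor of $(c,e^*)$, and therefore $c(c,e^*)=\gcd(d(v_m),c(v_m,e))/a_{v_m}=\gcd(N_{v_m},c(v_m,e))=c(v_m,e)=1-\tD(v_1)$, using $c(v_m,e)\mid N_{v_m}$ (Thm.~\ref{xncoo9qwdx9}). But Cor.~\ref{P90werd23ewods0ci-cor}(c) also says $\bD(v_m,\{e\})=\eta(v_m,e)+(1-c(c,e^*))$, i.e.\ $\tD(v_m)+\tD(v_1)=\tD(v_1)$, so $\tD(v_m)=0$, contradicting $\epsilon(v_m)=2$, $\tD(v_m)\neq0$. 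Hence $\Omega_2=\emptyset$, so in the non-path case $\Omega(\Teul)=\Omega_1$.

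Finally I would show $|\Omega_1|\le1$ and extract (b). If $v,v'\in\Omega_1$ with $v_0$ at position $j$ on $\mu(v)$ and $j'$ on $\mu(v')$ (both $<m$): if $j=1$ or $j'=1$ then $\epsilon(v_0)=1$, which pins down the other path through the forced chain of $(\epsilon=2,\tD=0)$-vertices out of $v_0$ and gives $v=v'$; otherwise $v_0$ is interior to both paths, hence has exactly two $\Neul$-neighbours, so the unordered pair of directions out of $v_0$ along $\mu(v)$ and along $\mu(v')$ coincide; following the $(\epsilon=2,\tD=0)$-chain from $v_0$ in a fixed direction reaches a uniquely determined vertex, so either $v=v'$ or the leaf $v$ (with $\tD(v)\le0$) is identified with the endpoint of $\mu(v')$, which has $\tD>0$ in the non-path case --- impossible. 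So $|\Omega(\Teul)|=|\Omega_1|\le1$. If $\Omega(\Teul)=\{v\}$, then $v\in\Omega_1$, say $v_0=v_j$ with $j<m$; then $(v_1,\dots,v_{j+1})$ is a $\tD$-trivial path whose first vertex is $v$ and whose last two vertices satisfy $v_j=v_0<v_{j+1}$, which is exactly (b). Together with the path case this also yields $|\Omega(\Teul)|\le2$ in every case, completing the proof.
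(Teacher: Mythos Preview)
Your proof is correct and follows essentially the same approach as the paper. Both arguments attach to each $z\in\Omega(\Teul)$ a maximal $\tD$-trivial path, use Prop.~\ref{0hp9fh023wbpchvgrsjs256} to obtain $M(z_n,e)>1$ when $z_{n-1}>z_n$, and then apply Cor.~\ref{P90werd23ewods0ci-cor} with $A=\{e\}$ to force $\epsilon(z_n)=2$ and $\tD(z_n)=0$, contradicting maximality; uniqueness is then obtained from the fact that such paths must pass through $v_0$. Your organization (path case versus non-path case, and the explicit decomposition $\Omega=\Omega_1\sqcup\Omega_2$) differs from the paper's (which proves (b) first, then (a)), but the substance is the same.
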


\begin{proof}
We first prove (b). Let us assume that
\begin{equation} \label {03irfc0w3eX38f}
\textit{no $\tD$-trivial path $(z_1, \dots, z_n)$ satisfies  $\epsilon(z_n)=1$.}
\end{equation}
Then we claim that
\begin{equation} \label {slxp203qwsxic}
\begin{minipage}[t]{.8\textwidth}
\it
for each $z \in \Omega(\Teul)$ there exists a $\tD$-trivial path $\gamma_z = (z_1,\dots,z_n)$
satisfying $z_1=z$ and  $z_{n-1}<z_n$.
\end{minipage}
\end{equation}
Indeed, let $z \in \Omega(\Teul)$.
Define $z_1=z$.
Since $\epsilon(z_1)=1$, there is exactly one $z_2 \in \Neul$ that is adjacent to $z_1$.
Then $(z_1,z_2)$ is a $\tD$-trivial path.
Among all $\tD$-trivial paths $(z_1,z_2,\dots)$ such that $z_1=z$,
choose one,  $\gamma_z = (z_1, \dots, z_n)$, which maximizes $n$ (so $n\ge2$).
Let us prove that $\gamma_z$ satisfies  $z_{n-1}<z_n$. 
By contradiction, assume that 
$$
z_{n-1}>z_n.
$$
Let $e = \{ z_n, z_{n-1} \}$
and note that Prop.\ \ref{0hp9fh023wbpchvgrsjs256} implies that
\begin{equation} \label {KKKdfno239wdbsoo}
M(z_n,e)>1.
\end{equation}
As $z_1 \notin  \bigcup_{w \in W} V(w)$, we have  $(z_1,\dots,z_n) \notin \Gamma$, so $\tD( \{z_1,\dots,z_n\}) \le 0$.
Let $A = \{e\}$, then $\bD( z_n, A ) = \tD( \{z_1,\dots,z_n\}) \le 0$,
and since $\eta(z_n,e)\ge0$ by Cor.\ \ref{90hJkHJHF7238ewHkjHiu93}, 
we have $\bD( z_n, A ) < 1 + \sum_{e' \in A} \eta(z_n,e')$.
Then Cor.\ \ref{P90werd23ewods0ci-cor} gives $\epsilon(z_n) - |A| \in \{0,1,2\}$.
Since  $\epsilon(z_n) \ge 2$ by  assumption \eqref{03irfc0w3eX38f}, we have $\epsilon(z_n) - |A| \in \{1,2\}$.
If $\epsilon(z_n) - |A| = 2$ then Cor.\ \ref{P90werd23ewods0ci-cor}  implies that 
$R(z_n,A)=0$, which contradicts \eqref{KKKdfno239wdbsoo}; so $\epsilon(z_n) - |A| = 1$ and hence $\epsilon(z_n)=2$.
By Cor.\ \ref{P90werd23ewods0ci-cor}  we get $R(z_n,A)<1$; this and \eqref{KKKdfno239wdbsoo} imply that $\sigma(z_n)=0$
and $a_{z_n}=1$, so $\tD(z_n)=0$ by Lemma \ref{90q932r8dhd89cnr9}\eqref{pc9vp23r09}.
Let $y$ be the unique element of $\Neul\setminus\{z_{n-1}\}$ which is adjacent to $z_n$;
then $(z_1, \dots, z_n,y)$ is $\tD$-trivial, which contradicts the maximality of $\gamma_z$ and hence proves that $z_{n-1}<z_n$.
So \eqref{slxp203qwsxic} is proved.

We claim that $| \Omega(\Teul) | \le 1$.
Indeed, suppose that $z_1,z_1' \in \Omega(\Teul)$ and let us argue that $z_1=z_1'$.
Consider $\gamma_{z_1}= (z_1,\dots,z_n)$ and $\gamma_{z_1'} = (z_1',\dots,z_m')$ satisfying \eqref{slxp203qwsxic}.
Since $z_{n-1}<z_n$,
we have $v_0 \in \Neul(z_n, \{z_n, z_{n-1}\}) = \{ z_1, \dots, z_{n-1}\}$;
similarly, $v_0 \in \{ z_1', \dots, z_{m-1}' \}$.
So there exists $(i,j)$ satisfying
\begin{equation} \label {0djb23o0wesd9fd}
1\le i<n,  \ \   1\le j<m  \ \  \textit{and} \ \  z_i = z_j'.
\end{equation}
Among all pairs $(i,j)$ satisfying \eqref{0djb23o0wesd9fd}, choose one that minimizes $i$.
Assume that $i \neq 1$; then $1<i<n$, so $\epsilon(z_i)=2$, so $\epsilon(z_j')=2$ and hence $j \neq1$, so $1<j<m$.
Then $(z_{i-1}, z_i, z_{i+1})$ is equal to either $(z_{j-1}', z_j', z_{j+1}')$ or $(z_{j+1}', z_j', z_{j-1}')$.
If $(z_{i-1}, z_i, z_{i+1}) = (z_{j+1}', z_j', z_{j-1}')$ then $(z_1, \dots, z_{i-1}, z_i, z_{j-1}', \dots, z_1')$ is a $\tD$-trivial path
with $\epsilon(z_1')=1$, contradicting \eqref{03irfc0w3eX38f};
so we have $(z_{i-1}, z_i, z_{i+1}) = (z_{j-1}', z_j', z_{j+1}')$, but then $z_{i-1} =  z_{j-1}'$ contradicts the minimality of $i$.
These contradictions show that $i=1$.
Thus $\epsilon(z_i)=1$, so $\epsilon(z_j')=1$ where $1\le j < m$, so $j=1$ and hence $z_1 = z_1'$. 
So $| \Omega(\Teul) | \le 1$. Together with \eqref{slxp203qwsxic}, this proves assertion (b).

There remains to show that $| \Omega(\Teul) | \le 2$ in all cases, and that assertion (a) is true.
Obviously, it suffices to prove this under the additional assumption that  $| \Omega(\Teul) | \ge 2$.
This assumption and (b) imply that there exists a $\tD$-trivial path $(z_1, \dots, z_n)$ satisfying $\epsilon(z_n)=1$.
Then  $\Neul = \{z_1,\dots,z_n\}$ and $\Omega(\Teul) \subseteq \{z_1, z_n\}$, so $\Omega(\Teul) = \{z_1, z_n\}$ (so $| \Omega(\Teul) | \le 2$ is proved).
We have $\tD(z_1)\le0$ and $\tD(z_n)\le0$ because $z_1,z_n \in \Omega(\Teul) \subseteq Z(\Teul)$; thus $\tD(\Neul) = \tD(z_1) + \tD(z_n) \le 0$ is clear.
It is also clear that  $(z_n, \dots, z_1)$ is a $\tD$-trivial path satisfying $\epsilon(z_1)=1$.
\end{proof}

\begin{lemma} \label {0cvg248f7qvq8hene44dsdi374}
$\Gamma(\Teul)$ is the set of $\tD$-trivial paths $\gamma = (z_1,\dots,z_n)$ such that
\begin{equation} \label {pc0S2g94v3e9dhud7r4v}
\text{$\tD(z_1) \le 0 < \tD( z_n )$ \quad and \quad $z_{n-1}>z_{n}$.}
\end{equation}
\end{lemma}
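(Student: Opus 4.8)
The plan is to show that the set described here coincides with $\Gamma(\Teul)$ as defined in Def.\ \ref{xkclqlwpd90cod}. Both descriptions single out the $\tD$-trivial paths $(z_1,\dots,z_n)$ with $\tD(z_1)\le0$ and $z_{n-1}>z_n$, so the only thing to verify is that, under these standing hypotheses, the condition ``$\tD(\{z_1,\dots,z_n\})>0$'' of Def.\ \ref{xkclqlwpd90cod} is equivalent to ``$\tD(z_n)>0$''. First I would record that, by definition of a $\tD$-trivial path, $\tD(z_i)=0$ for $1<i<n$, whence $\tD(\{z_1,\dots,z_n\})=\tD(z_1)+\tD(z_n)$. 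Since $\tD(z_1)\le0$, the implication $\tD(\{z_1,\dots,z_n\})>0\Rightarrow\tD(z_n)>0$ is automatic; the content is the reverse implication.

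For the converse I would proceed as follows. Assume $\tD(z_n)\ge1$. Set $e=\{z_n,z_{n-1}\}$ and $c=c(z_n,e)$. Because the path is $\tD$-trivial, $\Neul(z_n,e)=\{z_1,\dots,z_{n-1}\}$, so $\tD(\Neul(z_n,e))=\tD(z_1)\le0$ and $(z_n,e)$ is nonpositive; by Prop.\ \ref{DKxcnpw93sdo} this gives $\tD(z_1)=1-c$, and by Rem.\ \ref{uyhmdytjwhwhrkdftraef4gf23h23} also $c\in\Nat\setminus\{0\}$. Since $z_{n-1}>z_n$, Prop.\ \ref{0hp9fh023wbpchvgrsjs256} forbids $M(z_n,e)=1$, so $M(z_n,e)\ge2$ and therefore $N_{z_n}=M(z_n,e)\,c\ge2c$. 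As $\tD(\{z_1,\dots,z_n\})=\tD(z_1)+\tD(z_n)=(1-c)+\tD(z_n)$, the reverse implication reduces to the single inequality $\tD(z_n)\ge c$.

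To prove $\tD(z_n)\ge c$ I would use the formula
$$
\tD(z_n)=\sigma(z_n)+(\epsilon(z_n)-2)(N_{z_n}-1)+N_{z_n}\big(1-\tfrac1{a_{z_n}}\big)
$$
from Lemma \ref{90q932r8dhd89cnr9}\eqref{pc9vp23r09} together with $N_{z_n}\ge2c$, splitting into cases on $\epsilon(z_n)$. If $\epsilon(z_n)\ge3$, all three summands are $\ge0$ and $\tD(z_n)\ge N_{z_n}-1\ge2c-1\ge c$. If $\epsilon(z_n)=2$, then $\tD(z_n)=\sigma(z_n)+N_{z_n}(1-\tfrac1{a_{z_n}})\ge1$ forces one summand to be positive: either $\sigma(z_n)>0$, in which case some $k_x>1$ and $\sigma(z_n)=N_{z_n}\sum_{x\in\Deul_{z_n}}(1-\tfrac1{k_x})\ge N_{z_n}/2\ge c$ by Lemma \ref{lkjxfX0293wsod}, or $a_{z_n}>1$, in which case $N_{z_n}(1-\tfrac1{a_{z_n}})\ge N_{z_n}/2\ge c$; either way $\tD(z_n)\ge c$.

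The hard part is the case $\epsilon(z_n)=1$, where the local estimate above is not enough and a global argument is needed. The plan there: since $(z_1,\dots,z_n)$ is $\tD$-trivial, $\epsilon(z_n)=1$ is equivalent to $\Neul=\{z_1,\dots,z_n\}$ (as noted just before Thm \ref{Xcoikn23ifcdKJDluFYT937}); and then the hypothesis $z_{n-1}>z_n$ pins down the root as $v_0=z_n$ --- indeed if $v_0=z_j$ with $j<n$, then the subpath $(z_j,\dots,z_n)$, being a path issued from the root, would be increasing in the tree order, forcing $z_{n-1}<z_n$, contrary to hypothesis. Hence $a_{z_n}=a_{v_0}=1$ and $\delta_{v_0}=1+|\Deul_{v_0}|$ (no arrow is adjacent to $v_0$), while the displayed formula degenerates to $\tD(z_n)=\sigma(z_n)-N_{z_n}+1$. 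From $\tD(z_n)\ge1$ we get $\sigma(z_n)\ge N_{z_n}$, i.e.\ $\sum_{x\in\Deul_{z_n}}(1-\tfrac1{k_x})\ge1$, so Lemma \ref{trivialobs} forces at least two of the $k_x$ to exceed $1$; thus $|\Deul_{v_0}|\ge2$ and $\delta_{v_0}\ge3$. Now Prop.\ \ref{0c9b45okjrdidjfo} gives $\tD(\Neul)\ge(\delta_{v_0}-1)(\delta_{v_0}-2)\ge2$, and since $\tD(\Neul)=\tD(\{z_1,\dots,z_n\})=(1-c)+\tD(z_n)$ this yields $\tD(z_n)\ge c+1\ge c$, completing the argument. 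I expect this last case --- the clean identification $v_0=z_n$ and the appeal to the global lower bound for $\tD(\Neul)$ --- to be the only subtle point; the remaining cases are routine size estimates.
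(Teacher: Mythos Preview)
Your proof is correct. The approach, however, differs from the paper's.

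The paper argues by contradiction: assuming $\gamma=(z_1,\dots,z_n)$ satisfies \eqref{pc0S2g94v3e9dhud7r4v} but $\gamma\notin\Gamma(\Teul)$, it first observes that then no element of $\Gamma(\Teul)$ starts at $z_1$, so $z_1\in\Omega(\Teul)$. It then invokes the structural dichotomy of Thm~\ref{Xcoikn23ifcdKJDluFYT937}: either there is no $\tD$-trivial path whose last vertex has $\epsilon=1$, in which case Thm~\ref{Xcoikn23ifcdKJDluFYT937}(b) produces a path from $z_1$ forcing $z_{n-1}<z_n$; or such a path exists, in which case $\Neul=\{z_1,\dots,z_n\}$, $\tD(\Neul)\le0$, and Prop.~\ref{0c9b45okjrdidjfo} gives $\delta_{v_0}\le2$, from which one computes $\tD(z_n)=1-d(v_0)\le0$, contradicting $\tD(z_n)>0$.

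Your route is more direct: you reduce to the single estimate $\tD(z_n)\ge c(z_n,\{z_n,z_{n-1}\})$ and verify it by a case split on $\epsilon(z_n)$, using $N_{z_n}\ge 2c$ (from Prop.~\ref{0hp9fh023wbpchvgrsjs256}) and elementary size bounds. You avoid Thm~\ref{Xcoikn23ifcdKJDluFYT937} entirely; only in the case $\epsilon(z_n)=1$ do the two proofs converge, both pinning $v_0=z_n$ and appealing to Prop.~\ref{0c9b45okjrdidjfo}, though in opposite directions (you deduce $\delta_{v_0}\ge3$ from $\tD(z_n)\ge1$, the paper deduces $\delta_{v_0}\le2$ from $\tD(\Neul)\le0$). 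Your argument is more self-contained; the paper's is shorter because the heavy lifting is already packaged in Thm~\ref{Xcoikn23ifcdKJDluFYT937}.
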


\begin{smallremark}
We defined the set $\Gamma(\Teul)$ in \ref{xkclqlwpd90cod}. This Lemma gives an equivalent definition.
\end{smallremark}

\begin{proof}
Let $\Gamma'(\Teul)$ be the set of $\tD$-trivial paths satisfying \eqref{pc0S2g94v3e9dhud7r4v}.
It is clear that $\Gamma(\Teul) \subseteq \Gamma'(\Teul)$.
We prove the reverse inclusion by contradiction: assume that $\Gamma'(\Teul) \nsubseteq \Gamma(\Teul)$.
Choose $\gamma = (z_1,\dots,z_n) \in \Gamma'(\Teul) \setminus \Gamma(\Teul)$.
Since $\gamma \notin\Gamma(\Teul)$, no element $(u_1,\dots,u_m)$ of $\Gamma(\Teul)$ satisfies $u_1=z_1$.
Thus $z_1 \notin \bigcup_{w \in W(\Teul)} V(w)$;
as $z_1 \in Z(\Teul)$, it follows that $z_1 \in \Omega(\Teul)$.

Suppose that no $\tD$-trivial path $(y_1, \dots, y_m)$ satisfies $\epsilon(y_m)=1$.
Then (since $z_1 \in \Omega(\Teul)$) Thm \ref{Xcoikn23ifcdKJDluFYT937}(b) implies that 
there exists a $\tD$-trivial path $(y_1, \dots, y_m)$ such that $y_1=z_1$ and $y_{m-1}<y_m$.
Since $\tD(z_n)>0$, we must have $(y_1, \dots, y_m) = (z_1, \dots, z_i)$ for some $i\le n$.
Thus $z_{n-1}<z_n$, which contradicts the assumption \eqref{pc0S2g94v3e9dhud7r4v}.

So there must exist a $\tD$-trivial path $(y_1, \dots, y_m)$ such that $\epsilon(y_m)=1$.
Then $\Neul = \{ y_1, \dots, y_m\}$ and consequently  $\{z_1,\dots,z_n\} \subseteq \{y_1,\dots,y_m\}$.
Since $\tD(z_n)>0$, we must have $z_n \in \{y_1,y_m\}$; we also have $z_1 \in \{y_1,y_m\}$ because $\epsilon(z_1)=1$;
so $\Neul = \{z_1,\dots,z_n\}$.
As $\gamma \notin \Gamma(\Teul)$, we have $\tD(\{z_1,\dots,z_n\})\le 0$, so $\tD(\Neul) \le 0$. 
We have $\tD(\Neul) \ge (\delta_{v_0}-1) (\delta_{v_0}-2)$ by Prop.\ \ref{0c9b45okjrdidjfo}, so $\delta_{v_0} \le 2$
and consequently $| \Deul_{v_0} | \le 1$. This implies that $\sigma(v_0) = N_{v_0} - d(v_0)$.
Also note that $z_n=v_0$, because $z_{n-1}>z_n$ and $v_0 \in \{z_1,\dots,z_n\}$; so
$\tD(z_n) = \sigma(v_0) + (\epsilon(v_0)-2)(N_{v_0}-1) + N_{v_0}( 1 - \frac1{a_{v_0}}) =  (N_{v_0} - d(v_0)) + (-1) (N_{v_0} - 1) = 1-d(v_0) \le 0$,
a contradiction.  So $\Gamma(\Teul) = \Gamma'(\Teul)$.
\end{proof}

\section*{Combs}
\label {SectionCombs}

\begin{definition} \label {c09Nc23owsfcnp2q0wuXwoh}
Suppose that $(u,e), (u',e') \in P$.
We say that $(u,e)$ is a \textit{comb over $(u',e')$} if $(u,e) \succeq (u',e')$ and,
for every $(v,f) \in P$ satisfying  $(u,e) \succ (v,f) \succeq (u',e')$,
the following conditions hold:
\begin{enumerata}

\item $\epsilon(v) \in \{2,3\}$;

\item if $\epsilon(v)=2$ then $R(v,\{f\})<1$;

\item if $\epsilon(v)=3$ then  $R(v,\{f\})=0$ and  $(v,g)$ is a tooth,
where $g$ denotes the unique edge of $\Neul$ which is incident to $v$, not in $\gamma_{u,v}$ and distinct from $f$.

\end{enumerata}
\end{definition}

\begin{remark} \label {pv09w4v7AqOmksndfwoa}
Suppose that $(u,e), (u',e') \in P$ and that $(u,e)$ is a comb over $(u',e')$.
For each $(v,f) \in P$ satisfying  $(u,e) \succ (v,f) \succeq (u',e')$, we have $\epsilon(v) \in \{2,3\}$ and:
\begin{itemize}

\item if $\epsilon(v)=2$ and $v$ is a node then its type is $[d(v), N_v, \dots, N_v]$ and if $d(v) \neq N_v$ then $a_v=1 = M(v,f)$;

\item if $\epsilon(v)=3$ then $a_v=1=M(v,f)$ and if $v$ is a node then its type is $[N_v, \dots, N_v]$.

\end{itemize}
\end{remark}

\begin{remarks}  \label {c0Bj12Wsdh0982EChi}
\begin{enumerate}

\item Each element of $P$ is a comb over itself.

\item If $(u,e), (u',e'), (u'',e'') \in P$ satisfy $(u,e) \succeq (u',e') \succeq (u'',e'')$, then the following are equivalent:
\begin{itemize}

\item $(u,e)$ is a comb over $(u',e')$ and $(u',e')$ is a comb over $(u'',e'')$,

\item $(u,e)$ is a comb over $(u'',e'')$.

\end{itemize}
\end{enumerate}
\end{remarks}

Recall (from Lemma \ref{p0c9vin12q09wsc}) that $(u,e) \succeq (u',e')$ implies $\Neul(u,e) \supseteq \Neul(u',e')$,
$\tD\big( \Neul(u,e) \big) \ge \tD\big( \Neul(u',e') \big)$ and $\eta(u,e) \ge \eta(u',e')$.
We now describe what happens when $\eta(u,e) = \eta(u',e')$.

\begin{proposition} \label {0ci19KJTghL872je309} 
Suppose that $(u,e), (u',e') \in P$ satisfy $(u,e) \succeq (u',e')$.

\begin{enumerata}
\item The following are equivalent:
\begin{enumerata}

\item $\eta(u,e) = \eta(u',e')$ and $\Omega(\Teul)$ is disjoint from $\Neul(u,e) \setminus \Neul(u',e')$;

\item $(u,e)$ is a comb over $(u',e')$.

\end{enumerata}

\item The following are equivalent:
\begin{enumerata}

\item \label {98fUHhhf69gw2h8G72}  $\tD\big( \Neul(u,e) \big) = \tD\big( \Neul(u',e') \big)$
and $\Omega(\Teul)$ is disjoint from $\Neul(u,e) \setminus \Neul(u',e')$;

\item \label {98mdmosdberdtkj76lca} $(u,e)$ is a comb over $(u',e')$ and $c(u,e) = c(u',e')$;

\item  \label {pd90cfn2pw0s}   for every vertex $v$ in $\gamma_{u,u'}$ such that $v \neq u$, we have $\epsilon(v)=2$ and $\tD(v)=0$.

\end{enumerata}
\end{enumerata}
\end{proposition}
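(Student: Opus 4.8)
The plan is to prove (a) first and then derive (b) from it. For (a), I would argue by induction on the poset $(P,\preceq)$, or equivalently by induction on $|\Neul(u,e)|-|\Neul(u',e')|$; the base case $(u,e)=(u',e')$ is trivial since both sides hold automatically (every element is a comb over itself by Rem.\ \ref{c0Bj12Wsdh0982EChi}(1), and $\Neul(u,e)\setminus\Neul(u',e')=\emptyset$). For the inductive step, write $e=\{u,u_0\}$ and let $(u_0,e_1),\dots,(u_0,e_n)$ be the immediate predecessors of $(u,e)$; exactly one of them, say $(u_0,e_1)$, satisfies $(u_0,e_1)\succeq(u',e')$. The key identity is that $\Neul(u,e)=\{u_0\}\cup\bigcup_{i=1}^n\Neul(u_0,e_i)$, so $u_0\in\Neul(u,e)\setminus\Neul(u_0,e_1)$, and $\eta(u,e)-\eta(u_0,e_1) = \big[\eta(u,e)-\sum_i\eta(u_0,e_i)\big] + \sum_{i\ge2}\eta(u_0,e_i)$. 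Using Cor.\ \ref{okcf039wepdocS}\eqref{9vfh2867fyjmvqty} and Cor.\ \ref{90hJkHJHF7238ewHkjHiu93}, the condition $\eta(u,e)=\eta(u_0,e_1)$ is equivalent to the conjunction: $\eta(u,e)=\sum_i\eta(u_0,e_i)$ and $\eta(u_0,e_i)=0$ (i.e.\ $(u_0,e_i)$ nonpositive, by Prop.\ \ref{DKxcnpw93sdo}) for all $i\ge2$.

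Now I would unwind what this conjunction means locally at $u_0$, using Thm \ref{P90werd23ewods0ci} and Cor.\ \ref{P90werd23ewods0ci-cor} with $A=\{e_1,\dots,e_n\}$ (so $\epsilon(u_0)-|A|=1$). The equality $\eta(u,e)=\sum_i\eta(u_0,e_i)$ combined with $\bD(u_0,A)=\tD(\Neul(u,e))$ forces $R(u_0,A)<1$; then Lemma \ref{trivialobs} forces at most one ``defect'' among the terms indexed by $\Deul_{u_0}$, $a_{u_0}$, and the $M(u_0,e_i)$. For $i\ge2$, $(u_0,e_i)$ nonpositive plus Lemma \ref{GRygergGREg8948r}(c)-type reasoning — actually, I should check directly: $(u_0,e_i)$ nonpositive means $M(u_0,e_i)$ could still be $1$, but if $M(u_0,e_i)>1$ then $e_i$ contributes a nonzero term; I'd show that combined with the structure this pins down $\epsilon(u_0)\in\{2,3\}$ and the comb conditions (b),(c) at $v=u_0$. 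The condition that $\Omega(\Teul)$ avoids $\Neul(u,e)\setminus\Neul(u_0,e_1)$ needs $u_0\notin\Omega(\Teul)$ and the $\Neul(u_0,e_i)$ for $i\ge2$ avoid $\Omega$; the latter follows since $(u_0,e_i)$ nonpositive means by Thm \ref{Xcoikn23ifcdKJDluFYT937}/Def.\ \ref{xkclqlwpd90cod} that those vertices lie in some $V(w)$ (they form a $\tD$-trivial path ending at a tooth root), hence not in $\Omega$; and $u_0\notin\Omega$ because if $\epsilon(u_0)=2$ with $\tD(u_0)=0$ or $\epsilon(u_0)=3$ then $u_0\notin Z(\Teul)$ or $u_0\in V(w)$. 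Conversely, assuming $(u,e)$ is a comb over $(u',e')$, the comb conditions at $u_0$ give $R(u_0,A)<1$ directly, which via Cor.\ \ref{P90werd23ewods0ci-cor}(c) yields $\eta(u,e)=\sum_i\eta(u_0,e_i)$ and the teeth conditions give $\eta(u_0,e_i)=0$ for $i\ge2$, closing the loop by induction.

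For part (b), equivalences \eqref{98mdmosdberdtkj76lca}$\Leftrightarrow$\eqref{98fUHhhf69gw2h8G72} follow from (a) together with Lemma \ref{p0c9vin12q09wsc}(c): $\tD(\Neul(u,e))-\tD(\Neul(u',e'))=[c(u',e')-c(u,e)]+[\eta(u,e)-\eta(u',e')]$, and since $c(u,e)\le c(u',e')$ and $\eta(u,e)\ge\eta(u',e')$ the left side vanishes iff both bracketed differences vanish iff ($c(u,e)=c(u',e')$ and $\eta(u,e)=\eta(u',e')$). Combined with (a) this gives the equivalence with ``$(u,e)$ comb over $(u',e')$ and $c(u,e)=c(u',e')$''. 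For \eqref{pd90cfn2pw0s}: $c(u,e)=c(u',e')$ forces, at each intermediate step, $a_{u_0}=1$ and the gcd in Def.\ \ref{kcjfnp0293wd}(ii) to equal $c(u_0,e_1)$, which via Prop.\ \ref{0hp9fh023wbpchvgrsjs256} and Thm \ref{xncoo9qwdx9} forces $n=1$ (no extra branches $e_i$, $i\ge2$, i.e.\ no teeth at $u_0$) and $M(u_0,e_1)=1$, hence $\epsilon(u_0)=2$; the comb condition then forces $R(u_0,\{e_1\})<1$ with no defect except possibly $k_x$'s summing, but $M=1$ and $a=1$ force $\sigma(u_0)=0$ hence $\tD(u_0)=0$ by Lemma \ref{90q932r8dhd89cnr9}\eqref{pc9vp23r09}. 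Iterating over $\gamma_{u,u'}$ gives \eqref{pd90cfn2pw0s}; the converse is a direct computation using Prop.\ \ref{xfo230weidwods} that $c$ is constant along a $\tD$-trivial stretch.

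\textbf{Main obstacle.} I expect the delicate point to be the precise bookkeeping in the inductive step of (a): correctly matching ``$\eta(u,e)=\eta(u_0,e_1)$'' to the two comb conditions (b) ($\epsilon=2$, $R<1$) and (c) ($\epsilon=3$, $R=0$ plus a tooth on the side branch $g$), and in particular verifying that when $\epsilon(u_0)=3$ the extra branch is forced to be a tooth rather than just nonpositive — this requires combining Lemma \ref{GRygergGREg8948r}(c) ($M>1$ on teeth) with the $R(u_0,A)<1$ constraint to see that $R(u_0,\{f\})=0$ on the through-branch is possible only if the side branch carries all the ``tooth-type'' defect. Getting the $\Omega(\Teul)$-disjointness bookkeeping to interlock cleanly with this case analysis (using Thm \ref{Xcoikn23ifcdKJDluFYT937} to identify exactly which vertices can lie in $\Omega$) is the other place where care is needed.
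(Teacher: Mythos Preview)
Your overall strategy matches the paper's: reduce part (a) to the one-step case where $(u',e')$ is an immediate predecessor of $(u,e)$, use Cor.~\ref{okcf039wepdocS} and Cor.~\ref{P90werd23ewods0ci-cor} to get $R(u_0,A)<1$ and hence $n\in\{1,2\}$ via Prop.~\ref{0hp9fh023wbpchvgrsjs256}, and derive (b) from (a) via Lemma~\ref{p0c9vin12q09wsc}. But there is a genuine gap at exactly the point you flag as the main obstacle.

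When $n=2$, you need to show that the side branch $(u_0,e_2)$ is a \emph{tooth}, not merely nonpositive. Your proposed mechanism---``combining Lemma~\ref{GRygergGREg8948r}(c) with the $R(u_0,A)<1$ constraint''---does not work: Lemma~\ref{GRygergGREg8948r}(c) says that a tooth has $M>1$, not that $M>1$ forces a tooth. Knowing $(u_0,e_2)$ is nonpositive with $M(u_0,e_2)>1$ gives you nothing about the existence of an element of $\Gamma(\Teul)$ terminating at $u_0$ through $e_2$. The missing ingredient is precisely the $\Omega$-disjointness hypothesis in condition (ai), and it is used here rather than in the other direction. The paper's argument runs as follows: since $\tD(\Neul(u_0,e_2))\le 0$, the set $Z_2=\{z\in\Neul(u_0,e_2):\epsilon(z)=1\}$ contains some $z$ with $\tD(z)\le 0$, i.e.\ $Z_2\cap Z(\Teul)\neq\emptyset$; for each such $z$, the $\Omega$-disjointness forces $z\in V(w_z)$ for some $w_z\in W(\Teul)$; a contradiction argument using Lemma~\ref{p0293efp0cw23ep0hvj} (if every $w_z\neq u_0$ then $\bar V(w_z)\subseteq\Neul(u_0,e_2)$ with $\tD(\bar V(w_z))>0$, forcing another $z'\in Z_2\cap Z$ outside these sets, a contradiction) shows some $w_z=u_0$, whence $\gamma_{z,u_0}\in\Gamma(\Teul)$ and $(u_0,e_2)$ is a tooth. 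Once you have the tooth, $M(u_0,e_2)>1$ together with $R(u_0,A)<1$ gives $R(u_0,\{e_1\})=0$.

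A secondary issue: in part (b), your route from (ii) to (iii) tries to force $\sigma(u_0)=0$ directly from $c$-constancy, which does not follow (e.g.\ a single $k_x=2$ gives $R(u_0,\{e_1\})=\tfrac12<1$ with $\sigma(u_0)>0$). The paper instead first uses the comb structure plus $c(u,e)=c(u',e')$ to rule out $\epsilon(v)=3$ at any intermediate $v$ (a tooth at $v$ would give $c(u,e)\le c(v,g)<c(v,f)\le c(u',e')$), so every intermediate $v$ has $\epsilon(v)=2$; then $\sum_v\tD(v)=\tD(\Neul(u,e))-\tD(\Neul(u',e'))=0$ with each $\tD(v)\ge 0$ forces $\tD(v)=0$.
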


\begin{proof}
To prove assertion (a),
consider the unique sequence $(u_0,e_0) \succ \cdots \succ (u_r,e_r)$ such that $(u_0,e_0) = (u,e)$,  $(u_r,e_r) = (u',e')$
and, for each $j \in \{1, \dots, r\}$, $(u_j,e_j)$ is an immediate predecessor of $(u_{j-1},e_{j-1})$.
Note that $\eta(u,e) = \eta(u_0,e_0) \ge \eta(u_1,e_1) \ge \cdots \ge \eta(u_r,e_r) = \eta(u',e')$ and
$\Neul(u,e) \setminus \Neul(u',e') = \bigcup_{j=1}^r [ \Neul(u_{j-1},e_{j-1}) \setminus \Neul(u_{j},e_{j}) ]$;
so condition (ai) is equivalent to
``for all $j \in \{1, \dots, r\}$, (ai$_j$) is true'', where
\begin{enumerata}
\item[(ai$_j$)] \qquad $\eta(u_{j-1},e_{j-1})=\eta(u_{j},e_{j})$ and $\Omega(\Teul) \cap [ \Neul(u_{j-1},e_{j-1}) \setminus \Neul(u_{j},e_{j}) ] = \emptyset$.
\end{enumerata}
By part (2) of Rem.\ \ref{c0Bj12Wsdh0982EChi}, condition (aii) is equivalent to 
$(u_{j-1},e_{j-1})$ being a comb over $(u_{j},e_{j})$ for all $j \in \{1, \dots, r\}$.
So it suffices to prove (ai)$\Leftrightarrow$(aii) under the additional assumption that $(u',e')$ is an immediate predecessor of $(u,e)$.
In other words, the proof of (a) reduces to proving the following statement:

\begin{claim}
Let $(u,e)$ be a non minimal element of $P$,
write $e=\{u,u_0\}$, and let $(u_0,e_1), \dots, (u_0,e_n)$ ($n\ge1$) be the immediate predecessors of $(u,e)$.
Then the following are equivalent:
\begin{enumerata}

\item[$(\alpha)$] $\eta(u,e) = \eta(u_0,e_1)$ and $\Omega(\Teul)$ is disjoint from $\Neul(u,e) \setminus \Neul(u_0,e_1)$

\item[$(\beta)$]  $n \in \{ 1, 2 \}$ and:
\begin{enumerata}

\item if $n=1$ then $R(u_0, \{e_1\})< 1$;
\item if $n=2$ then  $R(u_0, \{e_1\})=0$ and $(u_0,e_2)$ is a tooth.

\end{enumerata}
\end{enumerata}
\end{claim}
Let us prove this Claim.
Assume that $(\alpha)$ is true.
We have $\eta(u,e) \ge \sum_{i=1}^n \eta(u_0,e_i)$ by Lemma \ref{okcf039wepdocS} and $\eta(u_0,e_i)\ge0$ for all $i$ by Cor.\ \ref{90hJkHJHF7238ewHkjHiu93}.
So $\eta(u_0,e_i)=0$ for all $i>1$ and $\eta(u,e) = \eta(u_0,e_1) = \sum_{i=1}^n \eta(u_0,e_i) = \sum_{e' \in A} \eta(u_0,e')$,
where $A = \{ e_1, \dots, e_n \}$. 
Then
$$
\textstyle
\bD(u_0,A) = \tD( \Neul(u,e) )
= 1 + \eta(u,e) - c(u,e) 
< 1 + \eta(u,e) 
= 1 +  \sum_{e' \in A} \eta(u_0,e').
$$
As $\epsilon({u_0})-|A| = 1$, Cor \ref{P90werd23ewods0ci-cor}  gives
$$
\textstyle    R({u_0},A)<1 .
$$

Note in particular that if $n=1$ then  $R(u_0, \{e_1\}) = R({u_0},A)<1$, so $(\beta)$ is true when $n=1$.

From now-on we assume that $n\ge2$  (then $\epsilon(u_0) \ge 3$).
Since $R({u_0},A)<1$, at most one $i \in \{1, \dots, n\}$ satisfies $M(u_0,e_i)>1$; it follows that $n=2$, by Prop.\ \ref{0hp9fh023wbpchvgrsjs256}.
So, to prove $(\beta)$, there remains to show that $R(u_0, \{e_1\})=0$ and that $(u_0,e_2)$ is a tooth.

Observe that $\Teul$ is not a brush.
Indeed, if $\Teul$ is a brush then $\epsilon(u_0) \ge 3$ implies that $W=\{u_0\}$ and $\Neul = \bar V(u_0)$,
so both $(u_0,e_1)$ and $(u_0,e_2)$ are teeth,
which contradicts the fact that  at most one $i \in \{1, 2\}$ satisfies $M(u_0,e_i)>1$. 
Also observe that $\eta(u_0,e_2)=0$, so Prop.\ \ref{DKxcnpw93sdo} gives
\begin{equation} \label {923r0fgfia3uhg6e83}
\tD( \Neul(u_0,e_2) ) \le 0 .
\end{equation}

Let $Z_2 = \setspec{ z \in \Neul(u_0,e_2) }{ \epsilon(z)=1 }$.
We have $\tD( \Neul(u_0,e_2) ) \ge \tD(Z_2)$, because 
each $x \in \Neul(u_0,e_2) \setminus Z_2$ satisfies $\epsilon(x)>1$ and hence $\tD(x)\ge0$ (Lemma \ref{90q932r8dhd89cnr9}).
Thus $\tD(Z_2) \le 0$, by \eqref{923r0fgfia3uhg6e83}.
Since $Z_2 \neq \emptyset$, it follows that some $z \in Z_2$ satisfies $\tD(z) \le 0$.
Thus $Z_2 \cap Z \neq \emptyset$.

For each $z \in Z_2 \cap Z$,
the assumption that $\Omega(\Teul)$ is disjoint from $\Neul(u,e) \setminus \Neul(u_0,e_1)$
implies that $z \notin \Omega(\Teul) = Z \setminus \bigcup_{w \in W} V(w)$,
so $z \in V(w_z)$ for some $w_z \in W$.
We claim that
\begin{equation} \label {0dfj2339ne8hqog8}
\text{$w_z = u_0$ for some $z \in Z_2 \cap Z$.}
\end{equation}
Indeed, assume that \eqref{0dfj2339ne8hqog8} is false.
Then for all  $z \in Z_2 \cap Z$ we have $\bar V(w_z) \subseteq \Neul(u_0,e_2)$.
Since Lemma \ref{p0293efp0cw23ep0hvj} gives $\tD( \bar V(w_z) ) > 0$ for each  $z \in Z_2 \cap Z \neq \emptyset$ (recall that $\Teul$ is not a brush),
it follows that the subset $Y =  \bigcup_{z \in Z_2\cap Z} \bar V(w_z)$ of $\Neul(u_0,e_2)$ satisfies $\tD(Y)>0$.
Then \eqref{923r0fgfia3uhg6e83} implies that there exists $x \in \Neul(u_0,e_2) \setminus Y$ satisfying  $\tD(x)<0$.
Then $\epsilon(x)=1$, so $x \in Z_2 \cap Z$ and hence $x \in \bar V(w_x) \subseteq Y$, a contradiction. 
So \eqref{0dfj2339ne8hqog8} must be true, and in particular $u_0 \in W$. 
Consider $z \in Z_2 \cap Z$ such that $w_z = u_0$.
Then $z \in V(u_0)$, so $\gamma_{z,u_0} \in \Gamma$; since $e_2$ is in $\gamma_{z,u_0}$, we obtain that $(u_0,e_2)$ is a tooth.
Finally, note that $R(u_0,A)<1$ together with the fact that $(u_0,e_2)$ is a tooth (so $M(u_0,e_2)>1$ by Lemma \ref{GRygergGREg8948r}) implies 
that $R(u_0,\{e_1\}) = 0$.  So $(\beta)$ is true.

Conversely, assume that $(\beta)$ is true.
Let $A = \{ e_1, \dots, e_n \}$ (where $n \in \{1,2\}$).
If $n=1$ then $R(u_0,A) = R(u_0,\{e_1\}) <1$ by ($\beta$-i), and if $n=2$ then
$R(u_0,A) = R(u_0,\{e_1\}) + (1 - \frac1{M(u_0,e_2)}) = (1 - \frac1{M(u_0,e_2)}) < 1$ by ($\beta$-ii),
so $R(u_0,A) < 1$ in all cases.
Since $R(u_0,A) < 1$  and $\epsilon(u_0) - |A| = 1$, Thm \ref{P90werd23ewods0ci} gives
$$
 1> R(u_0,A) = R(u_0,A) + (\epsilon(u_0) - |A| - 1) { \textstyle (1 - \frac1{N_{u_0}}) }
=   1 + \frac{ \bD(u_0,A) - 1 - \sum_{e' \in A}\eta(u_0,e')}{N_{u_0}},
$$
so   $\bD(u_0,A) < 1 + \sum_{e' \in A}\eta(u,e')$.
Since $\bD(u_0,A) = \tD( \Neul(u,e) )$, we have $\tD( \Neul(u,e) ) < 1 + \sum_{e' \in A}\eta(u,e')$,
so $\eta(u,e) =  \sum_{e' \in A}\eta(u_0,e') =  \sum_{i=1}^n \eta(u_0,e_i)$ by Cor.\ \ref{okcf039wepdocS}.
We consider the cases $n=1$, $n=2$ separately.

If $n=1$ then $\eta(u,e) =  \sum_{i=1}^n \eta(u_0,e_i) = \eta(u_0,e_1)$.
Moreover,  $\Neul(u,e) \setminus \Neul(u_0,e_1) = \{ u_0 \}$ and $u_0 \notin \Omega(\Teul)$ because $\epsilon(u_0) \neq 1$,
so  $\Omega(\Teul)$ is disjoint from $\Neul(u,e) \setminus \Neul(u_0,e_1)$ and $(\alpha)$ is true.

If $n=2$ then $(u_0,e_2)$ is a tooth by ($\beta$-ii), so $\eta(u_0,e_2)=0$ by Lemma \ref{GRygergGREg8948r} and consequently 
$\eta(u,e) =  \sum_{i=1}^n \eta(u_0,e_i) = \eta(u_0,e_1)$.
By contradiction, suppose that there exists an element $z$ of $\Omega(\Teul) \cap [ \Neul(u,e) \setminus \Neul(u_0,e_1)]$.
Then $\epsilon(z)=1$, so $z \neq u_0$. We have 
$\Neul(u,e) \setminus \Neul(u_0,e_1) = \{ u_0 \} \cup \Neul(u_0,e_2)$, so $z \in \Neul(u_0,e_2)$.
Since $(u_0,e_2)$ is a tooth, we must have $\gamma_{z,u_0} \in \Gamma$, so $u_0 \in W$ and $z \in V(u_0)$, contradicting the fact that $z \in \Omega(\Teul)$.
So $\Omega(\Teul) \cap [ \Neul(u,e) \setminus \Neul(u_0,e_1)] = \emptyset$, and $(\alpha)$ is true.

So $(\beta)$ implies $(\alpha)$. This proves the Claim, and completes the proof of part (a) of the Proposition.

\bigskip

\noindent (b) Since $\tD\big( \Neul(u,e) \big) = \tD\big( \Neul(u',e') \big)$ is equivalent to 
$\eta(u,e) = \eta(u',e')$ and  $c(u,e) = c(u',e')$ by Lemma \ref{p0c9vin12q09wsc},
and since (a) is true, it follows that \eqref{98fUHhhf69gw2h8G72} is equivalent to  \eqref{98mdmosdberdtkj76lca}.
Let us prove that \eqref{98fUHhhf69gw2h8G72} is equivalent to \eqref{pd90cfn2pw0s}.
We may assume that $u \neq u'$, otherwise the result is trivial.
Let $(u,u']$ denote the set of  vertices $v$ in $\gamma_{u,u'}$ such that $v \neq u$.

If \eqref{pd90cfn2pw0s} is true then $\Neul(u,e) \setminus \Neul(u',e') = (u,u']$,
so each $v \in \Neul(u,e) \setminus \Neul(u',e')$  satisfies $\epsilon(v)=2$ and $\tD(v)=0$.
Since $\epsilon(v)=2$ for all $v \in \Neul(u,e) \setminus \Neul(u',e')$,
$\Omega(\Teul)$ is disjoint from $\Neul(u,e) \setminus \Neul(u',e')$;
since $\tD(v)=0$ for all $v \in \Neul(u,e) \setminus \Neul(u',e')$,
we have $\tD\big( \Neul(u,e) \big) = \tD\big( \Neul(u',e') \big)$.
So \eqref{pd90cfn2pw0s} implies \eqref{98fUHhhf69gw2h8G72}.  

Conversely, assume that  \eqref{98fUHhhf69gw2h8G72} is true.  
Then $\eta(u,e)=\eta(u',e')$ and $c(u',e') = c(u,e)$, by Lemma \ref{p0c9vin12q09wsc}.
Consider $v \in (u,u']$; we have to show that $\epsilon(v)=2$ and $\tD(v)=0$.
Note that there is a unique edge $f$ such that $(v,f) \in P$ and  $(u,e) \succ (v,f) \succeq (u',e')$.
Since $\eta(u,e)=\eta(u',e')$ and $\Omega(\Teul)$ is disjoint from $\Neul(u,e) \setminus \Neul(u',e')$,
part (a) implies that $(u,e)$ is a comb over $(u',e')$, so
$\epsilon(v) \in \{2,3\}$ and if $\epsilon(v) = 3$ then $R(v,\{f\})=0$ and $(v,g)$ is a tooth (notation $g$ as in Def.\ \ref{c09Nc23owsfcnp2q0wuXwoh}).
Suppose that $\epsilon(v)=3$. The fact that $R(v,\{f\})=0$ implies that $c(v,f) = N_v$ and the fact that $(v,g)$ is a tooth implies that $c(v,g)<N_v$,
so $c(v,g)<c(v,f)$; since $(u,e) \succ (v,g)$, we have $c(u,e) \le c(v,g)$, so  $c(u,e) \le c(v,g) < c(v,f) \le c(u',e')$, a contradiction.
This shows that $\epsilon(v)=2$ (for every $v \in (u,u']$). Then   $\Neul(u,e) \setminus \Neul(u',e') = (u,u']$, so 
$$
\textstyle  \sum_{ v \in (u,u'] } \tD(v) =  \tD\big( \Neul(u,e) \big) - \tD\big( \Neul(u',e') \big) = 0 .
$$
For each  $v \in (u,u']$ we have $\epsilon(v)>1$, so $\tD(v)\ge0$; it follows that 
$\tD(v)=0$ for all $v \in (u,u']$, so \eqref{pd90cfn2pw0s} holds and we are done.
\end{proof}

The next result is a corollary of  Lemma \ref{0hp9fh023wbpchvgrsjs256}, and is closely related to Prop.\ \ref{0ci19KJTghL872je309}.

\begin{corollary}
Let $(u,e), (u',e') \in P$.
If $(u,e)$ is a comb over $(u',e')$ and $u<u'$ then we have $\epsilon(v)=2$ and $\tD(v)=0$ for every vertex $v$ in $\gamma_{u,u'}$ such that $v \neq u$.
\end{corollary}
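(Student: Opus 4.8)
The plan is to work directly with the chain of immediate predecessors joining $(u,e)$ to $(u',e')$ and to show that every vertex it passes through carries $\epsilon=2$ and $\tD=0$. Since $u<u'$ forces $(u,e)\neq(u',e')$, hence $(u,e)\succ(u',e')$, I would first (as in the proof of Prop.~\ref{0ci19KJTghL872je309}) take the chain $(u,e)=(u_0,e_0)\succ(u_1,e_1)\succ\cdots\succ(u_r,e_r)=(u',e')$ in which each $(u_j,e_j)$ is an immediate predecessor of $(u_{j-1},e_{j-1})$, with $r\ge1$. By Def.~\ref{d9f239cw093eir} this gives $e_{j-1}=\{u_{j-1},u_j\}$ for $j=1,\dots,r$, so $(u_0,u_1,\dots,u_r)=\gamma_{u,u'}$; because $u=u_0<u_r=u'$, this path is strictly increasing, i.e.\ $u_0<u_1<\cdots<u_r$, and in particular $\{u_1,\dots,u_r\}$ is exactly the set of vertices of $\gamma_{u,u'}$ other than $u$. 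Writing $e_j=\{u_j,u_j^*\}$ for $1\le j\le r$, I would note that $u_j^*>u_j$ in each case: for $j<r$ this is $u_j^*=u_{j+1}>u_j$, and for $j=r$ one uses that $e_r=e'$ joins $u'$ to one of its $\Neul$-neighbours other than $u_{r-1}$ (Def.~\ref{d9f239cw093eir}), and since $u_{r-1}<u_r$ is the \emph{only} neighbour of $u_r$ below $u_r$, the other endpoint $u_r^*$ must be a child of $u_r$, so $u_r^*>u_r$.

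Next I would exploit Prop.~\ref{0hp9fh023wbpchvgrsjs256}: applied to $(u_j,e_j)$ with $e_j=\{u_j,u_j^*\}$, it says $M(u_j,e_j)=1$ would force $u_j>u_j^*$, which is false; since $M(u_j,e_j)\in\Nat\setminus\{0\}$ (Notation~\ref{c9v8mmxzKRknrscxe4ftdes}), this yields $M(u_j,e_j)>1$ for every $j\in\{1,\dots,r\}$. Feeding this into the comb hypothesis, Def.~\ref{c09Nc23owsfcnp2q0wuXwoh} applied to the pair $(v,f)=(u_j,e_j)$ (which satisfies $(u,e)\succ(u_j,e_j)\succeq(u',e')$) gives $\epsilon(u_j)\in\{2,3\}$, and the case $\epsilon(u_j)=3$ is ruled out because condition (c) there would give $R(u_j,\{e_j\})=0$, hence $M(u_j,e_j)=1$. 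So $\epsilon(u_j)=2$, and condition (b) gives $R(u_j,\{e_j\})<1$; since one summand of $R(u_j,\{e_j\})$ is $1-\frac1{M(u_j,e_j)}\neq0$, Lemma~\ref{trivialobs} forces $a_{u_j}=1$ and $k_x=1$ for all $x\in\Deul_{u_j}$, whence $\sigma(u_j)=0$. Then Lemma~\ref{90q932r8dhd89cnr9}\eqref{pc9vp23r09} gives
$$
\tD(u_j)=\sigma(u_j)+(\epsilon(u_j)-2)(N_{u_j}-1)+N_{u_j}\big(1-\tfrac1{a_{u_j}}\big)=0 .
$$

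This completes the argument, since $u_1,\dots,u_r$ are precisely the vertices of $\gamma_{u,u'}$ distinct from $u$. As a cross-check against Prop.~\ref{0ci19KJTghL872je309}(b) I would observe that $\sigma(u_j)=0$ gives $d(u_j)=N_{u_j}$ (Rem.~\ref{F0934nofe8rg3p406egfh}), which is divisible by $c(u_j,e_j)$ (Thm.~\ref{xncoo9qwdx9}); as $(u_j,e_j)$ is the unique immediate predecessor of $(u_{j-1},e_{j-1})$ and $a_{u_j}=1$, Def.~\ref{kcjfnp0293wd} then gives $c(u_{j-1},e_{j-1})=c(u_j,e_j)$, so telescoping yields $c(u,e)=c(u',e')$, exactly as that proposition predicts. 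The step I expect to need the most care is the order bookkeeping — establishing $u_j^*>u_j$ uniformly, and in particular handling the terminal pair $(u',e')$, whose edge $e'$ need not lie on $\gamma_{u,u'}$; once that is in place, the rest is a short chain of already-established structural lemmas.
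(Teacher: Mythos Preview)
Your proof is correct and follows essentially the same route as the paper's: both arguments hinge on Prop.~\ref{0hp9fh023wbpchvgrsjs256} to get $M(v,f)>1$ from the order relation, then use the comb conditions to exclude $\epsilon(v)=3$ and to force $\sigma(v)=0$, $a_v=1$, hence $\tD(v)=0$. The paper's version is simply terser—it treats an arbitrary $v\neq u$ on $\gamma_{u,u'}$ directly and asserts $v<w$ (where $f=\{v,w\}$) without comment, whereas you unroll the chain of immediate predecessors and handle the terminal pair $(u',e')$ explicitly; your extra care there is sound but not a different idea.
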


\begin{proof}
Consider a vertex $v$ in $\gamma_{u,u'}$ such that $v \neq u$, and let $f$ be the edge satisfying $(u,e) \succ (v,f) \succeq (u',e')$.
If we define $w$ by $f = \{v,w\}$ then $v<w$, so $M(v,f)>1$ by Lemma \ref{0hp9fh023wbpchvgrsjs256}.
Then $R(v,\{f\})>0$, so the definition of comb implies that $\epsilon(v)=2$.
We have $R(v,\{f\})<1$ (again by definition of comb), so $M(v,f)>1$ implies that $\sigma(v)=0$ and $a_v=1$, so $\tD(v)=0$.
\end{proof}

\section{Global structure: first steps}
\label{Section:Globalstructurefirststeps}

{\it We continue to assume that $\Teul$ is a minimally complete abstract Newton tree at infinity.}

\begin{definition} \label {FTJOIhgrjHIO76r67yiN}
The set $S(\Teul) = \Neul \setminus \bigcup_{w \in W(\Teul)}\big( \bar V(w) \setminus \{w\} \big)$ is called the \textit{skeleton} of $\Teul$.
See Def.\ \ref{xkclqlwpd90cod} and \ref{pd09Yv3ned09Xse} for notations.
\end{definition}

\begin{lemma}  \label {p309ef2GFT485ryf}
\begin{enumerata}

\item $S(\Teul)$ is a nonempty subtree of $\Neul$.

\item $v_0 \in S(\Teul)$, $W(\Teul) \subseteq S(\Teul)$, $\Omega(\Teul) \subseteq S(\Teul)$ and if  $\Omega(\Teul) = S(\Teul)$ then $|\Omega(\Teul)| = 2$.
In particular, if $\Omega(\Teul) \neq \emptyset$ then $|S(\Teul)|>1$.

\item $\Neul = \bigcup_{v \in S(\Teul)} \bar V(v)$,
and $\bar V(v) \cap \bar V(v') = \emptyset$ for every choice of distinct  $v, v' \in S(\Teul)$.

\item $| S(\Teul) | = 1$ if and only if $\Teul$ is a brush or $| \Neul | = 1$.

\end{enumerata}
\end{lemma}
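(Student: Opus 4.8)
The plan is to base everything on a single structural observation about the paths in $\Gamma(\Teul)$, after which the four assertions follow by bookkeeping; I would establish the containments of part (b) first and then use them for (a), (c) and (d). The observation is: every $\gamma=(z_1,\dots,z_n)\in\Gamma(\Teul)$ is \emph{descending}, i.e.\ $z_1>z_2>\dots>z_n$. Indeed, if $z_j$ is the vertex of $\gamma$ nearest the root, then $\gamma$ splits as $z_1>\dots>z_j<\dots<z_n$, so $j<n$ would give $z_{n-1}<z_n$, contradicting the condition $z_{n-1}>z_n$ in the definition of $\Gamma(\Teul)$; hence $j=n$. It follows that for each $w\in W(\Teul)$ every element $x$ of $\bar V(w)\setminus\{w\}$ is a proper descendant of $w$, and that, writing $x=z_k$ in some $\gamma=(z_1,\dots,z_n)\in\Gamma$ with $z_n=w$ and $k\le n-1$, either $k=1$ (so $x\in V(w)$ and $\epsilon(x)=1$) or $1<k<n$ (so $\epsilon(x)=2$ and $\tD(x)=0$). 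I will use freely that $\tD(w)>0$ for all $w\in W(\Teul)$ and $\bigcup_{w}V(w)\subseteq Z(\Teul)$ (from \ref{xkclqlwpd90cod}), Rem.\ \ref{pc09n3409vnZiEWOdhFp30ef}, the characterization (stated just before Thm \ref{Xcoikn23ifcdKJDluFYT937}) of $\tD$-trivial paths with $\epsilon(z_n)=1$, and the bound $|\Omega(\Teul)|\le 2$ from that theorem.

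For part (b): $v_0\in S(\Teul)$ restates Rem.\ \ref{pc09n3409vnZiEWOdhFp30ef}(a). To prove $W(\Teul)\subseteq S(\Teul)$ and $\Omega(\Teul)\subseteq S(\Teul)$, suppose $v\in\bar V(w)\setminus\{w\}$ with $w\in W(\Teul)$ and $v=z_k$ as above. The interior case $1<k<n$ gives $\tD(v)=0$, impossible if $v\in W(\Teul)$, and impossible if $v\in\Omega(\Teul)\subseteq Z(\Teul)$ since then $\epsilon(v)=1$; the boundary case $k=1$ gives $v=z_1\in V(w)$, so $v\notin\Omega(\Teul)$, while $v\in W(\Teul)$ is excluded because the descending observation applied to a $\Gamma$-path ending at $v$ gives $v$ a $\Neul$-neighbour strictly below $v$, whereas $\epsilon(v)=1$ forces the unique $\Neul$-neighbour of $v$ to be $z_2$, which by the descending property of $\gamma$ lies strictly above $v$ — a contradiction. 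For the clause ``$\Omega(\Teul)=S(\Teul)$ implies $|\Omega(\Teul)|=2$'': since $v_0\in S$ and $|\Omega|\le 2$ it suffices to exclude $\Omega=S=\{v_0\}$. If $\Neul=\{v_0\}$ then $\epsilon(v_0)=0$, so $v_0\notin Z$ and $\Omega=\emptyset$. Otherwise $\Neul\setminus\{v_0\}\ne\emptyset$ coincides with $\bigcup_{w\in W}(\bar V(w)\setminus\{w\})$, so $W\ne\emptyset$; then $W\subseteq S=\{v_0\}$ forces $W=\{v_0\}$ and $\bar V(v_0)=\Neul$, and choosing $v\in V(v_0)$ makes $\gamma_{v,v_0}=(z_1,\dots,z_n)$ a $\tD$-trivial path with $\epsilon(z_n)=\epsilon(v_0)=1$, hence $\Neul=\{z_1,\dots,z_n\}$; but then $\tD(\Neul)=\tD(z_1)+\tD(z_n)\le 0$ since $z_1,z_n\in Z$, contradicting $\gamma_{v,v_0}\in\Gamma$. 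The ``in particular'' clause follows: if $\Omega\ne\emptyset$, then either $|\Omega|=2$ and $|S|\ge 2$, or $|\Omega|=1$, in which case $\Omega\ne S$, so $\Omega\subsetneq S$ and again $|S|\ge 2$.

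For the remaining parts: in (a), connectedness is by contradiction — if the $\Neul$-path $\gamma_{x,y}$ between $x,y\in S$ met a vertex $u\notin S$, then $u\in\bar V(w)\setminus\{w\}$ and, following the forced ``arm'' at $u$ (each interior vertex $z_k$, $1<k<n$, of the relevant $\Gamma$-path has exactly its two path-neighbours as $\Neul$-neighbours, and the tip $z_1$ has exactly one), $\gamma_{x,y}$ would be forced to terminate at the tip $z_1\in\bar V(w)\setminus\{w\}$, whence $z_1\in\{x,y\}\subseteq S$ — a contradiction; together with $v_0\in S$ and Rem.\ \ref{jdhbf2i3p9e}, this shows $S(\Teul)$ is a nonempty subtree. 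In (c), $\Neul=\bigcup_{v\in S}\bar V(v)$ holds because each $x\in\Neul$ either lies in $S$, hence in $\bar V(x)$, or lies in $\bar V(w)\setminus\{w\}$ for some $w\in W\subseteq S$; and disjointness is shown by noting that if $x\in\bar V(v)\cap\bar V(v')$ with $v\ne v'$ in $S$ then $x\ne v,v'$ (otherwise, say, $v\in\bar V(v')\setminus\{v'\}$ contradicts $v\in S$), so $v,v'\in W$ and $v<x$, $v'<x$ by the descending observation; thus $v,v'$ are comparable, say $v<v'<x$, and then $v'$ lies strictly between $x$ and $v$ on a path $\gamma\in\Gamma$ ending at $v$, hence is an interior vertex of $\gamma$, giving $\tD(v')=0$ and contradicting $v'\in W$. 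Finally (d): if $|\Neul|=1$ then $W=\emptyset$ and $S=\{v_0\}$; if $\Teul$ is a brush then $W=\{v_0\}$ and $\bar V(v_0)=\Neul$ by Rem.\ \ref{pc09n3409vnZiEWOdhFp30ef}(b), so $S=\{v_0\}$; conversely, if $S=\{v_0\}$ then, exactly as in the argument above, either $\Neul=\{v_0\}$ or $W=\{v_0\}$ with $\bar V(v_0)=\Neul$, i.e.\ $\Teul$ is a brush.

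The main obstacle is isolating and proving the descending-path observation and extracting from it the ``pendant-arm'' picture of $\bar V(w)\setminus\{w\}$; once that is in hand, the rest is routine set-chasing. Among the routine steps, the one needing the most care is showing that $\Omega(\Teul)=S(\Teul)$ forces $|\Omega(\Teul)|=2$, since that is the single place where one must combine $|\Omega(\Teul)|\le 2$ with the fact that a $\tD$-trivial path $(z_1,\dots,z_n)$ satisfies $\epsilon(z_n)=1$ exactly when $\Neul=\{z_1,\dots,z_n\}$.
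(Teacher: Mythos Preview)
Your proof is correct but follows a somewhat different route from the paper's. You make explicit the observation that every path in $\Gamma(\Teul)$ is strictly descending toward its endpoint $z_n$, and use it as a unifying tool throughout; the paper leaves this implicit (it is essentially the content of Rem.~\ref{pc09n3409vnZiEWOdhFp30ef}(a)). For part~(b), the paper proves in one stroke that $\{z,y\}\subseteq S(\Teul)$ whenever $z\in\Omega(\Teul)$ and $y$ is its unique $\Neul$-neighbour, which yields both $\Omega(\Teul)\subseteq S(\Teul)$ and the clause ``$\Omega(\Teul)=S(\Teul)\Rightarrow|\Omega(\Teul)|=2$'' simultaneously; you handle the latter clause separately by excluding $\Omega=S=\{v_0\}$, which works but is longer than needed --- once you have $W=\{v_0\}$ and $v_0\in\Omega\subseteq Z(\Teul)$, the incompatibility $\tD(v_0)>0$ versus $\tD(v_0)\le0$ is already a contradiction. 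For part~(a), the paper removes the $\Gamma$-paths one at a time and notes that each removal preserves connectedness, whereas your direct ``pendant-arm'' argument is equally valid and arguably more conceptual. For part~(c), the paper simply declares disjointness ``clear''; your explicit argument (showing $v,v'<x$ forces $v,v'$ to be comparable, and then the larger of the two to be an interior vertex of a $\Gamma$-path, contradicting $\tD>0$) fills that in nicely. Part~(d) is essentially the same in both proofs.
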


\begin{proof}
(b) We abbreviate $W(\Teul)$ to $W$. 
We know that $v_0 \in \Neul$.  If $v_0 \notin S(\Teul)$ then there exists $w \in W$ such that $v_0 \in \bar V(w) \setminus \{w\}$,
which contradicts Rem.\ \ref{pc09n3409vnZiEWOdhFp30ef}.
So $v_0 \in S(\Teul)$.

We have $\tD(x) \le 0$ for all $x \in \bigcup_{w \in W}\big( \bar V(w) \setminus \{w\} \big)$ and 
$\tD(x) > 0$ for all $x \in W$,
so $W$ is disjoint from  $\bigcup_{w \in W}\big( \bar V(w) \setminus \{w\} \big)$ and hence $W \subseteq S(\Teul)$.

Let $z \in \Omega(\Teul)$. Since $\epsilon(z)=1$, a unique $y \in \Neul$ is adjacent to $z$.
We prove $\{z,y\} \subseteq S(\Teul)$ by contradiction.
Suppose that $\{z,y\} \nsubseteq S(\Teul)$; then there exists $w \in W$ such that 
$\{z,y\} \cap ( \bar V(w) \setminus \{w\} ) \neq \emptyset$;
so there exists $(x_1,\dots,x_n) \in \Gamma(\Teul)$ such that $x_n=w$ and $\{z,y\} \cap \{x_1,\dots,x_{n-1}\} \neq \emptyset$;
note that $z \neq w$ (because $\tD(z)\le0$) and that if only one element of $\{z,y\}$ belongs to $\{x_1,\dots,x_{n-1}\}$ then the other element
must be $w$; it follows that $(z,y)=(x_1,x_2)$, so $z \in V(w)$, which contradicts $z \in \Omega(\Teul)$.
So $\{z,y\} \subseteq S(\Teul)$.
If $\Omega(\Teul) = S(\Teul)$ then $\{z,y\} \subseteq \Omega(\Teul)$, so  $|\Omega(\Teul)| = 2$.
This argument shows that $\Omega(\Teul) \subseteq S(\Teul)$ and that if $\Omega(\Teul) = S(\Teul)$ then $|\Omega(\Teul)| = 2$.
So (b) is proved.

(a) Since $\emptyset \neq S(\Teul) \subseteq \Neul$ and $\Neul$ is a tree, it suffices to show that $S(\Teul)$ is connected.
If $\Gamma(\Teul) = \emptyset$ then $W = \emptyset$ and hence $S(\Teul) = \Neul$ is a nonempty tree.
Assume that $\Gamma(\Teul) \neq \emptyset$ and let $\gamma_1, \dots, \gamma_r$ be the distinct elements of $\Gamma(\Teul)$,
where we write $\gamma_i = (x_{i,1}, \dots, x_{i,n_i})$ for each $i=1,\dots,r$.
Define a sequence $(S_0, \dots, S_r)$ of sets by $S_0 = \Neul$ and
$S_{i} = S_{i-1} \setminus \{ x_{i,1}, \dots, x_{i,n_i-1} \}$ for each $i=1,\dots,r$.
Then $S(\Teul) = S_r$.
We know that $S_0$ is connected, and it is quite clear that if $S_{i-1}$ is connected then so is $S_i$.
So $S_r = S(\Teul)$ is connected.

(c) It is clear that $S(\Teul) \subseteq \bigcup_{v \in S(\Teul)} \bar V(v) \subseteq \Neul$.
If $x \in \Neul \setminus S(\Teul)$ then $x \in  \bar V(w) \setminus \{w\}$ for some $w \in W$;
since $W \subseteq S(\Teul)$, we have $x \in  \bigcup_{v \in S(\Teul)} \bar V(v)$, so $\Neul = \bigcup_{v \in S(\Teul)} \bar V(v)$.
It is clear that $\bar V(v) \cap \bar V(v') = \emptyset$ whenever  $v,v'$ are distinct elements of $S(\Teul)$.

(d) Assume that $| S(\Teul) | = 1$ and $| \Neul | \neq 1$. Then $S(\Teul) \neq \Neul$, so $W \neq \emptyset$. 
Since $W \subseteq S(\Teul)$, we obtain that $W = S(\Teul)$ is a singleton $\{w\}$.
Then (c) gives $\Neul = \bar V(w)$, so $\Teul$ is a brush.
This shows that if $| S(\Teul) | = 1$ then $\Teul$ is a brush or $| \Neul | = 1$.
The converse is clear.
\end{proof}

\section*{The case $| S(\Teul) | = 1$}
\label {ThecaseSTeul1}

The remainder of this section is devoted to the special case  $| S(\Teul) | = 1$.
The general case ($| S(\Teul) | \ge 1$) is studied in the subsequent sections.

Recall from Lemma \ref{p309ef2GFT485ryf} that $v_0 \in S(\Teul)$ and that
$$
| S(\Teul) | = 1 \iff \text{ $\Teul$ is a brush or $| \Neul | = 1$,}
$$
where the two clauses  ``$\Teul$ is a brush'' and  ``$| \Neul | = 1$'' are mutually exclusive.
Also note that $| S(\Teul) | = 1$ implies that, for each $e \in \Eeul_{v_0}$, $(v_0,e)$ is a tooth and hence is nonpositive.

\begin{notation} \label {FpPpc09bne0fviIwZemf90}
The following special notation is used in the present section (and only in the present section).
Let $E$  denote the set of all edges of $\Teul$ incident to $v_0$.
For each $e = \{ v_0, u\} \in E$, define
\begin{gather*}
a(e) = \begin{cases} a_u, & \text{if $u \in \Deul_{v_0}$,} \\ p(v_0,e)/c(v_0,e), & \text{if $u \notin \Deul_{v_0}$,} \end{cases}
\qquad
d(e) = \begin{cases} d_u, & \text{if $u \in \Deul_{v_0}$,} \\ c(v_0,e), & \text{if $u \notin \Deul_{v_0}$,} \end{cases} \\[1mm]
k(e) = N_{v_0}/d(e),   \qquad x(e) = q(e,u) .
\end{gather*}
Note that in the case where $u \in \Deul_{v_0}$ we have $k(e) = k_u = -\det(e) = a(e) - x(e)$.
We also use the abbreviations $N = N_{v_0}$ and $\delta = \delta_{v_0} = |E|$.
\end{notation}

\begin{lemma}  \label {a0v2b48fvwogydhdbfsr034j}
If $| S(\Teul) | = 1$ then the following hold.
\begin{enumerata}

\item $\delta, N \in \Nat\setminus \{0\}$ 

\item For all $e \in E$ we have   $a(e), d(e), k(e) \in \Nat\setminus \{0\}$ and $N = k(e) d(e)$.

\item $N = \sum_{e \in E} a(e) d(e)$ and $R(v_0, \Eeul_{v_0}) = \sum_{e \in E} (1 - \frac{1}{k(e)})$

\item If $\gcd\setspec{ d_u }{ u \in \Deul } = 1$ then  $\gcd\setspec{ d(e) }{ e \in E } = 1$.

\end{enumerata}

\end{lemma}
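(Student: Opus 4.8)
The plan is to unwind the special-case hypothesis $|S(\Teul)|=1$ using Lemma \ref{p309ef2GFT485ryf}(d), which tells us that either $|\Neul|=1$ or $\Teul$ is a brush. The case $|\Neul|=1$ is trivial: then $\Neul=\{v_0\}$, there are no edges of $\Neul$ incident to $v_0$, and for each $e\in E$ we have $u\in\Deul_{v_0}$, so $a(e)=a_u$, $d(e)=d_u$, $k(e)=k_u$, $N=N_{v_0}$; assertions (a)--(d) then follow directly from Lemma \ref{90q932r8dhd89cnr9}\eqref{o82y387jw9e23} (which gives $N_{v_0}=k_ud_u$), from Rem.\ \ref{efy872392eujf} ($N_{v_0}\ge1$), from Def.\ \ref{p9823p98p2d}\eqref{fho8wiw9s-5}, and from the observation that $N_{v_0}=\sum_{u\in\Deul_{v_0}}a_ud_u$ (which is a special case of the formula $N_{v_0}=\sum_{u\in\Deul_{v_0}}a_ud_u+\sum_{e\in\Eeul_{v_0}}p(v_0,e)$ used in Prop.\ \ref{0c9b45okjrdidjfo}). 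So the bulk of the work is the brush case.

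In the brush case, $W(\Teul)=\{v_0\}$ and $\Neul=\bar V(v_0)$ by Rem.\ \ref{pc09n3409vnZiEWOdhFp30ef}(b), and every $e\in\Eeul_{v_0}$ gives a tooth $(v_0,e)$, hence by Lemma \ref{GRygergGREg8948r} is nonpositive with $M(v_0,e)>1$ and $\eta(v_0,e)=0$; so by Prop.\ \ref{DKxcnpw93sdo} (or Rem.\ \ref{uyhmdytjwhwhrkdftraef4gf23h23}) $c(v_0,e)\in\Nat\setminus\{0\}$. For (b) I would split on whether $u\in\Deul_{v_0}$ or not. If $u\in\Deul_{v_0}$, everything reduces as above to Lemma \ref{90q932r8dhd89cnr9}\eqref{o82y387jw9e23}. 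If $u\notin\Deul_{v_0}$, then $e\in\Eeul_{v_0}$, so $d(e)=c(v_0,e)\in\Nat\setminus\{0\}$; by Thm \ref{xncoo9qwdx9} we have $p(v_0,e)\in c(v_0,e)\Integ$ with $p(v_0,e)\ge1$ (it is a nonempty sum of positive $\hat x$'s, since $v_0<$ the arrows it dominates), so $a(e)=p(v_0,e)/c(v_0,e)\in\Nat\setminus\{0\}$; and $N=N_{v_0}\in c(v_0,e)\Integ$ again by Thm \ref{xncoo9qwdx9}, with $N_{v_0}\ge1$, so $k(e)=N/d(e)\in\Nat\setminus\{0\}$ and $N=k(e)d(e)$ by definition. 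For (a), $N=N_{v_0}\ge1$ by Rem.\ \ref{efy872392eujf} and $\delta=\delta_{v_0}\ge1$ by Def.\ \ref{9c8yvqewjft7d}; both are integers by construction.

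For (c), I would use the identity $N_{v_0}=\sum_{u\in\Deul_{v_0}}a_ud_u+\sum_{e\in\Eeul_{v_0}}p(v_0,e)$ (the one already invoked in the proof of Prop.\ \ref{0c9b45okjrdidjfo}), together with $a(e)d(e)=a_ud_u$ when $u\in\Deul_{v_0}$ and $a(e)d(e)=\big(p(v_0,e)/c(v_0,e)\big)c(v_0,e)=p(v_0,e)$ when $u\notin\Deul_{v_0}$; grouping the two sums gives $N=\sum_{e\in E}a(e)d(e)$. For the $R$-formula: by Notation \ref{p09cv347dYF6Us98}, $R(v_0,\Eeul_{v_0})=\sum_{x\in\Deul_{v_0}}(1-\frac1{k_x})+(1-\frac1{a_{v_0}})+\sum_{e\in\Eeul_{v_0}}(1-\frac1{M(v_0,e)})$; now $a_{v_0}=1$ by Rem.\ \ref{p092pof90e9rf}(iii), so the middle term vanishes; for $x\in\Deul_{v_0}$ the edge $e=\{v_0,x\}$ has $k(e)=k_x$, and for $e\in\Eeul_{v_0}$ we have $M(v_0,e)=N_{v_0}/c(v_0,e)=N/d(e)=k(e)$, so the two remaining sums combine into $\sum_{e\in E}(1-\frac1{k(e)})$. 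Finally, (d): given $\gcd\{d_u\mid u\in\Deul\}=1$, I would note that $\{d(e)\mid e\in E\}\supseteq\{d_u\mid u\in\Deul_{v_0}\}$, and that for $e\in\Eeul_{v_0}$, $d(e)=c(v_0,e)$ is divisible by $\gcd\{d(z)\mid z\in\Nd(v_0,e)\}$ by Lemma \ref{kjwoeid9cse9c}(c), which in turn divides each $d_u$ for $u$ a dicritical dominated through $e$ — hence each $d(e)$ for $e\in\Eeul_{v_0}$ shares a common factor with the $d_u$'s but adding them cannot increase the gcd; more precisely, since $\bigcup_{z\in\Nd(v_0,e)}\Deul_z$ is nonempty for each $e\in\Eeul_{v_0}$ (as $(v_0,e)$ is a tooth, some dicritical lies beyond $e$), each such $d(e)$ is a multiple of some $d_u$-gcd, so $\gcd\{d(e)\mid e\in E\}$ divides $\gcd\{d_u\mid u\in\Deul_{v_0}\}$, and the latter could still exceed $1$ — so I must be careful: the cleanest route is to show $\gcd\{d(e)\mid e\in E\}=\gcd\{d_u\mid u\in\Deul\}$ directly by checking that every $d_u$ ($u\in\Deul$) is a $\Integ$-combination reachable from the $d(e)$'s and vice versa, using Lemma \ref{kjwoeid9cse9c} and the f-partition $(A_{v})$. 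I expect this last point, assertion (d), to be the main obstacle, since it requires relating the gcd of all dicritical degrees to the gcd of the first-level data $d(e)$ via the characteristic-number divisibilities rather than a direct computation.
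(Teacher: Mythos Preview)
Your arguments for (a), (b), and (c) are essentially the paper's proof; the case split on $|\Neul|=1$ versus brush is unnecessary (the paper uses only that $|S(\Teul)|=1$ makes each $(v_0,e)$ with $e\in\Eeul_{v_0}$ nonpositive, which you also observe), but it does no harm.

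In (d), however, you have reversed the divisibility in Lemma~\ref{kjwoeid9cse9c}(c). That lemma asserts $\gcd\setspec{d(z)}{z\in\Nd(v_0,e)}\in c(v_0,e)\Integ$, i.e.\ $c(v_0,e)$ \emph{divides} this gcd, not that $c(v_0,e)$ is divisible by it. With the correct direction, the argument is immediate: for each $e\in\Eeul_{v_0}$ (where $\Nd(v_0,e)\neq\emptyset$), $d(e)=c(v_0,e)$ divides $\gcd\setspec{d_u}{u\in\Deul(e)}$ with $\Deul(e)=\bigcup_{z\in\Nd(v_0,e)}\Deul_z$; for $e\in E\setminus\Eeul_{v_0}$, $d(e)=d_u$ with $u\in\Deul_{v_0}$. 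Since $\Deul_{v_0}\cup\bigcup_{e\in\Eeul_{v_0}}\Deul(e)=\Deul$, we get that $\gcd\setspec{d(e)}{e\in E}$ divides $\gcd\setspec{d_u}{u\in\Deul}=1$, and we are done. Your proposed route of establishing the full equality $\gcd\setspec{d(e)}{e\in E}=\gcd\setspec{d_u}{u\in\Deul}$ is stronger than needed and would require an additional argument for the reverse divisibility; the paper avoids this entirely.
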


\begin{proof}
(a) is clear.

(b) It is clear that  $N = k(e) d(e)$ for all $e \in E$.
For each $e \in \Eeul_{v_0}$, $k(e) = N/c(v_0,e) = M(v_0,e)$ and $a(e) = p(v_0,e)/c(v_0,e)$ are positive integers by  Thm \ref{xncoo9qwdx9}.
The assumption $| S(\Teul) | = 1$ implies that (for each  $e \in \Eeul_{v_0}$) $(v_0,e)$ is nonpositive,
so $d(e) = c(v_0,e) \in \Nat\setminus\{0\}$ by Rem.\ \ref{uyhmdytjwhwhrkdftraef4gf23h23}.
If $e \in E \setminus \Eeul_{v_0}$ then $e = \{ v_0, u \}$ with 
$u \in \Deul_{v_0}$, so $a(e) = a_u$, $d(e) = d_u$ and $k(e) = N/d_u = k_u$ all belong to $\Nat \setminus \{0\}$.

(c) We have $N = \sum_{u \in \Deul_{v_0}} a_u d_u  +  \sum_{e \in \Eeul_{v_0}} p(v_0,e) =  \sum_{e \in E} a(e) d(e)$ and
$R(v_0, \Eeul_{v_0})
= \sum_{u \in \Deul_{v_0}} (1 - \frac{1}{k_u}) + \sum_{e \in \Eeul_{v_0}} (1 - \frac{1}{M(v_0,e)})
= \sum_{e \in E} (1 - \frac{1}{k(e)})$.

(d) We may assume that $\Eeul_{v_0} \neq \emptyset$, otherwise $\Deul = \Deul_{v_0}$ and the claim is clear.
For each $e \in \Eeul_{v_0}$, let $\Deul(e) = \bigcup_{z \in \Nd(v_0,e)} \Deul_z$ and note that $\Nd(v_0,e) \neq \emptyset$;
by Lemma \ref{kjwoeid9cse9c}(c), the positive integer $d(e) = c(v_0,e)$ divides $\gcd\setspec{d(z)}{ z \in \Nd(v_0,e) } = \gcd\setspec{ d_u }{ u \in \Deul(e)}$.
So the integer $\gcd\setspec{ d(e) }{ e \in \Eeul_{v_0} }$ divides  $\gcd\setspec{ d_u }{ u \in \Deul' }$, where we define 
$\Deul' = \bigcup_{e \in \Eeul_{v_0}} \Deul(e)$.
It follows that  $\gcd\setspec{ d(e) }{ e \in E }$ divides $\gcd\setspec{ d_u }{ u \in \Deul_{v_0} \cup \Deul' }$.
As $\Deul_{v_0} \cup \Deul' = \Deul$, we are done.
\end{proof}

\begin{lemma} \label {PpPppc0v9vv32487fCd}
If $| S(\Teul) | = 1$ then
$\tD(\Neul) = 2 + \sum_{e \in E} [(\delta-2)a(e) - 1] d(e)$.
\end{lemma}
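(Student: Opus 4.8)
The plan is to read the formula off Corollary~\ref{dox9f80293ewf0di}, applied at the root $v_0$, after noticing that under the hypothesis $|S(\Teul)|=1$ all the $\eta$-terms vanish. As remarked just before Notation~\ref{FpPpc09bne0fviIwZemf90}, the condition $|S(\Teul)|=1$ forces $(v_0,e)$ to be a tooth for every $e\in\Eeul_{v_0}$, so Lemma~\ref{GRygergGREg8948r} gives $\eta(v_0,e)=0$ for all such $e$. Hence, taking $u=v_0$ and writing $N=N_{v_0}$ in Corollary~\ref{dox9f80293ewf0di},
$$
\tD(\Neul) \;=\; \bigl( R(v_0,\Eeul_{v_0}) - 2 \bigr) N + 2 .
$$

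Next I would substitute the two identities of Lemma~\ref{a0v2b48fvwogydhdbfsr034j}(c), namely $R(v_0,\Eeul_{v_0})=\sum_{e\in E}\bigl(1-\tfrac1{k(e)}\bigr)$ and $N=\sum_{e\in E}a(e)d(e)$, and use $\delta=\delta_{v_0}=|E|$ together with the relation $N=k(e)d(e)$ from Lemma~\ref{a0v2b48fvwogydhdbfsr034j}(b) (so that $N/k(e)=d(e)$). This turns the right-hand side into a routine algebraic manipulation:
$$
\bigl( R(v_0,\Eeul_{v_0}) - 2 \bigr) N \;=\; (\delta-2)N - \sum_{e\in E}\frac{N}{k(e)} \;=\; (\delta-2)\sum_{e\in E}a(e)d(e) - \sum_{e\in E}d(e) \;=\; \sum_{e\in E}\bigl[(\delta-2)a(e)-1\bigr]d(e),
$$
and adding $2$ yields exactly the asserted formula $\tD(\Neul)=2+\sum_{e\in E}[(\delta-2)a(e)-1]d(e)$.

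There is essentially no obstacle here: the argument is just Corollary~\ref{dox9f80293ewf0di} plus the bookkeeping of Lemma~\ref{a0v2b48fvwogydhdbfsr034j}. The only point worth noting is that Lemma~\ref{a0v2b48fvwogydhdbfsr034j} already packages the two subcases of $|S(\Teul)|=1$ (the case where $\Teul$ is a brush and the case $|\Neul|=1$, in which $\Eeul_{v_0}=\emptyset$), so no separate treatment of degenerate situations is required.
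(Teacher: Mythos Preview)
Your proof is correct and follows essentially the same approach as the paper's: both apply Corollary~\ref{dox9f80293ewf0di} at $v_0$, use that $\eta(v_0,e)=0$ for all $e\in\Eeul_{v_0}$ under the hypothesis $|S(\Teul)|=1$, and then unwind $R(v_0,\Eeul_{v_0})$ and $N$ via the identities of Lemma~\ref{a0v2b48fvwogydhdbfsr034j} to reach the stated formula. Your explicit citation of Lemma~\ref{GRygergGREg8948r} for the vanishing of $\eta$ and your remark on the degenerate case $\Eeul_{v_0}=\emptyset$ are slight elaborations, but the argument is the same.
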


\begin{proof}
The assumption $| S(\Teul) | = 1$ implies that $\eta(v_0,e) = 0$ for all $e \in \Eeul_{v_0}$,
so Cor.\ \ref{dox9f80293ewf0di} gives the first equality in:
\begin{multline*}
\tD(\Neul) - 2 = ( R(v_0, \Eeul_{v_0}) - 2 ) N
=  \textstyle   N \sum_{e \in E} (1 - \frac1{k(e)}) - 2N
=  \textstyle  \sum_{e \in E} (N - {d(e)}) - 2N \\
=  \textstyle  (\delta-2)N - \sum_{e \in E} d(e)
=  \textstyle  (\delta-2) \sum_{e \in E} a(e) d(e) - \sum_{e \in E} d(e) = \sum_{e \in E} [(\delta-2)a(e) - 1] d(e) .
\end{multline*}
\end{proof}

\begin{corollary} \label {pc09vnw3oe9c}
If $| S(\Teul) | = 1$ then the following hold.
\begin{enumerata}

\item $\tD(\Neul) > 0 \iff\tD(\Neul) \ge 2 \iff \delta > 2$

\item If $\delta > 3$ then $\delta \le \delta(\delta-3) \le (\delta-3) \sum_{e \in E} d(e) \le \tD(\Neul) - 2$.

\end{enumerata}
\end{corollary}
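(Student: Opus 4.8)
The plan is to deduce both assertions directly from Lemma~\ref{PpPppc0v9vv32487fCd}, which gives $\tD(\Neul) = 2 + \sum_{e \in E} [(\delta-2)a(e) - 1] d(e)$, together with Lemma~\ref{a0v2b48fvwogydhdbfsr034j}: namely that $\delta = |E| \ge 1$ and that $a(e), d(e) \in \Nat\setminus\{0\}$ for every $e \in E$. No auxiliary construction is needed; the whole argument is elementary estimation of the sum $\sum_{e\in E}[(\delta-2)a(e)-1]d(e)$, organized according to whether $\delta \le 2$, $\delta = 3$, or $\delta \ge 4$.

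For part (a), first I would treat the case $\delta \le 2$. If $\delta = 1$ then $E$ has a single element $e$ and $(\delta-2)a(e)-1 = -(a(e)+1) \le -2$, so $\tD(\Neul) - 2 \le -2 d(e) \le -2$; if $\delta = 2$ then $(\delta-2)a(e)-1 = -1$ for every $e$, so $\tD(\Neul) - 2 = -\sum_{e\in E} d(e) \le -|E| = -2$. In both subcases $\tD(\Neul) \le 0$, so neither $\tD(\Neul) > 0$ nor $\tD(\Neul) \ge 2$ holds. Next, if $\delta \ge 3$ then $a(e)\ge 1$ gives $(\delta-2)a(e)-1 \ge (\delta-2)-1 = \delta-3 \ge 0$ for every $e$, so every term of the sum is $\ge 0$ and hence $\tD(\Neul) \ge 2$. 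Combining the two cases, and using trivially that $\tD(\Neul) \ge 2 \Rightarrow \tD(\Neul) > 0$, the implications $\delta > 2 \Rightarrow \tD(\Neul) \ge 2 \Rightarrow \tD(\Neul) > 0 \Rightarrow \delta > 2$ close the cycle, which is (a).

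For part (b), assume $\delta > 3$, i.e.\ $\delta \ge 4$, so that $\delta - 3 \ge 1$ and $\delta - 2 \ge 2$. The leftmost inequality $\delta \le \delta(\delta-3)$ is immediate from $\delta - 3 \ge 1$. For the middle inequality, since $\delta - 3 > 0$ it suffices to show $\delta \le \sum_{e\in E} d(e)$, and this holds because each $d(e) \ge 1$, so $\sum_{e\in E} d(e) \ge |E| = \delta$. For the rightmost inequality, using Lemma~\ref{PpPppc0v9vv32487fCd} I would compute
$$
\tD(\Neul) - 2 - (\delta-3)\sum_{e\in E} d(e) = \sum_{e \in E}\big[(\delta-2)a(e) - 1 - (\delta-3)\big]d(e) = (\delta-2)\sum_{e\in E}(a(e)-1)d(e),
$$
which is $\ge 0$ since $\delta - 2 > 0$ and $(a(e)-1)d(e) \ge 0$ for each $e$; this gives $(\delta-3)\sum_{e\in E} d(e) \le \tD(\Neul) - 2$ and completes (b).

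There is essentially no obstacle here: the content is entirely in Lemmas~\ref{PpPppc0v9vv32487fCd} and~\ref{a0v2b48fvwogydhdbfsr034j}, and what remains is bookkeeping. The only point deserving care is making sure $E \ne \emptyset$ (equivalently $\delta \ge 1$) so that the case analysis in (a) is exhaustive, but this is part of Lemma~\ref{a0v2b48fvwogydhdbfsr034j}(a) (ultimately Rem.~\ref{efy872392eujf}).
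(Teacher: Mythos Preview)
Your proof is correct and follows essentially the same approach as the paper: both parts are deduced from Lemma~\ref{PpPppc0v9vv32487fCd} together with the positivity facts in Lemma~\ref{a0v2b48fvwogydhdbfsr034j}, via elementary estimation of $\sum_{e\in E}[(\delta-2)a(e)-1]d(e)$. Your organization of part~(a) is in fact slightly cleaner than the paper's --- you show directly that $\delta\le 2$ forces $\tD(\Neul)\le 0$ and $\delta\ge 3$ forces $\tD(\Neul)\ge 2$, whereas the paper argues the equivalence $\delta>2 \Leftrightarrow \tD(\Neul)\ge 2$ and then separately rules out $\tD(\Neul)=1$ --- but the content is the same.
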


\begin{proof}
We have $\delta>2$ $\Leftrightarrow$ $\sum_{e \in E} [(\delta-2)a(e) - 1] d(e) \ge 0$
$\Leftrightarrow$ $\tD(\Neul) \ge 2$ by Lemma \ref{PpPppc0v9vv32487fCd}.
To finish the proof of (a), it remains to show that  $\tD(\Neul) \neq 1$.
Assume that  $\tD(\Neul) = 1$; then  $\sum_{e \in E} [(\delta-2)a(e) - 1] d(e) = -1$, so $\delta \in \{1,2\}$.
If $\delta=1$ then $E = \{e\}$ and $(a(e)+1)d(e)=1$, which is impossible.
If $\delta=2$ then $E = \{ e_1, e_2 \}$ and $d(e_1)+d(e_2)=1$, which is impossible.
So (a) is proved.

Assume that $\delta > 3$. The first inequality in (b) is clear, and the second one is obtained by mutiplying $\delta \le \sum_{e \in E} d(e)$ by $(\delta-3)$.
We get the third inequality by noting that $a(e) \ge 1$ implies $(\delta-2)a(e) - 1 \ge \delta-3$, so 
$
\tD(\Neul) - 2 = \sum_{e \in E} [ (\delta-2)a(e) - 1 ] d(e) \ge \sum_{e \in E} (\delta-3) d(e) .
$
\end{proof}

\begin{corollary}  \label {9vbrtyukey6idckFfFugus}
If $\Teul$ is a brush then $\tD(\Neul) \ge 2$.
\end{corollary}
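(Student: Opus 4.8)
The plan is to deduce Corollary \ref{9vbrtyukey6idckFfFugus} directly from the results already established for the case $|S(\Teul)|=1$. First I would recall, from Lemma \ref{p309ef2GFT485ryf}(d), that if $\Teul$ is a brush then $|S(\Teul)| = 1$; in particular all the machinery of the section applies, and moreover a brush has $|\Neul| > 1$ by Remark \ref{pc09n3409vnZiEWOdhFp30ef}(b).

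Next I would show that the hypothesis ``$\Teul$ is a brush'' forces $\delta = \delta_{v_0} > 2$, so that Corollary \ref{pc09vnw3oe9c}(a) yields $\tD(\Neul) \ge 2$. Here is the key point: by Remark \ref{pc09n3409vnZiEWOdhFp30ef}(b), a brush satisfies $W(\Teul) = \{v_0\}$ and $\bar V(v_0) = \Neul$, so $v_0 \in W(\Teul)$. By the observation after Lemma \ref{p0293efp0cw23ep0hvj} (or directly from the definition of $W(\Teul)$ in \ref{xkclqlwpd90cod}), membership in $W(\Teul)$ means there is a $\tD$-trivial path $(z_1,\dots,z_n)$ with $z_n = v_0$; since $|\Neul| > 1$ we have $n \ge 2$, and since $\tD(v_0) = \tD(z_n) > 0$ while, by Remark \ref{p092pof90e9rf}, no arrow is adjacent to $v_0$ and $a_{v_0} = 1$, Lemma \ref{90q932r8dhd89cnr9}\eqref{pc9vp23r09} gives $\tD(v_0) = \sigma(v_0) + (\epsilon(v_0)-2)(N_{v_0}-1)$. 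Because $v_0$ is a tooth-endpoint of a comb in a brush, one expects $\epsilon(v_0) \ge 2$; more robustly, $\delta_{v_0} = \epsilon(v_0) + |\Deul_{v_0}| = \delta$, and I would argue that $\delta \le 2$ is incompatible with being a brush by tracing through which vertices can serve as $z_1$. Concretely: if $\delta_{v_0} \le 2$ then $|\Deul_{v_0}| \le 1$, so (Lemma before \ref{FFlAkjcvwiuer2n3Z3cxs7df9}) $v_0 \in \Omega(\Teul)$ when $\epsilon(v_0) = 1$; but $\Omega(\Teul) = Z(\Teul) \setminus \bigcup_{w} V(w)$ and $v_0 \in V(v_0)$ would be needed, contradicting the definition of $\Omega$ — so $v_0 \notin W(\Teul)$, contradiction. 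Thus $\delta > 2$.

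With $\delta > 2$ in hand, Corollary \ref{pc09vnw3oe9c}(a) — which states $\tD(\Neul) > 0 \iff \tD(\Neul) \ge 2 \iff \delta > 2$ under the hypothesis $|S(\Teul)| = 1$ — immediately gives $\tD(\Neul) \ge 2$, which is exactly the claim. So the proof reduces to the short chain: brush $\Rightarrow |S(\Teul)| = 1$ and $v_0 \in W(\Teul)$; $v_0 \in W(\Teul) \Rightarrow \delta_{v_0} > 2$; then apply Corollary \ref{pc09vnw3oe9c}(a).

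The main obstacle I anticipate is the middle step — rigorously extracting $\delta_{v_0} > 2$ (equivalently $\epsilon(v_0) \ge 2$, since $v_0$ has no adjacent arrows, combined with $\Deul_{v_0}$ possibly nonempty) from the definition of ``brush''. The cleanest route is probably: since $v_0 \in W(\Teul)$, Lemma \ref{p0293efp0cw23ep0hvj} applied with $w = v_0$ would give $\epsilon(v_0) > |V(v_0)|$ — \emph{except} that lemma assumes $\Teul$ is not a brush, so it cannot be used directly; instead I would argue that a brush has $\Neul = \bar V(v_0)$ with $V(v_0) \neq \emptyset$, hence $\epsilon(v_0) \ge |V(v_0)| \ge 1$, and then rule out $\epsilon(v_0) = 1$ using that $\epsilon(v_0) = 1$ forces (by the structure of $\tD$-trivial paths ending at $v_0$, together with $\tD(v_0) > 0$ and $\delta_{v_0} \ge 3$ when $v_0 \ne$ a valency-$2$ vertex) a contradiction with $v_0 \in W$, or more simply invoke that in a brush $\Neul$ looks like a comb attached at $v_0$ so $v_0$ has the role of $z_n$ with $t(z_n) \ge 1$, forcing $\epsilon(v_0) \ge 2$. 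Once $\delta_{v_0} \ge 3$ is secured the rest is immediate.

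\begin{proof}
Suppose $\Teul$ is a brush. By Lemma \ref{p309ef2GFT485ryf}(d), $|S(\Teul)| = 1$, so the results of this subsection apply.
By Remark \ref{pc09n3409vnZiEWOdhFp30ef}(b) we have $|\Neul| > 1$ and $W(\Teul) = \{v_0\}$;
in particular $v_0 \in W(\Teul)$, so there exists $(z_1, \dots, z_n) \in \Gamma(\Teul)$ with $z_n = v_0$, and necessarily $n \ge 2$.

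We claim $\delta_{v_0} > 2$. Suppose not; then $|\Deul_{v_0}| \le 1$ (recall from Remark \ref{p092pof90e9rf} that no arrow is adjacent to $v_0$, so $\delta_{v_0} = \epsilon(v_0) + |\Deul_{v_0}|$).
If $\epsilon(v_0) = 1$, then by the Lemma preceding Ex.\ \ref{FFlAkjcvwiuer2n3Z3cxs7df9} (part (b)) we have $v_0 \in \Omega(\Teul)$, i.e.\ $v_0 \in Z(\Teul) \setminus \bigcup_{w \in W(\Teul)} V(w)$; but $z_{n-1} > z_n = v_0$ contradicts $v_0 < z_{n-1}$ (Remark \ref{pc09n3409vnZiEWOdhFp30ef}(a)), a contradiction.
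Hence $\epsilon(v_0) = 2$ and $|\Deul_{v_0}| = 0$, so $\delta_{v_0} = 2$; since $v_0$ is the root this is allowed, but then $\Deul_{v_0} = \emptyset$ gives $\sigma(v_0) = 0$ and $a_{v_0} = 1$, so by Lemma \ref{90q932r8dhd89cnr9}\eqref{pc9vp23r09}, $\tD(v_0) = \sigma(v_0) + (\epsilon(v_0)-2)(N_{v_0}-1) + N_{v_0}(1 - \tfrac1{a_{v_0}}) = 0$.
This contradicts $\tD(v_0) = \tD(z_n) > 0$ (which holds since $z_n \in W(\Teul)$).
Therefore $\delta_{v_0} > 2$.

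By Corollary \ref{pc09vnw3oe9c}(a), $\delta_{v_0} = \delta > 2$ implies $\tD(\Neul) \ge 2$.
\end{proof}
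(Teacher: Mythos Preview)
Your approach is essentially the same as the paper's: both argue that a brush must have $\delta_{v_0} > 2$ (by showing $\delta_{v_0} \le 2$ forces $\tD(v_0) \le 0$, contradicting $v_0 \in W(\Teul)$), and then invoke Corollary~\ref{pc09vnw3oe9c}(a). The $\epsilon(v_0)=2$ case is handled identically.

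There is, however, a garbled step in your $\epsilon(v_0)=1$ case. The sentence ``$z_{n-1} > z_n = v_0$ contradicts $v_0 < z_{n-1}$'' is vacuous --- those are the same statement, and Remark~\ref{pc09n3409vnZiEWOdhFp30ef}(a) is not relevant here. The contradiction you want, once you have $v_0 \in \Omega(\Teul)$, is immediate: $\Omega(\Teul) \subseteq Z(\Teul)$ gives $\tD(v_0) \le 0$, while $v_0 \in W(\Teul)$ gives $\tD(v_0) > 0$. (The paper avoids invoking $\Omega(\Teul)$ altogether and simply computes $\tD(v_0) = \sigma(v_0) - (N_{v_0}-1) \le 0$ directly from $|\Deul_{v_0}|\le 1$, which gives $\sigma(v_0) < N_{v_0}$.) With that one sentence replaced, your proof is correct.
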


\begin{proof}
By contradiction, assume that $\Teul$ is a brush and $\tD(\Neul) < 2$.
Since $\Teul$ is a brush, we have $| S(\Teul) | = 1$ and $| \Neul | > 1$.
Since $\tD(\Neul) < 2$, we have $\delta \le 2$ by Cor.\ \ref{pc09vnw3oe9c}, so $| \Deul_{v_0} | + \epsilon(v_0) \le 2$. 
Since $| \Neul | > 1$, we have $\epsilon(v_0) > 0$ and hence $| \Deul_{v_0} | \le 1$, so $\sigma(v_0) < N$.
Either $\epsilon(v_0)=2$ and $| \Deul_{v_0} | = 0$ (in which case $\tD(v_0) = 0$)
or $\epsilon(v_0)=1$ and $| \Deul_{v_0}  | \le 1$ (in which case  $\tD(v_0) = \sigma(v_0) + (-1)(N-1) \le 0$ because $\sigma(v_0) < N$).
So in all cases we have  $\tD(v_0) \le 0$, so $v_0 \notin W(\Teul)$, contradicting that $\Teul$ is a brush.
\end{proof}

\begin{proposition} \label {c0wkudmFDdkKkserfdkzfmvXxlie}
If $| \Neul | = 1$ then $\tD(\Neul)$ is even.
\end{proposition}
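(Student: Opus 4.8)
The plan is to reduce the claim to a parity bookkeeping with the edge-data $a(e),d(e),k(e),x(e)$ attached to the edges at $v_0$ in Notation \ref{FpPpc09bne0fviIwZemf90}. Since $|\Neul|=1$ forces $\Neul=\{v_0\}$, we have $|S(\Teul)|=1$, so the results of the present section apply; moreover $\Eeul_{v_0}=\emptyset$ (because $\Neul$ has no edges), so the set $E$ of edges of $\Teul$ incident to $v_0$ is exactly $\setspec{\{v_0,u\}}{u\in\Deul_{v_0}}$, and hence $k(e)=a(e)-x(e)$ for every $e\in E$. By Lemma \ref{PpPppc0v9vv32487fCd} together with the identity $N=\sum_{e\in E}a(e)d(e)$ of Lemma \ref{a0v2b48fvwogydhdbfsr034j}(c),
$$
\tD(\Neul)=2+\sum_{e\in E}\big[(\delta-2)a(e)-1\big]d(e)=2+(\delta-2)N-\sum_{e\in E}d(e),
$$
so it is enough to show that $\delta N+\sum_{e\in E}d(e)$ is even, where $\delta=|E|$.

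The crucial input is the coprimality $\gcd(k(e),a(e))=1$ for every $e\in E$. I would deduce this from axiom \eqref{fho8wiw9s-5} of Def.\ \ref{p9823p98p2d}: writing $e=\{v_0,u\}$ with $u\in\Deul_{v_0}$, minimal completeness (Def.\ \ref{e5e6e7w8e9wee9}(ii)) provides a dead end $\epsilon$ incident to the dicritical $u$, and $\epsilon\neq e$ since $\epsilon$ joins $u$ to an arrow of $\Aeul_0$ while $e$ joins $u$ to the vertex $v_0$; therefore $q(e,u)=x(e)$ and $q(\epsilon,u)=a_u=a(e)$ (Def.\ \ref{d23ed9wei9d23}) are relatively prime, whence $\gcd(k(e),a(e))=\gcd(a(e)-x(e),a(e))=\gcd(x(e),a(e))=1$. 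The other facts I will use are $N=k(e)d(e)$ for all $e\in E$ (Lemma \ref{a0v2b48fvwogydhdbfsr034j}(b)) and again $N=\sum_{e\in E}a(e)d(e)$.

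Then I would split on the parity of $N$. If $N$ is odd, then $k(e)$ and $d(e)$ are odd for every $e$, so $\sum_{e\in E}d(e)\equiv|E|=\delta$ and $\delta N\equiv\delta\pmod 2$, giving $\delta N+\sum_{e\in E}d(e)\equiv 2\delta\equiv 0$. If $N$ is even, then $\delta N$ is even, and for each $e$ with $d(e)$ odd the relation $k(e)d(e)=N$ forces $k(e)$ even, so $\gcd(k(e),a(e))=1$ forces $a(e)$ odd; hence $a(e)d(e)\equiv d(e)\pmod 2$ holds for every $e\in E$ (trivially so when $d(e)$ is even), and summing gives $\sum_{e\in E}d(e)\equiv\sum_{e\in E}a(e)d(e)=N\equiv 0\pmod 2$. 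In either case $\delta N+\sum_{e\in E}d(e)$ is even, so $\tD(\Neul)$ is even.

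I do not anticipate a serious obstacle here: essentially all the work lies in assembling the right ingredients — the closed formula for $\tD(\Neul)$ from Lemma \ref{PpPppc0v9vv32487fCd}, the identities $N=k(e)d(e)=\sum_{e\in E}a(e)d(e)$, and above all the coprimality $\gcd(k(e),a(e))=1$ coming from axiom \eqref{fho8wiw9s-5}. Once these are in hand the remaining computation is routine, and the only place that requires any care is the two-case split according to the parity of $N$ (an example with $a(e)$ even, $k(e)$ odd, $d(e)$ odd would break the argument, but that configuration is precisely what the coprimality condition excludes).
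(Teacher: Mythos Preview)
Your proof is correct and rests on the same three ingredients as the paper's: the closed formula $\tD(\Neul)=2+(\delta-2)N-\sum_{e\in E}d(e)$ coming from Lemma~\ref{PpPppc0v9vv32487fCd}, the identities $N=k(e)d(e)=\sum_e a(e)d(e)$, and the coprimality $\gcd(a(e),k(e))=1$ coming from axiom~\eqref{fho8wiw9s-5}. The organization differs: the paper argues by contradiction and splits on the parity of $\delta$, whereas you argue directly and split on the parity of $N$. Your split is the more natural one --- the case $N$ odd is immediate, and in the case $N$ even the single observation ``$d(e)$ odd $\Rightarrow k(e)$ even $\Rightarrow a(e)$ odd'' gives $a(e)d(e)\equiv d(e)\pmod 2$ for every $e$ in one stroke, so $\sum d(e)\equiv N\equiv 0$. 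The paper's $\delta$-split instead works backward from the assumed oddness of $\tD(\Neul)$ to locate a term with $a_r$ even and $d_r$ odd and then derive a parity contradiction on $N$; this is a bit more circuitous but ultimately uses the same coprimality constraint in the same way.
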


\begin{proof}
We proceed by contradiction: assume that $| \Neul | = 1$ and $\tD(\Neul)$ is odd.
Use the notations $E = \{ e_1, \dots, e_\delta\}$,  $a_i = a(e_i)$, $d_i = d(e_i)$, $k_i = k(e_i)$  and $x_i = x(e_i)$.
Since $\gcd(a_i,x_i)=1$ and $k_i = -\det(e_i) = a_i - x_i$, we have $\gcd( a_i, k_i ) = 1$ for all $i =1, \dots, \delta$.

Consider the case where $\delta$ is even.
Then Lemma \ref{PpPppc0v9vv32487fCd} implies that $\sum_{i=1}^\delta d_i$ is odd; since this sum is odd and the number $\delta$ of terms is even,
some $d_j$ must be even.
Then $N$ is even, since $d_j \mid N$.
Then $\sum_{i=1}^\delta (a_i-1)d_i = N - \sum_{i=1}^\delta d_i$ is odd and consequently there must exist an $r$ such that $(a_r-1) d_r$ is odd.
Then $d_r$ is odd and $a_r$ is even.
Since $\gcd(a_r,k_r)=1$, it follows that $k_r$ is odd, so $N = k_r d_r$ is odd, a contradiction.

Consider the case where $\delta$ is odd.
Then Lemma \ref{PpPppc0v9vv32487fCd} implies that $\sum_{i=1}^\delta (a_i-1) d_i \equiv 1 \pmod{2}$ and consequently
there must exist an $r$ such that $(a_r-1) d_r$ is odd.
Then $d_r$ is odd and $a_r$ is even.
Since $\gcd(a_r,k_r)=1$, it follows that $k_r$ is odd, so $N = k_r d_r$ is odd.
So $d_i$ is odd for all $i$ (because $d_i \mid N$).
It then follows from Lemma \ref{PpPppc0v9vv32487fCd} that $1 \equiv \sum_{i=1}^\delta ((\delta-2)a_i-1) d_i \equiv  \sum_{i=1}^\delta (a_i-1)
\equiv  \sum_{i=1}^\delta a_i - \delta \equiv  \sum_{i=1}^\delta a_i - 1 \pmod{2}$, 
so $\sum_{i=1}^\delta a_i$ is even.
Then $1 \equiv N \equiv \sum_{i=1}^\delta a_i d_i \equiv \sum_{i=1}^\delta a_i \equiv 0 \pmod2$, a contradiction.
\end{proof}

\begin{example}  \label {98vvhb349rfcJfe9F}
Let  $(d_1,d_2,d_3) \in \{ (1,1,1), (1,1,2), (1,2,3) \}$ and set
\begin{equation} \label {O0oZ8nvdfjiejyfanscix6f9f}
x_1 = - \frac{d_2 + d_3}{d_1} , \quad x_2 = - \frac{d_1 + d_3}{d_2} , \quad x_3 = - \frac{d_1 + d_2}{d_3} .
\end{equation}
Then the following is a minimally complete Newton tree at infinity satisfying $| \Neul | = 1$,\\
$\gcd\setspec{ d_u }{ u \in \Deul } = 1$ and $\tD(\Neul)=2$.
$$
\setlength{\unitlength}{1mm}
\begin{picture}(60,20)(-30,-14)
\put(0,0){\circle{1}}
\put(-15,0){\circle*{1}}
\put(.4472,.2236){\line(2,1){12}}
\put(.4472,-.2236){\line(2,-1){12}}
\put(12,6){\circle*{1}}
\put(12,6){\vector(0,-1){8}}
\put(12,6){\line(1,0){8}}
\put(12,-6){\circle*{1}}
\put(12,-6){\vector(0,-1){8}}
\put(12,-6){\line(1,0){8}}
\put(-.5,0){\line(-1,0){14.5}}
\put(-15,0){\line(-1,0){8}}
\put(-15,0){\vector(0,-1){8}}
\put(-14,-7){\makebox(0,0)[l]{\footnotesize $(0)$}}
\put(13,-2){\makebox(0,0)[bl]{\footnotesize $(0)$}}
\put(13,-14){\makebox(0,0)[bl]{\footnotesize $(0)$}}
\put(-0.8,1){\makebox(0,0)[b]{\footnotesize $v_0$}}
\put(-13,1){\makebox(0,0)[bl]{\tiny $x_1$}}
\put(10,5.5){\makebox(0,0)[br]{\tiny $x_2$}}
\put(10,-5.5){\makebox(0,0)[tr]{\tiny $x_3$}}
\put(19.5,6.1){\makebox(0,0)[l]{\tiny $< d_2$}}
\put(19.5,-5.9){\makebox(0,0)[l]{\tiny $< d_3$}}
\put(-22.5,0.1){\makebox(0,0)[r]{\tiny $d_1 >$}}
\end{picture}
$$
\end{example}

\begin{corollary} \label {pc0wbyrjo79e8rnn9}
Suppose  $| S(\Teul) | = 1$, $\tD(\Neul)=2$ and $\gcd\setspec{ d_u }{ u \in \Deul } = 1$.
Then $\delta = 3$. Let us write $E = \{ e_1, e_2, e_3 \}$ with $d(e_1) \le d(e_2) \le d(e_3)$,
and define $a_i = a(e_i)$ and $d_i = d(e_i)$.
\begin{enumerata}

\item $(a_1,a_2,a_3) = (1,1,1)$ and $(d_1,d_2,d_3) \in \{ (1,1,1), (1,1,2), (1,2,3) \}$. 

\item If $| \Neul | = 1$ then $\Teul$ is one of the three trees of  Ex.\ \ref{98vvhb349rfcJfe9F}.

\end{enumerata}
\end{corollary}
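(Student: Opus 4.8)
The plan is to combine the structural formula from Lemma \ref{PpPppc0v9vv32487fCd} with the arithmetic constraints coming from Lemma \ref{a0v2b48fvwogydhdbfsr034j} and Corollary \ref{pc09vnw3oe9c}. First, by Corollary \ref{pc09vnw3oe9c}(a), the hypothesis $\tD(\Neul)=2>0$ gives $\delta>2$, and by Corollary \ref{pc09vnw3oe9c}(b) (contrapositive), $\delta>3$ would force $\tD(\Neul)-2 \ge \delta(\delta-3) \ge 4$, contradicting $\tD(\Neul)=2$; hence $\delta=3$. Now write $E=\{e_1,e_2,e_3\}$ with $d_1\le d_2\le d_3$ and $a_i=a(e_i)$, $d_i=d(e_i)$ as in the statement. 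By Lemma \ref{a0v2b48fvwogydhdbfsr034j}(b) each $a_i,d_i \in \Nat\setminus\{0\}$, and by Lemma \ref{PpPppc0v9vv32487fCd} with $\delta=3$ we have
$$
\tD(\Neul) - 2 = \sum_{i=1}^3 (a_i-1) d_i = 0 .
$$
Since each term $(a_i-1)d_i$ is a nonnegative integer, every term vanishes, and since $d_i\ge1$ this forces $a_i=1$ for $i=1,2,3$. This proves the first half of (a).

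For the determination of $(d_1,d_2,d_3)$, I would use the equation $N = \sum_{i=1}^3 a_i d_i = d_1+d_2+d_3$ from Lemma \ref{a0v2b48fvwogydhdbfsr034j}(c) together with the divisibility relations $N = k_i d_i$ (Lemma \ref{a0v2b48fvwogydhdbfsr034j}(b)), i.e. $d_i \mid N = d_1+d_2+d_3$ for each $i$. Also, $\gcd\setspec{d_u}{u\in\Deul}=1$ together with Lemma \ref{a0v2b48fvwogydhdbfsr034j}(d) gives $\gcd(d_1,d_2,d_3)=1$. So the problem reduces to the elementary statement: if $d_1\le d_2\le d_3$ are positive integers with $\gcd(d_1,d_2,d_3)=1$ and each $d_i$ divides $d_1+d_2+d_3$, then $(d_1,d_2,d_3)\in\{(1,1,1),(1,1,2),(1,2,3)\}$. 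This is a short case analysis: from $d_3\mid d_1+d_2+d_3$ we get $d_3 \mid d_1+d_2$, and $0 < d_1+d_2 \le 2d_3$, so $d_1+d_2 \in \{d_3, 2d_3\}$; if $d_1+d_2=2d_3$ then $d_1=d_2=d_3$, forcing $(1,1,1)$ by the gcd condition; if $d_1+d_2=d_3$ then $N = 2d_3$ and $d_2 \mid 2d_3 = 2(d_1+d_2)$ gives $d_2\mid 2d_1$, while $d_1\le d_2$ forces $d_2 \in \{d_1, 2d_1\}$ — the subcase $d_2=d_1$ gives $d_3=2d_1$, hence by gcd $(1,1,2)$, and the subcase $d_2=2d_1$ gives $d_3=3d_1$, hence by gcd $(1,2,3)$. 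This proves (a).

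For (b), assume in addition $|\Neul|=1$. Then $\Eeul_{v_0}=\emptyset$, so all three edges $e_1,e_2,e_3$ join $v_0$ to dicritical vertices $u_1,u_2,u_3$, and for each $i$ we have (as noted in Notation \ref{FpPpc09bne0fviIwZemf90}) $k_i = k(e_i) = -\det(e_i) = a_i - x_i = 1 - x_i$, so $x_i = 1 - k_i$. Using $N = d_1+d_2+d_3$ and $k_i = N/d_i$ one computes directly $x_i = 1 - N/d_i = -(N-d_i)/d_i = -(d_j+d_\ell)/d_i$ where $\{j,\ell\}=\{1,2,3\}\setminus\{i\}$; this is exactly formula \eqref{O0oZ8nvdfjiejyfanscix6f9f} of Example \ref{98vvhb349rfcJfe9F}. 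Since $a_i=1$ for all $i$, the dead end incident to $u_i$ is decorated by $1$, and each $u_i$ has degree $d_i$ with its $d_i$ arrows decorated by $(1)$ and the edge $\{v_0,u_i\}$ decorated by $1$ near $u_i$; together with $q(e_i,v_0)=1$ (Def.\ \ref{p9823p98p2d}\eqref{fho8wiw9s-3}) and the decoration $x_i$ on $e_i$ near $u_i$, this determines $\Teul$ completely as the tree displayed in Example \ref{98vvhb349rfcJfe9F} for the corresponding triple $(d_1,d_2,d_3)$. Hence $\Teul$ is one of those three trees.

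\textbf{Main obstacle.} The only genuinely delicate point is making sure that in case (b) the decorations are pinned down uniquely — i.e. that no freedom remains beyond the triple $(d_1,d_2,d_3)$. The edge determinant condition $\det(e_i)<0$ forces $k_i = -\det(e_i) \ge 1$, and combined with $a_i = 1$ (already established) this gives $x_i = a_i - k_i = 1 - k_i \le 0$ and exactly equal to the value in \eqref{O0oZ8nvdfjiejyfanscix6f9f}; one should also check consistency of Def.\ \ref{p9823p98p2d}\eqref{fho8wiw9s-5} near each $u_i$, which holds automatically since each $u_i$ is a dicritical with all outgoing decorations equal to $1$. Everything else is routine arithmetic on positive integers.
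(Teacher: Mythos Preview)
Your proof is correct and follows essentially the same route as the paper's: both derive $\delta=3$ from Cor.~\ref{pc09vnw3oe9c}, use Lemma~\ref{PpPppc0v9vv32487fCd} to get $\sum(a_i-1)d_i=0$ and hence $a_i=1$, then reduce to the divisibility constraint $d_i\mid d_1+d_2+d_3$ with $\gcd(d_1,d_2,d_3)=1$, and finally recover $x_i$ from $k_i=1-x_i$ in case $|\Neul|=1$. The only difference is that you spell out the elementary case analysis for the triples $(d_1,d_2,d_3)$ and the uniqueness of the decorations in (b), whereas the paper simply asserts both. (One small wording slip: you write ``the edge $\{v_0,u_i\}$ decorated by $1$ near $u_i$'' when that decoration is $x_i$; near $v_0$ it is $1$. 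This does not affect the argument.)
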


\begin{proof}
Cor.\ \ref{pc09vnw3oe9c} implies that $\delta = 3$, so 
Lemma \ref{PpPppc0v9vv32487fCd} gives  $\sum_{i=1}^3 (a_i - 1) d_i  = 0$. 
This implies that $a_1=a_2=a_3=1$. So $N = \sum_{i=1}^3 a_i d_i = d_1 + d_2 + d_3$, and consequently each $d_i$ divides $d_1 + d_2 + d_3$. 
Since $\gcd(d_1,d_2,d_3)=1$ and $d_1 \le d_2 \le d_3$, we have $(d_1,d_2,d_3) \in \{ (1,1,1), (1,1,2), (1,2,3) \}$.
This proves (a).
Assume that $| \Neul | = 1$ and write $k_i = k(e_i)$ and $x_i = x(e_i)$.
Since (for each $i$) $N = k_i d_i$ and $k_i = -\det(e_i) = a_i - x_i = 1 - x_i$,
it follows that $x_1,x_2,x_3$ are as stipulated in  \eqref{O0oZ8nvdfjiejyfanscix6f9f}.
So  $\Teul$ is one of the three trees of  Ex.\ \ref{98vvhb349rfcJfe9F}.
\end{proof}

\begin{example} \label {nv63jfy64nvy3}
The trees depicted below are minimally complete abstract Newton trees at infinity satisfying $\Neul = \{v_0\}$,
$\gcd\setspec{ d_u }{ u \in \Deul } = 1$ and $\tD(\Neul)=0$.
In part (b), $a_1,a_2$ are arbitrary positive integers satisfying $\gcd(a_1,a_2)=1$.
In (a) (resp.\ (b)), $v_0$ is a node of type $[1]$ (resp.\ $[1,1]$).
$$
\text{\rm(a)} \scalebox{.9}{
\setlength{\unitlength}{1mm}%
\begin{picture}(28,14)(-2,-11)
\put(0,0){\circle{1}}
\put(15,0){\circle*{1}}
\put(.5,0){\line(1,0){14.5}}
\put(15,0){\vector(1,0){10}}
\put(15,0){\vector(0,-1){10}}
\put(16,-9){\makebox(0,0)[l]{\footnotesize $(0)$}}
\put(0,1){\makebox(0,0)[b]{\footnotesize $v_0$}}
\put(12,1){\makebox(0,0)[br]{\tiny $0$}}
\end{picture}
}
 \qquad\qquad\qquad 
\text{\rm(b)} \scalebox{.9}{
\setlength{\unitlength}{1mm}
\begin{picture}(50,14)(-25,-11)
\put(0,0){\circle{1}}
\put(-15,0){\circle*{1}}
\put(15,0){\circle*{1}}
\put(.5,0){\line(1,0){14.5}}
\put(-.5,0){\line(-1,0){14.5}}
\put(15,0){\vector(1,0){10}}
\put(15,0){\vector(0,-1){10}}
\put(-15,0){\vector(-1,0){10}}
\put(16,-9){\makebox(0,0)[l]{\footnotesize $(0)$}}
\put(-15,0){\vector(0,-1){10}}
\put(-14,-9){\makebox(0,0)[l]{\footnotesize $(0)$}}
\put(0,1){\makebox(0,0)[b]{\footnotesize $v_0$}}
\put(-12,1){\makebox(0,0)[b]{\tiny $-a_2$}}
\put(12,1){\makebox(0,0)[b]{\tiny $-a_1$}}
\put(-15.5,-2){\makebox(0,0)[tr]{\tiny $a_1$}}
\put(14.5,-2){\makebox(0,0)[tr]{\tiny $a_2$}}
\end{picture}
}
$$
\end{example}

\begin{proposition} \label {c0viwjytsdDJjLlxdifFebg}
Suppose that $\Teul$ satisfies $| S(\Teul) | = 1$, $\tD(\Neul) < 2$
and at least one of the following conditions:
\begin{enumerata}

\item[(i)] $\gcd\setspec{ d_u }{ u \in \Deul } = 1$; \qquad {\rm (ii)}  $\tD(\Neul) \ge 0$.

\end{enumerata}
Then $\Teul$ is one of the trees of  Ex.\ \ref{nv63jfy64nvy3}.
\end{proposition}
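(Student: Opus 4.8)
The plan is to combine the structural results already proved with a short explicit computation at the root. First I would note that the hypotheses preclude $\Teul$ from being a brush, since a brush satisfies $\tD(\Neul)\ge 2$ by Cor.\ \ref{9vbrtyukey6idckFfFugus}, contradicting $\tD(\Neul)<2$. As $|S(\Teul)|=1$, Lemma \ref{p309ef2GFT485ryf}(d) then forces $|\Neul|=1$, i.e.\ $\Neul=\{v_0\}$; consequently every vertex of $\Teul$ other than $v_0$ is a dicritical, and each such dicritical is adjacent to $v_0$ by Lemma \ref{923ip38kjfbve09}\eqref{9v8bdzkc93ry7f}. Writing $\delta=\delta_{v_0}$, we have $\epsilon(v_0)=0$ and (Rem.\ \ref{p092pof90e9rf}) no arrow at $v_0$, so $\delta=|\Deul_{v_0}|=|\Deul|\ge1$; and Cor.\ \ref{pc09vnw3oe9c}(a) converts $\tD(\Neul)<2$ into $\tD(\Neul)\le0$ together with $\delta\in\{1,2\}$.

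Next I would set up the arithmetic at $v_0$ using Notation \ref{FpPpc09bne0fviIwZemf90}: write $E=\{e_1,\dots,e_\delta\}$ with $e_i=\{v_0,u_i\}$, $u_i\in\Deul$, and put $a_i=a(e_i)=a_{u_i}$, $d_i=d(e_i)=d_{u_i}$, $x_i=x(e_i)=q(e_i,u_i)$, $k_i=k(e_i)=k_{u_i}$. By Lemma \ref{a0v2b48fvwogydhdbfsr034j}(c), $N:=N_{v_0}=\sum_{j=1}^{\delta}a_jd_j$; by Lemma \ref{90q932r8dhd89cnr9}\eqref{o82y387jw9e23}, $N=k_id_i$ for every $i$; and $k_i=a_i-x_i$ by \ref{FpPpc09bne0fviIwZemf90}. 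Combining these yields
$$
x_i\,d_i=-\sum_{j\ne i}a_jd_j\qquad(i=1,\dots,\delta).
$$
Moreover, since $u_i$ is a dicritical it carries a dead end whose decoration at $u_i$ is $a_i$ (Def.\ \ref{d23ed9wei9d23} and minimal completeness); that dead end and $e_i$ are distinct edges incident to $u_i$, so Def.\ \ref{p9823p98p2d}\eqref{fho8wiw9s-5} gives $\gcd(x_i,a_i)=1$. Finally, Lemma \ref{PpPppc0v9vv32487fCd} gives $\tD(\Neul)=2+\sum_{j=1}^{\delta}[(\delta-2)a_j-1]d_j$.

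Then I would split into the two cases. If $\delta=1$, the displayed identity gives $x_1=0$, hence $a_1=\gcd(0,a_1)=1$; then $\tD(\Neul)=2-2d_1\le0$, and $d_1=1$ follows either from hypothesis (i) via Lemma \ref{a0v2b48fvwogydhdbfsr034j}(d) (which gives $\gcd\setspec{d(e)}{e\in E}=d_1=1$) or from hypothesis (ii) (which gives $\tD(\Neul)\ge0$); so $a_1=d_1=1$, $x_1=0$, and $\Teul$ is the tree of Ex.\ \ref{nv63jfy64nvy3}(a). If $\delta=2$, the identity gives $x_1=-a_2d_2/d_1$ and $x_2=-a_1d_1/d_2$, so $d_1\mid a_2d_2$ and $d_2\mid a_1d_1$. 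Under hypothesis (i), $\gcd(d_1,d_2)=1$ by Lemma \ref{a0v2b48fvwogydhdbfsr034j}(d), hence $d_1\mid a_2$ and $d_2\mid a_1$; writing $a_2=d_1m$ gives $x_1=-d_2m$, and $\gcd(x_1,a_1)=1$ forces $\gcd(d_2,a_1)=1$, which with $d_2\mid a_1$ gives $d_2=1$, and symmetrically $d_1=1$. Under hypothesis (ii), $\tD(\Neul)=2-d_1-d_2$ is both $\le0$ (from the first paragraph) and $\ge0$, so again $d_1=d_2=1$. In either subcase $d_1=d_2=1$, $x_1=-a_2$, $x_2=-a_1$, and $\gcd(a_1,a_2)=\gcd(x_1,a_1)=1$, so $\Teul$ is the tree of Ex.\ \ref{nv63jfy64nvy3}(b).

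The step I expect to require the most care is the final identification: deducing that $\Teul$ \emph{equals} one of the two displayed trees once $\delta$, the decorations $q(e_i,v_0)=1$, the $x_i$, the dead-end decorations $a_i$, and the degrees $d_i$ have been determined. Here one must use that the $(1)$-arrows at each $u_i$ have unit decorations (Lemma \ref{923ip38kjfbve09}\eqref{93f394ejf9i}) and that the dead-end arrows are $(0)$-decorated with unit decoration at the arrow end (Def.\ \ref{p9823p98p2d}\eqref{fho8wiw9s-4}), so that no freedom remains beyond the parameters appearing in Ex.\ \ref{nv63jfy64nvy3}. A minor but necessary point, already packaged in Lemma \ref{a0v2b48fvwogydhdbfsr034j}(d), is that when $|\Neul|=1$ the gcd condition (i), stated for $\Deul$, is equivalent to the corresponding condition on $\setspec{d(e)}{e\in E}$ used above.
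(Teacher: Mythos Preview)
Your proof is correct and follows essentially the same route as the paper: reduce to $|\Neul|=1$ via Cor.\ \ref{9vbrtyukey6idckFfFugus} and Lemma \ref{p309ef2GFT485ryf}(d), use Cor.\ \ref{pc09vnw3oe9c} to get $\delta\in\{1,2\}$, and then compute explicitly at $v_0$ using $N=\sum a_jd_j$, $N=k_id_i$, $k_i=a_i-x_i$, and $\gcd(a_i,x_i)=1$. Your $\delta=1$ case is organized a bit more cleanly than the paper's (you obtain $x_1=0$ and $a_1=1$ unconditionally from the empty sum before splitting on (i)/(ii)), and your explicit remark on the final identification is a point the paper leaves tacit, but there is no substantive difference in strategy.
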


\begin{proof}
Suppose that $| S(\Teul) | = 1$, that $\tD(\Neul) < 2$, and that one of (i), (ii) is true.
Then $\Teul$ is not a brush by Cor.\ \ref{9vbrtyukey6idckFfFugus}, so  $| \Neul | = 1$ by  Lemma \ref{p309ef2GFT485ryf}.
Use the notations $E = \{ e_1, \dots, e_\delta\}$,  $a_i = a(e_i)$, $d_i = d(e_i)$, $k_i = k(e_i)$  and $x_i = x(e_i)$.
Since $| \Neul | = 1$, we have $k_i = -\det(e_i) = a_i - x_i$ for all $i$.
Cor.\ \ref{pc09vnw3oe9c} implies that $\delta \in \{1,2\}$.

Assume that $\delta=1$.  If (i) holds then $d_1 = \gcd(d_1, \dots, d_\delta)=1$,
so $N = a_1 d_1 = a_1$; since $N = k_1 d_1 = k_1 =  a_1 - x_1$, we get $x_1=0$ and hence $a_1=1$ because $\gcd(a_1,x_1)=1$;
so $\Teul$ is the tree (a) of Ex.\ \ref{nv63jfy64nvy3}.
If (ii) holds then $0 \le \tD(\Neul) = \tD(v_0) = \sigma(v_0) + (-2)(N-1) = (N-d_1) -2(N-1)$, so  $N+d_1\le 2$, so $d_1=1$ and hence (i) holds,
so again $\Teul$ is the tree (a) of Ex.\ \ref{nv63jfy64nvy3}.

Assume that $\delta=2$.
If (i) holds then $N = a_1 d_1 + a_2 d_2$; as $d_1 \mid N$ and $\gcd(d_1,d_2)=1$, we get $d_1 \mid a_2$ and by symmetry $d_2 \mid a_1$.
We have $(a_1-x_1)d_1 = k_1 d_1 = N = a_1 d_1 + a_2 d_2$, so $x_1 = - (a_2/d_1) d_2$, so $d_2 \mid x_1$;
since $d_2 \mid a_1$ and $\gcd(a_1,x_1)=1$, it follows that $d_2=1$.
By symmetry, $d_1=1$. This gives $x_1 = -a_2$ and $x_2 = -a_1$, so $\Teul$ is the tree (b) of Ex.\ \ref{nv63jfy64nvy3}.
If (ii) holds then 
$0 \le \tD(\Neul) = \tD(v_0) = \sigma(v_0) + (-2)(N-1) = (N-d_1)+(N-d_2) -2(N-1)$, so  $d_1+d_2\le 2$, so $d_1=1=d_2$, so (i) holds and hence
$\Teul$ is the tree (b) of Ex.\ \ref{nv63jfy64nvy3}.
\end{proof}

\section{Global structure: decomposition into combs}
\label{Section:GlobalstructureofNeul}

{\it We continue to assume that $\Teul$ is a minimally complete abstract Newton tree at infinity.}

\begin{definition} \label {Xgo6cTYRDh4i8ru237wu}
We shall say that an edge $e$ of $\Neul$ \textit{is of type $\Gamma$} if there exists $\gamma \in \Gamma(\Teul)$ such that $e$ is in $\gamma$.
Given $v \in \Neul$, let $t(v)$ be the number of edges of $\Neul$  incident to $v$ and of type $\Gamma$,
and let $\delta^*(v)$  be the number of edges of $\Neul$ incident to $v$ and not of type $\Gamma$.
\end{definition}

\begin{remark} \label {p0c92n30werf0dof}
The following trivial observations should be kept in mind.
\begin{enumerata}

\item $\delta^*(v) + t(v) = \epsilon(v)$ for all $v \in \Neul$.

\item If $v \in \Neul \setminus S(\Teul)$ then $\delta^*(v)=0$.
\item If $v \in S(\Teul)$ then $\delta^*(v)$ is equal to the valency of $v$ as a vertex of the tree $S(\Teul)$.

\item If $| S(\Teul) | > 1$ then $S(\Teul) = \setspec{ v \in \Neul }{ \delta^*(v)>0 }$.

\item \label {0ck2j3wnsdo9} Let $v \in S(\Teul)$. If $e$ is an edge of $\Neul$ incident to $v$ then $e$ is of type $\Gamma$ if and only if $(v,e)$ is a tooth,
so $t(v)$ is the number of teeth of the form $(v,e)$.
Note that $t(v)>0$ is equivalent to $v \in W(\Teul)$, and if $t(v)>0$ then $t(v) = | V(v) |$.

\end{enumerata}
\end{remark}

\begin{notation} \label {pc0v23e0geCpgspyr23}
Let $P_S(\Teul) = \setspec{ (u,e) \in P(\Teul) }{ u \in S(\Teul) }$.
Observe that if $(u,e), (u',e') \in P_S(\Teul)$ satisfy $(u',e') \succeq (u,e)$ then the set 
$\setspec{ (v,f) \in P(\Teul) }{ (u',e') \succeq (v,f) \succeq (u,e) }$ is included in  $P_S(\Teul)$ and is totally ordered by $\preceq$
(this is because $S(\Teul)$ is a tree).
\end{notation}

\begin{theorem}  \label {cvnv7nd6ykawsujwryf9v}
Define a binary relation $\sim$ on the set $P_S(\Teul)$ by declaring that
$$
(u,e) \sim (u',e') \iff \text{one of $(u,e), (u',e')$ is a comb over the other,}
$$
where $(u,e), (u',e') \in P_S(\Teul)$.
Then $\sim$ is an equivalence relation on $P_S(\Teul)$ and each equivalence class is totally ordered by $\preceq$.
\end{theorem}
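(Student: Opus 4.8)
The plan is to check that $\sim$ is reflexive, symmetric and transitive, and to observe that the total ordering of the classes then comes for free. Reflexivity is Remark~\ref{c0Bj12Wsdh0982EChi}(1), symmetry is immediate from the symmetric form of the definition, and once transitivity is established, $\sim$ is literally the relation ``one of the two elements is a comb over the other''; since a comb over $(u',e')$ is in particular $\succeq(u',e')$, any two $\sim$-equivalent elements are $\preceq$-comparable, so every equivalence class is totally ordered by $\preceq$. Thus the whole task reduces to transitivity. Write $x\triangleright y$ for ``$x$ is a comb over $y$'' (so $x\triangleright y$ entails $x\succeq y$, and $x\sim y$ means $x\triangleright y$ or $y\triangleright x$), and let $x_1\sim x_2\sim x_3$ in $P_S(\Teul)$; we may assume $x_1,x_2,x_3$ pairwise distinct, the other cases being immediate. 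Up to the obvious symmetry there are two configurations. In \emph{Configuration~A} one has $x_1\triangleright x_2\triangleright x_3$; then $x_1\succeq x_2\succeq x_3$, and Remark~\ref{c0Bj12Wsdh0982EChi}(2) applied to this $\succeq$-chain gives $x_1\triangleright x_3$, hence $x_1\sim x_3$. In \emph{Configuration~B} the pivot $x_2$ lies strictly on one side, i.e.\ either $x_1\triangleright x_2$ and $x_3\triangleright x_2$, or $x_2\triangleright x_1$ and $x_2\triangleright x_3$. Here I would prove the

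\smallskip
\noindent\textbf{Comparability Claim.} \emph{In Configuration~B, $x_1$ and $x_3$ are $\preceq$-comparable.}
\smallskip

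\noindent Granting it, say $x_1\preceq x_3$; then in the first case $x_3\succeq x_1\succeq x_2$ with $x_3\triangleright x_2$, and in the second case $x_2\succeq x_3\succeq x_1$ with $x_2\triangleright x_1$, so in either case Remark~\ref{c0Bj12Wsdh0982EChi}(2) forces $x_3\triangleright x_1$; thus $x_1\sim x_3$ and transitivity holds.

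To prove the Comparability Claim I would argue by contradiction, assuming $x_1,x_3$ incomparable. Since $x_1,x_2,x_3\in P_S(\Teul)$ and $S(\Teul)$ is a tree, the two intervals of $(P(\Teul),\preceq)$ joining $x_2$ to $x_1$ and $x_2$ to $x_3$ are contained in $P_S(\Teul)$ and each totally ordered by $\preceq$ (Notation~\ref{pc0v23e0geCpgspyr23}); call them the chains $C_1$ and $C_3$. They share a common initial part at $x_2$ and, by incomparability, they genuinely branch: there is a last common element $z$, followed in $C_1$ by $y_1$ and in $C_3$ by $y_3$ with $y_1\ne y_3$, each of $y_1,y_3$ covering or covered by $z$. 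By the description of immediate predecessors in Definition~\ref{d9f239cw093eir}, the branching occurs at a single vertex $t$ incident to three distinct edges of $\Neul$ (the edges of $y_1$, of $y_3$, and the one ``towards $x_2$''), so $\epsilon(t)\ge 3$; note that $z,y_1,y_3$ are interior to both combs because they differ from $x_1$ and $x_3$. Feeding the comb conditions of Definition~\ref{c09Nc23owsfcnp2q0wuXwoh} at these vertices, and using $\epsilon(t)\ge3$, one gets $\epsilon(t)=3$, and then clause~(c), read with the orientation condition ``$g\notin\gamma_{u,v}$'', designates the edge at $t$ pointing towards $x_3$ as carrying a tooth (when the hypothesis comes from the $x_1$-side), and symmetrically for the $x_3$-side. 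When the pivot is \emph{below} (the case $x_2\triangleright x_1$, $x_2\triangleright x_3$), that tooth-edge is precisely the chain element $y_3=(t,e_3)$, so applying clause~(c) of the \emph{other} comb at $y_3$ also forces $R(t,\{e_3\})=0$; but since $y_3$ is a tooth, $M(t,e_3)>1$ by Lemma~\ref{GRygergGREg8948r}, whence $R(t,\{e_3\})\ge 1-\tfrac1{M(t,e_3)}>0$, a contradiction. When the pivot is \emph{above}, the designated tooth is an edge $e_3=\{t,u_3\}$ where $u_3$ is the vertex of the chain element $y_3\in P_S(\Teul)$, so $u_3\in S(\Teul)$; but $\Neul(t,e_3)$ equals the $\tD$-trivial path underlying that tooth, hence $\Neul(t,e_3)\subseteq\bar V(t)\setminus\{t\}\subseteq\Neul\setminus S(\Teul)$, while $u_3\in\Neul(t,e_3)$, again a contradiction. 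So the branching is impossible and $x_1,x_3$ are comparable.

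I expect the real work — and the only delicate point — to be this last paragraph: identifying the vertex $t$ and its three incident edges in terms of $C_1$ and $C_3$ in each of the two sub-cases of Configuration~B, verifying carefully that the chain elements to which Definition~\ref{c09Nc23owsfcnp2q0wuXwoh} is applied are genuine interior points of the combs, and tracking the orientation clause so as to conclude that the \emph{correct} edge at $t$ is forced to be a tooth (with the ``above'' and ``below'' cases needing the two distinct terminal contradictions above). The rest — reflexivity, symmetry, Configuration~A, the reduction in Configuration~B, the degenerate coincidences among $x_1,x_2,x_3$, and the deduction that the classes are totally ordered — is routine and short.
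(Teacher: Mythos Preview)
Your proposal is correct and follows essentially the same route as the paper. The only difference is organizational: the paper packages your Comparability Claim as two separate lemmas (that $X^-(u,e)$ and $X^+(u,e)$ are each totally ordered, Lemmas~\ref{0cvc023weij0wC3irf9} and~\ref{0co8nJkjhJKHFGHiu28ue039}) and uses an abstract poset lemma to locate the branching configuration, but the terminal contradictions---pivot-below pits ``tooth $\Rightarrow M>1 \Rightarrow R>0$'' against ``comb $\Rightarrow R=0$'', pivot-above observes that the tooth edge leads to a vertex outside $S(\Teul)$---are identical to yours.
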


The proof of the Theorem is given after the next three lemmas.  We omit the proof of the first one.

\begin{lemma} \label {9823vrZn984rCkjh9onqC238ev874w}
\newcommand{\ab}{\lhd}
\newcommand{\ba}{\rhd}
Let $(Y,\le)$ be a finite poset that is not totally ordered.
Given $y,y' \in Y$, let the notation $y \ab y'$ mean:\ \  $y<y'$ and no element $x$ of $Y$ satisfies $y < x < y'$.
\begin{enumerata}

\item If $Y$ has a greatest element then there exist distinct $y,y_1,y_2 \in Y$ satisfying $y_1 \lhd y \rhd y_2$.
\item If $Y$ has a least element then there exist distinct $y,y_1,y_2 \in Y$ satisfying $y_1 \rhd y \lhd y_2$.

\end{enumerata}
\end{lemma}

%%%%%%%%%%%%%%%%%%%%%%%%%%%%%%%%%%%%%%%%%%%%%%%%%%%%%%%%%%%%%%%%%%%%%%%%%%%%%%%%%%%%%%%%%%%%%%%%%%%%%%%%%%%%%%%%%%%%%%%%
%%%%%%%%%%%%%%%%%%%%%%%%%%%%%%%%%%%%%%%%%%%%%%%%%%%%%%%%%%%%%%%%%%%%%%%%%%%%%%%%%%%%%%%%%%%%%%%%%%%%%%%%%%%%%%%%%%%%%%%%
%	\begin{proof}
%	(a) Since $Y$ is not totally ordered, we have $U \neq \emptyset$ where 
%	$U$ is the set of all $y \in Y$ satisfying: there exists $y' \in Y$ such that $y,y'$ are not comparable.
%	Moreover, $U$ has at least two maximal elements.
%	Let $y_1,y_2$ be distinct maximal elements of $U$. Since $Y$ has a greatest element, $y_1,y_2$ are not maximal in $Y$;
%	so there exist $y_1',y_2' \in Y$ such that  $y_1 \lhd y_1'$ and $y_2 \lhd y_2'$.
%	Since (for each $k=1,2$) $y_k$ is a maximal element of $U$, $y_k' \notin U$; so for all $x \in Y$, $y_k'$ and $x$ are comparable.
%	So $y_1',y_2'$ are comparable, so $y_i' \le y_j'$ for some $(i,j) \in \{ (1,2), (2,1) \}$.
%	If $y_i' \le y_j$ then $y_i < y_i' \le y_j$ contradicts the fact that $y_i$ is maximal in $U$; 
%	so it is not the case that $y_i' \le y_j$; since $y_i', y_j$ are comparable, we get $y_j < y_i'$.
%	Then $y_j < y_i' \le y_j'$ and $y_j \lhd y_j'$ imply $y_i' = y_j'$. Define $y = y_1' = y_2'$, then $y_1 \lhd y \rhd y_2$.
%	This proves (a), and (b) follows by applying (a) to the poset $(Y,\ge)$.
%	\end{proof}
%%%%%%%%%%%%%%%%%%%%%%%%%%%%%%%%%%%%%%%%%%%%%%%%%%%%%%%%%%%%%%%%%%%%%%%%%%%%%%%%%%%%%%%%%%%%%%%%%%%%%%%%%%%%%%%%%%%%%%%%
%%%%%%%%%%%%%%%%%%%%%%%%%%%%%%%%%%%%%%%%%%%%%%%%%%%%%%%%%%%%%%%%%%%%%%%%%%%%%%%%%%%%%%%%%%%%%%%%%%%%%%%%%%%%%%%%%%%%%%%%

\begin{lemma} \label {0cvc023weij0wC3irf9}
Given $(u,e) \in P(\Teul)$, consider the set 
$$
X^-(u,e) = \setspec{ (u',e') \in P(\Teul) }{ \text{$(u,e)$ is a comb over $(u',e')$} }.
$$
Then $\big( X^-(u,e),\preceq \big)$ is nonempty and totally ordered. 
\end{lemma}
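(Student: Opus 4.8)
Nonemptiness is free: by Remark~\ref{c0Bj12Wsdh0982EChi}(1) each element of $P$ is a comb over itself, so $(u,e)\in X^-(u,e)$. For the total ordering I plan to argue by contradiction, producing a ``trivalent vertex seen from two sides''. So suppose $(c,d),(c',d')\in X^-(u,e)$ are incomparable. Among all $(v,f)\in P(\Teul)$ satisfying $(c,d)\preceq(v,f)\preceq(u,e)$ and $(c',d')\preceq(v,f)$ — a nonempty set, since $(u,e)$ is a comb over both and hence $\succeq$ both — I will choose $(u^*,e^*)$ so that $|\Neul(u^*,e^*)|$ is minimal. Using the transitivity of ``comb over'' (Remark~\ref{c0Bj12Wsdh0982EChi}(2)) applied to $(u,e)\succeq(u^*,e^*)\succeq(c,d)$ and to $(u,e)\succeq(u^*,e^*)\succeq(c',d')$, the pair $(u^*,e^*)$ is then a comb over $(c,d)$ and over $(c',d')$. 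Because $(c,d),(c',d')$ are incomparable, $(u^*,e^*)$ equals neither of them, so $(c,d)\prec(u^*,e^*)$ and $(c',d')\prec(u^*,e^*)$ strictly; in particular $(u^*,e^*)$ is non\-minimal.

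Next I unwind the immediate-predecessor structure of Definition~\ref{d9f239cw093eir}: write $e^*=\{u^*,u_0^*\}$ and let $(u_0^*,f_1),\dots,(u_0^*,f_n)$ be the immediate predecessors of $(u^*,e^*)$, so $\epsilon(u_0^*)=n+1$. Each of $(c,d)$, $(c',d')$ lies $\preceq$ one of these, say $(c,d)\preceq(u_0^*,f_i)$ and $(c',d')\preceq(u_0^*,f_j)$. Here is where minimality is used: if $i=j$ then $(u_0^*,f_i)$ would again lie in the chosen set but satisfy $\Neul(u_0^*,f_i)\subsetneq\Neul(u^*,e^*)$ (the far side of $f_i$ is strictly contained in the far side of $e^*$), contradicting minimality; hence $i\neq j$, so $n\ge 2$ and $\epsilon(u_0^*)\ge 3$. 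Now feed $(v,f)=(u_0^*,f_i)$ into the definition of ``$(u^*,e^*)$ is a comb over $(c,d)$'' (Definition~\ref{c09Nc23owsfcnp2q0wuXwoh}): clause~(a) forces $\epsilon(u_0^*)\in\{2,3\}$, hence $\epsilon(u_0^*)=3$, hence $n=2$ and $\{i,j\}=\{1,2\}$; then clause~(c) gives $R(u_0^*,\{f_i\})=0$ and tells us that $(u_0^*,g)$ is a tooth, where $g$ is the unique $\Neul$-edge at $u_0^*$ not on $\gamma_{u^*,u_0^*}$ and distinct from $f_i$. Since $\gamma_{u^*,u_0^*}$ is the single edge $e^*$ and $\epsilon(u_0^*)=3$, the only such $g$ is $f_j$, so $(u_0^*,f_j)$ is a tooth. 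Running the symmetric argument with $(v,f)=(u_0^*,f_j)$ inside ``$(u^*,e^*)$ is a comb over $(c',d')$'' gives $R(u_0^*,\{f_j\})=0$ and that $(u_0^*,f_i)$ is a tooth.

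The contradiction is then immediate: $(u_0^*,f_i)$ is a tooth, so $M(u_0^*,f_i)>1$ by Lemma~\ref{GRygergGREg8948r}(c); but $R(u_0^*,\{f_i\})=\sum_{x\in\Deul_{u_0^*}}(1-\tfrac1{k_x})+(1-\tfrac1{a_{u_0^*}})+(1-\tfrac1{M(u_0^*,f_i)})=0$ is a sum of nonnegative terms, forcing every term to vanish and in particular $M(u_0^*,f_i)=1$. This contradiction shows that no two elements of $X^-(u,e)$ are incomparable, i.e.\ $\big(X^-(u,e),\preceq\big)$ is totally ordered, completing the proof. I expect the only delicate point to be the bookkeeping in the second paragraph — verifying that minimality of $|\Neul(u^*,e^*)|$ really forces $i\neq j$, and correctly matching the edge $g$ of clause~(c) with $f_j$ (resp.\ $f_i$) using that $\gamma_{u^*,u_0^*}$ is a single edge and $\epsilon(u_0^*)=3$; there is no estimate or computation of substance, the real content being that clause~(c) of the comb axiom is self-contradictory when invoked from both sides at a trivalent vertex.
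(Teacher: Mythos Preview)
Your proof is correct and follows essentially the same strategy as the paper: locate a trivalent vertex $u_0^*$ at which the comb condition, applied from two sides, forces one edge to be simultaneously a tooth (hence $M>1$) and to satisfy $R=0$ (hence $M=1$). The only difference is how the branching vertex is found: the paper invokes the abstract poset Lemma~\ref{9823vrZn984rCkjh9onqC238ev874w} (applied to $X^-(u,e)$, which has greatest element $(u,e)$) to produce a wedge $(u_1,e_1)\prec(v,f)\succ(u_2,e_2)$ with no intermediate $X^-$-elements, from which immediate-predecessorship follows; you instead take two incomparable elements and minimize $|\Neul(u^*,e^*)|$ among common upper bounds, then use minimality to force the two immediate predecessors containing them to be distinct. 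Both routes arrive at the same three-edge configuration and the same contradiction via Definition~\ref{c09Nc23owsfcnp2q0wuXwoh}(c) and Lemma~\ref{GRygergGREg8948r}(c).
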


\begin{proof}
Since $(u,e) \in X^-(u,e)$, we have $X^-(u,e) \neq \emptyset$.
By contradiction, suppose that $X^-(u,e)$ is not totally ordered.
Since $X^-(u,e)$ has a greatest element (namely $(u,e)$),
Lemma \ref{9823vrZn984rCkjh9onqC238ev874w} implies that there exist 
distinct $(u_1,e_1) , (v,f) , (u_2,e_2) \in X^-(u,e)$ satisfying
\begin{enumerate}

\item[(i)] $(u_1,e_1) \prec (v,f) \succ (u_2,e_2)$

\item[(ii)] no $(x,h) \in X^-(u,e)$ satisfies $(u_1,e_1) \prec (x,h) \prec (v,f)$ or $(u_2,e_2) \prec (x,h) \prec (v,f)$.

\end{enumerate}
By Rem.\ \ref{c0Bj12Wsdh0982EChi}, conditions (i) and (ii) imply that $(u_1,e_1)$ and $(u_2,e_2)$ are immediate predecessors of $(v,f)$.
Thus $u_1=u_2$ and $e_1 \neq e_2$. Write $u^* = u_1 = u_2$, then $(u^*,e_1) \prec (v,f) \succ (u^*,e_2)$.
Since $e_1,e_2,f$ are distinct edges incident to $u^*$, we have $\epsilon(u^*)\ge3$.
For each $k=1,2$, we have  $(u^*,e_k) \prec (v,f) \preceq (u,e)$ and (by definition of $X^-(u,e)$) $(u,e)$ is a comb over $(u^*,e_k)$.
Since $(u,e)$ is a comb over $(u^*,e_2)$, we have $\epsilon(u^*)=3$ and $(u^*,e_1)$ is a tooth.
Since $(u^*,e_1)$ is a tooth, $R(u^*, \{ e_1 \}) > 0$.
Since $(u,e)$ is a comb over $(u^*,e_1)$ and $\epsilon(u^*)=3$, we have $R(u^*, \{ e_1 \}) = 0$, a contradiction. 
\end{proof}

\begin{lemma} \label {0co8nJkjhJKHFGHiu28ue039}
Given $(u,e) \in P_S(\Teul)$, define the set
$$
X^+(u,e) = \setspec{ (u',e') \in P_S(\Teul) }{ \text{$(u',e')$ is a comb over $(u,e)$} } .
$$
Then $\big( X^+(u,e), \preceq \big)$ is nonempty and totally ordered. 
\end{lemma}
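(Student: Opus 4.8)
The plan is to mirror the structure of the proof of Lemma \ref{0cvc023weij0wC3irf9}, working with the opposite order and exploiting that $P_S(\Teul)$ is ``tree-like'' along the skeleton (cf.\ Notation \ref{pc0v23e0geCpgspyr23}). First I would observe that $(u,e) \in X^+(u,e)$ by Remark \ref{c0Bj12Wsdh0982EChi}(1), so $X^+(u,e) \neq \emptyset$. The set $X^+(u,e)$ has $(u,e)$ as its least element with respect to $\preceq$: every element of $X^+(u,e)$ is $\succeq (u,e)$ by definition of ``comb over''. So if $X^+(u,e)$ were not totally ordered, Lemma \ref{9823vrZn984rCkjh9onqC238ev874w}(b) would supply distinct $(u_1,e_1),(v,f),(u_2,e_2) \in X^+(u,e)$ with $(u_1,e_1) \rhd (v,f) \lhd (u_2,e_2)$, where moreover (by the ``no element strictly between'' clause and the fact that $P_S(\Teul)$ locally looks like the tree $S(\Teul)$) each of $(u_1,e_1),(u_2,e_2)$ is an \emph{immediate successor} of $(v,f)$, i.e.\ $(v,f)$ is an immediate predecessor of both $(u_1,e_1)$ and of $(u_2,e_2)$.

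Next I would unpack what ``$(v,f)$ is an immediate predecessor of $(u_i,e_i)$'' means: writing $e_i = \{u_i, w_i\}$, we have $v = w_1 = w_2 =: w$, the vertex $w \in S(\Teul)$ is adjacent (in $\Neul$) to $u_1$, to $u_2$, and to the vertex $v^*$ defined by $f = \{v, v^*\}$, and these are three distinct edges of $\Neul$ incident to $w$, so $\epsilon(w) \ge 3$. Since $(u_i,e_i)$ is a comb over $(u_i,e_i)$ trivially but more importantly is a comb over $(v,f)$ and $(v,f)$ is an immediate predecessor, the vertex $w$ (being the ``$u_0$'' in the sense of Definition \ref{c09Nc23owsfcnp2q0wuXwoh} for the pair $(u_i,e_i) \succ (v,f)$) must satisfy the comb conditions at $w$. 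Applying Definition \ref{c09Nc23owsfcnp2q0wuXwoh}(a) with $(v,f)$ as the smaller pair in the comb $(u_1,e_1) \succeq (v,f)$: since $\epsilon(w) \ge 3$ we are forced into case $\epsilon(w)=3$, and then condition (c) says that $(w,g)$ is a tooth where $g$ is the unique edge of $\Neul$ incident to $w$, not in $\gamma_{u_1,w}$, and distinct from $f$. Since $\gamma_{u_1,w}$ contains $e_1 = \{u_1,w\}$, the edge $g$ must be $e_2$. So $(w, e_2)$ is a tooth, hence $R(w,\{e_2\})>0$ (indeed $M(w,e_2)>1$ by Lemma \ref{GRygergGREg8948r}). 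On the other hand, apply the comb condition to the comb $(u_2,e_2) \succeq (v,f)$: again $\epsilon(w)=3$, and condition (c) gives that $R(w,\{h\})=0$ for the appropriate edge $h$; since $\gamma_{u_2,w}$ contains $e_2$ and $h \neq f$, we get $h = e_1$, i.e.\ $R(w,\{e_1\})=0$. But symmetrically, running the first comb's condition (c) for the value ``$R(w,\{f'\})$'' part rather than the tooth part: Definition \ref{c09Nc23owsfcnp2q0wuXwoh}(c) for the comb $(u_1,e_1)\succeq(v,f)$ also asserts $R(w,\{e_1\})=0$ directly. The contradiction is that $(w,e_2)$ is a tooth (from the first comb) yet the second comb forces $R(w,\{e_2\})=0$, which by the formula for $R$ forces $k_x=1$ for all $x\in\Deul_w$, $a_w=1$, and $M(w,e_2)=1$, contradicting $M(w,e_2)>1$.

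The main obstacle I anticipate is the bookkeeping in the paragraph above: correctly identifying, in each of the two comb relations $(u_i,e_i)\succeq(v,f)$, which edge plays the role of ``$f$'' (the edge on the path $\gamma_{u_i,\cdot}$) and which plays the role of ``$g$'' (the tooth edge) in Definition \ref{c09Nc23owsfcnp2q0wuXwoh}(c), and making sure the two conclusions genuinely conflict. The key symmetry is that $e_1$ lies on $\gamma_{u_1,w}$ but \emph{not} on $\gamma_{u_2,w}$ (and vice versa for $e_2$), which is exactly what forces the tooth produced by one comb to be the very edge the other comb declares to have $R = 0$. One should also check at the start that Lemma \ref{9823vrZn984rCkjh9onqC238ev874w}(b) genuinely applies --- it needs $X^+(u,e)$ to be a finite poset that is not totally ordered with a least element; finiteness is inherited from $P(\Teul)$, and the least element is $(u,e)$ as noted. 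Finally I would remark that $X^+(u,e)$, being a totally ordered finite poset, has a greatest element, which is the observation used later (e.g.\ to extract maximal combs in the decomposition), though that is not part of the present statement.
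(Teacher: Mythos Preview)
Your overall architecture matches the paper's: nonemptiness is clear, $(u,e)$ is the least element, Lemma~\ref{9823vrZn984rCkjh9onqC238ev874w}(b) produces $(u_1,e_1)\rhd(v,f)\lhd(u_2,e_2)$, and one then argues that $(v,f)$ is an immediate predecessor (in $P$) of each $(u_i,e_i)$, so $e_i=\{u_i,v\}$ and $\epsilon(v)\ge 3$. (Your justification of this ``immediate predecessor'' step is a bit breezy; the paper makes it precise by taking the immediate predecessor $(u_i',e_i')$ of $(u_i,e_i)$ lying above $(v,f)$, invoking Notation~\ref{pc0v23e0geCpgspyr23} to get $(u_i',e_i')\in P_S(\Teul)$, then Remark~\ref{c0Bj12Wsdh0982EChi} to get $(u_i',e_i')\in X^+(u,e)$, whence $(u_i',e_i')=(v,f)$.)

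The real problem is your final contradiction. You misread Definition~\ref{c09Nc23owsfcnp2q0wuXwoh}(c): for an intermediate pair $(v,f)$, the condition is $R(v,\{f\})=0$ --- the edge in the $R$-condition is \emph{always} the second component $f$ of the intermediate pair, and does not depend on which comb $(u_i,e_i)$ one is inside. What varies with $i$ is only the tooth edge $g$. So from comb~2 you do \emph{not} get $R(w,\{e_1\})=0$ (nor $R(w,\{e_2\})=0$); you only get $R(v,\{f\})=0$ again, together with the fact that $(v,e_1)$ is a tooth. Hence your asserted contradiction ``$(w,e_2)$ is a tooth yet $R(w,\{e_2\})=0$'' is unsupported (and is also inconsistent with the $R(w,\{e_1\})=0$ you wrote two lines earlier).

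The correct way to finish --- and this is what the paper does --- is to use the tooth conclusion directly. From the comb $(u_2,e_2)$ over $(u,e)$ with intermediate pair $(v,f)$ and $\epsilon(v)=3$, condition~(c) forces $(v,e_1)$ to be a tooth. By the definition of tooth there exists $(z_1,\dots,z_n)\in\Gamma(\Teul)$ with $z_n=v$ and $\{z_{n-1},z_n\}=e_1$, so $z_{n-1}=u_1$; hence $u_1\in\bar V(v)\setminus\{v\}$ and therefore $u_1\notin S(\Teul)$, contradicting $(u_1,e_1)\in X^+(u,e)\subseteq P_S(\Teul)$. You had already correctly derived that $(v,e_2)$ is a tooth from comb~1, and the symmetric conclusion $u_2\notin S(\Teul)$ would equally give the contradiction; you simply extracted the wrong piece of Definition~\ref{c09Nc23owsfcnp2q0wuXwoh}(c) when switching to comb~2.
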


\begin{proof}
Since $(u,e) \in X^+(u,e)$, we have $X^+(u,e) \neq \emptyset$.
By contradiction, suppose that $X^+(u,e)$ is not totally ordered.
Since $X^+(u,e)$ has a least element (namely $(u,e)$),
Lemma \ref{9823vrZn984rCkjh9onqC238ev874w} implies that there exist 
distinct $(u_1,e_1) , (v,f) , (u_2,e_2) \in X^+(u,e)$ satisfying
\begin{enumerate}

\item[(i)] $(u_1,e_1) \succ (v,f) \prec (u_2,e_2)$

\item[(ii)] no $(x,h) \in X^+(u,e)$ satisfies $(u_1,e_1) \succ (x,h) \succ (v,f)$ or $(u_2,e_2) \succ (x,h) \succ (v,f)$.

\end{enumerate}
Since $(u_1,e_1) \succ (v,f)$, we may consider the unique immediate predecessor $(u_1',e_1')$ of $(u_1,e_1)$
that satisfies  $(u_1,e_1) \succ (u_1',e_1') \succeq (v,f) \succeq (u,e)$.
Then $(u_1',e_1') \in P_S(\Teul)$ by the claim made in Notation \ref{pc0v23e0geCpgspyr23},
so  $(u_1',e_1') \in X^+(u,e)$ by  Rem.\ \ref{c0Bj12Wsdh0982EChi}.
Thus (ii) implies that $(u_1',e_1') = (v,f)$, i.e.,
$(v,f)$ is an immediate predecessor of $(u_1,e_1)$.
Similarly, $(v,f)$ is an immediate predecessor of $(u_2,e_2)$.
We have
$$
(u_1,e_1) \succ (v,f) \succeq (u,e) \quad \text{and} \quad  (u_2,e_2) \succ (v,f) \succeq (u,e) 
$$
where $e_1 = \{ u_1, v \}$,  $e_2 = \{ u_2, v \}$ and $f$ are distinct edges incident to $v$; so $\epsilon(v) \ge 3$.
Since $(u_2,e_2)$ is a comb over $(u,e)$, we have $\epsilon(v)=3$ and $(v,e_1)$ is a tooth.
Since $(v,e_1)$ is a tooth, it follows that $u_1 \notin S(\Teul)$, contradicting $(u_1,e_1) \in X^+(u,e)$.
\end{proof}

\begin{proof}[Proof of Thm \ref{cvnv7nd6ykawsujwryf9v}]
\newcommand{\combo}{\mathrel{\text{\rm c.o.}}}
Given $(u',e'), (u,e) \in P_S(\Teul)$, let us write ``$(u',e') \combo (u,e)$'' as an abbreviation for the sentence ``$(u',e')$ is a comb over $(u,e)$''.

It is clear that $\sim$ is reflexive and symmetric and we have to show that it is transitive. Suppose that 
$(u,e), (u',e'), (v,f) \in P_S(\Teul)$ satisfy $(u,e) \sim (v,f)$ and $(v,f) \sim (u',e')$; let us prove that $(u,e) \sim (u',e')$ in each 
of the four possible cases:
\begin{center}
\begin{tabular}{ccc}
(i)\ \  $(u,e) \combo (v,f)$ and $(v,f) \combo (u',e')$ && (ii)\ \  $(u',e') \combo (v,f)$ and $(v,f) \combo (u,e)$ \\
(iii)\ \  $(v,f) \combo (u,e)$ and $(v,f) \combo (u',e')$ && (iv)\ \  $(u,e) \combo (v,f)$ and $(u',e') \combo (v,f)$.
\end{tabular}
\end{center}

In case (i), Rem.\ \ref{c0Bj12Wsdh0982EChi} implies that $(u,e) \combo (u',e')$, so  $(u,e) \sim (u',e')$.

Case (ii) is similar to case (i).

In case (iii), we have $(u,e), (u',e') \in X^-(v,f)$.
By Lemma \ref{0cvc023weij0wC3irf9}, we have $(u,e) \preceq (u',e')$ or $(u',e') \preceq (u,e)$; we may assume that $(u,e) \preceq (u',e')$. 
Then $(u,e) \preceq (u',e') \preceq (v,f)$ and $(v,f) \combo (u,e)$, so 
Rem.\ \ref{c0Bj12Wsdh0982EChi} implies that $(u',e') \combo (u,e)$, so  $(u,e) \sim (u',e')$.

In case (iv), we have $(u,e), (u',e') \in X^+(v,f)$.
Using Lemma \ref{0co8nJkjhJKHFGHiu28ue039} and arguing as in case (iii), we find that  $(u,e) \sim (u',e')$.

This shows that $\sim$ is an equivalence relation.
Let $C$ be an equivalence class.
If $(u,e), (u',e') \in C$ then $(u,e) \combo (u',e')$ or $(u',e') \combo (u,e)$;
in the first case we have  $(u,e) \succeq (u',e')$ and in the second case $(u',e') \succeq (u,e)$;
so $(C,\preceq)$ is totally ordered.
This proves Thm \ref{cvnv7nd6ykawsujwryf9v}.
\end{proof}

\begin{definition} \label {cpan3bAfon98gbvqrkdLlxdir6}
We define the set 
$$
\In(\Teul) = \begin{cases}
 \Omega(\Teul) , &  \text{if $\Omega(\Teul) \neq \emptyset$,} \\
 \setspec{ z \in S(\Teul) }{ \delta^*(z) \le 1 } , & \text{if $\Omega(\Teul) = \emptyset$}.
\end{cases}
$$
The elements of $\In(\Teul)$ are called the \textit{initial vertices} of $\Teul$.
Observe:
$$ 
\emptyset  \neq \In(\Teul) \subseteq \setspec{ z \in S(\Teul) }{ \delta^*(z) \le 1 } .
$$
\end{definition}

\begin{remark}
Recall from Lemma \ref{p309ef2GFT485ryf} that $S(\Teul) \neq \emptyset$ and that
$$
\text{$| S(\Teul) | = 1$ $\iff$ $| \Neul | = 1$ or $\Teul$ is a brush.}
$$
In the special case where  $| S(\Teul) | = 1$,
the sets $\Oeul$ and $\overline\Oeul$ (defined in \ref{c20w39w93e9d0dqMne89wcg9}) are empty and consequently most of 
the theory developed below (from \ref{c20w39w93e9d0dqMne89wcg9} to the end of the section) is trivially true.
\end{remark}

\begin{definitions} \label {c20w39w93e9d0dqMne89wcg9}
Choose $z \in \In(\Teul)$.
This choice determines the following objects and notations.

For each $u \in S(\Teul) \setminus \{z\}$, let $e_u$ be the unique edge of $\gamma_{u,z}$ incident to $u$.
Observe that $e_u$ is an edge of the tree $S(\Teul)$ (since $u,z \in S(\Teul)$, $\gamma_{u,z}$ is entirely contained in $S(\Teul)$),
so in particular $(u,e_u)$ is not a tooth. 

\smallskip

\noindent (a) Let $\Oeul = \Oeul(\Teul,z) = \setspec{ (u,e_u) }{ u \in S(\Teul) \setminus \{z\} }$ and observe that
$$
\Oeul \neq \emptyset \iff S(\Teul) \setminus \{z\} \neq \emptyset \iff | S(\Teul) | > 1 .
$$
Also, the first projection $(u,e_u) \mapsto u$ is a bijection from $\Oeul$ to $S(\Teul) \setminus \{z\}$,
and the second projection $(u,e_u) \mapsto e_u$ is a bijection from $\Oeul$ to the set of edges of the tree $S(\Teul)$.
Since $\Oeul \subseteq P_S(\Teul)$, we may consider the equivalence relation $\sim$ on $\Oeul$ obtained by restricting 
the relation $\sim$ of Thm \ref{cvnv7nd6ykawsujwryf9v}. 

\smallskip

\noindent (b) Let $\overline\Oeul = \overline\Oeul(\Teul,z)$ be the set of equivalence classes of $(\Oeul,\sim)$.
Thus $\overline\Oeul \neq \emptyset \Leftrightarrow | S(\Teul) | > 1$.

\smallskip

\noindent (c) Let $C \in \overline\Oeul$. Then (by Thm \ref{cvnv7nd6ykawsujwryf9v}) $(C,\preceq)$ is totally ordered and hence has
a greatest element $(u,e_{u})$ and a least element $(v,e_{v})$ (which may coincide);
define
$$
Y_C = \bar V(u) \cup \big( \Neul(u,e_u) \setminus \Neul(v,e_v) \big) \quad \text{and} \quad \dot c(C) =  c(v,e_v) - c(u,e_u).
$$
We also use the notation $u_C = u$, which makes sense because $u$ is uniquely determined by $C$.

\smallskip

\noindent (d) Assuming that $| S(\Teul) | > 1$,
let $z'$ be the unique element of $S(\Teul)$ that is adjacent to $z$; then $(z', e_{z'}) \in \Oeul$, so there is a unique
element $C_0 \in \overline\Oeul$ satisfying $(z', e_{z'}) \in C_0$. 
Let $u_0 = u_{C_0}$, then $(u_0,e_{u_0})$ and $(z',e_{z'})$ are respectively the greatest and least elements of $C_0$.
(When $|S(\Teul)|=1$, we do not define $z'$, $C_0$ and $u_0$.)
\end{definitions}

\begin{remark}
Choose $z \in \In(\Teul)$.
If $C \in \overline\Oeul = \overline\Oeul(\Teul,z)$, and if $(u,e_u)$ and $(v,e_v)$ are respectively the greatest and least elements of $C$,
then  $(u,e_u)$ is a comb over $(v,e_v)$.
Thus we may view $\overline\Oeul$ as a decomposition of $S(\Teul)$ into combs, this decomposition being determined by the choice of $z$.
Since each element of $\overline\Oeul$ corresponds to a comb, $|\overline\Oeul|$ is the number of combs in the decomposition.
Note that $|\overline\Oeul| \ge 1 \iff | S(\Teul) | > 1$.
\end{remark}

\begin{corollary}  \label {cjvl45otbsAZofg9bgf}
Choose $z \in \In(\Teul)$ and let $C \in \overline\Oeul(\Teul,z)$. Then $\dot c(C)=0$ if and only if
$$
\text{for any $(x,e_x) \in C$ such that $x \neq u_C$ we have $\epsilon(x)=2$ and $\tD(x)=0$.}
$$
\end{corollary}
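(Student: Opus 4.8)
The plan is to reduce the statement to Proposition~\ref{0ci19KJTghL872je309}(b) applied to the two ends of the chain $(C,\preceq)$. Write $(u,e_{u})$ for the greatest and $(v,e_{v})$ for the least element of $C$, so that $u=u_{C}$ and $\dot c(C)=c(v,e_{v})-c(u,e_{u})$. If $(u,e_{u})=(v,e_{v})$, i.e.\ $C$ is a singleton, then $\dot c(C)=0$ and the right-hand condition is vacuous, so the equivalence holds trivially; thus assume $u\neq v$. Since $C$ is a single $\sim$-class we have $(u,e_{u})\sim(v,e_{v})$, which means that one of these two pairs is a comb over the other; as the comb-over relation requires the inequality $\succeq$ and we have $(u,e_{u})\succeq(v,e_{v})$ but not the reverse, it must be $(u,e_{u})$ that is a comb over $(v,e_{v})$. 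Now I would invoke Proposition~\ref{0ci19KJTghL872je309}(b) with $(u,e):=(u,e_{u})$ and $(u',e'):=(v,e_{v})$: its condition \eqref{98mdmosdberdtkj76lca}, that $(u,e_{u})$ be a comb over $(v,e_{v})$ and that $c(u,e_{u})=c(v,e_{v})$, is now equivalent to $c(u,e_{u})=c(v,e_{v})$, i.e.\ to $\dot c(C)=0$. Hence $\dot c(C)=0$ is equivalent to condition \eqref{pd90cfn2pw0s}, namely: every vertex $y$ in $\gamma_{u,v}$ with $y\neq u$ satisfies $\epsilon(y)=2$ and $\tD(y)=0$.

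It then remains to check that the set of such vertices $y$ coincides with $\{\,x\mid(x,e_{x})\in C,\ x\neq u_{C}\,\}$. First I would record an elementary order identification: for $(x,e_{x}),(y,e_{y})\in\Oeul$ (equivalently, for $x,y\in S(\Teul)\setminus\{z\}$), one has $(x,e_{x})\preceq(y,e_{y})$ if and only if $x$ lies on $\gamma_{z,y}$. This is immediate from Definition~\ref{d9f239cw093eir} once one observes that, by construction, $e_{w}$ is the first edge of $\gamma_{w,z}$, together with the standard tree fact that two paths issuing from a common vertex share their first edge exactly when their other endpoints lie in the same component of the tree obtained by deleting that vertex. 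Granting this, $(x,e_{x})$ belongs to $C$ if and only if $(v,e_{v})\preceq(x,e_{x})\preceq(u,e_{u})$: the forward implication holds because $(C,\preceq)$ is a chain whose endpoints are these two pairs; the reverse holds because $\gamma_{u,v}\subseteq S(\Teul)$ (a subtree) and $x\neq z$ force $(x,e_{x})\in\Oeul$, while Remark~\ref{c0Bj12Wsdh0982EChi}(2), applied to $(u,e_{u})\succeq(x,e_{x})\succeq(v,e_{v})$, shows that $(u,e_{u})$ is a comb over $(x,e_{x})$ and hence $(x,e_{x})\sim(u,e_{u})$. By the order identification, the condition $(v,e_{v})\preceq(x,e_{x})\preceq(u,e_{u})$ says precisely that $v$ lies on $\gamma_{z,x}$ and $x$ lies on $\gamma_{z,u}$, which (since $v$ itself then lies on $\gamma_{z,u}$) is the same as saying that $x$ lies on $\gamma_{u,v}$. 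Hence $\{\,x\mid(x,e_{x})\in C\,\}$ is exactly the vertex set of $\gamma_{u,v}$; deleting $u=u_{C}$ from both sides yields the required translation and completes the argument.

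The one genuinely delicate point is the bookkeeping in the second paragraph --- in particular, the fact that $C$ is precisely the $\preceq$-interval from $(v,e_{v})$ to $(u,e_{u})$ inside $\Oeul$. Making this rigorous requires the order identification on $\Oeul$ together with the careful use of Remark~\ref{c0Bj12Wsdh0982EChi}(2) to fill in the interior of the chain; everything else is a direct appeal to Proposition~\ref{0ci19KJTghL872je309}(b), so I anticipate no further difficulty.
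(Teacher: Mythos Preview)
Your proof is correct and follows essentially the same approach as the paper: both reduce the equivalence to \eqref{98mdmosdberdtkj76lca}$\Leftrightarrow$\eqref{pd90cfn2pw0s} in Proposition~\ref{0ci19KJTghL872je309}(b). The paper's proof is a one-line citation of that equivalence, whereas you have carefully filled in the identification between $\{\,x\mid(x,e_{x})\in C,\ x\neq u_{C}\,\}$ and the non-$u$ vertices of $\gamma_{u,v}$; this bookkeeping is exactly what the paper leaves implicit, and your treatment of it (via the order identification on $\Oeul$ and Remark~\ref{c0Bj12Wsdh0982EChi}(2)) is sound.
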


\begin{proof}
This is \eqref{98mdmosdberdtkj76lca} $\Leftrightarrow$ \eqref{pd90cfn2pw0s} in Prop.\ \ref{0ci19KJTghL872je309}.
\end{proof}

Recall that $| \Omega(\Teul) | \le 2$ by Thm \ref{Xcoikn23ifcdKJDluFYT937}.
In the case $| \Omega(\Teul) | = 2$, the decomposition into combs has a fairly simple description:

\begin{proposition}  \label {A09g2nr9f2wH03rf9rgeF}
Suppose that $| \Omega(\Teul) | = 2$.
Then there exists a $\tD$-trivial path $(z_1, \dots, z_n)$ such that $\Neul = \{z_1, \dots, z_n\} = S(\Teul)$ and
$\In(\Teul) = \Omega(\Teul) = \{ z_1, z_n \}$.
If we write $e_i = \{ z_i, z_{i+1} \}$ for $i=1, \dots, n-1$ then the following hold.
\begin{enumerata}

\item $\overline\Oeul(\Teul,z_1) = \{ C_0 \}$ where $C_0 = \{ (z_2,e_1), \dots, (z_n,e_{n-1}) \}$, $u_0 = z_n$ and $\dot c(C_0)=0$.
\item $\overline\Oeul(\Teul,z_n) = \{ C_0 \}$ where $C_0 = \{ (z_{n-1},e_{n-1}), \dots, (z_1,e_1) \}$, $u_0 = z_1$ and $\dot c(C_0)=0$.

\end{enumerata}
\end{proposition}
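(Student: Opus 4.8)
The plan is to unwind the definitions from \ref{c20w39w93e9d0dqMne89wcg9} in the very special configuration provided by Thm \ref{Xcoikn23ifcdKJDluFYT937}(a). First I would invoke that theorem: since $|\Omega(\Teul)|=2$, there is a $\tD$-trivial path $(z_1,\dots,z_n)$ with $\epsilon(z_n)=1$, and for any such path we get $\Neul=\{z_1,\dots,z_n\}$, $\Omega(\Teul)=\{z_1,z_n\}$, $\tD(z_1),\tD(z_n)\le 0$, and $(z_n,\dots,z_1)$ is again $\tD$-trivial with $\epsilon(z_1)=1$. Next I would check $S(\Teul)=\Neul$: since $\tD(z_i)=0$ for $1<i<n$ while $\tD(z_1),\tD(z_n)\le 0$, we have $W(\Teul)=\emptyset$ (no $w$ with $\tD(w)>0$), hence $\Gamma(\Teul)=\emptyset$, so no edge is of type $\Gamma$, $\bar V(v)=\{v\}$ for all $v$, and $S(\Teul)=\Neul\setminus\bigcup_{w\in W}(\bar V(w)\setminus\{w\})=\Neul$. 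Also $\delta^*(v)=\epsilon(v)$ for all $v$, so $\{z\in S(\Teul): \delta^*(z)\le 1\}=\{z_1,z_n\}$; combined with $\Omega(\Teul)=\{z_1,z_n\}\neq\emptyset$, Def.\ \ref{cpan3bAfon98gbvqrkdLlxdir6} gives $\In(\Teul)=\Omega(\Teul)=\{z_1,z_n\}$. That establishes everything before (a).

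For part (a), I would take $z=z_1\in\In(\Teul)$. Then for each $u=z_i$ with $i\ge 2$, the path $\gamma_{u,z_1}$ runs $z_i,z_{i-1},\dots,z_1$, so $e_{u}=\{z_i,z_{i-1}\}=e_{i-1}$, and hence $\Oeul(\Teul,z_1)=\{(z_2,e_1),\dots,(z_n,e_{n-1})\}$. The heart of (a) is to show all these pairs lie in a single $\sim$-class, i.e.\ that $(z_n,e_{n-1})$ is a comb over $(z_2,e_1)$; this follows from Prop.\ \ref{0ci19KJTghL872je309}(b), using the equivalence \eqref{98mdmosdberdtkj76lca}$\Leftrightarrow$\eqref{pd90cfn2pw0s}: for every vertex $v=z_i$ in $\gamma_{z_n,z_2}$ with $v\neq z_n$ we have $2\le i< n$, hence $\epsilon(z_i)=2$ and $\tD(z_i)=0$ by the $\tD$-triviality of $(z_1,\dots,z_n)$ (condition \eqref{pd90cfn2pw0s} asks about $z_i$ for $i$ running over $2,\dots,n-1$, which is exactly where $\tD$-triviality gives the needed data — note $z_n$ is excluded and $z_1$ does not appear). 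Thus \eqref{98mdmosdberdtkj76lca} holds, so $(z_n,e_{n-1})$ is a comb over $(z_2,e_1)$ \emph{and} $c(z_n,e_{n-1})=c(z_2,e_1)$; since both $(z_2,e_1)$ and $(z_n,e_{n-1})$ are in the resulting class, it is the unique class $C_0$, $u_{C_0}=z_n$ (the $\succeq$-greatest element), and $\dot c(C_0)=c(z_2,e_1)-c(z_n,e_{n-1})=0$.

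Part (b) is the mirror image: take $z=z_n\in\In(\Teul)$, use that $(z_n,\dots,z_1)$ is $\tD$-trivial (from Thm \ref{Xcoikn23ifcdKJDluFYT937}(a)), and repeat the argument with the path reversed, so that $e_{z_i}=\{z_i,z_{i+1}\}=e_i$ and $\Oeul(\Teul,z_n)=\{(z_{n-1},e_{n-1}),\dots,(z_1,e_1)\}$ forms a single class $C_0$ with $u_0=z_1$ and $\dot c(C_0)=0$. The only point requiring a little care is the indexing of $e_i$ relative to which endpoint we approach from, and making sure the ``$v\neq u_C$'' vertex in Cor.\ \ref{cjvl45otbsAZofg9bgf} / condition \eqref{pd90cfn2pw0s} never lands on one of $z_1,z_n$ where $\tD$ might be negative — I expect this bookkeeping to be the main (and only mild) obstacle, since all the analytic content is already packaged in Thm \ref{Xcoikn23ifcdKJDluFYT937} and Prop.\ \ref{0ci19KJTghL872je309}.
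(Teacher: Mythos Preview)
Your proposal is correct and follows essentially the same approach as the paper. The paper's proof is terser --- it simply asserts that the pre-(a) claims follow from Thm~\ref{Xcoikn23ifcdKJDluFYT937} and that ``it is clear'' that $\overline\Oeul(\Teul,z_1)=\{C_0\}$, then invokes Cor.~\ref{cjvl45otbsAZofg9bgf} (itself a direct consequence of Prop.~\ref{0ci19KJTghL872je309}(b)) to get $\dot c(C_0)=0$ --- whereas you spell out the $W(\Teul)=\emptyset$ argument for $S(\Teul)=\Neul$ and go straight to Prop.~\ref{0ci19KJTghL872je309}(b), obtaining the comb property and the equality of characteristic numbers simultaneously; but the underlying reasoning is the same.
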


\begin{proof}
The assertions made before (a) and (b) follow from Thm \ref{Xcoikn23ifcdKJDluFYT937}.
In (a), it is clear that 
$\overline\Oeul(\Teul,z_1) = \{ C_0 \}$ where $C_0 = \{ (z_2,e_1), \dots, (z_n,e_{n-1}) \}$,
and we have $u_0 = z_n$ because $(z_n,e_{n-1})$ is the greatest element of $(C_0,\preceq)$.
Since for each $i$ such that $1 < i < n$ we have $\epsilon(z_i)=2$ and $\tD(z_i)=0$,
the fact that $\dot c(C_0) = 0$ follows from Cor.\ \ref{cjvl45otbsAZofg9bgf}.
This proves (a), and the proof of (b) is similar.
\end{proof}

\begin{lemma}  \label {cp0v9i230wedqwo}
Assume that $\Teul$ is not a brush.
Let $(u,e) \in P_S(\Teul)$ be such that  $u \notin \Omega(\Teul)$ and $X^+(u,e) = \{ (u,e) \}$.  Then the following hold.
\begin{enumerata}

\item $\tD( \bar V(u) ) \ge \max(1, \epsilon(u)-2)>0$

\item If $\delta^*(u) \ge 2$ then $\tD( \bar V(u) \cup \Neul(u,e) ) \ge | \delta^*(u) - 3 |$.

\item If $\delta^*(u) \ge 3$ and $\tD( \bar V(u) \cup \Neul(u,e) ) = \delta^*(u) - 3$
then $R(u,\{e\}) = 0$, $t(u)=0$, $(u,e)$ is nonpositive, and $u > u'$ where $e = \{u,u'\}$. 

\end{enumerata}
\end{lemma}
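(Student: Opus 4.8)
The plan is to reduce all three parts to a single identity. Write $T$ for the set of edges $f$ of $\Neul$ incident to $u$ with $(u,f)$ a tooth; then $|T|=t(u)$, and by Remark~\ref{p0c92n30werf0dof} the remaining $\delta^*(u)$ edges at $u$ are exactly the edges of the tree $S(\Teul)$ at $u$, each having its far endpoint in $S(\Teul)$. Since $\bar V(u)=\{u\}\cup\bigcup_{f\in T}\Neul(u,f)$ (Def.~\ref{pd09Yv3ned09Xse}, as in the proof of Lemma~\ref{p0293efp0cw23ep0hvj}), we get $\bD(u,T)=\tD(\bar V(u))$ and $\bD(u,T\cup\{e\})=\tD(\bar V(u)\cup\Neul(u,e))$. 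Assuming $(u,e)$ is not itself a tooth, apply Theorem~\ref{P90werd23ewods0ci} to $u$ with $A=T\cup\{e\}$; using $\eta(u,f)=0$ and $M(u,f)>1$ for $f\in T$ (Lemma~\ref{GRygergGREg8948r}), $\epsilon(u)-|A|-1=\delta^*(u)-2$, and the algebraic identity $(\delta^*(u)-2)(N_u-1)-N_u+1=(\delta^*(u)-3)(N_u-1)$, the theorem becomes, after rearrangement,
\[
\tD\bigl(\bar V(u)\cup\Neul(u,e)\bigr)=\eta(u,e)+N_u\,R\bigl(u,T\cup\{e\}\bigr)+(\delta^*(u)-3)(N_u-1).
\]
Because $N_u\ge2$ whenever $\epsilon(u)\ge2$ (Lemma~\ref{90q932r8dhd89cnr9}), $\eta(u,e)\ge0$ (Cor.~\ref{90hJkHJHF7238ewHkjHiu93}) and $R(u,\cdot)\ge0$, this already settles (b) and (c) for $\delta^*(u)\ge3$: the last term is $\ge\delta^*(u)-3\ge0$, giving (b); and if the left side equals $\delta^*(u)-3$ then $\eta(u,e)=0$ and $R(u,T\cup\{e\})=0$, forcing $T=\emptyset$ (so $t(u)=0$ and $R(u,\{e\})=0$), then $M(u,e)=1$ (so $u>u'$ by Prop.~\ref{0hp9fh023wbpchvgrsjs256}) and $(u,e)$ nonpositive (Prop.~\ref{DKxcnpw93sdo}). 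When $(u,e)$ is a tooth we argue directly: then $\Neul(u,e)\subseteq\bar V(u)$, $u\in W(\Teul)$, and Lemma~\ref{p0293efp0cw23ep0hvj} gives $\tD(\bar V(u))\ge\max(1,\epsilon(u)-2)\ge\delta^*(u)-1$, which covers (b) for $\delta^*(u)\ge2$ and, exceeding $\delta^*(u)-3$, excludes the equality hypothesis of (c).

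For (a) and for (b) with $\delta^*(u)=2$ the hypothesis $X^+(u,e)=\{(u,e)\}$ is indispensable. I would first note $|\Neul|\ge2$, hence $|S(\Teul)|\ge2$ (Lemma~\ref{p309ef2GFT485ryf}) and $\delta^*(u)\ge1$; and $u\notin Z(\Teul)$, since for each $w\in W(\Teul)$ the set $V(w)\subseteq\bar V(w)\setminus\{w\}$ is disjoint from $S(\Teul)$, so $u\in S(\Teul)$ forces $u\notin\bigcup_w V(w)$, whence $u\notin\Omega(\Teul)$ yields $u\notin Z(\Teul)$. The crucial translation: if $f_0$ is an edge of $S(\Teul)$ incident to $u$ with $f_0\ne e$, and $v$ is its far endpoint, then $(v,f_0)\in P_S(\Teul)$ with $(u,e)$ an immediate predecessor of $(v,f_0)$; as $(v,f_0)\ne(u,e)$ it is not a comb over $(u,e)$, and unravelling Def.~\ref{c09Nc23owsfcnp2q0wuXwoh} (the only pair strictly between $(v,f_0)$ and $(u,e)$ is $(u,e)$ itself) gives: \emph{(I)} if $\epsilon(u)=2$ then $R(u,\{e\})\ge1$; and \emph{(II)} if $\epsilon(u)=3$ and $(u,g')$ is a tooth, where $g'$ is the edge at $u$ distinct from $e$ and $f_0$, then $R(u,\{e\})>0$.

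With this, (a) is handled by cases on whether $u\in W(\Teul)$: if so, apply Lemma~\ref{p0293efp0cw23ep0hvj}; if not, $\bar V(u)=\{u\}$ and one needs $\tD(u)\ge\max(1,\epsilon(u)-2)$ — for $\epsilon(u)=1$ this comes from $u\notin Z(\Teul)$; for $\epsilon(u)\ge3$ from Lemma~\ref{90q932r8dhd89cnr9}(b),(c) ($N_u\ge2$, so $\tD(u)\ge(\epsilon(u)-2)(N_u-1)\ge\epsilon(u)-2$); and for $\epsilon(u)=2$ from the fact that $\tD(u)=0$ would force $\sigma(u)=0$ and $a_u=1$, hence $R(u,\{e\})<1$, contradicting \emph{(I)}. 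For (b) with $\delta^*(u)=2$ and $(u,e)$ not a tooth, the displayed identity reads $\tD(\bar V(u)\cup\Neul(u,e))=\eta(u,e)+N_u\,R(u,T\cup\{e\})-(N_u-1)$, so it suffices to show $R(u,T\cup\{e\})\ge1$: immediate if $t(u)\ge2$ (two teeth-terms, each $\ge\tfrac12$); from \emph{(I)} if $t(u)=0$; and for $t(u)=1$ one adds the teeth-term $\ge\tfrac12$ to either $1-\tfrac1{M(u,e)}\ge\tfrac12$ (when $M(u,e)>1$) or, when $M(u,e)=1$, to $R(u,\{e\})\ge\tfrac12$, the latter holding because \emph{(II)} gives $R(u,\{e\})>0$ while its $M$-contribution vanishes, so some $k_x$ or $a_u$ exceeds $1$. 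I expect the subcase $\delta^*(u)=2$, $t(u)=1$, $M(u,e)=1$ to be the main obstacle: it is where the coprimality built into $R(u,\cdot)$, the principle that at most one downward edge at $u$ has $M=1$ (Prop.~\ref{0hp9fh023wbpchvgrsjs256}), and the comb-obstruction $X^+(u,e)=\{(u,e)\}$ must all be combined, and where one must choose $f_0$ so that the edge $g'$ singled out by Def.~\ref{c09Nc23owsfcnp2q0wuXwoh} is exactly the one making $(u,g')$ a tooth.
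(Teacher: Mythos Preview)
Your proof is correct and essentially complete. The single identity you derive from Theorem~\ref{P90werd23ewods0ci},
\[
\tD\bigl(\bar V(u)\cup\Neul(u,e)\bigr)=\eta(u,e)+N_u\,R\bigl(u,T\cup\{e\}\bigr)+(\delta^*(u)-3)(N_u-1),
\]
is exactly the equation the paper obtains (in its treatment of $\delta^*(u)\ge3$), and your deductions for (b) and (c) in that range match the paper's.

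Where you and the paper diverge is in how the hypothesis $X^+(u,e)=\{(u,e)\}$ is used for (a) with $\epsilon(u)=2$ and for (b) with $\delta^*(u)=2$. The paper argues by contradiction: it builds the neighbouring pair $(u^*,e^*)\succ(u,e)$, observes that $\tD\bigl(\Neul(u^*,e^*)\bigr)$ is too small, and then invokes Prop.~\ref{0ci19KJTghL872je309} (the characterization of combs via $\eta$-constancy and $\Omega$-disjointness) to conclude that $(u^*,e^*)$ is a comb over $(u,e)$, forcing either $u^*\notin S(\Teul)$ or $(u^*,e^*)\in X^+(u,e)$ --- a contradiction in each case. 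You instead unpack Def.~\ref{c09Nc23owsfcnp2q0wuXwoh} directly to extract the arithmetic constraints you call (I) and (II), then feed those into the displayed identity to get $R(u,T\cup\{e\})\ge1$. Your route is more elementary in that it bypasses Prop.~\ref{0ci19KJTghL872je309} (whose proof is nontrivial), at the cost of a small case analysis on $t(u)$; the paper's route is shorter once that proposition is available. Your handling of the delicate subcase $\delta^*(u)=2$, $t(u)=1$, $M(u,e)=1$ is fine: with $f_0$ the other $S(\Teul)$-edge and $g'$ the tooth edge, the edge singled out by the comb definition is indeed $g'$, so (II) gives $R(u,\{e\})>0$, and since the $M$-term vanishes some $k_x$ or $a_u$ exceeds~$1$, contributing at least~$\tfrac12$.
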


\begin{proof}
(a) If $u \in W(\Teul)$ then (since $\Teul$ is not a brush) we have $\tD\big( \bar V(u) \big) \ge \max(1, \epsilon(u)-2)$
by Lemma \ref{p0293efp0cw23ep0hvj} and we are done;
so we may (and we shall) assume that $u \notin W(\Teul)$.
Then  $\bar V(u)  = \{ u \}$, so $\tD( \bar V(u) ) = \tD( u )$.

If $\epsilon(u)=1$ then $\tD(u) >0$, otherwise we would have $u \in Z(\Teul)$, and since $u \notin \Omega(\Teul)$ this would imply that
$u \in V(w)$ for some $w \in W(\Teul)$, which would contradict $u \in S(\Teul)$.
Thus $\epsilon(u)=1$ implies $\tD(u)>0$ and hence $\tD( \bar V(u) ) \ge \max(1, \epsilon(u)-2)$.

Consider the case $\epsilon(u)>1$.
Then  Lemma \ref{90q932r8dhd89cnr9} implies that $N_u>1$ and hence that $\tD(u) \ge \epsilon(u)-2$.
So there only remains to show that if $\epsilon(u)=2$ then $\tD(u)>0$.
By contradiction, assume that $\epsilon(u)=2$ and $\tD(u)\le 0$.  Then in fact $\tD(u) = 0$. 
Let $u^*$ be the unique vertex of $\Neul$ that is adjacent to $u$ and such that $\{ u^*, u \} \neq e$;
let $e^* = \{ u^*, u \}$.
Then $(u^*,e^*) \succ (u,e)$, $\tD(\Neul(u^*,e^*)) = \tD(\Neul(u,e))$,
and $\Omega(\Teul)$ is disjoint from  $\Neul(u^*,e^*) \setminus \Neul(u,e) = \{u\}$; so  $(u^*,e^*)$ is a comb over $(u,e)$ by Prop.\ \ref{0ci19KJTghL872je309}.
Since $(u^*,e^*) \notin X^+(u,e)$ by assumption, it follows that $u^* \notin S(\Teul)$.
Thus $(u, e^*)$ is a tooth and hence $u \in W(\Teul)$, a contradiction.
This proves (a).

Assume that $(u,e)$ is a tooth. Then $\Neul(u,e) \subseteq \bar V(u)$, so part (a) implies that
$\tD( \bar V(u) \cup \Neul(u,e) )  = \tD( \bar V(u) ) \ge \max(1,\epsilon(u)-2)$.
If $\delta^*(u)=2$ then $\tD( \bar V(u) \cup \Neul(u,e) ) \ge 1  = | \delta^*(u)-3 |$ and 
if $\delta^*(u)\ge3$ then $\tD( \bar V(u) \cup \Neul(u,e) ) \ge \epsilon(u)-2 \ge \delta^*(u)-2 >  \delta^*(u)-3 = |\delta^*(u)-3|$.
This shows that (b) and (c) are true whenever $(u,e)$ is a tooth.

From now-on we assume that  $(u,e)$ is not a tooth.  Let us prove:
\begin{equation} \label {0chbqnvu7CVRFMWKZFVcEI9gcf653}
\textit{ if $\delta^*(u)=2$ then $\tD( \bar V(u) \cup \Neul(u,e) ) \ge 1$.}
\end{equation}
Proceeding by contradiction, we assume that  $\delta^*(u)=2$ and $\tD( \bar V(u) \cup \Neul(u,e) ) \le 0$.
Since  $\delta^*(u) = 2$ and $e$ is an edge in $S(\Teul)$ (because $(u,e)$ is not a tooth), there is a unique vertex $u^* \in S(\Teul)$ such that
$u^*$ is adjacent to $u$ and $\{u^*,u\} \neq e$; let $e^* =\{u^*,u\}$.
Then $\tD( \Neul(u^*,e^*) ) = \tD( \bar V(u) \cup \Neul(u,e) ) \le 0$, so $(u^*,e^*)$ is nonpositive. 
As $(u^*,e^*) \succ (u,e)$, it follows that $(u,e)$ is also nonpositive, so $\eta(u^*,e^*)=0=\eta(u,e)$;
as $\Omega(\Teul)$ is disjoint from $\Neul(u^*,e^*) \setminus \Neul(u,e) = \bar V(u)$,
Prop.\ \ref{0ci19KJTghL872je309} implies that $(u^*,e^*)$ is a comb over $(u,e)$.
Since $(u^*,e^*) \notin X^+(u,e)$ by assumption, it follows that $u^* \notin S(\Teul)$, a contradiction.
This proves \eqref{0chbqnvu7CVRFMWKZFVcEI9gcf653}.
It follows that (b) is true (and (c) is trivially true) when $\delta^*(u)=2$.

There remains to prove (b) and (c) when $\delta^*(u)\ge3$.
Let 
$$
A = \{e\} \cup \setspec{ e' \in \Eeul_u }{ \text{$(u,e')$ is a tooth} }.
$$
Since $\epsilon(u)= |A| + \delta^*(u)-1$,
Thm \ref{P90werd23ewods0ci} gives
$ R(u,A) + (\delta^*(u)-2) { \textstyle (1 - \frac1{N_u}) }  =   1 + \frac{ \bD(u,A) - 1 - \sum_{e' \in A}\eta(u,e')}{N_u} $.
Note that $\sum_{e' \in A}\eta(u,e') = \eta(u,e)$ because $\eta(u,e')=0$ whenever $(u,e')$ is a tooth.
After some straightforward manipulations we obtain
$$
\tD( \bar V(u) \cup \Neul(u,e) ) = \bD(u,A) = R(u,A) N_u  +   (\delta^*(u)-3) (N_u-1)  +  \eta(u,e) .
$$
The assumption $\delta^*(u)\ge3$ implies that  $N_u \ge 2$, so
$\tD( \bar V(u) \cup \Neul(u,e) ) \ge \delta^*(u)-3$.
If equality holds then $\eta(u,e) = 0$ (so $(u,e)$ is nonpositive) and $R(u,A)=0$,
so $t(u)=0$ and $R(u,\{e\})=0$; the last condition implies that $M(u,e)=1$,
so Lemma \ref{0hp9fh023wbpchvgrsjs256} gives $u > u'$ where $e = \{u,u'\}$. 
\end{proof}

\begin{remark}  \label {p0v9bhbe57reti9kr8JF5Ftjfn}
{\it If $z \in \In(\Teul)$ and $(u,e) \in \Oeul(\Teul,z)$ then  $X^+(u,e) \subseteq \Oeul(\Teul,z)$.}
(This observation is used in the proofs of Cor.\ \ref{0c9in34dh5sifzsshnkus} and Lemma \ref{A0c9vub23oW8fgbgp0q2arft}.)
\end{remark}

\begin{corollary} \label {0c9in34dh5sifzsshnkus}
Choose $z \in \In(\Teul)$ and let $C \in \overline\Oeul = \overline\Oeul(\Teul,z)$.
\begin{enumerata}

\item $\tD( \bar V(u_C) ) \ge \max(1, \epsilon(u_C)-2)>0$ if and only if $| \Omega(\Teul) | \le 1$.

\item If $\delta^*(u_C) \ge 2$ then $\tD( \bar V(u_C) \cup \Neul(u_C,e_{u_C}) ) \ge | \delta^*(u_C) - 3 |$.

\item If $\delta^*(u_C) \ge 3$ and $\tD( \bar V(u_C) \cup \Neul(u_C,e_{u_C}) ) = \delta^*(u_C) - 3$
then $R(u_C,\{e_{u_C}\}) = 0$, $t(u_C)=0$, $(u_C,e_{u_C})$ is nonpositive, and $u_C > u'$ where $e_{u_C} = \{u_C,u'\}$. 

\end{enumerata}
\end{corollary}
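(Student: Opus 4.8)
The plan is to deduce the Corollary from Lemma \ref{cp0v9i230wedqwo} by identifying $(u_C,e_{u_C})$ with an admissible pair for that Lemma, after first clearing away two degenerate situations.

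First I would observe that if $|S(\Teul)|=1$ then $\overline\Oeul(\Teul,z)=\emptyset$ by Def.\ \ref{c20w39w93e9d0dqMne89wcg9}, so there is no $C$ and all three assertions hold vacuously; hence, invoking Lemma \ref{p309ef2GFT485ryf}(d), I may assume $|S(\Teul)|>1$, which forces $\Teul$ not to be a brush and $|\Neul|>1$. Next I would settle the case $|\Omega(\Teul)|=2$ directly: by Prop.\ \ref{A09g2nr9f2wH03rf9rgeF} there is a $\tD$-trivial path $(z_1,\dots,z_n)$ with $\Neul=S(\Teul)=\{z_1,\dots,z_n\}$ and $\In(\Teul)=\Omega(\Teul)=\{z_1,z_n\}$, and $\overline\Oeul(\Teul,z)=\{C_0\}$ where $u_{C_0}$ is the endpoint of the path other than $z$; thus $u_C=u_{C_0}\in\Omega(\Teul)\subseteq Z(\Teul)$, so $\epsilon(u_C)=1$, $\tD(u_C)\le0$, $u_C\notin W(\Teul)$, $\bar V(u_C)=\{u_C\}$, $t(u_C)=0$ and $\delta^*(u_C)=\epsilon(u_C)=1$. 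Then $\tD(\bar V(u_C))=\tD(u_C)\le0<1=\max(1,\epsilon(u_C)-2)$, so the inequality in (a) fails --- consistent with $|\Omega(\Teul)|\le1$ being false --- while the hypotheses $\delta^*(u_C)\ge2$ and $\delta^*(u_C)\ge3$ of (b) and (c) also fail, making those assertions vacuous.

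In the remaining case $|\Omega(\Teul)|\le1$ I would apply Lemma \ref{cp0v9i230wedqwo} with $(u,e)=(u_C,e_{u_C})$, checking its three hypotheses. That $\Teul$ is not a brush is already arranged. That $u_C\notin\Omega(\Teul)$ follows because either $\Omega(\Teul)=\emptyset$, or $\Omega(\Teul)$ is a singleton which by Def.\ \ref{cpan3bAfon98gbvqrkdLlxdir6} coincides with $\In(\Teul)$ and hence contains $z$, whereas $u_C\in S(\Teul)\setminus\{z\}$. That $X^+(u_C,e_{u_C})=\{(u_C,e_{u_C})\}$ follows from Rem.\ \ref{p0v9bhbe57reti9kr8JF5Ftjfn}, which places $X^+(u_C,e_{u_C})$ inside $\Oeul(\Teul,z)$: any member is then a comb over $(u_C,e_{u_C})$, hence lies in the $\sim$-class $C$ and is $\succeq(u_C,e_{u_C})$, so it must coincide with the greatest element $(u_C,e_{u_C})$ of the totally ordered chain $(C,\preceq)$ (Thm \ref{cvnv7nd6ykawsujwryf9v}). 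With these verified, Lemma \ref{cp0v9i230wedqwo}(a) gives $\tD(\bar V(u_C))\ge\max(1,\epsilon(u_C)-2)>0$, which together with the standing $|\Omega(\Teul)|\le1$ yields the equivalence in (a); parts (b) and (c) of the Corollary are then verbatim parts (b) and (c) of the Lemma.

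The argument involves no serious difficulty --- the content is entirely in matching up definitions. The step most prone to slips is the $|\Omega(\Teul)|=2$ bookkeeping (keeping track of which endpoint is $u_C$ and verifying $\bar V(u_C)=\{u_C\}$), together with the verification that $X^+(u_C,e_{u_C})$ is a singleton; I expect this last point to be where one must be most careful about the interplay between $X^+$, the relation $\sim$, and the convention that $u_C$ names the greatest element of its class.
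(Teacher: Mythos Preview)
Your proposal is correct and follows essentially the same approach as the paper's proof: both split into the cases $|\Omega(\Teul)|=2$ (handled via Prop.~\ref{A09g2nr9f2wH03rf9rgeF} and Thm~\ref{Xcoikn23ifcdKJDluFYT937}, where the biconditional in (a) has both sides false and (b),(c) are vacuous) and $|\Omega(\Teul)|\le1$ (reduced to Lemma~\ref{cp0v9i230wedqwo} after verifying its hypotheses, in particular $X^+(u_C,e_{u_C})=\{(u_C,e_{u_C})\}$ via Rem.~\ref{p0v9bhbe57reti9kr8JF5Ftjfn}). Your treatment is slightly more explicit in spelling out the $|S(\Teul)|=1$ vacuity and the $X^+$ argument, but the content is the same.
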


\begin{proof}
If $| \Omega(\Teul) | = 2$ then assertions (b) and (c) are true, because they are implications whose hypotheses are false
(because $\delta^*(u_C)=1$ by Prop.\ \ref{A09g2nr9f2wH03rf9rgeF});
assertion (a) is true because it has the form $\Pscr \Leftrightarrow \Qscr$ with both $\Pscr,\Qscr$ false
(because  $\tD(\bar V(u_C)) = \tD(u_C) \le 0$ by  Thm \ref{Xcoikn23ifcdKJDluFYT937}).

If $| \Omega(\Teul) | \le 1$ then  $\Omega(\Teul) \subseteq \{z\}$  and $u_C \in S(\Teul) \setminus \{z\}$, so $u_C \notin \Omega(\Teul)$.
We have $X^+(u_C,e_{u_C}) \subseteq \Oeul$ by Rem.\ \ref{p0v9bhbe57reti9kr8JF5Ftjfn}, so $X^+(u_C,e_{u_C}) = \{ (u_C,e_{u_C}) \}$ and hence
$(u_C,e_{u_C})$ satisfies all hypotheses of Lemma \ref{cp0v9i230wedqwo}
(note that $\Teul$ is not a brush since $\overline\Oeul \neq \emptyset$ by assumption).
By  Lemma \ref{cp0v9i230wedqwo}, (a), (b) and (c) are all true.
\end{proof}

\begin{lemma} \label {A0c9vub23oW8fgbgp0q2arft}
Assume that $| S(\Teul) | > 1$. 
Choose $z \in \In(\Teul)$ and consider the pair $(u_0,e_{u_0})$  defined in part {\rm (d)} of Def.\ \ref{c20w39w93e9d0dqMne89wcg9}.\footnote{Note
that $(u_0,e_{u_0})$ is defined if and only if $| S(\Teul) | > 1$ (see Def.\ \ref{c20w39w93e9d0dqMne89wcg9}(d)).}
The following are equivalent:
\begin{enumerata}

\item $\Omega(\Teul) \neq \emptyset$

\item $\tD\big( \bar V( z ) ) \le 0$

\item $(u_0,e_{u_0})$  is nonpositive

\item $(u_0,e_{u_0})$  is a maximal element of the set of nonpositive elements of $P_S(\Teul)$.

\end{enumerata}
Moreover, the following hold.
\begin{enumerate}

\item[(i)] If $\tD\big( \bar V( u_0 ) \cup \Neul( u_0, e_{u_0} ) \big) \le 1$ then conditions {\rm (a--d)} are satisfied.

\item[(ii)] If $\delta^*(u_0)=2$ and $\tD\big( \bar V( u_0 ) \cup \Neul( u_0, e_{u_0} ) \big) = 1$ then $R(u_0,A)=1$,
where we define $A = \{e_{u_0}\} \cup \setspec{ e \in \Eeul_{u_0} }{ \text{$(u_0,e)$ is a tooth} }$.

\end{enumerate}
\end{lemma}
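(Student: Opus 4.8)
The four conditions (a)--(d) are statements about nonpositivity and $\Omega(\Teul)$, so the plan is to establish the cycle of implications (a)$\Rightarrow$(b)$\Rightarrow$(c)$\Rightarrow$(d)$\Rightarrow$(a), then handle the two extra assertions. For (a)$\Rightarrow$(b): assuming $\Omega(\Teul) \neq \emptyset$, I would recall that $z \in \In(\Teul) = \Omega(\Teul)$ by Def.\ \ref{cpan3bAfon98gbvqrkdLlxdir6}, so $z \in \Omega(\Teul) \subseteq Z(\Teul)$, which by definition of $Z(\Teul)$ gives $\epsilon(z)=1$ and $\tD(z) \le 0$; since $z \in \Omega(\Teul)$ we have $z \notin \bigcup_{w \in W} V(w)$, so no tooth $(z,e)$ exists, hence $\bar V(z) = \{z\}$ and $\tD(\bar V(z)) = \tD(z) \le 0$. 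For (b)$\Rightarrow$(c): by the definitions in \ref{c20w39w93e9d0dqMne89wcg9}(d), $(u_0, e_{u_0})$ is the greatest element of the comb-class $C_0$ and $(z', e_{z'})$ its least, so $(u_0,e_{u_0})$ is a comb over $(z',e_{z'})$; moreover $\Neul(u_0, e_{u_0}) = \bar V(u_0) \cup (\text{stuff between } u_0 \text{ and } z') \supseteq \bar V(z)$, and in fact since $z$ is adjacent to $z'$ with $\epsilon(z)=1$, one checks $\bar V(z) = \{z\}$ sits at the bottom; I would argue $\tD(\Neul(u_0,e_{u_0})) = \tD(\bar V(z)) \le 0$ using that the comb over structure forces $\epsilon = 2$, $\tD = 0$ on the intermediate vertices (Cor.\ \ref{cjvl45otbsAZofg9bgf} type reasoning, or directly that $\Neul(u_0,e_{u_0})\setminus\{z\}$ contributes $0$ to $\tD$). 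Hence $(u_0,e_{u_0})$ is nonpositive.

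For (c)$\Rightarrow$(d): if $(u_0, e_{u_0})$ is nonpositive, I must show it is \emph{maximal} among nonpositive elements of $P_S(\Teul)$. Suppose $(u', e') \in P_S(\Teul)$ is nonpositive with $(u', e') \succ (u_0, e_{u_0})$. Then $\tD(\Neul(u',e')) \ge \tD(\Neul(u_0,e_{u_0}))$ by Lemma \ref{p0c9vin12q09wsc}, and both are $\le 0$; by Prop.\ \ref{DKxcnpw93sdo} both have $\eta = 0$, so by Prop.\ \ref{0ci19KJTghL872je309}(a) (applied with the disjointness from $\Omega$ coming for free since $\eta(u',e') = \eta(u_0,e_{u_0})$ forces... actually I need $\Omega(\Teul)$ disjoint from the intermediate set) $(u',e')$ is a comb over $(u_0,e_{u_0})$. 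But then following the chain of immediate predecessors from $(u',e')$ down, the vertex $z'$ adjacent to $z$ would lie strictly inside a comb in a position inconsistent with $z \in S(\Teul)$; more precisely, $(u_0, e_{u_0})$ was chosen as the greatest element of $C_0$, and being a comb over it from strictly above would extend $C_0$ upward through a vertex not in $S(\Teul)$ or would force $z \notin \In(\Teul)$. I would make this rigorous by noting that if $(u', e')$ is a comb over $(u_0, e_{u_0})$ with $(u',e') \in P_S(\Teul)$, then $(u', e') \in X^+(u_0, e_{u_0}) \subseteq \Oeul(\Teul, z)$ by Rem.\ \ref{p0v9bhbe57reti9kr8JF5Ftjfn}, contradicting maximality of $(u_0, e_{u_0})$ in its class $C_0$ unless $(u',e') = (u_0,e_{u_0})$. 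Then (d)$\Rightarrow$(c) is trivial, and (c)$\Rightarrow$(a) follows by reversing: nonpositivity of $(u_0,e_{u_0})$ gives $\tD(\bar V(z)) \le 0$ (running (b)$\Leftrightarrow$(c) backwards), hence $z \in Z(\Teul)$; and since $z \in S(\Teul)$, Lemma \ref{p309ef2GFT485ryf} plus $z \in Z(\Teul)$ forces $z \in \Omega(\Teul)$, so $\Omega(\Teul) \neq \emptyset$. Actually the cleanest is (b)$\Rightarrow$(a): $\tD(\bar V(z)) \le 0$ with $\bar V(z) = \{z\}$ (which holds because $z \in S(\Teul)$ so $(z,e)$ is never a tooth, forcing $\bar V(z)=\{z\}$ — wait, $z$ could be some $u_C$... but $z \in \In(\Teul)$ has $\delta^*(z) \le 1$; if $z \in W$ then $\bar V(z) \ne \{z\}$, but then $\tD(\bar V(z)) \ge 1 > 0$ by Lemma \ref{p0293efp0cw23ep0hvj}, contradiction, unless $\Teul$ is a brush, excluded by $|S(\Teul)|>1$). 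So $z \notin W$, $\bar V(z) = \{z\}$, $\tD(z) \le 0$, $\epsilon(z) = \delta^*(z) \le 1$; if $\epsilon(z) = 1$ and $\tD(z)\le 0$ then $z \in Z(\Teul)$, and $z \in S(\Teul) \setminus \bigcup V(w)$ gives $z \in \Omega(\Teul)$. (The case $\epsilon(z)=0$ means $|\Neul|=1$, excluded.)

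For the final two assertions: (i) says $\tD(\bar V(u_0) \cup \Neul(u_0, e_{u_0})) \le 1$ implies (a)--(d) hold. Note $\bar V(u_0) \cup \Neul(u_0,e_{u_0}) = \Neul(z', e_{z'}) \cup \{u_0$-side stuff$\}$; actually $\bar V(u_0) \subseteq \Neul(u_0,e_{u_0})$ is false in general — rather $\Neul(u_0,e_{u_0})$ consists of everything below $e_{u_0}$, and $\bar V(u_0)$ hangs off $u_0$, so their union is $\{u_0\} \cup \bigcup_{\text{teeth at }u_0}\Neul(u_0,e) \cup \Neul(u_0,e_{u_0})$. I would argue contrapositively using Cor.\ \ref{0c9in34dh5sifzsshnkus}: if $\Omega(\Teul) = \emptyset$, then by Cor.\ \ref{0c9in34dh5sifzsshnkus}(a) $\tD(\bar V(u_0)) \ge \max(1,\epsilon(u_0)-2) > 0$, and a small computation (adding the nonnegative $\tD$-contribution of $\Neul(u_0, e_{u_0})$, which is $\ge 0$ when $(u_0,e_{u_0})$ is positive) shows $\tD(\bar V(u_0) \cup \Neul(u_0,e_{u_0})) \ge 1$ unless it equals... hmm, I'd need $\tD(\Neul(u_0,e_{u_0})) \ge 1$ too, which holds because $\Omega = \emptyset$ means $(u_0,e_{u_0})$ is not nonpositive (by the equivalence just proved), i.e. $\tD(\Neul(u_0,e_{u_0})) \ge 1$. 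So the union has $\tD \ge \tD(u_0) + \tD(\Neul(u_0,e_{u_0})) \ge$ something, and I'd push it past $1$; the delicate point is double-counting, which Thm \ref{P90werd23ewods0ci} with the right set $A$ resolves cleanly. For (ii): with $\delta^*(u_0) = 2$ and the union having $\tD = 1$, I apply Thm \ref{P90werd23ewods0ci} to $u_0$ with $A = \{e_{u_0}\} \cup \{\text{tooth-edges at }u_0\}$; since $\epsilon(u_0) = |A| + \delta^*(u_0) - 1 = |A| + 1$, the formula reads $R(u_0,A) + (1-\tfrac1{N_{u_0}}) = 1 + \frac{\bD(u_0,A) - 1 - \sum\eta}{N_{u_0}}$, and with $\sum_{e\in A}\eta(u_0,e) = 0$ (teeth have $\eta = 0$, and $\eta(u_0,e_{u_0}) = 0$ since nonpositive by (i)) and $\bD(u_0,A) = \tD(\bar V(u_0)\cup\Neul(u_0,e_{u_0})) = 1$, this collapses to $R(u_0,A) = 1$.

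\textbf{Main obstacle.} The hardest part will be the careful bookkeeping in (c)$\Rightarrow$(d) — ensuring that a nonpositive element strictly above $(u_0,e_{u_0})$ in $P_S(\Teul)$ really does force a contradiction with $z \in \In(\Teul)$, using the comb-equivalence machinery of Thm \ref{cvnv7nd6ykawsujwryf9v} and Rem.\ \ref{p0v9bhbe57reti9kr8JF5Ftjfn} correctly — and, in (i), getting the $\tD$ of the union $\bar V(u_0) \cup \Neul(u_0,e_{u_0})$ expressed without double-counting, which requires choosing precisely the right subset $A \subseteq \Eeul_{u_0}$ in Thm \ref{P90werd23ewods0ci} and verifying the $\eta$-terms vanish.
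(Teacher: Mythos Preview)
Your approach is essentially the same as the paper's, using the same tools (Prop.~\ref{0ci19KJTghL872je309}, Prop.~\ref{DKxcnpw93sdo}, Rem.~\ref{p0v9bhbe57reti9kr8JF5Ftjfn}, Thm~\ref{P90werd23ewods0ci}), but two of your implications have real gaps.

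\textbf{On (b)$\Rightarrow$(c).} You claim the comb-over structure forces $\epsilon=2$ and $\tD=0$ on intermediate vertices, giving $\tD(\Neul(u_0,e_{u_0})) = \tD(\bar V(z))$. This is false: a comb permits $\epsilon(v)=3$ with a tooth hanging off $v$ (Def.~\ref{c09Nc23owsfcnp2q0wuXwoh}), and even when $\epsilon(v)=2$ it only gives $R(v,\{f\})<1$, not $\tD(v)=0$. The condition you invoke is the stronger one from Prop.~\ref{0ci19KJTghL872je309}(b), which additionally requires $c(u_0,e_{u_0}) = c(z',e_{z'})$ --- and Cor.~\ref{cjvl45otbsAZofg9bgf} is about $\dot c(C)=0$, again a $c$-constancy statement, not a bare comb statement. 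The correct route (which the paper takes, doing (a)$\Rightarrow$(c) instead) is: once you know $\bar V(z)=\{z\}$ and $\tD(z)\le 0$, the pair $(z',e_{z'})$ is nonpositive, hence $\eta(z',e_{z'})=0$ by Prop.~\ref{DKxcnpw93sdo}; the comb-over gives $\eta(u_0,e_{u_0})=\eta(z',e_{z'})=0$ by Prop.~\ref{0ci19KJTghL872je309}(a); then Prop.~\ref{DKxcnpw93sdo} yields $(u_0,e_{u_0})$ nonpositive. No claim about intermediate $\tD$-values is needed.

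\textbf{On (c)$\Rightarrow$(d).} You correctly flag that applying Prop.~\ref{0ci19KJTghL872je309}(a) requires $\Omega(\Teul)$ disjoint from $\Neul(u',e') \setminus \Neul(u_0,e_{u_0})$, but in your cycle (a)$\Rightarrow$(b)$\Rightarrow$(c)$\Rightarrow$(d)$\Rightarrow$(a) you reach (c)$\Rightarrow$(d) without knowing (a), so you cannot locate $\Omega(\Teul)$. The paper fixes this by reordering: it first disposes of the case $|\Omega(\Teul)|=2$ (where all of (a)--(d) hold trivially via Prop.~\ref{A09g2nr9f2wH03rf9rgeF}), then proves (d)$\Rightarrow$(c)$\Rightarrow$(b)$\Rightarrow$(a) and (a)$\Rightarrow$(c) \emph{before} (c)$\Rightarrow$(d). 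At that point $\Omega(\Teul)=\{z\}\subseteq \Neul(u_0,e_{u_0})$ is available, giving the required disjointness; then Rem.~\ref{p0v9bhbe57reti9kr8JF5Ftjfn} puts $(u',e')$ into $\Oeul(\Teul,z)$, hence into $C_0$, forcing $(u',e')=(u_0,e_{u_0})$ --- exactly as you sketch.

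Your arguments for (i) and (ii) are correct and match the paper's. The ``double-counting'' worry in (i) dissolves once you note $(u_0,e_{u_0})$ is not a tooth (it lies in $\Oeul$), so $\bar V(u_0)\cap\Neul(u_0,e_{u_0})=\emptyset$; then $\tD(\bar V(u_0))>0$ (Cor.~\ref{0c9in34dh5sifzsshnkus}(a), after handling $|\Omega|=2$) forces $\tD(\Neul(u_0,e_{u_0}))\le 0$. For (ii), the $\eta$-terms vanish because all $e\in A$ are either teeth or $e_{u_0}$ itself, which is nonpositive by (i).
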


\begin{proof}
Consider the case where $| \Omega(\Teul) | = 2$. 
Then conditions (a--d) are equivalent, since they are all true by Prop.\ \ref{A09g2nr9f2wH03rf9rgeF} and Thm \ref{Xcoikn23ifcdKJDluFYT937}. 
Moreover, (i) (resp.\ (ii)) is true because it is an implication whose conclusion is true (resp.\ whose hypothesis is false).
So the Lemma is true whenever $| \Omega(\Teul) | = 2$. 

From now-on, assume that $|\Omega(\Teul)| \le 1$. Note that this implies that $\Omega(\Teul) \subseteq \{z\}$.

It is clear that (d) implies (c).
Consider $(z',e_{z'})$ defined in Def.\ \ref{c20w39w93e9d0dqMne89wcg9}(d) and note that $\Neul(z',e_{z'}) = \bar V(z)$.
If (c) is true then $(z',e_{z'})$ is nonpositive (because $(z',e_{z'}) \preceq ( u_0, e_{u_0} )$), so
$\tD( \bar V(z) )  = \tD( \Neul(z',e_{z'}) ) \le 0$, showing that (c) implies (b).
If (b) is true then $z \notin W$ by Lemma \ref{p0293efp0cw23ep0hvj} (note that $\Teul$ is not a brush since $| S(\Teul) | > 1$),
so $t(z)=0$ and consequently $\epsilon(z)=1$ (because $\delta^*(z)=1$);
we also have $\tD(z) = \tD( \bar V(z) ) \le 0$, so $z \in Z(\Teul)$.
The fact that $z \in S(\Teul)$ implies that no $w \in W$ satisfies $z \in V(w)$, so $z \in \Omega(\Teul)$, showing that (b) implies (a).

Assume that (a) is true.
Then $\Omega(\Teul)=\{z\}$, so $\tD(z)\le0$ and consequently $z \notin W(\Teul)$;
it follows that $\Neul(z',e_{z'}) = \bar V(z) = \{z\}$ and hence that
$\tD( \Neul(z',e_{z'}) ) = \tD(z) \le 0$, which shows that $(z',e_{z'})$ is nonpositive.
So $\eta(z',e_{z'})=0$ by Prop.\ \ref{DKxcnpw93sdo}.
As $( u_0, e_{u_0} )$ is a comb over $(z',e_{z'})$,
we have $\eta( u_0, e_{u_0} ) = \eta(z',e_{z'})=0$ by Prop.\ \ref{0ci19KJTghL872je309}, so 
$( u_0, e_{u_0} )$ is nonpositive by Prop.\ \ref{DKxcnpw93sdo}.  Thus (a) implies (c).

To prove that (c) implies (d), suppose that $( u_0, e_{u_0} ) \preceq (v,f)$ where $(v,f)$ is a nonpositive element of $P_S(\Teul)$;
then $\eta( u_0, e_{u_0} ) = 0 = \eta(v,f)$ and $\Omega(\Teul)$ is disjoint from $\Neul(v,f) \setminus \Neul( u_0, e_{u_0} )$
(because $\Omega(\Teul)\subseteq\{z\} \subseteq \Neul(u_0,e_{u_0})$)
so $(v,f)$ is a comb over $( u_0, e_{u_0} )$ by Prop.\ \ref{0ci19KJTghL872je309}.
Thus $(v,f) \in X^+( u_0, e_{u_0} )$ and hence  $(v,f) \in C_0$ by Rem.\ \ref{p0v9bhbe57reti9kr8JF5Ftjfn}.
Then $(v,f) = ( u_0, e_{u_0} )$ because $( u_0, e_{u_0} )$ is the greatest element of $C_0$. So (c) implies (d).

This shows that (a--d) are equivalent.

To prove (i), assume that $\tD\big( \bar V( u_0 ) \cup \Neul( u_0, e_{u_0} ) \big) \le 1$.
We noted in Def.\ \ref{c20w39w93e9d0dqMne89wcg9} that no element of $\Oeul$ is a tooth, so $( u_0, e_{u_0} )$ is not a tooth
and consequently $\bar V( u_0 ) \cap \Neul( u_0, e_{u_0} ) = \emptyset$; as 
$\tD\big( \bar V( u_0 ) \big) > 0$ by Cor.\ \ref{0c9in34dh5sifzsshnkus}(a),
we have $1 \ge \tD\big( \bar V( u_0 ) \cup \Neul( u_0, e_{u_0} ) \big) > \tD\big( \Neul( u_0, e_{u_0} ) \big)$,
so  $( u_0, e_{u_0} )$ is nonpositive and (i) is proved.

(ii) Assume that $\delta^*(u_0)=2$ and $\tD\big( \bar V( u_0 ) \cup \Neul( u_0, e_{u_0} ) \big) = 1$.
Note that $\bD(u_0,A) =\tD\big( \bar V( u_0 ) \cup \Neul( u_0, e_{u_0} ) \big)$, so $\bD(u_0,A) = 1$.
It follows from (i) that $( u_0, e_{u_0} )$ is nonpositive, so $\eta( u_0, e_{u_0} ) = 0$ and consequently $\eta(u_0,e)=0$ for all $e \in A$.
Since $\delta^*(u_0)=2$ we have $\epsilon(u_0) - |A| - 1 = 0$, 
so Thm \ref{P90werd23ewods0ci} gives
$ R(u_0,A)  =   1 + \frac{ \bD(u_0,A) - 1 - \sum_{e \in A}\eta(u_0,e)}{N_{u_0}} = 1$.
\end{proof}

\begin{smallremark}
The next result assumes that  $| S(\Teul) | > 1$ and $|\Omega(\Teul)| \le 1$ in order to avoid uninteresting complications of the statement.
Indeed, if $| S(\Teul) | = 1$ then $\overline\Oeul=\emptyset$, so part (a) of Thm \ref{0d2h39fh29834jfp0dfgq} holds trivially,
the first equality in (b) is true (because $\bar V( z ) = \Neul$), and the second equality makes no sense because $(u_0,e_{u_0})$ is not defined when
$| S(\Teul) | = 1$.  The case  $|\Omega(\Teul)| = 2$ is treated in Prop.\ \ref{A09g2nr9f2wH03rf9rgeF} and Thm \ref{Xcoikn23ifcdKJDluFYT937},
and does not need to be reiterated in the Theorem.
\end{smallremark}

\begin{theorem} \label {0d2h39fh29834jfp0dfgq}
Assume that $| S(\Teul) | > 1$ and that $|\Omega(\Teul)| \le 1$.
Choose $z \in \In(\Teul)$ and consider $\overline\Oeul = \overline\Oeul(\Teul,z)$.
With notation as in Def.\ \ref{c20w39w93e9d0dqMne89wcg9}, the following hold.
\begin{enumerata}
\setlength{\itemsep}{1mm}

\item For each $C \in \overline\Oeul$ we have $\tD( Y_C) = \tD\big( \bar V(u_C) \big) +  \dot c(C)$,
$\tD\big( \bar V(u_C) \big) \ge \max(1, \epsilon(u_C)-2)$ and $\dot c(C) \ge 0$.

\item $\tD( \Neul )
=  \tD\big( \bar V( z ) \big) + \sum_{ C  \in \overline\Oeul  } \tD( Y_C) 
=  \tD\big( \bar V( u_0 ) \cup \Neul( u_0, e_{u_0} ) \big) + \sum_{ C  \in \overline\Oeul \setminus\{C_0\} } \tD( Y_C)$

\end{enumerata}
\end{theorem}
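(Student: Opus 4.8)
The plan is to prove the two equalities in (b) and the three inequalities in (a), relying on the decomposition of $\Neul$ furnished by Lemma \ref{p309ef2GFT485ryf} together with the comb structure of Thm \ref{cvnv7nd6ykawsujwryf9v} and the calculus of $\tD$ developed in Section \ref{Sec:LocalstructureofNeul}. First I would record the set-theoretic picture: by Lemma \ref{p309ef2GFT485ryf}(c), $\Neul = \bigcup_{v \in S(\Teul)} \bar V(v)$ is a disjoint union; and for a fixed choice of $z \in \In(\Teul)$, the set $S(\Teul) \setminus \{z\}$ is partitioned by the combs $C \in \overline\Oeul$, each $C$ contributing the vertices $\{ x : (x,e_x) \in C \}$. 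Writing $C$ with greatest element $(u_C,e_{u_C})$ and least element $(v_C,e_{v_C})$, the set of vertices carried by $C$ is exactly $\{u_C\} \cup \big( \Neul(u_C,e_{u_C}) \setminus \Neul(v_C,e_{v_C}) \big)$ \emph{intersected with} $S(\Teul)$, but once we also attach the pendant brushes $\bar V(x)\setminus\{x\}$ for $x$ in that set, we get precisely $Y_C = \bar V(u_C) \cup \big( \Neul(u_C,e_{u_C}) \setminus \Neul(v_C,e_{v_C}) \big)$. So I would verify that $\{ \bar V(z) \} \cup \{ Y_C : C \in \overline\Oeul \}$ is an f-partition of $\Neul$; this is the bookkeeping heart of the first equality in (b), and it follows from Lemma \ref{p309ef2GFT485ryf}(c) together with the fact that for $C_0$ the least element $(z',e_{z'})$ satisfies $\Neul(z',e_{z'}) = \bar V(z)$, so $\bar V(z)$ is exactly ``$\Neul(u_{C_0},e_{u_{C_0}}) \setminus \Neul(\text{nothing below } z')$'' pattern truncated at $z$. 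Summing $\tD$ over this f-partition (using the additivity built into the definition $\tD(S) = \sum_{v\in S}\tD(v)$ in \ref{pc09vg349fbv8682}) gives $\tD(\Neul) = \tD(\bar V(z)) + \sum_{C} \tD(Y_C)$.

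Next I would prove assertion (a). For the identity $\tD(Y_C) = \tD(\bar V(u_C)) + \dot c(C)$: since $(u_C,e_{u_C})$ is not a tooth (it is an edge of $S(\Teul)$, as noted in \ref{c20w39w93e9d0dqMne89wcg9}), $\bar V(u_C)$ and $\Neul(u_C,e_{u_C}) \setminus \Neul(v_C,e_{v_C})$ are disjoint, so $\tD(Y_C) = \tD(\bar V(u_C)) + \big[\tD(\Neul(u_C,e_{u_C})) - \tD(\Neul(v_C,e_{v_C}))\big]$. Because $(u_C,e_{u_C})$ is a comb over $(v_C,e_{v_C})$, Prop.\ \ref{0ci19KJTghL872je309}(a) gives $\eta(u_C,e_{u_C}) = \eta(v_C,e_{v_C})$, so Lemma \ref{p0c9vin12q09wsc}(c) reduces the bracket to $c(v_C,e_{v_C}) - c(u_C,e_{u_C}) = \dot c(C)$. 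The inequality $\dot c(C) \ge 0$ is then immediate from Lemma \ref{p0c9vin12q09wsc}(b) (the map $c$ is order-reversing and $(u_C,e_{u_C}) \succeq (v_C,e_{v_C})$). The inequality $\tD(\bar V(u_C)) \ge \max(1,\epsilon(u_C)-2)$ is precisely Cor.\ \ref{0c9in34dh5sifzsshnkus}(a), valid because we have assumed $|\Omega(\Teul)| \le 1$.

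For the second equality in (b), I would start from the first and move the block $\bar V(z)$ into $C_0$. By Lemma \ref{A0c9vub23oW8fgbgp0q2arft}, the hypothesis $|\Omega(\Teul)| \le 1$ splits into two cases. If $\Omega(\Teul) = \emptyset$, then $z \notin W(\Teul)$ need not hold, but more to the point $(u_0,e_{u_0})$ is \emph{not} nonpositive, and I would argue that $\bar V(z) \cup \big(\Neul(u_0,e_{u_0}) \setminus \Neul(z',e_{z'})\big) = \bar V(u_0) \cup \Neul(u_0,e_{u_0})$ after noting $\Neul(z',e_{z'}) = \bar V(z)$ — so $\bar V(z) \sqcup Y_{C_0} = \bar V(u_0) \cup \Neul(u_0,e_{u_0})$, and hence $\tD(\bar V(z)) + \tD(Y_{C_0}) = \tD(\bar V(u_0) \cup \Neul(u_0,e_{u_0}))$; substituting into the first equality gives the second. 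If $\Omega(\Teul) = \{z\}$, then $\bar V(z) = \{z\}$ (since $\tD(z) \le 0$ forces $z \notin W(\Teul)$, by Lemma \ref{p0293efp0cw23ep0hvj}), and again $\{z\} \cup \Neul(u_0,e_{u_0}) \setminus \Neul(z',e_{z'})$ combines with the brush $\bar V(u_0)$ to give $\bar V(u_0) \cup \Neul(u_0,e_{u_0})$ — the same reconciliation works. In both cases the key fact is the chain $\Neul(z',e_{z'}) = \bar V(z) \subseteq \Neul(u_0,e_{u_0})$ and that $Y_{C_0} = \bar V(u_0) \cup (\Neul(u_0,e_{u_0}) \setminus \Neul(z',e_{z'}))$.

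The main obstacle I anticipate is not any single hard estimate but getting the f-partition bookkeeping exactly right — in particular handling the degenerate cases where $u_C = v_C$ (a one-edge comb, so $\Neul(u_C,e_{u_C}) \setminus \Neul(v_C,e_{v_C}) = \emptyset$ and $Y_C = \bar V(u_C)$, $\dot c(C) = 0$), where $z'$ itself is the top of its comb ($C_0 = \{(z',e_{z'})\}$, $u_0 = z'$), and where $|S(\Teul)| > 1$ but $S(\Teul)$ is a path (so every comb is genuinely a segment). I would dispatch these by checking that all displayed formulas specialize correctly, and by invoking Remark \ref{p0v9bhbe57reti9kr8JF5Ftjfn} to be sure that $X^+(u_C,e_{u_C}) \subseteq \Oeul$, which is what legitimizes treating $\overline\Oeul$ as an honest partition of the edge set of $S(\Teul)$ into combs with well-defined tops. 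Once the partition is pinned down, every remaining step is a one-line citation of Prop.\ \ref{0ci19KJTghL872je309}, Lemma \ref{p0c9vin12q09wsc}, or Cor.\ \ref{0c9in34dh5sifzsshnkus}.
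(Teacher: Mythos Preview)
Your proposal is correct and follows essentially the same route as the paper: the f-partition $\{\bar V(z)\} \cup \{Y_C : C \in \overline\Oeul\}$ of $\Neul$ via Lemma~\ref{p309ef2GFT485ryf}(c), the identity $\tD(Y_C) = \tD(\bar V(u_C)) + \dot c(C)$ via Prop.~\ref{0ci19KJTghL872je309}(a) and Lemma~\ref{p0c9vin12q09wsc}(c), and the bound from Cor.~\ref{0c9in34dh5sifzsshnkus}(a). The one superfluous step is your case split on $|\Omega(\Teul)|$ for the second equality in (b): the paper simply observes $\Neul(z',e_{z'}) = \bar V(z)$ (which holds regardless of whether $z \in W(\Teul)$, since $z'$ is the unique neighbor of $z$ in $S(\Teul)$ and $\delta^*(z)=1$) and derives $\bar V(z) \sqcup Y_{C_0} = \bar V(u_0) \cup \Neul(u_0,e_{u_0})$ directly, so Lemma~\ref{A0c9vub23oW8fgbgp0q2arft} is not needed here.
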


\begin{proof}
(a) Let $C \in \overline\Oeul$ and let $(u,e_u)$ and $(v,e_v)$ be respectively the greatest and least elements of $(C,\preceq)$.
We already noted that $e_u$ is an edge of $S(\Teul)$;
in particular $(u,e_u)$ is not a tooth, and consequently $\bar V(u) \cap \big( \Neul(u,e_u) \setminus \Neul(v,e_v) \big) = \emptyset$.
This together with $\Neul(v,e_v) \subseteq \Neul(u,e_u)$ implies that
$\tD( Y_C) = \tD\big( \bar V(u) \big) + \tD\big( \Neul(u,e_u) \big) - \tD\big( \Neul(v,e_v) \big)$.
We have $\tD\big( \Neul(u,e_u) \big) - \tD\big( \Neul(v,e_v) \big) = (\eta(u,e_u) - \eta(v,e_v)) + \dot c(C)$
and $\dot c(C) \ge 0$ by Lemma \ref{p0c9vin12q09wsc}.
Since $(u,e_u)$ is a comb over $(v,e_v)$,  $\eta(u,e)=\eta(v,e_v)$ by Prop.\ \ref{0ci19KJTghL872je309}.
Thus $\tD( Y_C) = \tD\big( \bar V(u) \big) +  \dot c(C)$.  
Cor.\ \ref{0c9in34dh5sifzsshnkus}(a) implies that $\tD\big( \bar V(u) \big) \ge \max(1, \epsilon(u)-2)$. 
As $u_C = u$, (a) is proved.

(b) See Def.\ \ref{SJyFDkjyftGkp0c9vb349vby3f6w} for the notion of f-partition.
Since $\overline\Oeul$ is a partition of $\Oeul$ and the first projection $p_1 : \Oeul \to S(\Teul) \setminus \{z\}$ is bijective,
it follows that $\big( p_1(C) \big)_{C \in \overline\Oeul}$ is an f-partition of $S(\Teul) \setminus \{z\}$.
By Lemma \ref{p309ef2GFT485ryf}, $\big( \bar V(x) \big)_{x \in S(\Teul)}$ is an f-partition of $\Neul$.
It is easy to see that $Y_C = \bigcup_{x \in p_1(C)} \bar V(x)$ for each $C \in \overline\Oeul$; it follows that
$\big( Y_C \big)_{C \in \overline\Oeul}$ is an f-partition of $\Neul \setminus \bar V(z)$,
so $\tD( \Neul \setminus \bar V(z) ) =  \sum_{ C  \in \overline\Oeul  } \tD( Y_C)$ and hence
\begin{equation} \label {i87hfyYX6tgygz292w}
\textstyle
\tD( \Neul ) =  \tD\big( \bar V( z ) \big) + \sum_{ C  \in \overline\Oeul  } \tD( Y_C) ,
\end{equation}
which proves the first part of (b).
As in paragraph \ref{c20w39w93e9d0dqMne89wcg9},
let $u_0 = u_{C_0}$ and note that $(u_0,e_{u_0})$ and $(z',e_{z'})$ are respectively the greatest and least elements of $C_0$.
Then $\Neul(z',e_{z'}) = \bar V(z)$, so $Y_{C_0} = \bar V(u_0) \cup \big( \Neul(u_0,e_{u_0}) \setminus \Neul(z',e_{z'}) \big) 
= \bar V(u_0) \cup \big( \Neul(u_0,e_{u_0}) \setminus \bar V(z) \big)$, from which we get
\begin{equation} \label {d0fo239eh9d9h3e7923e}
\tD\big( \bar V( z ) \big) + \tD( Y_{C_0} ) = \tD\big( \bar V(u_0) \cup \Neul(u_0,e_{u_0}) \big).
\end{equation}
Assertion (b) follows from \eqref{i87hfyYX6tgygz292w} and \eqref{d0fo239eh9d9h3e7923e}.
\end{proof}

\begin{lemma} \label {9bvr58sdv3wof9f}
Let $z \in \In(\Teul)$ and $C \in \overline\Oeul(\Teul,z)$.
Then 
$$
0 \le t(C) \le \dot c(C) \in \Nat
$$
where we define $t(C) = | \setspec{ (x,e_x) \in C }{ x \neq u_C \text{ and $t(x)>0$}} |$.\footnote{We can think
of $t(C)$ as being the number of teeth of the comb $C$, not counting the teeth attached to $u_C$.}
\end{lemma}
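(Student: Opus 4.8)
The plan is to exploit the basic structural facts about combs together with the ``calculus'' developed in Section~\ref{Sec:LocalstructureofNeul}, and to reason along the totally ordered chain $C$. First I would recall that $(C,\preceq)$ is totally ordered by Thm~\ref{cvnv7nd6ykawsujwryf9v}, with greatest element $(u_C,e_{u_C})$ and least element some $(v,e_v)$, and that $\dot c(C) = c(v,e_v) - c(u_C,e_{u_C})$. Since $c$ is order-reversing and $(v,e_v) \preceq (u_C,e_{u_C})$, we have $\dot c(C)\ge 0$; the fact that $\dot c(C) \in \Nat$ will follow from Prop.~\ref{DKxcnpw93sdo} and Rem.~\ref{uyhmdytjwhwhrkdftraef4gf23h23}, once one observes that $(v,e_v)$ and $(u_C,e_{u_C})$ are either both nonpositive or both ``positive'' — more precisely, that the difference $c(v,e_v)-c(u_C,e_{u_C})$ is an integer because along the comb the passage from one characteristic number to the next multiplies by an integer (Def.~\ref{kcjfnp0293wd}; cf.\ the computation $m\,c(u,e)=c(u',e')/a_{u'}$ in the proof of Lemma~\ref{p0c9vin12q09wsc}, and Prop.~\ref{xfo230weidwods}). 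Actually the cleanest route to $\dot c(C)\in\Nat$ is: $c(u_C,e_{u_C})$ divides $c(v,e_v)$ as a rational number (by iterating $c(u,e)\mid c(u_0,e_i)$ from Def.~\ref{kcjfnp0293wd}), so $\dot c(C) = c(v,e_v) - c(u_C,e_{u_C})$ is a nonnegative integer multiple of $c(u_C,e_{u_C})$ minus $c(u_C,e_{u_C})$, hence in $c(u_C,e_{u_C})\Integ$; to upgrade to $\Integ$ itself one uses that $c(v,e_v)\in\Nat$ when $(v,e_v)$ is nonpositive (Rem.~\ref{uyhmdytjwhwhrkdftraef4gf23h23}), and handles the non-nonpositive case separately. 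I expect the positivity/integrality of $\dot c(C)$ to be routine.

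Next, to prove $0\le t(C)\le \dot c(C)$: the lower bound is trivial since $t(C)$ counts a set of indices. For the upper bound I would run an induction along the chain $C$ from its least element upward, or equivalently decompose $C$ into its successive immediate-predecessor steps. At each step, a vertex $x$ with $(x,e_x)\in C$, $x\neq u_C$, satisfies $\epsilon(x)\in\{2,3\}$ by the definition of comb (Def.~\ref{c09Nc23owsfcnp2q0wuXwoh}). If $\epsilon(x)=2$ then $R(x,\{f\})<1$ where $f$ is the incoming edge, and if $\epsilon(x)=3$ then $R(x,\{f\})=0$ and there is a tooth $(x,g)$ attached at $x$ (so $t(x)>0$). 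The key numerical input is Thm~\ref{P90werd23ewods0ci} (or Cor.~\ref{P90werd23ewods0ci-cor}) applied at $x$ with $A$ the set of edges consisting of $f$ together with the tooth-edge(s) at $x$: since each tooth $(x,g)$ satisfies $M(x,g)>1$ (Lemma~\ref{GRygergGREg8948r}(c)) and $\eta(x,g)=0$, the presence of a tooth at $x$ forces a nonzero term $(1-\frac1{M(x,g)})$ on the left-hand side of the identity in Thm~\ref{P90werd23ewods0ci}, which must be balanced on the right-hand side by an increase in the quantity governing the change of $c$ along the step — concretely, by $M(x,f)=N_x/c(x,f)$ being $>1$, i.e.\ the characteristic number strictly dropping by a factor $\ge 2$ as we pass the step at $x$. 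Thus each vertex $x$ contributing to $t(C)$ produces a step along the comb at which $c$ strictly decreases, and since $c$ takes positive integer values at the relevant pairs and decreases through these steps toward $c(u_C,e_{u_C})$, the number of such steps is at most $c(v,e_v)-c(u_C,e_{u_C}) = \dot c(C)$. (If $c$ only drops by a factor $\ge 2$ one in fact gets the stronger bound $t(C)\le \log_2$ of the ratio, but the additive bound $\dot c(C)$ is all that is claimed.)

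More carefully, I would phrase the induction as follows. For an initial segment of $C$ of the form $C' = \setspec{(w,f)\in P}{(u',e_{u'})\succeq (w,f)\succeq (v,e_v)}$ with greatest element $(u',e_{u'})$, set $t(C') = |\setspec{(x,e_x)\in C'}{x\neq u' \text{ and } t(x)>0}|$; I claim $t(C') \le c(v,e_v)-c(u',e_{u'})$. The base case $C'=\{(v,e_v)\}$ gives $t(C')=0$ and the right side is $0$. For the inductive step, pass from $(u',e_{u'})$ to its immediate predecessor $(u'',e_{u''})$ inside $C$ (so $(u'',e_{u''})\prec(u',e_{u'})$, and $u'$ is the vertex with $e_{u'}=\{u',u''\}$... up to the orientation bookkeeping in Def.~\ref{c20w39w93e9d0dqMne89wcg9}): if $t(u'')>0$ then, as argued above, $c(u'',e_{u''}) \ge 2\,c(u',e_{u'}) > c(u',e_{u'})$ so in particular $c(v,e_v)-c(u',e_{u'}) \ge c(v,e_v)-c(u'',e_{u''}) + 1 \ge t(C'')+1 = t(C')$; if $t(u'')=0$ then $c(u'',e_{u''})\ge c(u',e_{u'})$ and $t(C')=t(C'')\le c(v,e_v)-c(u'',e_{u''})\le c(v,e_v)-c(u',e_{u'})$. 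Either way the claim propagates. Taking $C'=C$ (so $u'=u_C$) gives $t(C)\le c(v,e_v)-c(u_C,e_{u_C})=\dot c(C)$, and combined with the first paragraph this finishes the proof.

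\textbf{Main obstacle.} The delicate point is the implication ``$t(x)>0 \Rightarrow M(x,e_x^{\mathrm{in}})>1$'', i.e.\ that a tooth attached at an interior vertex of the comb forces the characteristic number to strictly drop at that step. This is exactly where one must combine the comb axiom $R(x,\{f\})=0$ when $\epsilon(x)=3$ with the fact $M(x,g)>1$ for the tooth $g$ and the structure of $R(x,\Eeul_x)$: from $R(x,\{f\})=0$ one gets $a_x=1$, $k_{x'}=1$ for all $x'\in\Deul_x$, and $c(x,f)=N_x$ on the ``$f$ side'', while the tooth contributes to make the gcd defining $c$ on the outgoing side strictly smaller, forcing $M(x,e_x^{\mathrm{in}}) = N_x/c(x,e_x^{\mathrm{in}}) \ge 2$. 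Getting the orientation conventions of Def.~\ref{c20w39w93e9d0dqMne89wcg9} and the definition of $c$ (Def.~\ref{kcjfnp0293wd}) to line up correctly here — in particular which pair is the immediate predecessor of which, and whether $u_C$ sits at the top or bottom of the chain relative to the root — is the part requiring the most care, though all the needed inputs (Thm~\ref{P90werd23ewods0ci}, Lemma~\ref{GRygergGREg8948r}, Prop.~\ref{0hp9fh023wbpchvgrsjs256}, Rem.~\ref{pv09w4v7AqOmksndfwoa}) are already available.
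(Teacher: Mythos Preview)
Your telescoping strategy is correct and is exactly what the paper does, but your execution has gaps and the paper's route is much cleaner. The paper writes $\gamma_{u_C,v}=(x_1,\dots,x_n)$ with $x_1=u_C$, $x_n=v$, $e_i=e_{x_i}$, and telescopes $\dot c(C)=\sum_{i=2}^n[c(x_i,e_i)-c(x_{i-1},e_{i-1})]$. The crucial simplification you miss is that since $(x_{i-1},e_{i-1})$ is a comb over $(x_i,e_i)$, Prop.~\ref{0ci19KJTghL872je309}(a) gives $\eta(x_{i-1},e_{i-1})=\eta(x_i,e_i)$, and then Lemma~\ref{p0c9vin12q09wsc}(c) gives
\[
c(x_i,e_i)-c(x_{i-1},e_{i-1})=\tD\big(\Neul(x_{i-1},e_{i-1})\big)-\tD\big(\Neul(x_i,e_i)\big)=\tD\big(\bar V(x_i)\big)\in\Integ.
\]
This instantly yields $\dot c(C)\in\Nat$ (no case split on nonpositivity needed), and when $t(x_i)>0$, i.e.\ $x_i\in W(\Teul)$, Lemma~\ref{p0293efp0cw23ep0hvj} gives $\tD(\bar V(x_i))\ge 1$ (here $\Teul$ is not a brush since $\overline\Oeul\neq\emptyset$). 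Summing over $i$ gives $\dot c(C)\ge t(C)$.

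Your direct approach via the comb axioms can be repaired but has real problems as written. First, your induction goes the wrong way: passing to an ``immediate predecessor'' $(u'',e_{u''})\prec(u',e_{u'})$ shrinks the segment rather than growing it toward $u_C$. Second, from $c(u'',e_{u''})\ge 2\,c(u',e_{u'})$ you cannot conclude the gap is $\ge 1$ unless $c(u',e_{u'})\ge 1$, which you have not established (characteristic numbers are only positive rationals). What does work: at a tooth vertex $x_i$ (with $2\le i\le n$) the comb axiom $R(x_i,\{e_i\})=0$ forces $c(x_i,e_i)=N_{x_i}$, $a_{x_i}=1$, $d(x_i)=N_{x_i}$, and then Def.~\ref{kcjfnp0293wd} gives $c(x_{i-1},e_{i-1})=\gcd(N_{x_i},N_{x_i},c(x_i,g))=c(x_i,g)$; since $M(x_i,g)\ge 2$ and $N_{x_i}\ge 2$ one gets $c(x_i,e_i)-c(x_{i-1},e_{i-1})=N_{x_i}-c(x_i,g)\ge N_{x_i}/2\ge 1$. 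This fixes the gap but is more work than the paper's one-line appeal to $\tD(\bar V(x_i))$. Finally, your headline implication ``$t(x)>0\Rightarrow M(x,e_x^{\mathrm{in}})>1$'' has the edge wrong: the comb axiom forces $M(x_i,e_i)=1$, and the drop occurs in $c(x_{i-1},e_{i-1})$, not in any $M$ evaluated at $x_i$.
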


\begin{proof}
Let $(u_C,e_{u_C})$ and $(v,e_v)$ be respectively the greatest and least elements of $C$;
write $\gamma_{u_C,v} = (x_1, \dots, x_n)$ and (for each $i$) $e_i = e_{x_i}$.
Then 
\begin{equation} \label {c098bymdjctyrfbv8}
\textstyle \dot c(C) = c(x_n,e_n) - c(x_1,e_1) = \sum_{i=2}^n [ c(x_{i},e_{i})  - c(x_{i-1},e_{i-1})].
\end{equation}
Let $i \in \{2,\dots,n\}$.
Since $(x_{i-1},e_{i-1})$ is a comb over $(x_{i},e_{i})$, Prop.\ \ref{0ci19KJTghL872je309}  implies that
$\eta(x_{i-1},e_{i-1}) = \eta(x_{i},e_{i})$;
thus Lemma \ref{p0c9vin12q09wsc} gives
$$
0 \le c(x_{i},e_{i})  - c(x_{i-1},e_{i-1}) = \tD\big( \Neul(x_{i-1},e_{i-1}) \big) - \tD\big( \Neul(x_{i},e_{i}) \big) \in \Integ,
$$
so in particular $c(x_{i},e_{i})  - c(x_{i-1},e_{i-1}) \in \Nat$.
Suppose that $t(x_i)>0$; then $x_i \in W(\Teul)$, so $\tD( \bar V(x_i) ) \ge 1$ by Lemma \ref{p0293efp0cw23ep0hvj}
(note that  $\Teul$ is not a brush, because the assumption implies that $\overline\Oeul(\Teul,z) \neq \emptyset$).
As $\Neul(x_{i-1},e_{i-1}) \setminus \Neul(x_{i},e_{i}) = \bar V(x_i)$,
we get
$c(x_{i},e_{i})  - c(x_{i-1},e_{i-1})
= \tD\big( \Neul(x_{i-1},e_{i-1}) \big) - \tD\big( \Neul(x_{i-1},e_{i-1}) \big) = \tD( \bar V(x_i) ) \ge 1$.
This together with \eqref{c098bymdjctyrfbv8} implies that $\dot c(C) \ge t(C)$, so we are done.
\end{proof}

\begin{corollary} \label {pTo9v2q3hZYa9r1ge0cX3rg}
Choose $z \in \In(\Teul)$ and consider $\overline\Oeul = \overline\Oeul(\Teul,z)$.
\begin{enumerata}

\item $| \overline\Oeul | + \sum_{C \in \overline\Oeul \setminus \{C_0\}} \dot c(C) \le 1 + \max(0,\tD( \Neul ))$

\item If  $| \Neul | > 1$ and $| \overline\Oeul | \ge \tD( \Neul)$ then $\Omega(\Teul) \neq \emptyset$.

\end{enumerata}
\end{corollary}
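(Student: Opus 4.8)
The plan is to prove both assertions by combining Theorem \ref{0d2h39fh29834jfp0dfgq} (which handles the generic case $|S(\Teul)|>1$ and $|\Omega(\Teul)|\le1$) with Lemma \ref{9bvr58sdv3wof9f} (the bound $t(C)\le\dot c(C)$) and Corollary \ref{0c9in34dh5sifzsshnkus}(a), while treating the degenerate cases $|S(\Teul)|=1$ and $|\Omega(\Teul)|=2$ separately. First I would dispose of the case $|S(\Teul)|=1$: then $\overline\Oeul=\emptyset$, so the sum in (a) is empty, $|\overline\Oeul|=0$, and (a) reads $0\le 1+\max(0,\tD(\Neul))$, which is trivially true; for (b), $|S(\Teul)|=1$ together with $|\Neul|>1$ forces $\Teul$ to be a brush (Lemma \ref{p309ef2GFT485ryf}(d)), hence $W(\Teul)=\{v_0\}$ and $v_0\in W(\Teul)$; one then checks (or invokes Corollary \ref{9vbrtyukey6idckFfFugus}) that $\tD(\Neul)\ge2$, so the hypothesis $|\overline\Oeul|=0\ge\tD(\Neul)$ would force $\tD(\Neul)\le0$, a contradiction — so in the brush case the implication (b) is vacuously true. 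Next, the case $|\Omega(\Teul)|=2$: by Proposition \ref{A09g2nr9f2wH03rf9rgeF}, $\overline\Oeul(\Teul,z)=\{C_0\}$ with $\dot c(C_0)=0$, so $|\overline\Oeul|=1$ and the sum over $\overline\Oeul\setminus\{C_0\}$ is empty; since $\tD(\Neul)\le0$ in that case (Theorem \ref{Xcoikn23ifcdKJDluFYT937}(a)), assertion (a) reads $1\le 1+\max(0,\tD(\Neul))=1$, true; and (b) is immediate since $\Omega(\Teul)\ne\emptyset$ by hypothesis of this case.

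The main case is $|S(\Teul)|>1$ and $|\Omega(\Teul)|\le1$, where Theorem \ref{0d2h39fh29834jfp0dfgq} applies. The key computation for (a): by Theorem \ref{0d2h39fh29834jfp0dfgq}(b), $\tD(\Neul)=\tD(\bar V(z))+\sum_{C\in\overline\Oeul}\tD(Y_C)$, and by part (a) of that theorem $\tD(Y_C)=\tD(\bar V(u_C))+\dot c(C)$ with $\tD(\bar V(u_C))\ge1$ (using $\max(1,\epsilon(u_C)-2)\ge1$). Also $\tD(\bar V(z))$ may be negative, but only in a controlled way: if $\Omega(\Teul)=\emptyset$ then $z\notin Z(\Teul)$ or more precisely $\tD(\bar V(z))\ge\ldots$; actually the cleaner route is to use the second equality in Theorem \ref{0d2h39fh29834jfp0dfgq}(b), namely $\tD(\Neul)=\tD(\bar V(u_0)\cup\Neul(u_0,e_{u_0}))+\sum_{C\in\overline\Oeul\setminus\{C_0\}}\tD(Y_C)$. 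For each $C\in\overline\Oeul\setminus\{C_0\}$ we have $\tD(Y_C)=\tD(\bar V(u_C))+\dot c(C)\ge1+\dot c(C)$, so the sum is $\ge(|\overline\Oeul|-1)+\sum_{C\ne C_0}\dot c(C)$. It remains to bound $\tD(\bar V(u_0)\cup\Neul(u_0,e_{u_0}))$ from below; I would split according to $\delta^*(u_0)$. If $\delta^*(u_0)\ge2$, Corollary \ref{0c9in34dh5sifzsshnkus}(b) gives $\tD(\bar V(u_0)\cup\Neul(u_0,e_{u_0}))\ge|\delta^*(u_0)-3|\ge 0$; if $\delta^*(u_0)=1$ then $\bar V(z)$ is a single edge away and one uses $\tD(\bar V(u_0))\ge1$ together with nonnegativity of $\tD(\Neul(u_0,e_{u_0}))$ when $(u_0,e_{u_0})$ is non-negative, or else $\Omega(\Teul)\ne\emptyset$ by Lemma \ref{A0c9vub23oW8fgbgp0q2arft}(i) if the quantity is $\le1$. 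Combining, $\tD(\Neul)\ge(|\overline\Oeul|-1)+\sum_{C\ne C_0}\dot c(C)$ whenever this is useful, i.e. whenever the right side is positive; and when it is $\le0$ the inequality (a) is automatic since its left side is then $\le1$. This gives $|\overline\Oeul|+\sum_{C\ne C_0}\dot c(C)\le 1+\max(0,\tD(\Neul))$, which is (a).

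For (b), I would argue by contraposition: assume $\Omega(\Teul)=\emptyset$ and $|\Neul|>1$, and show $|\overline\Oeul|<\tD(\Neul)$, i.e. $|\overline\Oeul|\le\tD(\Neul)-1$. With $\Omega(\Teul)=\emptyset$, Lemma \ref{A0c9vub23oW8fgbgp0q2arft} (the equivalences there, read in the negated form) tells us $(u_0,e_{u_0})$ is \emph{not} nonpositive, so $\tD(\Neul(u_0,e_{u_0}))\ge1$; combined with $\tD(\bar V(u_0))\ge1$ and the disjointness $\bar V(u_0)\cap\Neul(u_0,e_{u_0})=\emptyset$ (since $(u_0,e_{u_0})$ is not a tooth), we get $\tD(\bar V(u_0)\cup\Neul(u_0,e_{u_0}))\ge2$. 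Feeding this into the second equality of Theorem \ref{0d2h39fh29834jfp0dfgq}(b) with each $\tD(Y_C)\ge1$ for $C\ne C_0$ yields $\tD(\Neul)\ge2+(|\overline\Oeul|-1)=|\overline\Oeul|+1$, hence $|\overline\Oeul|\le\tD(\Neul)-1<\tD(\Neul)$, as desired. The main obstacle I anticipate is bookkeeping the degenerate subcases cleanly — in particular making sure the sign of $\tD(\bar V(z))$ and the relation between the two expressions in Theorem \ref{0d2h39fh29834jfp0dfgq}(b) are handled uniformly, and that the $\delta^*(u_0)=1$ versus $\delta^*(u_0)\ge2$ split in (a) is airtight; the actual inequalities are then routine applications of the cited results.
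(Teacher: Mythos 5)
Your proposal follows the paper's proof almost exactly: dispose of the cases $|S(\Teul)|=1$ and $|\Omega(\Teul)|=2$ (where $|\overline\Oeul|\le 1$ makes (a) trivial, and Cor.~\ref{9vbrtyukey6idckFfFugus} handles the brush case of (b)), then in the main case feed $\tD(Y_C)\ge 1+\dot c(C)$ and $\tD\big(\bar V(u_0)\cup\Neul(u_0,e_{u_0})\big)\ge 0$ into the second equality of Thm~\ref{0d2h39fh29834jfp0dfgq}(b), and for (b) conclude via Lemma~\ref{A0c9vub23oW8fgbgp0q2arft}. Part (b) and all the degenerate cases are correct.

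The one loose end is your treatment of $\delta^*(u_0)=1$ in part (a). As written, that branch does not establish $\tD\big(\bar V(u_0)\cup\Neul(u_0,e_{u_0})\big)\ge 0$: if $(u_0,e_{u_0})$ is nonpositive then $\tD\big(\Neul(u_0,e_{u_0})\big)=1-c(u_0,e_{u_0})$ can be an arbitrarily negative integer, and $\tD\big(\bar V(u_0)\big)\ge 1$ does not compensate; invoking ``$\Omega(\Teul)\ne\emptyset$'' gives no lower bound either. Fortunately the branch is vacuous exactly where you need it: the quantity $(|\overline\Oeul|-1)+\sum_{C\ne C_0}\dot c(C)$ is positive only when $|\overline\Oeul|>1$, and $|\overline\Oeul|>1$ forces $\delta^*(u_0)\ge 2$ --- this is the fact the paper uses (asserted in the proof of Cor.~\ref{p0c9ifn2o3w9dcpw0e}; it comes from the definition of comb: any $v$ with $(u_0,e_{u_0})\succ(v,e_v)\succeq(z',e_{z'})$ has $\epsilon(v)\le 3$, and when $\epsilon(v)=3$ the third edge must be a tooth, hence not an edge of $S(\Teul)$, so a second comb can only branch off at or beyond $u_0$). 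Either cite that fact, or replace your $\delta^*(u_0)=1$ branch by the observation that in that case $|\overline\Oeul|=1$, so (a) reduces to the trivial inequality $1\le 1+\max(0,\tD(\Neul))$, which your ``right side $\le 0$'' clause already covers.
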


\begin{proof}
To prove (a) we may assume that $| \overline\Oeul | > 1$, otherwise the result is trivial.
Then $| S(\Teul) | > 1$ (because $\overline\Oeul \neq \emptyset$) and $| \Omega(\Teul) | \le 1$ by Prop.\ \ref{A09g2nr9f2wH03rf9rgeF}.
We also have  $\delta^*(u_0)\ge2$, so Cor.\ \ref{0c9in34dh5sifzsshnkus}(b) gives $\tD\big( \bar V( u_0 ) \cup \Neul( u_0, e_{u_0} ) \big) \ge 0$.
We have $\tD( Y_C) \ge 1 + \dot c(C)$ for each $C  \in \overline\Oeul$ by Thm \ref{0d2h39fh29834jfp0dfgq}(a),
so Thm \ref{0d2h39fh29834jfp0dfgq}(b) gives
$\tD( \Neul ) =  \tD\big( \bar V( u_0 ) \cup \Neul( u_0, e_{u_0} ) \big) + \sum_{ C  \in \overline\Oeul \setminus\{C_0\} } \tD( Y_C)
\ge \sum_{ C  \in \overline\Oeul \setminus\{C_0\} } \tD( Y_C) \ge |\overline\Oeul|-1 +  \sum_{C \in \overline\Oeul \setminus \{C_0\}} \dot c(C)$,
so $| \overline\Oeul |  +  \sum_{C \in \overline\Oeul \setminus \{C_0\}} \dot c(C) \le 1 + \tD( \Neul ) \le 1 +  \max(0,\tD( \Neul ))$, proving (a).

(b) Suppose that $| S(\Teul) | = 1$.
As $| \Neul | > 1$, Lemma \ref{p309ef2GFT485ryf} implies that $\Teul$ is a brush; 
then  $\tD( \Neul) \ge 2$ by Cor.\ \ref{9vbrtyukey6idckFfFugus}, which contradicts 
the hypothesis $0 = | \overline\Oeul | \ge \tD( \Neul)$.

This shows that $| S(\Teul) | > 1$.
We may assume that $| \Omega(\Teul) | \le 1$, otherwise the result is trivial.
So the hypothesis of  Thm \ref{0d2h39fh29834jfp0dfgq} is satisfied.
Part (a) of that result gives $\tD( Y_C) \ge 1$ for each $C  \in \overline\Oeul$, so part (b) gives
$\tD( \Neul ) \ge  \tD\big( \bar V( u_0 ) \cup \Neul( u_0, e_{u_0} ) \big) + |\overline\Oeul|-1$,
so $\tD\big( \bar V( u_0 ) \cup \Neul( u_0, e_{u_0} ) \big) \le 1$.
Then $\Omega(\Teul) \neq \emptyset$ by Lemma \ref{A0c9vub23oW8fgbgp0q2arft}(i).
\end{proof}

\begin{corollary} \label {p0c9ifn2o3w9dcpw0e}
Choose $z \in \In(\Teul)$ and assume that $| \overline\Oeul | > 1$, where $\overline\Oeul = \overline\Oeul(\Teul,z)$.
Define:
\begin{itemize}

\item $B=2$ if  $\delta^*(u_0) = 2$, $B=0$ if  $\delta^*(u_0) \neq 2$;

\item $L= | \setspec{ v \in S(\Teul) }{ \delta^*(v)=1 } | =$ the number of leaves of the tree $S(\Teul)$;

\item $\overline\Oeul_i = \setspec{ C \in \overline\Oeul \setminus \{ C_0 \} } { \delta^*(u_C) = i }$ for $i=1,2$;

\item $\overline\Oeul_{>2} = \setspec{ C \in \overline\Oeul \setminus \{ C_0 \} } { \delta^*(u_C) > 2 }$;

\item $T = \sum_{ C \in \overline\Oeul_1} \max(0,t(u_C)-2)  + \sum_{ C \in \overline\Oeul_2} \max(0,t(u_C)-1)  + \sum_{ C \in \overline\Oeul_{>2}} t(u_C)$;

\item $x_0 = \tD\big( \bar V( u_0 ) \cup \Neul( u_0, e_{u_0} ) \big) -  | \delta^*(u_0) - 3 |$;

\item $x_C = \tD( \bar V(u_C) ) -  \max(1, \epsilon(u_C)-2)$ for each  $C \in \overline\Oeul \setminus \{C_0\}$.

\end{itemize}

Then the following hold.

\begin{enumerata}
\setlength{\itemsep}{1mm}

\item $  \displaystyle
\tD( \Neul ) = B + 2(L-2) + | \overline\Oeul_2 | +  \  T + x_0 +  \!\! \sum_{ C \in \overline\Oeul \setminus \{C_0\} } \!\!\! (\dot c(C) + x_C)$

\item We have $B, L-2, | \overline\Oeul_2 |, T, x_0 \in \Nat$ and, for each  $C \in \overline\Oeul \setminus \{C_0\}$,  $\dot c(C), x_C \in \Nat$.

\item $\tD(\Neul) \ge  B + 2(L-2) + | \overline\Oeul_2 | \ge 2 $

\end{enumerata}
\end{corollary}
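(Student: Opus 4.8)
\textbf{Proof plan for Corollary \ref{p0c9ifn2o3w9dcpw0e}.}
The plan is to derive everything from the two formulas in Theorem \ref{0d2h39fh29834jfp0dfgq}, after first installing the hypotheses needed to invoke it. Since $|\overline\Oeul|>1$ we have $|S(\Teul)|>1$ (because $\overline\Oeul\ne\emptyset$), and Prop.\ \ref{A09g2nr9f2wH03rf9rgeF} forces $|\Omega(\Teul)|\le1$; thus the hypothesis of Theorem \ref{0d2h39fh29834jfp0dfgq} is met. The starting point is
\[
\tD(\Neul)
= \tD\big(\bar V(u_0)\cup\Neul(u_0,e_{u_0})\big)
 + \sum_{C\in\overline\Oeul\setminus\{C_0\}}\tD(Y_C),
\]
and, for each $C\in\overline\Oeul\setminus\{C_0\}$, the identity $\tD(Y_C)=\tD(\bar V(u_C))+\dot c(C)$ from part (a) of that Theorem. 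I would then substitute $\tD(\bar V(u_C)) = \max(1,\epsilon(u_C)-2)+x_C$ (the definition of $x_C$) and $\tD(\bar V(u_0)\cup\Neul(u_0,e_{u_0})) = |\delta^*(u_0)-3|+x_0$ (the definition of $x_0$). This gives
\[
\tD(\Neul) = |\delta^*(u_0)-3| + x_0
 + \sum_{C\in\overline\Oeul\setminus\{C_0\}}\big(\max(1,\epsilon(u_C)-2)+\dot c(C)+x_C\big).
\]

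Next I would carry out the bookkeeping that turns $\sum_{C}\max(1,\epsilon(u_C)-2)$ and the constant $|\delta^*(u_0)-3|$ into the combinatorial quantities $B$, $2(L-2)$, $|\overline\Oeul_2|$ and $T$. For each $C\in\overline\Oeul\setminus\{C_0\}$ one has $\epsilon(u_C)=\delta^*(u_C)+t(u_C)$ and $\bar V(u_C)$ contains the $t(u_C)$ teeth attached to $u_C$; splitting $\overline\Oeul\setminus\{C_0\}$ into $\overline\Oeul_1,\overline\Oeul_2,\overline\Oeul_{>2}$ and analysing $\max(1,\delta^*(u_C)+t(u_C)-2)$ in each piece produces exactly $1$, $2$, $\delta^*(u_C)-2$ as the "minimal" contribution plus a defect which assembles into $T$ (using $\max(0,t-2)$, $\max(0,t-1)$, and $t$ respectively). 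Summing the minimal contributions over all of $\overline\Oeul\setminus\{C_0\}$, together with $|\delta^*(u_0)-3|$, should be recognised as $B+2(L-2)+|\overline\Oeul_2|$: here the identity relating $L$ (number of leaves of $S(\Teul)$), the $\delta^*$-values of the $u_C$, and $|\overline\Oeul|$ is the combinatorial heart of the computation — one uses that $(u_C)_{C\ne C_0}$ enumerates $S(\Teul)\setminus\{z\}$ together with the handshake/leaf-count identity for the tree $S(\Teul)$. Assembling the pieces yields formula (a).

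For (b) I would check non-negativity term by term: $L\ge2$ since $S(\Teul)$ is a tree with $|S(\Teul)|>1$ and $z$ is a leaf, so there is a second leaf; $|\overline\Oeul_2|,T\in\Nat$ by definition; $\dot c(C)\in\Nat$ by Lemma \ref{9bvr58sdv3wof9f} (or Thm \ref{0d2h39fh29834jfp0dfgq}(a)); $x_C\in\Nat$ because $\tD(\bar V(u_C))\ge\max(1,\epsilon(u_C)-2)$ by Cor.\ \ref{0c9in34dh5sifzsshnkus}(a), and it is an integer; and $x_0\in\Nat$ because $\delta^*(u_0)\ge2$ (as $|\overline\Oeul|>1$) lets us apply Cor.\ \ref{0c9in34dh5sifzsshnkus}(b) to get $\tD(\bar V(u_0)\cup\Neul(u_0,e_{u_0}))\ge|\delta^*(u_0)-3|$, again an integer. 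Finally (c) is immediate from (a) and (b): dropping the manifestly non-negative terms $T,x_0,\sum(\dot c(C)+x_C)$ gives $\tD(\Neul)\ge B+2(L-2)+|\overline\Oeul_2|$, and since $L\ge2$ and $B\ge0$ this is $\ge|\overline\Oeul_2|\ge0$; the sharper bound $\ge2$ should come from observing that $L\ge2$ together with the structure forces either $L\ge3$ (giving $2(L-2)\ge2$) or, when $L=2$, $S(\Teul)$ is a path so $\delta^*(u_0)=2$ hence $B=2$. The main obstacle I anticipate is the leaf-counting identity in the second paragraph — reconciling $\sum_{C\ne C_0}$ of the $\delta^*$-based quantities with $B+2(L-2)+|\overline\Oeul_2|$ requires a careful, slightly fiddly induction on the tree $S(\Teul)$ (or a direct handshake argument tracking how many $C$ have $\delta^*(u_C)=1,2,>2$), and getting the constant $-2$ (i.e.\ the "$L-2$" rather than "$L$") exactly right is where sign errors would creep in.
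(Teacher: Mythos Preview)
Your approach is essentially identical to the paper's: start from Theorem \ref{0d2h39fh29834jfp0dfgq}(b), substitute the definitions of $x_0$ and $x_C$, split $\overline\Oeul\setminus\{C_0\}$ according to $\delta^*(u_C)$, and reduce the remaining sum to $B+2(L-2)+|\overline\Oeul_2|+T$ via the handshake identity on $S(\Teul)$. Parts (b) and (c) are handled exactly as the paper does.

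Two small slips in your sketch are worth flagging. First, the ``minimal contribution'' of $\max(1,\epsilon(u_C)-2)$ for $C\in\overline\Oeul_2$ is $1$, not $2$: when $\delta^*(u_C)=2$ one has $\max(1,t(u_C))$, whose minimum is $1$, with defect $\max(0,t(u_C)-1)$. The paper organises this by writing $\max(1,\epsilon(u_C)-2)=(\delta^*(u_C)-2)+[\text{correction}]$ uniformly, where the correction is $2+\max(0,t(u_C)-2)$, $1+\max(0,t(u_C)-1)$, or $t(u_C)$ in the three cases; summing the $(\delta^*(u_C)-2)$ piece over all $C\neq C_0$ then gives $1-\delta^*(u_0)$ via handshake. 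Second, it is not true that $(u_C)_{C\neq C_0}$ enumerates $S(\Teul)\setminus\{z\}$; rather, the key fact (which the paper states explicitly) is that every $x\in S(\Teul)\setminus\big(\{z\}\cup\{u_C:C\in\overline\Oeul\}\big)$ has $\delta^*(x)=2$, so only $z$ and the $u_C$ contribute to $\sum_{x\in S(\Teul)}(\delta^*(x)-2)=-2$. With these corrections the bookkeeping closes, and $|\overline\Oeul_1|=L-1$ (the leaves of $S(\Teul)$ being $z$ and the $u_C$ with $C\in\overline\Oeul_1$) finishes the identification.
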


\begin{proof}
(a) The assumption $| \overline\Oeul | > 1$ implies that $| S(\Teul) | > 1$ and  $| \Omega(\Teul) | \le 1$,
so the hypothesis of  Thm \ref{0d2h39fh29834jfp0dfgq} is satisfied;
it also implies that $\delta^*(u_0)\ge2$, so $| \delta^*(u_0)-3 | = B + ( \delta^*(u_0)-3 )$.
By definition of $x_0$, it follows that
$$
\tD\big( \bar V( u_0 ) \cup \Neul( u_0, e_{u_0} ) \big) = x_0 + B + ( \delta^*(u_0) - 3 ) .
$$
By Thm \ref{0d2h39fh29834jfp0dfgq} we have $\tD( Y_C) = \tD\big( \bar V(u_C) \big) +  \dot c(C)$ 
for each $C \in \overline\Oeul$, so
$$
\tD( Y_C) =   \max(1, \epsilon(u_C)-2) + x_C +  \dot c(C) \quad \text{for all $C \in \overline\Oeul \setminus \{ C_0 \}$},
$$
by definition of $x_C$. We have
$\tD( \Neul ) =  \tD\big( \bar V( u_0 ) \cup \Neul( u_0, e_{u_0} ) \big) + \sum_{ C  \in \overline\Oeul \setminus\{C_0\} } \tD( Y_C)$
again by Thm \ref{0d2h39fh29834jfp0dfgq}, so the above remarks give
\begin{equation*} 
\tD( \Neul )
\textstyle  =   x_0 + B + ( \delta^*(u_0) - 3 ) + \sum_{ C  \in \overline\Oeul \setminus\{C_0\} }  \max(1, \epsilon(u_C)-2)
+  \sum_{ C  \in \overline\Oeul \setminus\{C_0\} } (x_C +  \dot c(C)).
\end{equation*}
It therefore suffices to show that 
\begin{equation} \label {Pc0oOv9hnt5dyuX}
\textstyle
( \delta^*(u_0) - 3 ) + \sum_{ C  \in \overline\Oeul \setminus\{C_0\} } \max(1, \epsilon(u_C)-2) = 2(L-2) + | \overline\Oeul_2 | + T .
\end{equation}
For each $C \in \overline\Oeul_{>2}$ we have $\epsilon(u_C)-2 \ge 1$, so
$\max(1, \epsilon(u_C)-2) = \epsilon(u_C)-2 = (\delta^*(u_C)-2) + t(u_C)$.
For each $C \in  \overline\Oeul_{1} \cup \overline\Oeul_{2}$, we may write
$
\max(1, \epsilon(u_C)-2)
= 1 + \max(0, \epsilon(u_C)-3) 
= 1 + \max(0, t(u_C) - (3 - \delta^*(u_C)))
= (\delta^*(u_C)-2) + (3-\delta^*(u_C))   + \max(0, t(u_C) - (3 - \delta^*(u_C)))
$, so
$$
\max(1, \epsilon(u_C)-2) =  (\delta^*(u_C)-2) +
\begin{cases}
2 + \max(0, t(u_C)-2) & \text{if $C \in \overline\Oeul_{1}$}, \\
1 + \max(0, t(u_C)-1) & \text{if $C \in \overline\Oeul_{2}$}, \\
t(u_C) & \text{if $C \in \overline\Oeul_{>2}$\, .}
\end{cases}
$$
Since $\big\{ \overline\Oeul_{1}, \overline\Oeul_{2}, \overline\Oeul_{>2} \big\}$ is a partition of $\overline\Oeul \setminus\{C_0\}$,
we get
\begin{equation}  \label {Pc09Jihvn203wchqob78q10wJ}
\textstyle
\sum_{ C  \in \overline\Oeul \setminus\{C_0\} } \max(1, \epsilon(u_C)-2) =
\sum_{ C  \in \overline\Oeul \setminus\{C_0\} } (\delta^*(u_C)-2) + 2 | \overline\Oeul_1 | + | \overline\Oeul_2 | + T .
\end{equation}
It is well known that the sum of the number ``valency$(x)$ minus 2'', over all vertices $x$ of a tree, is equal to $-2$.
This gives the first equality in
\begin{equation} \label {v9bhu7yoOoOleiriobbgt4875}
\textstyle
-2 = \sum_{x \in S(\Teul)} (\delta^*(x)-2)
= (\delta^*(z)-2) + (\delta^*(u_0)-2) + \sum_{ C  \in \overline\Oeul \setminus\{C_0\} } (\delta^*(u_C)-2),
\end{equation}
while the second equality follows from the observation that
$\delta^*(x)=2$ for all $x \in S(\Teul) \setminus \big( \{z\} \cup \setspec{ u_C }{ C \in \overline\Oeul } \big)$.
Thus $\sum_{ C  \in \overline\Oeul \setminus\{C_0\} } (\delta^*(u_C)-2) = 1 - \delta^*(u_0)$.
This and \eqref{Pc09Jihvn203wchqob78q10wJ} give
\begin{equation} \label {c09vbopWEWEWEWE230wd9c2y}
\textstyle
( \delta^*(u_0) - 3 ) + \sum_{ C  \in \overline\Oeul \setminus\{C_0\} } \max(1, \epsilon(u_C)-2) 
= -2  + 2 | \overline\Oeul_1 | + | \overline\Oeul_2 | + T .
\end{equation}
Finally, we note that the set of leaves of $S(\Teul)$ is precisely
$\{z\} \cup \setspec{ u_C }{ C \in \overline\Oeul_1 }$, so $| \overline\Oeul_1 | = L-1$ and 
consequently the right-hand-side of \eqref{c09vbopWEWEWEWE230wd9c2y} is equal to $2(L-2) + | \overline\Oeul_2 | + T$.
This proves \eqref{Pc0oOv9hnt5dyuX} and completes the proof of assertion (a).

(b) It is clear that $B, | \overline\Oeul_2 |, T \in \Nat$.
Since $| S(\Teul) | > 1$, the tree $S(\Teul)$ has at least two leaves; so $L-2 \in \Nat$.
Since $| \overline\Oeul | > 1$, we have $\delta^*(u_0)\ge2$, so $\tD\big( \bar V( u_0 ) \cup \Neul( u_0, e_{u_0} ) \big) \ge | \delta^*(u_0)-3 |$
by Cor.\ \ref{0c9in34dh5sifzsshnkus}(b), so $x_0 \in \Nat$.
Part (a) of Thm \ref{0d2h39fh29834jfp0dfgq} implies that $\dot c(C), x_C \in \Nat$ for all  $C \in \overline\Oeul \setminus \{C_0\}$.

(c) Parts (a) and (b) immediately imply that  $\tD(\Neul) \ge  B + 2(L-2) + | \overline\Oeul_2 |$.
Since $| \overline\Oeul | > 1$, we have $B = 2$ or $L \ge 3$, so $B + 2(L-2) + | \overline\Oeul_2 | \ge B + 2(L-2) \ge 2$. 
\end{proof}

\begin{notation} \label {0vbb359gb}
Choose $z \in \In(\Teul)$ and let $\overline\Oeul = \overline\Oeul(\Teul,z)$.
If $\overline\Oeul \neq \emptyset$ (or equivalently $| S(\Teul) | > 1$) then we define
the rooted tree $\widetilde\Oeul = \widetilde\Oeul(\Teul,z)$ by declaring that
\begin{itemize}

\item the vertex-set of $\widetilde\Oeul$ is $\overline\Oeul$ and the root of  $\widetilde\Oeul$ is $C_0$;

\item distinct elements $C,C'$ of $\overline\Oeul$ are adjacent in $\widetilde\Oeul$ if and only if
there exist $(u,e) \in C$ and $(u',e') \in C'$ such that $u$ is adjacent to $u'$ in $\Neul$.

\end{itemize}
\end{notation}

The rooted tree $\widetilde\Oeul$ is useful for visualizing the decomposition into combs and (when $| \overline\Oeul | > 1$)
for computing the value of $B + 2(L-2) + | \overline\Oeul_2 |$.  
Figure \ref{d9be3yie8dfdokj6rdosefd} gives the values of $B$, $L$ and $ | \overline\Oeul_2 |$
for every rooted tree $\widetilde\Oeul$ whose number of vertices is at least 2 and at most 5.
Recall that  Cor.\ \ref{p0c9ifn2o3w9dcpw0e}(c) implies that $\tD(\Neul) \ge  B + 2(L-2) + | \overline\Oeul_2 |$ whenever  $| \overline\Oeul | > 1$.

\begin{figure}[htb]
\centering
\scalebox{.7}{
$$
\begin{array}[t]{|c|c|c|c|c|c|c|}
\hline
& | \overline\Oeul |  &  \widetilde{\Oeul} \rule{0mm}{5mm} & B & L & |\overline\Oeul_2| & H  \\ \hline \hline
%%%%%%%%%%%%%%%%%%%%%%%%%%%%%%%%%%%%%%%%%%%%%%%%%%%%%%%%%%%%%%%%%%%%%%%%%%%%%%%%%%%%%%%%%%%%%%%%%%%%%
%%%%%%%%%%%%%%%%%%%%%%%%%%%%%%%%%%%%%%%%%%%%%%%%%%%%%%%%%%%%%%%%%%%%%%%%%%%%%%%%%%%%%%%%%%%%%%%%%%%%%
\mbox{(a)} & 2 & \raisebox{1mm}{\scalebox{.7}{ \xymatrix@R=0pt{ \circledast \ar@{-}[r] & \bullet } }}
& 2 & 2 & 0 & 2 \\ \hline
%%%%%%%%%%%%%%%%%%%%%%%%%%%%%%%%%%%%%%%%%%%%%%%%%%%%%%%%%%%%%%%%%%%%%%%%%%%%%%%%%%%%%%%%%%%%%%%%%%%%%
\mbox{(b)} & 3 & \raisebox{4mm}{\scalebox{.7}{ \xymatrix@R=0pt{ 
& \bullet \\
\circledast \ar@{-}[ru] \ar@{-}[rd] \\
& \bullet	} }}
& 0 & 3 & 0 & 2 \\ \hline
%%%%%%%%%%%%%%%%%%%%%%%%%%%%%%%%%%%%%%%%%%%%%%%%%%%%%%%%%%%%%%%%%%%%%%%%%%%%%%%%%%%%%%%%%%%%%%%%%%%%%
\mbox{(c)} & 3 & \raisebox{1mm}{\scalebox{.7}{ \xymatrix@R=0pt{ 
\circledast \ar@{-}[r] & \bullet \ar@{-}[r] & \bullet 
} }} & 2 & 2 & 1 & 3 \\ \hline
%%%%%%%%%%%%%%%%%%%%%%%%%%%%%%%%%%%%%%%%%%%%%%%%%%%%%%%%%%%%%%%%%%%%%%%%%%%%%%%%%%%%%%%%%%%%%%%%%%%%%
\mbox{(d)} & 4 & \raisebox{4mm}{\scalebox{.7}{ \xymatrix@R=0pt{ 
& \bullet \\
\circledast \ar@{-}[r] \ar@{-}[ru] \ar@{-}[rd] & \bullet \\
& \bullet
} }} & 0 & 4 & 0 & 4 \\ \hline
%%%%%%%%%%%%%%%%%%%%%%%%%%%%%%%%%%%%%%%%%%%%%%%%%%%%%%%%%%%%%%%%%%%%%%%%%%%%%%%%%%%%%%%%%%%%%%%%%%%%%
\mbox{(e)} & 4 & \raisebox{4mm}{\scalebox{.7}{ \xymatrix@R=0pt{
& \bullet \\
\circledast \ar@{-}[ru] \ar@{-}[rd] \\
& \bullet \ar@{-}[r] & \bullet 
} }} & 0 & 3 & 1 & 3 \\ \hline
%%%%%%%%%%%%%%%%%%%%%%%%%%%%%%%%%%%%%%%%%%%%%%%%%%%%%%%%%%%%%%%%%%%%%%%%%%%%%%%%%%%%%%%%%%%%%%%%%%%%%
\mbox{(f)} & 4 & \raisebox{4mm}{\scalebox{.7}{ \xymatrix@R=0pt{ 
&& \bullet \\
\circledast \ar@{-}[r] & \bullet \ar@{-}[ru] \ar@{-}[rd] \\
&& \bullet 
} }} & 2 & 3 & 0 & 4 \\ \hline
%%%%%%%%%%%%%%%%%%%%%%%%%%%%%%%%%%%%%%%%%%%%%%%%%%%%%%%%%%%%%%%%%%%%%%%%%%%%%%%%%%%%%%%%%%%%%%%%%%%%%
\mbox{(g)} & 4 & \raisebox{1mm}{\scalebox{.7}{ \xymatrix@R=0pt{ 
\circledast \ar@{-}[r] & \bullet \ar@{-}[r] &  \bullet \ar@{-}[r] &  \bullet 
} }} & 2 & 2 & 2 & 4 \\ \hline
%%%%%%%%%%%%%%%%%%%%%%%%%%%%%%%%%%%%%%%%%%%%%%%%%%%%%%%%%%%%%%%%%%%%%%%%%%%%%%%%%%%%%%%%%%%%%%%%%%%%%
\end{array}
\ \ 
\begin{array}[t]{|c|c|c|c|c|c|c|}
\hline
& | \overline\Oeul |  &  \widetilde{\Oeul} \rule{0mm}{5mm} & B & L & |\overline\Oeul_2| & H  \\ \hline \hline
%%%%%%%%%%%%%%%%%%%%%%%%%%%%%%%%%%%%%%%%%%%%%%%%%%%%%%%%%%%%%%%%%%%%%%%%%%%%%%%%%%%%%%%%%%%%%%%%%%%%%

\mbox{(h)} & 5 & \raisebox{6mm}{\scalebox{.7}{ \xymatrix@R=0pt{
& \bullet \\
& \bullet \\
\circledast \ar@{-}[r] \ar@{-}[ruu] \ar@{-}[ru] \ar@{-}[rd] & \bullet \\
& \bullet
} }} & 0 & 5 & 0 & 6 \\ \hline
%%%%%%%%%%%%%%%%%%%%%%%%%%%%%%%%%%%%%%%%%%%%%%%%%%%%%%%%%%%%%%%%%%%%%%%%%%%%%%%%%%%%%%%%%%%%%%%%%%%%%
\mbox{(i)} & 5 & \raisebox{4mm}{\scalebox{.7}{ \xymatrix@R=0pt{
& \bullet \\
\circledast \ar@{-}[r] \ar@{-}[ru] \ar@{-}[rd] & \bullet \\
& \bullet \ar@{-}[r] & \bullet 
} }} & 0 & 4 & 1 & 5 \\ \hline
%%%%%%%%%%%%%%%%%%%%%%%%%%%%%%%%%%%%%%%%%%%%%%%%%%%%%%%%%%%%%%%%%%%%%%%%%%%%%%%%%%%%%%%%%%%%%%%%%%%%%
\mbox{(j)} & 5 & \raisebox{5mm}{\scalebox{.7}{ \xymatrix@R=0pt{ 
& \bullet \\
\circledast \ar@{-}[ru] \ar@{-}[rd] && \bullet \\
& \bullet \ar@{-}[ru] \ar@{-}[rd] \\
&& \bullet 
} }} & 0 & 4 & 0 & 4 \\ \hline
%%%%%%%%%%%%%%%%%%%%%%%%%%%%%%%%%%%%%%%%%%%%%%%%%%%%%%%%%%%%%%%%%%%%%%%%%%%%%%%%%%%%%%%%%%%%%%%%%%%%%
\mbox{(k)} & 5 & \raisebox{4mm}{\scalebox{.7}{ \xymatrix@R=0pt{
& \bullet \\
\circledast \ar@{-}[ru] \ar@{-}[rd] \\
& \bullet \ar@{-}[r] & \bullet  \ar@{-}[r] & \bullet 
} }} & 0 & 3 & 2 & 4 \\ \hline
%%%%%%%%%%%%%%%%%%%%%%%%%%%%%%%%%%%%%%%%%%%%%%%%%%%%%%%%%%%%%%%%%%%%%%%%%%%%%%%%%%%%%%%%%%%%%%%%%%%%%
\mbox{(l)} & 5 & \raisebox{4mm}{\scalebox{.7}{ \xymatrix@R=0pt{
& \bullet  \ar@{-}[r] & \bullet \\
\circledast \ar@{-}[ru] \ar@{-}[rd] \\
& \bullet \ar@{-}[r] & \bullet 
} }} & 0 & 3 & 2 & 4 \\ \hline
%%%%%%%%%%%%%%%%%%%%%%%%%%%%%%%%%%%%%%%%%%%%%%%%%%%%%%%%%%%%%%%%%%%%%%%%%%%%%%%%%%%%%%%%%%%%%%%%%%%%%
\mbox{(m)} & 5 & \raisebox{4mm}{\scalebox{.7}{ \xymatrix@R=0pt{
& & \bullet \\
\circledast \ar@{-}[r] & \bullet \ar@{-}[r] \ar@{-}[ru] \ar@{-}[rd] & \bullet \\
& & \bullet
} }} & 2 & 4 & 0 & 6 \\ \hline
%%%%%%%%%%%%%%%%%%%%%%%%%%%%%%%%%%%%%%%%%%%%%%%%%%%%%%%%%%%%%%%%%%%%%%%%%%%%%%%%%%%%%%%%%%%%%%%%%%%%%
\mbox{(n)} & 5 & \raisebox{4mm}{\scalebox{.7}{ \xymatrix@R=0pt{
& & \bullet \\
\circledast \ar@{-}[r] & \bullet \ar@{-}[ru] \ar@{-}[rd] \\
& & \bullet \ar@{-}[r] & \bullet 
} }} & 2 & 3 & 1 & 5 \\ \hline
%%%%%%%%%%%%%%%%%%%%%%%%%%%%%%%%%%%%%%%%%%%%%%%%%%%%%%%%%%%%%%%%%%%%%%%%%%%%%%%%%%%%%%%%%%%%%%%%%%%%%
\mbox{(o)} & 5 & \raisebox{4mm}{\scalebox{.7}{ \xymatrix@R=0pt{
&&& \bullet \\
\circledast \ar@{-}[r] & \bullet \ar@{-}[r] & \bullet \ar@{-}[ru] \ar@{-}[rd] \\
&&& \bullet 
} }} & 2 & 3 & 1 & 5 \\ \hline
%%%%%%%%%%%%%%%%%%%%%%%%%%%%%%%%%%%%%%%%%%%%%%%%%%%%%%%%%%%%%%%%%%%%%%%%%%%%%%%%%%%%%%%%%%%%%%%%%%%%%
\mbox{(p)} & 5 & \raisebox{1mm}{\scalebox{.7}{ \xymatrix@R=0pt{ 
\circledast \ar@{-}[r] & \bullet \ar@{-}[r] &  \bullet \ar@{-}[r] &  \bullet  \ar@{-}[r] &  \bullet 
} }} & 2 & 2 & 3 & 5 \\ \hline
%%%%%%%%%%%%%%%%%%%%%%%%%%%%%%%%%%%%%%%%%%%%%%%%%%%%%%%%%%%%%%%%%%%%%%%%%%%%%%%%%%%%%%%%%%%%%%%%%%%%%
\end{array}
$$
}
\caption{See \ref{0vbb359gb}. The number $B + 2(L-2) + |\overline\Oeul_2|$ is called $H$ in the above tables.
The value of $H$ is given for all rooted trees $\widetilde\Oeul$ whose number $|\overline\Oeul|$ of vertices
satisfies $2 \le |\overline\Oeul| \le 5$.  The vertex $\circledast$ is the root of $\widetilde\Oeul$.}
\label {d9be3yie8dfdokj6rdosefd}
\end{figure}

\begin{remark} \label {cp09vw4gririnhe}
Given a rooted tree $T$ we define $H(T) = 2\lambda + n_2 - 2$, where
$\lambda$ is the number of vertices of valency $1$ in $T$ (including the root if it has valency $1$),
and $n_2$ is the number of vertices distinct from the root and of valency $2$ in $T$.
Then it is not hard to see that the following is true:
$$
\textit{Let $z \in \In(\Teul)$ and $\overline\Oeul = \overline\Oeul(\Teul,z)$.  If $| \overline\Oeul | > 1$ then
$H(\, \widetilde \Oeul \,) = B + 2(L-2) + | \overline\Oeul_2 |$.}
$$
So one can compute the value of $B + 2(L-2) + | \overline\Oeul_2 |$ 
just by looking at the picture of $\widetilde\Oeul$ and using $H( \widetilde \Oeul ) = 2\lambda + n_2 - 2$.
\end{remark}

\section{$\Nd^*(\Teul)$}
\label {Sec:Nd_etoile}

Let $\Teul$ be a minimally complete abstract Newton tree at infinity.

The main results of this section are Theorems \ref{p0fkwJjk8eCahbkBJhZuIK83gldf} and \ref{cvintcjowHYtjk2uu78748};
the first one relates the cardinality of $\Nd^*(\Teul)$ to the number $\tD(\Neul)$ and the second gives information about the 
possible locations of the vertices of $\Nd^*(\Teul)$.
This is interesting for reasons that are briefly explained in the introduction.

\begin{definition} \label {iv097383k467k4V498ufhr7}
\begin{enumerata}

\item $\Nd^*(\Teul) = \setspec{ v \in \Nd(\Teul) }{ d(v)=1 }$

\item Let $\xi : \Neul \to \Nat$ be the set map defined as follows (where $v \in\Neul$):
\begin{itemize}

\item If $d(v)\neq1$ then $\xi(v)=0$;

\item if $N_v=1$ then $\xi(v) = 1$ (note that $d(v)=1$ by Lemma \ref{90q932r8dhd89cnr9}\eqref{0239u9e78923});

\item if $N_v\neq 1$ and $d(v)=1$ then $\xi(v) = \max(1,n)$ where $n$ is the number of occurrences of ``$1$'' in the type of $v$
(note that $N_v\neq d(v)$ implies that $v$ is a node, so it makes sense to speak of the type of $v$). 

\end{itemize}
Given a subset $E$ of $\Neul$ we define $\xi(E) = \sum_{v \in E} \xi(v)$.
Observe that  $\xi(v)>0 \Leftrightarrow v \in \Nd^*(\Teul)$ for all $v \in \Neul$;
so $| \Nd^*(\Teul) \cap E | \le \xi(E)$ for every $E \subseteq \Neul$, and in particular $| \Nd^*(\Teul) | \le \xi(\Neul)$.

\end{enumerata}
\end{definition}

\begin{definition}
If $v \in \Neul$ satisfies $\setspec{ d_x }{ x \in \Deul_v } \subseteq \{1,N_v\}$, we say that $v$ is \textit{pure}.
Note that if $v \in \Neul$ satisfies $\sigma(v)=0$ or $N_v=1$ then it is pure.
\end{definition}

\begin{theorem}  \label {p0fkwJjk8eCahbkBJhZuIK83gldf}
\begin{enumerata}

\item $| \Nd^*(\Teul) | \le \xi(\Neul) \le 2 + \max(0,\tD(\Neul))$

\item If  $\xi(\Neul) = 2 + \max(0,\tD(\Neul))$ then each $x \in \Nd^*(\Teul)$ is pure and satisfies $a_x=1$.

\item If $| \Nd^*(\Teul) |  = 2 + \max(0,\tD(\Neul))$ then each $x \in \Nd^*(\Teul)$ is a node of type $[1,N_x,\dots,N_x]$ and satisfies $a_x=1$.

\end{enumerata}
\end{theorem}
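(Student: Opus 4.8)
\textbf{Proof plan for Theorem \ref{p0fkwJjk8eCahbkBJhZuIK83gldf}.}
The plan is to prove all three parts simultaneously by a careful bookkeeping argument built on Corollaries \ref{Xxkvp0wedifcwZepd} and \ref{p0c9ifn2o3w9dcpw0e}, together with the local analysis of nodes from Section \ref{SEC:Nodes}. The key observation is that $\xi(v)$ is, by its very definition, controlled by two kinds of ``nonzero terms'': the term $(1-\frac1{a_v})$ in $R(v,\Eeul_v)$ and the terms $(1-\frac1{M(v,e)})$ and $(1-\frac1{k_x})$. More precisely, first I would establish the \emph{local} estimate: for each $v \in \Neul$ with $d(v)=1$ (equivalently $v \in \Nd^*(\Teul)$), one has $\xi(v) \le \#(v) - (\epsilon(v)-1) + (\text{something})$, by relating the number of ``$1$''s in the type of $v$ to the quantity $| \setspec{x \in \Deul_v}{k_x>1} |$ via Lemma \ref{90q932r8dhd89cnr9}\eqref{o82y387jw9e23} (each dicritical $x$ adjacent to $v$ has $d_x = N_v/k_x$, so $d_x = 1 \Leftrightarrow k_x = N_v$, which forces $k_x > 1$ as soon as $N_v > 1$; hence if $N_v>1$ the ``$1$''s in the type of $v$ are precisely the $x$ with $k_x = N_v$, all of which contribute to $| \setspec{x}{k_x>1}|$). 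Combined with $a_x=1$ whenever there is a dead end decorated by $1$ at a node (Def.\ \ref{e5e6e7w8e9wee9}(iii) together with the dicritical being adjacent), this gives a bound on $\xi(v)$ in terms of the nonzero terms of \eqref{uibyu5orbfvo7gnrnsdruoq0heu} that are ``attributable'' to $v$ beyond the mandatory $\epsilon(v)-1$ coming from Prop.\ \ref{0hp9fh023wbpchvgrsjs256}.

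Next I would globalize. The natural approach is to fix $z \in \In(\Teul)$ and use the decomposition into combs (Section \ref{Section:GlobalstructureofNeul}). When $|S(\Teul)|=1$ (i.e.\ $|\Neul|=1$ or $\Teul$ is a brush), the bound follows directly from Corollaries \ref{pc09vnw3oe9c} and \ref{9vbrtyukey6idckFfFugus} and an explicit examination of the root $v_0$ using Lemma \ref{PpPppc0v9vv32487fCd}: here $\Nd^*(\Teul) \subseteq \{v_0\}$ is not literally true, but $\xi(\Neul) = \xi(v_0) + \sum_{v \neq v_0}\xi(v)$ and a brush has its non-$v_0$ vertices organized into teeth, each of which is a $\tD$-trivial path along which $\tD$ and $\epsilon$ are highly constrained, forcing most of the $\xi$ values to vanish. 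When $|S(\Teul)|>1$, I would distinguish the cases $|\Omega(\Teul)| = 2$ (handled by Prop.\ \ref{A09g2nr9f2wH03rf9rgeF}: then $\Neul$ is a single $\tD$-trivial path and the $\xi$-count is immediate from Lemma \ref{8ey8cdody27}) and $|\Omega(\Teul)| \le 1$ (where Cor.\ \ref{p0c9ifn2o3w9dcpw0e}(a) gives the master formula $\tD(\Neul) = B + 2(L-2) + |\overline\Oeul_2| + T + x_0 + \sum_{C}(\dot c(C) + x_C)$ with all summands in $\Nat$). The idea is then to show $\xi(\Neul) \le 2 + (\text{that expression})$ by distributing the $\xi$-contributions: vertices $u_C$ (comb tops) and the leaf $z$ absorb the $B + 2(L-2) + |\overline\Oeul_2|$ part, teeth absorb the $T$ part and the $\dot c(C)$ part via Lemma \ref{9bvr58sdv3wof9f} (which says $t(C) \le \dot c(C)$, and $\xi$ of the vertices in a tooth of $\bar V(w)$ is bounded by $\tD(\bar V(w))$-type quantities), and the slack variables $x_0, x_C$ cover everything else.

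For parts (b) and (c), the argument runs through the equality case: if $\xi(\Neul)$ attains its maximum $2 + \max(0,\tD(\Neul))$, then every inequality used in the proof of (a) must be an equality. In particular each $x_C = 0$, each $\dot c(C) = t(C)$, and—crucially—every ``$1$'' counted in $\xi$ must come from a $k_x$ or an $a_v$ being ``extremal'' in a sum of $(1-\frac1{n_i})$ terms that is forced to be as small as possible; by Lemma \ref{trivialobs}, such sums being small means at most one $n_i \neq 1$, which pins down the type of $v$. Tracing through: for $x \in \Nd^*(\Teul)$, equality forces $\sigma(x)$ to be minimal, and combined with $d(x)=1$ this forces the type to be $[1, N_x, \dots, N_x]$ in case (c); purity and $a_x=1$ in case (b) follow similarly from Lemma \ref{8ey8cdody27}(a,b) once one knows the $\tD$ or $\xi$ budget is saturated at $x$. \textbf{The main obstacle} I anticipate is the bookkeeping in the $|\Omega(\Teul)| \le 1$, $|S(\Teul)| > 1$ case: one must carefully match each positive $\xi$-contribution against a distinct nonnegative term in the Cor.\ \ref{p0c9ifn2o3w9dcpw0e}(a) identity without double-counting, and in particular handle the vertices lying strictly inside combs (the ``$z_i$ with $1<i<n$'' in the comb picture) which by Remark \ref{pv09w4v7AqOmksndfwoa} have $\epsilon(v) \in \{2,3\}$ with tight arithmetic—these can still be nodes of type $[d(v), N_v, \dots, N_v]$ and hence in $\Nd^*(\Teul)$ when $d(v)=1$, so they do contribute to $\xi$, and one needs that such a contribution is exactly compensated by a corresponding unit in $\dot c(C)$ or in the tooth-count. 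Getting this matching to be a genuine injection is the technical heart, and the equality analysis for (b), (c) then amounts to checking that the injection is a bijection forces each local configuration into the claimed normal form.
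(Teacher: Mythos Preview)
Your case decomposition (split by $|S(\Teul)|$ and $|\Omega(\Teul)|$) matches the paper's, and you correctly identify the main obstacle. However, two of your technical ingredients are off and would not carry the argument through.

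\textbf{The local estimate.} Relating $\xi(v)$ to $\#(v)$ via Cor.~\ref{Xxkvp0wedifcwZepd} gives only a per-vertex bound $\#(v)\le\max(3,\tD(\Neul)+2)$; summing such bounds over $v\in\Nd^*(\Teul)$ yields nothing close to $\xi(\Neul)\le 2+\tD(\Neul)$. The paper's local estimate is different and is the real engine: Lemma~\ref{p9uv8w6t376wsw448gF3j} shows $\sigma(v)\ge\xi(v)(N_v-1)$, with equality iff $v$ is pure. Feeding this into Thm~\ref{P90werd23ewods0ci} (with $A$ the set of tooth-edges at $w$) and using Cor.~\ref{0vk3mWsklxd03mcwlfi} yields, for each $w\in S(\Teul)$ with $w\notin\Omega(\Teul)$, the bound $\tD(\bar V(w))\ge\xi(\bar V(w))+\delta^*(w)-2$ (Lemma~\ref{Xc0ejrKlPhgx26xk45}(c)). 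This is additive in a way your $\#(v)$ bound is not.

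\textbf{The globalization.} You route the main case through Cor.~\ref{p0c9ifn2o3w9dcpw0e}(a), but that identity requires $|\overline\Oeul|>1$, so $|\overline\Oeul|=1$ (which is in fact the generic situation, e.g.\ for all rational trees) would need separate treatment. The paper instead sums the per-$w$ bound above over $w\in S(\Teul)$ and uses the tree identity $\sum_{w}(\delta^*(w)-2)=-2$ to get $\tD(\Neul)\ge\xi(\Neul)-2$ directly. The one place this fails is $w=z$ when $z\in\Omega(\Teul)$; the paper absorbs this by treating $X=\bar V(u_0)\cup\Neul(u_0,e_{u_0})$ as a single block (Lemma~\ref{ovF8ym5jDdrfjkerdjkteowos}) and summing the per-$w$ bound only over $S(\Teul)\setminus X$. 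Your ``main obstacle'' is resolved inside that lemma: along $C_0$ one shows (Lemmas~\ref{9hJTY8gJ73jr9Ij93fe}, \ref{88yHvcxhl93hf7}) that the interior of the comb contributes at most one unit to $\xi$, and that unit forces $c(u_0,e_{u_0})\le 1$, which is exactly the slack needed. Your proposed matching of $\xi$-contributions against the terms $B,L,|\overline\Oeul_2|,T,x_0,\dot c(C),x_C$ could probably be made to work, but it would require proving these same lemmas anyway and adds a layer of indirection.

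The equality analysis you sketch for (b) and (c) is correct in spirit once (a) is in place by the paper's route, since each step becomes an equality and Lemma~\ref{p9uv8w6t376wsw448gF3j} then forces purity, while Lemma~\ref{Xc0ejrKlPhgx26xk45}(d) forces $a_x=1$ and $V(w)\subseteq\Nd^*(\Teul)$.
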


The proof of
Thm~\ref{p0fkwJjk8eCahbkBJhZuIK83gldf}
occupies a large portion of this section, and is recapitulated in paragraph \ref{c9b3498siBxkxcY5ykTwybsCte6eu}.

\begin{remark} \label {cCo98sw5vs8g6vblvbL3ervweygd}
{\it To prove Thm \ref{p0fkwJjk8eCahbkBJhZuIK83gldf}, it suffices to show that 
$\xi(\Neul) \le 2 + \max(0,\tD(\Neul))$ and that assertion {\rm (b)} holds.}
Indeed, we already noted that $| \Nd^*(\Teul) | \le \xi(\Neul)$, so (a) follows.
Let us deduce (c) from (a) and (b). 
Assume that $| \Nd^*(\Teul) |  = 2 + \max(0,\tD(\Neul))$;
then (a) implies that (i)~$| \Nd^*(\Teul) | = \xi(\Neul)$ and (ii)~$\xi(\Neul) = 2 + \max(0,\tD(\Neul))$,
and (i) is equivalent to (i$'$)~$\xi(x)=1$ for all $x \in \Nd^*(\Teul)$.
Now consider $x \in \Nd^*(\Teul)$.
We have $\xi(x)=1$ by (i$'$), and (ii) and (b) imply that $x$ is pure and satisfies $a_x=1$.
Since $x$ is pure and satisfies $\xi(x)=1$, it is a node of type $[1,N_x,\dots,N_x]$. This proves (c).
\end{remark}

We prove several lemmas in preparation for the proof of Thm~\ref{p0fkwJjk8eCahbkBJhZuIK83gldf}.

\begin{lemma}  \label {pvOtyxMdcbgOEjo28hw8y}
Let $v \in \Neul$. Then $\sigma(v) \ge N_v - d(v)$,
and if equality holds then either $v$ is not a node or it is a node of type $[d(v), N_v, \dots, N_v]$.
\end{lemma}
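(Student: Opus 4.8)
The statement is an inequality $\sigma(v)\ge N_v-d(v)$ together with a rigidity condition, and the definition of $\sigma(v)$ together with Lemma \ref{90q932r8dhd89cnr9}\eqref{o82y387jw9e23} make both halves essentially arithmetic. First I would dispose of the trivial cases: if $v$ is not a node then $\sigma(v)=0$ and $d(v)=N_v$ by Notations \ref{92bty9df8349thg98}, so both sides equal $0$, the inequality holds with equality, and the ``either $v$ is not a node'' alternative in the rigidity conclusion is satisfied. Similarly, if $v$ is a node with $N_v=1$, then by Lemma \ref{90q932r8dhd89cnr9}\eqref{0239u9e78923} we have $\sigma(v)=0$, $d(v)=1$, so again equality holds and $v$ is a node of type $[1,\dots,1]=[d(v),N_v,\dots,N_v]$, as required.

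\textbf{Main case.} Assume $v\in\Nd(\Teul)$. Write $\Deul_v=\{u_1,\dots,u_s\}$ with degrees $d_i=d_{u_i}$ and $k_i=k_{u_i}$, so that $N_v=k_id_i$ for each $i$ (Lemma \ref{90q932r8dhd89cnr9}\eqref{o82y387jw9e23}) and $\sigma(v)=\sum_{i=1}^s(k_i-1)d_i=\sum_{i=1}^s(N_v-d_i)$. Now use $d(v)=\gcd(d_1,\dots,d_s)$. The key observation is that the sum $\sum_{i=1}^s(N_v-d_i)$ is a sum of $s\ge1$ nonnegative integers $N_v-d_i$ (each $d_i\mid N_v$ so $d_i\le N_v$), and at least one index $j$ achieves $d_j\ge d(v)$; more to the point, I would single out the largest $d_j$ and note $N_v-d_j \ge N_v-N_v=0$ is not strong enough, so instead I argue directly: pick $j$ with $d_j$ maximal, then $d(v)\mid d_j$ hence $d(v)\le d_j$, giving $\sigma(v)=\sum_i(N_v-d_i)\ge N_v-d_j\ge N_v-d_j$, but I actually need $\sigma(v)\ge N_v-d(v)$, which follows since $d_j$ need not equal $d(v)$; the honest route is: $\sigma(v)=\sum_{i=1}^s(N_v-d_i)\ge N_v-d_{i_0}$ where $d_{i_0}$ is the largest degree, and $d_{i_0}\le N_v$; to get the bound $N_v-d(v)$ I instead observe that if $d_{i_0}<N_v$ then each term $N_v-d_i\ge N_v-d_{i_0}$ is positive, and if $d_{i_0}=N_v$ then $d_i\mid N_v$ forces the remaining structure — so the cleanest argument is to split on whether some $d_i$ equals $N_v$.

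\textbf{Finishing.} Concretely: if every $d_i<N_v$, then since $d(v)\le d_{i}\le N_v/2$ for... no — rather, $\sigma(v)=\sum_i(N_v-d_i)$ and $d(v)=\gcd_i d_i$ divides each $d_i$, so $d(v)\le d_i$, hence $N_v-d_i\le N_v-d(v)$ is the wrong direction; the right direction is $\sum_i(N_v-d_i)\ge \max_i(N_v-d_i)=N_v-\min_i d_i$, and $\min_i d_i\ge d(v)$, so $\sigma(v)\ge N_v-\min_i d_i \ge N_v - (\text{something})$... this gives $\sigma(v)\ge N_v-\min_i d_i$ but $\min_i d_i$ could be $>d(v)$ only makes the bound weaker. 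So in fact $\min_i d_i \ge d(v)$ does not immediately give $\sigma(v)\ge N_v-d(v)$ unless $\min_i d_i=d(v)$. The resolution: if $s\ge 2$ then $\sigma(v)=\sum_i(N_v-d_i)\ge (N_v-d_{i_1})+(N_v-d_{i_2})$ for two indices, which is $\ge N_v$ when $d_{i_1},d_{i_2}\le N_v/... $; and if $s=1$ then $d(v)=d_1$ and $\sigma(v)=N_v-d_1=N_v-d(v)$ exactly. So the genuine content is the case $s=1$ (equality, type $[d(v),N_v,\dots,N_v]$ vacuously $=[d(v)]$) versus $s\ge2$, where I must show $\sum_{i=1}^s(N_v-d_i)\ge N_v-d(v)$ and characterize equality. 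For $s\ge2$: order $d_1\le\cdots\le d_s$; then $d(v)=\gcd$ divides $d_1$, and I claim $\sum_{i=1}^s(N_v-d_i)\ge (N_v-d_1)\ge N_v-d_1$ with more terms available — the point is $(N_v-d_2)+\cdots+(N_v-d_s)\ge 0$ always, with equality iff $d_2=\cdots=d_s=N_v$, and then $d(v)=\gcd(d_1,N_v,\dots,N_v)=\gcd(d_1,N_v)=d_1$ since $d_1\mid N_v$; so equality $\sigma(v)=N_v-d(v)$ forces $d_2=\cdots=d_s=N_v$ and $d_1=d(v)$, i.e.\ the type is $[d(v),N_v,\dots,N_v]$. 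The one remaining subtlety is to rule out, in the equality case, that $N_v-d_1 > N_v-d(v)$, i.e.\ to confirm $d_1=d(v)$, which follows from $d(v)=\gcd(d_1,N_v)=d_1$ as just noted. I expect the main obstacle to be merely bookkeeping: making sure the $s=1$ edge case and the degenerate type notation $[d(v),N_v,\dots,N_v]$ with zero trailing $N_v$'s are handled so that the final sentence of the lemma reads correctly in all cases; there is no deep difficulty, only the need for a clean case split on $s$ and on which $d_i$ equal $N_v$.
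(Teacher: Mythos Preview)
Your argument has a genuine gap: you never actually establish the inequality $\sigma(v)\ge N_v-d(v)$. What you prove (after several false starts) is only
\[
\sigma(v)=\sum_{i=1}^s(N_v-d_i)\ \ge\ N_v-d_1,
\]
where $d_1=\min_i d_i$. But $d(v)=\gcd_i d_i\le d_1$, so $N_v-d(v)\ge N_v-d_1$: your bound is \emph{weaker} than the one required. A concrete example shows the gap is real: take $N_v=30$, $d_1=6$, $d_2=10$; then $d(v)=2$, $N_v-d_1=24$, while $N_v-d(v)=28$. Your inequality $\sigma(v)\ge 24$ does not imply $\sigma(v)\ge 28$.

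The missing ingredient is the observation that any \emph{proper} divisor $d_i$ of $N_v$ satisfies $d_i\le N_v/2$, hence $N_v-d_i\ge N_v/2$. With this in hand the proof is immediate: if at least two of the $d_i$ are strictly less than $N_v$, then $\sigma(v)\ge N_v/2+N_v/2=N_v>N_v-d(v)$, a strict inequality; if at most one $d_j$ is strictly less than $N_v$, then $\sigma(v)=N_v-d_j$ (or $0$ if none) and $d(v)=\gcd(d_j,N_v,\dots,N_v)=d_j$ (or $N_v$), so $\sigma(v)=N_v-d(v)$ with the type exactly $[d(v),N_v,\dots,N_v]$. This simultaneously proves the inequality and characterises equality, which is precisely the elementary arithmetic fact the paper invokes. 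Your equality analysis is also incomplete as written, since from $\sigma(v)=N_v-d(v)$ and $\sigma(v)\ge N_v-d_1$ you only get $d_1\ge d(v)$, which is vacuous; once you have the dichotomy above, the equality case follows cleanly.
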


\begin{proof}
This is a consequence of the following elementary arithmetic fact, whose verification is left to the reader:
\it Let $N,d_1, \dots, d_s$ ($s\ge1$) be positive integers such that $d_i \mid N$ for all $i$.
Then $\sum_{i=1}^s (N - d_i) \ge N - \gcd(d_1, \dots, d_s)$, and if equality holds then at most one $i$ is such that $d_i \neq N$.
\end{proof}

\begin{lemma} \label {p9uv8w6t376wsw448gF3j}
Let $v \in \Neul$.  Then $\sigma(v) \ge \xi(v)(N_v-1)$, and equality holds if and only if $v$ is pure.
\end{lemma}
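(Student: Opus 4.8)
The statement to prove is: for $v \in \Neul$, we have $\sigma(v) \ge \xi(v)(N_v - 1)$, with equality if and only if $v$ is pure. The natural strategy is to reduce to a purely arithmetic inequality about the degrees $d_x$ of the dicriticals in $\Deul_v$ and then handle the easy degenerate cases separately. First I would dispose of the cases where $v$ is not a node and where $N_v = 1$: if $v$ is not a node then $\Deul_v = \emptyset$, so $\sigma(v) = 0$ and $\xi(v) = 0$ (since $d(v) = N_v$; if $N_v = 1$ then also $\xi(v)=1$ but then $N_v - 1 = 0$, so both sides are $0$ anyway) — in all these subcases $v$ is pure and equality holds trivially. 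Likewise if $N_v = 1$ then $\sigma(v) = 0$ by Lemma \ref{90q932r8dhd89cnr9}\eqref{0239u9e78923}, $\xi(v) = 1$, $N_v - 1 = 0$, and $v$ is pure, so equality holds.

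So from now I assume $v$ is a node with $N_v > 1$. Write $\Deul_v = \{x_1, \dots, x_s\}$ with degrees $d_i = d_{x_i}$, each a positive divisor of $N_v$ by Lemma \ref{90q932r8dhd89cnr9}\eqref{o82y387jw9e23}, and recall $k_{x_i} = N_v / d_i$, so that $\sigma(v) = \sum_{i=1}^s (k_{x_i} - 1)d_i = \sum_{i=1}^s (N_v - d_i)$. Thus the claim becomes
$$
\sum_{i=1}^s (N_v - d_i) \ge \xi(v)(N_v - 1),
$$
where $\xi(v)$ is: $0$ if $d(v) = \gcd(d_1,\dots,d_s) \neq 1$; and $\max(1, n)$ if $d(v) = 1$, where $n$ is the number of indices $i$ with $d_i = 1$. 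In the case $d(v) \neq 1$ the right side is $0$ and the left side is $\ge 0$, and $v$ is pure iff $\sigma(v) = 0$ iff all $d_i = N_v$; since $d(v) = N_v$ would force $d(v) = N_v > 1$, this is consistent (when $d(v)\neq 1$ and $d(v)\neq N_v$ the node is impure and $\sigma(v) > N_v - 1 > 0$ is strict). In the case $d(v) = 1$: if $n \ge 1$ then $\xi(v) = n$ and we need $\sum_{i=1}^s (N_v - d_i) \ge n(N_v - 1)$; the $n$ indices with $d_i = 1$ contribute exactly $n(N_v - 1)$, and the remaining indices contribute $\ge 0$, giving the inequality, with equality iff every remaining $d_i$ equals $N_v$, i.e. iff $v$ is pure. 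If $n = 0$ (so no $d_i$ equals $1$ but $\gcd = 1$, forcing $s \ge 2$ and every $d_i < N_v$ hence $\le N_v/2$), then $\xi(v) = 1$ and we need $\sum_{i=1}^s (N_v - d_i) \ge N_v - 1$; since $s \ge 2$ and each $N_v - d_i \ge N_v/2 \ge 1$, the sum is $\ge N_v \ge N_v - 1$, and in fact strictly $> N_v - 1$, consistent with $v$ being impure in this subcase (as $\sigma(v)\neq N_v - 1$ and indeed $\neq 0$, $d(v) \neq 1$ is false but $v$ pure would need all $d_i \in \{1, N_v\}$, impossible here).

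The only mild subtlety — and the step I would be most careful about — is keeping the bookkeeping straight between the definition of $\xi(v)$ (which distinguishes $d(v) = 1$ from $d(v) \neq 1$, and within $d(v) = 1$ uses $\max(1,n)$) and the characterization of ``pure'' ($\{d_x\} \subseteq \{1, N_v\}$). I would organize the argument as a short case analysis on whether $d(v) = 1$, and within that on whether some $d_i = 1$, each time isolating the sum $\sum (N_v - d_i)$ into the ``$d_i = 1$'' terms (each equal to $N_v - 1$) plus the rest (each $\ge 0$, and $= 0$ only when $d_i = N_v$), so that the equality condition ``all remaining terms vanish'' translates exactly to ``$v$ is pure''. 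No deep input beyond Lemma \ref{90q932r8dhd89cnr9} is needed; this is elementary once the cases are set up, and is closely parallel to Lemma \ref{pvOtyxMdcbgOEjo28hw8y}.
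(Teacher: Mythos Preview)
Your proof is correct and follows essentially the same case analysis as the paper's: dispose of the trivial cases ($v$ not a node or $N_v=1$), then for a node with $N_v>1$ split on whether $d(v)=1$, and within $d(v)=1$ on whether some $d_i=1$, using $\sigma(v)=\sum_i(N_v-d_i)$ throughout. One inconsequential slip: in your parenthetical for the case $d(v)\neq 1$, $d(v)\neq N_v$, the claim ``$\sigma(v)>N_v-1$'' is false in general (e.g.\ $N_v=6$, $s=1$, $d_1=2$ gives $\sigma(v)=4<5$), but all you actually need there is $\sigma(v)>0$, which is clear.
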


\begin{proof}
If either $N_v=1$ or $v$ is not a node then $v$ is pure and the inequality reads ``$0 \ge 0$,'' so the Lemma is true in these cases.
If $v \in \Nd(\Teul) \setminus \Nd^*(\Teul)$ then $\sigma(v)=0$ if and only if $v$ is pure, and the inequality reads ``$\sigma(v)\ge0$,''
so the Lemma is true in this case as well.
Consider the last remaining case, i.e., assume that $v \in \Nd^*(\Teul)$ and that $N_v \neq 1$.
Let $n$ be the number of occurrences of ``$1$'' in the type $[d_1, \dots, d_s]$ of $v$; so $d_i \neq 1 \Leftrightarrow i > n$.
If $n=0$ then $\xi(v)=1$ and we have $1<d_i<N_v$ for at least two values of $i$,
so Lemma \ref{pvOtyxMdcbgOEjo28hw8y} implies that $\sigma(v) > (N_v-d(v)) = \xi(v)(N_v-1)$; as $v$ is not pure, the Lemma is valid.
If $n>0$ then $\xi(v)=n$ and $\sigma(v) - \xi(v)(N_v-1) = \sum_{i>n}(N_v - d_i) \ge0$, so the inequality is valid,
and equality holds if and only if  $\sum_{i>n}(N_v - d_i) =0$, if and only if $v$ is pure. This proves the Lemma.
\end{proof}

\begin{lemma} \label {clibvaulwe7pq}
If $z \in \Nd^*(\Teul) \cap Z(\Teul)$ then $\xi(z)=1$, $\tD(z)=0$, $a_z=1$ and $z$ is a node of type $[1,N_z,\dots,N_z]$.
\end{lemma}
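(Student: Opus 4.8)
Let $z \in \Nd^*(\Teul) \cap Z(\Teul)$. By definition of $\Nd^*(\Teul)$ we have $z \in \Nd(\Teul)$ and $d(z)=1$; by definition of $Z(\Teul)$ we have $\epsilon(z)=1$ and $\tD(z)\le 0$. The plan is to first apply Lemma \ref{8ey8cdody27}, whose hypotheses $\epsilon(z)=1$ and $\tD(z)\le 0$ are exactly what we have. Since $z \in \Nd(\Teul)$, part (a) and part (b) of that Lemma apply: part (b) says that the type of $z$ is $[d(z),N_z,\dots,N_z] = [1,N_z,\dots,N_z]$ (using $d(z)=1$), and that if $d(z) \neq N_z$ then $a_z=1$. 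Moreover Lemma \ref{8ey8cdody27} gives $\tD(z) = 1 - \frac{d(z)}{a_z} = 1 - \frac{1}{a_z}$.

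Next I would split into the two cases $N_z = 1$ and $N_z \neq 1$. If $N_z = 1$, then $a_z \mid N_z$ forces $a_z = 1$, so $\tD(z) = 1 - 1 = 0$; and Lemma \ref{90q932r8dhd89cnr9}\eqref{0239u9e78923} already tells us that $z$ is a node of type $[1,\dots,1] = [1,N_z,\dots,N_z]$, with $\tD(z)=0=\sigma(z)$; finally $\xi(z)=1$ directly from the definition (the case $N_v=1$). If $N_z \neq 1$, then $d(z) = 1 \neq N_z$, so Lemma \ref{8ey8cdody27}(b) gives $a_z = 1$, hence $\tD(z) = 1 - \frac{1}{1} = 0$. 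The type being $[1,N_z,\dots,N_z]$ with $N_z \neq 1$ means there is exactly one occurrence of ``$1$'' in the type, so $\xi(z) = \max(1,1) = 1$ by the definition of $\xi$ in \ref{iv097383k467k4V498ufhr7}.

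In both cases we have obtained $\xi(z)=1$, $\tD(z)=0$, $a_z=1$, and that $z$ is a node of type $[1,N_z,\dots,N_z]$, which is precisely the assertion of the Lemma. I do not anticipate any real obstacle here: the statement is essentially an immediate corollary of Lemma \ref{8ey8cdody27} together with the definition of $\xi$, and the only care needed is to handle the degenerate case $N_z=1$ separately so that one can legitimately speak of ``the number of occurrences of $1$ in the type'' in the generic case. One small point to double-check while writing is that when $N_z \neq 1$ the type $[1,N_z,\dots,N_z]$ indeed has its ``$1$'' as a genuine entry and the remaining entries (if any) equal to $N_z \neq 1$, so that $n=1$ in the definition of $\xi$; this follows from Lemma \ref{8ey8cdody27}(b) as recalled above.
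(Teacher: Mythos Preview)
Your proof is correct and follows essentially the same route as the paper, invoking Lemma~\ref{8ey8cdody27} together with the definition of $\xi$. The paper avoids your case split on $N_z$ by observing directly that $\tD(z) = 1 - 1/a_z$ is an integer, which forces $a_z=1$ in one stroke; but this is a cosmetic simplification rather than a different argument.
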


\begin{proof}
By Lemma \ref{8ey8cdody27}, $\Integ \ni \tD({z}) = 1 - d({z})/a_{z} = 1 - 1/a_{z}$, so $a_{z}=1$ and $\tD({z})=0$.
By Lemma \ref{8ey8cdody27} again, $z$ is a node of type  $[d(z), N_{z}, \dots, N_{z}] = [1, N_{z}, \dots, N_{z}]$.
So $\xi(z)=1$.
\end{proof}

\begin{lemma} \label {db8vcmxdJjvdrticu6tfvild}
Let $(x_1,\dots,x_m)$ be a $\tD$-trivial path.
\begin{enumerata}

\item For each $i$ such that $1<i<m$, we have $x_i \notin \Nd^*(\Teul)$, $x_i$ is pure, $a_{x_i}=1$ and $\tD(x_i)=0$.

\item $\Nd^*(\Teul) \cap \{ x_1, \dots, x_m \} \subseteq \{x_1,x_m\}$

\item If $x_1 \in \Nd^*(\Teul)$ and $\tD(x_1) \le 0$ then
$x_1$ is a node of type $[1, N_{x_1}, \dots, N_{x_1}]$ and satisfies $a_{x_1}=1=\xi(x_1)$,
$\tD(x_1)=0$ and $c(x_m, \{x_m, x_{m-1}\})=1$.

\end{enumerata}
\end{lemma}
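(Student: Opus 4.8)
\textbf{Plan for the proof of Lemma \ref{db8vcmxdJjvdrticu6tfvild}.}
The three parts are essentially arithmetical consequences of the defining conditions of a $\tD$-trivial path, namely $\epsilon(x_i)=2$ and $\tD(x_i)=0$ for $1<i<m$, together with the formula for $\tD(v)$ from Lemma \ref{90q932r8dhd89cnr9}\eqref{pc9vp23r09}. First I would prove (a). Fix $i$ with $1<i<m$. From $\epsilon(x_i)=2$ and $\tD(x_i)=0$, Lemma \ref{90q932r8dhd89cnr9}\eqref{pc9vp23r09} gives $0=\tD(x_i)=\sigma(x_i)+(\epsilon(x_i)-2)(N_{x_i}-1)+N_{x_i}(1-\frac1{a_{x_i}})=\sigma(x_i)+N_{x_i}(1-\frac1{a_{x_i}})$; since both summands are nonnegative, $\sigma(x_i)=0$ and $a_{x_i}=1$. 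By Remark \ref{F0934nofe8rg3p406egfh}, $\sigma(x_i)=0$ forces $d(x_i)=N_{x_i}$, so $x_i$ is pure (if $x_i$ is a node its type is $[N_{x_i},\dots,N_{x_i}]$; if not, there is nothing to check), hence $\xi(x_i)=0$, i.e. $x_i\notin\Nd^*(\Teul)$. This gives all of (a), and (b) is then immediate: any element of $\Nd^*(\Teul)\cap\{x_1,\dots,x_m\}$ must by (a) be $x_1$ or $x_m$.

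For (c), assume $x_1\in\Nd^*(\Teul)$ and $\tD(x_1)\le0$. Since $(x_1,\dots,x_m)$ is a $\tD$-trivial path we have $\epsilon(x_1)=1$, so $x_1\in Z(\Teul)$, hence $x_1\in\Nd^*(\Teul)\cap Z(\Teul)$. Lemma \ref{clibvaulwe7pq} then directly yields $\xi(x_1)=1$, $\tD(x_1)=0$, $a_{x_1}=1$, and that $x_1$ is a node of type $[1,N_{x_1},\dots,N_{x_1}]$; in particular $d(x_1)=1$. It remains to compute $c(x_m,\{x_m,x_{m-1}\})$. Here I would invoke Proposition \ref{xfo230weidwods}(a): since $(x_1,\dots,x_m)$ is $\tD$-trivial (note $m\ge2$, and if $m=2$ the edge $\{x_2,x_1\}=\{x_m,x_{m-1}\}$ is just $e_2$ and minimality of $(x_2,e_2)$ gives $c(x_2,e_2)=d(x_1)/a_{x_1}$ directly), we get $c(x_m,\{x_m,x_{m-1}\})=\frac{d(x_1)}{a_{x_1}}=\frac{1}{1}=1$, as desired.

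I do not anticipate a genuine obstacle here; the lemma is a bookkeeping consequence of results already in place. The one point requiring a small amount of care is the very last claim, $c(x_m,\{x_m,x_{m-1}\})=1$: one must make sure that Proposition \ref{xfo230weidwods}(a) applies with the correct orientation of the path (its indices run $u_1,\dots,u_n$ with $\epsilon(u_1)=1$, which matches $x_1,\dots,x_m$ since $\epsilon(x_1)=1$) and handle the degenerate case $m=2$ separately as noted. Everything else is a direct citation of Lemmas \ref{90q932r8dhd89cnr9}, \ref{clibvaulwe7pq} and Remark \ref{F0934nofe8rg3p406egfh}.
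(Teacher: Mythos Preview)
Your proof is correct and for (a), (b) and the first half of (c) it coincides with the paper's argument almost verbatim (one small omission: to conclude $\xi(x_i)=0$ from $d(x_i)=N_{x_i}$ you need $N_{x_i}>1$, which follows from $\epsilon(x_i)=2$ via Lemma~\ref{90q932r8dhd89cnr9}\eqref{0239u9e78923}; the paper states this explicitly).

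The only genuine difference is in how you obtain $c(x_m,\{x_m,x_{m-1}\})=1$. You invoke Proposition~\ref{xfo230weidwods}(a), which directly gives $c(x_m,\{x_m,x_{m-1}\})=d(x_1)/a_{x_1}=1$. The paper instead observes that $\tD(\{x_1,\dots,x_{m-1}\})=0$ (since every $\tD(x_i)=0$ for $i<m$), so $(x_m,\{x_m,x_{m-1}\})$ is nonpositive, and then Proposition~\ref{DKxcnpw93sdo} yields $0=1-c(x_m,\{x_m,x_{m-1}\})$. Your route is slightly more direct and avoids the detour through nonpositivity; both are short one-line citations of results already in place.
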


\begin{proof}
If $1<i<m$ then $\epsilon(x_i)=2$ and $\tD(x_i)=0$,
so Lemma \ref{90q932r8dhd89cnr9} implies that $N_{x_i} > 1$, $\sigma(x_i)=0$ and $a_{x_i}=1$;
now $\sigma(x_i)=0$ and Rem.\ \ref{F0934nofe8rg3p406egfh}
imply that $d(x_i) = N_{x_i}$, so $x_i \notin \Nd^*(\Teul)$.
As $\sigma(x_i)=0$ also implies that $x_i$ is pure, (a) is proved.
Clearly, (b) follows from (a).

Suppose that $x_1 \in \Nd^*(\Teul)$ and $\tD(x_1)\le 0$.  Then Lemma \ref{clibvaulwe7pq} implies that $\xi(x_1)=1$,
$\tD(x_1)=0$, $a_{x_1}=1$ and $x_1$  is a node of type $[1, N_{x_1}, \dots, N_{x_1}]$.
As $\tD( \{ x_1, \dots, x_{m-1} \} ) = 0$, the pair $(x_m, \{x_m, x_{m-1}\})$ is nonpositive, so 
$\tD( \{ x_1, \dots, x_{m-1} \} ) = 1 - c(x_m, \{x_m, x_{m-1}\})$ by Prop.~\ref{DKxcnpw93sdo},
so $c(x_m, \{x_m, x_{m-1}\}) = 1$ and (c) is proved.
\end{proof}

\begin{corollary} \label {0vk3mWsklxd03mcwlfi}
Let $w \in S(\Teul)$.
\begin{enumerata}

\item $\Nd^*(\Teul) \cap \bar V(w) \subseteq \{w\} \cup V(w)$ 

\item Each $x \in \bar V(w) \setminus (\{w\} \cup V(w))$ is pure and satisfies $a_x=1$ and $\xi(x)=0=\tD(x)$.

\item Let $x \in \Nd^*(\Teul) \cap V(w)$. Then $x$ is a node of type $[1,N_x,\dots,N_x]$ and satisfies $a_x=1=\xi(x)$ and $\tD(x)=0$.
If $\epsilon_x$ denotes the edge of $\gamma_{w,x}$ incident to $w$ then $c(w,\epsilon_x)=1$.

\item $\xi\big( \bar V(w) \big) = \xi(w) + \xi\big( V(w) \big) = \xi(w) + | \Nd^*(\Teul) \cap V(w) |$

\end{enumerata}
\end{corollary}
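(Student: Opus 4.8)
The statement to prove is Corollary \ref{0vk3mWsklxd03mcwlfi}, which describes the structure of $\Nd^*(\Teul)$, $\xi$, $a_x$ and $\tD(x)$ on the set $\bar V(w)$ for $w \in S(\Teul)$. The plan is to reduce everything to the combinatorics of the $\tD$-trivial paths that make up $\bar V(w)$, and then invoke Lemma \ref{db8vcmxdJjvdrticu6tfvild} and Corollary \ref{clibvaulwe7pq}.

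First I would recall the relevant definitions: by \ref{pd09Yv3ned09Xse}, if $w \notin W(\Teul)$ then $\bar V(w) = \{w\}$ and $V(w) = \emptyset$, so (a)--(d) are all trivially true (the set $\bar V(w) \setminus (\{w\} \cup V(w))$ is empty, and (d) reads $\xi(w) = \xi(w)$). So I may assume $w \in W(\Teul)$. Then $\bar V(w) = \bigcup_{v \in V(w)} \{x : x \text{ is in } \gamma_{v,w}\}$, where each $\gamma_{v,w} = (z_1,\dots,z_n)$ lies in $\Gamma(\Teul)$, hence is a $\tD$-trivial path with $z_1 = v \in Z(\Teul)$, $z_{n-1} > z_n = w$, $\tD(z_1) \le 0 < \tD(\{z_1,\dots,z_n\})$.

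For part (b): let $x \in \bar V(w) \setminus (\{w\} \cup V(w))$. Then $x$ lies on some $\gamma_{v,w} = (z_1,\dots,z_n)$ with $x = z_i$ for some $i$; since $x \ne w = z_n$ and $x \notin V(w)$ forces $x \ne z_1 = v$ (as $v \in V(w)$), we have $1 < i < n$. Now Lemma \ref{db8vcmxdJjvdrticu6tfvild}(a) gives directly that $x$ is pure, $a_x = 1$, $\tD(x) = 0$, and $x \notin \Nd^*(\Teul)$; the last fact gives $\xi(x) = 0$ by Def.\ \ref{iv097383k467k4V498ufhr7}(b). For part (a): if $x \in \Nd^*(\Teul) \cap \bar V(w)$, then $x$ lies on some $\gamma_{v,w}$; by Lemma \ref{db8vcmxdJjvdrticu6tfvild}(b), $x \in \{z_1, z_n\} = \{v, w\} \subseteq \{w\} \cup V(w)$, which is (a). For part (c): let $x \in \Nd^*(\Teul) \cap V(w)$. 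Then there is $\gamma_{x,w} = (z_1,\dots,z_n) \in \Gamma(\Teul)$ with $z_1 = x$, so $x \in Z(\Teul)$ (by the remark after Def.\ \ref{xkclqlwpd90cod}, or directly from the definition of $\Gamma$, $z_1 \in Z$), hence $\tD(x) \le 0$; now Lemma \ref{db8vcmxdJjvdrticu6tfvild}(c) gives that $x$ is a node of type $[1, N_x, \dots, N_x]$ with $a_x = 1 = \xi(x)$, $\tD(x) = 0$, and $c(x_m, \{x_m, x_{m-1}\}) = c(w, \epsilon_x) = 1$ where $\epsilon_x = \{z_n, z_{n-1}\} = \{w, z_{n-1}\}$ is the edge of $\gamma_{w,x}$ incident to $w$. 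This is exactly (c). (One should double-check that the edge $\epsilon_x$ of $\gamma_{w,x}$ incident to $w$ is $\{z_{n-1}, z_n\}$: yes, since $z_n = w$, the edge of the path incident to $z_n$ is $\{z_{n-1}, z_n\}$.)

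For part (d): first note $\xi(V(w)) = | \Nd^*(\Teul) \cap V(w) |$ because every $x \in V(w) \cap \Nd^*(\Teul)$ has $\xi(x) = 1$ by (c), while every $x \in V(w) \setminus \Nd^*(\Teul)$ has $\xi(x) = 0$ by the observation $\xi(x) > 0 \Leftrightarrow x \in \Nd^*(\Teul)$ in Def.\ \ref{iv097383k467k4V498ufhr7}(b). Then I observe that $\{w\}$, $V(w)$, and $\bar V(w) \setminus (\{w\} \cup V(w))$ form an f-partition of $\bar V(w)$ (they are pairwise disjoint since $w \notin V(w)$ — indeed $\tD(w) > 0$ whereas elements of $V(w) \subseteq Z(\Teul)$ have $\tD \le 0$ — and their union is $\bar V(w)$). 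Hence $\xi(\bar V(w)) = \xi(w) + \xi(V(w)) + \xi(\bar V(w) \setminus (\{w\} \cup V(w)))$, and the last summand is $0$ by (b), giving $\xi(\bar V(w)) = \xi(w) + \xi(V(w)) = \xi(w) + |\Nd^*(\Teul) \cap V(w)|$, which is (d).

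The main obstacle is minor bookkeeping: confirming that a vertex $x \in \bar V(w)$ lying on two different paths $\gamma_{v,w}$ and $\gamma_{v',w}$ of $\Gamma(\Teul)$ gets a consistent classification (it does, because the classification only depends on $\epsilon(x)$, $\tD(x)$, the type of $x$, etc., which are intrinsic to $x$), and carefully checking the identification of the edge $\epsilon_x$ in part (c). I expect no serious difficulty beyond correctly unwinding the definitions of $V(w)$, $\bar V(w)$, and $\Gamma(\Teul)$.
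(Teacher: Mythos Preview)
Your proof is correct and follows the same approach as the paper, which simply states that (a), (b), (c) follow from Lemma \ref{db8vcmxdJjvdrticu6tfvild}, and that the two equalities in (d) follow from (a) and (c) respectively. You have carefully unpacked exactly this reduction, including the trivial case $w\notin W(\Teul)$ and the verification that interior vertices of the relevant $\tD$-trivial paths are precisely the elements of $\bar V(w)\setminus(\{w\}\cup V(w))$.
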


\begin{proof}
Assertions (a), (b) and (c) follow from Lemma \ref{db8vcmxdJjvdrticu6tfvild}.
The first equality in (d) follows from (a), and the second follows from (c).
\end{proof}

\begin{lemma}  \label {Xc0ejrKlPhgx26xk45}
Let $w \in S(\Teul)$ and $\nu = \xi\big( \bar V(w) \big)$.
\begin{enumerata}

\item  \label {0bgkrCsOkviNhytirfV542okjd} Let $A = \setspec{ \epsilon_v }{ v \in V(w) }$,
where for each $v \in V(w)$ we let $\epsilon_v$ denote the edge of $\gamma_{w,v}$ incident to $w$.
Then $R(w,A) \ge \nu ( 1 - \frac1{N_w} )$.

\item \label {plwekj4tdMoO08346fbmt4yigr} $\tD(\bar V(w) ) \ge (\nu + \delta^*(w) - 2)(N_{w}-1)$

\item \label {pv0ero0DRy8jruewoqwd3co}   If $\Neul \neq \{ w \}$ and $w \notin \Omega(\Teul)$ then
$\tD(\bar V(w) ) \ge \nu + \delta^*(w) - 2$. 

\item \label {3j2mwilsi58ozdOondw7BtJie484}
If equality holds in \eqref{0bgkrCsOkviNhytirfV542okjd} or \eqref{plwekj4tdMoO08346fbmt4yigr} or \eqref{pv0ero0DRy8jruewoqwd3co} then
\begin{enumerata}
\item each $x \in \bar V(w)$ is pure and satisfies $a_{ x } = 1$,
\item $V( w ) \subseteq \Nd^*(\Teul)$.
\end{enumerata}

\end{enumerata}
\end{lemma}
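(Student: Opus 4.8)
\textbf{Proof plan for Lemma \ref{Xc0ejrKlPhgx26xk45}.}
The plan is to reduce everything to a single application of Theorem \ref{P90werd23ewods0ci} with a well-chosen subset of $\Eeul_w$, then read off the three inequalities and their equality cases. First I would set up the notation: let $v_1,\dots,v_k$ be the distinct elements of $V(w)$ (so $k = t(w) = |V(w)|$), let $A = \{\epsilon_{v_1},\dots,\epsilon_{v_k}\}$ as in \eqref{0bgkrCsOkviNhytirfV542okjd}, and note $|A| = k$. Each $(w,\epsilon_{v_i})$ is a tooth (by definition of $V(w)$ and $\Gamma(\Teul)$), so by Lemma \ref{GRygergGREg8948r} we have $\eta(w,\epsilon_{v_i}) = 0$ and $M(w,\epsilon_{v_i}) > 1$, and moreover $\bar V(w) = \{w\} \cup \bigcup_{i=1}^k \Neul(w,\epsilon_{v_i})$, so $\tD(\bar V(w)) = \bD(w,A)$. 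Since $\Neul \neq \{w\}$ in the nontrivial cases we have $\epsilon(w) \ge 1$, and $\epsilon(w) - |A| = \delta^*(w)$ by Rem.\ \ref{p0c92n30werf0dof}\eqref{0ck2j3wnsdo9} and Rem.\ \ref{p0c92n30werf0dof}(a).

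For part \eqref{0bgkrCsOkviNhytirfV542okjd}: by Cor.\ \ref{0vk3mWsklxd03mcwlfi}(d) we have $\nu = \xi(\bar V(w)) = \xi(w) + |\Nd^*(\Teul) \cap V(w)|$. I would estimate $R(w,A)$ termwise. The term $\sum_{x \in \Deul_w}(1-\frac1{k_x}) + (1-\frac1{a_w})$ contributes at least $\frac{\sigma(w)}{N_w}$ by Lemma \ref{lkjxfX0293wsod} plus a nonnegative piece, hence at least $\xi(w)(1-\frac1{N_w})$ by Lemma \ref{p9uv8w6t376wsw448gF3j} (using $\sigma(w) \ge \xi(w)(N_w-1)$, with equality iff $w$ is pure). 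Each tooth $\epsilon_{v_i}$ with $v_i \in \Nd^*(\Teul)$ contributes $1 - \frac1{M(w,\epsilon_{v_i})}$, and by Cor.\ \ref{0vk3mWsklxd03mcwlfi}(c) such a $v_i$ gives $c(w,\epsilon_{v_i}) = 1$, hence $M(w,\epsilon_{v_i}) = N_w/c(w,\epsilon_{v_i}) = N_w$, contributing exactly $1-\frac1{N_w}$; the remaining teeth contribute $\ge \frac12 \ge 0$ but I only need them nonnegative here. Summing these gives $R(w,A) \ge (\xi(w) + |\Nd^*(\Teul)\cap V(w)|)(1-\frac1{N_w}) = \nu(1-\frac1{N_w})$. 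Tracking the equality case shows it forces $w$ pure with $a_w = 1$, every non-$\Nd^*$ tooth to be absent (so $V(w) \subseteq \Nd^*(\Teul)$), and combined with Cor.\ \ref{0vk3mWsklxd03mcwlfi}(b) every $x \in \bar V(w)$ is pure with $a_x = 1$ — that is exactly \eqref{3j2mwilsi58ozdOondw7BtJie484}.

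For \eqref{plwekj4tdMoO08346fbmt4yigr}, I would feed the bound from \eqref{0bgkrCsOkviNhytirfV542okjd} into Theorem \ref{P90werd23ewods0ci}: since $\eta(w,e) = 0$ for all $e \in A$,
$$
R(w,A) + (\epsilon(w) - |A| - 1)\big(1 - \tfrac1{N_w}\big) = 1 + \frac{\bD(w,A) - 1}{N_w},
$$
so $\tD(\bar V(w)) - 1 = \bD(w,A) - 1 = N_w\big(R(w,A) + (\delta^*(w)-1)(1-\frac1{N_w}) - 1\big) \ge N_w\big((\nu + \delta^*(w) - 2)(1-\frac1{N_w})\big) - (N_w - 1) \cdot 0$; rearranging gives $\tD(\bar V(w)) \ge (\nu+\delta^*(w)-2)(N_w-1) + 1 - 1 + \cdots$, and a careful collection of the $(1-\frac1{N_w})$ coefficients yields exactly $(\nu+\delta^*(w)-2)(N_w-1)$ — I will need to double-check the constant $+1$ against the $(1-\frac1{N_w})$ versus $1$ bookkeeping, which is the one genuinely fiddly computation. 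For \eqref{pv0ero0DRy8jruewoqwd3co}: when $w \notin \Omega(\Teul)$ and $\Neul \neq \{w\}$, Lemma \ref{cp0v9i230wedqwo}(a) (applied after checking $X^+$ is trivial, or more directly via Cor.\ \ref{0c9in34dh5sifzsshnkus}(a)) gives $\tD(\bar V(w)) \ge \max(1,\epsilon(w)-2) > 0$, hence $N_w \ge 2$, so $(\nu+\delta^*(w)-2)(N_w-1) \ge \nu+\delta^*(w)-2$ whenever $\nu+\delta^*(w)-2 \ge 0$; in the remaining case $\nu+\delta^*(w)-2 < 0$ one has $\tD(\bar V(w)) \ge 1 \ge \nu+\delta^*(w)-2$ directly from the positivity. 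The equality statement \eqref{3j2mwilsi58ozdOondw7BtJie484} then propagates: equality in \eqref{pv0ero0DRy8jruewoqwd3co} forces $N_w \ge 2$, $\nu+\delta^*(w)-2 \ge 0$, and equality in \eqref{plwekj4tdMoO08346fbmt4yigr}, which forces equality in \eqref{0bgkrCsOkviNhytirfV542okjd} (since the $(1-\frac1{N_w})$ slack terms all vanish), and we are back to the already-analyzed case. The main obstacle I anticipate is purely arithmetic: getting the additive constants right in the chain from Theorem \ref{P90werd23ewods0ci} to \eqref{plwekj4tdMoO08346fbmt4yigr} (the difference between multiplying a bound by $N_w$ versus by $N_w - 1$), and making sure the equality analysis of Lemma \ref{p9uv8w6t376wsw448gF3j} and Cor.\ \ref{0vk3mWsklxd03mcwlfi} really does cover every $x \in \bar V(w)$, not just $x = w$ and $x \in V(w)$ but also the interior vertices of the $\tD$-trivial paths — which is handled by Cor.\ \ref{0vk3mWsklxd03mcwlfi}(b).
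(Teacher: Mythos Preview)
Your plan for parts \eqref{0bgkrCsOkviNhytirfV542okjd} and \eqref{plwekj4tdMoO08346fbmt4yigr} is essentially the paper's proof: split $\nu$ as $\xi(w) + |\Nd^*(\Teul)\cap V(w)|$ via Cor.\ \ref{0vk3mWsklxd03mcwlfi}(d), bound the $\Deul_w$-sum by Lemma \ref{p9uv8w6t376wsw448gF3j}, bound the tooth-sum using $c(w,\epsilon_v)=1$ from Cor.\ \ref{0vk3mWsklxd03mcwlfi}(c), then feed into Thm \ref{P90werd23ewods0ci}. Your bookkeeping anxiety is harmless: from Thm \ref{P90werd23ewods0ci} you get $\frac{\bD(w,A)}{N_w} = R(w,A) + (\delta^*(w)-2)(1-\tfrac1{N_w})$, and substituting the bound from \eqref{0bgkrCsOkviNhytirfV542okjd} gives \eqref{plwekj4tdMoO08346fbmt4yigr} directly. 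The reduction of \eqref{3j2mwilsi58ozdOondw7BtJie484} to the statement ``$w$ is pure, $a_w=1$, $V(w)\subseteq\Nd^*(\Teul)$'' via Cor.\ \ref{0vk3mWsklxd03mcwlfi}(b,c) is also how the paper proceeds.

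The gap is in \eqref{pv0ero0DRy8jruewoqwd3co}. You invoke Lemma \ref{cp0v9i230wedqwo}(a) or Cor.\ \ref{0c9in34dh5sifzsshnkus}(a) to obtain $\tD(\bar V(w)) \ge \max(1,\epsilon(w)-2)$, but neither applies to an arbitrary $w \in S(\Teul)$: the first needs a pair $(w,e)$ with $X^+(w,e)=\{(w,e)\}$, the second needs $w=u_C$ for some $C \in \overline\Oeul$. Worse, the conclusion $\tD(\bar V(w)) \ge 1$ is simply false under the hypotheses of \eqref{pv0ero0DRy8jruewoqwd3co}: take $w\in S(\Teul)$ with $\epsilon(w)=\delta^*(w)=2$, $w\neq v_0$, $\tD(w)=0$ (so $w$ is a node of type $[N_w,\dots,N_w]$ with $a_w=1$, as in Rem.\ \ref{pv09w4v7AqOmksndfwoa}); then $w\notin W(\Teul)$, $\bar V(w)=\{w\}$, and $\tD(\bar V(w))=0$.

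The paper instead proves the weaker (and true) claim $\tD(\bar V(w))\ge 0$ by a short direct contradiction argument using Lemma \ref{p0293efp0cw23ep0hvj} and the definition of $\Omega(\Teul)$, and separately checks that $N_w=1$ forces $\nu+\delta^*(w)-2\le 0$. With those two facts in hand, your case split goes through: if $\nu+\delta^*(w)-2>0$ then $N_w\ge 2$ and \eqref{plwekj4tdMoO08346fbmt4yigr} gives \eqref{pv0ero0DRy8jruewoqwd3co}; if $\nu+\delta^*(w)-2\le 0$ then $\tD(\bar V(w))\ge 0 \ge \nu+\delta^*(w)-2$. The equality analysis for \eqref{pv0ero0DRy8jruewoqwd3co} likewise needs these two ingredients rather than the unavailable $\tD(\bar V(w))\ge 1$.
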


\begin{proof}
Let us begin by observing that in order to prove \eqref{3j2mwilsi58ozdOondw7BtJie484} it suffices to show
that if equality holds in \eqref{0bgkrCsOkviNhytirfV542okjd} or \eqref{plwekj4tdMoO08346fbmt4yigr} or \eqref{pv0ero0DRy8jruewoqwd3co} then:
\begin{itemize}
\item[$(*)$]
 $w$ is pure, $a_{ w } = 1$ and  $V( w ) \subseteq \Nd^*(\Teul)$.
\end{itemize}
Indeed, assume that $(*)$ holds. We have to show that if $x \in \bar V(w)\setminus \{w\}$ then $x$ is pure and $a_x=1$.
If $x \in V(w)$ then  $x\in\Nd^*(\Teul)\cap V(w)$ by $(*)$, so  Cor.~\ref{0vk3mWsklxd03mcwlfi}(c) implies that  $x$ is pure and $a_x=1$.
If  $x \notin V(w)$ then Cor.~\ref{0vk3mWsklxd03mcwlfi}(b)  implies that $x$ is pure and $a_x=1$. 
Hence, if $(*)$ is true then so are (i) and (ii).

\medskip

\eqref{0bgkrCsOkviNhytirfV542okjd}
It follows from Cor.~\ref{0vk3mWsklxd03mcwlfi}(d) that $\nu = t_* + \xi(w)$ 
where $t_* = \xi\big( V(w) \big) = |\Nd^*(\Teul) \cap V(w)|$.  
By Lemma \ref{p9uv8w6t376wsw448gF3j},
\begin{equation} \label {ocivbwi4uhr47865tfhn920}
\text{$\textstyle \sum_{x \in \Deul_{w}} (1 - \frac1{k_x}) \ge \xi(w) (1 - \frac1{N_{w}})$, and if equality holds then $w$ is pure.}
\end{equation}
We claim:
\begin{equation} \label {v984kil3wsCkidu65os8r}
\text{$\textstyle \sum_{e \in A}( 1 - \frac1{M({w},e)}) \ge t_*(1 - \frac1{N_{w}})$, and if equality holds then $V(w) \subseteq \Nd^*(\Teul)$.}
\end{equation}
Indeed, Cor.~\ref{0vk3mWsklxd03mcwlfi}(c) implies that $c({w}, \epsilon_v) = 1$ and hence $M({w}, \epsilon_v) = N_{w}$ for all
$v \in \Nd^*(\Teul) \cap V({w})$; this gives the inequality in \eqref{v984kil3wsCkidu65os8r}.
If equality holds then $1 - \frac1{M({w},\epsilon_v)}=0$  for all $v \in V(w) \setminus \Nd^*(\Teul)$,
and since (by Lemma \ref{GRygergGREg8948r})  $M({w},\epsilon_v)>1$ for all $v \in V(w)$ we get  $V(w) \setminus \Nd^*(\Teul) = \emptyset$,
which proves \eqref{v984kil3wsCkidu65os8r}.
By \eqref{ocivbwi4uhr47865tfhn920} and \eqref{v984kil3wsCkidu65os8r},
\begin{align} \label {vop9JhgCIxljdl938}
\textstyle
R(w,A) & = 
\textstyle   \sum_{x \in \Deul_w} (1 - \frac1{k_x}) + \sum_{e \in A}( 1 - \frac1{M(w,e)}) + (1 - \frac1{a_w}) \\
\notag   & \ge  \textstyle   (\xi(w) + t_*) (1 - \frac1{N_w})+ (1 - \frac1{a_w}) = \nu (1 - \frac1{N_w})+ (1 - \frac1{a_w}) \ge \nu (1 - \frac1{N_w}) \, ,
\end{align}
so \eqref{0bgkrCsOkviNhytirfV542okjd} is proved.
Assume that equality holds in \eqref{0bgkrCsOkviNhytirfV542okjd}.
Then $a_w=1$ by \eqref{vop9JhgCIxljdl938}, and equality holds in both \eqref{ocivbwi4uhr47865tfhn920} and \eqref{v984kil3wsCkidu65os8r},
so $(*)$ is satisfied.  So \eqref{3j2mwilsi58ozdOondw7BtJie484} is true when equality holds in \eqref{0bgkrCsOkviNhytirfV542okjd}.

\medskip

\eqref{plwekj4tdMoO08346fbmt4yigr}
We have $\tD(\bar V(w) ) = \bD(w,A)$.
If $e \in A$ then $(w,e)$ is a tooth, so $\eta(w,e)=0$  by Lemma \ref{GRygergGREg8948r}.
So Thm \ref{P90werd23ewods0ci} gives
$$
\textstyle
 R(w,A) + (\epsilon(w) - |A| - 1) (1 - \frac1{N_w})   =   1 + \frac{ \bD(w,A) - 1 }{N_w} .
$$
Then $\frac{ \bD(w,A) }{N_w} = R(w,A) + (\epsilon(w) - |A| - 2) (1 - \frac1{N_w}) = R(w,A) + (\delta^*(w) - 2) (1 - \frac1{N_w})$.
We have $R(w,A) \ge \nu (1 - \frac1{N_w})$ by \eqref{0bgkrCsOkviNhytirfV542okjd}, so
$\frac{ \bD(w,A) }{N_w} \ge (\nu + \delta^*(w) - 2) (1 - \frac1{N_w})$, so
$$
\tD( \bar V(w) ) = \bD(w,A) \ge (\nu + \delta^*(w) - 2) (N_w - 1) \, .
$$
So \eqref{plwekj4tdMoO08346fbmt4yigr} is proved. Note that if equality holds in \eqref{plwekj4tdMoO08346fbmt4yigr}  then $R(w,A) = \nu (1 - \frac1{N_w})$,
so equality holds in \eqref{0bgkrCsOkviNhytirfV542okjd}.
Consequently,  \eqref{3j2mwilsi58ozdOondw7BtJie484} is true when equality holds in \eqref{plwekj4tdMoO08346fbmt4yigr}.

\medskip

\eqref{pv0ero0DRy8jruewoqwd3co}   Assume that $\Neul \neq \{ w \}$ and $w \notin \Omega(\Teul)$. We claim that this implies
\begin{equation}  \label {aojfbjfdsy63i93b81i3d}
\tD( \bar V(w) ) \ge 0 \, .
\end{equation}
Indeed, assume the contrary: $\tD( \bar V(w) ) < 0$.
If $\Teul$ is a brush then $| S(\Teul) | = 1$, so $S(\Teul) = \{w\}$ and $\Neul = \bar V(w)$,
so $\tD( \bar V(w) ) = \tD(\Neul) \ge 2$ by Cor.~\ref{9vbrtyukey6idckFfFugus},
contradicting the hypothesis $\tD( \bar V(w) ) < 0$.  So $\Teul$ is not a brush.
Then  $\tD( \bar V(w) ) < 0$ and Lemma \ref{p0293efp0cw23ep0hvj} imply that $w \notin W(\Teul)$, so $\bar V(w) = \{w\}$ and hence $\tD(w)<0$.
So $\epsilon(w) \in \{0,1\}$ by Lemma \ref{90q932r8dhd89cnr9}\eqref{WFh2h8heq8whoq9u}.
As $\Neul \neq \{w\}$, we have in fact $\epsilon(w)=1$.
Then $w \in Z(\Teul)$, and since $w \notin\Omega(\Teul)$ it follows that 
$w \in V(w')$ for some $w' \in W(\Teul)$; this contradicts $w \in S(\Teul)$, so \eqref{aojfbjfdsy63i93b81i3d} is proved.
Next, we show that
\begin{equation} \label {1hhnejXio8eergjaiqUETDQWKOW}
\text{if $(\nu + \delta^*(w) - 2) (N_w - 2) <  0$, then $\nu + \delta^*(w) - 2 <0$.}
\end{equation}
It is clear that this implication is true when $N_w \neq 1$, so assume that $N_w=1$.
Then Lemma \ref{90q932r8dhd89cnr9} gives $w \in \Nd^*(\Teul)$ and $V(w) = \emptyset$ (because $\tD(w)=0$, so $w \notin W(\Teul)$);
so $\nu=\xi(w)=1$ because $N_w=1$; since $N_w=1$ also implies that $\delta^*(w)\le1$, the hypothesis of \eqref{1hhnejXio8eergjaiqUETDQWKOW} is false,
so \eqref{1hhnejXio8eergjaiqUETDQWKOW} is true.
So \eqref{1hhnejXio8eergjaiqUETDQWKOW} is true in all cases.

We prove \eqref{pv0ero0DRy8jruewoqwd3co} by contradiction: assume that
\begin{equation} \label {Pc09ec5xe8sdifndsrd}
\tD( \bar V(w) ) < \nu + \delta^*(w) - 2 \, .
\end{equation}
Then $(\nu + \delta^*(w) - 2) (N_w - 1) <  \nu + \delta^*(w) - 2$ by \eqref{plwekj4tdMoO08346fbmt4yigr} and \eqref{Pc09ec5xe8sdifndsrd}, so
$(\nu + \delta^*(w) - 2) (N_w - 2) <  0$.
By \eqref{1hhnejXio8eergjaiqUETDQWKOW}, it follows that  $\nu + \delta^*(w) - 2 <0$.
This together with \eqref{Pc09ec5xe8sdifndsrd} gives $\tD( \bar V(w) ) < 0$, which contradicts \eqref{aojfbjfdsy63i93b81i3d}.
This proves \eqref{pv0ero0DRy8jruewoqwd3co}.

\medskip

Since we already know that   \eqref{3j2mwilsi58ozdOondw7BtJie484} is true when equality holds in \eqref{0bgkrCsOkviNhytirfV542okjd} or
\eqref{plwekj4tdMoO08346fbmt4yigr},
to finish the proof of the Lemma it suffices to show that 
$$
\text{if $\Neul \neq \{ w \}$ and $w \notin \Omega(\Teul)$ and equality holds in \eqref{pv0ero0DRy8jruewoqwd3co},
then equality holds in \eqref{plwekj4tdMoO08346fbmt4yigr}.}
$$
We proceed by contradiction. Assume that $\Neul \neq \{ w \}$ and $w \notin \Omega(\Teul)$ and that equality holds in  \eqref{pv0ero0DRy8jruewoqwd3co}
but not in \eqref{plwekj4tdMoO08346fbmt4yigr}.  Then
\begin{equation} \label {pco98vecrhj45so76eur}
(\nu + \delta^*(w) - 2)(N_{w}-1)  < \tD(\bar V(w) ) = \nu + \delta^*(w) - 2 \, ,
\end{equation}
so $(\nu + \delta^*(w) - 2)(N_{w}-2) < 0$;
then we get $\nu + \delta^*(w) - 2 < 0$ by \eqref{1hhnejXio8eergjaiqUETDQWKOW},
which implies that $\tD(\bar V(w))<0$ by \eqref{pco98vecrhj45so76eur}.
This contradicts \eqref{aojfbjfdsy63i93b81i3d}, so the proof is complete.
\end{proof}

We may now prove three special cases of Thm \ref{p0fkwJjk8eCahbkBJhZuIK83gldf}.

\begin{proposition} \label {Pp0drigro8awe7v8csbhsnehry}
Assume that $| \Neul | = 1$.
Then $\xi(\Neul) \le \max(0,\tD(\Neul)) +2$ where equality holds if and only if $\Teul$ is one of the trees of Ex.\ {\rm \ref{nv63jfy64nvy3}(b).}
In particular, Thm \ref{p0fkwJjk8eCahbkBJhZuIK83gldf} is valid when $| \Neul | = 1$.
\end{proposition}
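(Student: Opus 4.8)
The plan is to cast everything in the notation of paragraph \ref{FpPpc09bne0fviIwZemf90} and apply Lemmas \ref{a0v2b48fvwogydhdbfsr034j} and \ref{PpPppc0v9vv32487fCd}. Since $|\Neul|=1$ we have $\Neul=\{v_0\}$, hence $|S(\Teul)|=1$ by Lemma \ref{p309ef2GFT485ryf}(d), $\epsilon(v_0)=0$, $a_{v_0}=1$, $\xi(\Neul)=\xi(v_0)$, and all $\delta:=\delta_{v_0}$ neighbours of $v_0$ are dicritical, so $v_0$ is a node; write $[d_1,\dots,d_\delta]$ for its type and $N:=N_{v_0}$. First I would record the few facts I need: $N=\sum_i a(e_i)d_i\ge\delta$ with every $a(e_i)\ge1$ and $N=k(e_i)d_i$ (Lemma \ref{a0v2b48fvwogydhdbfsr034j}); $\tD(\Neul)\ge(\delta-1)(\delta-2)\ge0$, so $\max(0,\tD(\Neul))=\tD(\Neul)$ (Prop.\ \ref{0c9b45okjrdidjfo}); and the identity $\tD(\Neul)=2+\sum_i\big((\delta-2)a(e_i)-1\big)d_i$ (Lemma \ref{PpPppc0v9vv32487fCd}). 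I would also note the trivial bound $\xi(v_0)\le\delta$, immediate from the definition of $\xi$ since the type has length $\delta$.

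Then I would split into cases according to $\delta$. If $\delta\ge3$, each summand in the identity is $\ge(\delta-3)d_i\ge\delta-3\ge0$, so $\tD(\Neul)+2\ge4+\delta(\delta-3)>\delta\ge\xi(v_0)$, a strict inequality. If $\delta=1$, then $\xi(v_0)\le1<2\le\max(0,\tD(\Neul))+2$, again strict. If $\delta=2$, the identity reads $\tD(\Neul)=2-d_1-d_2$, which together with $\tD(\Neul)\ge0$ and $d_i\ge1$ forces $d_1=d_2=1$ and $\tD(\Neul)=0$; since then $N=a(e_1)+a(e_2)\ge2$, the definition of $\xi$ gives $\xi(v_0)=2=\tD(\Neul)+2$, i.e.\ equality. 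Assembling the cases yields the inequality in general, with equality exactly when $\delta=2$.

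Finally I would identify the equality case with Ex.\ \ref{nv63jfy64nvy3}(b). One direction is a one-line computation on that example ($v_0$ a node of type $[1,1]$, $\tD(\Neul)=0$, $N\ge2$, $d(v_0)=1$, so $\xi(v_0)=2$). For the converse, $\delta=2$ gives $\Deul=\{u_1,u_2\}$ with $u_1,u_2$ dicriticals of degree $1$ and $e_i=\{v_0,u_i\}$; using genericity and minimal completeness I would show each $u_i$ has valency $3$ (the edge $e_i$, one edge to a $(1)$-arrow $\alpha_i$, one dead end to a $(0)$-arrow), fix all the unit decorations from Def.\ \ref{p9823p98p2d}, compute $x_{v_0,\alpha_i}=a_{u_i}$ so that $N=a_{u_1}+a_{u_2}$, and deduce $x(e_i)=a(e_i)-k(e_i)=-a_{u_{3-i}}$ from $k(e_i)=N$ and $k(e_i)=a(e_i)-x(e_i)$ (valid because $|\Neul|=1$). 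The coprimality requirement of Def.\ \ref{p9823p98p2d} at $u_i$ then becomes $\gcd(a_{u_1},a_{u_2})=1$, so $\Teul$ is exactly the tree of Ex.\ \ref{nv63jfy64nvy3}(b). The closing assertion then follows from Rem.\ \ref{cCo98sw5vs8g6vblvbL3ervweygd}: it remains only to observe that in the equality case $\Nd^*(\Teul)=\{v_0\}$ with $v_0$ pure (its type $[1,1]$ lies in $\{1,N_{v_0}\}$) and $a_{v_0}=1$.

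The delicate point is this converse identification: checking that for $|\Neul|=1$ and $\delta=2$ the combinatorial shape and every decoration are genuinely forced, so that no minimally complete tree with a root of type $[1,1]$ and $|\Neul|=1$ escapes Ex.\ \ref{nv63jfy64nvy3}(b). This is routine bookkeeping with the axioms of Def.\ \ref{p9823p98p2d} and \ref{e5e6e7w8e9wee9} together with the multiplicity formula $N_{v_0}=\sum_\alpha x_{v_0,\alpha}$, but it must be carried out with care; by contrast, the inequalities themselves are short.
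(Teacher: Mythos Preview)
Your proof is correct, and it takes a genuinely different route from the paper's. The paper splits according to the sign of $\tD(\Neul)$: when $\tD(\Neul)\le 0$ it invokes Prop.~\ref{c0viwjytsdDJjLlxdifFebg} to reduce to the two trees of Ex.~\ref{nv63jfy64nvy3} and reads off $\xi$ directly; when $\tD(\Neul)>0$ it uses Cor.~\ref{pc09vnw3oe9c} to get $\delta>2$, then applies the key inequality $\sigma(v_0)\ge \xi(v_0)(N_{v_0}-1)$ of Lemma~\ref{p9uv8w6t376wsw448gF3j} together with $\tD(v_0)=\sigma(v_0)-2(N_{v_0}-1)$ to obtain $\xi(v_0)\le \tD(\Neul)/(N_{v_0}-1)+2<\tD(\Neul)+2$.

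You instead split according to $\delta$, and the crucial observation that makes this clean is your use of Prop.~\ref{0c9b45okjrdidjfo} to get $\tD(\Neul)\ge(\delta-1)(\delta-2)\ge 0$ up front, so that $\max(0,\tD(\Neul))=\tD(\Neul)$ throughout. Your bound $\xi(v_0)\le\delta$ combined with Lemma~\ref{PpPppc0v9vv32487fCd} and the inequality $(\delta-2)^2>0$ for $\delta\ge 3$ then handles the large-$\delta$ case without ever invoking Lemma~\ref{p9uv8w6t376wsw448gF3j}; the cases $\delta=1,2$ collapse to direct computation. What you gain is independence from the specialized $\sigma$-versus-$\xi$ machinery; what you pay is redoing by hand (in your final paragraph) the $\delta=2$ identification that the paper gets for free from Prop.~\ref{c0viwjytsdDJjLlxdifFebg}. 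Both are short, and your version is arguably more self-contained for this particular statement.
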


\begin{proof}
Assume that $| \Neul | = 1$.  Then $\Neul = \{ v_0 \}$.
We may assume that $\xi(\Neul)>0$, otherwise $\xi(\Neul) < \max(0,\tD(\Neul)) +2$ and there is nothing to prove.
Note that $\xi(\Neul)>0$ implies that $\gcd\setspec{ d_x }{ x \in \Deul_{v_0} } = 1$.

Consider the case where $\tD(\Neul)\le0$.
Then, by Prop.~\ref{c0viwjytsdDJjLlxdifFebg}, $\Teul$ is one of the trees of Ex.\ \ref{nv63jfy64nvy3}.
If $\Teul$ is as in  \ref{nv63jfy64nvy3}(a) then $\xi(\Neul) = 1 < 2 = \max(0,\tD(\Neul)) + 2$;
if $\Teul$ is as in  \ref{nv63jfy64nvy3}(b) then  $\xi(\Neul) = 2 = \max(0,\tD(\Neul))+2$; so the claim is true whenever $\tD(\Neul)\le0$.

From now-on, assume that $\tD(\Neul) > 0$. Then $\delta_{v_0}>2$ by Cor.~\ref{pc09vnw3oe9c};
since $N_{v_0} \ge \delta_{v_0}$ (see Rem.~\ref{efy872392eujf}), we have $N_{v_0}>2$.
We have $\tD(\Neul) = \tD(v_0) = \sigma(v_0) - 2(N_{v_0}-1)$ by Lemma \ref{90q932r8dhd89cnr9}\eqref{pc9vp23r09}
and $\sigma(v_0) \ge \xi(v_0) ( N_{v_0} - 1 )$ by Lemma \ref{p9uv8w6t376wsw448gF3j}, so 
$$
\xi(\Neul) = \xi(v_0) \le \frac {\sigma(v_0)}{ N_{v_0}-1} =  \frac {\tD(\Neul) + 2(N_{v_0}-1)}{ N_{v_0}-1} =  \frac {\tD(\Neul)}{ N_{v_0}-1} + 2 < \tD(\Neul) + 2,
$$
so $\xi(\Neul) < \max(0,\tD(\Neul))+2$ and the claim is true.
\end{proof}

\begin{proposition}  \label {PiIoO0v9w6ejamuisuergbf83e9f0}
If $\Teul$ is a brush then the following hold.
\begin{enumerata}

\item $\Neul = \bar V(v_0)$ and $\Nd^*(\Teul) \subseteq \{v_0\} \cup V(v_0)$.

\item $\xi(\Neul)  \le \tD(\Neul) + 2$ and if equality holds
then $V(v_0) \subseteq \Nd^*(\Teul)$ and each $x \in \Neul$ is pure and satisfies $a_x=1$.

\end{enumerata}
In particular, Thm \ref{p0fkwJjk8eCahbkBJhZuIK83gldf} is valid when $\Teul$ is a brush.
\end{proposition}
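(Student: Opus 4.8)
The statement concerns a brush $\Teul$, so by Remark~\ref{pc09n3409vnZiEWOdhFp30ef}(b) we have $|\Neul|>1$ and $W(\Teul)=\{v_0\}$, and by Lemma~\ref{p309ef2GFT485ryf}(d) we have $|S(\Teul)|=1$, with $S(\Teul)=\{v_0\}$. The first goal is to establish (a): since $v_0$ is the unique element of $W(\Teul)$ and $\Neul=\bigcup_{w\in W(\Teul)}\bar V(w)$ by Lemma~\ref{p309ef2GFT485ryf}(c), we get $\Neul=\bar V(v_0)$. For the inclusion $\Nd^*(\Teul)\subseteq\{v_0\}\cup V(v_0)$, apply Corollary~\ref{0vk3mWsklxd03mcwlfi}(a) with $w=v_0$, which directly gives $\Nd^*(\Teul)\cap\bar V(v_0)\subseteq\{v_0\}\cup V(v_0)$; since $\Neul=\bar V(v_0)$, this is exactly the claim.

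For (b), the plan is to invoke Lemma~\ref{Xc0ejrKlPhgx26xk45} with $w=v_0$. Set $\nu=\xi(\bar V(v_0))=\xi(\Neul)$. Since $S(\Teul)=\{v_0\}$, the vertex $v_0$ is a leaf of the (trivial) tree $S(\Teul)$, so $\delta^*(v_0)$ equals the valency of $v_0$ in $S(\Teul)$, which is $0$. Then part \eqref{plwekj4tdMoO08346fbmt4yigr} of Lemma~\ref{Xc0ejrKlPhgx26xk45} gives
$$
\tD(\Neul)=\tD(\bar V(v_0))\ \ge\ (\nu+\delta^*(v_0)-2)(N_{v_0}-1)=(\nu-2)(N_{v_0}-1).
$$
Because $v_0$ is not a dicritical, $N_{v_0}\ge1$; and since $|\Neul|>1$ we in fact have $N_{v_0}\ge 2$ (this follows because $\epsilon(v_0)=\delta^*(v_0)+t(v_0)=t(v_0)>0$, as $v_0\in W(\Teul)$, together with the fact that $v_0\in W(\Teul)$ forces $\tD(v_0)>0$ hence $N_{v_0}>1$ by Lemma~\ref{90q932r8dhd89cnr9}). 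Therefore $N_{v_0}-1\ge1$, and the inequality yields $\tD(\Neul)\ge\nu-2$, i.e.\ $\xi(\Neul)\le\tD(\Neul)+2$. Note also that by Corollary~\ref{9vbrtyukey6idckFfFugus}, $\tD(\Neul)\ge2$, so $\max(0,\tD(\Neul))=\tD(\Neul)$ and the bound matches the one in Thm~\ref{p0fkwJjk8eCahbkBJhZuIK83gldf}.

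If equality $\xi(\Neul)=\tD(\Neul)+2$ holds, then equality holds in \eqref{plwekj4tdMoO08346fbmt4yigr} of Lemma~\ref{Xc0ejrKlPhgx26xk45} (here I should double-check that the only slack in the chain above is in that inequality — the factor $(N_{v_0}-1)\ge 1$ could also be slack if $N_{v_0}>2$, so I need to argue that equality in $\tD(\Neul)=\nu-2$ forces equality in \eqref{plwekj4tdMoO08346fbmt4yigr}: if $\nu-2>0$ this is immediate since then $(\nu-2)(N_{v_0}-1)=\nu-2$ forces $N_{v_0}=2$ and hence equality throughout; if $\nu-2\le 0$ then $\xi(\Neul)\le 2\le\tD(\Neul)$, contradicting $\xi(\Neul)=\tD(\Neul)+2$ unless $\tD(\Neul)\le 0$, impossible by Cor.~\ref{9vbrtyukey6idckFfFugus} — so $\nu-2>0$ and we are fine). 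Then part \eqref{3j2mwilsi58ozdOondw7BtJie484} of Lemma~\ref{Xc0ejrKlPhgx26xk45} applies: it gives that $V(v_0)\subseteq\Nd^*(\Teul)$ and that each $x\in\bar V(v_0)=\Neul$ is pure and satisfies $a_x=1$, which is exactly the conclusion of (b). Finally, the "in particular" clause: by Remark~\ref{cCo98sw5vs8g6vblvbL3ervweygd}, to prove Thm~\ref{p0fkwJjk8eCahbkBJhZuIK83gldf} for a brush it suffices to check $\xi(\Neul)\le 2+\max(0,\tD(\Neul))$ and that assertion (b) of that theorem holds; the first is what we just proved, and the second (each $x\in\Nd^*(\Teul)$ is pure with $a_x=1$) follows from part (b) of the present Proposition since $\Nd^*(\Teul)\subseteq\Neul$. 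The main obstacle I anticipate is the bookkeeping in the equality case — making sure that $\tD(\Neul)=\nu-2$ genuinely forces equality in the Lemma's inequality \eqref{plwekj4tdMoO08346fbmt4yigr} rather than hiding slack in the factor $N_{v_0}-1$ — but the argument above using $\tD(\Neul)\ge 2$ resolves it.
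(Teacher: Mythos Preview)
Your proof is correct, and part~(a) follows the same route as the paper. For part~(b), however, you and the paper diverge slightly: the paper invokes part~\eqref{pv0ero0DRy8jruewoqwd3co} of Lemma~\ref{Xc0ejrKlPhgx26xk45}, which directly gives the integer inequality $\tD(\bar V(v_0))\ge\nu+\delta^*(v_0)-2=\nu-2$ after checking the two hypotheses $\Neul\neq\{v_0\}$ and $v_0\notin\Omega(\Teul)$ (the latter because $v_0\in W(\Teul)$ and $W(\Teul)\cap\Omega(\Teul)=\emptyset$). You instead use part~\eqref{plwekj4tdMoO08346fbmt4yigr}, which carries the factor $(N_{v_0}-1)$; this forces you to first verify $N_{v_0}\ge 2$, and then in the equality case to argue that $\tD(\Neul)=\nu-2$ together with $\tD(\Neul)\ge 2$ pins down $N_{v_0}=2$, so that equality genuinely propagates back to~\eqref{plwekj4tdMoO08346fbmt4yigr}. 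This bookkeeping is exactly what part~\eqref{pv0ero0DRy8jruewoqwd3co} of the lemma was designed to absorb, so the paper's route is shorter; but your argument is sound and has the mild advantage of extracting the extra piece of information $N_{v_0}=2$ in the equality case.

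One small expository point: your sentence ``the inequality yields $\tD(\Neul)\ge\nu-2$'' tacitly assumes $\nu-2\ge 0$, since for $\nu-2<0$ the factor $N_{v_0}-1\ge 1$ pushes the bound the wrong way. You do patch this in the equality discussion using $\tD(\Neul)\ge 2$, and the same observation (if $\nu\le 2$ then $\xi(\Neul)\le 2<\tD(\Neul)+2$) covers the inequality as well; it would be cleaner to say so up front.
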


\begin{proof}
By Def.~\ref{pd09Yv3ned09Xse}, there exists $w \in W(\Teul)$ such that $\bar V(w) = \Neul$;
we have $W(\Teul) = \{ v_0 \}$ by Rem.~\ref{pc09n3409vnZiEWOdhFp30ef}, so $\Neul=\bar V( v_0 )$.
Then $\Nd^*(\Teul) \subseteq \{v_0\} \cup V(v_0)$ by Cor.~\ref{0vk3mWsklxd03mcwlfi}.

We have $\Neul \neq \{ v_0 \}$ (because $| \Neul | > 1$ by Rem.~\ref{pc09n3409vnZiEWOdhFp30ef})
and  $v_0 \notin \Omega(\Teul)$ (because $W(\Teul) \cap \Omega(\Teul) = \emptyset$),
so Lemma  \ref{Xc0ejrKlPhgx26xk45}\eqref{pv0ero0DRy8jruewoqwd3co} implies that
$\tD(\bar V(v_0) )
\ge \xi\big(\bar V(v_0)\big)  + \delta^*(v_0) - 2 
= \xi(\Neul)  - 2$ (because $\delta^*(v_0)=0$),
so $\xi(\Neul)  \le \tD(\Neul) + 2$.
If equality holds then  Lemma  \ref{Xc0ejrKlPhgx26xk45} implies that $V(v_0) \subseteq \Nd^*(\Teul)$ 
and that each $x \in \Neul$ is pure and satisfies $a_x=1$.
\end{proof}

\begin{proposition}  \label {hCfa89HFdfjkUY738ihhdJMJi854727}
If $| \Omega(\Teul) | = 2$ then the following hold:
\begin{enumerata}

\item $\Nd^*(\Teul) \subseteq \Omega(\Teul)$, $| \Nd^*(\Teul) |  = \xi(\Neul)  \le 2$, and each $x \in \Nd^*(\Teul)$ has type $[1,N_x,\dots,N_x]$
and satisfies $a_x=1 = \xi(x)$ and $\tD(x)=0$.

\item If $\xi(\Neul) =2$ then each $x \in \Neul$ is pure and satisfies $a_x=1$ and $\tD(x)=0$.

\end{enumerata}
In particular, Thm \ref{p0fkwJjk8eCahbkBJhZuIK83gldf} is valid when $| \Omega(\Teul) | = 2$.
\end{proposition}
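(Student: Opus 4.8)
The proposition is essentially a corollary of the structural description of the case $|\Omega(\Teul)|=2$ obtained in Thm~\ref{Xcoikn23ifcdKJDluFYT937}, combined with the local information about $\tD$-trivial paths recorded in Lemma~\ref{db8vcmxdJjvdrticu6tfvild} and Lemma~\ref{clibvaulwe7pq}. So the first step is to invoke Thm~\ref{Xcoikn23ifcdKJDluFYT937}(a): from $|\Omega(\Teul)|=2$ we obtain a $\tD$-trivial path $(z_1,\dots,z_n)$ with $\epsilon(z_n)=1$, and for any such path $\Neul=\{z_1,\dots,z_n\}$, $\Omega(\Teul)=\{z_1,z_n\}$, $\tD(z_1)\le0$, $\tD(z_n)\le0$ and $\tD(\Neul)=\tD(z_1)+\tD(z_n)\le0$; moreover $(z_n,\dots,z_1)$ is also a $\tD$-trivial path with $\epsilon(z_1)=1$.

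Next I would apply Lemma~\ref{db8vcmxdJjvdrticu6tfvild}(a) to this path: for each $i$ with $1<i<n$, the vertex $z_i$ lies outside $\Nd^*(\Teul)$, is pure, and satisfies $a_{z_i}=1$ and $\tD(z_i)=0$. Hence $\Nd^*(\Teul)\subseteq\{z_1,z_n\}=\Omega(\Teul)$. Now for the endpoints: since $z_1,z_n\in\Omega(\Teul)\subseteq Z(\Teul)$, any endpoint that belongs to $\Nd^*(\Teul)$ lies in $\Nd^*(\Teul)\cap Z(\Teul)$, so Lemma~\ref{clibvaulwe7pq} forces it to be a node of type $[1,N_x,\dots,N_x]$ with $a_x=1$, $\tD(x)=0$ and $\xi(x)=1$. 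Since $\xi$ vanishes exactly off $\Nd^*(\Teul)$ and equals $1$ on each vertex of $\Nd^*(\Teul)$, we get $\xi(\Neul)=|\Nd^*(\Teul)|\le2$. This yields (a). For (b), assume $\xi(\Neul)=2$; then $\Nd^*(\Teul)=\{z_1,z_n\}$, so by (a) both $z_1$ and $z_n$ are nodes of type $[1,N_x,\dots,N_x]$ — in particular pure — with $a_x=1$ and $\tD(x)=0$, while the interior vertices $z_2,\dots,z_{n-1}$ were already shown to be pure with $a=1$ and $\tD=0$; thus every $x\in\Neul$ has these properties.

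Finally I would deduce Thm~\ref{p0fkwJjk8eCahbkBJhZuIK83gldf} in this case via Rem.~\ref{cCo98sw5vs8g6vblvbL3ervweygd}: it suffices to check $\xi(\Neul)\le2+\max(0,\tD(\Neul))$ and assertion (b) of that theorem. Here $\tD(\Neul)\le0$, so the bound reads $\xi(\Neul)\le2$, which we have just established; and if $\xi(\Neul)=2+\max(0,\tD(\Neul))=2$ then part (b) of the present proposition (or part (a), which already says each $x\in\Nd^*(\Teul)$ has type $[1,N_x,\dots,N_x]$, hence is pure, with $a_x=1$) gives exactly what is needed. The argument is short and I do not anticipate a genuine obstacle; the only point requiring a word of care is verifying that a node of type $[1,N_x,\dots,N_x]$ is pure, which is immediate from the definition since $\{d_x\mid x\in\Deul_v\}\subseteq\{1,N_v\}$, and keeping straight that the two possible endpoints of the path play symmetric roles so that nothing extra needs to be checked at $z_n$ separately.
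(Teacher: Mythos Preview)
Your proposal is correct and follows essentially the same approach as the paper: invoke Thm~\ref{Xcoikn23ifcdKJDluFYT937}(a) to get the $\tD$-trivial path $(z_1,\dots,z_n)$ with $\Neul=\{z_1,\dots,z_n\}$, handle the interior vertices via Lemma~\ref{db8vcmxdJjvdrticu6tfvild}(a), handle the endpoints via Lemma~\ref{clibvaulwe7pq}, and conclude via Rem.~\ref{cCo98sw5vs8g6vblvbL3ervweygd}. The only cosmetic difference is that the paper treats the endpoints by applying Lemma~\ref{db8vcmxdJjvdrticu6tfvild}(c) to both $(z_1,\dots,z_n)$ and the reversed path $(z_n,\dots,z_1)$, but since part~(c) of that lemma itself just quotes Lemma~\ref{clibvaulwe7pq}, this is the same argument.
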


\begin{proof}
Suppose that  $| \Omega(\Teul) | = 2$.
By Thm \ref{Xcoikn23ifcdKJDluFYT937}, there exists a $\tD$-trivial path $(x_1, \dots, x_m)$ such that 
$\Neul = \{ x_1, \dots, x_m \}$, $\Omega(\Teul) = \{x_1,x_m\}$,
$\tD(x_1), \tD(x_m) \le 0$ and 
$\tD(\Neul) = \tD(x_1) + \tD(x_m) \le 0$.
Note that  $(x_m, \dots, x_1)$ is also a $\tD$-trivial path;
applying Lemma \ref{db8vcmxdJjvdrticu6tfvild} to both $(x_1, \dots, x_m)$ and $(x_m, \dots, x_1)$ 
implies that $\Nd^*(\Teul) \subseteq \{x_1, x_m\}$
and that each $x \in \Nd^*(\Teul)$ has type $[1, N_x, \dots, N_x]$ and satisfies $a_x=1 = \xi(x)$ and $\tD(x)=0$.
Thus $| \Nd^*(\Teul) | = \xi(\Neul) \le 2$.
To prove (c), first note that the condition ``$x$ is pure and  satisfies $a_x=1$ and $\tD(x)=0$'' is satisfied for each $x = x_i$ with $1<i<m$,
and also (by (b)) for each $x \in \Nd^*(\Teul)$; if $\xi(\Neul) =2$ then this covers all $x \in \Neul$.
\end{proof}

Recall from Lemma \ref{p309ef2GFT485ryf} that if $| S(\Teul) | = 1$ then either $|\Neul| = 1$ or $\Teul$ is a brush.
In view of Propositions \ref{Pp0drigro8awe7v8csbhsnehry}, \ref{PiIoO0v9w6ejamuisuergbf83e9f0} and \ref{hCfa89HFdfjkUY738ihhdJMJi854727},
this means that Thm \ref{p0fkwJjk8eCahbkBJhZuIK83gldf} is valid whenever $| S(\Teul) | = 1$ or $| \Omega(\Teul) | = 2$.
So the remaining part of the proof can be carried out under the assumptions of paragraph \ref{934btgodog9qkej}:

\begin{parag} \label {934btgodog9qkej}
\textbf{\boldmath Let us assume that  $| S(\Teul) | \neq 1$ and $| \Omega(\Teul) | \neq 2$.} It follows that
$$
| S(\Teul) | > 1 \quad \text{and} \quad  | \Omega(\Teul) | < 2 \, .
$$
Let $z \in \In(\Teul)$ and $\overline\Oeul = \overline\Oeul(\Teul,z)$.
Note that $\overline\Oeul \neq \emptyset$, because  $| S(\Teul) | > 1$ (see Def.~\ref{c20w39w93e9d0dqMne89wcg9}). 
Define 
$$
\nu_z = \xi\big( \bar V(z) \big) \, .
$$
For each $C \in \overline\Oeul$, 
let $(v_C,e_{v_C}) \preceq (u_C,e_{u_C})$ be respectively the least and greatest elements of $C$ and define
$$
\beta_C = \xi\big( \Neul(u_C,e_{u_C}) \setminus \Neul(v_C,e_{v_C}) \big) 
\quad \text{and} \quad
\nu_C = \xi\big( \bar V(u_C) \big) \, .
$$
Then 
\begin{equation} \label {87rjHFTRk77bhw7dneo}
\xi( \Neul ) = \nu_z + \sum_{C \in \overline\Oeul} (\beta_C + \nu_C)
= \nu_z  +  \beta_{C_0} + \nu_{C_0} + \sum_{C \in \overline\Oeul \setminus \{ C_0 \} } (\beta_C + \nu_C) \, .
\end{equation}
\end{parag}

\medskip

The assumptions and notations of \ref{934btgodog9qkej} are in effect throughout \ref{9hJTY8gJ73jr9Ij93fe}--\ref{76GgdT276d729gs27g882h}.

\begin{sublemma} \label {9hJTY8gJ73jr9Ij93fe}
Let $C \in \overline\Oeul$. If  $(v,e) \in C$ satisfies $\Neul(v,e) \cap \Nd^*(\Teul) \neq \emptyset$ then 
$$
1 \ge c(v,e) = c(u_C, e_{u_C} ) \, .
$$
\end{sublemma}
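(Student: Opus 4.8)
The statement says: if $C$ is a comb in the decomposition $\overline\Oeul(\Teul,z)$, and some pair $(v,e)\in C$ with $v\neq u_C$ (or $v=u_C$) has $\Neul(v,e)$ meeting $\Nd^*(\Teul)$, then $c(v,e)=c(u_C,e_{u_C})$ and this common value is $\le 1$. The two conclusions should be attacked separately. For the equality $c(v,e)=c(u_C,e_{u_C})$: since $(u_C,e_{u_C})$ is a comb over $(v,e)$ (as $(u_C,e_{u_C})$ is the greatest element of $C$ and $C$ is totally ordered with every element a comb over every smaller one — cf.\ Thm~\ref{cvnv7nd6ykawsujwryf9v} and Rem.\ \ref{c0Bj12Wsdh0982EChi}), Prop.~\ref{0ci19KJTghL872je309}(a) gives $\eta(u_C,e_{u_C})=\eta(v,e)$. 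Now I need to upgrade this to $c(u_C,e_{u_C})=c(v,e)$. The key is that $\Neul(v,e)$ contains a node $x\in\Nd^*(\Teul)$, i.e.\ $d(x)=1$; by Lemma~\ref{kjwoeid9cse9c}(c) applied to $(v,e)$, since $x\in\Nd(v,e)$ we get $\gcd\{d(z'):z'\in\Nd(v,e)\}\in c(v,e)\Integ$, and this gcd is $1$, so $c(v,e)$ divides $1$ in $\Rat$; but $c(v,e)>0$, and more importantly $c(v,e)$ is a rational number dividing $1$... this forces $c(v,e)\le 1$ only if $c(v,e)\in\Nat$. So I should first establish the second conclusion.

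\textbf{Establishing $c(v,e)\le 1$.} Since $\Nd(v,e)\ni x$ with $d(x)=1$, Lemma~\ref{kjwoeid9cse9c}(c) gives $1=\gcd\{d(z'):z'\in\Nd(v,e)\}\in c(v,e)\Integ$, hence there is $m\in\Integ$ with $m\,c(v,e)=1$; as $c(v,e)>0$ (Lemma~\ref{Ocivbghn5dmkuyesK}), $m\ge 1$, so $c(v,e)=1/m\le 1$. That settles $c(v,e)\le 1$ directly. Then the fact that $c(v,e)\le 1$ and $c(v,e)=1/m$ already tells me $c(v,e)$ need not be an integer a priori, but now I want to push the equality $c(v,e)=c(u_C,e_{u_C})$. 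By Lemma~\ref{p0c9vin12q09wsc}(b) (monotonicity), $(u_C,e_{u_C})\succeq(v,e)$ gives $c(u_C,e_{u_C})\le c(v,e)\le 1$, so $c(u_C,e_{u_C})$ is also $\le 1$. Combined with $\eta(u_C,e_{u_C})=\eta(v,e)$ and Lemma~\ref{p0c9vin12q09wsc}(c), which says $\tD(\Neul(u_C,e_{u_C}))-\tD(\Neul(v,e))=[c(v,e)-c(u_C,e_{u_C})]+[\eta(u_C,e_{u_C})-\eta(v,e)]=c(v,e)-c(u_C,e_{u_C})$: I need to show this difference is $0$.

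\textbf{The main step.} The difference $c(v,e)-c(u_C,e_{u_C})$ is a nonnegative integer (it equals a difference of two values of $\tD$, hence lies in $\Integ$, and is $\ge 0$ by monotonicity). Since both $c(v,e)$ and $c(u_C,e_{u_C})$ lie in $(0,1]$ and their difference is a nonnegative integer, that difference must be $0$ — a number in $(0,1]$ minus a number in $(0,1]$ that is a nonnegative integer can only be $0$. This is the crux, and it is actually short once I know both characteristic numbers are $\le 1$. So the logical order is: (1) $x\in\Nd^*(\Teul)\cap\Neul(v,e)$ forces $c(v,e)\le 1$ via Lemma~\ref{kjwoeid9cse9c}(c); (2) monotonicity (Lemma~\ref{p0c9vin12q09wsc}(b)) forces $c(u_C,e_{u_C})\le c(v,e)\le 1$, and both are positive; (3) $(u_C,e_{u_C})$ is a comb over $(v,e)$ so $\eta$ agrees, and by Lemma~\ref{p0c9vin12q09wsc}(c) the difference $c(v,e)-c(u_C,e_{u_C})$ is a nonnegative integer lying in $[0,1)$, hence $0$; (4) conclude $c(v,e)=c(u_C,e_{u_C})\le 1$. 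The only subtlety I anticipate is making sure Lemma~\ref{kjwoeid9cse9c}(c) applies — it requires $\Nd(v,e)\neq\emptyset$, which holds because $\Neul(v,e)$ meets $\Nd^*(\Teul)\subseteq\Nd(\Teul)$ and any node in $\Neul(v,e)$ automatically has $e$ on its path to $v$, hence lies in $\Nd(v,e)$. This containment $\Nd^*(\Teul)\cap\Neul(v,e)\subseteq\Nd(v,e)$ is immediate from the definitions, so there is really no hard obstacle; the proof is a clean assembly of Lemmas~\ref{kjwoeid9cse9c}, \ref{Ocivbghn5dmkuyesK}, \ref{p0c9vin12q09wsc} and Prop.~\ref{0ci19KJTghL872je309}.
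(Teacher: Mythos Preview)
Your proof is correct and follows essentially the same route as the paper's own proof: use Lemma~\ref{kjwoeid9cse9c}(c) with the node $x\in\Nd^*(\Teul)\cap\Neul(v,e)$ to get $c(v,e)\le 1$; use positivity and monotonicity of $c$ to get $0<c(u_C,e_{u_C})\le c(v,e)\le 1$; use Prop.~\ref{0ci19KJTghL872je309}(a) to obtain $\eta(u_C,e_{u_C})=\eta(v,e)$ from the comb relation; then Lemma~\ref{p0c9vin12q09wsc}(c) forces $c(v,e)-c(u_C,e_{u_C})$ to be an integer in $[0,1)$, hence $0$. The only difference is expository: you spell out the containment $\Nd^*(\Teul)\cap\Neul(v,e)\subseteq\Nd(v,e)$ and the reciprocal form $c(v,e)=1/m$, which the paper leaves implicit.
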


\begin{proof}
Since $\Neul(v,e) \cap \Nd^*(\Teul) \neq \emptyset$, Lemma \ref{kjwoeid9cse9c}(c) gives $c(v,e) \le 1$.
So we have $0 < c(u_C, e_{u_C}) \le c(v,e) \le 1$ and consequently
$0 \le c(v,e) - c(u_C, e_{u_C})<1$.
Moreover, $(u_C, e_{u_C})$ is a comb over $(v,e)$, so $\eta(u_C,e_{u_C}) = \eta(v,e)$ by Prop.~\ref{0ci19KJTghL872je309}(a);
then Lemma \ref{p0c9vin12q09wsc} gives
$c(v,e) - c(u_C, e_{u_C}) = \tD( \Neul(u_C, e_{u_C}) ) - \tD( \Neul(v, e) ) \in \Integ$,
so $c(v,e) - c(u_C, e_{u_C}) = 0$.
\end{proof}

\begin{sublemma} \label {88yHvcxhl93hf7}
Let $C \in \overline\Oeul$ and let $(v_C,e_{v_C}) \preceq (u_C,e_{u_C})$ be respectively the least and greatest elements of $C$.
Consider the path $\gamma_{v_C,u_C} = (v_1, \dots, v_n)$ and the set
$$
H_C = \big( \Neul(u_C, e_{u_C})  \setminus \Neul(v_C, e_{v_C}) \big) \cap \Nd^*(\Teul) \, .
$$

\begin{enumerata}

\item $|H_C| \le 1$, and if $H_C \neq \emptyset$ then  $\dot c(C) > 0$. 

\item Let $x \in H_C$. Then $a_x=1$, $x$ is a node of type $[1,N_x,\dots,N_x]$, and a unique $i \in \{1, \dots, n-1\}$ satisfies $(\alpha)$ or $(\beta)$, where:
\begin{itemize}

\item[$(\alpha)$] $x = v_i$ and $\epsilon(v_i)=2$;

\item[$(\beta)$]  there exists $(x_1, \dots, x_m) \in \Gamma(\Teul)$ such that $x = x_1$ and $x_m = v_i$.

\end{itemize}
Moreover,  the unique $i$ that satisfies $(\alpha)$ or $(\beta)$ also satisfies
\begin{itemize}

\item[$(\gamma)$] $c(v_i, e_{v_i}) = N_{v_i} > 1 \ge c(v_{i+1}, e_{v_{i+1}}) = c(u_{C}, e_{u_{C}})$.

\end{itemize}

\item $ \beta_C = \big| H_C \big| \in \{0,1\} $

\end{enumerata}
\end{sublemma}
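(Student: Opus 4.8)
The statement to prove is Sublemma \ref{88yHvcxhl93hf7}, with three parts. The plan is to work along the path $\gamma_{v_C,u_C} = (v_1, \dots, v_n)$, where $(v_1,e_{v_1}) = (v_C,e_{v_C})$ is the least and $(v_n,e_{v_n})=(u_C,e_{u_C})$ is the greatest element of the comb $C$. Recall from \ref{c09Nc23owsfcnp2q0wuXwoh} that for each interior vertex $v_i$ (i.e.\ $1 \le i \le n-1$, since $(v_C,e_{v_C}) \preceq (v_i,e_{v_i}) \prec (u_C,e_{u_C})$) we have $\epsilon(v_i) \in \{2,3\}$ with tightly controlled arithmetic: if $\epsilon(v_i)=2$ then $R(v_i,\{e_{v_i}\})<1$, and if $\epsilon(v_i)=3$ then $R(v_i,\{e_{v_i}\})=0$ and the ``extra'' edge $g$ at $v_i$ is a tooth. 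The set $\Neul(u_C,e_{u_C}) \setminus \Neul(v_C,e_{v_C})$ is exactly $\bigcup_{i=1}^{n-1} \big( \bar V(v_i) \setminus \{w\text{-part}\}\big)$; more precisely it is $\{v_1,\dots,v_{n-1}\}$ together with, for each $i$ with $\epsilon(v_i)=3$, the interior $\Neul(v_i,g_i)$ of the tooth attached at $v_i$.

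First I would locate where an element $x \in \Nd^*(\Teul)$ can possibly sit in this set. An element of $\Nd^*(\Teul)$ has $d(x)=1$; by Lemma \ref{kjwoeid9cse9c}(c), if $x \in \Neul(v,e)$ then $c(v,e) \mid 1$, so $c(v,e) \le 1$. Combining this with Sublemma \ref{9hJTY8gJ73jr9Ij93fe}, if $(v,e) \in C$ satisfies $\Neul(v,e) \cap \Nd^*(\Teul) \neq \emptyset$ then $c(v,e) = c(u_C,e_{u_C}) \le 1$, so $\tD(\Neul(v,e)) = \tD(\Neul(u_C,e_{u_C}))$ and in particular $\dot c(C) = c(v_C,e_{v_C}) - c(u_C,e_{u_C}) \ge c(v,e) - c(u_C,e_{u_C}) = 0$; to get strict positivity in part (a) I would instead track the vertex $v_i$ closest to $u_C$ that "carries" the $\Nd^*$-element and argue that the characteristic number must jump at that step. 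The heart of the matter is case analysis on each interior $v_i$ using the comb conditions: if $\epsilon(v_i)=2$, then by Prop.\ \ref{0ci19KJTghL872je309}(b) (the equivalence with \eqref{pd90cfn2pw0s}) one of two things happens along the comb — either $\dot c(C)=0$ and all interior $v_i$ have $\tD(v_i)=0$, or $\dot c$ picks up a contribution. If $\epsilon(v_i)=3$, the extra edge $g_i$ is a tooth so $R(v_i,\{e_{v_i}\})=0$ forces $\sigma(v_i)=0$, $a_{v_i}=1$, hence $d(v_i)=N_{v_i}$ and $v_i \notin \Nd^*(\Teul)$; and $c(v_i,e_{v_i})=N_{v_i}$ while the tooth interior $\Neul(v_i,g_i)$ has $c(v_i,g_i) < N_{v_i}$ (by Lemma \ref{GRygergGREg8948r}), so an $\Nd^*$-element in the tooth interior at $v_i$ would need $c(u_C,e_{u_C}) \le c(v_i,g_i) < N_{v_i}$, and then $\dot c(C) \ge c(v_i,e_{v_i}) - c(u_C,e_{u_C}) \ge N_{v_i} - c(v_i,g_i) \ge 1 > 0$.

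For part (b): once we know $x \in H_C$, since $x \in \Nd^*(\Teul)$ and $x$ lies in the interior of the comb, $x$ is either one of the path vertices $v_i$ itself or lies in a tooth interior attached at some $v_i$. In the first case, $x = v_i$ with $\epsilon(v_i) \in \{2,3\}$, but $\epsilon(v_i)=3$ was just ruled out, so $\epsilon(v_i)=2$ — that is case $(\alpha)$. In the second case, $x$ lies inside the tooth interior $\Neul(v_i,g_i)$ for some $v_i$ with $\epsilon(v_i)=3$; the tooth corresponds to a $\tD$-trivial path $(x_1,\dots,x_m) \in \Gamma(\Teul)$ with $x_m = v_i$, and by Lemma \ref{db8vcmxdJjvdrticu6tfvild}(b) the only members of that path possibly in $\Nd^*(\Teul)$ are $x_1$ and $x_m = v_i$; since $v_i \notin \Nd^*(\Teul)$, we must have $x = x_1$ — that is case $(\beta)$. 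In either case Lemma \ref{db8vcmxdJjvdrticu6tfvild}(c) (applied to the tooth's $\tD$-trivial path in case $(\beta)$) or Lemma \ref{clibvaulwe7pq}/\ref{8ey8cdody27} (noting $\tD(x)\le 0$ since $x$ is nonpositive, being in $\Neul(u_C,e_{u_C})$ with $\dot c$ argument, or more directly: $x$ is in the interior so has valency forcing $\tD(x)\le 0$) gives that $a_x=1$ and $x$ is a node of type $[1,N_x,\dots,N_x]$. To get $(\gamma)$: in case $(\alpha)$, $x=v_i$ is a node of type $[1,N_{v_i},\dots,N_{v_i}]$, so $\sigma(v_i) = N_{v_i}-1 > 0$ (unless $N_{v_i}=1$), forcing $N_{v_i}>1$; then $c(v_i,e_{v_i}) \mid d(v_i) = 1$ gives $c(v_i,e_{v_i})\le 1$, but actually one computes $c(v_i,e_{v_i}) = N_{v_i}$ from the recursion since $M(v_i,e_{v_i})=1$ when $R(v_i,\{e_{v_i}\})<1$ comes from $k$-values — this needs care: I expect the right computation is $c(v_{i+1},e_{v_{i+1}}) = \gcd(d(v_i), c(v_i,e_{v_i}))/a_{v_i} = 1 = c(u_C,e_{u_C})$ while $c(v_i,e_{v_i}) = N_{v_i}$, using $d(v_i)=1$ and $a_{v_i}=1$. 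In case $(\beta)$, the tooth's $\tD$-trivial path combined with Lemma \ref{db8vcmxdJjvdrticu6tfvild}(c) gives $c(v_i, e') = 1$ where $e'$ is the tooth edge, and $c(v_i,e_{v_i}) = N_{v_i}$ follows from $R(v_i,\{e_{v_i}\})=0$ and the structure of $\epsilon(v_i)=3$ combs.

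For part (c): $\beta_C = \xi\big(\Neul(u_C,e_{u_C}) \setminus \Neul(v_C,e_{v_C})\big)$. Since $\xi(v) > 0 \iff v \in \Nd^*(\Teul)$ and each $v \in \Nd^*(\Teul)$ in this set has $\xi(v)=1$ (being of type $[1,N_v,\dots,N_v]$, so the number of $1$'s in the type is $1$), we get $\beta_C = |H_C|$, which is in $\{0,1\}$ by part (a). \textbf{The main obstacle} I anticipate is part (b)(ii)--(iii), specifically pinning down the \emph{uniqueness} of the index $i$ and verifying the precise characteristic-number identities in $(\gamma)$: one must rule out the possibility of two distinct interior vertices each contributing an $\Nd^*$-element, and this should follow from Sublemma \ref{9hJTY8gJ73jr9Ij93fe} forcing $c(v,e)$ to be constantly equal to $c(u_C,e_{u_C})$ on the relevant sub-path, combined with the observation that such an element forces a strict jump in $c$ (part (a)), so there can be at most one jump and hence at most one carrying vertex. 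Getting the bookkeeping between "$x$ is a path vertex $v_i$" and "$x$ is in a tooth at $v_i$" clean, and confirming the characteristic numbers in each subcase via Def.\ \ref{kcjfnp0293wd}, is where the real work lies.
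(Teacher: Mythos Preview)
Your plan follows the same architecture as the paper's proof: localize $x \in H_C$ to an index $i$ on the path, split into the two cases $(\alpha)$ (on-path, $\epsilon(v_i)=2$) and $(\beta)$ (tooth-tip, $\epsilon(v_i)=3$), derive the characteristic-number jump $(\gamma)$ in each case, and then use monotonicity of $c$ along the comb to force uniqueness of $i$ and hence of $x$. Your handling of case $(\beta)$, of the exclusion of $x=v_i$ when $\epsilon(v_i)=3$, and of part (c) is correct.

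There is one genuine gap, in case $(\alpha)$. To show $a_x=1$ and that $x$ has type $[1,N_x,\dots,N_x]$ when $x=v_i$ with $\epsilon(v_i)=2$, you propose to invoke Lemma~\ref{clibvaulwe7pq} or Lemma~\ref{8ey8cdody27}, supported by the claim that $\tD(x)\le 0$. Both lemmas require $\epsilon(x)=1$, which fails here; and your justifications for $\tD(x)\le 0$ are incorrect: ``nonpositive'' is a property of pairs in $P(\Teul)$, not of vertices, and in fact once you know $a_{v_i}=1$ and $\sigma(v_i)=N_{v_i}-1$ you get $\tD(v_i)=N_{v_i}-1>0$. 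The correct route is the one you nearly reach a few lines later when discussing $c(v_i,e_{v_i})$: from the comb condition $R(v_i,\{e_{v_i}\})<1$ and Lemma~\ref{pvOtyxMdcbgOEjo28hw8y} you get $\sigma(v_i)\ge N_{v_i}-d(v_i)=N_{v_i}-1>0$ (here $N_{v_i}>1$ since $\epsilon(v_i)>1$, by Lemma~\ref{90q932r8dhd89cnr9}\eqref{0239u9e78923}), so at least one $k_u>1$; then Lemma~\ref{trivialobs} applied to $R(v_i,\{e_{v_i}\})<1$ forces exactly one nonzero term, giving simultaneously $a_{v_i}=1$, $M(v_i,e_{v_i})=1$ (hence $c(v_i,e_{v_i})=N_{v_i}$), and type $[d(v_i),N_{v_i},\dots,N_{v_i}]=[1,N_{v_i},\dots,N_{v_i}]$. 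This single computation yields $a_x=1$, the type, and the first half of $(\gamma)$; the second half $c(v_{i+1},e_{v_{i+1}})=c(u_C,e_{u_C})\le 1$ follows directly from Sublemma~\ref{9hJTY8gJ73jr9Ij93fe} since $x\in\Neul(v_{i+1},e_{v_{i+1}})\cap\Nd^*(\Teul)$. (Your aside ``$c(v_i,e_{v_i})\mid d(v_i)=1$'' has the divisibility backwards and should be discarded.)
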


\begin{proof}
Assertion (c) is a consequence of (a) and (b).

(b) Let $x \in H_C$.
Since $x \in \Neul(u_C, e_{u_C})  \setminus \Neul(v_C, e_{v_C})$, 
a unique $i \in \{1, \dots, n-1\}$ satisfies $(\alpha')$ or $(\beta')$, where:
\begin{itemize}

\item[$(\alpha')$] $x = v_i$;

\item[$(\beta')$]  there exists $(x_1, \dots, x_m) \in \Gamma(\Teul)$ such that $x \in \{ x_1, \dots, x_{m-1} \}$ and $x_m = v_i$.

\end{itemize}

Observe that $i$ satisfies $\epsilon(v_i)>1$, so Lemma \ref{90q932r8dhd89cnr9}\eqref{0239u9e78923} gives 
$$
N_{v_i} > 1 \, .
$$

If $(\beta')$ holds then Cor.~\ref{0vk3mWsklxd03mcwlfi}(a) implies that $x \in V(x_m)$, so $x = x_1$, so $(\beta)$ holds.

Suppose that $(\alpha')$ holds but that $(\alpha)$ does not.
Then the definition of comb implies that $\epsilon(v_i) =3$, that $R(v_i, \{ e_{v_i} \}) = 0$,
and that there exists $(x_1, \dots, x_m) \in \Gamma(\Teul)$ such that $x_m=v_i$.
Since $x_m = v_i = x \in \Nd^*(\Teul)$, Lemma \ref{pvOtyxMdcbgOEjo28hw8y} implies that $\sigma(v_i) \ge N_{v_i}-d(v_i) = N_{v_i}-1>0$,
so $\sigma(v_i)>0$, which contradicts  $R(v_i, \{ e_{v_i} \}) = 0$.
This contradiction shows that if $(\alpha')$ holds then so does $(\alpha)$.
So we have shown that  a unique $i \in \{1, \dots, n-1\}$ satisfies $(\alpha)$ or $(\beta)$.
Let us now show that the same $i$ satisfies $(\gamma)$. (We already noted that $N_{v_i}>1$.)

If $i$ satisfies $(\alpha)$ then $R(v_i, \{ e_{v_i} \} ) < 1$ and (by Lemma \ref{pvOtyxMdcbgOEjo28hw8y}) $\sigma(v_i) \ge N_{v_i} - d(v_i) = N_{v_i} - 1 > 0$,
so $M(v_i, e_{v_i} ) =1$, so $c(v_i, e_{v_i} ) = N_{v_i}$. 
If $i$ satisfies $(\beta)$ then $R(v_i, \{ e_{v_i} \} ) =0$,
so $M(v_i, e_{v_i} ) =1$ and hence $c(v_i, e_{v_i} ) = N_{v_i}$.  So in both cases we have 
$$
c(v_i, e_{v_i} ) = N_{v_i} > 1 \, .
$$
Since $\Neul(v_{i+1},e_{v_{i+1}}) \cap \Nd^*(\Teul) \neq \emptyset$, Lemma \ref{9hJTY8gJ73jr9Ij93fe} implies that 
$1 \ge c(v_{i+1}, e_{v_{i+1}}) = c(u_{C}, e_{u_{C}})$, so $i$ satisfies $(\gamma)$.

We claim that $a_x=1$ and that $x$ is a node of type $[1,N_x, \dots, N_x]$.
If $i$ satisfies $(\beta)$, this follows from Cor.~\ref{0vk3mWsklxd03mcwlfi}(c).
We noted in the preceding paragraph that if $i$ satisfies $(\alpha)$ then $\sigma(x)>0$ and $R(x,\{e_x\})<1$;
so $a_x=1$; as $\sigma(x)/N_x \le R(x,\{e_x\})<1$, the type of $x$ is $[d(x), N_x, \dots, N_x]$,
and since $x \in \Nd^*(\Teul)$ the type is  $[1,N_x, \dots, N_x]$.
So (b) is proved.

(a) Consider $x,x' \in H_C$. By (b), there exist $i,i' \in \{1, \dots, n-1\}$ such that
$(x,i)$ satisfies  $(\alpha)$ or $(\beta)$, and $(x',i')$ satisfies  $(\alpha)$ or $(\beta)$.
Since $i$ and $i'$ must then satisfy $(\gamma)$, we have $i=i'$; considering the statements  $(\alpha)$ and $(\beta)$, it
follows that $x=x'$.  So $|H_C| \le 1$.
If $H_C \neq \emptyset$ then there is an $i$ satisfying $(\gamma)$, so $\dot c(C) \ge c(v_i, e_{v_i}) - c(v_{i+1}, e_{v_{i+1}}) > 0$.
So (a) is true.

(d) We have $\beta_C = \xi\big( \Neul(u_C,e_{u_C}) \setminus \Neul(v_C,e_{v_C}) \big)$ by definition
and  $\xi\big( \Neul(u_C,e_{u_C}) \setminus \Neul(v_C,e_{v_C}) \big) = \xi( H_C )$ because $\xi$ is zero outside of $\Nd^*(\Teul)$.
Since (b) implies that $\xi(x)=1$ for each $x \in H_C$, it follows that $\xi(H_C) = | H_C |$. 
\end{proof}

\begin{sublemma}  \label {ovF8ym5jDdrfjkerdjkteowos} 
Let $X = \bar V(u_0) \cup \Neul(u_0,e_{u_0})$.  
\begin{enumerata}

\item \label {P0v3b49wsrpqwo0q59RHYJ}  $\xi(X) \le \max \big(0, \tD( X ) \big) - \delta^*(u_0) + 3$

\item If equality holds in \eqref{P0v3b49wsrpqwo0q59RHYJ} then 
$N_{ u_0 }>1$, 
$V( u_0 ) \subseteq \Nd^*(\Teul)$, 
each $x \in \bar V(u_0)$ is pure and satisfies $a_{ x } = 1$,
and each $x \in \Nd^*(\Teul) \cap X$ is pure 
and satisfies $a_x=1$.

\item Assume that $\tD(X) \le 0$. Then $\Nd^*(\Teul) \cap \Neul(u_0,e_{u_0}) = \emptyset$,
and if equality holds in \eqref{P0v3b49wsrpqwo0q59RHYJ} then $M(u_0,e_{u_0})=1$.

\end{enumerata}
\end{sublemma}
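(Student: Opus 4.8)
\textbf{Proof plan for Sublemma \ref{ovF8ym5jDdrfjkerdjkteowos}.}
The plan is to reduce everything to the estimates already established for $\bar V(u_0)$ in Lemma \ref{Xc0ejrKlPhgx26xk45}, combined with the monotonicity of $\eta$ and $c$ from Lemma \ref{p0c9vin12q09wsc} and Prop.\ \ref{DKxcnpw93sdo}, together with the comb-calculus of Thm \ref{P90werd23ewods0ci} / Cor.\ \ref{P90werd23ewods0ci-cor}. First I would note that $(u_0,e_{u_0})$ is not a tooth (no element of $\Oeul$ is a tooth, Def.\ \ref{c20w39w93e9d0dqMne89wcg9}), so $\bar V(u_0) \cap \Neul(u_0,e_{u_0}) = \emptyset$ and hence $\xi(X) = \xi\big(\bar V(u_0)\big) + \xi\big(\Neul(u_0,e_{u_0})\big)$ and $\tD(X) = \tD\big(\bar V(u_0)\big) + \tD\big(\Neul(u_0,e_{u_0})\big)$. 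Since $\overline\Oeul \neq \emptyset$ and $|\Omega(\Teul)|<2$ under the standing assumptions \ref{934btgodog9qkej}, we have $u_0 \notin \Omega(\Teul)$ and $\Neul \neq \{u_0\}$ (indeed $z \in \Neul \setminus \{u_0\}$ since $z \in \In(\Teul)$ and $u_0 = u_{C_0} \in S(\Teul)\setminus\{z\}$), so Lemma \ref{Xc0ejrKlPhgx26xk45}\eqref{pv0ero0DRy8jruewoqwd3co} applies with $w = u_0$: writing $\nu_0 = \xi\big(\bar V(u_0)\big)$ we get $\tD\big(\bar V(u_0)\big) \ge \nu_0 + \delta^*(u_0) - 2$.

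Next I would handle the $\Neul(u_0,e_{u_0})$ part. The key observation is that $\xi$ is supported on $\Nd^*(\Teul)$, so $\xi\big(\Neul(u_0,e_{u_0})\big) = |\Nd^*(\Teul) \cap \Neul(u_0,e_{u_0})|$; and if this set is nonempty then Lemma \ref{kjwoeid9cse9c}(c) forces $c(u_0,e_{u_0}) \le 1$, so by Rem.\ \ref{uyhmdytjwhwhrkdftraef4gf23h23} (applied contrapositively) either $c(u_0,e_{u_0}) = 1$ or $(u_0,e_{u_0})$ is not nonpositive, i.e.\ $\tD\big(\Neul(u_0,e_{u_0})\big) \ge 1$. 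This immediately gives part (c): if $\tD(X)\le 0$ then $\tD\big(\Neul(u_0,e_{u_0})\big) \le \tD(X) - \tD\big(\bar V(u_0)\big) \le -\big(\nu_0+\delta^*(u_0)-2\big)$; combined with $\tD\big(\bar V(u_0)\big) > 0$ (Cor.\ \ref{0c9in34dh5sifzsshnkus}(a), valid since $|\Omega(\Teul)|\le1$) we conclude $\tD\big(\Neul(u_0,e_{u_0})\big) < 1$, hence $\Nd^*(\Teul)\cap\Neul(u_0,e_{u_0})=\emptyset$ and $\xi\big(\Neul(u_0,e_{u_0})\big) = 0$. For the general bound (a), one splits on whether $\Nd^*(\Teul)\cap\Neul(u_0,e_{u_0})$ is empty: if empty, $\xi(X) = \nu_0 \le \tD\big(\bar V(u_0)\big) - \delta^*(u_0) + 2 \le \tD(X) - \delta^*(u_0) + 2 + \big(-\tD(\Neul(u_0,e_{u_0}))\big)$, and here $\tD(\Neul(u_0,e_{u_0}))$ is either $\ge 0$ (whence the bound with $+2 \le +3$ is clear even after dropping it) — actually the cleaner route is to bound $\xi(X) \le \nu_0 + \xi(\Neul(u_0,e_{u_0}))$ and estimate each summand against the corresponding $\tD$-piece, using $\nu_0 + \delta^*(u_0) - 2 \le \tD(\bar V(u_0))$ and $\xi(\Neul(u_0,e_{u_0})) \le \tD(\Neul(u_0,e_{u_0})) + 1$ when the latter is positive (and $= 0 \le \tD(\Neul(u_0,e_{u_0})) + 1$ otherwise, using $\tD \ge 0$ always on $\Neul(u_0,e_{u_0})$? no — it can be negative). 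The careful bookkeeping is: $\xi(\Neul(u_0,e_{u_0})) \le \max(0, \tD(\Neul(u_0,e_{u_0}))) \le \max(0,\tD(\Neul(u_0,e_{u_0}))) + 1$, and in fact $\xi(\Neul(u_0,e_{u_0}))\le \tD(\Neul(u_0,e_{u_0}))$ whenever it is nonzero by the remark above (since then $\tD(\Neul(u_0,e_{u_0}))\ge 1$ and one still needs $\xi \le \tD$ there — this requires the inductive statement of Thm \ref{p0fkwJjk8eCahbkBJhZuIK83gldf} on the smaller tree, which is exactly how the induction in \ref{934btgodog9qkej}ff.\ is structured). Adding $\nu_0 \le \tD(\bar V(u_0)) - \delta^*(u_0) + 2$ and $\xi(\Neul(u_0,e_{u_0})) \le \tD(\Neul(u_0,e_{u_0}))$ yields $\xi(X) \le \tD(X) - \delta^*(u_0) + 2$; one then argues that if $\tD(X) < 0$ the left side is $0$ and the inequality with $\max(0,\cdot)$ still holds, and the extra ``$+1$'' (giving $+3$ instead of $+2$) is the slack needed precisely when $c(u_0,e_{u_0})=1$ forces $\tD(\Neul(u_0,e_{u_0}))$ down to $0$ while $\Nd^*$ still meets it.

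For (b), the equality analysis: equality in \eqref{P0v3b49wsrpqwo0q59RHYJ} forces equality in every intermediate inequality, in particular equality in Lemma \ref{Xc0ejrKlPhgx26xk45}\eqref{pv0ero0DRy8jruewoqwd3co}, which via Lemma \ref{Xc0ejrKlPhgx26xk45}\eqref{3j2mwilsi58ozdOondw7BtJie484} gives that every $x \in \bar V(u_0)$ is pure with $a_x=1$ and $V(u_0)\subseteq\Nd^*(\Teul)$; the equality chain for \eqref{pv0ero0DRy8jruewoqwd3co} also traces back through \eqref{plwekj4tdMoO08346fbmt4yigr}, forcing $N_{u_0}>1$ (otherwise the inequality $(\nu_0+\delta^*(u_0)-2)(N_{u_0}-1)\le(\nu_0+\delta^*(u_0)-2)$ degenerates and one checks $N_{u_0}=1$ is incompatible with equality here as in \eqref{1hhnejXio8eergjaiqUETDQWKOW}). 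Purity of $x\in\Nd^*(\Teul)\cap X$: for $x\in\bar V(u_0)$ this is already known; for $x\in\Nd^*(\Teul)\cap\Neul(u_0,e_{u_0})$ one invokes Sublemma \ref{88yHvcxhl93hf7}(b) (type $[1,N_x,\dots,N_x]$, hence pure, $a_x=1$) — which requires that $\Nd^*(\Teul)\cap\Neul(u_0,e_{u_0})$ be contained in the $H_{C}$-sets of the combs $C\ne C_0$, i.e.\ these $x$ sit inside teeth or at $\epsilon=2$ comb-interior vertices. The last clause of (c), that equality in \eqref{P0v3b49wsrpqwo0q59RHYJ} together with $\tD(X)\le0$ implies $M(u_0,e_{u_0})=1$: since then $\tD(\Neul(u_0,e_{u_0}))<1$ and $\Nd^*$ misses it, equality forces $\xi(X)=\nu_0 = \tD(\bar V(u_0))-\delta^*(u_0)+3 \ge \tD(X)-\delta^*(u_0)+3$, which combined with $\tD(\Neul(u_0,e_{u_0})) \le 0$ pins $\tD(\Neul(u_0,e_{u_0})) = 0$ exactly (it cannot be negative without breaking equality), and then $(u_0,e_{u_0})$ is nonpositive with $\tD(\Neul(u_0,e_{u_0})) = 1 - c(u_0,e_{u_0})$ by Prop.\ \ref{DKxcnpw93sdo}, giving $c(u_0,e_{u_0})=1$, i.e.\ $M(u_0,e_{u_0}) = N_{u_0}/c(u_0,e_{u_0}) = N_{u_0}$... wait — this gives $M = N_{u_0}$, not $1$. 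The resolution is that in part (c) one is in the regime where $\Nd^*(\Teul)\cap\Neul(u_0,e_{u_0})=\emptyset$ already, so $\xi(\Neul(u_0,e_{u_0})) = 0$ and the equality in \eqref{P0v3b49wsrpqwo0q59RHYJ} together with $\tD(X)\le 0$ actually forces $\tD(X) \le 0$ to be tight against $\nu_0 + \delta^*(u_0) - 3 = \xi(X) + \delta^*(u_0) - 3$; tracking the sign, $\tD(\Neul(u_0,e_{u_0})) = \tD(X) - \tD(\bar V(u_0))$ must then equal exactly $1 - \delta^*(u_0) \le -1 < 1$, hmm; I expect the clean statement is that equality plus $\tD(X)\le0$ forces $(u_0,e_{u_0})$ nonpositive and then $R(u_0,\{e_{u_0}\})$-type considerations (as in Cor.\ \ref{0c9in34dh5sifzsshnkus}(c) / Lemma \ref{cp0v9i230wedqwo}(c)) give $\sigma(u_0)=0$, whence $d(u_0)=N_{u_0}$ and $c(u_0,e_{u_0}) = N_{u_0}$... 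The \textbf{main obstacle} is precisely this third assertion of (c): disentangling which of the several equality chains is active and correctly propagating ``$R=0$ at $u_0$'' through the comb definition to conclude $M(u_0,e_{u_0})=1$; I would handle it by first isolating (from the equality case of (b)) that $\sigma(u_0)=0$ and $a_{u_0}=1$, so $R(u_0,\{e_{u_0}\}) = 1 - 1/M(u_0,e_{u_0})$, and then showing $\tD(X)\le 0$ combined with the tightness forces $R(u_0,\{e_{u_0}\})=0$, hence $M(u_0,e_{u_0})=1$ — the inputs being Thm \ref{P90werd23ewods0ci} applied to $(u_0, A)$ with $A$ the union of $\{e_{u_0}\}$ and the tooth-edges at $u_0$, exactly as in the proof of Cor.\ \ref{0c9in34dh5sifzsshnkus}(c). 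All remaining steps are routine substitutions into the identities of Thm \ref{P90werd23ewods0ci} and bookkeeping with $\xi$, $\max(0,\cdot)$ and f-partitions.
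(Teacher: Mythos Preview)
Your proposal has a genuine gap in the proof of part~(a), and it stems from the line ``one still needs $\xi \le \tD$ there --- this requires the inductive statement of Thm~\ref{p0fkwJjk8eCahbkBJhZuIK83gldf} on the smaller tree, which is exactly how the induction in~\ref{934btgodog9qkej}ff.\ is structured.'' There is no such induction: the argument in \ref{934btgodog9qkej} is not a structural induction on subtrees, and Thm~\ref{p0fkwJjk8eCahbkBJhZuIK83gldf} is what we are in the process of proving, so you cannot invoke it on $\Neul(u_0,e_{u_0})$. Your additive strategy --- bounding $\xi(\bar V(u_0))$ via Lemma~\ref{Xc0ejrKlPhgx26xk45}(c) and $\xi(\Neul(u_0,e_{u_0}))$ via something like ``$\xi \le \tD + 1$'' --- therefore has no foundation for the second summand.

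The paper fills this gap differently in the two cases $\Omega(\Teul)=\emptyset$ and $\Omega(\Teul)=\{z\}$. When $\Omega(\Teul)=\emptyset$, every $w\in S(\Teul)\cap X$ satisfies $w\notin\Omega(\Teul)$, so one may apply Lemma~\ref{Xc0ejrKlPhgx26xk45}(c) to \emph{each} such $w$ (not only to $u_0$) and sum, using $\sum_{w\in S(\Teul)\cap X}(\delta^*(w)-2)=\delta^*(u_0)-3$. When $\Omega(\Teul)=\{z\}$, this fails at $w=z$, and the paper instead uses the \emph{absolute} bound $\xi(\Neul(u_0,e_{u_0})) = \nu_z+\beta_{C_0}\in\{0,1\}$ coming from Sublemmas~\ref{9hJTY8gJ73jr9Ij93fe} and~\ref{88yHvcxhl93hf7}; then it applies Thm~\ref{P90werd23ewods0ci} at $u_0$ with $A'=A\cup\{e_{u_0}\}$ (the tooth-edges together with $e_{u_0}$), obtaining a closed formula for $\tD(X)$ and bounding it below via Lemma~\ref{Xc0ejrKlPhgx26xk45}(a) plus an explicit error term $E=N_{u_0}-c(u_0,e_{u_0})-(\nu_z+\beta_{C_0})\ge0$. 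You never isolate the crucial fact $\nu_z+\beta_{C_0}\le 1$, which is what makes the $\Omega=\{z\}$ case go through.

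Your handling of the second clause of (c) is also off the mark. You reach $c(u_0,e_{u_0})=1$ at one point and then try to rescue things via $R(u_0,\{e_{u_0}\})=0$, but neither of these is the actual mechanism. In the paper, equality in~(a) forces $E=0$; combined with $\nu_z+\beta_{C_0}=0$ (already proved in the first clause of~(c) under $\tD(X)\le0$), this gives $N_{u_0}=c(u_0,e_{u_0})$ directly, i.e.\ $M(u_0,e_{u_0})=1$. Your first-clause argument for~(c) is essentially correct, but note that it tacitly uses $\Omega(\Teul)\neq\emptyset$ (so that Lemma~\ref{A0c9vub23oW8fgbgp0q2arft} makes $(u_0,e_{u_0})$ nonpositive); when $\Omega(\Teul)=\emptyset$ one has $\tD(X)\ge2$ and~(c) is vacuous.
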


\begin{proof}
Let us first prove the case where $\Omega(\Teul) = \emptyset$.
Lemma \ref{A0c9vub23oW8fgbgp0q2arft} implies that $(u_0,e_{u_0})$ is not nonpositive
and Cor.\ \ref{0c9in34dh5sifzsshnkus} gives $\tD( \bar V(u_0) )>0$, so $\tD(X) = \tD( \Neul(u_0,e_{u_0}) ) + \tD( \bar V(u_0) ) \ge 2$.
In particular, assertion (c) holds trivially. The fact that  $\tD( \bar V(u_0) )>0$ also implies that $N_{u_0}>1$
($N_{u_0}=1$ implies $\bar V(u_0)=\{u_0\}$ and $\tD(u_0)=0$), which is part of assertion (b).
For each $w \in S(\Teul) \cap X$ we have  $\Neul \neq \{ w \}$ (because $| S(\Teul) | > 1$)
and $w \notin \Omega(\Teul)$ (because  $\Omega(\Teul) = \emptyset$), 
so Lemma \ref{Xc0ejrKlPhgx26xk45}\eqref{pv0ero0DRy8jruewoqwd3co} implies that 
\begin{equation}  \label {cgd9s6fjLvIlh7623l923f}
\tD(\bar V(w) ) \ge  \xi\big( \bar V(w) \big) + (\delta^*(w) - 2) \quad \text{for all $w \in S(\Teul) \cap X$.}
\end{equation}
We have $\sum_{w \in S(\Teul)\cap X} \tD(\bar V(w) ) = \tD(X)$ and 
$\sum_{w \in S(\Teul)\cap X}  \xi\big( \bar V(w) \big) = \xi(X)$,
because $\big( \bar V(w) \big)_{w \in S(\Teul) \cap X}$ is an f-partition of $X$.
Since $\delta^*(w)=2$ for all $w \in  (S(\Teul) \cap X) \setminus\{z,u_0\}$,
$\sum_{w \in S(\Teul) \cap X} (\delta^*(w) - 2) = (\delta^*(z)-2) + (\delta^*(u_0)-2) = \delta^*(u_0)-3$.
So \eqref{cgd9s6fjLvIlh7623l923f} implies that $\tD( X ) \ge \xi(X) + \delta^*(u_0) - 3$, which proves \eqref{P0v3b49wsrpqwo0q59RHYJ}.
If equality holds in \eqref{P0v3b49wsrpqwo0q59RHYJ}  then it must hold in \eqref{cgd9s6fjLvIlh7623l923f} for all  $w \in S(\Teul) \cap X$;
then Lemma \ref{Xc0ejrKlPhgx26xk45} implies that a condition stronger than (b) is satisfied, namely:
each $x \in X$ is pure and satisfies $a_x=1$,
and each $w \in S(\Teul) \cap X$ satisfies $V(w) \subseteq \Nd^*(\Teul)$ (and we already know that $N_{u_0}>1$).
So the Lemma is valid whenever $\Omega(\Teul) = \emptyset$.

\medskip

From now-on, and until the end of the proof, we assume that $\Omega(\Teul) \neq \emptyset$.
This implies that $\Omega(\Teul) = \{z\}$. Thus $\tD(z)\le0$, so $z \notin W(\Teul)$ and hence $\bar V(z) = \{z\}$.
Consequently, $\nu_z = \xi\big( \bar V(z) \big) = \xi(z)$, and we have $\xi(z) \in \{0,1\}$ by Lemma \ref{clibvaulwe7pq}, because $z \in Z(\Teul)$.
So Lemmas \ref{9hJTY8gJ73jr9Ij93fe} and \ref{88yHvcxhl93hf7} have the following consequence:
\begin{equation} \label {cjh23ehjiwdjmuduhj84}
\text{$\nu_z + \beta_{C_0} \in \{0,1\}$,
and if  $\nu_z + \beta_{C_0} \neq 0$ then $c(u_0,e_{u_0}) \le 1$.}
\end{equation}

We prove the first part of (c), i.e.,
\begin{equation} \label {p09cvj34e90dcw}
\text{if $\tD(X)\le0$ then $\Nd^*(\Teul) \cap \Neul(u_0,e_{u_0}) = \emptyset$.}
\end{equation}
Assume that $\tD(X)\le0$. We have $0 \ge \tD(X) = \tD( \bar V(u_0) ) + \tD( \Neul(u_0,e_{u_0}) )$ and 
$\tD( \bar V(u_0) ) > 0$ by Cor.\ \ref{0c9in34dh5sifzsshnkus}(a), so $\tD( \Neul(u_0,e_{u_0}) ) < 0$.
Then $(u_0,e_{u_0})$ is nonpositive, so  $\tD( \Neul(u_0,e_{u_0}) ) = 1 - c(u_0,e_{u_0})$ by Prop.\ \ref{DKxcnpw93sdo}.
Thus $c(u_0,e_{u_0})>1$, so \eqref{cjh23ehjiwdjmuduhj84} implies that $\nu_z + \beta_{C_0} = 0$, i.e.,
$\Nd^*(\Teul) \cap \Neul(u_0,e_{u_0}) = \emptyset$, proving \eqref{p09cvj34e90dcw}.

We claim that the Lemma is true whenever $\delta^*(u_0) + \nu_{C_0} <3$.
Indeed, if  $\delta^*(u_0) =1$  and  $\nu_{C_0} \le 1$ then  $\xi(X) = \nu_z + \beta_{C_0} + \nu_{C_0} \le 2 \le \max \big(0, \tD( X ) \big) - \delta^*(u_0) + 3$,
so \eqref{P0v3b49wsrpqwo0q59RHYJ} is true in this case;
if $\delta^*(u_0) =2$ and $\nu_{C_0}=0$ then $\xi(X) = \nu_z + \beta_{C_0} + \nu_{C_0} \le 1$ and Cor. \ref{0c9in34dh5sifzsshnkus} gives 
$\tD( X ) \ge  | \delta^*(u_0) - 3 | = 1 \ge \xi(X)$, so $\xi(X) \le \max(0,\tD(X)) < \max(0,\tD(X)) - \delta^*(u_0) + 3$,
and again \eqref{P0v3b49wsrpqwo0q59RHYJ} is true; so:
\begin{equation} \label {pv09w3vrw7899whdoas9x}
\text{if $\delta^*(u_0) + \nu_{C_0} <3$ then \eqref{P0v3b49wsrpqwo0q59RHYJ} is true.}
\end{equation}
Assume that $\delta^*(u_0) + \nu_{C_0} <3$ and that equality holds in \eqref{P0v3b49wsrpqwo0q59RHYJ}.
Then
\begin{equation}  \label {cp0Kg93h38YJ2POxi}
0 \le \max \big(0, \tD( X ) \big) = \xi(X) + \delta^*(u_0) -3
= (\nu_z + \beta_{C_0}) + (\nu_{C_0} + \delta^*(u_0) -3) \le 0 \, ,
\end{equation}
the last inequality because
$\nu_z + \beta_{C_0} \le 1$ and  $\nu_{C_0} + \delta^*(u_0) -3 \le -1$.
Thus $\tD(X)\le0$ by \eqref{cp0Kg93h38YJ2POxi}, so $\nu_z + \beta_{C_0} = 0$ by \eqref{p09cvj34e90dcw};
then \eqref{cp0Kg93h38YJ2POxi} implies that $0 \le \nu_{C_0} + \delta^*(u_0) -3 \le 0$, which contradicts the hypothesis $\delta^*(u_0) + \nu_{C_0} <3$.
This shows that
\begin{equation} \label {4k3krtemM56jklocvjf56}
\text{if $\delta^*(u_0) + \nu_{C_0} <3$ then equality cannot hold in \eqref{P0v3b49wsrpqwo0q59RHYJ}.}
\end{equation}
So, when $\delta^*(u_0) + \nu_{C_0} <3$, both (b) and the second part of (c) hold trivially.
This together with \eqref{p09cvj34e90dcw} and \eqref{pv09w3vrw7899whdoas9x} means that the Lemma is true when $\delta^*(u_0) + \nu_{C_0} <3$.

To prove the Lemma, we may therefore assume that
\begin{equation}  \label {ckjb2o38eewpqFaJ3yrfv}
\delta^*(u_0) + \nu_{C_0} \ge 3 \, .
\end{equation}
If $N_{u_0} = 1$ then Lemma \ref{90q932r8dhd89cnr9} says that $\epsilon(u_0)=1$ and that $u_0$ is a node of type $[1,\dots,1]$,
so $\nu_{C_0} = 1$ and $\delta^*(u_0)=1$, which contradicts \eqref{ckjb2o38eewpqFaJ3yrfv}. This shows that 
\begin{equation}  \label {9823i5tvbg9w48R}
N_{u_0} > 1 \, .
\end{equation}

Let us now prove (a).
Define $A = \setspec{ \epsilon_v }{ v \in V(u_0) }$,
where for each $v \in V(u_0)$ we let $\epsilon_v$ denote the edge of $\gamma_{{u_0},v}$ incident to ${u_0}$,
and let $A' = A \cup \{ e_{u_0} \}$.
Since $\Omega(\Teul) \neq \emptyset$, Lemma \ref{A0c9vub23oW8fgbgp0q2arft} implies that $(u_0,e_{u_0})$ is nonpositive,
so $\eta(u_0,e_{u_0}) = 0$ by Prop.\ \ref{DKxcnpw93sdo}; we also have $\eta(u_0,e)=0$ for each $e \in A$, by Lemma \ref{GRygergGREg8948r};
so $\eta(u_0,e)=0$ for all $e \in A'$.  Also note that $\tD(X) = \bD({u_0},A')$.
By Thm \ref{P90werd23ewods0ci},
\begin{equation*}
\textstyle
R({u_0},A') + (\epsilon({u_0}) - |A'| - 1) (1 - \frac1{N_{u_0}})
= \textstyle  1 + \frac{\tD(X) - 1  - \sum_{e \in A'} \eta(u_0,e) }{ N_{u_0} } 
= \textstyle  (1 - \frac1{N_{u_0}}) + \frac{\tD(X)}{ N_{u_0} }
\end{equation*}
Since $\epsilon({u_0}) - |A'| = \delta^*({u_0}) -1$,
\begin{equation}  \label {cosmgBVhgf93267c388}
\tD(X) = N_{u_0} R({u_0},A') +  (\delta^*(u_0)-3) (N_{u_0}-1) \, .
\end{equation}
We have $R({u_0},A') = R({u_0},A) + (1 - \frac1{M(u_0,e_{u_0})})$,
and Lemma \ref{Xc0ejrKlPhgx26xk45}\eqref{0bgkrCsOkviNhytirfV542okjd} gives  
\begin{equation}  \label {cp0wjhkmsjkusew3nbosd}
\textstyle   R({u_0},A) \ge \nu_{C_0}(1-\frac1{N_{u_0}}) \, .
\end{equation}
It follows that
\begin{multline}  \label {pf3jerkksseynxzmxed}
N_{u_0} R({u_0},A')
= N_{u_0} R({u_0},A) + (N_{u_0} - c(u_0,e_{u_0})) \\
\ge  \nu_{C_0} (N_{u_0} - 1)  + (N_{u_0} - c(u_0,e_{u_0}))
 = \nu_{C_0} (N_{u_0} - 1)   + \nu_z + \beta_{C_0} + E \, ,
\end{multline}
where we define $E =  N_{u_0} - c(u_0,e_{u_0}) - \nu_z - \beta_{C_0}$.
Note that if equality holds in \eqref{pf3jerkksseynxzmxed} then it holds in \eqref{cp0wjhkmsjkusew3nbosd}, so Lemma \ref{Xc0ejrKlPhgx26xk45} implies:
\begin{equation}  \label {cimao3r8q432q7XNAweyi}
\begin{minipage}[t]{.7\textwidth}
If equality holds in \eqref{pf3jerkksseynxzmxed} then $V( u_0 ) \subseteq \Nd^*(\Teul)$ and
each $x \in \bar V(u_0)$ is pure and satisfies $a_{ x } = 1$.
\end{minipage}
\end{equation}

We claim that $E \ge0$. Indeed, if  $\nu_z + \beta_{C_0} = 0$ then $E =  N_{u_0} - c(u_0,e_{u_0})$ is nonnegative
because $c(u_0,e_{u_0}) \mid N_{u_0}$ by Thm \ref{xncoo9qwdx9},
and if $\nu_z + \beta_{C_0} \neq 0$ then we have $\nu_z + \beta_{C_0} = 1$ and $c(u_0,e_{u_0}) \le 1< N_{u_0}$
by \eqref{cjh23ehjiwdjmuduhj84} and \eqref{9823i5tvbg9w48R},
so $\nu_z + \beta_{C_0} = 1 \le N_{u_0} - c(u_0,e_{u_0})$ and hence $E\ge0$.
Now \eqref{cosmgBVhgf93267c388} and \eqref{pf3jerkksseynxzmxed} give
$\tD(X) \ge  \nu_{C_0} (N_{u_0} - 1)   + \nu_z + \beta_{C_0} + E +  (\delta^*(u_0)-3) (N_{u_0}-1)$
and consequently
\begin{multline}  \label {vfcfgwjek5tfftswwer}
\tD(X) - (\delta^*(u_0)-3) - \xi(X)
= \tD(X) - (\delta^*(u_0)-3) - (\nu_z + \beta_{C_0} + \nu_{C_0}) \\
\ge  \nu_{C_0} (N_{u_0} - 2) + (\delta^*(u_0)-3) (N_{u_0}-2) + E 
= (\delta^*(u_0) + \nu_{C_0} -3) (N_{u_0}-2) + E \ge 0 \, ,
\end{multline}
where the last inequality follows from  \eqref{ckjb2o38eewpqFaJ3yrfv}, \eqref{9823i5tvbg9w48R} and $E\ge0$.
So \eqref{P0v3b49wsrpqwo0q59RHYJ} is proved.

\medskip

(b) Assume that equality holds in \eqref{P0v3b49wsrpqwo0q59RHYJ}; by \eqref{4k3krtemM56jklocvjf56}, this implies
that \eqref{ckjb2o38eewpqFaJ3yrfv} is satisfied, so \eqref{9823i5tvbg9w48R} is also satisfied, i.e., $N_{u_0}>1$.
Since \eqref{ckjb2o38eewpqFaJ3yrfv} is satisfied, so is \eqref{vfcfgwjek5tfftswwer};
since equality holds in \eqref{P0v3b49wsrpqwo0q59RHYJ},
the two inequalities in \eqref{vfcfgwjek5tfftswwer} are in fact equalities,
so equality holds in \eqref{pf3jerkksseynxzmxed} and hence the conclusions of \eqref{cimao3r8q432q7XNAweyi} are valid.
So, to finish the proof of (b), it suffices to show that 
\begin{equation} \label {Opcogjis7etv3p}
\text{each $x \in \Nd^*(\Teul) \cap \Neul(u_0,e_{u_0})$ has type  $[1, N_x, \dots, N_x]$ and satisfies $a_x=1$}
\end{equation}
(because  having type  $[1, N_x, \dots, N_x]$ is stronger than being pure).
By Lemma \ref{88yHvcxhl93hf7}, \eqref{Opcogjis7etv3p} is true when $x \neq z$.
If  $z \in \Nd^*(\Teul)$ then (since $z \in \Omega(\Teul) \subseteq Z(\Teul)$) Lemma \ref{clibvaulwe7pq} implies that 
$a_z=1$ and that $z$ is a node of type $[1,N_z,\dots,N_z]$.
This proves \eqref{Opcogjis7etv3p}, and completes the proof of (b).

\medskip

There only remains to prove the second part of (c).
Assume that $\tD(X)\le0$ and that equality holds in (a).
Then $E=0$ in \eqref{vfcfgwjek5tfftswwer};
as $E =  N_{u_0} - c(u_0,e_{u_0}) - \nu_z - \beta_{C_0}$ and (by \eqref{p09cvj34e90dcw})  $\nu_z + \beta_{C_0} = 0$, we have 
$N_{u_0} = c(u_0,e_{u_0})$ and hence $M(u_0,e_{u_0})=1$.
This completes the proof of the Lemma.
\end{proof}

\begin{sublemma}  \label {76GgdT276d729gs27g882h}
\begin{enumerata}

\item \label {9238ctg8se7p2e} $\xi(\Neul) \le \max(0,\tD(\Neul)) + 2$

\item If equality holds in \eqref{9238ctg8se7p2e}  then 
$N_{u_0}>1$, each $x \in \Neul \setminus \Neul(u_0,e_{u_0})$ is pure and satisfies $a_x=1$,
each $x \in \Nd^*(\Teul)$ is pure and satisfies $a_x=1$, 
and for each $w \in S(\Teul) \setminus \Neul(u_0,e_{u_0})$ we have $V(w) \subseteq \Nd^*(\Teul)$.

\item Assume that $\tD(\Neul)\le0$.
Then $| \overline \Oeul | = 1$ and $\Nd^*(\Teul) \subseteq \{u_0\} \cup V(u_0)$,
and if equality holds in \eqref{9238ctg8se7p2e} then $M(u_0,e_{u_0}) = 1$.

\end{enumerata}
\end{sublemma}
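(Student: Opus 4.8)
The statement to prove is Sublemma~\ref{76GgdT276d729gs27g882h}, and the natural strategy is to combine the decomposition of $\xi(\Neul)$ recorded in equation~\eqref{87rjHFTRk77bhw7dneo} with the comb-by-comb estimates already established. Recall from \eqref{87rjHFTRk77bhw7dneo} that
$$
\xi(\Neul) = \nu_z + \beta_{C_0} + \nu_{C_0} + \sum_{C \in \overline\Oeul \setminus \{C_0\}} (\beta_C + \nu_C),
$$
and recall from Thm~\ref{0d2h39fh29834jfp0dfgq}(b) that
$$
\tD(\Neul) = \tD\big( \bar V(u_0) \cup \Neul(u_0,e_{u_0}) \big) + \sum_{C \in \overline\Oeul \setminus \{C_0\}} \tD(Y_C).
$$
The plan is to bound each term of the first sum by the corresponding term of the second. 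For the root comb, set $X = \bar V(u_0) \cup \Neul(u_0,e_{u_0})$; then $\nu_z + \beta_{C_0} + \nu_{C_0} = \xi(X)$, and Sublemma~\ref{ovF8ym5jDdrfjkerdjkteowos}\eqref{P0v3b49wsrpqwo0q59RHYJ} gives $\xi(X) \le \max(0,\tD(X)) - \delta^*(u_0)+3$. For a comb $C \neq C_0$, write $Y_C = \bar V(u_C) \cup (\Neul(u_C,e_{u_C}) \setminus \Neul(v_C,e_{v_C}))$, so that $\xi(Y_C) = \nu_C + \beta_C$; I would combine the bound $\beta_C = |H_C| \in \{0,1\}$ from Sublemma~\ref{88yHvcxhl93hf7}(c) with Lemma~\ref{Xc0ejrKlPhgx26xk45}\eqref{pv0ero0DRy8jruewoqwd3co} applied to $w = u_C$ (legitimate since $\Neul \neq \{u_C\}$ and $u_C \notin \Omega(\Teul)$ when $|\Omega(\Teul)|\le 1$), namely $\tD(\bar V(u_C)) \ge \nu_C + \delta^*(u_C) - 2$, together with $\tD(Y_C) = \tD(\bar V(u_C)) + \dot c(C)$ from Thm~\ref{0d2h39fh29834jfp0dfgq}(a) and the inequality $\dot c(C) \ge \beta_C$ extracted from Sublemma~\ref{88yHvcxhl93hf7}(a) (if $H_C \neq \emptyset$ then $\dot c(C) > 0$, hence $\dot c(C) \ge 1 = \beta_C$). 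This yields $\tD(Y_C) \ge \nu_C + \beta_C + \delta^*(u_C) - 2 = \xi(Y_C) + \delta^*(u_C) - 2$ for each $C \neq C_0$.

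Summing, and using $\tD(Y_C) \ge 1$ (Thm~\ref{0d2h39fh29834jfp0dfgq}(a)) to handle the $\max(0,-)$ bookkeeping on the root comb, I would get
$$
\xi(\Neul) \le \big(\max(0,\tD(X)) - \delta^*(u_0) + 3\big) + \sum_{C \neq C_0} \big(\tD(Y_C) - \delta^*(u_C) + 2\big) \le \tD(\Neul) + 2 + \Big(1 - \delta^*(u_0) - \sum_{C \neq C_0}\delta^*(u_C) + 2|\overline\Oeul \setminus\{C_0\}|\Big),
$$
so the remaining point is the combinatorial identity on the skeleton. Here I would invoke the ``sum of (valency $-2$) over a tree equals $-2$'' fact (used already in the proof of Cor.~\ref{p0c9ifn2o3w9dcpw0e}, see \eqref{v9bhu7yoOoOleiriobbgt4875}): since $\delta^*$ is the valency function of $S(\Teul)$, and all vertices of $S(\Teul)$ outside $\{z\} \cup \{u_C : C \in \overline\Oeul\}$ have $\delta^*=2$, we obtain $(\delta^*(z)-2) + (\delta^*(u_0)-2) + \sum_{C\neq C_0}(\delta^*(u_C)-2) = -2$, i.e. $\delta^*(u_0) + \sum_{C\neq C_0}\delta^*(u_C) = 2|\overline\Oeul| - 3$ (using $\delta^*(z)=1$, which holds because $z\in\In(\Teul)$). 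Substituting, the parenthesised correction term becomes $1 - (2|\overline\Oeul|-3) + 2(|\overline\Oeul|-1) = 2$... which is not quite $\le 0$, so I will need to be more careful: one must not inflate $\max(0,\tD(X))$ to $\tD(X)$ unless $\tD(X) \ge 0$, and when $\tD(X) < 0$ one appeals instead to Sublemma~\ref{ovF8ym5jDdrfjkerdjkteowos}(c) which forces $\Nd^*(\Teul) \cap \Neul(u_0,e_{u_0}) = \emptyset$, shrinking $\xi(X)$ to $\nu_{C_0} \le \delta^*(u_0)+1$. I expect that tracking the two cases $\tD(X) \ge 0$ versus $\tD(X) < 0$, and correctly using Cor.~\ref{0c9in34dh5sifzsshnkus}(b) ($\tD(X) \ge |\delta^*(u_0)-3|$) to recover the deficit, is exactly where the argument has to be assembled with care rather than mechanically.

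For part (b), the equality case, I would trace back through every inequality used: equality in \eqref{9238ctg8se7p2e} forces equality in Sublemma~\ref{ovF8ym5jDdrfjkerdjkteowos}\eqref{P0v3b49wsrpqwo0q59RHYJ} (giving $N_{u_0}>1$, $V(u_0)\subseteq\Nd^*(\Teul)$, purity and $a_x=1$ on $\bar V(u_0)$ and on $\Nd^*(\Teul)\cap X$) and equality in Lemma~\ref{Xc0ejrKlPhgx26xk45}\eqref{pv0ero0DRy8jruewoqwd3co} for each $u_C$, $C\neq C_0$, which by Lemma~\ref{Xc0ejrKlPhgx26xk45}\eqref{3j2mwilsi58ozdOondw7BtJie484} gives purity and $a_x=1$ on $\bar V(u_C)$ and $V(u_C) \subseteq \Nd^*(\Teul)$; for the teeth hanging off such $u_C$, Cor.~\ref{0vk3mWsklxd03mcwlfi}(b,c) supplies the purity and $a_x=1$ for the remaining vertices of $\bar V(u_C)$, and one notes $S(\Teul)\setminus\Neul(u_0,e_{u_0})$ is exactly $\{z\}\cup\{u_C : C\neq C_0\}$ so the claimed statement about $V(w)$ is covered. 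Part (c) is the case $\tD(\Neul)\le 0$: then $\tD(Y_C)\ge 1$ for $C\neq C_0$ and $\tD\big(\bar V(u_0)\cup\Neul(u_0,e_{u_0})\big) \ge \max(1,\epsilon(u_0)-2)>0$ (Cor.~\ref{0c9in34dh5sifzsshnkus}(a)) force $\overline\Oeul\setminus\{C_0\}=\emptyset$, i.e. $|\overline\Oeul|=1$; then $\Nd^*(\Teul)\subseteq X = \bar V(u_0)\cup\Neul(u_0,e_{u_0})$, and Sublemma~\ref{ovF8ym5jDdrfjkerdjkteowos}(c) gives $\Nd^*(\Teul)\cap\Neul(u_0,e_{u_0})=\emptyset$, so $\Nd^*(\Teul)\subseteq\bar V(u_0)\subseteq\{u_0\}\cup V(u_0)$ by Cor.~\ref{0vk3mWsklxd03mcwlfi}(a); finally the $M(u_0,e_{u_0})=1$ assertion in the equality case is the last clause of Sublemma~\ref{ovF8ym5jDdrfjkerdjkteowos}(c). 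The main obstacle, as signalled above, is organising the case split on the sign of $\tD(X)$ so that the skeleton valency identity delivers the clean bound $\xi(\Neul)\le\tD(\Neul)+2$ rather than a constant off by two; everything else is an assembly of already-proven estimates.
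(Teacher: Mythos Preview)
Your approach for (a) is correct; the ``off by two'' you found is an arithmetic slip in the tree identity, not a real obstruction. With $\delta^*(z)=1$, the identity $\sum_{w\in S(\Teul)}(\delta^*(w)-2)=-2$ combined with the fact that every $w\in S(\Teul)\setminus\big(\{z\}\cup\{u_C:C\in\overline\Oeul\}\big)$ has $\delta^*(w)=2$ gives $\delta^*(u_0)+\sum_{C\neq C_0}\delta^*(u_C)=2|\overline\Oeul|-1$, not $2|\overline\Oeul|-3$. Substituting the correct value makes your correction term vanish, so your comb-by-comb bound $\tD(Y_C)\ge\xi(Y_C)+\delta^*(u_C)-2$ (which you correctly assemble from Lemma~\ref{Xc0ejrKlPhgx26xk45}\eqref{pv0ero0DRy8jruewoqwd3co}, Thm~\ref{0d2h39fh29834jfp0dfgq}(a), and $\dot c(C)\ge\beta_C$ via Sublemma~\ref{88yHvcxhl93hf7} and Lemma~\ref{9bvr58sdv3wof9f}) closes the argument directly when $|\overline\Oeul|>1$; no case split on the sign of $\tD(X)$ is needed, since $\delta^*(u_0)\ge2$ forces $\tD(X)\ge0$ by Cor.~\ref{0c9in34dh5sifzsshnkus}(b). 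The case $|\overline\Oeul|=1$ is immediate from Sublemma~\ref{ovF8ym5jDdrfjkerdjkteowos} alone, as you note.

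The paper's route differs in a way that matters for (b): instead of summing over combs, it sums Lemma~\ref{Xc0ejrKlPhgx26xk45}\eqref{pv0ero0DRy8jruewoqwd3co} over \emph{all} $w\in S(\Teul)\setminus X$, obtaining $\tD(\Neul\setminus X)\ge\xi(\Neul\setminus X)+\sum_{w\in S(\Teul)\setminus X}(\delta^*(w)-2)$. This yields the same inequality for (a) without invoking Sublemma~\ref{88yHvcxhl93hf7}, and in the equality case it forces equality in Lemma~\ref{Xc0ejrKlPhgx26xk45}\eqref{pv0ero0DRy8jruewoqwd3co} for every such $w$, so Lemma~\ref{Xc0ejrKlPhgx26xk45}\eqref{3j2mwilsi58ozdOondw7BtJie484} delivers purity, $a_x=1$, and $V(w)\subseteq\Nd^*(\Teul)$ for each. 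Your plan for (b) has a gap here: you only get equality in Lemma~\ref{Xc0ejrKlPhgx26xk45} at the $u_C$'s, not at the intermediate comb vertices $w\in S(\Teul)\setminus X$ with $\delta^*(w)=2$, so purity and $a_x=1$ for $x\in\bar V(w)$ and $V(w)\subseteq\Nd^*(\Teul)$ are not covered. (Your description of $S(\Teul)\setminus\Neul(u_0,e_{u_0})$ as $\{z\}\cup\{u_C:C\neq C_0\}$ is also incorrect: in fact $z\in\Neul(u_0,e_{u_0})$, and the correct set is $\{u_0\}\cup(S(\Teul)\setminus X)$, which does contain those intermediate vertices.) You could patch this by analysing what $\dot c(C)=\beta_C$ forces along the comb, but the paper's vertex-by-vertex partition avoids the detour entirely. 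Part (c) is fine in spirit; the cleanest way to get $|\overline\Oeul|=1$ from $\tD(\Neul)\le0$ is Cor.~\ref{p0c9ifn2o3w9dcpw0e}(c) (which gives $\tD(\Neul)\ge2$ whenever $|\overline\Oeul|>1$), after which everything reduces to Sublemma~\ref{ovF8ym5jDdrfjkerdjkteowos}(c) as you say.
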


\begin{proof}
Let $X = \bar V(u_0) \cup \Neul(u_0,e_{u_0})$.  

Consider the case where $| \overline \Oeul | = 1$.
Then $X = \Neul$ and $\delta^*(u_0)=1$, so Lemma \ref{ovF8ym5jDdrfjkerdjkteowos} gives
$$
\xi(\Neul)  \le \max(0,\tD( \Neul )) - \delta^*(u_0) + 3 = \max(0,\tD( \Neul )) + 2 \, ,
$$
so \eqref{9238ctg8se7p2e} holds.
Assertion (b) (resp.\ (c)) follows from part (b) (resp.\ part (c)) of  Lemma \ref{ovF8ym5jDdrfjkerdjkteowos},
so the Lemma is true when  $| \overline \Oeul | = 1$.

From now-on, assume that  $| \overline \Oeul | > 1$. 
Then $\tD(X)\ge0$ by Cor.\ \ref{0c9in34dh5sifzsshnkus}(b) and $\tD(\Neul)\ge2$ by Cor.\ \ref{p0c9ifn2o3w9dcpw0e}(c);
so $\max(0,\tD(X)) = \tD(X)$ and $\max(0,\tD(\Neul)) = \tD(\Neul)$.
We have $\xi(X) \le \tD( X ) - \delta^*(u_0) + 3$ by Lemma \ref{ovF8ym5jDdrfjkerdjkteowos}, so
\begin{equation}  \label {cjhbjyrake7ru2ofapt}
\tD( X ) \ge \xi(X) + \delta^*(u_0) - 3 \, .
\end{equation}
For each $w \in S(\Teul) \setminus X$ we have  $\Neul \neq \{ w \}$ (because $| S(\Teul) | > 1$)
and $w \notin \Omega(\Teul)$ (because  $\Omega(\Teul) \subseteq \{z\} \subseteq X$),
so Lemma \ref{Xc0ejrKlPhgx26xk45}\eqref{pv0ero0DRy8jruewoqwd3co} implies that 
\begin{equation}  \label {p0jb4wlfh7msjlwirefwe}
\tD(\bar V(w) ) \ge  \xi\big( \bar V(w) \big) + (\delta^*(w) - 2) \quad \text{for all $w \in S(\Teul) \setminus X$.}
\end{equation}
Since $\big( \bar V(w) \big)_{w \in S(\Teul) \setminus X}$ is an f-partition of $\Neul \setminus X$, we have
$$
\textstyle
\sum_{w \in S(\Teul)\setminus X} \tD(\bar V(w) ) = \tD(\Neul \setminus X) \text{\ \ and\ \ }
\sum_{w \in S(\Teul)\setminus X}  \xi\big( \bar V(w) \big) = \xi\big( \Neul \setminus X \big) \, ,
$$
so \eqref{p0jb4wlfh7msjlwirefwe} gives 
\begin{equation}  \label {903euswtghhdfajsueeuion8}
\textstyle
\tD(\Neul \setminus X) \ge  \xi\big( \Neul \setminus X \big) + \sum_{w \in S(\Teul) \setminus X} (\delta^*(w)-2) \, .
\end{equation}
In any tree, the sum of the quantity ``valency$-2$'' over all vertices is equal to $-2$;
since for each $w \in S(\Teul)$ the number $\delta^*(w)$ is the valency of $w$ in the tree $S(\Teul)$, we have
\begin{align*}
-2 &= \textstyle \sum_{w \in S(\Teul)} (\delta^*(w)-2)
= (\delta^*(z)-2) + (\delta^*(u_0)-2) + \sum_{w \in S(\Teul) \setminus X} (\delta^*(w)-2) \\
&= \textstyle (\delta^*(u_0)-3) + \sum_{w \in S(\Teul) \setminus X} (\delta^*(w)-2) \, ,
\end{align*}
so \eqref{903euswtghhdfajsueeuion8} is equivalent to 
\begin{equation}  \label {78gkjvJhd9498dfw437}
\tD(\Neul \setminus X) \ge   \xi\big( \Neul \setminus X \big) + 1 - \delta^*(u_0) \, .
\end{equation}
Adding \eqref{cjhbjyrake7ru2ofapt} and \eqref{78gkjvJhd9498dfw437} gives $\tD(\Neul) \ge  \xi(\Neul) - 2$,
which proves \eqref{9238ctg8se7p2e}.

(b) We already showed that (b) is true when $| \overline\Oeul | = 1$, so assume that $| \overline\Oeul | > 1$.
Assume that equality holds in \eqref{9238ctg8se7p2e}.
Then equality holds in \eqref{cjhbjyrake7ru2ofapt} and \eqref{78gkjvJhd9498dfw437},
so it holds in \eqref{p0jb4wlfh7msjlwirefwe} for all $w \in S(\Teul) \setminus X$.
By  Lemmas \ref{ovF8ym5jDdrfjkerdjkteowos} and \ref{Xc0ejrKlPhgx26xk45}, the following hold:
\begin{itemize}

\item $N_{ u_0 }>1$, 
$V( u_0 ) \subseteq \Nd^*(\Teul)$, 
each $x \in \bar V(u_0)$ is pure and satisfies $a_{ x } = 1$,
and each $x \in \Nd^*(\Teul) \cap X$ is pure 
and satisfies $a_x=1$.

\item For each $w \in S(\Teul) \setminus X$, 
$V( w ) \subseteq \Nd^*(\Teul)$ and each $x \in \bar V(w)$ is pure and satisfies $a_{ x } = 1$.

\end{itemize}
It follows that (b) is true.  As $\tD(\Neul)\ge2$, (c) holds trivially.  So the Lemma is proved.
\end{proof}

\begin{parag} \label  {c9b3498siBxkxcY5ykTwybsCte6eu}
\textbf{Proof of Thm \ref{p0fkwJjk8eCahbkBJhZuIK83gldf}.}
By Propositions \ref{Pp0drigro8awe7v8csbhsnehry}, \ref{PiIoO0v9w6ejamuisuergbf83e9f0} and \ref{hCfa89HFdfjkUY738ihhdJMJi854727},
Thm~\ref{p0fkwJjk8eCahbkBJhZuIK83gldf} is valid whenever $| S(\Teul) | = 1$ or $| \Omega(\Teul) | = 2$.
By Lemma \ref{76GgdT276d729gs27g882h}, it is valid when  $| S(\Teul) | \neq 1$ and $| \Omega(\Teul) | \neq 2$.
So Thm~\ref{p0fkwJjk8eCahbkBJhZuIK83gldf} is proved. \hfill \qedsymbol
\end{parag}

In fact we proved more than is stated in Thm~\ref{p0fkwJjk8eCahbkBJhZuIK83gldf}:
we also obtained information about the possible locations of the vertices of $\Nd^*(\Teul)$.
In the case $| S(\Teul) | = 1$, that information can be found in Propositions 
\ref{Pp0drigro8awe7v8csbhsnehry} and \ref{PiIoO0v9w6ejamuisuergbf83e9f0}.
In the case $| \Omega(\Teul) | = 2$, see Prop.~\ref{hCfa89HFdfjkUY738ihhdJMJi854727}.
For the case $| S(\Teul) | \neq 1$ and $| \Omega(\Teul) | \neq 2$, the information is spread over paragraph \ref{934btgodog9qkej};
we gather some of it in the following statement:

\begin{theorem} \label {cvintcjowHYtjk2uu78748}
Assume that $| S(\Teul) | \neq 1$ and $| \Omega(\Teul) | \neq 2$.
Choose $z \in \In(\Teul)$ and note that $\overline\Oeul  = \overline\Oeul(\Teul,z)$ is not empty.
Then the following hold.
\begin{enumerata}

\item \label {92g098golJXS4o934}  Each $x \in \Nd^*(\Teul) \setminus \{z\}$ satisfies exactly one of the following conditions:
\begin{enumerata}

\item $x = u_C$ for some $C \in \overline\Oeul$;

\item there exists $w \in W(\Teul)$ such that $x \in V(w)$;

\item $x \in S(\Teul) \setminus \setspec{ u_C }{ C \in \overline\Oeul }$ and $\epsilon(x)=2$.

\end{enumerata}
Moreover, in cases {\rm (ii)} and {\rm (iii)}, $x$ is a node of type $[1, N_x, \dots, N_x]$ and such that $a_x=1$.

\item \label {d9f8g3476238} We have $\dot c(C)=0$ for each $C \in \overline\Oeul$ whose minimal element
$(v_C,e_{v_C})$ satisfies $\Neul(v_C,e_{v_C}) \cap \Nd^*(\Teul) \neq \emptyset$.

\item \label {824c7sKLGF729o}  We have $\Nd^*(\Teul) \setminus \Neul(v,e_{v}) \subseteq \bigcup_{C \in \overline\Oeul} \big( \{u_C\} \cup V(u_C) \big)$
for each $v \in S(\Teul) \setminus \{z\}$ satisfying $\Neul(v,e_{v}) \cap \Nd^*(\Teul) \neq \emptyset$.

\item \label {8s745ki7f0w63rFrg} If $\xi(\Neul) = \max(0,\tD(\Neul)) + 2$ then 
$N_{u_0}>1$, each $x \in \Neul \setminus \Neul(u_0,e_{u_0})$ is pure and satisfies $a_x=1$,
each $x \in \Nd^*(\Teul)$ is pure and satisfies $a_x=1$, 
and for each $w \in S(\Teul) \setminus \Neul(u_0,e_{u_0})$ we have $V(w) \subseteq \Nd^*(\Teul)$.

\item \label {pvejcjkssyeuer23i} Assume that $\tD(\Neul)\le0$.
Then $| \overline \Oeul | = 1$ and $\Nd^*(\Teul) \subseteq \{u_0\} \cup V(u_0)$,
and if $\xi(\Neul)=2$ then $M(u_0,e_{u_0}) = 1$.

\end{enumerata}
\end{theorem}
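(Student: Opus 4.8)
\textbf{Proof plan for Theorem \ref{cvintcjowHYtjk2uu78748}.}
The plan is to assemble the statement from the material already developed in paragraph \ref{934btgodog9qkej} together with Sublemmas \ref{9hJTY8gJ73jr9Ij93fe}--\ref{76GgdT276d729gs27g882h} and the earlier structural results. We are operating under the standing assumptions $|S(\Teul)|>1$ and $|\Omega(\Teul)|<2$, so $\overline\Oeul\neq\emptyset$ and all the notation of \ref{934btgodog9qkej} is available. First I would prove \eqref{92g098golJXS4o934}. Fix $x\in\Nd^*(\Teul)\setminus\{z\}$. By Lemma \ref{p309ef2GFT485ryf}(c), $x$ lies in $\bar V(w)$ for a unique $w\in S(\Teul)$. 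Using Cor.\ \ref{0vk3mWsklxd03mcwlfi}(a), either $x=w$ or $x\in V(w)$; in the latter case $w\in W(\Teul)$, giving case (ii), and Cor.\ \ref{0vk3mWsklxd03mcwlfi}(c) shows $x$ is a node of type $[1,N_x,\dots,N_x]$ with $a_x=1$. If $x=w\in S(\Teul)$, I split according to whether $x\in\setspec{u_C}{C\in\overline\Oeul}$ (case (i)) or not; in the remaining subcase $x$ lies strictly inside some comb $C$, so by definition of comb $\epsilon(x)\in\{2,3\}$, and if $\epsilon(x)=3$ then Def.\ \ref{c09Nc23owsfcnp2q0wuXwoh}(c) forces $R(x,\{f\})=0$, hence $\sigma(x)=0$, hence $d(x)=N_x$ by Rem.\ \ref{F0934nofe8rg3p406egfh}, contradicting $x\in\Nd^*(\Teul)$ unless $N_x=1$; but $\epsilon(x)=3$ and Lemma \ref{90q932r8dhd89cnr9}\eqref{0239u9e78923} give $N_x>1$, a contradiction, so $\epsilon(x)=2$, which is case (iii). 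The type statement in case (iii) is exactly the argument in Sublemma \ref{88yHvcxhl93hf7}(b) under hypothesis $(\alpha)$: from $R(x,\{e_x\})<1$ and $\sigma(x)\ge N_x-d(x)=N_x-1>0$ (Lemma \ref{pvOtyxMdcbgOEjo28hw8y}) we get $a_x=1$ and $\sigma(x)/N_x<1$, so the type is $[d(x),N_x,\dots,N_x]=[1,N_x,\dots,N_x]$.

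Next, \eqref{d9f8g3476238} is immediate from Sublemma \ref{9hJTY8gJ73jr9Ij93fe} and Sublemma \ref{88yHvcxhl93hf7}(a): if $(v_C,e_{v_C})$ satisfies $\Neul(v_C,e_{v_C})\cap\Nd^*(\Teul)\neq\emptyset$, then $c(v_C,e_{v_C})\le 1$, and since $(u_C,e_{u_C})\succeq(v_C,e_{v_C})$ with $0<c(u_C,e_{u_C})\le c(v_C,e_{v_C})\le 1$ and $c(v_C,e_{v_C})-c(u_C,e_{u_C})\in\Integ$ (Lemma \ref{p0c9vin12q09wsc}, using $\eta(u_C,e_{u_C})=\eta(v_C,e_{v_C})$ from Prop.\ \ref{0ci19KJTghL872je309}), we get $\dot c(C)=0$. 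For \eqref{824c7sKLGF729o}, fix $v\in S(\Teul)\setminus\{z\}$ with $\Neul(v,e_v)\cap\Nd^*(\Teul)\neq\emptyset$ and take $x\in\Nd^*(\Teul)\setminus\Neul(v,e_v)$. Then $x\notin\{z\}$ since $z\in\Neul(v,e_v)$ (because $e_v$ is the edge of $\gamma_{v,z}$ at $v$), so \eqref{92g098golJXS4o934} applies; I claim case (iii) is excluded. Indeed in case (iii), $x=v_i$ for some interior vertex of the path defining the comb $C$ containing it, with $\epsilon(x)=2$; then $x\in\Neul(u_C,e_{u_C})\setminus\Neul(v_C,e_{v_C})$ and $\Neul(v_C,e_{v_C})\cap\Nd^*(\Teul)\ni$ the relevant neighbour toward $z$, which by Sublemma \ref{88yHvcxhl93hf7}(b)($\gamma$) forces $c(u_C,e_{u_C})\le 1$ while $c(v_i,e_{v_i})=N_{v_i}>1$; since $v$ is between $u_C$ and the root direction this pins $x$ on the root side of $v$ or inside a comb reaching past $v$ — I would carefully trace the position of $v$ relative to the chain $\gamma_{v_C,u_C}$ to rule out $x\notin\Neul(v,e_v)$. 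In cases (i) and (ii) we have $x\in\{u_C\}\cup V(u_C)$ for some $C\in\overline\Oeul$ (for (ii), the $w$ with $x\in V(w)$ is some $u_C$ or $u_0$ by the skeleton decomposition Lemma \ref{p309ef2GFT485ryf}), which is the asserted containment.

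Finally, parts \eqref{8s745ki7f0w63rFrg} and \eqref{pvejcjkssyeuer23i} are literally the content of Sublemma \ref{76GgdT276d729gs27g882h}(b) and (c) respectively, restated: the hypothesis $\xi(\Neul)=\max(0,\tD(\Neul))+2$ is exactly equality in \eqref{9238ctg8se7p2e}, and Sublemma \ref{76GgdT276d729gs27g882h}(b) gives $N_{u_0}>1$, purity and $a_x=1$ for $x\in\Neul\setminus\Neul(u_0,e_{u_0})$ and for $x\in\Nd^*(\Teul)$, and $V(w)\subseteq\Nd^*(\Teul)$ for $w\in S(\Teul)\setminus\Neul(u_0,e_{u_0})$; Sublemma \ref{76GgdT276d729gs27g882h}(c) handles the case $\tD(\Neul)\le 0$, yielding $|\overline\Oeul|=1$, $\Nd^*(\Teul)\subseteq\{u_0\}\cup V(u_0)$, and $M(u_0,e_{u_0})=1$ under the equality hypothesis (noting $\xi(\Neul)=2$ is exactly equality when $\tD(\Neul)\le 0$). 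The main obstacle I anticipate is part \eqref{824c7sKLGF729o}: keeping precise track of which side of a given skeleton vertex $v$ a given element of $\Nd^*(\Teul)$ lies, and correctly invoking the monotonicity of $c$ along $\gamma_{v_C,u_C}$ (Sublemma \ref{88yHvcxhl93hf7}($\gamma$)) together with Lemma \ref{kjwoeid9cse9c}(c); everything else is essentially a reorganization of statements already proved. I would also double-check that \eqref{92g098golJXS4o934}'s three cases are genuinely exclusive, which follows because (i) requires $x\in\setspec{u_C}{C}$, (ii) requires $x\notin S(\Teul)$ or at least $x\in V(w)$ for some $w$ — and $V(w)\subseteq Z(\Teul)$ is disjoint from $S(\Teul)$ when $w\neq x$ — while (iii) requires $x\in S(\Teul)$ but $x\notin\setspec{u_C}{C}$.
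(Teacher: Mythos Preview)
Your treatment of parts \eqref{92g098golJXS4o934}, \eqref{d9f8g3476238}, \eqref{8s745ki7f0w63rFrg} and \eqref{pvejcjkssyeuer23i} is essentially the paper's own argument (with \eqref{92g098golJXS4o934} organized via the f-partition $(\bar V(w))_{w\in S(\Teul)}$ rather than the coarser $Y_C$-decomposition, which is fine). The problem is in part \eqref{824c7sKLGF729o}.

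Your plan for \eqref{824c7sKLGF729o} is to apply the trichotomy from \eqref{92g098golJXS4o934}, exclude case (iii), and claim that cases (i)--(ii) already land in $\bigcup_C(\{u_C\}\cup V(u_C))$. The error is the parenthetical ``for (ii), the $w$ with $x\in V(w)$ is some $u_C$ \ldots\ by Lemma \ref{p309ef2GFT485ryf}'': that lemma says nothing of the sort. A vertex $w\in W(\Teul)$ can perfectly well be an interior vertex of some comb $C$ (the $\epsilon=3$ case of Def.\ \ref{c09Nc23owsfcnp2q0wuXwoh}(c)), in which case $x\in V(w)$ does \emph{not} place $x$ in $V(u_C)$, since the sets $V(\cdot)$ are pairwise disjoint. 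So case (ii) does not automatically give the conclusion, and you must exclude the subcase $w\neq u_C$ by a separate argument --- the same argument, in fact, that you need for excluding case (iii), and which you explicitly leave unfinished (``I would carefully trace\ldots'').

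The paper avoids this by not routing through \eqref{92g098golJXS4o934} at all. It argues by contradiction: if $x\notin\bigcup_C(\{u_C\}\cup V(u_C))$ then, using Cor.\ \ref{0vk3mWsklxd03mcwlfi} and $\bar V(z)\subseteq\Neul(v,e_v)$, one gets $x\in H_C$ for some $C$; Sublemma \ref{88yHvcxhl93hf7}(b) then produces an index $i$ satisfying $(\gamma)$, so $c(v_i,e_{v_i})=N_{v_i}>1$. On the other hand, $x\notin\Neul(v,e_v)$ forces $v_i\notin\Neul(v,e_v)$ (since $x\in\bar V(v_i)$), hence $\Neul(v,e_v)\subseteq\Neul(v_i,e_{v_i})$, and now the hypothesis $\Neul(v,e_v)\cap\Nd^*(\Teul)\neq\emptyset$ together with Sublemma \ref{9hJTY8gJ73jr9Ij93fe} gives $c(v_i,e_{v_i})\le 1$, a contradiction. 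This single argument simultaneously handles both your case (iii) and the bad subcase of (ii), which is why the paper's route is cleaner here.
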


\begin{proof}
\noindent \eqref{92g098golJXS4o934}   
Let $x \in \Nd^*(\Teul) \setminus \{z\}$. Then either $x \in \bar V(z) \setminus \{z\}$ or there exists  $C \in \overline\Oeul$
such that $x \in \bar V(u_C) \cup \big( \Neul(u_C,e_{u_C}) \setminus \Neul(v_C,e_{v_C}) \big)$,
where $(v_C,e_{v_C}) \preceq(u_C,e_{u_C})$ are respectively the least and greatest elements of $C$.
If $x \in \bar V(z) \setminus \{z\}$ then  (by Cor.~\ref{0vk3mWsklxd03mcwlfi}(a)) $x \in V(z)$, so $x$ satisfies (ii).
If $x \in \bar V(u_C)$ then  (by Cor.~\ref{0vk3mWsklxd03mcwlfi}(a)) either $x=u_C$ or $x \in V(u_C)$, so $x$ satisfies (i) or (ii).
If $x \in \Neul(u_C,e_{u_C}) \setminus \Neul(v_C,e_{v_C})$ then $x \in H_C$,
so we may consider the unique $i$ satisfying $(\alpha)$ or $(\beta)$ in Lemma \ref{88yHvcxhl93hf7}(b);
if $i$ satisfies $(\alpha)$ (resp.\ $(\beta)$) then $x$ satisfies (iii) (resp.\ (ii)).
So $x$ satisfies (exactly) one of (i), (ii), (iii).
There remains to show that if  $x$ satisfies (ii) or (iii) then $x$ is a node of type $[1, N_x, \dots, N_x]$ and such that $a_x=1$.
If $x$ satisfies (ii), then this follows from Cor.~\ref{0vk3mWsklxd03mcwlfi}(c).
If $x$ satisfies (iii), then (as we saw in the above argument) we have $x \in H_C$ for some $C \in \overline\Oeul$, so
the desired conclusion follows from  Lemma \ref{88yHvcxhl93hf7}(b).
So \eqref{92g098golJXS4o934} is proved.

\smallskip

\noindent \eqref{d9f8g3476238}
Let $C \in \overline\Oeul$ and suppose that the  minimal element
$(v_C,e_{v_C})$ of $C$ satisfies $\Neul(v_C,e_{v_C}) \cap \Nd^*(\Teul) \neq \emptyset$.
Then Lemma \ref{9hJTY8gJ73jr9Ij93fe} implies that $1 \ge c(v_C,e_C) = c(u_C, e_{u_C} )$,
so $\dot c(C)=  c(v_C,e_C) - c(u_C, e_{u_C} ) = 0$.

\smallskip

\noindent \eqref{824c7sKLGF729o}  
Let $v \in S(\Teul) \setminus \{z\}$ be such that $\Neul(v,e_{v}) \cap \Nd^*(\Teul) \neq \emptyset$
and consider an element $x$ of $\Nd^*(\Teul) \setminus \Neul(v,e_{v})$.
By contradiction, suppose that $x \notin \bigcup_{C \in \overline\Oeul} \big( \{u_C\} \cup V(u_C) \big)$.
Then $x \notin \bigcup_{C \in \overline\Oeul} \big( \bar V(u_C) \big)$ by Cor.~\ref{0vk3mWsklxd03mcwlfi},
and $x \notin \bar V(z)$ because $\bar V(z) \subseteq\Neul(v,e_{v})$,
so there exists $C \in \overline\Oeul$  such that $x \in \Neul(u_C,e_{u_C}) \setminus \Neul(v_C,e_{v_C})$,
where $(v_C,e_{v_C}) \preceq(u_C,e_{u_C})$ are respectively the least and greatest elements of $C$.
Then $x \in H_C$ and we may consider the unique $i$ satisfying $(\alpha)$ or $(\beta)$ in Lemma \ref{88yHvcxhl93hf7}(b).
Since $x \notin \Neul(v,e_v)$, it follows that $v_i\notin \Neul(v,e_v)$,
so $\Neul(v,e_v) \subseteq \Neul(v_i,e_{v_i})$, so $\Neul(v_i,e_{v_i}) \cap \Nd^*(\Teul) \neq \emptyset$
and consequently $c(v_i,e_{v_i}) \le 1$ by Lemma \ref{9hJTY8gJ73jr9Ij93fe}.
This contradicts the fact that $i$ satisfies condition $(\gamma)$ of Lemma \ref{88yHvcxhl93hf7}(b).
We showed that  $x \in \bigcup_{C \in \overline\Oeul} \big( \{u_C\} \cup V(u_C) \big)$,
which proves~\eqref{824c7sKLGF729o}.

\smallskip

Assertions \eqref{8s745ki7f0w63rFrg} and \eqref{pvejcjkssyeuer23i} follow from parts (b) and (c) of Lemma \ref{76GgdT276d729gs27g882h}. 
\end{proof}

\section{Rational trees}
\label {Section:RationalCase}

\begin{definition} \label {ratttreeee}
By a \textit{rational tree}, we mean an abstract Newton tree at infinity which is minimally complete and satisfies
$\tD( \Neul ) = 0$ and $\gcd\setspec{ d_x }{ x \in \Deul } = 1$.
\end{definition}

The class of rational trees is important because (as explained in the Introduction)
if $f \in \Comp[x,y]$ is a rational polynomial then $\Teul(f;x,y)$ is a rational tree.

\medskip

Recall that if $\Teul$ is any minimally complete abstract Newton tree at infinity then $\In(\Teul) \neq \emptyset$,
so it is always possible to choose $z \in \In(\Teul)$ and to consider the set $\overline\Oeul(\Teul,z)$ determined by
that choice. This is true in particular if $\Teul$ is a rational tree.

\begin{corollary} \label {0d9fpv09ffi209we}
Let $\Teul$ be a rational tree, choose $z \in \In(\Teul)$ and consider $\overline\Oeul = \overline\Oeul(\Teul,z)$.
\begin{enumerata}

\item 
$\overline\Oeul = \emptyset$
$\Leftrightarrow$
$| S(\Teul) | = 1$
$\Leftrightarrow$
$\Neul = \{ v_0 \}$
$\Leftrightarrow$
$\Omega(\Teul) = \emptyset$
$\Leftrightarrow$
$\Teul$ is one of the trees of Ex.\ \ref{nv63jfy64nvy3}.

\item If $\overline\Oeul \neq \emptyset$ then $| \overline\Oeul | = 1$, $\delta^*(u_0)=1$ and 
(see \mbox{\rm \ref{c20w39w93e9d0dqMne89wcg9}(d)} for $u_0$ and \ref{p09cv347dYF6Us98} for $a_{u_0}^*$):
$$
| \setspec{ x \in \Deul_{u_0} }{ k_x>1 } | + a_{u_0}^* + t({u_0}) \le 3 .
$$

\end{enumerata}
\end{corollary}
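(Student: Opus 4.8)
\textbf{Proof plan for Corollary \ref{0d9fpv09ffi209we}.}
The plan is to handle the two assertions separately, relying heavily on the structural machinery already in place. For part (a), the key observation is that a rational tree has $\tD(\Neul)=0$, so the various cardinality bounds proved earlier collapse. First I would establish the chain of equivalences. The implication $|S(\Teul)|=1 \Leftrightarrow \Neul=\{v_0\}$ follows from Lemma \ref{p309ef2GFT485ryf}(d) together with Cor.\ \ref{9vbrtyukey6idckFfFugus}: if $|S(\Teul)|=1$ then either $|\Neul|=1$ or $\Teul$ is a brush, but a brush would force $\tD(\Neul)\ge 2$, contradicting $\tD(\Neul)=0$; the converse is trivial. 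Next, $\Neul=\{v_0\}$ is the same as $|\Neul|=1$, and by Prop.\ \ref{c0viwjytsdDJjLlxdifFebg} (applicable since $\tD(\Neul)=0<2$ and condition (ii) holds, indeed condition (i) holds too by definition of rational tree), this is equivalent to $\Teul$ being one of the trees of Ex.\ \ref{nv63jfy64nvy3}; one should double-check that those trees indeed satisfy $\tD(\Neul)=0$ and the gcd condition, which is asserted in Ex.\ \ref{nv63jfy64nvy3} itself. The equivalence $\overline\Oeul=\emptyset \Leftrightarrow |S(\Teul)|=1$ is immediate from the remark after Def.\ \ref{c20w39w93e9d0dqMne89wcg9} (or Def.\ \ref{c20w39w93e9d0dqMne89wcg9}(b)), which states $\overline\Oeul\neq\emptyset \Leftrightarrow |S(\Teul)|>1$. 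Finally, for $\Omega(\Teul)=\emptyset$: if $|\Neul|=1$ then trivially $\Omega(\Teul)=\emptyset$ (e.g.\ by the remarks at the start of Sec.\ \ref{Sec:LocalstructureofNeul}, or directly since $Z(\Teul)$, $W(\Teul)$ are empty); conversely, if $|\Neul|>1$ I would use Cor.\ \ref{pTo9v2q3hZYa9r1ge0cX3rg}(b), which says that $|\Neul|>1$ and $|\overline\Oeul|\ge\tD(\Neul)=0$ imply $\Omega(\Teul)\neq\emptyset$. This closes all the equivalences.

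For part (b), assume $\overline\Oeul\neq\emptyset$, so by part (a) we have $|\Neul|>1$ and $\Omega(\Teul)\neq\emptyset$; since $|\Omega(\Teul)|\le 2$, I would split on whether $|\Omega(\Teul)|=2$ or $|\Omega(\Teul)|=1$. If $|\Omega(\Teul)|=2$, then Prop.\ \ref{A09g2nr9f2wH03rf9rgeF} directly gives $|\overline\Oeul|=1$, $\delta^*(u_0)=1$, and in fact $\dot c(C_0)=0$ with the comb $C_0$ being $\tD$-trivial; in particular $u_0=z'$ or $u_0$ is the other endpoint, and since $\delta^*(u_0)=1$ and $\tD(z)=0=\tD(u_0)$ (both endpoints lie in $\Omega(\Teul)\subseteq Z(\Teul)$ with $\epsilon=1$), one checks $\Deul_{u_0}$ is handled easily. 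If $|\Omega(\Teul)|=1$, then the hypothesis of Thm \ref{0d2h39fh29834jfp0dfgq} is met, and combining Thm \ref{0d2h39fh29834jfp0dfgq}(b) with Thm \ref{0d2h39fh29834jfp0dfgq}(a) gives $0=\tD(\Neul)=\tD(\bar V(u_0)\cup\Neul(u_0,e_{u_0})) + \sum_{C\in\overline\Oeul\setminus\{C_0\}}\tD(Y_C)$ with each $\tD(Y_C)\ge \max(1,\epsilon(u_C)-2)\ge 1$ and $\tD(\bar V(u_0)\cup\Neul(u_0,e_{u_0}))\ge 0$ by Cor.\ \ref{0c9in34dh5sifzsshnkus}(b) (using $\delta^*(u_0)\ge 2$ when $|\overline\Oeul|>1$); this forces $\overline\Oeul=\{C_0\}$, i.e.\ $|\overline\Oeul|=1$, and then $\delta^*(u_0)=1$ since $u_0$ is a leaf of $S(\Teul)$ distinct from the only other leaf $z$ (here I should confirm $S(\Teul)$ being a path-with-the-structure-of-one-comb forces $\delta^*(u_0)=1$; alternatively, $|\overline\Oeul|=1$ means $S(\Teul)$ is traversed by a single comb from $z$ to $u_0$, so $u_0$ has $\delta^*$-valency $1$).

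Once $|\overline\Oeul|=1$ and $\delta^*(u_0)=1$ are established, the final inequality $|\setspec{x\in\Deul_{u_0}}{k_x>1}| + a_{u_0}^* + t(u_0)\le 3$ should follow from Cor.\ \ref{Xxkvp0wedifcwZepd} applied at $u=u_0$: that corollary gives $|\setspec{x\in\Deul_{u_0}}{k_x>1}| + a_{u_0}^* + \epsilon(u_0)-1\le \#(u_0)\le\max(3,\tD(\Neul)+2)=\max(3,2)=3$. Since $\epsilon(u_0)=\delta^*(u_0)+t(u_0)=1+t(u_0)$ by Rem.\ \ref{p0c92n30werf0dof}(a), we have $\epsilon(u_0)-1=t(u_0)$, and substituting yields exactly $|\setspec{x\in\Deul_{u_0}}{k_x>1}| + a_{u_0}^* + t(u_0)\le 3$. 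The main obstacle I anticipate is bookkeeping in part (b): making sure the case $|\Omega(\Teul)|=2$ is not secretly excluded (it is a genuine sub-case that must be dispatched via Prop.\ \ref{A09g2nr9f2wH03rf9rgeF} rather than Thm \ref{0d2h39fh29834jfp0dfgq}), and correctly deducing $\delta^*(u_0)=1$ from $|\overline\Oeul|=1$ in the $|\Omega(\Teul)|=1$ case — this requires noting that the skeleton is exhausted by the single comb $C_0$ running from $z$ to $u_0$, so $u_0$ is a leaf of $S(\Teul)$ and has $\delta^*(u_0)=1$. Everything else is a direct invocation of the cited results.
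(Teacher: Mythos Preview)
Your proof is correct and follows essentially the same approach as the paper. The only organizational difference is that the paper dispatches $|\overline\Oeul|\le 1$ once at the very beginning by citing Cor.~\ref{p0c9ifn2o3w9dcpw0e}(c) (if $|\overline\Oeul|>1$ then $\tD(\Neul)\ge 2$), whereas you re-derive this bound in part (b) via a case split on $|\Omega(\Teul)|$ using Prop.~\ref{A09g2nr9f2wH03rf9rgeF} and Thm~\ref{0d2h39fh29834jfp0dfgq} directly; both routes are valid, and the rest of your argument (the chain of equivalences in (a), and the use of Cor.~\ref{Xxkvp0wedifcwZepd} together with $\epsilon(u_0)-1=t(u_0)$ for the final inequality) matches the paper exactly.
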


\begin{proof}
If $| \overline\Oeul | > 1$ then  Cor.\ \ref{p0c9ifn2o3w9dcpw0e}(c) gives 
$\tD(\Neul) \ge 2$, which is not the case. 
Thus $| \overline\Oeul | \le 1$.

(a) We have $S(\Teul) \neq \emptyset$ by Lemma \ref{p309ef2GFT485ryf}. 
It is noted in Def.\ \ref{c20w39w93e9d0dqMne89wcg9}(a) that
$\overline\Oeul = \emptyset$ $\Leftrightarrow$ $| S(\Teul) | \le 1$ $\Leftrightarrow$ $| S(\Teul) | = 1$.
Lemma  \ref{p309ef2GFT485ryf} states that 
$| S(\Teul) | = 1$ $\Leftrightarrow$ $| \Neul | = 1$ or $\Teul$ is a brush,
but $\Teul$ cannot be a brush by Cor.\ \ref{9vbrtyukey6idckFfFugus},
so $| S(\Teul) | = 1$ $\Leftrightarrow$ $| \Neul | = 1$.
As $v_0 \in \Neul$ by Rem.\ \ref{p092pof90e9rf},  $| \Neul | = 1$ $\Leftrightarrow$ $\Neul = \{ v_0 \}$.

If  $\Neul \neq \{v_0\}$ then $|\Neul|>1$ (and $| \overline\Oeul | \ge 0 = \tD(\Neul)$),
so Cor.\ \ref{pTo9v2q3hZYa9r1ge0cX3rg} implies that $\Omega(\Teul) \neq \emptyset$.
Conversely, if  $\Omega(\Teul) \neq \emptyset$ then $Z(\Teul) \neq \emptyset$, so $\Neul \neq \{v_0\}$.
Hence, $\Neul = \{ v_0 \}$ $\Leftrightarrow$ $\Omega(\Teul) = \emptyset$.

If $| S(\Teul) | = 1$ then Prop.\ \ref{c0viwjytsdDJjLlxdifFebg} implies that $\Teul$ is one of the trees of Ex.\ \ref{nv63jfy64nvy3};
conversely, each  tree of Ex.\ \ref{nv63jfy64nvy3} is a rational tree with $\Neul = \{v_0\}$, so (a) is proved.
 
(b) Assume that $\overline\Oeul \neq \emptyset$.
Then $| \overline\Oeul | = 1$,  so $\delta^*(u_0)=1$.
Cor.~\ref{Xxkvp0wedifcwZepd} gives 
$$
| \setspec{ x \in \Deul_{u_0} }{ k_x>1 } | + a_{u_0}^* + \epsilon({u_0})-1 \le 3 ;
$$
as $\epsilon(u_0)-1 = (\delta^*(u_0)-1) + t(u_0) = t(u_0)$, the last claim follows.
\end{proof}

\begin{example} \label {09cq13cgrl38n9ZAyey}
Continuation of Fig.\ \ref{723dfvcjp2q98ewdywe} and Ex.\ \ref{Pc09vbq3j7gabXrZiA}, \ref{cIuCbgepwej6DsJ37655enm},
\ref{cov7btiw67d78vujew9}, \ref{oOo8cvn239vn3p98fgIW}  and \ref{FFlAkjcvwiuer2n3Z3cxs7df9}.
Recall from Ex.\ \ref{cIuCbgepwej6DsJ37655enm} that $\tD(\Neul) = 0$. 
Considering the types of the nodes (given in Ex.\ \ref{Pc09vbq3j7gabXrZiA}), we see that $\gcd\setspec{ d_x }{ x \in \Deul } = 1$,
so $\Teul$ is a rational tree.
We have $S(\Teul) = \{ v_0, v_1, v_2, v_3, v_4 \}$, $\In(\Teul) = \Omega(\Teul) = \{v_0\}$,
$\overline\Oeul = \{ C_0 \}$,
$C_0 = \{ (v_1, e_{v_1}),  (v_2, e_{v_2}),  (v_3, e_{v_3}),  (v_4, e_{v_4}) \}$
where $e_{v_i} = \{v_i, v_{i-1}\}$,
and $(v_4, e_{v_4})$ is a comb over $(v_1, e_{v_1})$.
Note that $u_0 = v_4$  and that $| \setspec{ x \in \Deul_{u_0} }{ k_x>1 } | = 0$, $a_{u_0}^* = 1$ and $t(u_0)=1$
(compare with Cor.\ \ref{0d9fpv09ffi209we}(b)).
\end{example}

See Def.\ \ref{iv097383k467k4V498ufhr7} for $\xi$ and $\Nd^*(\Teul)$.
Recall that $| \Omega(\Teul) |  \in \{0,1,2\}$ by Thm \ref{Xcoikn23ifcdKJDluFYT937}.

\begin{corollary} \label {9vbq34ritgf7ZAOdb9rv}
If $\Teul$ is a rational tree then $| \Nd^*(\Teul) | \le \xi(\Neul) \le 2$ and the following hold.
\begin{enumerata}

\item \label {o2i3brfowe9-c}  If $\Omega(\Teul) = \emptyset$ then $\Teul$ is one of the trees of Ex.\ \ref{nv63jfy64nvy3}.

\item \label {o2i3brfowe9-b} Assume that $\Omega(\Teul)$ is a singleton $\{z\}$.
Then $\In(\Teul) =  \{z\}$ and we may consider $\overline\Oeul(\Teul,z) = \{ C_0 \}$ and the greatest element  $(u_0,e_{u_0})$ of $C_0$.
Then  $\Nd^*(\Teul) \subseteq \{ u_0 \} \cup V( u_0 )$ and each $x \in \Nd^*(\Teul) \cap V( u_0 )$ has type  $[1, N_x, \dots, N_x]$ and satisfies $a_x=1$.
If $\xi(\Neul) =  2$ then $N_{u_0}>1$, $M(u_0,e_{u_0}) = 1$, $V(u_0) \subseteq \Nd^*(\Teul)$, and each $x \in \bar V(u_0)$ is pure and satisfies $a_x=1$.

\item \label {o2i3brfowe9-a}  Assume that $| \Omega(\Teul) | = 2$.
Then $\Nd^*(\Teul) \subseteq \Omega(\Teul)$, $| \Nd^*(\Teul) |  = \xi(\Neul)  \le 2$, and each $x \in \Nd^*(\Teul)$ has type $[1,N_x,\dots,N_x]$
and satisfies $a_x=1 = \xi(x)$ and $\tD(x)=0$.
If $\xi(\Neul) =2$ then each $x \in \Neul$ is pure and satisfies $a_x=1$ and $\tD(x)=0$.

\end{enumerata}
\end{corollary}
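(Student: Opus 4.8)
The statement is a corollary packaging together, for the special case of rational trees, the results already proved in Sections \ref{Sec:Nd_etoile} and \ref{Section:RationalCase}. The plan is to first establish the bound $|\Nd^*(\Teul)| \le \xi(\Neul) \le 2$, then dispatch the three cases (a), (b), (c) according to $|\Omega(\Teul)| \in \{0,1,2\}$ (which is exhaustive by Thm \ref{Xcoikn23ifcdKJDluFYT937}), citing the appropriate earlier results in each case. A rational tree has $\tD(\Neul)=0$ by definition, so $\max(0,\tD(\Neul))=0$ throughout; thus Thm \ref{p0fkwJjk8eCahbkBJhZuIK83gldf}(a) immediately gives $|\Nd^*(\Teul)| \le \xi(\Neul) \le 2 + \max(0,\tD(\Neul)) = 2$, which is the opening claim.

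For part (a), the hypothesis $\Omega(\Teul)=\emptyset$ together with Cor.\ \ref{0d9fpv09ffi209we}(a) forces $\Teul$ to be one of the trees of Ex.\ \ref{nv63jfy64nvy3}: indeed, that corollary lists $\Omega(\Teul)=\emptyset$ among a chain of equivalent conditions, the last of which is exactly ``$\Teul$ is one of the trees of Ex.\ \ref{nv63jfy64nvy3}''. For part (c), where $|\Omega(\Teul)|=2$, the desired conclusions are verbatim the content of Prop.\ \ref{hCfa89HFdfjkUY738ihhdJMJi854727}(a) and (b); since a rational tree is minimally complete, that proposition applies directly and nothing further is needed. (One should double-check that the $\xi$-equality hypotheses match: in a rational tree $\max(0,\tD(\Neul))=0$, so ``$\xi(\Neul)=2$'' is the same as ``$\xi(\Neul) = \max(0,\tD(\Neul))+2$'', the hypothesis appearing in the cited results.)

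For part (b), where $\Omega(\Teul)=\{z\}$, I would first note that $\In(\Teul)=\Omega(\Teul)=\{z\}$ by Def.\ \ref{cpan3bAfon98gbvqrkdLlxdir6}. Since $\Omega(\Teul)\neq\emptyset$ we have $\Neul \neq \{v_0\}$ by Cor.\ \ref{0d9fpv09ffi209we}(a), hence $|S(\Teul)|>1$, so $\overline\Oeul(\Teul,z)$ is nonempty; and $|\overline\Oeul|\le 1$ because $|\overline\Oeul|>1$ would give $\tD(\Neul)\ge 2$ by Cor.\ \ref{p0c9ifn2o3w9dcpw0e}(c), contradicting $\tD(\Neul)=0$. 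Thus $\overline\Oeul=\{C_0\}$ and we may speak of its greatest element $(u_0,e_{u_0})$. The two cases $|S(\Teul)|=1$ and $|\Omega(\Teul)|=2$ are excluded here, so the refined location statements of Thm \ref{cvintcjowHYtjk2uu78748} are available: part \eqref{pvejcjkssyeuer23i} of that theorem, applied with $\tD(\Neul)=0\le0$, gives $\Nd^*(\Teul)\subseteq\{u_0\}\cup V(u_0)$ and, when $\xi(\Neul)=2$, also $M(u_0,e_{u_0})=1$; part \eqref{92g098golJXS4o934}(ii) (the only applicable sub-case for an element of $V(u_0)$) gives that each $x\in\Nd^*(\Teul)\cap V(u_0)$ is a node of type $[1,N_x,\dots,N_x]$ with $a_x=1$; and part \eqref{8s745ki7f0w63rFrg}, applied when $\xi(\Neul)=\max(0,\tD(\Neul))+2=2$, yields $N_{u_0}>1$, that $V(u_0)\subseteq\Nd^*(\Teul)$ (taking $w=u_0\in S(\Teul)\setminus\Neul(u_0,e_{u_0})$, noting $u_0\notin\Neul(u_0,e_{u_0})$ since $(u_0,e_{u_0})$ is not a tooth), and that each $x\in\bar V(u_0)$ is pure with $a_x=1$. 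Assembling these gives exactly the stated conclusions of (b).

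\textbf{Main obstacle.} The mathematical substance is entirely in the earlier theorems, so the only real work is bookkeeping: confirming that the hypothesis ``$| S(\Teul)|\neq 1$ and $|\Omega(\Teul)|\neq 2$'' of Thm \ref{cvintcjowHYtjk2uu78748} is met in case (b), checking that $u_0 \notin \Neul(u_0,e_{u_0})$ so that the ``$w \in S(\Teul)\setminus\Neul(u_0,e_{u_0})$'' clause of \eqref{8s745ki7f0w63rFrg} does apply to $w=u_0$, and making sure the various $\xi$-hypotheses are correctly translated under $\tD(\Neul)=0$. None of this should be difficult; the subtlety, if any, lies in not conflating ``$\Nd^*(\Teul)\cap V(u_0)$ elements have type $[1,N_x,\dots]$'' (always true) with the stronger statement that $V(u_0)\subseteq\Nd^*(\Teul)$ (only under $\xi(\Neul)=2$).
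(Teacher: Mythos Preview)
Your proposal is correct and follows essentially the same approach as the paper's proof: the opening bound via Thm~\ref{p0fkwJjk8eCahbkBJhZuIK83gldf}, part~(a) via Cor.~\ref{0d9fpv09ffi209we}(a), part~(c) via Prop.~\ref{hCfa89HFdfjkUY738ihhdJMJi854727}, and part~(b) via Thm~\ref{cvintcjowHYtjk2uu78748}(d,e). The only cosmetic difference is that for the type of elements in $\Nd^*(\Teul)\cap V(u_0)$ the paper cites Cor.~\ref{0vk3mWsklxd03mcwlfi}(c) directly rather than routing through Thm~\ref{cvintcjowHYtjk2uu78748}\eqref{92g098golJXS4o934}(ii), and for $|\overline\Oeul|=1$ the paper simply invokes Cor.~\ref{0d9fpv09ffi209we} (whose proof already contains the $|\overline\Oeul|\le 1$ argument you reproduce).
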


\begin{proof}
We have $| \Nd^*(\Teul) | \le \xi(\Neul) \le 2$ by Thm \ref{p0fkwJjk8eCahbkBJhZuIK83gldf},
assertion \eqref{o2i3brfowe9-a}  follows from Prop.\ \ref{hCfa89HFdfjkUY738ihhdJMJi854727},
and assertion \eqref{o2i3brfowe9-c}  from Cor.\ \ref{0d9fpv09ffi209we}(a).
We prove \eqref{o2i3brfowe9-b}. Assume that $\Omega(\Teul)$ is a singleton $\{z\}$.
Then $\In(\Teul) =  \{z\}$ and we may consider $\overline\Oeul = \overline\Oeul(\Teul,z)$.
Since $\Omega(\Teul)  \neq \emptyset$, we have $| \overline\Oeul | = 1$ by  Cor.\ \ref{0d9fpv09ffi209we}, so $\overline\Oeul = \{ C_0 \}$.
By Cor.\ \ref{0vk3mWsklxd03mcwlfi}(c), each $x \in \Nd^*(\Teul) \cap V( u_0 )$ has type  $[1, N_x, \dots, N_x]$ and satisfies $a_x=1$.
The other claims in \eqref{o2i3brfowe9-b}  follow from  parts (e) and (d) of Thm \ref{cvintcjowHYtjk2uu78748}.
\end{proof}

\begin{remark}
Cor.\ \ref{9vbq34ritgf7ZAOdb9rv} has the following consequence (refer to the introduction for the definitions):
{\it If $\Teul = \Teul(f;x,y)$ where $f \in \Comp[x,y]$ is a simple (resp.\ quasi-simple) rational polynomial,
then $\Teul$ has at most $2$ (resp.\ at most $3$) nodes.}
One can check that this claim agrees with the explicit descriptions given in \cite{CND15:simples} and \cite{Sasao_QuasiSimple2006}.
\end{remark}

The following fact is useful in conjunction with Thm \ref{c03hbr9dfgsjlxnzwesmchp}\eqref{ve4876ekrlu8ecBbue}:

\begin{lemma}  \label {0c9gh5thowe6fric}
Let $\Teul$ be a rational tree and suppose that $u \in \Neul$ is such that $(u,e)$ is nonpositive for all $e \in \Eeul_u$.
Let $(d_1, \dots, d_m)$ be the tuple obtained by first concatenating the three families
$$
(d_x)_{x \in \Deul_u}, \quad (c(u,e))_{e \in \Eeul_u} \quad \text{and} \quad ({N_u}/{a_u})
$$
(where the last family has only one term)
and then reordering the terms so that $d_1 \le \cdots \le d_m$.
Then all $d_i$ are positive integers and divisors of $N_u$. Moreover, one of the following holds:
\begin{enumerata}

\item $N_u=1$ and $(d_1, \dots, d_m) = (1, \dots, 1)$;

\item $m\ge2$, $N_u\ge2$ and $(d_1, \dots, d_m) = (1, 1, N_u, \dots, N_u)$;
\item $m\ge3$ and there exists an odd integer $b \ge3$ such that  $N_u=2b$ and $(d_1, \dots, d_m) = (2, b, b, N_u, \dots, N_u)$.

\end{enumerata}
\end{lemma}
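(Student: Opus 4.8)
The plan is to reduce the statement to a purely arithmetic assertion about a finite tuple of divisors of $N_u$, and then extract the structural constraint from the hypothesis that $\Teul$ is a rational tree (i.e.\ $\tD(\Neul)=0$ and $\gcd\setspec{d_x}{x\in\Deul}=1$). First I would recall that since $(u,e)$ is nonpositive for every $e\in\Eeul_u$, Prop.~\ref{DKxcnpw93sdo} and Rem.~\ref{uyhmdytjwhwhrkdftraef4gf23h23} give $c(u,e)\in\Nat\setminus\{0\}$ with $c(u,e)\mid N_u$ by Thm~\ref{xncoo9qwdx9}; together with Lemma~\ref{90q932r8dhd89cnr9}\eqref{o82y387jw9e23} (so $d_x\mid N_u$ for $x\in\Deul_u$) and $a_u\mid N_u$ (Def.~\ref{d23ed9wei9d23}), every term of $(d_1,\dots,d_m)$ is a positive divisor of $N_u$. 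That disposes of the first sentence of the ``Moreover'' clause.

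\textbf{Key steps.}
The heart of the argument is to compute $\tD(\Neul)$ using Cor.~\ref{dox9f80293ewf0di}, applied at the vertex $u$: we have $\tD(\Neul)=\big(R(u,\Eeul_u)-2\big)N_u+2+\sum_{e\in\Eeul_u}\eta(u,e)$, and since each $(u,e)$ is nonpositive, $\eta(u,e)=0$ for all $e\in\Eeul_u$ by Prop.~\ref{DKxcnpw93sdo}. Thus $0=\tD(\Neul)$ forces $R(u,\Eeul_u)=2-\tfrac2{N_u}$. Now $R(u,\Eeul_u)=\sum_{x\in\Deul_u}(1-\tfrac1{k_x})+(1-\tfrac1{a_u})+\sum_{e\in\Eeul_u}(1-\tfrac1{M(u,e)})$, and rewriting each term via $k_x=N_u/d_x$, $M(u,e)=N_u/c(u,e)$, and the ``$N_u/a_u$'' term, one gets exactly
$$
\sum_{i=1}^m\Big(1-\frac{d_i}{N_u}\Big)=2-\frac2{N_u},
\quad\text{i.e.}\quad
\sum_{i=1}^m(N_u-d_i)=2N_u-2.
$$
So the problem becomes: classify tuples $(d_1\le\cdots\le d_m)$ of positive divisors of a positive integer $N=N_u$ satisfying $\sum_{i=1}^m(N-d_i)=2N-2$, under the additional global constraint coming from $\gcd\setspec{d_x}{x\in\Deul}=1$. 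When $N=1$ every $d_i=1$ and the equation reads $0=0$, giving case (a). When $N\ge2$, each nonzero summand $N-d_i$ is at least $N-(N/2)=N/2\ge1$, and in fact is $\ge1$ with the two largest possible ``small'' values being $N-N/2$ and, if $N$ is even, the contributions force a short case analysis: writing $d_i=N$ for $i>\ell$ and $1\le d_1\le\cdots\le d_\ell<N$, we need $\sum_{i=1}^\ell(N-d_i)=2N-2$ with $\ell\le m$. If $\ell=2$ then $d_1+d_2=2$, so $d_1=d_2=1$ and we are in case (b); this always requires $m\ge2$. If $\ell\ge3$ then $\sum_{i=1}^\ell(N-d_i)\ge\ell(N-N/2)=\ell N/2\ge 3N/2$, and combined with $\le 2N-2$ this bounds $\ell$ and $N$ tightly: $\ell=3$ and, after solving $3N-(d_1+d_2+d_3)=2N-2$, i.e.\ $d_1+d_2+d_3=N+2$ with each $d_i\mid N$ and $d_i<N$, the only solutions are $(d_1,d_2,d_3)=(1,1,N)$—excluded since $d_3<N$—or $(2,b,b)$ with $N=2b$, $b\mid N$, forcing $b$ odd and $b\ge3$ (if $b=1$ then $N=2$ and $d_i\in\{1,2\}$, reducing to case (b); if $b$ even then $4\mid N=2b$ would make $N/d_i$ non-integral for $d_i=b$). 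A symmetric check rules out $(1,2,3)$-type solutions unless they already fall under (b), and verifies $m\ge3$ in case (c). Finally the gcd hypothesis: I would note that $\gcd\setspec{d_x}{x\in\Deul}=1$ together with Lemma~\ref{kjwoeid9cse9c}(c) (each $c(u,e)$ divides the gcd of the $d(z)$ over $z\in\Nd(u,e)$) is what prevents the degenerate ``all $d_i$ equal to $N$'' tuple and more generally guarantees the ``$1$'' or ``$2$'' actually appears, i.e.\ that $\ell\ge2$; this needs to be woven in at the point where $\ell$ is introduced.

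\textbf{Main obstacle.}
The routine part is the elementary divisor arithmetic (the inequality $N-d_i\ge N/2$ and the finite case split), which the paper even flags as ``left to the reader'' in Lemma~\ref{pvOtyxMdcbgOEjo28hw8y}. The genuinely delicate point I expect to spend the most care on is the appearance of case (c) and in particular pinning down that $b$ must be \emph{odd}: the parity obstruction here is analogous to Prop.~\ref{c0wkudmFDdkKkserfdkzfmvXxlie}/\ref{c0viwjytsdDJjLlxdifFebg}, and the cleanest route is probably to observe that if $b$ were even then $N=2b\equiv0\pmod4$ while $d_i=b$ gives $N/d_i=2$, consistent; so parity of $b$ cannot be forced from divisibility alone and one genuinely needs the structure of $\Teul$—specifically that $N_u$ is even exactly because a dicritical or edge of ``degree'' $2$ sits at $u$, and then the gcd-$1$ condition on $\Deul$ together with the edge-determinant relation $k_x=a_x-x(e)$ (as exploited in Prop.~\ref{c0wkudmFDdkKkserfdkzfmvXxlie}) forces $b$ odd. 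Getting this parity step to follow cleanly from the already-established machinery, rather than re-deriving it, will be the crux; everything else is bookkeeping on the equation $\sum_{i=1}^m(N_u-d_i)=2N_u-2$.
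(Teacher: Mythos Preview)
Your approach is correct and matches the paper's: reduce via Cor.~\ref{dox9f80293ewf0di} and Prop.~\ref{DKxcnpw93sdo} to the equation $\sum_{i=1}^m(N_u-d_i)=2(N_u-1)$, then split on the number $\ell=m_0$ of terms with $d_i<N_u$. Two points need correcting, however.

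First, you have the role of the gcd hypothesis misplaced. The bound $\ell\ge2$ (when $N_u\ge2$) follows from the equation alone: $\ell=0$ forces $0=2N_u-2$ so $N_u=1$, and $\ell=1$ gives $d_1=2-N_u\le0$. What the rational-tree hypothesis $\gcd\setspec{d_x}{x\in\Deul}=1$ actually buys you, via Lemma~\ref{kjwoeid9cse9c}(c), is the global fact $\gcd(d_1,\dots,d_m)=1$: this gcd divides $d(u)$ (through the terms $d_x$, or trivially if $u\notin\Nd(\Teul)$) and divides each $c(u,e)$, hence divides $\gcd\setspec{d(z)}{z\in\Nd(\Teul)}=\gcd\setspec{d_x}{x\in\Deul}=1$. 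The paper establishes this explicitly as a first step.

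Second, your ``Main obstacle'' is illusory. Once $\gcd(d_1,\dots,d_m)=1$ is in hand, the parity of $b$ in case~(c) is immediate: with $\ell=3$ and $d_4=\cdots=d_m=N_u$, we get $\gcd(d_1,d_2,d_3)=\gcd(d_1,\dots,d_m)=1$, and for $(d_1,d_2,d_3)=(2,b,b)$ this reads $\gcd(2,b)=1$, forcing $b$ odd. Your own parenthetical attempt (``$4\mid N$ would make $N/d_i$ non-integral'') is indeed wrong, as you later notice, but the fix is the gcd condition above---not any edge-determinant relation or analogue of Prop.~\ref{c0wkudmFDdkKkserfdkzfmvXxlie}. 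Your planned detour through the tree structure would be wasted effort. The paper packages the $\ell=3$ arithmetic as the auxiliary Lemma~\ref{c9i2bfxcgrmOxdkgcksi6cf3}, whose decisive hypothesis is precisely $\gcd(t_1,t_2,t_3)=1$.
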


The proof of Lemma  \ref{0c9gh5thowe6fric} uses the following fact, whose verification is left to the reader:

\begin{lemma}  \label {c9i2bfxcgrmOxdkgcksi6cf3}
Let $t_1, t_2, t_3, N$ be positive integers satisfying 
$$
\text{$t_i \mid N$ for all $i=1,2,3$}, \quad t_1 + t_2 + t_3 = N+2, \quad t_1 \le t_2 \le t_3 \quad \text{and} \quad \gcd(t_1, t_2, t_3) = 1 .
$$
Then either $(t_1, t_2, t_3) = (1,1,N)$ or there exists an odd integer $b \ge 3$ such that 
$(t_1, t_2, t_3) = (2, b, b)$ and $N = 2b$.
\end{lemma}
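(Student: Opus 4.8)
The statement to prove is Lemma~\ref{c9i2bfxcgrmOxdkgcksi6cf3}, a purely elementary number-theoretic fact: given positive integers $t_1 \le t_2 \le t_3$ and $N$ with each $t_i \mid N$, with $t_1+t_2+t_3 = N+2$, and with $\gcd(t_1,t_2,t_3)=1$, conclude that $(t_1,t_2,t_3)=(1,1,N)$ or $(t_1,t_2,t_3)=(2,b,b)$ with $N=2b$ for some odd $b\ge 3$. The plan is to write $t_i = N/m_i$ with positive integers $m_i$, so that $m_1 \ge m_2 \ge m_3 \ge 1$, and translate the equation $t_1+t_2+t_3 = N+2$ into
$$
\frac{1}{m_1} + \frac{1}{m_2} + \frac{1}{m_3} = 1 + \frac{2}{N}.
$$
Since the right-hand side is strictly greater than $1$, the triple $(m_1,m_2,m_3)$ of positive integers must satisfy $\frac1{m_1}+\frac1{m_2}+\frac1{m_3} > 1$, and the classical classification of such triples (the ``$ABC$'' / Platonic solid list) gives only finitely many possibilities for $(m_1,m_2,m_3)$ up to ordering: with $m_3=1$ anything of the form $(m_1,m_2,1)$; with $m_3=2$ the options $(m_1,2,2)$ for any $m_1$ (wait — here we need the sum $>1$, so $(m_1,2,2)$ works for any $m_1\ge 2$ and also $m_1=2$), $(m_1,3,2)$ with $m_1\in\{3,4,5\}$, $(m_1,4,2)$ does not, etc.; and with $m_3=3$ only $(3,3,3)$. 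The finiteness here is genuine but the bookkeeping is light, so I would simply enumerate.

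The key steps, in order: \emph{(i)} set up the substitution $t_i=N/m_i$ and record that $\gcd(t_1,t_2,t_3)=1$ is equivalent to $\mathrm{lcm}(m_1,m_2,m_3)=N$; \emph{(ii)} observe $\frac1{m_1}+\frac1{m_2}+\frac1{m_3}=1+\frac2N > 1$ and hence $m_3\in\{1,2,3\}$ (if $m_3\ge 3$ for all, equality forces $m_1=m_2=m_3=3$); \emph{(iii)} handle the case $m_3=1$, i.e.\ $t_3=N$: then $t_1+t_2 = 2$, so $t_1=t_2=1$, giving $(1,1,N)$ — note $\gcd$ and divisibility are automatic; \emph{(iv)} handle $m_3=2$, i.e.\ $t_3=N/2$ (so $N$ is even): substitute and get $\frac1{m_1}+\frac1{m_2} = \frac12+\frac2N$; from $m_1\ge m_2\ge 2$ one finds $m_2=2$ forces $\frac1{m_1}=\frac2N$ hence $m_1 = N/2 = t_3$, giving $t_1 = N/m_1 = 2$ and $t_2 = N/2 = t_1$... wait, that gives $t_1 = 2, t_2 = N/2, t_3 = N/2$, and then $t_1+t_2+t_3 = 2 + N = N+2$ checks out; set $b = N/2$, which must be odd (if $b$ were even then $2\mid t_1=2$, $2\mid t_2=b$, $2\mid t_3 = b$, contradicting $\gcd=1$) and $\ge 3$ (if $b = 1$ then $N=2$, $t_3 = 1 < t_1 = 2$, impossible; if $b=2$ it's even); the remaining subcase $m_2=3$ (so $m_1\in\{3,4,5\}$) must be checked to lead to contradictions with the divisibility/gcd conditions, e.g.\ $(m_1,m_2,m_3)=(3,3,2)$ gives $\frac13+\frac13+\frac12 = \frac76 = 1+\frac16$, so $N=6$, $t=(2,2,3)$, but then $\gcd=1$ is fine yet $t_1=t_2=2$ is not of the allowed forms — so I must double-check: actually $(2,2,3)$ with $N=6$: is this allowed by the conclusion? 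No. So I need to rule it out, but $2+2+3 = 7 = 6+1 \ne 6+2$; good, it fails the hypothesis. Similarly $(m_1,m_2,m_3)=(4,3,2)$: $\frac14+\frac13+\frac12 = \frac{13}{12} = 1 + \frac1{12}$, so $N=24$, $t = (6,8,12)$, sum $=26\ne 26$? $24+2=26$, and $6+8+12=26$ — this satisfies the hypothesis! But $\gcd(6,8,12)=2\ne 1$, so it is excluded by the gcd condition. And $(5,3,2)$: $\frac15+\frac13+\frac12 = \frac{31}{30}$, $N=30$, $t=(6,10,15)$, sum $31 = 32$? $30+2=32\ne 31$; fails. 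So each exotic subcase is killed by either the sum-condition or the gcd-condition; \emph{(v)} handle $m_3=3$: $m_1=m_2=m_3=3$, $N = \mathrm{lcm} = 3$, but then $t_1=t_2=t_3=1$ and the sum is $3 \ne N+2 = 5$, contradiction.

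\textbf{Main obstacle.} There is no deep obstacle here — the result is elementary — but the one point requiring care is the case $m_3 = 2$, where I must make sure I have enumerated \emph{all} integer solutions of $\frac1{m_1}+\frac1{m_2} = \frac12+\frac2N$ with $m_1\ge m_2\ge 2$ and $m_i \mid N$, and then verified for each whether the sum-condition and the gcd-condition are simultaneously satisfiable, landing only on $(2,b,b)$. I would organize this by splitting on $m_2 = 2$ (the productive branch) versus $m_2 \ge 3$ (where $m_2\in\{3\}$ forces $m_1\in\{3,4,5\}$, a finite check), writing each candidate triple $(t_1,t_2,t_3)$ explicitly and observing it is excluded by either $t_1+t_2+t_3\ne N+2$ or $\gcd>1$. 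The oddness of $b$ and the bound $b\ge 3$ should be stated as the final consequences of the gcd condition and of $t_1\le t_2\le t_3$ respectively. Since the paper explicitly says ``whose verification is left to the reader,'' a clean two-case writeup along these lines is exactly what is expected; I would keep the exposition terse and let the arithmetic checks speak for themselves.
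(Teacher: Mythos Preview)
Your approach is correct and entirely appropriate: the paper itself leaves this verification to the reader, so there is no proof to compare against, and your substitution $t_i = N/m_i$ followed by the classification of triples with $\sum 1/m_i > 1$ is the natural route.

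There is, however, a repeated arithmetic slip you should fix before writing this up. From $\sum 1/m_i = 1 + 2/N$ you have $2/N = \sum 1/m_i - 1$, so if $\sum 1/m_i = 1 + 1/k$ then $N = 2k$, not $N = k$. Concretely:
\begin{itemize}
\item $(m_1,m_2,m_3)=(3,3,2)$ gives $N=12$ (not $6$), $t=(4,4,6)$; the sum \emph{does} equal $N+2$, but $\gcd(4,4,6)=2$, so this case is excluded by the gcd condition.
\item $(m_1,m_2,m_3)=(5,3,2)$ gives $N=60$ (not $30$), $t=(12,20,30)$; again the sum equals $N+2$, and $\gcd=2$ excludes it.
\end{itemize}
You got $(4,3,2)$ right: $N=24$, $t=(6,8,12)$, $\gcd=2$. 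So in fact all three $m_2=3$ subcases are killed uniformly by $\gcd=2$ (equivalently $\operatorname{lcm}(m_1,m_2,m_3)=N/2\ne N$), not by failure of the sum condition. This makes the writeup cleaner: once $m_3=2$ and $m_2=3$, one always gets $\gcd(t_1,t_2,t_3)=2$. With that correction your case analysis is complete and correct.
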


\begin{proof}[Proof of \ref{0c9gh5thowe6fric}]
By Def.\ \ref{d23ed9wei9d23}, $N_u/a_u$ is a positive integer (and a divisor of $N_u$).
If $x \in \Deul_u$ then $d_x$ is a positive integer and a divisor of $N_u = k_xd_x$ (Lemma \ref{90q932r8dhd89cnr9}).
If $e \in \Eeul_u$ then $(u,e)$ is nonpositive, so  $c(u,e)$ is a positive integer by Rem \ref{uyhmdytjwhwhrkdftraef4gf23h23},
and $c(u,e) \mid N_u$ by Thm \ref{xncoo9qwdx9}.
So all $d_i$ are positive integers and divisors of $N_u$.
The integer $\gcd(d_1, \dots, d_m)$ is a divisor of $d(u)$ and also a divisor of $c(u,e)$ for each $e \in \Eeul_u$;
since (by Lemma \ref{kjwoeid9cse9c}) $c(u,e)$ divides $\gcd\setspec{d(v)}{ v \in \Nd(u,e) }$ whenever $\Nd(u,e) \neq \emptyset$, it follows that 
$\gcd(d_1, \dots, d_m)$ divides  $\gcd\setspec{d(v)}{ v \in \Nd(\Teul) } = \gcd\setspec{d_x}{ x \in \Deul } = 1$, i.e.,
$$
\gcd(d_1, \dots, d_m) = 1 \, .
$$ 

Clearly, if $N_u=1$ then (a) is satisfied. From now-on, assume that $N_u>1$.
We have
$0 = \tD(\Neul) = \big( R(u, \Eeul_u) - 2 \big) N_u  + 2 +  \sum_{e \in \Eeul_u} \eta(u,e) = \big( R(u, \Eeul_u) - 2 \big) N_u  + 2 $
by Cor.\ \ref{dox9f80293ewf0di} and \ref{DKxcnpw93sdo}, 
and the definitions of $R(u, \Eeul_u)$ and $(d_1,\dots,d_m)$ give
$R(u, \Eeul_u) N_u = \sum_{ i = 1 }^m (N_u - d_i) = \sum_{ i = 1 }^{m_0} (N_u - d_i)$ where we define $m_0$ by 
$$
\text{$d_i  < N_u$ for all $i \le m_0$} \quad \text{and} \quad \text{$d_i  = N_u$ for all $i > m_0$}.
$$
It follows that $\sum_{ i = 1 }^{m_0} (N_u - d_i) = 2(N_u-1)$ where $N_u - d_i \ge \frac{N_u}2$ for each $i = 1, \dots, m_0$. 
So we have $m_0 \in \{2,3\}$. If $m_0=2$ then $d_1=1=d_2$ and hence (b) is satisfied.
If $m_0=3$ then $d_1 + d_2 + d_3 = N_u+2$ and $\gcd(d_1,d_2,d_3) = \gcd(d_1, \dots, d_m)=1$,
so Lemma \ref{c9i2bfxcgrmOxdkgcksi6cf3} implies that (c) is satisfied.
\end{proof}

The next result uses the notations of Fig.\ \ref{902br8hrjkderydyukhyg} in the Introduction.
Note that assertion \eqref{1v0Aeri87llrfdOfitrkj} is obvious
and that \eqref{9f2i39KJG96fgIi8k} (and also \eqref{0vdjo7eHidf782eh39} if it is properly stated) is valid without assuming that $\Teul$ is a rational tree.
We include (\ref{0vdjo7eHidf782eh39}--\ref{1v0Aeri87llrfdOfitrkj}) in the statement
in order to make it clear that Cor.~\ref{0v3y46y7iwoujbfv8e} (in the Introduction) follows from
Thm \ref{c03hbr9dfgsjlxnzwesmchp}.

\begin{theorem} \label {c03hbr9dfgsjlxnzwesmchp}
Let $\Teul$ be a rational tree.
Then either $\Teul$ is one of the trees of Ex.\ \ref{nv63jfy64nvy3} or the following hold.
\begin{enumerata}

\item \label {iHhiHhd8i934ur}
$\Neul$ looks like the tree of Fig.\ \ref{902br8hrjkderydyukhyg} with $n>1$.
In that picture,
the elements of $\Gamma(\Teul)$ are the vertical or oblique branches,\footnote{The elements of $\Gamma(\Teul)$ are
incorrectly but conveniently called ``teeth'' in the Introduction.
This abuse of language is harmless, because of the bijection mentioned just after Def.~\ref{xkclqlwpd90cod}.}
$W(\Teul) \setminus \{z_n\} = \{ z_{i_1}, \dots, z_{i_k} \}$ and $S(\Teul) = \{ z_1, \dots, z_n \}$.

\item \label {pd90fb35t93repfEUTH}
We have $z_1 \notin W(\Teul)$, so if $k\neq0$ then $i_1>1$. Moreover, $z_1$ satisfies one of:
\begin{itemize}

\item $z_1 \notin \Nd(\Teul)$, $z_1 = v_0$ and $\delta_{v_0}=1$.

\item $z_1$ is a node of type $[d, N_{z_1}, \dots, N_{z_1}]$ for some divisor $d$ of $N_{z_1}$,
and if $d \neq N_{z_1}$ then $a_{z_1}=1$.

\end{itemize}

\item \label {c8bGTghU783hceo}
We have $v_0 \in \{ z_1, \dots, z_{n-1} \}$, say $v_0 = z_{i_0}$ with $1 \le i_0 < n$,
and if $k\neq 0$ then $i_0 < i_1$.
Moreover, $\delta_{v_0} \le 2$.

\item \label {ve4876ekrlu8ecBbue}
The hypothesis of Lemma \ref{0c9gh5thowe6fric} is satisfied with $u=z_n$ (so the conclusion of the Lemma is also satisfied).
In particular, $t(z_n) \in \{0,1,2,3\}$. 

\item \label {0vdjo7eHidf782eh39} Let $i$ be such that $1<i<n$.
Then $\epsilon(z_i) \in \{2,3\}$,  $R(z_i, \{ e_{z_i} \} ) =0$ if $\epsilon(z_i)=3$ and $R(z_i, \{ e_{z_i} \} ) <1$ if $\epsilon(z_i)=2$.
In particular, we have $\epsilon'(z_i) \le 3$ where $\epsilon'(z_i)$ is defined in the Introduction.

\item \label {9f2i39KJG96fgIi8k} Consider an element $(x_1, \dots, x_m)$ of $\Gamma(\Teul)$.
Then, for each $j<m$, $x_j$ is a node and $\epsilon'(x_j) \le 2$.

\item \label {1v0Aeri87llrfdOfitrkj}  The set $\Nd^*(\Teul)$ and the function $\xi$ satisfy the conclusions of Cor.~\ref{9vbq34ritgf7ZAOdb9rv}.

\end{enumerata}
\end{theorem}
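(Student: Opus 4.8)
\textbf{Proof plan for Theorem \ref{c03hbr9dfgsjlxnzwesmchp}.}
The plan is to assemble the statement from the machinery already developed, treating a rational tree as a minimally complete abstract Newton tree at infinity with $\tD(\Neul)=0$ and $\gcd\setspec{d_x}{x\in\Deul}=1$. First I would dispose of the degenerate case: by Cor.\ \ref{0d9fpv09ffi209we}(a), if $\Omega(\Teul)=\emptyset$ (equivalently $\Neul=\{v_0\}$) then $\Teul$ is one of the trees of Ex.\ \ref{nv63jfy64nvy3}, which is the first alternative of the theorem. So assume from now on that $\Omega(\Teul)\neq\emptyset$; then $\Neul\neq\{v_0\}$, $\Teul$ is not a brush (Cor.\ \ref{9vbrtyukey6idckFfFugus}), and by Cor.\ \ref{0d9fpv09ffi209we} we have $|\overline\Oeul|=1$, i.e.\ $\overline\Oeul=\{C_0\}$ and $\delta^*(u_0)=1$. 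I also note that $|\Omega(\Teul)|\le 2$ by Thm \ref{Xcoikn23ifcdKJDluFYT937}; if $|\Omega(\Teul)|=2$ the structure is pinned down by Thm \ref{Xcoikn23ifcdKJDluFYT937} (a $\tD$-trivial path with $\Neul=\{z_1,\dots,z_n\}=S(\Teul)$, and both endpoints in $\Omega$), and if $|\Omega(\Teul)|=1$ then $\In(\Teul)=\Omega(\Teul)=\{z\}$. In both subcases $|S(\Teul)|>1$ and $S(\Teul)$, being a tree with $|\overline\Oeul|=1$, has exactly two leaves, hence is a path $(z_1,\dots,z_n)$ with $n>1$; choosing $z=z_1$ to be an initial vertex and labelling so that $z_n=u_0$ (the greatest element of $C_0$) gives assertion \eqref{iHhiHhd8i934ur}, once one identifies $W(\Teul)\setminus\{z_n\}$ with the vertices in $\{z_1,\dots,z_{n-1}\}$ carrying teeth (Rem.\ \ref{p0c92n30werf0dof}\eqref{0ck2j3wnsdo9}) and notes $\Gamma(\Teul)$ is the set of $\tD$-trivial paths ending at some $z_i$, drawn as the oblique branches. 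This matches Fig.\ \ref{902br8hrjkderydyukhyg} with $z_i=z_{i_0}$ the root.

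For \eqref{pd90fb35t93repfEUTH}, since $z_1=z\in\In(\Teul)$ and (when $\Omega(\Teul)=\{z_1\}$) $z_1\in\Omega(\Teul)$, we have $z_1\notin W(\Teul)$ by Rem.\ \ref{pc09n3409vnZiEWOdhFp30ef}(a) and Def.\ \ref{pd09Yv3ned09Xse}; so no tooth is attached to $z_1$, forcing $i_1>1$ if $k\neq0$. The dichotomy for $z_1$ itself is exactly Lemma \ref{8ey8cdody27}: $\epsilon(z_1)=1$ (since $\delta^*(z_1)=1$ and no tooth is attached) and $\tD(z_1)\le0$ (since $z_1\in\Omega(\Teul)\subseteq Z(\Teul)$, or directly from $|\Omega(\Teul)|=2$ case), so either $z_1$ is not a node — in which case Lemma \ref{C0qowbdowX932epug7fnvd}(a) gives $z_1=v_0$ and $\delta_{v_0}=1$ — or $z_1$ is a node of type $[d(z_1),N_{z_1},\dots,N_{z_1}]$ with $a_{z_1}=1$ when $d(z_1)\neq N_{z_1}$. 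For \eqref{c8bGTghU783hceo}: the root $v_0$ lies in $S(\Teul)=\{z_1,\dots,z_n\}$ by Lemma \ref{p309ef2GFT485ryf}, and $v_0\neq z_n$ because $z_n=u_0$ is (in the $|\Omega|=1$ case) nonpositive hence $\tD(z_n)=\tD(\bar V(z_n))>0$ would contradict... — more carefully, I would argue $v_0=z_{i_0}$ with $i_0<n$ by showing $z_n>z_{n-1}$ (Cor.\ \ref{0c9in34dh5sifzsshnkus}(c) or Prop.\ \ref{A09g2nr9f2wH03rf9rgeF}), so $v_0\in\Neul(z_n,e_{z_n})=\{z_1,\dots,z_{n-1}\}$; and $i_0<i_1$ because no tooth is attached to any $z_j$ with $j\le i_0$ (such $z_j$ has $\delta_v\le 2$ toward $z_{i_0}$... ) — in fact the cleanest route is: $v_0$ has no arrow adjacent and $\delta_{v_0}=\epsilon(v_0)+|\Deul_{v_0}|$, combined with $\tD(\Neul)=0$ and Prop.\ \ref{0c9b45okjrdidjfo}, which gives $(\delta_{v_0}-1)(\delta_{v_0}-2)\le 0$, hence $\delta_{v_0}\le 2$. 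And $i_0<i_1$ since if $z_{i_1}$ with $i_1\le i_0$ carried a tooth, walking from $v_0$ toward $z_1$ one would meet $\Omega(\Teul)=\{z_1\}$ in the wrong position; I would formalize this using that the tooth's path lies in $\bar V(z_{i_1})\setminus\{z_{i_1}\}$ and $z_1\in S(\Teul)$.

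Assertion \eqref{ve4876ekrlu8ecBbue} is where I expect the crux: I must verify that $u=z_n$ satisfies the hypothesis of Lemma \ref{0c9gh5thowe6fric}, namely that $(z_n,e)$ is nonpositive for every $e\in\Eeul_{z_n}$. Since $\delta^*(z_n)=1$, exactly one edge of $\Neul$ incident to $z_n$ lies in $S(\Teul)$, namely $e_{z_n}=e_{u_0}$; this one is nonpositive by Lemma \ref{A0c9vub23oW8fgbgp0q2arft} (equivalence (c)$\Leftrightarrow$(a), using $\Omega(\Teul)\neq\emptyset$) in the $|\Omega|=1$ case, and by Thm \ref{Xcoikn23ifcdKJDluFYT937}/Prop.\ \ref{A09g2nr9f2wH03rf9rgeF} in the $|\Omega|=2$ case. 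Every other edge of $\Neul$ incident to $z_n$ is of type $\Gamma$, i.e.\ $(z_n,e)$ is a tooth, hence nonpositive by Lemma \ref{GRygergGREg8948r}(a). So all edges incident to $z_n$ are nonpositive and Lemma \ref{0c9gh5thowe6fric} applies; its conclusion bounds the number $m$ of entries, and reading off how many of the $c(z_n,e)$ come from teeth shows $t(z_n)\le 3$ (at most three nonzero $(1-\tfrac1{k})$-type contributions). For \eqref{0vdjo7eHidf782eh39}: a vertex $z_i$ with $1<i<n$ is an internal vertex of the comb $C_0$ distinct from $u_C=z_n$, so by Def.\ \ref{c09Nc23owsfcnp2q0wuXwoh} (the defining conditions of a comb, applied to $(z_n,e_{z_n})$ a comb over $(z_1',e')$ — using that $z_1\in\Omega$ forces $(z_n,e_{z_n})$ to be a comb over the least element) we get $\epsilon(z_i)\in\{2,3\}$ with $R(z_i,\{e_{z_i}\})=0$ when $\epsilon=3$ and $<1$ when $\epsilon=2$; unpacking $R$ via Notation \ref{p09cv347dYF6Us98} and the definition of $\epsilon'$ in the Introduction gives $\epsilon'(z_i)\le 3$. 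Assertion \eqref{9f2i39KJG96fgIi8k} is Cor.\ \ref{0vk3mWsklxd03mcwlfi}(b)--(c) and Lemma \ref{db8vcmxdJjvdrticu6tfvild} applied to an element of $\Gamma(\Teul)$: each non-terminal vertex of such a path is pure with $a=1$, $\tD=0$, $\epsilon=2$, and is a node (it is adjacent to dicriticals by Lemma \ref{C0qowbdowX932epug7fnvd}(b) applied where $\epsilon=2,\tD=0$, $v\neq v_0$), giving $\epsilon'\le 2$. Finally \eqref{1v0Aeri87llrfdOfitrkj} is Cor.\ \ref{9vbq34ritgf7ZAOdb9rv}, already proved. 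The main obstacle throughout is bookkeeping: carefully matching the combinatorial parameters $z_i,i_j,v_0$ of Fig.\ \ref{902br8hrjkderydyukhyg} against the objects $S(\Teul),\Omega(\Teul),\overline\Oeul(\Teul,z),u_0$, and ensuring the chosen labelling (which $z=z_1$, the orientation of the path) is consistent with $v_0=z_{i_0}$ and $z_n>z_{n-1}$; once the dictionary is fixed, each item is a short appeal to a named result.
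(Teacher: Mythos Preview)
Your overall strategy is the same as the paper's: reduce to Cor.~\ref{0d9fpv09ffi209we}, split into $|\Omega(\Teul)|=2$ and $|\Omega(\Teul)|=1$, identify $S(\Teul)$ with the path $(z_1,\dots,z_n)$ via $|\overline\Oeul|=1$, and then read off \eqref{iHhiHhd8i934ur}--\eqref{1v0Aeri87llrfdOfitrkj} from the named lemmas. Parts \eqref{pd90fb35t93repfEUTH}, \eqref{ve4876ekrlu8ecBbue}, \eqref{0vdjo7eHidf782eh39}, \eqref{9f2i39KJG96fgIi8k}, \eqref{1v0Aeri87llrfdOfitrkj} are handled correctly and with the right references.

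The genuine gap is in \eqref{c8bGTghU783hceo}. Your claimed route to $z_n>z_{n-1}$ does not work: Cor.~\ref{0c9in34dh5sifzsshnkus}(c) requires $\delta^*(u_C)\ge 3$, but here $\delta^*(u_0)=1$; and Prop.~\ref{A09g2nr9f2wH03rf9rgeF} (for $|\Omega|=2$) does not assert $z_n>z_{n-1}$ either. Your sketch for $i_0<i_1$ (``walking from $v_0$ toward $z_1$ one would meet $\Omega(\Teul)$ in the wrong position'') does not lead to a contradiction: a tooth at $z_{i_1}$ with $i_1<i_0$ is perfectly compatible with the partial order and with $z_1\in S(\Teul)$. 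The paper's device is Thm~\ref{Xcoikn23ifcdKJDluFYT937}(b): since $|\Omega(\Teul)|=1$, there is a $\tD$-trivial path $(x_1,\dots,x_h)$ with $x_1=z_1$ and $x_{h-1}<x_h$. Because $\epsilon(x_j)=2$ for $1<j<h$ and $\delta^*(z_j)=2$ for $1<j<n$, the path is forced to run along $S(\Teul)$ with $t(z_j)=0$ for $j<h$; then $x_{h-1}<x_h$ places $v_0$ among $\{x_1,\dots,x_{h-1}\}=\{z_1,\dots,z_{h-1}\}$, giving both $i_0<n$ and $i_0<i_1$ at once. (An alternative you nearly reach: for a tooth at $z_j$ with $1<j<n$ the comb condition forces $R(z_j,\{e_{z_j}\})=0$, hence $M(z_j,e_{z_j})=1$, so $z_j>z_{j-1}$ by Prop.~\ref{0hp9fh023wbpchvgrsjs256}; this contradicts $j<i_0$.)

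A smaller omission: in the $|\Omega(\Teul)|=2$ case you must first use $\tD(\Neul)=0$ together with Thm~\ref{Xcoikn23ifcdKJDluFYT937}(a) to get $\tD(z_j)=0$ for all $j$, hence $W(\Teul)=\emptyset$ and $\Gamma(\Teul)=\emptyset$; then, since both endpoints lie in $\Omega(\Teul)$, swap $(z_1,\dots,z_n)\leftrightarrow(z_n,\dots,z_1)$ if necessary to arrange $v_0\neq z_n$. The paper does exactly this.
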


\begin{proof}
Let $\Teul$ be a rational tree which is not one of the trees of Ex.\ \ref{nv63jfy64nvy3}.
We have $\delta_{v_0} \le 2$ by Prop.~\ref{0c9b45okjrdidjfo}.
We noted in Def.~\ref{cpan3bAfon98gbvqrkdLlxdir6} that $\In(\Teul) \neq \emptyset$;
choose $z \in \In(\Teul)$ and consider $\overline\Oeul = \overline\Oeul(\Teul,z)$.
If $\overline\Oeul = \emptyset$ then  Cor.~\ref{0d9fpv09ffi209we} implies that $\Teul$ is one of the trees of Ex.\ \ref{nv63jfy64nvy3},
which is not the case by assumption. So $\overline\Oeul \neq \emptyset$.
Then  $|\overline\Oeul| = 1$, $| S(\Teul) | > 1$ and $\Omega(\Teul) \neq \emptyset$ by Cor.~\ref{0d9fpv09ffi209we},
so $| \Omega(\Teul) | \in \{1,2\}$ by Thm \ref{Xcoikn23ifcdKJDluFYT937}.
Also note that $\In(\Teul) = \Omega(\Teul)$ by definition of $\In(\Teul)$.

Consider the case where $| \Omega(\Teul) | =2$.
By Thm~\ref{Xcoikn23ifcdKJDluFYT937}, there exists a path $(z_1,\dots,z_n)$
such that  $n > 1$, $\Neul = \{ z_1,\dots,z_n \}$, $\Omega(\Teul) = \{z_1,z_n\}$ and $\tD(z_i)=0$ for all $i = 1, \dots, n$. 
Since each $x \in W(\Teul)$ satisfies $\tD(x)>0$, we must have $W(\Teul) = \emptyset$ and hence $\Gamma(\Teul) = \emptyset$.
Replacing if necessary  $(z_1,\dots,z_n)$ by $(z_n,\dots,z_1)$, we may arrange that $v_0 \neq z_n$.
With this notation, it is clear that all claims of the Theorem are true.

From now-on, assume that $| \Omega(\Teul) | =1$.
As $z \in \In(\Teul) = \Omega(\Teul)$, we have  $\Omega(\Teul) = \{z\}$.
Since $\overline\Oeul$ is a partition of $\Oeul = \setspec{ (u,e_u) }{ u \in S(\Teul) \setminus \{z\} }$,
the unique element $C_0$ of $\overline\Oeul$
satisfies $C_0  = \setspec{ (u,e_u) }{ u \in S(\Teul) \setminus \{z\} }$.
Let $(u_{0}, e_{u_0})$ be the greatest element of $C_0$ and let 
$(z_1, \dots, z_n)$ denote the path from  $z_1=z$ to $z_n = u_{0}$.
Then $S(\Teul) = \{ z_1, \dots, z_n\}$ where  $n = | S(\Teul) | > 1$.
Define $i_1 < \cdots < i_k$ by $\{ z_{i_1}, \dots, z_{i_k} \} = W(\Teul) \setminus \{z_n\}$, then $\Neul$ looks exactly as in Fig.~\ref{902br8hrjkderydyukhyg}
and assertion \eqref{iHhiHhd8i934ur} is true.
Since $z_1 = z \in \Omega(\Teul)$, we have $\tD(z_1)\le 0$ and hence $z_1 \notin W(\Teul)$; so the first part of \eqref{pd90fb35t93repfEUTH} is true;
the second part of \eqref{pd90fb35t93repfEUTH} follows from Lemmas \ref{C0qowbdowX932epug7fnvd} and \ref{8ey8cdody27}.
We have $v_0 \in \{z_1, \dots, z_n\}$ by Lemma \ref{p309ef2GFT485ryf},
and Thm \ref{Xcoikn23ifcdKJDluFYT937} implies that there exists a $\tD$-trivial path $(x_1,\dots,x_h)$ satisfying $x_1 = z_1$ and  $x_{h-1}<x_h$;
so \eqref{c8bGTghU783hceo} is true.

By Lemma \ref{A0c9vub23oW8fgbgp0q2arft}, $(z_n, e_{z_n}) = (u_0, e_{u_0})$ is nonpositive.
If $e \in \Eeul_{z_n} \setminus \{e_{z_n}\}$ then $(z_n,e)$ is a tooth and hence is nonpositive.
So $z_n$ satisfies the hypothesis of Lemma \ref{0c9gh5thowe6fric}.
Note that if $e \in \Eeul_{z_n}$ is such that $(z_n,e)$ is a tooth  then $M(z_n,e)>1$ by Lemma \ref{GRygergGREg8948r}, so $c(z_n,e)<N_{z_n}$,
which means that $e$ produces a term $d_i<N_{z_n}$ in the tuple $(d_1, \dots, d_m)$ of Lemma  \ref{0c9gh5thowe6fric}.
Since at most three $i$ satisfy $d_i<N_{z_n}$, we have $t(z_n) \le 3$, so \eqref{ve4876ekrlu8ecBbue} is true.

If $i$ is such that $1<i<n$
then $(z_n, e_{z_n}) \succ (z_i, e_{z_i})$ and $(z_n, e_{z_n})$ is a comb over $(z_i, e_{z_i})$,
so assertion \eqref{0vdjo7eHidf782eh39} follows from \ref {c09Nc23owsfcnp2q0wuXwoh} and \ref{pv09w4v7AqOmksndfwoa}.

In assertion \eqref{9f2i39KJG96fgIi8k}, $(x_1, \dots, x_m)$ is a $\tD$-trivial path such that $\tD(x_1)\le0$
and $v_0 \notin \{x_1, \dots, x_{m-1}\}$.
So the claim follows from Lemmas \ref{C0qowbdowX932epug7fnvd} and \ref{8ey8cdody27}.

Assertion \eqref{1v0Aeri87llrfdOfitrkj} is clear.
\end{proof}

Corollary 2, in the Introduction, is a consequence of the next result.
This Proposition describes a class of trees that is considerably larger than the set of $\Teul(f;x,y)$ with
$f \in \Comp[x,y]$ a simple rational polynomial.

\begin{proposition}  \label {9988zhgGUjfdlkq4q965vsaqWd39r0}
Let $\Teul$ be a rational tree such that $\Nd(\Teul) = \Nd^*(\Teul)$.
Then the following hold.
\begin{enumerata}

\item If $\Omega(\Teul) = \emptyset$ then $\Teul$ is one of the trees of Ex.\ \ref{nv63jfy64nvy3}.

\item If $| \Omega(\Teul) | = 1$ then $\Neul$ is one of the  following:
$$
\scalebox{1}{
\setlength{\unitlength}{1mm}
\text{\rm(i)\ \ }\raisebox{-13\unitlength}{\begin{picture}(36,17)(69,-14)
\put(70,0){\circle{1}}
\put(80,0){\circle{1}}
\put(100,0){\circle{1}}
\put(80.5,0){\line(1,0){6}}
\put(99.5,0){\line(-1,0){6}}
\put(90,0){\makebox(0,0){\dots}}
\put(70.5,0){\line(1,0){9}}
\put(70,1.5){\makebox(0,0)[b]{\tiny $z_{1}$}}
\put(80,1.5){\makebox(0,0)[b]{\tiny $z_{2}$}}
\put(100,1.5){\makebox(0,0)[b]{\tiny $z_n$}}
\end{picture}} \qquad
\text{\rm(ii)\ \ }\raisebox{-13\unitlength}{\begin{picture}(36,17)(69,-14)
\put(70,0){\circle{1}}
\put(80,0){\circle{1}}
\put(100,0){\circle{1}}
\put(80.5,0){\line(1,0){6}}
\put(99.5,0){\line(-1,0){6}}
\put(90,0){\makebox(0,0){\dots}}
\put(70.5,0){\line(1,0){9}}
\put(99.8787,-.4851){\line(-1,-4){2.25}}
\put(97.5,-10){\circle{1}} \put(97.5,-11.5){\makebox(0,0)[t]{\tiny $y_{1}$}}
\put(70,1.5){\makebox(0,0)[b]{\tiny $z_{1}$}}
\put(80,1.5){\makebox(0,0)[b]{\tiny $z_{2}$}}
\put(100,1.5){\makebox(0,0)[b]{\tiny $z_n$}}
\end{picture}} \qquad
\text{\rm(iii)\ \ }\raisebox{-13\unitlength}{\begin{picture}(36,17)(69,-14)
\put(70,0){\circle{1}}
\put(80,0){\circle{1}}
\put(100,0){\circle{1}}
\put(80.5,0){\line(1,0){6}}
\put(99.5,0){\line(-1,0){6}}
\put(90,0){\makebox(0,0){\dots}}
\put(70.5,0){\line(1,0){9}}
\put(99.8787,-.4851){\line(-1,-4){2.25}}
\put(100.1213,-.4851){\line(1,-4){2.25}}
\put(97.5,-10){\circle{1}} \put(97.5,-11.5){\makebox(0,0)[t]{\tiny $y_{1}$}}
\put(102.5,-10){\circle{1}} \put(102.5,-11.5){\makebox(0,0)[t]{\tiny $y_{2}$}}
\put(70,1.5){\makebox(0,0)[b]{\tiny $z_{1}$}}
\put(80,1.5){\makebox(0,0)[b]{\tiny $z_{2}$}}
\put(100,1.5){\makebox(0,0)[b]{\tiny $z_n$}}
\end{picture}}
}
$$
In the three cases we have $n>1$, $\Omega(\Teul) = \{z_1\} = \{v_0\}$, $\delta_{v_0}=1$ and $S(\Teul)=\{z_1,\dots,z_n\}$. Moreover,
\begin{itemize}

\item in case \text{\rm (i)} we have $\Nd(\Teul) = \{z_n\}$ and the type of $z_n$ is either $[1]$  or $[1,1]$;

\item in case \text{\rm (ii)} we have $V(z_n)=\{y_1\}$ and either $\Nd(\Teul) = \{y_1\}$ or $\Nd(\Teul) = \{z_n,y_1\}$;
moreover, the type of $y_1$ is  $[1, N_{y_1}, \dots, N_{y_1} ]$ and if $z_n \in \Nd(\Teul)$ then its type is $[1]$;

\item in case \text{\rm (iii)} we have $V(z_n)=\Nd(\Teul) = \{y_1,y_2\}$ and (for each $i=1,2$) the type of $y_i$ is $[1, N_{y_i}, \dots, N_{y_i} ]$. 

\end{itemize}

\item If  $| \Omega(\Teul) | = 2$ then $\Neul$ satisfies all of the following conditions:
\begin{itemize}

\item There exists a $\tD$-trivial path $(z_1,\dots,z_n)$ such that $\Neul = \{ z_1, \dots, z_n\}$;

\item $n \in \{2,3\}$ and if $n=3$ then $v_0=z_2$;

\item $a_{z_i}=1$ for all $i = 1, \dots, n$;

\item $\Nd(\Teul) = \{ z_1, z_n \}$ and each node $v$ has type $[d_1, \dots, d_s] = [1, N_v, \dots, N_v]$, where $s=1 \Leftrightarrow v = v_0$.

\end{itemize}
\end{enumerata}
\end{proposition}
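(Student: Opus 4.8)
The plan is to derive Proposition \ref{9988zhgGUjfdlkq4q965vsaqWd39r0} from Theorem \ref{c03hbr9dfgsjlxnzwesmchp} and Corollaries \ref{0d9fpv09ffi209we} and \ref{9vbq34ritgf7ZAOdb9rv}, using the extra hypothesis $\Nd(\Teul)=\Nd^*(\Teul)$ to cut down the list of possibilities. First I would dispose of the two easy cases. If $\Omega(\Teul)=\emptyset$, then Cor.\ \ref{0d9fpv09ffi209we}(a) says $\Teul$ is one of the trees of Ex.\ \ref{nv63jfy64nvy3}, which is assertion (a). If $|\Omega(\Teul)|=2$, then by Thm \ref{c03hbr9dfgsjlxnzwesmchp}(a) and the argument already given in its proof, there is a $\tD$-trivial path $(z_1,\dots,z_n)$ with $\Neul=\{z_1,\dots,z_n\}$, $\Omega(\Teul)=\{z_1,z_n\}$, $W(\Teul)=\emptyset$, and $\tD(z_i)=0$ for all $i$; thus $\Nd(\Teul)=\Nd^*(\Teul)=\{z_1,z_n\}$ by Cor.\ \ref{9vbq34ritgf7ZAOdb9rv}\eqref{o2i3brfowe9-a} (note $z_1,z_n\in\Omega(\Teul)\subseteq Z(\Teul)$, and each has type $[1,N_\bullet,\dots,N_\bullet]$), and Lemma \ref{db8vcmxdJjvdrticu6tfvild} applied to both $(z_1,\dots,z_n)$ and $(z_n,\dots,z_1)$ gives $a_{z_i}=1$ for all $i$. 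One then checks $\tD(\Neul)=\tD(z_1)+\tD(z_n)=0$ forces $N_{v_0}=d(v_0)$ at the root, so $v_0$ has type $[1]$; combined with $n\in\{2,3\}$ (if $n\ge4$ the path has an interior vertex $z_i$ with $\epsilon(z_i)=2$, $\tD(z_i)=0$, hence $z_i\notin\Nd^*(\Teul)$ but $z_i\in\Neul\setminus S(\Teul)$... actually $z_i\in\Neul$; the bound $n\le3$ should come from Prop.\ \ref{0c9b45okjrdidjfo} / the analysis of $\Omega$, plus the fact that interior vertices of a $\tD$-trivial path that are not nodes contradict $\Nd(\Teul)=\Nd^*(\Teul)$ only if they are nodes — I would verify that interior vertices here cannot be nodes at all, forcing $n\le 3$ and, when $n=3$, $v_0=z_2$). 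That gives (c).

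The substantive case is $|\Omega(\Teul)|=1$. Write $\Omega(\Teul)=\{z\}$; then $\In(\Teul)=\{z\}$, $\overline\Oeul=\overline\Oeul(\Teul,z)=\{C_0\}$ by Cor.\ \ref{0d9fpv09ffi209we}, and by the proof of Thm \ref{c03hbr9dfgsjlxnzwesmchp} we get $S(\Teul)=\{z_1,\dots,z_n\}$ with $z_1=z=v_0$ (using part (c) of that theorem together with $\delta_{v_0}\le 2$ and $\Omega$ being a singleton, which forces $i_0=1$ and then $\delta_{v_0}=1$ since $z_1\notin W(\Teul)$), $z_n=u_0$, and $\Gamma(\Teul)$ described by the teeth. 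The key reduction is: the hypothesis $\Nd(\Teul)=\Nd^*(\Teul)$ kills all ``internal'' combinatorics. By Thm \ref{c03hbr9dfgsjlxnzwesmchp}\eqref{0vdjo7eHidf782eh39}, each $z_i$ with $1<i<n$ has $\epsilon(z_i)\in\{2,3\}$; but if $\epsilon(z_i)=3$ then $R(z_i,\{e_{z_i}\})=0$, so $\sigma(z_i)=0$, so $d(z_i)=N_{z_i}$, so $z_i$ is a node of type $[N_{z_i},\dots,N_{z_i}]$ — a node not in $\Nd^*(\Teul)$ unless $N_{z_i}=1$, and $\epsilon(z_i)=3$ forces $N_{z_i}>1$ by Lemma \ref{90q932r8dhd89cnr9}\eqref{0239u9e78923}, contradiction. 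Hence $\epsilon(z_i)=2$ for $1<i<n$, i.e.\ $S(\Teul)$ is a path and no teeth are attached to $z_2,\dots,z_{n-1}$; moreover such $z_i$ with $\tD(z_i)=0$ and $\epsilon(z_i)=2$ satisfy $\sigma(z_i)=0$, so they are either not nodes or nodes of type $[N_{z_i},\dots,N_{z_i}]$ — again the latter is excluded unless $N_{z_i}=1$, and I would argue $N_{z_i}=1$ is also excluded on a $\tD$-trivial interior (or leads to a degenerate tree), so in fact $z_2,\dots,z_{n-1}\notin\Nd(\Teul)$. Similarly Thm \ref{c03hbr9dfgsjlxnzwesmchp}\eqref{9f2i39KJG96fgIi8k} says for each $(x_1,\dots,x_m)\in\Gamma(\Teul)$ and $j<m$, $x_j$ is a node with $\epsilon'(x_j)\le 2$; but $\Nd(\Teul)=\Nd^*(\Teul)$ plus the type constraint $[d(x_j),N_{x_j},\dots,N_{x_j}]$ from Lemma \ref{8ey8cdody27} forces $d(x_j)=1$ and (interior of a $\tD$-trivial path, Lemma \ref{db8vcmxdJjvdrticu6tfvild}) actually $x_j\notin\Nd^*(\Teul)$ for $1<j<m$ — so each element of $\Gamma(\Teul)$ has length exactly $2$, i.e.\ every tooth is a single edge from $z_i$ to a leaf. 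Since no teeth hang on $z_2,\dots,z_{n-1}$, all teeth hang on $z_n=u_0$, so $W(\Teul)\subseteq\{z_n\}$ and $V(z_n)$ consists of the leaves $y_1,\dots$ directly joined to $z_n$.

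It remains to bound the number of teeth at $z_n$ and pin down the types. Apply Lemma \ref{0c9gh5thowe6fric} with $u=z_n$ (its hypothesis holds by Thm \ref{c03hbr9dfgsjlxnzwesmchp}\eqref{ve4876ekrlu8ecBbue}): the sorted tuple built from $(d_x)_{x\in\Deul_{z_n}}$, $(c(z_n,e))_{e\in\Eeul_{z_n}}$ and $N_{z_n}/a_{z_n}$ is one of $(1,\dots,1)$, $(1,1,N_{z_n},\dots,N_{z_n})$, or $(2,b,b,2b,\dots,2b)$ with $b\ge3$ odd. The third alternative produces a dicritical adjacent to $z_n$ of degree $2$ or a node $z_n$ of type containing $b,b$ or $2,b,b$, none of which has gcd $1$ while also satisfying $d(z_n)=1$ — wait, one must check: $z_n\in\Nd(\Teul)$ would need $d(z_n)=1$, but the type $[2,b,b,\dots]$ has gcd $\ge1$ that could be $1$ only if... $\gcd(2,b)=1$ since $b$ odd, so $d(z_n)=1$ is possible; however then $z_n$ is a node of type $[2,b,\dots]$ which is \emph{not} of the form $[1,N_{z_n},\dots]$, and I would need to rule this out using $\Nd(\Teul)=\Nd^*(\Teul)$ more carefully or by tracing where the entries $2,b,b$ come from (they cannot all be dicritical degrees at $z_n$ if there is a nontrivial subtree, etc.). This is the main obstacle: showing the ``$b,b$'' alternative is incompatible with $\Nd(\Teul)=\Nd^*(\Teul)$ and with the shape forced above, leaving only $(1,\dots,1)$ and $(1,1,N_{z_n},\dots)$. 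Granting that, the number of entries $<N_{z_n}$ is $\le 2$, so $t(z_n)\le 2$, giving exactly the three pictures (i) $t(z_n)=0$, (ii) $t(z_n)=1$, (iii) $t(z_n)=2$; the types of $z_n$ and of the $y_i$ then read off directly from Cor.\ \ref{0vk3mWsklxd03mcwlfi}(c) (each $y_i\in\Nd^*(\Teul)\cap V(z_n)$ has type $[1,N_{y_i},\dots,N_{y_i}]$ and $a_{y_i}=1$) and from the tuple ($z_n$, if a node, has type $[1]$ since the only entry $<N_{z_n}$ coming from $\Deul_{z_n}$ must be a single $1$). Finally $\delta_{v_0}=1$ and $v_0=z_1\notin W(\Teul)$ have already been established, and $\Nd(\Teul)=\Nd^*(\Teul)$ is used one last time to say there are no other nodes anywhere in $\Neul$. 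Assembling cases (a), (b), (c) completes the proof.
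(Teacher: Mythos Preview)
Your overall strategy matches the paper's, but the proposal has two genuine gaps in part (b), and the handling of parts (c) and of $z_1=v_0$ is shakier than it needs to be because you don't exploit the key inclusion early enough.

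\textbf{The organizing step you are missing.} The paper's first move in case $|\Omega(\Teul)|=1$ is to invoke Cor.~\ref{9vbq34ritgf7ZAOdb9rv}\eqref{o2i3brfowe9-b} to get $\Nd(\Teul)=\Nd^*(\Teul)\subseteq\{u_0\}\cup V(u_0)=\{z_n\}\cup V(z_n)$. Everything structural then follows from Lemma~\ref{C0qowbdowX932epug7fnvd}: any $w\in W(\Teul)$ with $(x_1,\dots,x_m)\in\Gamma(\Teul)$, $x_m=w$, has $v_0\notin\{x_1,\dots,x_{m-1}\}$, so each $x_j$ with $j<m$ is a node, hence lies in $\{z_n\}\cup V(z_n)$; this forces $w=z_n$ and $m=2$. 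Thus $W(\Teul)\subseteq\{z_n\}$, so $\epsilon(z_i)=2$ for $1<i<n$; and $z_1\notin\Nd(\Teul)$ with $\epsilon(z_1)=1$ gives $z_1=v_0$, $\delta_{v_0}=1$ by Lemma~\ref{C0qowbdowX932epug7fnvd}(a). Your attempt to rule out $\epsilon(z_i)=3$ directly fails: from $R(z_i,\{e_{z_i}\})=0$ you correctly get $\sigma(z_i)=0$, but this does \emph{not} imply that $z_i$ is a node (a vertex with $\Deul_{z_i}=\emptyset$, $a_{z_i}=1$, $\epsilon(z_i)=3$, $z_i\neq v_0$ is perfectly legal), so there is nothing to contradict $\Nd=\Nd^*$. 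Your separate tooth-length argument could be repurposed to close this, but as written your deduction ``no teeth on $z_2,\dots,z_{n-1}$'' depends on the flawed step, so the logic is circular. Likewise your claim that ``$i_0=1$'' from Thm~\ref{c03hbr9dfgsjlxnzwesmchp}(c) is unjustified; the paper gets $z_1=v_0$ only after knowing $z_1\notin\Nd(\Teul)$.

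\textbf{The $(2,b,b)$ obstruction.} You correctly identify this as the crux and do not resolve it. The paper's argument is short but specific. In case (ii) (one tooth), Lemma~\ref{db8vcmxdJjvdrticu6tfvild}(c) gives $c(z_n,\{z_n,y_1\})=1$, which is a term of the tuple in Lemma~\ref{0c9gh5thowe6fric}; since alternative~(c) of that lemma contains no $1$, it is excluded and we are in case (i). There, writing the type as $[d_1,\dots,d_s]$ with $\gcd=1$, one first shows $\sum_i d_i\le N_{z_n}$ from the explicit identity $N_{z_n}=q\,a_{z_n}\bigl(q_1\sum_i a_i d_i + q_{\mathrm{dic}}(z_n)\,S\bigr)$ (with $S=0$, $q_1=1$ in case (i)), so $(d_1,\dots,d_s)$ is a proper subsequence of $(2,b,b)$ and hence, by $\gcd=1$, equals $(2,b)$. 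The same identity then gives $2b=N_{z_n}=A\cdot 2+B\cdot b$ with integers $A,B\ge1$, which has no solution for odd $b\ge3$. This finishes the exclusion, leaving only the $(1,1,N_{z_n},\dots)$ alternative and hence type $[1]$ or $[1,1]$. The further exclusion of $[1,1]$ in case (ii) is a $\xi$-count: $N_{z_n}>1$ (since $z_n<y_1$ and $y_1\in\Neul$, via Lemma~\ref{8123oiuwhqd8d2}) forces $\xi(z_n)=2$, so $\xi(\Neul)\ge \xi(z_n)+\xi(y_1)=3>2$, contradicting Cor.~\ref{9vbq34ritgf7ZAOdb9rv}.

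For part (c), your uncertainty about $n\le 3$ is resolved by the same mechanism: Prop.~\ref{hCfa89HFdfjkUY738ihhdJMJi854727} gives $\Nd(\Teul)\subseteq\{z_1,z_n\}$; any interior $z_i$ with $1<i<n$ has $\epsilon(z_i)=2$, $\tD(z_i)=0$, $z_i\notin\Nd(\Teul)$, so Lemma~\ref{C0qowbdowX932epug7fnvd}(b) forces $z_i=v_0$. Since there is only one root, $n\le3$, and if $n=3$ then $v_0=z_2$.
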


\begin{proof}
Assertion (a) reiterates  Cor.~\ref{9vbq34ritgf7ZAOdb9rv}\eqref{o2i3brfowe9-c}.
To prove (b), assume that $\Omega(\Teul)$ is a singleton $\{z\}$.
Then (Cor.~\ref{9vbq34ritgf7ZAOdb9rv}\eqref{o2i3brfowe9-b})
$\In(\Teul) = \{z\}$ and the set $\overline\Oeul = \overline\Oeul(\Teul,z)$ has exactly one element $C_0$.
Let $(u_0,e_{u_0})$ be the greatest elements of $C_0$ and $(z_1,\dots,z_n)$ the path from $z$ to $u_0$ (so $z_1=z$ and $z_n=u_0$).
Since $(u_0,e_{u_0}) \in \Oeul(\Teul,z)$, we have $u_0 \neq z$ by \ref{c20w39w93e9d0dqMne89wcg9}(a), so $n>1$.
Since $| \overline\Oeul | = 1$, $S(\Teul) = \{ z_1, \dots, z_n \}$.
Since $\Nd^*(\Teul) \subseteq \{u_0\} \cup V(u_0)$ by  Cor.~\ref{9vbq34ritgf7ZAOdb9rv}\eqref{o2i3brfowe9-b}
and $\Nd(\Teul) = \Nd^*(\Teul)$ by assumption,
\begin{equation} \label {p0c94e8c092bcf3267jie}
\Nd(\Teul) \subseteq \{u_0\} \cup V(u_0) = \{z_n\} \cup V(z_n) \, .
\end{equation}
If $w \in W(\Teul)$ and $(x_1,\dots,x_m)$ is the path from some element of $V(w)$ to $w$ then $x_m<x_{m-1}$
by definition of $V(w)$, so $v_0 \notin \{x_1,\dots,x_{m-1}\}$,
so Lemma \ref{C0qowbdowX932epug7fnvd} implies that $\{x_1,\dots,x_{m-1}\} \subseteq \Nd(\Teul)$;
by \eqref{p0c94e8c092bcf3267jie}, it follows that $w = z_n$ and $m=2$, so $W(\Teul) \subseteq \{z_n\}$ and: 
\begin{equation}
\text{$V(z_n) \subseteq \Nd(\Teul)$ and each element of $V(z_n)$ is adjacent to $z_n$.}
\end{equation}
Since $W(\Teul) \cap \{z_1,\dots,z_{n-1} \} = \emptyset$ and $(u_0,e_{u_0})$ is a comb over $(z_2,e_{z_2})$,
we obtain $\epsilon(z_i) = 2$ for all $i$ such that $1<i<n$.
We have $\epsilon(z_1)=1$ (because $z_1 \in \Omega(\Teul)$) and  $z_1 \notin \Nd(\Teul)$ (by \eqref{p0c94e8c092bcf3267jie}), 
so Lemma \ref{C0qowbdowX932epug7fnvd}(a) gives
$$
z_1=v_0 \text{\ \ and\ \ }  \delta_{v_0}=1 \, .
$$
We have $| \Nd(\Teul) | \le \xi(\Neul) \le 2$ by Cor.~\ref{9vbq34ritgf7ZAOdb9rv}, so $| V(u_0) | \in \{0,1,2\}$.
This gives the three pictures (i--iii) for $\Neul$, and the following are clear:
\begin{itemize}

\item in case \text{\rm (i)}: $\Nd(\Teul) = \{z_n\}$;  

\item in case \text{\rm (ii)}:  $V(z_n)=\{y_1\}$ and either $\Nd(\Teul) = \{y_1\}$ or $\Nd(\Teul) = \{z_n,y_1\}$;

\item in case \text{\rm (iii)}: $V(z_n)=\Nd(\Teul) = \{y_1,y_2\}$ 

\end{itemize}
It is also clear (by Cor.~\ref{9vbq34ritgf7ZAOdb9rv}\eqref{o2i3brfowe9-b}) that $y_i$ is of type $[1, N_{y_i}, \dots, N_{y_i}]$ in cases (ii) and (iii).
To complete the proof of (b), there only remains to prove the following statement:
\begin{equation} \label {pc092g398gdbcA023fdqpi}
\text{If $z_n$ is a node then its type is $[1]$ or $[1,1]$, and in case (ii) it cannot be $[1,1]$.}
\end{equation}

By contradiction, assume that $z_n$ is a node of type $[1,1]$ in case (ii).
We have $z_n<y_1$ and $N_{y_1}>0$, so $N_{z_n}>1$ by Lemma \ref{8123oiuwhqd8d2} and consequently $\xi(z_n) = 2$,
so $\xi(\Neul) = \xi(z_n) + \xi(y_1) = 2 + 1 > 2$, contradicting Cor.~\ref{9vbq34ritgf7ZAOdb9rv}.
This contradiction proves the second part of assertion \eqref{pc092g398gdbcA023fdqpi}.

Assume that $z_n$ is a node of type $\tau=[d_1,\dots,d_s]$. The proof of (b) will be complete if we can
show that $\tau=[1]$ or $[1,1]$.  We have $s\ge1$, $d_i \mid N_{z_n}$ for all $i$, and
\begin{equation} \label {pc0iqfVvz88qgc8piW9HFG}
\gcd(d_1,\dots,d_s)=1 \, ,
\end{equation}
the last claim because $z_n \in \Nd(\Teul)=\Nd^*(\Teul)$.
Let $q = q(\{z_n,z_{n-1}\}, z_n)$ and consider $\qdic(z_n)$ defined in  \ref{Cjkf0293wdipwos}.
See \ref{0db3ukifxoms7dcdOoOoxde} for the notation ``$A_{y_1}$'' and define
$$
S =
\begin{cases}
0 & \text{in case (i),} \\
\sum_{\alpha \in A_{y_1}} \hat x_{z_n,\alpha} & \text{in case (ii),}
\end{cases}
\qquad 
q_1 = \begin{cases}
1 & \text{in case (i),} \\
q(\{z_n,y_1\},z_n) & \text{in case (ii).}
\end{cases}
$$
Since $v_0=z_1$, we have $\qdic(z_n), q_1 \in \Nat\setminus \{0\}$ and (in case (ii)) $S \in \Nat\setminus \{0\}$;
moreover, $\sum_{\alpha \in A_{z_n}} \hat x_{z_n,\alpha} = \sum_{i=1}^s a_i d_i$ for some $a_1, \dots, a_n \in \Nat\setminus \{0\}$.
In view of \eqref{p0c94e8c092bcf3267jie}, we can write
\begin{equation}  \label {o09c2g38dg1gfo752dherC}
\textstyle      N_{z_n}  = q a_{z_n} \left( q_1 \sum_{i=1}^s a_{i} d_i + \qdic(z_n) S \right) \, .
\end{equation}
As $N_{z_n}>0$, we get $q>0$ and it also follows that
\begin{equation}  \label {c02938ve82f77sco20I}
\textstyle  \sum_{i=1}^s d_i \le N_{z_n} \, .
\end{equation}

If $s=1$ then \eqref{pc0iqfVvz88qgc8piW9HFG} implies $\tau=[1]$;
if $N_{z_n}=1$ then \eqref{c02938ve82f77sco20I} implies $\tau=[1]$; so, from now-on, we may assume that $s>1$ and $N_{z_n}>1$.
Since $s>1$, we have $d_i<N_{z_n}$ for all $i$, by \eqref{c02938ve82f77sco20I}.
By Thm \ref{c03hbr9dfgsjlxnzwesmchp}\eqref{ve4876ekrlu8ecBbue}, we may apply Lemma \ref{0c9gh5thowe6fric} to $u=z_n$.
Let $(\delta_1, \dots, \delta_m)$ be the sequence denoted ``$(d_1,\dots,d_m)$'' in Lemma \ref{0c9gh5thowe6fric}.
Since $N_{z_n}>1$, condition (a) of  Lemma \ref{0c9gh5thowe6fric} cannot hold; so one of the following must be true:
\begin{itemize}

\item[(b$'$)] $(\delta_1, \dots, \delta_m) = (1, 1, N_{z_n}, \dots, N_{z_n})$;

\item[(c$'$)] $m\ge3$ and there exists an odd integer $b \ge3$ such that  $N_{z_n}=2b$ and $(\delta_1, \dots, \delta_m) = (2, b, b, N_{z_n}, \dots, N_{z_n})$.

\end{itemize}
Suppose (c$'$) is true.
If we are in case (ii) then $c(z_n,\{z_n,y_1\})$ is a term of $(\delta_1, \dots, \delta_m)$
and $c(z_n,\{z_n,y_1\})=1$ by Lemma \ref{db8vcmxdJjvdrticu6tfvild}(c), contradicting (c$'$); so we must be in case (i).
Since $(d_1, \dots, d_s)$ is a subsequence of $(\delta_1,\dots,\delta_m)$ and $d_i<N_{z_n}$ for all $i$,
$(d_1, \dots, d_s)$ is a subsequence of $(2,b,b)$; we note that $(d_1, \dots, d_s) \neq (2,b,b)$ by \eqref{c02938ve82f77sco20I},
so \eqref{pc0iqfVvz88qgc8piW9HFG} implies that $(d_1, \dots, d_s) = (2,b)$.
It follows from \eqref{o09c2g38dg1gfo752dherC} (together with $S=0$ since we are in case (i)) that 
there exist integers $A, B\ge 1$ such that $N_{z_n} = A d_1 + B d_2$, so $2b = 2 A + b B$. This is impossible, so (c$'$) cannot hold.

So (b$'$) must hold. As $(d_1, \dots, d_s)$ is a subsequence of $(1, 1, N_{z_n}, \dots, N_{z_n})$ and $d_i<N_{z_n}$ for all $i$,
$(d_1, \dots, d_s)$ is either $(1)$ or $(1,1)$, i.e., the type of $z_n$ is $[1]$ or $[1,1]$.  This proves (b).

\medskip
Proof of (c).
By Thm \ref{Xcoikn23ifcdKJDluFYT937}, there exists a $\tD$-trivial path $(z_1, \dots, z_n)$ such that 
$\Neul = \{ z_1, \dots, z_n \}$, $\Omega(\Teul) = \{z_1,z_n\}$  and $\tD(z_i) = 0$ for all $i = 1, \dots,n$.
Prop.~\ref{hCfa89HFdfjkUY738ihhdJMJi854727} (together with $\Nd(\Teul)=\Nd^*(\Teul)$) implies that $\Nd(\Teul) \subseteq \{z_1,z_n\}$.
Let $i \in \{1,n\}$; if $z_i \notin \Nd(\Teul)$ then Lemma \ref{C0qowbdowX932epug7fnvd}(a) implies that $z_i=v_0$,
so Lemma \ref{90q932r8dhd89cnr9} gives $0 = \tD(z_i) = \tD(v_0) = \sigma(v_0) + (-1)(N_{v_0}-1) + N_{v_0}(1 - \frac1{a_{v_0}}) = 1-N_{v_0}$,
so $N_{v_0}=1$, which implies that $v_0$ is a node by Lemma \ref{90q932r8dhd89cnr9}. This contradiction shows that $z_1,z_n$ are nodes,
so $\Nd(\Teul) = \{z_1,z_n\}$.
If $1<i<n$ then $z_i \notin \Nd(\Teul)$, so Lemma \ref{C0qowbdowX932epug7fnvd}(b) implies that $z_i=v_0$;
this implies that $n \in \{2,3\}$ and that if $n=3$ then $z_2=v_0$.

Cor.~\ref{9vbq34ritgf7ZAOdb9rv}\eqref{o2i3brfowe9-a} implies that each $v \in \Nd(\Teul)$ has type $[d_1,\dots,d_s] = [1,N_v,\dots,N_v]$
and satisfies $a_v=1$; thus no dead end is attached to $v$, so $\delta_v = s+\epsilon(v) = s+1$.
If $s=1$ then $\delta_v=2$, so $v = v_0$ by Def.~\ref{e5e6e7w8e9wee9}(iv).
If $s\neq1$ then $\delta_v>2$, so $v \neq v_0$ by Thm~\ref{c03hbr9dfgsjlxnzwesmchp}\eqref{c8bGTghU783hceo}.
This completes the proof.
\end{proof}

%%%%%%%%%%%%%%%%%%%%%%%%%%%%%%%%%%%%%%%%%%%%%%%%%%%%%%%%%%%%%%%%%%%%%%%%%%%%%%%%%%%%%%%%%%%%%%%%%%%%%%%%%%%%
%%%%%%%%%%%%%%%%%%%%%%%%%%%%%%%%%%%%%%%%%%%%%%%%%%%%%%%%%%%%%%%%%%%%%%%%%%%%%%%%%%%%%%%%%%%%%%%%%%%%%%%%%%%%

\section{The cases $\tD( \Neul ) = 2$ and $\tD( \Neul ) = 4$}
\label {Section:Genus1case}

{\it We continue to assume that $\Teul$ is a minimally complete Newton tree at infinity.}

\medskip

We consider the cases where $\tD( \Neul ) \in \{ 2, 4 \}$.
Note that this implies that 
\begin{equation} \label {pvivy5hjdKo0xswwrJFkuxudgcg}
|\Omega(\Teul)| \le 1
\end{equation}
because Thm \ref{Xcoikn23ifcdKJDluFYT937} implies that if $|\Omega(\Teul)| > 1$ then $\tD( \Neul ) \le 0$.
As discussed in the Introduction, for any tree coming from a primitive polynomial the number  $\tD( \Neul )$ is even and nonnegative
since it is equal to twice the genus of the generic fiber.
So the case $\tD( \Neul ) = 3$ is not interesting for applications, which explains why it is not discussed in this section.

\begin{corollary} \label {p0c9vn3lw9nf9q3we}
Assume that $\tD(\Neul) = 2$, let $z \in \In(\Teul)$ and consider $\overline\Oeul = \overline\Oeul(\Teul,z)$.
\begin{enumerata}

\item $| \overline\Oeul | \in \{0,1,2,3\}$ and if $| \overline\Oeul | \ge 2$ then $| \Omega(\Teul) | = 1$.

\item If $| \overline\Oeul | =2$ (resp.\ $| \overline\Oeul | =3$)
 then the rooted tree $\widetilde\Oeul$ appears in row {\rm (a)} (resp.\ {\rm (b)}) of Figure~\ref{d9be3yie8dfdokj6rdosefd}.

\end{enumerata}
\end{corollary}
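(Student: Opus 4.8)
\textbf{Proof plan for Corollary \ref{p0c9vn3lw9nf9q3we}.}
The plan is to deduce everything from the combinatorial inequality of Corollary \ref{p0c9ifn2o3w9dcpw0e}(c), which says that whenever $|\overline\Oeul|>1$ we have $\tD(\Neul)\ge B+2(L-2)+|\overline\Oeul_2|$, together with the table in Figure \ref{d9be3yie8dfdokj6rdosefd} and Remark \ref{cp09vw4gririnhe} which identify the quantity $H=B+2(L-2)+|\overline\Oeul_2|$ for all rooted trees $\widetilde\Oeul$ with at most $5$ vertices. First I would dispose of assertion (a): if $|\overline\Oeul|\ge 2$ then in particular $|\overline\Oeul|>1$, so $|S(\Teul)|>1$ (because $\overline\Oeul\neq\emptyset$), and by Proposition \ref{A09g2nr9f2wH03rf9rgeF} the hypothesis $|\Omega(\Teul)|=2$ would force $|\overline\Oeul|=1$, a contradiction; hence $|\Omega(\Teul)|\le1$. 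On the other hand $|\Omega(\Teul)|\ge1$ is forced by Corollary \ref{pTo9v2q3hZYa9r1ge0cX3rg}(b): since $|\overline\Oeul|\ge 2>0=\tD(\Neul)-2\ge\cdots$ — more precisely, $|\Neul|>1$ (as $\overline\Oeul\neq\emptyset$) and $|\overline\Oeul|\ge 2 = \tD(\Neul)$, so Corollary \ref{pTo9v2q3hZYa9r1ge0cX3rg}(b) gives $\Omega(\Teul)\neq\emptyset$. Combining, $|\Omega(\Teul)|=1$ whenever $|\overline\Oeul|\ge2$.

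Next, for the bound on $|\overline\Oeul|$: we always have $|\overline\Oeul|\ge0$ (and $|\overline\Oeul|=0 \iff |S(\Teul)|=1$), so it remains to rule out $|\overline\Oeul|\ge 4$. Suppose $|\overline\Oeul|\ge4$. Then $|\overline\Oeul|>1$, so Corollary \ref{p0c9ifn2o3w9dcpw0e}(c) applies and gives $2=\tD(\Neul)\ge B+2(L-2)+|\overline\Oeul_2|$. Using Remark \ref{cp09vw4gririnhe}, $B+2(L-2)+|\overline\Oeul_2|=H(\widetilde\Oeul)=2\lambda+n_2-2$, where $\lambda\ge1$ is the number of valency-$1$ vertices (leaves including the root if it is a leaf) of the rooted tree $\widetilde\Oeul$ and $n_2$ the number of non-root vertices of valency $2$. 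So I must show that every rooted tree $T$ on at least $4$ vertices satisfies $2\lambda(T)+n_2(T)-2\ge 3$, i.e. $2\lambda+n_2\ge5$. This is an elementary graph fact: a tree on $v\ge4$ vertices has $\lambda\ge2$ leaves; if it has exactly $2$ leaves it is a path, on $v\ge4$ vertices a path has at least $v-2\ge2$ internal vertices and (rooted at an endpoint, which is the worst case for $n_2$) at least $v-2-1=v-3\ge1$ of them are non-root vertices of valency $2$, giving $2\lambda+n_2\ge4+1=5$; while rooted at a non-endpoint all $v-2\ge2$ internal vertices have valency $2$ and at most one is the root, so $n_2\ge v-3\ge1$ again. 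If it has $\lambda\ge3$ leaves then $2\lambda\ge6\ge5$ already. Hence $H\ge3>2=\tD(\Neul)$, a contradiction. So $|\overline\Oeul|\le3$, proving (a).

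For (b), when $|\overline\Oeul|=2$ or $|\overline\Oeul|=3$ we again have $|\overline\Oeul|>1$, so Corollary \ref{p0c9ifn2o3w9dcpw0e}(c) gives $2=\tD(\Neul)\ge H(\widetilde\Oeul)$. A direct inspection of Figure \ref{d9be3yie8dfdokj6rdosefd} (which lists all rooted trees with $2$ to $5$ vertices together with their $H$-values) shows that among the trees with $|\overline\Oeul|=2$ only row (a) has $H\le2$, and among those with $|\overline\Oeul|=3$ only rows (b) and (c) have $H\le2$ — but row (c) has $H=3$, so only row (b) qualifies. Therefore $\widetilde\Oeul$ must be the tree of row (a) (resp.\ row (b)) of Figure \ref{d9be3yie8dfdokj6rdosefd}, which is exactly the claim. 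The only mildly delicate point in the whole argument is the elementary tree inequality $2\lambda+n_2\ge 5$ for rooted trees on $\ge4$ vertices; everything else is bookkeeping with the already-established Corollary \ref{p0c9ifn2o3w9dcpw0e} and the table.
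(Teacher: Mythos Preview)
Your proof is correct and, for part (b), essentially identical to the paper's argument: both enumerate the rooted trees on $2$ or $3$ vertices from Figure~\ref{d9be3yie8dfdokj6rdosefd} and eliminate row (c) via the inequality $H\le\tD(\Neul)$ of Corollary~\ref{p0c9ifn2o3w9dcpw0e}(c). Your phrasing ``only rows (b) and (c) have $H\le2$ --- but row (c) has $H=3$'' is self-contradictory as written; what you mean is that rows (b) and (c) are the only candidates with $3$ vertices, and of these only (b) satisfies $H\le2$.

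For part (a) you take a more laborious route than the paper. The bound $|\overline\Oeul|\le 3$ follows in one line from Corollary~\ref{pTo9v2q3hZYa9r1ge0cX3rg}(a), which gives $|\overline\Oeul|\le 1+\max(0,\tD(\Neul))=3$; the paper simply cites this. You instead invoke Corollary~\ref{p0c9ifn2o3w9dcpw0e}(c) together with an ad hoc graph-theoretic lemma ($2\lambda+n_2\ge5$ for rooted trees on $\ge4$ vertices). Your lemma and its case analysis are correct, but they amount to re-proving a weak special case of Corollary~\ref{pTo9v2q3hZYa9r1ge0cX3rg}(a). Similarly, for $|\Omega(\Teul)|\le1$ the paper just cites the blanket observation~\eqref{pvivy5hjdKo0xswwrJFkuxudgcg} (an immediate consequence of Theorem~\ref{Xcoikn23ifcdKJDluFYT937} under $\tD(\Neul)>0$), whereas you argue via Proposition~\ref{A09g2nr9f2wH03rf9rgeF}; both routes are valid. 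The use of Corollary~\ref{pTo9v2q3hZYa9r1ge0cX3rg}(b) for the lower bound $|\Omega(\Teul)|\ge1$ is the same in both proofs.
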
 

\begin{proof}
Part (a) follows from Cor.\ \ref{pTo9v2q3hZYa9r1ge0cX3rg} and \eqref{pvivy5hjdKo0xswwrJFkuxudgcg}.
If $| \overline\Oeul | = 2$ then $\widetilde\Oeul$ must be the tree in row (a) of Fig.\ \ref{d9be3yie8dfdokj6rdosefd}.
Assume that $| \overline\Oeul | = 3$.
Then $\widetilde\Oeul$ must be one of the trees in rows (b) and (c) of Fig.\ \ref{d9be3yie8dfdokj6rdosefd}.
However it cannot be the one in row (c) because that one satisfies $B + 2(L-2) + | \overline\Oeul_2 | =3 > \tD(\Neul)$,
violating part (c) of Cor.\ \ref{p0c9ifn2o3w9dcpw0e}. 
\end{proof}

We shall now consider separately the four cases of Cor.\ \ref{p0c9vn3lw9nf9q3we} (i.e., the four values of $| \overline\Oeul |$).
If $| \overline\Oeul | = 1$ (resp.\ $2$, $3$) we write $\overline\Oeul = \{ C_0 \}$
(resp.\  $\overline\Oeul = \{ C_0, C_1 \}$, $\overline\Oeul = \{ C_0, C_1, C_2 \}$) where $C_0$ always has the meaning
defined in paragraph  \ref{c20w39w93e9d0dqMne89wcg9}. Also, we use the abbreviation $u_{i} = u_{C_i}$ for $i=0,1,2$.

Keep in mind that the equivalent conditions of Lemma \ref{A0c9vub23oW8fgbgp0q2arft} are satisfied whenever $\Omega(\Teul) \neq \emptyset$.

\begin{corollary} \label {cjerher5dszmmjcnnxdugytl} 
If $\tD(\Neul) = 2$ and $| \overline\Oeul | = 3$ then the following hold.
\begin{enumerata}

\item $| \Omega(\Teul) | = 1$,  $\delta^*(u_0) = 3$, $R(u_0, \{ e_{u_0} \})=0$ and $t(u_0)=0$.

\item $t(u_1), t(u_2) \in \{0,1,2\}$ and $\tD( \bar V( u_{1} ) ) = 1 = \tD( \bar V( u_{2} ) )$.

\item For each $i \in \{1,2\}$, if $\gamma_{u_0,u_i} = (u_{i,0}, \dots, u_{i,n_i})$ then for all $j$ such that $0<j<n_i$ we have
$\epsilon(u_{i,j})=2$ and $\tD(u_{i,j})=0$.

\end{enumerata}
\end{corollary}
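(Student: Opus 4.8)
The statement is a corollary of the general machinery built around the decomposition into combs, so the plan is to instantiate Theorem \ref{0d2h39fh29834jfp0dfgq}, Corollary \ref{p0c9ifn2o3w9dcpw0e}, Corollary \ref{0c9in34dh5sifzsshnkus} and Lemma \ref{A0c9vub23oW8fgbgp0q2arft} in the very rigid situation $\tD(\Neul)=2$, $|\overline\Oeul|=3$, and then read off the conclusions. First I would record what Corollary \ref{p0c9vn3lw9nf9q3we} already gives us: $|\overline\Oeul|\ge2$ forces $|\Omega(\Teul)|=1$, and moreover (by the same corollary together with Figure \ref{d9be3yie8dfdokj6rdosefd}) the rooted tree $\widetilde\Oeul$ must be the one in row (b) of that figure, i.e.\ the root $C_0$ is adjacent to both $C_1$ and $C_2$ and has valency $2$ in $\widetilde\Oeul$. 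Unwinding Notation \ref{0vbb359gb} and Remark \ref{p0c92n30werf0dof}, valency $2$ of $C_0$ in $\widetilde\Oeul$ means $\delta^*(u_0)\le 2$ while $C_1,C_2$ correspond to two distinct edges of $S(\Teul)$ leaving $u_0$, together with the edge $e_{u_0}$ leading toward $z$; hence $\delta^*(u_0)=3$. Since $|\Omega(\Teul)|=1\le1$ we are in the hypotheses of Theorem \ref{0d2h39fh29834jfp0dfgq}, and since $|\overline\Oeul|>1$ we may apply Corollary \ref{p0c9ifn2o3w9dcpw0e}.

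For part (a), the key is the equality of Corollary \ref{p0c9ifn2o3w9dcpw0e}(a). With $\delta^*(u_0)=3$ we have $B=0$; the skeleton $S(\Teul)$ has leaves exactly $\{z\}\cup\setspec{u_C}{C\in\overline\Oeul_1}$, and in row (b) the combs $C_1,C_2$ satisfy $\delta^*(u_{C_i})=1$, so $L=3$, hence $2(L-2)=2$ and $|\overline\Oeul_2|=0$. Thus $B+2(L-2)+|\overline\Oeul_2|=2=\tD(\Neul)$, which forces every remaining nonnegative term in the formula of Corollary \ref{p0c9ifn2o3w9dcpw0e}(a) to vanish: $T=0$, $x_0=0$, and $\dot c(C)=x_C=0$ for $C\in\{C_1,C_2\}$. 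Now $x_0=0$ means $\tD\big(\bar V(u_0)\cup\Neul(u_0,e_{u_0})\big)=|\delta^*(u_0)-3|=0$, and since $\delta^*(u_0)=3$ Corollary \ref{0c9in34dh5sifzsshnkus}(c) applies and yields $R(u_0,\{e_{u_0}\})=0$, $t(u_0)=0$, that $(u_0,e_{u_0})$ is nonpositive, and $u_0>u'$ where $e_{u_0}=\{u_0,u'\}$; this is exactly part (a). (That $(u_0,e_{u_0})$ is nonpositive is also consistent with Lemma \ref{A0c9vub23oW8fgbgp0q2arft}, since $\Omega(\Teul)\neq\emptyset$.)

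For part (b): $t(u_0)=0$ and $\delta^*(u_0)=3$ give $\epsilon(u_0)=3$, so Theorem \ref{0d2h39fh29834jfp0dfgq}(a) with $C=C_i$ gives $\tD(\bar V(u_{C_i}))\ge\max(1,\epsilon(u_{C_i})-2)$; since $x_{C_i}=0$ this is an equality, and since $\delta^*(u_{C_i})=1$ the vertex $u_{C_i}$ is a leaf of $S(\Teul)$, so by Corollary \ref{0c9in34dh5sifzsshnkus}(a) (valid because $|\Omega(\Teul)|\le1$) we get $\tD(\bar V(u_{C_i}))=\max(1,\epsilon(u_{C_i})-2)$, and the vanishing of $T$ forces $\epsilon(u_{C_i})-2\le1$, i.e.\ $\tD(\bar V(u_{C_i}))=1$. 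The bound $t(u_{C_i})\in\{0,1,2\}$ then follows from $\epsilon(u_{C_i})=\delta^*(u_{C_i})+t(u_{C_i})=1+t(u_{C_i})\le3$. For part (c): each $C_i$ is a comb over its least element, and because $\dot c(C_i)=0$, Corollary \ref{cjvl45otbsAZofg9bgf} (or directly Proposition \ref{0ci19KJTghL872je309}, the equivalence \eqref{98mdmosdberdtkj76lca}$\Leftrightarrow$\eqref{pd90cfn2pw0s}) says precisely that every vertex $x$ in the path $\gamma_{u_{C_i},v_{C_i}}$ other than $u_{C_i}$ satisfies $\epsilon(x)=2$ and $\tD(x)=0$; since the least element of $C_i$ is $(u_0,e_{u_{C_i}})$, the path $\gamma_{u_0,u_{C_i}}=(u_{i,0},\dots,u_{i,n_i})$ has exactly the interior vertices $u_{i,1},\dots,u_{i,n_i-1}$ (note $u_{i,0}=u_0$ is the least-element side and is excluded, $u_{i,n_i}=u_{C_i}$ is excluded as $u_{C_i}$), giving the claim.

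\textbf{Main obstacle.} No step is deep; the only care needed is bookkeeping to translate the graph-theoretic data of row (b) of Figure \ref{d9be3yie8dfdokj6rdosefd} into the precise values $B=0$, $L=3$, $|\overline\Oeul_2|=0$ and $\delta^*(u_0)=3$, and to correctly identify which endpoint of each comb $C_i$ is $u_{C_i}$ and which is the least element, so that Proposition \ref{0ci19KJTghL872je309} and Corollary \ref{cjvl45otbsAZofg9bgf} are applied on the correct segment of the path. Once the extremal equality in Corollary \ref{p0c9ifn2o3w9dcpw0e}(a) is in hand, everything else is an immediate consequence of that equality forcing each summand to be as small as possible.
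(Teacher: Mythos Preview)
Your approach is essentially identical to the paper's: invoke Cor.~\ref{p0c9vn3lw9nf9q3we} to land in row (b) of Fig.~\ref{d9be3yie8dfdokj6rdosefd}, read off $B+2(L-2)+|\overline\Oeul_2|=2=\tD(\Neul)$, force every remaining nonnegative summand of Cor.~\ref{p0c9ifn2o3w9dcpw0e}(a) to vanish, and then unpack via Cor.~\ref{0c9in34dh5sifzsshnkus}(c) and Cor.~\ref{cjvl45otbsAZofg9bgf}. Two minor slips to clean up (neither breaks the argument): the phrase ``valency $2$ of $C_0$ in $\widetilde\Oeul$ means $\delta^*(u_0)\le 2$'' is false and immediately contradicted by your correct conclusion $\delta^*(u_0)=3$, and the least element of $C_i$ is $(v_{C_i},e_{v_{C_i}})$ with $v_{C_i}$ the neighbour of $u_0$ in $S(\Teul)$, not $(u_0,e_{u_{C_i}})$---$u_0$ belongs to $C_0$, not $C_i$.
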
 

\begin{proof}
By Cor.\ \ref{p0c9vn3lw9nf9q3we},
we have $| \Omega(\Teul) | = 1$  and the rooted tree $\widetilde\Oeul$ appears in row {\rm (b)} of Figure~\ref{d9be3yie8dfdokj6rdosefd}.
Thus $\delta^*(u_0) = 3$.  Since  $B + 2(L-2) + | \overline\Oeul_2 | = 2 = \tD(\Neul)$,
Cor.\ \ref{p0c9ifn2o3w9dcpw0e} implies that $T=0$, $x_0=0$, $\dot c(C_1) = 0 = \dot c(C_2)$ and $x_{C_1}=0=x_{C_2}$.
The condition $T=0$ gives $t(u_1), t(u_2) \in \{0,1,2\}$, and  (by Cor.\ \ref{cjvl45otbsAZofg9bgf}) $\dot c(C_1) = 0 = \dot c(C_2)$ gives assertion (c).
Since $x_0 = 0$, we have $\tD\big( \bar V( u_0 ) \cup \Neul( u_0, e_{u_0} ) \big) = \delta^*(u_0) - 3$,
so Cor.\ \ref{0c9in34dh5sifzsshnkus}(c) implies that $R(u_0,\{e_{u_0}\}) = 0$ and $t(u_0)=0$.
For each $i=1,2$ we have $t(u_i)\le2$, so $\epsilon(u_i) \le 3$; since $x_{C_i}=0$,
it follows that $\tD( \bar V( u_{i} ) ) = \max(1,\epsilon(u_i)-2) = 1$.
\end{proof}

\begin{corollary} \label {pc0qcuh6a8reustb8ws7}
If $\tD(\Neul) = 2$ and $| \overline\Oeul | = 2$ then the following hold.
\begin{enumerata}

\item $| \Omega(\Teul) | = 1$, $\delta^*(u_0) = 2$, $t(u_0) \in \{0,1,2\}$ and $R(u_0,A)=1$,
where we define $A = \{e_{u_0}\} \cup \setspec{ e \in \Eeul_{u_0} }{ \text{$(u_0,e)$ is a tooth} }$.

\item $t(u_1) \in \{0,1,2\}$ and  $\tD( \bar V( u_{1} ) ) = 1$

\item If $\gamma_{u_0,u_1} = (u_{1,0}, \dots, u_{1,n_1})$ then for all $j$ such that $0<j<n_1$ we have
$\epsilon(u_{1,j})=2$ and $\tD(u_{1,j})=0$.

\end{enumerata}
\end{corollary}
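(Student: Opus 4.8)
The plan is to deduce Corollary \ref{pc0qcuh6a8reustb8ws7} from the general machinery of Section \ref{Section:GlobalstructureofNeul}, exactly in the spirit of the proof of Corollary \ref{cjerher5dszmmjcnnxdugytl}, but now with $|\overline\Oeul| = 2$, so that $\widetilde\Oeul$ is the tree of row (a) of Figure \ref{d9be3yie8dfdokj6rdosefd}. First I would invoke Corollary \ref{p0c9vn3lw9nf9q3we}: since $\tD(\Neul)=2$ and $|\overline\Oeul|\ge2$, we get $|\Omega(\Teul)|=1$, and since $|\overline\Oeul|=2$ the rooted tree $\widetilde\Oeul$ must be the one in row (a), which immediately gives $\delta^*(u_0)=2$ and $L=2$, $B=2$, $|\overline\Oeul_2|=0$ (the leaves of $S(\Teul)$ being $z$ and $u_1$, while $u_0$ has $\delta^*(u_0)=2$ and $u_1$ has $\delta^*(u_1)=1$; in particular $C_1 \in \overline\Oeul_1$). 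Then $B + 2(L-2) + |\overline\Oeul_2| = 2 = \tD(\Neul)$, so Corollary \ref{p0c9ifn2o3w9dcpw0e}(a) forces all the nonnegative slack terms to vanish: $T=0$, $x_0=0$, $\dot c(C_1)=0$ and $x_{C_1}=0$.

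Next I would extract the three assertions from these vanishings. For (a): the equivalent conditions of Lemma \ref{A0c9vub23oW8fgbgp0q2arft} hold since $\Omega(\Teul)\neq\emptyset$ and $|S(\Teul)|>1$, so $(u_0,e_{u_0})$ is nonpositive. From $x_0 = \tD\big(\bar V(u_0)\cup\Neul(u_0,e_{u_0})\big) - |\delta^*(u_0)-3| = 0$ and $\delta^*(u_0)=2$ we get $\tD\big(\bar V(u_0)\cup\Neul(u_0,e_{u_0})\big) = 1$. Now applying Lemma \ref{A0c9vub23oW8fgbgp0q2arft}(ii) (whose hypotheses $\delta^*(u_0)=2$ and $\tD\big(\bar V(u_0)\cup\Neul(u_0,e_{u_0})\big)=1$ are exactly met) yields $R(u_0,A)=1$ with $A$ as defined. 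The bound $t(u_0)\in\{0,1,2\}$ follows from $T=0$: indeed $C_0$ has $\delta^*(u_0)=2$, so the contribution of $u_0$ to the analogue of $T$ would be $\max(0,t(u_0)-1)$; more directly, since $R(u_0,A)=1$ and each term of $R(u_0,A)$ is $\ge\frac12$ (it has the form $\sum_i(1-\frac1{n_i})$), there are at most two nonzero terms, and Proposition \ref{0hp9fh023wbpchvgrsjs256} already guarantees $M(u_0,e_{u_0})>1$ whenever $u_0$ sits below its neighbor — here I would instead note that $|A| = 1 + t(u_0)$ and each tooth edge $e$ gives $M(u_0,e)>1$ by Lemma \ref{GRygergGREg8948r}, so $R(u_0,A) \ge t(u_0)/2$, forcing $t(u_0)\le 2$.

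For (b): since $x_{C_1} = \tD(\bar V(u_1)) - \max(1,\epsilon(u_1)-2) = 0$ and $\delta^*(u_1)=1$, we have $\epsilon(u_1) = \delta^*(u_1) + t(u_1) = 1 + t(u_1)$. The vanishing of $T$ (for $C_1\in\overline\Oeul_1$ the $T$-contribution is $\max(0,t(u_1)-2)$) gives $t(u_1)\le 2$, hence $\epsilon(u_1)\le 3$, so $\max(1,\epsilon(u_1)-2)=1$ and therefore $\tD(\bar V(u_1)) = 1$. For (c): $\dot c(C_1)=0$ together with Corollary \ref{cjvl45otbsAZofg9bgf} says that every $(x,e_x)\in C_1$ with $x\neq u_{C_1}$ satisfies $\epsilon(x)=2$ and $\tD(x)=0$; writing $\gamma_{u_0,u_1}=(u_{1,0},\dots,u_{1,n_1})$, the interior vertices $u_{1,j}$ with $0<j<n_1$ are precisely such $x$, so $\epsilon(u_{1,j})=2$ and $\tD(u_{1,j})=0$.

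The only mild obstacle is bookkeeping: correctly identifying which of $z, u_0, u_1$ are leaves of $S(\Teul)$ so that $L=2$, and checking that $C_1$ lands in $\overline\Oeul_1$ rather than $\overline\Oeul_2$, so that Corollary \ref{p0c9ifn2o3w9dcpw0e}(a) gives exactly the clean identity with no leftover slack. Everything else is a direct unpacking of the definitions in \ref{c20w39w93e9d0dqMne89wcg9} and \ref{0vbb359gb} together with Lemmas \ref{A0c9vub23oW8fgbgp0q2arft} and \ref{GRygergGREg8948r} and Corollaries \ref{0c9in34dh5sifzsshnkus} and \ref{cjvl45otbsAZofg9bgf}, parallel to the already-established Corollary \ref{cjerher5dszmmjcnnxdugytl}.
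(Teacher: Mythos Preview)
Your proposal is correct and follows essentially the same route as the paper: invoke Corollary~\ref{p0c9vn3lw9nf9q3we} to get $|\Omega(\Teul)|=1$ and row~(a), then use Corollary~\ref{p0c9ifn2o3w9dcpw0e}(a) to force $T=x_0=\dot c(C_1)=x_{C_1}=0$, and unpack each vanishing via Lemma~\ref{A0c9vub23oW8fgbgp0q2arft}(ii), Lemma~\ref{GRygergGREg8948r}, and Corollary~\ref{cjvl45otbsAZofg9bgf}. Your argument for $t(u_0)\le 2$ (from $R(u_0,A)=1$ and each tooth contributing at least $\tfrac12$) is exactly what the paper's terse ``follows from $R(u_0,A)=1$'' abbreviates.
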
 

\begin{proof}
Cor.\ \ref{p0c9vn3lw9nf9q3we} implies that $| \Omega(\Teul) | = 1$ and that $\widetilde\Oeul$ is in row (a) of Figure~\ref{d9be3yie8dfdokj6rdosefd}.
Thus $\delta^*(u_0) = 2$. Since  $B + 2(L-2) + | \overline\Oeul_2 | = 2 = \tD(\Neul)$,
Cor.\ \ref{p0c9ifn2o3w9dcpw0e} implies that  $T=x_0=x_{C_1}=\dot c(C_1) = 0$.
The condition $T=0$ gives $t(u_1) \in \{0,1,2\}$ and  $\dot c(C_1) = 0$ gives assertion (c).
The condition $x_0=0$ implies that $\tD\big( \bar V( u_0 ) \cup \Neul( u_0, e_{u_0} ) \big) = | \delta^*(u_0)-3 | = 1$,
so $ R(u_0,A)  =   1$ follows from Lemma \ref{A0c9vub23oW8fgbgp0q2arft}(ii),
and  $t(u_0) \in \{0,1,2\}$ follows from $ R(u_0,A)  =   1$.
We have $t(u_1)\le2$, so $\epsilon(u_1) \le 3$; since $x_{C_1}=0$,
we get $\tD( \bar V( u_{1} ) ) = \max(1,\epsilon(u_1)-2) = 1$.
\end{proof}

\begin{corollary} 
If $\tD(\Neul) = 2$ and $| \overline\Oeul | = 1$ then $\delta^*(u_0) = 1$ and 
$$
| \setspec{ x \in \Deul_{u_0} }{ k_x>1 } | + a_{u_0}^* + t({u_0}) \le 4.
$$
\end{corollary}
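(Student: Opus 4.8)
The plan is to follow the template already established by Corollaries~\ref{cjerher5dszmmjcnnxdugytl} and \ref{pc0qcuh6a8reustb8ws7} for the cases $|\overline\Oeul|=3$ and $|\overline\Oeul|=2$, and to handle the remaining case $|\overline\Oeul|=1$ (equivalently $\delta^*(u_0)=1$, since a single comb forces $S(\Teul)$ to be a path with $u_0$ a leaf). First I would invoke Cor.~\ref{p0c9vn3lw9nf9q3we}(a): since $\tD(\Neul)=2$ we have $|\overline\Oeul|\in\{0,1,2,3\}$, and the hypothesis $|\overline\Oeul|=1$ puts us in the case at hand. From $|\overline\Oeul|=1$ I get $|S(\Teul)|>1$ and $\Omega(\Teul)\neq\emptyset$ via Cor.~\ref{0d9fpv09ffi209we}(a)-style reasoning (more directly: $|\overline\Oeul|\ge 1$ means $|S(\Teul)|>1$, and $|\overline\Oeul|\ge\tD(\Neul)$ is not what we have here, but $|\overline\Oeul|=1<2$ so we cannot conclude $\Omega\neq\emptyset$ from Cor.~\ref{pTo9v2q3hZYa9r1ge0cX3rg}(b) — instead I note that $\delta^*(u_0)$ is the valency of $u_0$ in the tree $S(\Teul)$, and with a single comb $\widetilde\Oeul$ is a single vertex so $S(\Teul)$ has $u_0$ as one of its two leaves, giving $\delta^*(u_0)=1$ directly). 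So the first assertion $\delta^*(u_0)=1$ is essentially immediate from the shape of $\widetilde\Oeul$ when $|\overline\Oeul|=1$.

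The main work is the inequality. The key tool is Corollary~\ref{Xxkvp0wedifcwZepd}, which states $|\setspec{x\in\Deul_u}{k_x>1}| + a_u^* + \epsilon(u)-1 \le \#(u) \le \max(3,\tD(\Neul)+2)$ for every $u\in\Neul$. Applying this with $u=u_0$ and using $\tD(\Neul)=2$ gives $\max(3,\tD(\Neul)+2)=\max(3,4)=4$, so $|\setspec{x\in\Deul_{u_0}}{k_x>1}| + a_{u_0}^* + \epsilon(u_0)-1 \le 4$. Then I use Rem.~\ref{p0c92n30werf0dof}(a), $\delta^*(u_0)+t(u_0)=\epsilon(u_0)$, together with the just-established $\delta^*(u_0)=1$, to rewrite $\epsilon(u_0)-1 = (\delta^*(u_0)-1)+t(u_0) = t(u_0)$. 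Substituting yields exactly $|\setspec{x\in\Deul_{u_0}}{k_x>1}| + a_{u_0}^* + t(u_0)\le 4$, which is the claim.

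I expect no serious obstacle: the proof is a short bookkeeping argument entirely parallel to the two preceding corollaries, the only subtlety being the clean justification that $|\overline\Oeul|=1$ forces $\delta^*(u_0)=1$ (because $S(\Teul)$ is then a path — each of its non-leaf vertices has $\delta^*=2$, the two leaves are $z$ and $u_0$, and $u_0$ being a leaf means $\delta^*(u_0)=1$; this is also recorded in Cor.~\ref{0d9fpv09ffi209we}(b) in the rational case and the same reasoning applies here since $|\overline\Oeul|=1$ implies $S(\Teul)=\{z_1,\dots,z_n\}$ is a path with $u_0$ an endpoint). Everything else is a one-line application of Cor.~\ref{Xxkvp0wedifcwZepd} with $\tD(\Neul)=2$ plus the identity $\epsilon(u_0)-1=t(u_0)$.

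Concretely, here is the proof I would write.

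\begin{proof}
By Cor.~\ref{p0c9vn3lw9nf9q3we}(a), $| \overline\Oeul | \in \{0,1,2,3\}$, so the hypothesis $| \overline\Oeul | = 1$ makes sense.
Since $| \overline\Oeul | = 1$ we have $| S(\Teul) | > 1$, and the tree $S(\Teul)$ is a path one of whose two leaves is $u_0 = u_{C_0}$;
in particular $\delta^*(u_0) = 1$.
Apply Cor.~\ref{Xxkvp0wedifcwZepd} with $u = u_0$: using $\tD(\Neul) = 2$ we get $\max(3, \tD(\Neul)+2) = 4$, hence
$$
| \setspec{ x \in \Deul_{u_0} }{ k_x>1 } | + a_{u_0}^* + \epsilon({u_0})-1 \le 4 .
$$
By Rem.~\ref{p0c92n30werf0dof}(a) we have $\epsilon(u_0) = \delta^*(u_0) + t(u_0) = 1 + t(u_0)$, so $\epsilon(u_0)-1 = t(u_0)$.
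Substituting gives $| \setspec{ x \in \Deul_{u_0} }{ k_x>1 } | + a_{u_0}^* + t({u_0}) \le 4$, as claimed.
\end{proof}
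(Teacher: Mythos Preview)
Your proof is correct and follows exactly the paper's approach: the paper's proof reads simply ``$\delta^*(u_0) = 1$ is clear and the last assertion is Cor.~\ref{Xxkvp0wedifcwZepd},'' and you have supplied the missing details (why $|\overline\Oeul|=1$ forces $S(\Teul)$ to be a path with $u_0$ a leaf, and the substitution $\epsilon(u_0)-1=t(u_0)$ via Rem.~\ref{p0c92n30werf0dof}(a)). The preamble to your formal proof is somewhat overcautious --- the aside about not being able to use Cor.~\ref{pTo9v2q3hZYa9r1ge0cX3rg}(b) and the reference to Cor.~\ref{0d9fpv09ffi209we} are irrelevant here, since $\Omega(\Teul)\neq\emptyset$ is never needed --- but the written proof itself is clean and matches the paper's reasoning.
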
 

\begin{proof}
$\delta^*(u_0) = 1$ is clear and the last assertion is Cor.\ \ref{Xxkvp0wedifcwZepd}.
\end{proof}

\begin{parag} 
If $\tD(\Neul) = 2$ and $| \overline\Oeul | = 0$ then $|S(\Teul)| = 1$, so Cor.\ \ref{pc0wbyrjo79e8rnn9} applies to this case.
\end{parag} 

\medskip
We shall now say a few words about the case $\tD(\Neul) = 4$.

\begin{corollary} \label {09df2bf09je0dFiHq3hr}
Assume that $\tD(\Neul) = 4$, let $z \in \In(\Teul)$ and consider $\overline\Oeul = \overline\Oeul(\Teul,z)$.
\begin{enumerata}

\item $| \overline\Oeul | \in \{0,1,2,3,4,5\}$ and if $| \overline\Oeul | \ge 4$ then $| \Omega(\Teul) | = 1$. 

\item If $| \overline\Oeul | >1$ then 
 $\widetilde\Oeul$ appears in one of the rows {\rm (a--g, j--l)} of Figure~\ref{d9be3yie8dfdokj6rdosefd}.

\end{enumerata}
\end{corollary}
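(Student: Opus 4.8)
The plan is to follow the proof of Corollary~\ref{p0c9vn3lw9nf9q3we} verbatim, with the single change that the bound $\tD(\Neul)=2$ is everywhere replaced by $\tD(\Neul)=4$. First I would dispose of part (a). The upper estimate comes from Corollary~\ref{pTo9v2q3hZYa9r1ge0cX3rg}(a): since $\dot c(C)\ge0$ for every $C\in\overline\Oeul\setminus\{C_0\}$, we get $|\overline\Oeul|\le |\overline\Oeul|+\sum_{C\in\overline\Oeul\setminus\{C_0\}}\dot c(C)\le 1+\max(0,\tD(\Neul))=5$, so $|\overline\Oeul|\in\{0,1,2,3,4,5\}$. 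For the second clause, suppose $|\overline\Oeul|\ge4$. Then $\overline\Oeul\neq\emptyset$, so $|S(\Teul)|>1$ by Definition~\ref{c20w39w93e9d0dqMne89wcg9}(b), hence $|\Neul|\ge|S(\Teul)|>1$; moreover $|\overline\Oeul|\ge4=\tD(\Neul)$, so Corollary~\ref{pTo9v2q3hZYa9r1ge0cX3rg}(b) yields $\Omega(\Teul)\neq\emptyset$. Since $\tD(\Neul)=4$ forces $|\Omega(\Teul)|\le1$ by \eqref{pvivy5hjdKo0xswwrJFkuxudgcg}, we conclude $|\Omega(\Teul)|=1$.

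Next I would prove part (b). Assume $|\overline\Oeul|>1$. By part (a) the rooted tree $\widetilde\Oeul$ has at least $2$ and at most $5$ vertices, and Figure~\ref{d9be3yie8dfdokj6rdosefd} displays every rooted tree with that many vertices; so $\widetilde\Oeul$ occurs among the rows (a)--(p). On the other hand, Corollary~\ref{p0c9ifn2o3w9dcpw0e}(c) gives $\tD(\Neul)\ge B+2(L-2)+|\overline\Oeul_2|$, and by Remark~\ref{cp09vw4gririnhe} the right-hand side is exactly $H(\widetilde\Oeul)$, the entry in the $H$-column of the figure. Hence $H(\widetilde\Oeul)\le 4$. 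Inspecting the table, the rows whose $H$-value exceeds $4$ are precisely (h), (i), (m), (n), (o), (p) (with $H=6,5,6,5,5,5$ respectively), so $\widetilde\Oeul$ must appear in one of the remaining rows, namely (a--g, j--l). This is the asserted conclusion.

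The only point that requires care — and it is a purely mechanical verification rather than a genuine obstacle — is the final bookkeeping step: confirming that the rows of Figure~\ref{d9be3yie8dfdokj6rdosefd} with $H\le 4$ are exactly (a)--(g) together with (j), (k), (l), and that the figure indeed omits no rooted tree on at most $5$ vertices. Since the table already lists $H=B+2(L-2)+|\overline\Oeul_2|$ for each such tree, this amounts to reading off a column. No new combinatorial ingredient beyond those used for the case $\tD(\Neul)=2$ is needed.
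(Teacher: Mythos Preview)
Your proof is correct and follows essentially the same approach as the paper's own proof: both derive part (a) from Corollary~\ref{pTo9v2q3hZYa9r1ge0cX3rg} together with \eqref{pvivy5hjdKo0xswwrJFkuxudgcg}, and both derive part (b) by observing that Figure~\ref{d9be3yie8dfdokj6rdosefd} lists all rooted trees on $2$ to $5$ vertices and then using Corollary~\ref{p0c9ifn2o3w9dcpw0e}(c) to eliminate the rows with $H>4$. Your write-up is somewhat more explicit (invoking Remark~\ref{cp09vw4gririnhe} and listing the excluded rows with their $H$-values), but the argument is the same.
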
 

\begin{proof}
Part (a) follows from Cor.\ \ref{pTo9v2q3hZYa9r1ge0cX3rg} and \eqref{pvivy5hjdKo0xswwrJFkuxudgcg}.
Assume that $| \overline\Oeul | >1$; then $\widetilde\Oeul$ is a rooted tree whose number $| \overline\Oeul |$ of vertices 
satisfies $2 \le | \overline\Oeul | \le 5$;
as all such rooted trees are listed in Fig.\ \ref{d9be3yie8dfdokj6rdosefd}, $\widetilde\Oeul$ appears in that Figure.
By Cor.\ \ref{p0c9ifn2o3w9dcpw0e}(c),  $\widetilde\Oeul$ must satisfy the condition $B + 2(L-2) + | \overline\Oeul_2 | \le \tD(\Neul) = 4$,
so  $\widetilde\Oeul$ is in one of the ten rows (a--g), (j--l).
\end{proof}

In six of the ten cases of Cor.\ \ref{09df2bf09je0dFiHq3hr}(b) we have  $B + 2(L-2) + | \overline\Oeul_2 | = 4$, so
\begin{equation} \label {ciujbv3948utdydrhgo}
\textstyle
T + x_0 +  \sum_{ C \in \overline\Oeul \setminus \{C_0\} }  (\dot c(C) + x_C) \ = \ 0
\end{equation}
by Cor.\ \ref{p0c9ifn2o3w9dcpw0e}(a)
and consequently  $T =0= x_0$ and $\dot c(C) =0 = x_C$ for all $C \in \overline\Oeul \setminus \{C_0\}$.
As we saw in the proofs of Corollaries \ref{cjerher5dszmmjcnnxdugytl} and \ref{pc0qcuh6a8reustb8ws7},
consequences can be deduced from that.
In the remaining four cases the sum \eqref{ciujbv3948utdydrhgo} is equal to $1$ or $2$, which leads to a large number of cases.

We shall not further elaborate the case $\tD(\Neul) = 4$.

%%%%%%%%%%%%%%%%%%%%%%%%%%%%%%%%%%%%%%%%%%%%%%%%%%%%%%%%%%%%%%%%
%%%%%%%%%%%%%%%%%%%%%%%%%%%%%%%%%%%%%%%%%%%%%%%%%%%%%%%%%%%%%%%%
\bibliographystyle{alpha}

\end{document}